\newcommand{\dotr}[1]{#1^{\bullet}} 
\DeclareMathOperator{\inv}{inv}
\numberwithin{equation}{section}
\newtheorem{thm}{Theorem}[section]
\newtheorem{prop}[thm]{Proposition}
\newtheorem{lem}[thm]{Lemma}
\newtheorem{defn}[thm]{Definition}
\newtheorem{ex}[thm]{Example}
\theoremstyle{remark}
\newtheorem{rmk}[thm]{Remark}
\definecolor{myBlue}{RGB}{0,0,255}
\definecolor{myRed}{RGB}{255,0,0}
\definecolor{myGreen}{RGB}{0,255,0}
\definecolor{myYellow}{RGB}{230,230,0}
\definecolor{myCyan}{RGB}{0,255,255}
\definecolor{myMagenta}{RGB}{255,0,255}
\definecolor{myOrange}{RGB}{255,165,0}
\definecolor{myPurple}{RGB}{128,0,128}
\definecolor{myBrown}{RGB}{165,42,42}
\definecolor{myTeal}{RGB}{0,128,128}
\renewcommand{\tikz}[3]{
\begin{tikzpicture}[scale=#1,baseline=#2,>=stealth]
#3
\end{tikzpicture}}
\colorlet{lgray}{white!85!black}
\def\leq{\leqslant}
\def\geq{\geqslant}
\newcommand{\bra}[1]{\left\langle #1\right|}
\newcommand{\ket}[1]{\left|#1\right\rangle}
\numberwithin{equation}{section}
\numberwithin{section}{chapter}
\numberwithin{subsection}{section}
\renewcommand{\bar}[1]{#1^{\bullet}}
\newsavebox{\@brx}
\newcommand{\llangle}[1][]{\savebox{\@brx}{\(\m@th{#1\langle}\)}%
  \mathopen{\copy\@brx\kern-0.5\wd\@brx\usebox{\@brx}}}
\newcommand{\rrangle}[1][]{\savebox{\@brx}{\(\m@th{#1\rangle}\)}%
  \mathclose{\copy\@brx\kern-0.5\wd\@brx\usebox{\@brx}}}
\begin{document}

\title{\Huge{Rational symmetric functions from \\ the Izergin--Korepin 19-vertex model}}

\author{
\LARGE{Alexandr Garbali}
\footnote{\large{
School of Mathematics and Statistics, University of Melbourne, Parkville, Victoria, Australia. E-mail: \texttt{alexandr.garbali@unimelb.edu.au}
}}
\quad
\LARGE{Weiying Guo}
\footnote{\large{
School of Mathematics and Statistics, University of Melbourne, Parkville, Victoria, Australia. E-mail: \texttt{guwg@student.unimelb.edu.au}
}}
\quad
\LARGE{Michael Wheeler}
\footnote{\large{
School of Mathematics and Statistics, University of Melbourne, Parkville, Victoria, Australia. E-mail: \texttt{wheelerm@unimelb.edu.au}
}}
}

\maketitle

{\large{\bf Abstract.}
Starting from the Izergin--Korepin 19-vertex model in the quadrant, we introduce two families of rational multivariate functions $F_S$ and $G_S$; these are in direct analogy with functions introduced by Borodin \cite{Borodin17} in the context of the higher-spin 6-vertex model in the quadrant. 

\bigskip

\begin{enumerate}
\item 
We prove that $F_S(x_1,\dots,x_N;\bm{z})$ and $G_S(y_1,\dots,y_M;\bm{z})$ are symmetric functions in their alphabets $(x_1,\dots,x_N)$ and $(y_1,\dots,y_M)$, and pair together to yield a Cauchy identity. Both properties are consequences of the Yang--Baxter equation of the model.

\medskip
\item We show that, in an appropriate limit of the spectral parameters $\bm{z}$, $F_S$ tends to a stable symmetric function denoted $H_S$. This leads to a simplified version of the Cauchy identity with a fully factorized kernel, and suggests self-duality of the functions $H_S$.

\medskip
\item We obtain a symmetrization formula for the function $F_S(x_1,\dots,x_N;\bm{z})$, which exhibits its symmetry in $(x_1,\dots,x_N)$. In contrast to the 6-vertex model, where $F^{6{\rm V}}_S(x_1,\dots,x_N;\bm{z})$ is cast as a sum over the symmetric group $\mathfrak{S}_N$, the symmetrization formula in the 19-vertex model is over a larger set of objects that we define; we call these objects 2-permutations. 

\medskip
\item
As a byproduct of the proof of our symmetrization formula, we obtain explicit formulas for the monodromy matrix elements of the 19-vertex model in a basis that renders them totally spatially symmetric.
\end{enumerate}
}

\setcounter{tocdepth}{1}
\makeatletter
\def\l@subsection{\@tocline{2}{0pt}{2.5pc}{5pc}{}}
\makeatother
\tableofcontents

\let\cleardoublepage\clearpage

\chapter{Introduction}

\section{Background}

The study of symmetric functions via integrable lattice models has become a very well established topic over the past two decades, elegantly blending techniques from algebraic combinatorics, mathematical physics, and representation theory. Perhaps the earliest incarnation of this correspondence may be traced back to the Jacobi--Trudi formula for Schur polynomials:
\begin{align}
\label{schurJT}
s_{\lambda}(x_1,\dots,x_N)
=
\det\Big( h_{\lambda_i-i+j}(x_1,\dots,x_N) \Big),
\end{align}
where $\lambda$ is a partition and $h_{k}(x_1,\dots,x_N)$ denotes the $k$-th complete symmetric function with finite alphabet $(x_1,\dots,x_N)$ \cite[Chapter I]{Macdonald}. As is well known, $h_{k}(x_1,\dots,x_N)$ may be expressed as the partition function of a single monotone up-right path on the square lattice, such that the path takes a total of $k$ horizontal and $N$ vertical steps, and where a horizontal step receives a weight of $x_i$ when it occurs in the $i$-th row of the lattice. By application of the Lindstr\"om--Gessel--Viennot Theorem, equation \eqref{schurJT} then yields the famous non-intersecting lattice path formulation of the Schur polynomials; see, for example, \cite{Stembridge90}. Such a construction strongly hints at connections with integrable vertex models, and indeed this turns out to be the case; numerous publications have explored partition function representations for the Schur polynomials, either in terms of the five-vertex model \cite{Zinn-Justin09,BrubakerBF}, or the phase model \cite{Bogoliubov05}.

This connection extends more deeply into general symmetric polynomial theory. By now it is known that the Hall--Littlewood \cite{Tsilevich06,WheelerZinn-Justin16}, $q$--Whittaker \cite{Korff13,BorodinWheeler21}, Lascoux--Leclerc--Thibon \cite{CorteelGKM,AggarwalBW21}, Grothendieck \cite{WheelerZinn-Justin19} and Macdonald polynomials \cite{CantiniGW,GarbaliGW,GarbaliWheeler20} may all be cast in terms of partition functions of appropriate vertex models. These representations usually provide elegant proofs of key properties of the symmetric functions in question, including their symmetry, branching rules, Cauchy identities and (in some cases) monomial positivity. The vertex models at play are various reductions of the six-vertex model --- based on the quantized affine Lie algebra $\mathcal{U}_q(A^{(1)}_1)$ --- and reductions of its higher rank analogues and/or higher dimensional representations.

A more novel direction, leading to deep connections with integrable probability, was discovered in the paper of Borodin \cite{Borodin17}. Rather than attempting to identify a given partition function with a known family in the general hierarchy, the approach of \cite{Borodin17} was to invent a new family of multivariate symmetric functions by studying the generic (higher-spin) six-vertex model in the quadrant. In contrast to the known families listed above, these new objects had the distinction of depending {\it rationally} on their underlying alphabet. Using little more than the Yang--Baxter integrability of the model, Borodin was able to prove several key properties of his rational functions, the most non-trivial of which being their Cauchy identities and orthogonality with respect to a certain deformation of the Macdonald scalar product \cite[Chapter VI, Section 9]{Macdonald}. The work of \cite{Borodin17}, and those that immediately succeeded it \cite{CorwinPetrov16,BorodinPetrov16}, led to a surge of developments in integrable probability, since the (stochastic) higher-spin six-vertex model degenerates to a host of known stochastic processes including the asymmetric simple exclusion process, zero-range processes, semi-directed polymers and the Kardar--Parisi--Zhang equation. For a survey of these developments, and the extension of many of these ideas to the setting of higher-rank (or {\it coloured}) vertex models, we refer the reader to the introduction of \cite{BorodinWheeler18}.

\section{Goal of present work}
The aim of this paper is to extend the approach followed in \cite{Borodin17} to another known vertex model --- based on the {\it twisted} quantized affine Lie algebra $\mathcal{U}_q(A^{(2)}_2)$ --- thereby defining a new family of rational symmetric multivariate functions, whose properties we study. This vertex model is usually called the {\it Izergin--Korepin nineteen-vertex model} or {\it IK model} for short.

The IK model, introduced in \cite{IzerginKorepin81}, belongs to the general framework of solutions to the quantum Yang--Baxter equation \cite{Jimbo86b,Bazhanov87}. 
Since its introduction this model has been studied in the context of quantum spin chains \cite{Vichirko83,Tarasov88,Warnaar92,Pimenta14,Garbali16,Lu24}, loop models \cite{Nienhuis90a,Nienhuis90b,Feher15,Garbali17a,Garbali17b,Morin19,Morin23} and in relation to discrete holomorphicity \cite{Gier13,Ikhlef13}.

In contrast with the six-vertex model, whose underlying state space is the set $\{0,1\}$, the state space in the IK model is three-dimensional; we denote its states by elements of the set $\{0,1,2\}$. In the integrable probability community it is standard to view the state assigned to a vertex edge as labelling the number of lattice {\it paths} which are collinear with that edge; accordingly, vertices in the IK model admit either 0, 1 or 2 paths per edge.

An important feature of the stochastic six-vertex model studied in the work \cite{GwaSpohn92,BorodinCG,Borodin17} is its {\it sum-to-unity property}; this means that the row sums of the $R$-matrix of the model, $R(x;q)$, are all equal to 1. Combined with an appropriate positivity check for the vertex weights, this allows one to build a Markov process of randomly evolving paths in the quadrant; this process has several reductions to known stochastic systems \cite{BorodinPetrov16,Aggarwal17}. In the current work we find a non-standard gauge of the Izergin--Korepin model in which $R(x;q)$ also has the sum-to-unity property, and use this exclusively throughout the text. Unfortunately, as is easily checked, there is no choice of the spectral parameter $x \in \mathbb{R}$ and deformation parameter $q \in \mathbb{R}$ that renders all entries of our $R$-matrix positive; this rules out any immediate probabilistic utility of the model.

Although probabilistic applications remain out of reach, we closely mimic the setup of \cite{Borodin17} and study the IK model in the quadrant, constructing two families of multivariate rational functions as partition functions with prescribed boundary conditions; we denote them $F_S(x_1,\dots,x_N;\bm{z})$ and $G_S(y_1,\dots,y_M;\bm{z})$, where $(x_1,\dots,x_N)$ and $(y_1,\dots,y_M)$ are spectral parameters associated to the rows of the lattice, $\bm{z} = (z_1,z_2,z_3,\dots)$ are spectral parameters associated to columns, and $S$ denotes a 2-string (see Definition \ref{def:n-string-intro}, below). Using the Yang--Baxter equation and a handful of other intrinsic properties of the model, we prove the symmetry of $F_S(x_1,\dots,x_N;\bm{z})$ and $G_S(y_1,\dots,y_M;\bm{z})$ in their first alphabets, Cauchy summation identities, and a symmetrization formula for the first family. For more details on our results, we refer the reader to Sections \ref{ssec:intro-ik}--\ref{ssec:intro-twist}. 

To the best of our knowledge, this is the first paper to explore symmetric functions defined within a vertex model outside of the standard $\mathcal{U}_q(A^{(1)}_n)$ series\footnote{There should be no confusion here between the quantized affine Lie algebra that indexes the vertex model, and the root system underlying the symmetric function in question. For example, there have been some papers devoted to connections between symplectic characters/Koornwinder polynomials and vertex models; see \cite{Cantini16,Finn17}. The former objects are based on the root system $(\widecheck{C}_N,C_N)$, and yet the vertex models used to study them were all members of the $\mathcal{U}_q(A^{(1)}_n)$ series.}.

\section{Disambiguation}

The Yang--Baxter equation admits more than one nineteen-vertex solution \cite{Lima-Santos99}. The {\it Zamolodchikov--Fateev nineteen-vertex model} or {\it ZF model} is another example \cite{Zamolodchikov80}, and at the level of lattice configurations it is combinatorially identical to the IK model. The main point of difference is that the weights assigned to vertices in the ZF model can be obtained from the {\it fusion procedure} \cite{KulishRS} applied to the six-vertex model (it corresponds to taking symmetric tensor representations of weight 2 of the underlying $\mathcal{U}_q(A^{(1)}_1)$ algebra, which lifts the two-dimensional modules of the six-vertex model to three-dimensional ones), whereas the weights assigned to vertices in the IK model cannot be recovered via fusion of any simpler model.

It follows that the IK rational symmetric functions studied in this work {\it cannot} be obtained via fusion of the functions studied in \cite{Borodin17}; but their direct analogues in the ZF model can be. Although we will not pursue this point further in the present work, we remark that the ZF model served as an important experimental tool for developing formulas with a nineteen-vertex flavour starting from known six-vertex ones.

\bigskip

Let us now give a more detailed overview of our results.

\section{Izergin--Korepin model with sum-to-unity property}
\label{ssec:intro-ik}

The Izergin--Korepin $\mathcal{U}_q(A^{(2)}_2)$ vertex model is an assignment of complex-valued weights to vertices: 
\begin{align}
\label{IK-weights-intro}
W_{z/x}(i,j;k,\ell)
\longleftrightarrow
\tikz{0.7}{-0.1cm}{
\draw[lgray,line width=1.5pt,->] (-1,0) -- (1,0);
\draw[lgray,line width=1.5pt,->] (0,-1) -- (0,1);
\node[left] at (-1,0) {$j$};\node[right] at (1,0) {$\ell$};
\node[below] at (0,-1) {$i$};\node[above] at (0,1) {$k$};
\node[left] at (-1.5,0) {$x\rightarrow$};
\node[below] at (0,-1.7) {$\uparrow$};
\node[below] at (0,-2.4) {$z$};
},
\quad\forall\ i,j,k,\ell \in \{0,1,2\}.
\end{align}
Here $x$ and $z$ are variables associated to horizontal and vertical lines of the vertex, called {\it spectral parameters}; the vertex weight is a rational function in their ratio, $z/x$. $W_{z/x}(i,j;k,\ell)$ also depends rationally on $q$, the {\it quantum deformation parameter}, but we shall suppress this dependence in most of our notations. 

The model has a standard conservation property: $W_{z/x}(i,j;k,\ell) = 0$ unless $i+j = k+\ell$. Enforcing this constraint leads to nineteen possible non-zero vertex weights. In this work we present the model in a certain gauge that appears to be novel: our vertex weights satisfy the sum-to-unity property
\begin{align}
\label{sum-to-unity-intro}
\sum_{0 \leq k,\ell \leq 2}
W_{z/x}(i,j;k,\ell)
=
1,
\qquad
\forall\
i,j \in \{0,1,2\}.
\end{align}
Equation \eqref{sum-to-unity-intro} would appear to pave the way to potential probabilistic applications; however, we were not able to find any choice of the parameters $x,z,q$ such that all vertex weights $W_{z/x}(i,j;k,\ell)$ are positive. Nevertheless, we chose to adopt vertex weight conventions such that \eqref{sum-to-unity-intro} holds, since those weights have the greatest visual similarity with the conventions of \cite{BorodinWheeler18} in the stochastic six-vertex setting. For the explicit definition of our vertex weights and the Yang--Baxter equation of the model, we refer the reader to Section \ref{sec:IKmodel-relations} of the text.

\section{Symmetric functions}
\label{ssec:intro-sym}

Before defining our families of rational functions, we introduce the notion of {\it $n$-strings}, which are the combinatorial objects used to index them\footnote{
In the higher-spin six-vertex model of \cite{Borodin17}, all symmetric functions are indexed by partitions. In that work, there is a natural bijection between states and partitions, in view of the fact that the top vertical lattice edges admit any non-negative integer number of paths, leading to a vector $(m_0,m_1,m_2,\dots)$, where $m_i \in \mathbb{Z}_{\geq 0}\ \forall\ i \in \mathbb{Z}_{\geq 0}$. That vector then provides a partition $\lambda = 0^{m_0} 1^{m_1} 2^{m_2}\dots$ written in multiplicative notation.

In the present text, all modules are finite dimensional (we take fundamental representations both in the six-vertex and nineteen-vertex cases). Reading the top vertical lattice edges leads to vectors of the form $(S_1,S_2,S_3,\dots)$ with $S_i \in \{0,1\}$ in the six-vertex setting and $S_i \in \{0,1,2\}$ in the nineteen-vertex model. At least in the latter case, there is no clear bijection with partitions, and accordingly we index all of our functions by strings.}:

\begin{defn}[Definition \ref{def:n-string} in text]\label{def:n-string-intro}
An $n$-string is an infinite vector $S=(S_1,S_2,S_3,\dots)$ with $0 \leq S_i \leq n$ for all $i \in \mathbb{N}$, and obeying the finiteness property $\exists\ M \in \mathbb{N}$ such that $S_i = 0$ for all $i>M$. We write $|S| = \sum_{i \geq 1} S_i$ for the sum of components of any $n$-string and refer to this quantity as its {\it weight}. Let $\mathfrak{s}(n)$ denote the set of all $n$-strings.

Whenever $S_i=0$ for all $i>M$, we make the identification $(S_1,S_2,S_3,\dots) \equiv (S_1,\dots,S_M)$; we may do this without loss of generality, since the symmetric functions $F_S$ and $G_S$ that we define are invariant under the removal of tails of zeros from $S$. 
\end{defn}

Fix an integer $N \geq 1$ and let $S \in \mathfrak{s}(2)$ be an arbitrary 2-string such that $|S|= 2N$. Our first family of rational functions is denoted $F_S(x_1,\dots,x_N;\bm{z})$; here $(x_1,\dots,x_N)$ denotes an arbitrary alphabet of cardinality $N$ and $\bm{z} = (z_1,z_2,z_3,\dots)$ is a secondary infinite alphabet. We define them as partition functions constructed from the vertices \eqref{IK-weights-intro}:
\begin{align}
\label{F-picture-intro}
F_S(x_1,\dots,x_N;\bm{z})
:=
\tikz{0.7}{1.5cm}{
\foreach \y in {1,...,4}
\draw[lgray,line width=1.5pt,->] (0,\y) -- (10,\y);
\foreach \y in {1,...,4}
\node[left] at (0,\y) {2};
\foreach \y in {1,...,4}
\node[right] at (10,\y) {0};
\foreach \x in {1,...,9}
\draw[lgray,line width=1.5pt,->] (\x,0) -- (\x,5);
\node[left] at (-1.2,1) {$x_1\rightarrow$};
\node[left] at (-1.2,2.2) {$\vdots$};
\node[left] at (-1.2,3.2) {$\vdots$};
\node[left] at (-1.2,4) {$x_N\rightarrow$};
\node[above] at (1,5) {$S_1$};
\node[above] at (2,5) {$S_2$};
\node[above] at (3,5) {$S_3$};
\node[above] at (4,5) {$\cdots$};
\node[above] at (5,5) {$\cdots$};
\node[below] at (1,0) {0};
\node[below] at (2,0) {0};
\node[below] at (3,0) {0};
\node[below] at (4,-0.2) {$\cdots$};
\node[below] at (5,-0.2) {$\cdots$};
\node[below] at (1,-0.7) {$\uparrow$};
\node[below] at (2,-0.7) {$\uparrow$};
\node[below] at (3,-0.7) {$\uparrow$};
\node[below] at (1,-1.4) {$z_1$};
\node[below] at (2,-1.4) {$z_2$};
\node[below] at (3,-1.4) {$z_3$};
\node[below] at (4,-1.4) {$\cdots$};
\node[below] at (5,-1.4) {$\cdots$};
}
\end{align}
In this picture, all bottom incoming and right outgoing edges are assigned the state 0. Every left incoming edge is assigned the state 2, while top outgoing edges correspond with the entries of $S$.

The object \eqref{F-picture-intro} is to be interpreted as a partition function in the statistical mechanical sense: one works out all possible labelling of internal lattice edges and assigns a weight to each such configuration, which is the product of all individual vertices that comprise it. Summing these weights across all possible configurations yields the rational function $F_S(x_1,\dots,x_N;\bm{z})$. See the left panel of Example \ref{ex:FandG} for a sample configuration of this partition function.

Introduce another integer $M \geq 1$ and a new alphabet $(y_1,\dots,y_M)$; we take $S$ and $\bm{z}$ to be as previously. Our second family of rational functions is denoted $G_S(y_1,\dots,y_M;\bm{z})$ and defined as follows:
\begin{align}
\label{G-picture-intro}
G_S(y_1,\dots,y_M;\bm{z})
:=
\tikz{0.7}{2cm}{
\foreach \y in {1,...,5}
\draw[lgray,line width=1.5pt,->] (0,\y) -- (10,\y);
\foreach \x in {1,...,9}
\draw[lgray,line width=1.5pt,->] (\x,0) -- (\x,6);
\foreach \y in {1,...,5}
\node[left] at (0,\y) {0};
\foreach \y in {1,...,5}
\node[right] at (10,\y) {0};
\node[left] at (-1.2,1) {$y_1\rightarrow$};
\node[left] at (-1.2,2.2) {$\vdots$};
\node[left] at (-1.2,3.2) {$\vdots$};
\node[left] at (-1.2,4.2) {$\vdots$};
\node[left] at (-1.2,5) {$y_M\rightarrow$};
\node[above] at (1,6) {$S_1$};
\node[above] at (2,6) {$\cdots$};
\node[above] at (3,6) {$\cdots$};
\node[above] at (4,6) {$S_N$};
\node[above] at (5.1,5.95) {$S_{N+1}$};
\node[above] at (6.2,6) {$\cdots$};
\node[above] at (7.2,6) {$\cdots$};
\node[below] at (1,0) {2};
\node[below] at (2,-0.2) {$\cdots$};
\node[below] at (3,-0.2) {$\cdots$};
\node[below] at (4,0) {2};
\node[below] at (5,0) {0};
\node[below] at (6,-0.2) {$\cdots$};
\node[below] at (7,-0.2) {$\cdots$};
\node[below] at (1,-0.7) {$\uparrow$};
\node[below] at (4,-0.7) {$\uparrow$};
\node[below] at (5,-0.7) {$\uparrow$};
\node[below] at (1,-1.4) {$z_1$};
\node[below] at (2,-1.4) {$\cdots$};
\node[below] at (3,-1.4) {$\cdots$};
\node[below] at (4,-1.4) {$z_N$};
\node[below] at (5,-1.4) {$z_{N+1}$};
\node[below] at (6,-1.4) {$\cdots$};
\node[below] at (7,-1.4) {$\cdots$};
}
\end{align}
In this picture, all left incoming and right outgoing edges are assigned the state 0. The first $N$ incoming bottom edges are assigned the state 2; the remaining ones are all set to 0. As in the case of the functions \eqref{F-picture-intro}, the top outgoing edges correspond with the entries of $S$. We emphasize that in this definition, the number of horizontal rows, $M$, is independent of $N$. See the right panel of Example \ref{ex:FandG} for a sample configuration of this partition function.

\begin{ex}
\label{ex:FandG}
In this example we take $N=4$, $M=5$ and $S=(1,2,0,1,1,2,1)$. We give sample lattice configurations of the functions $F_S(x_1,x_2,x_3,x_4;\bm{z})$ (on the left) and $G_S(y_1,y_2,y_3,y_4,y_5;\bm{z})$ (on the right):
\begin{align*}
\tikz{0.7}{1.5cm}{
\foreach \y in {1,...,4}
\draw[lgray,line width=1.5pt] (0,\y) -- (8,\y);
\foreach \y in {1,...,4}
\node[left] at (0,\y) {$2$};
\foreach \y in {1,...,4}
\node[right] at (8,\y) {$0$};
\foreach \x in {1,...,7}
\draw[lgray,line width=1.5pt] (\x,0) -- (\x,5);
\node[left] at (-0.5,1) {$x_1\rightarrow$};
\node[left] at (-0.5,2) {$x_2 \rightarrow$};
\node[left] at (-0.5,3) {$x_3 \rightarrow$};
\node[left] at (-0.5,4) {$x_4 \rightarrow$};
\node[above] at (1,5) {$1$};
\node[above] at (2,5) {$2$};
\node[above] at (3,5) {$0$};
\node[above] at (4,5) {$1$};
\node[above] at (5,5) {$1$};
\node[above] at (6,5) {$2$};
\node[above] at (7,5) {$1$};
\node[below] at (1,0) {$0$};
\node[below] at (2,0) {$0$};
\node[below] at (3,0) {$0$};
\node[below] at (4,0) {$0$};
\node[below] at (5,0) {$0$};
\node[below] at (6,0) {$0$};
\node[below] at (7,0) {$0$};
\node[below] at (1,-0.7) {$\uparrow$};
\node[below] at (2,-0.7) {$\uparrow$};
\node[below] at (3,-0.7) {$\uparrow$};
\node[below] at (4,-0.7) {$\uparrow$};
\node[below] at (5,-0.7) {$\uparrow$};
\node[below] at (6,-0.7) {$\uparrow$};
\node[below] at (7,-0.7) {$\uparrow$};
\node[below] at (1,-1.4) {$z_1$};
\node[below] at (2,-1.4) {$z_2$};
\node[below] at (3,-1.4) {$z_3$};
\node[below] at (4,-1.4) {$z_4$};
\node[below] at (5,-1.4) {$z_5$};
\node[below] at (6,-1.4) {$z_6$};
\node[below] at (7,-1.4) {$z_7$};
\draw[line width = 2,->,rounded corners] (0,4.2) -- (1,4.2) -- (1,5);
\draw[line width = 2,->,rounded corners] (0,4) -- (1.9,4) -- (1.9,5);
\draw[line width = 2,->,rounded corners] (0,3.1) -- (1,3.1) -- (1,3.8) -- (2.1,3.8) -- (2.1,5);
\draw[line width = 2,->,rounded corners] (0,2.9) -- (2.9,2.9) -- (2.9,4) -- (4,4) -- (4,5);
\draw[line width = 2,->,rounded corners] (0,2.2) -- (3.1,2.2) -- (3.1,3) -- (4.8,3) -- (4.8,3.8) -- (5,4.2) -- (5,5);
\draw[line width = 2,->,rounded corners] (0,2) -- (5,2) -- (5,4) -- (5.9,4) -- (5.9,5);
\draw[line width = 2,->,rounded corners] (0,1.1) -- (4,1.1) -- (4,1.8) -- (5.2,1.8) -- (5.2,3) -- (6.1,3) -- (6.1,5);
\draw[line width = 2,->,rounded corners] (0,0.9) -- (6,0.9) -- (6,2) -- (7,2) -- (7,5);
}
\qquad
\tikz{0.7}{1.5cm}{
\foreach \y in {1,...,5}
\draw[lgray,line width=1.5pt] (0,\y) -- (8,\y);
\foreach \y in {1,...,5}
\node[left] at (0,\y) {$0$};
\foreach \y in {1,...,5}
\node[right] at (8,\y) {$0$};
\foreach \x in {1,...,7}
\draw[lgray,line width=1.5pt] (\x,0) -- (\x,6);
\node[left] at (-0.5,1) {$y_1\rightarrow$};
\node[left] at (-0.5,2) {$y_2 \rightarrow$};
\node[left] at (-0.5,3) {$y_3 \rightarrow$};
\node[left] at (-0.5,4) {$y_4 \rightarrow$};
\node[left] at (-0.5,5) {$y_5 \rightarrow$};
\node[above] at (1,6) {$1$};
\node[above] at (2,6) {$2$};
\node[above] at (3,6) {$0$};
\node[above] at (4,6) {$1$};
\node[above] at (5,6) {$1$};
\node[above] at (6,6) {$2$};
\node[above] at (7,6) {$1$};
\node[below] at (1,0) {$2$};
\node[below] at (2,0) {$2$};
\node[below] at (3,0) {$2$};
\node[below] at (4,0) {$2$};
\node[below] at (5,0) {$0$};
\node[below] at (6,0) {$0$};
\node[below] at (7,0) {$0$};
\node[below] at (1,-0.7) {$\uparrow$};
\node[below] at (2,-0.7) {$\uparrow$};
\node[below] at (3,-0.7) {$\uparrow$};
\node[below] at (4,-0.7) {$\uparrow$};
\node[below] at (5,-0.7) {$\uparrow$};
\node[below] at (6,-0.7) {$\uparrow$};
\node[below] at (7,-0.7) {$\uparrow$};
\node[below] at (1,-1.4) {$z_1$};
\node[below] at (2,-1.4) {$z_2$};
\node[below] at (3,-1.4) {$z_3$};
\node[below] at (4,-1.4) {$z_4$};
\node[below] at (5,-1.4) {$z_5$};
\node[below] at (6,-1.4) {$z_6$};
\node[below] at (7,-1.4) {$z_7$};
\draw[line width = 2,->,rounded corners] (0.9,0) -- (0.9,6);
\draw[line width = 2,->,rounded corners] (1.1,0) -- (1.1,4) -- (1.9,4) -- (1.9,6);
\draw[line width = 2,->,rounded corners] (1.9,0) -- (1.9,2.8) -- (2.1,3.2) -- (2.1,6);
\draw[line width = 2,->,rounded corners] (2.1,0) -- (2.1,3) -- (3,3) -- (3,5) -- (4,5) -- (4,6);
\draw[line width = 2,->,rounded corners] (2.9,0) -- (2.9,2.1) -- (4,2.1) -- (4,4) -- (5,4) -- (5,6);
\draw[line width = 2,->,rounded corners] (3.1,0) -- (3.1,1.9) -- (4.9,1.9) -- (4.9,3.2) -- (5.9,3.2) -- (5.9,6);
\draw[line width = 2,->,rounded corners] (3.9,0) -- (3.9,1.1) -- (5.1,1.1) -- (5.1,3) -- (6.1,3) -- (6.1,6);
\draw[line width = 2,->,rounded corners] (4.1,0) -- (4.1,0.9) -- (6,0.9) -- (6,2.8) -- (7,2.8) -- (7,6);
}
\end{align*}
\end{ex}

\begin{thm}[Theorem \ref{thm:IK-FG-sym} in text]
For any $S \in \mathfrak{s}(2)$, $F_S(x_1,\dots,x_N;\bm{z})$ and $G_S(y_1,\dots,y_M;\bm{z})$ are symmetric in their primary alphabets.
\end{thm}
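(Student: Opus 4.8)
The plan is to realize both $F_S$ and $G_S$ as matrix elements of a product of single-row operators acting on the column space, and to reduce the asserted symmetry to the \emph{commutativity} of those operators, which will in turn follow from the Yang--Baxter equation of Section~\ref{sec:IKmodel-relations} together with the sum-to-unity property \eqref{sum-to-unity-intro}. First I would introduce the monodromy matrix $T(x) = \prod_{j \geq 1} L(z_j/x)$ built along one horizontal line of spectral parameter $x$, viewed as an operator on the column space $\bigotimes_j V_j$ carrying a residual $3\times 3$ matrix structure in its horizontal (auxiliary) space. Writing $C(x)$ and $A(x)$ for the auxiliary matrix elements of $T(x)$ with horizontal boundary data (in,\,out)$=(2,0)$ and $(0,0)$ respectively, the definitions \eqref{F-picture-intro}--\eqref{G-picture-intro} become
\begin{align}
\label{plan-FG}
F_S(x_1,\dots,x_N;\bm{z}) = \bra{S}\, C(x_N)\cdots C(x_1)\, \ket{\varnothing},
\qquad
G_S(y_1,\dots,y_M;\bm{z}) = \bra{S}\, A(y_M)\cdots A(y_1)\, \ket{\varnothing'},
\end{align}
where $\ket{\varnothing}$, $\ket{\varnothing'}$ encode the respective bottom boundaries and $\bra{S}$ the common top boundary. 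Since an adjacent transposition of the primary alphabet swaps two neighbouring factors in \eqref{plan-FG}, it then suffices to prove that the $C(x)$ mutually commute, and likewise the $A(y)$.

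Next I would invoke the RTT form of the Yang--Baxter equation,
\begin{align}
\label{plan-RTT}
R_{ab}(x_a,x_b)\, T_a(x_a)\, T_b(x_b) = T_b(x_b)\, T_a(x_a)\, R_{ab}(x_a,x_b),
\end{align}
in which $R_{ab}$ intertwines the two $3$-dimensional horizontal spaces $a,b$; as both horizontal lines carry the fundamental representation, $R_{ab}$ is given by the same weights $W_{x_b/x_a}$ as the bulk vertices. The crux is to contract \eqref{plan-RTT} in the auxiliary spaces against the covectors and vectors dictated by \eqref{F-picture-intro}--\eqref{G-picture-intro}. The conservation law $i+j=k+\ell$ makes $(0,0)$ and $(2,2)$ the unique auxiliary configurations of total arrow content $0$ and $4$; hence each is an eigen(co)vector of $R_{ab}$ with no leakage from other states, and the corresponding eigenvalues are the diagonal weights $W(0,0;0,0)$ and $W(2,2;2,2)$. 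For $G_S$ both contracted boundaries are $(0,0)$, so the two sides of \eqref{plan-RTT} acquire the identical scalar $W(0,0;0,0)$ and $A(y_a)A(y_b)=A(y_b)A(y_a)$ follows at once. For $F_S$ the two boundaries are $(0,0)$ on one side and $(2,2)$ on the other; these scalars are a priori distinct functions, but the sum-to-unity property \eqref{sum-to-unity-intro} forces $W(0,0;0,0)=W(2,2;2,2)=1$ (each row-sum collapses to its single surviving diagonal entry), whence $C(x_a)C(x_b)=C(x_b)C(x_a)$ as well.

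The main obstacle is precisely this boundary contraction of \eqref{plan-RTT}: one must confirm that the auxiliary $R$-matrix is indeed the IK $R$-matrix in the chosen gauge, so that its extreme diagonal entries are the trivial weights used above, and that the conservation law genuinely rules out any other auxiliary state feeding into the fixed boundaries. It bears emphasis that for $F_S$ it is \emph{not} enough for the row operators to commute up to a scalar; the exact symmetry hinges on the coincidence of the two extreme boundary eigenvalues, which is exactly the dividend of working in the sum-to-unity gauge \eqref{sum-to-unity-intro} rather than in an arbitrary normalization. Once the commutativity of the $C$'s and of the $A$'s is established, the symmetry of $F_S$ and $G_S$ in their primary alphabets is immediate from \eqref{plan-FG}.
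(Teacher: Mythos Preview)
Your proposal is correct and follows essentially the same approach as the paper. The paper defines the row operators $\mathcal{C}(x)=\mathcal{T}_{2,0}(x)$ and $\mathcal{A}(y)=\mathcal{T}_{0,0}(y)$, proves $[\mathcal{T}_{a,b}(x),\mathcal{T}_{a,b}(y)]=0$ for $a,b\in\{0,2\}$ via the standard ``attach a frozen $R$-vertex, zip through with Yang--Baxter, detach'' argument (Proposition~\ref{prop:Tcommute}), and then deduces the symmetry of $F_S$ and $G_S$ immediately from these commutation relations; your RTT-contraction formulation is the same argument, and your observation that conservation plus sum-to-unity forces $W(0,0;0,0)=W(2,2;2,2)=1$ is exactly what the paper uses when it notes these weights are $1$ from the top and bottom rows of Figure~\ref{fig:19weights}.
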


This theorem is an immediate consequence of the Yang--Baxter equation of the model and the particular choice of boundary conditions that we assign to left incoming and right outgoing edges; see Section \ref{ssec:IK-symmetry} of the text.

\section{Cauchy identity}
\label{ssec:intro-cauch}

Having constructed our families of symmetric rational functions, we proceed to derive some of their properties. The first of these is their {\it Cauchy identity}:
\begin{thm}[Theorem \ref{thm:IK-cauchy} in text]
\label{thm:cauchy-FG-intro}
Fix alphabets $(x_1,\dots,x_N)$, $(y_1,\dots,y_M)$ and an infinite alphabet $\bm{z} = (z_1,z_2,z_3,\dots)$ which satisfy the constraints \eqref{eqres1-inhom}, \eqref{eqres2-inhom}. We then have the summation identity
\begin{multline}
\label{cauchy-FG-intro}
\sum_{S \in \mathfrak{s}(2)} 
c_{S}(q)
F_{S}(x_1,\dots,x_N;\bm{z})
G_{S}(y_1,\dots,y_M;q^{-3}\bm{z}^{-1})
\\
=
q^{N(2N+1)}
F_{(2^{N})}(x_1,\dots,x_N;\bm{z}) 
\prod_{i=1}^{N}
\prod_{j=1}^{M}
\frac{(1-q^2x_iy_j)(1-q^3x_iy_j)}{(1-x_iy_j)(1-qx_iy_j)},
\end{multline}
where the sum is taken over all 2-strings, the coefficient $c_S(q)$ in the summand is given by \eqref{eqcsfact1}, and where the function appearing on the right hand side is $F_T(x_1,\dots,x_N;\bm{z})$ with $T = (2^N,0,0,0,\dots)$.
\end{thm}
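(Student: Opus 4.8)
The plan is to realize both $F_S$ and $G_S$ as matrix elements of products of monodromy-matrix entries acting on the infinite tensor product $\mathcal{V} = \bigotimes_{j \geq 1} \mathbb{C}^3$ of column spaces, and then to interpret the left-hand side of \eqref{cauchy-FG-intro} as a single glued partition function. Reading the lattice \eqref{F-picture-intro} row by row expresses $F_S(x_1,\dots,x_N;\bm z) = \bra{S}\, \mathcal{B}(x_1)\cdots\mathcal{B}(x_N)\, \ket{\varnothing}$, where $\mathcal{B}(x)$ is the monodromy-matrix entry carrying the auxiliary state $0$ (right boundary) to $2$ (left boundary) and $\ket{\varnothing}$ is the all-$0$ column state forced by the bottom boundary; likewise \eqref{G-picture-intro} expresses $G_S(y_1,\dots,y_M;q^{-3}\bm z^{-1})$ as an analogous matrix element of the diagonal ($0\to 0$) entries, acting on the state $\ket{2^N}$ dictated by $G$'s bottom boundary and with columns carrying $q^{-3}\bm z^{-1}$. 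The first task is to check that the weighted sum $\sum_{S} c_S(q)\,\ket{S}\bra{S}$, with $c_S$ as in \eqref{eqcsfact1}, is exactly the operator making the pairing $\langle G,F\rangle$ compatible with stacking the two lattices: it must implement the correct dual pairing (resolution of identity) so that the object glued along the common $S$-edge is again a bona fide partition function of the model.

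The engine of the proof is the Yang--Baxter equation, applied as a \emph{train argument}. Having glued $F$ on top of the vertically reflected $G$ so that their $S$-boundaries are summed over, one obtains a rectangular lattice with $N$ horizontal lines carrying $(x_1,\dots,x_N)$ and $M$ horizontal lines carrying $(y_1,\dots,y_M)$, all sharing the same vertical lines. The specialization of $G$'s columns to $q^{-3}\bm z^{-1}$ is precisely the crossing transformation that allows a column line of $F$ (spectral parameter $z_j$) to be fused with the matching column line of $G$; this is what renders the interface $\sum_S c_S \ket{S}\bra{S}$ transparent and underlies the appearance of $c_S$. One then commutes each $y$-line past each $x$-line by inserting the $R$-matrix $R(x_i,y_j)$ between the two auxiliary spaces and dragging it across all vertical lines via the Yang--Baxter equation. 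Because the auxiliary boundary data are frozen ($F$: in-state $2$, out-state $0$; $G$: in- and out-state $0$), the net effect of each crossing is the scalar Boltzmann weight of a single $R$-vertex on these fixed external states, and a direct evaluation of that weight in our gauge should yield exactly the factor $\dfrac{(1-q^2 x_i y_j)(1-q^3 x_i y_j)}{(1-x_i y_j)(1-q x_i y_j)}$, producing the full product kernel over $1\leq i\leq N$, $1\leq j\leq M$.

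After all $M$ of the $y$-lines have been transported to the far side, they act degenerately on the boundary: the $0\to 0$ $G$-rows meet a configuration in which no further paths can be created, so they collapse to an overall scalar, while the $x$-lines reassemble into the partition function with the fully packed top boundary $T=(2^N,0,0,\dots)$, namely $F_{(2^N)}(x_1,\dots,x_N;\bm z)$. Bookkeeping of the weights accumulated in this collapse --- in particular the normalizations attached to pushing the $2N$ paths into the first $N$ columns --- should produce the explicit prefactor $q^{N(2N+1)}$. The constraints \eqref{eqres1-inhom} and \eqref{eqres2-inhom} on the three alphabets are convergence conditions guaranteeing that the sum over $S\in\mathfrak{s}(2)$ is absolutely convergent and that the formal manipulations are legitimate; they should enter only at the level of justifying the interchange of summation with the train argument.

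The main obstacle I anticipate is twofold. First, pinning down the coefficient $c_S(q)$: in the $\mathcal{U}_q(A_1^{(1)})$ six-vertex case of \cite{Borodin17} the column states lie in $\{0,1\}$ and the dual pairing is essentially diagonal with simple $q$-weights, whereas here each column state lies in $\{0,1,2\}$ and the three-dimensional auxiliary space produces a richer set of quadratic (RTT) commutation relations with several additional terms; one must verify that $\sum_S c_S\ket{S}\bra{S}$ really is the unique weighting under which the glued lattice closes up, with no residual terms surviving the Yang--Baxter manipulations. Second, the crossing computation itself is more delicate than in the six-vertex setting: the single-vertex evaluation that yields the rational kernel, together with the verification that the reflected/crossed $G$-weights at $q^{-3}\bm z^{-1}$ dovetail exactly with the $F$-weights at $\bm z$, both rely on the precise form of the vertex weights and on the crossing symmetry of the IK $R$-matrix, which must be established by explicit computation before the abstract train argument can be run.
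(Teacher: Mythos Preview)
Your overall architecture is the same as the paper's: stack the two lattices, insert a resolution of the identity on the common edge, and use Yang--Baxter to commute the $y$-rows past the $x$-rows. But two of the mechanisms you describe are not quite right, and these are precisely the places where the proof has content.

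First, the weighted sum $\sum_S c_S(q)\ket{S}\bra{S}$ is \emph{not} the gluing operator. In the paper the gluing is done with the plain identity $\sum_S \ket{S}\bra{S}$, and the coefficient $c_S(q)$ together with the prefactor $q^{N(2N+1)}$ arise from a separate \emph{flip symmetry} lemma (Proposition~\ref{prop:IK-flip-sym}). That lemma introduces a companion function $\dotr{G}_S$ built from \emph{dotted} (renormalized) vertices, acting with secondary alphabet $\bm z$ rather than $q^{-3}\bm z^{-1}$, and proves
\[
q^{-N(2N+1)}\,c_S(q)\,G_S(y_1,\dots,y_M;q^{-3}\bm z^{-1}) \;=\; \dotr{G}_S(y_1^{-1},\dots,y_M^{-1};\bm z).
\]
The $q^{-3}\bm z^{-1}$ is therefore not a device for ``fusing column lines''; it is exactly the spectral shift produced by the local symmetry \eqref{eqnineteencym1} of the Izergin--Korepin weights. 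After this rewriting the identity to be proved is simply $\sum_S F_S\,\dotr{G}_S = F_{(2^N)}\cdot(\text{kernel})$, with no $c_S$ and no $q^{N(2N+1)}$.

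Second, the train argument does not work by evaluating boundary $R$-vertices. The paper defines an expectation $\mathcal{E}_{N,M}=\bra{\varnothing}\mathcal{C}(x_1)\cdots\mathcal{C}(x_N)\,\dotr{\mathcal{D}}(y_M^{-1})\cdots\dotr{\mathcal{D}}(y_1^{-1})\ket{2^N,\varnothing}$ and uses the \emph{infinite-volume} commutation relation (Proposition~\ref{propforCauchyIK1})
\[
\dotr{\mathcal{D}}(x)\,\mathcal{C}(y) \;=\; \frac{(x-y)(x-qy)}{(x-q^2y)(x-q^3y)}\;\mathcal{C}(y)\,\dotr{\mathcal{D}}(x),
\]
which is where the kernel factor comes from. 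The point is that the finite-$L$ Yang--Baxter relation \eqref{IK-pre-limit} has \emph{three} terms, and the two unwanted ones only disappear in the limit $L\to\infty$ under the constraints \eqref{eqres1-inhom}--\eqref{eqres2-inhom}; those constraints are not merely about interchanging sums but are what make the commutation a single scalar. After commuting, one uses $\bra{\varnothing}\dotr{\mathcal{D}}(y_j^{-1})=\bra{\varnothing}$ to strip the $y$-operators, leaving $F_{(2^N)}$.
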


Equation \eqref{cauchy-FG-intro} is the first key result of this paper; its proof proceeds in several steps. We begin by noticing a certain {\it flip symmetry} of the function $G_S(y_1,\dots,y_M;\bm{z})$ that allows it to be written as a partition function similar to \eqref{G-picture-intro}, but having undergone reflection about its central horizontal line and complementation of the states that live on horizontal lattice edges. This symmetry allows one to express the left hand side of \eqref{cauchy-FG-intro} as a single partition function in the IK model; after permuting the rows of this partition function by $NM$ applications of the Yang--Baxter equation, we arrive at the right hand side of \eqref{cauchy-FG-intro}.

\section{Stable symmetric functions}
\label{ssec:intro-stab}

In the general classification of Cauchy summation identities, Theorem \ref{thm:cauchy-FG-intro} is said to be of {\it skew} type, as the function $F_{(2^N)}(x_1,\dots,x_N;\bm{z})$ resurfaces on its right hand side. Motivated by the quest for a Cauchy identity with a manifestly factorized right hand side, we were led to introduce a third family of symmetric rational functions in this work, which we denote $H_S(x_1,\dots,x_N;\bm{z})$. For their partition function definition, we refer the reader to Section \ref{sec:IK_stable} of the text. 

These functions have several important properties which we believe render them more canonical than either of the families $F_S(x_1,\dots,x_N;\bm{z})$ and $G_S(y_1,\dots,y_M;\bm{z})$. Their first property is that they are indexed by 2-strings $S \in \mathfrak{s}(2)$ of {\it arbitrary even weight} $|S| = 2K \leq 2N$, and serve as generalizations of $F_S(x_1,\dots,x_N;\bm{z})$. The latter are recovered when $S$ is chosen to have maximal weight:
\begin{prop}[Remark \ref{rmk:stable-cases} in text]
Let $S \in \mathfrak{s}(2)$ be a 2-string with $|S| = 2N$. The following equality holds:
\begin{align*}
H_S(x_1,\dots,x_N;\bm{z})
=
F_S(x_1,\dots,x_N;\bm{z}).
\end{align*}
\end{prop}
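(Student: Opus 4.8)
The plan is to establish the equality directly from the partition-function definition of $H_S$ given in Section~\ref{sec:IK_stable}, by showing that the additional structural freedom built into $H_S$ becomes inert precisely when $|S| = 2N$. First I would recall that $H_S$ and $F_S$ are constructed on the same $N$-row lattice with identical primary alphabet $(x_1,\dots,x_N)$, with left incoming edges all set to $2$ and bottom incoming edges (over the genuine $\bm z$-columns) all set to $0$; the difference is that $H_S$ accommodates a string $S$ of any even weight $|S| = 2K \le 2N$ by providing an auxiliary region, governed by a specialized limit of the spectral parameters $\bm z$, through which the surplus of $2(N-K)$ paths may exit. The proposition concerns the extreme case $K = N$, where there is no surplus, and the strategy is to argue that in this case the auxiliary region carries no paths and contributes a trivial multiplicative factor.

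The central step is a path-conservation count. By the conservation property, $W_{z/x}(i,j;k,\ell) = 0$ unless $i + j = k + \ell$, so the total number of paths is preserved throughout the lattice. Exactly $2N$ paths enter along the left boundary (two in each of the $N$ rows), and when $|S| = 2N$ all of them are absorbed by the top boundary reading off $S$. Consequently no path can reach the auxiliary region, forcing every vertex there into its empty state $(0,0;0,0)$. Here the sum-to-unity gauge \eqref{sum-to-unity-intro} is decisive: combined with conservation it gives $W_{z/x}(0,0;0,0) = 1$ (the only surviving term in the row sum at $i=j=0$), so the entire auxiliary region evaluates to $1$ irrespective of the limiting values of its spectral parameters. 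What remains is a sum over internal edge labelings of the core $N \times \infty$ lattice with exactly the boundary data of \eqref{F-picture-intro}, which is by definition $F_S(x_1,\dots,x_N;\bm z)$.

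I expect the main obstacle to be the interface bookkeeping rather than the counting itself: one must confirm that freezing the auxiliary vertices imposes on the core lattice precisely the boundary conditions of $F_S$ --- all-$0$ bottom and right outgoing edges, all-$2$ left incoming edges, and top edges equal to $S$ --- with no residual paths leaking across the seam. A secondary point to verify is that the $\bm z$-specialization defining $H_S$ does not collide with any pole of the core vertex weights, which are rational in $z/x$; since in the case $|S|=2N$ the core weights are evaluated at the genuine parameters $(x_1,\dots,x_N;\bm z)$ and only the auxiliary weights are taken to a limit, this should pose no difficulty, and the identity $H_S = F_S$ then follows.
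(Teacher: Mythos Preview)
Your proposal rests on a picture of $H_S$ that does not match the paper's actual definition. In Section~\ref{sec:IK_stable}, $H_S$ is \emph{not} a single partition function with all left edges equal to $2$ together with an auxiliary region for surplus paths; rather, it is defined (equation~\eqref{eqstableIK1}) as a weighted sum
\[
H_{S}(x_1,\dots,x_N;\bm{z}) \;=\; \sum_{I : |I| = 2K} (-1)^{\mathfrak{c}_1(I)/2}\,(1-q^{-1})^{\mathfrak{c}_1(I)}\,q^{-\inv(I)}\,F_{S}^{I}(x_1,\dots,x_N;\bm{z}),
\]
over vectors $I \in \{0,1,2\}^N$ with $|I| = 2K$, where $F_S^{I}$ is the partition function~\eqref{eqstableIK191} with left boundary $I$. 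The limiting construction you have in mind is Theorem~\ref{speicalpart1}, a derived property rather than the definition.

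With the correct definition the proof is the one-line observation recorded in Remark~\ref{rmk:stable-cases}: when $K = N$ the constraint $|I| = 2N$ with each $I_i \in \{0,1,2\}$ forces $I = (2^N)$; then $\mathfrak{c}_1(I) = 0$ and $\inv(I) = 0$, so the single surviving summand carries coefficient $1$, and $F_S^{(2^N)} = F_S$ by direct comparison of~\eqref{eqstableIK191} with~\eqref{pictureofrational1}. No path-conservation argument or freezing is required. If you insisted on arguing via the limiting picture, the correct statement for $K = N$ is that the auxiliary alphabet $\bm{u}$ has $N-K = 0$ entries, so there is no auxiliary region at all and nothing to freeze; the identity is then immediate from Theorem~\ref{speicalpart1}, but that route is needlessly indirect since the theorem itself is nontrivial and is proved \emph{after} the remark in question.
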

Not only are the functions $F_S(x_1,\dots,x_N;\bm{z})$ special cases of $H_S(x_1,\dots,x_N;\bm{z})$, the converse also turns out to be true:
\begin{thm}[Theorem \ref{speicalpart1} in text]
Fix two integers $0 \leq K < N$, and $S \in \mathfrak{s}(2)$ such that $|S| = 2K$. Let $\bm{u} \cup \bm{z}$ denote the extended alphabet $(u_1,\dots,u_{N-K}) \cup (z_1,z_2,z_3,\dots)$, and write $2^{N-K} \cup S = (2^{N-K},S_1,S_2,S_3,\dots)$. The following reduction holds:
\begin{align*}
\lim_{u_1 \rightarrow \infty}
\cdots
\lim_{u_{N-K} \rightarrow \infty}
\Big\{
F_{2^{N-K} \cup S}(x_1,\dots,x_N;\bm{u} \cup \bm{z})
\Big\}
=
H_S(x_1,\dots,x_N; \bm{z}).
\end{align*}
\end{thm}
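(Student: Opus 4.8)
The plan is to analyze the partition function $F_{2^{N-K} \cup S}(x_1,\dots,x_N;\bm{u} \cup \bm{z})$ column-by-column, and to show that the limit $u_1,\dots,u_{N-K} \to \infty$ forces the $N-K$ extra columns (those carrying spectral parameters $u_1,\dots,u_{N-K}$ and top boundary states $2$) to ``freeze'' into a configuration that reconstructs exactly the partition function defining $H_S$. The first step is to recall the explicit vertex weights $W_{z/x}(i,j;k,\ell)$ and examine their behaviour as the column spectral parameter tends to infinity, i.e. as the ratio $u/x \to \infty$ for each row variable $x$. Each vertex weight is a rational function of $z/x$; sending the column parameter to infinity picks out the leading behaviour in this ratio. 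The key computation is to identify, for each admissible vertex type, the leading power of the column parameter, and to determine which local configurations survive (i.e. contribute at the dominant order) and which are suppressed.

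Next, I would argue that in each of the $N-K$ extra columns, the requirement that the top edge carries state $2$ together with the dominant-order selection forces the path content of that column to be essentially rigid. Concretely, I expect that in the limit every extra column must absorb a fixed amount of ``mass'' (two paths, matching its top state $2$) from the horizontal lines in a unique surviving way, so that these columns act as a deterministic sink/source that precisely sets up the boundary data for the remaining columns. The surviving configuration on the extra columns should contribute a pure power of $q$ (and no $x$- or $z$-dependence), which must be checked to be exactly $1$, or to be absorbed into the normalization built into the definition of $H_S$; this bookkeeping of the overall multiplicative constant is something I would track carefully against the partition-function definition of $H_S$ in Section~\ref{sec:IK_stable}. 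After the extra columns are frozen, what remains is a partition function over the original columns $\bm{z}$ with left boundary states determined by what the frozen columns pass on, and top boundary states given by $S$; I would verify that these residual boundary conditions coincide term-by-term with those defining $H_S(x_1,\dots,x_N;\bm{z})$.

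The main obstacle I anticipate is controlling the limit at the level of the \emph{sum} over configurations rather than a single configuration: a priori, different internal configurations of the full lattice could have the same leading order in the $u_i$, and one must show that only those configurations in which the extra columns take their rigid frozen form survive, while all competing configurations are strictly subdominant and vanish in the limit. Establishing this requires a uniform estimate on the power of $u_i$ carried by an arbitrary configuration in terms of its path content in the extra columns, and then an argument that the maximal power is achieved precisely by the frozen configurations. A clean way to organize this is to take the limits \emph{sequentially} (first $u_1 \to \infty$, then $u_2 \to \infty$, and so on, as written in the statement), freezing one column at a time; this reduces the combinatorial bookkeeping to a single-column leading-order analysis at each stage, where the surviving vertices are easy to enumerate. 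The secondary difficulty is purely notational: matching the resulting frozen boundary data against the definition of $H_S$ and confirming the multiplicative constant, which should follow once the single-column limits are understood.
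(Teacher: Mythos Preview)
Your proposal has a genuine gap: the claim that the $N-K$ extra columns ``freeze into a unique surviving configuration'' in the limit $u_i\to\infty$ is false. When the column parameter is sent to infinity, multiple vertex types in each column survive with nonzero (and distinct) weights. For instance, the bottom-left vertex $W_{u_1/x}(0,2;k,\ell)$ has three admissible outputs $(k,\ell)\in\{(0,2),(1,1),(2,0)\}$, and all three have nonvanishing $u_1\to\infty$ limits (equal to $q^{-4}$, $-q^{-4}(1-q)^2(1+q)$, and $-(1-q^2)(1+q)q^{-3}$ respectively). So the left block is not rigid; it is a genuine partition function over many configurations, and its value depends nontrivially on the states $(I_1,\dots,I_N)$ that it passes to the remaining columns across the interface.

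Relatedly, you seem to expect that after the freeze, the remaining lattice has a \emph{single} fixed left boundary that you can match against ``the definition of $H_S$''. But $H_S$ is \emph{defined} (equation~\eqref{eqstableIK1}) as a weighted sum $\sum_{I:|I|=2K} (-1)^{\mathfrak{c}_1(I)/2}(1-q^{-1})^{\mathfrak{c}_1(I)}q^{-\inv(I)} F_S^I$ over all possible left-boundary vectors $I$. The paper's proof works precisely by inserting this interface sum: it shows that the left block, evaluated at boundary $I$, equals the coefficient $(-1)^{\mathfrak{c}_1(I)/2}(1-q^{-1})^{\mathfrak{c}_1(I)}q^{-\inv(I)}$ times a universal factor $\lim_{\bm u\to\infty}F_{(2^{N-K})}(*;\bm u)$ that is independent of $I$. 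This is established not by freezing but by exchange relations for the degenerate row operators $\mathcal T_{2,a}(*;\infty)$ (Lemma~\ref{qexchangeIKK1}), which allow any $I$ to be reordered to $(0^{N-K},2^K)$ at the cost of exactly those coefficients. Finally, note that the in-text statement (Theorem~\ref{speicalpart1}) actually carries the denominator $F_{(2^{N-K})}(*;\bm u)$; the universal factor is not $1$, so your expectation that the multiplicative constant ``must be checked to be exactly $1$'' would also fail.
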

Their second key property is that they are {\it stable} with respect to their primary alphabet 
$(x_1,\dots,x_N)$:
\begin{prop}[Proposition \ref{prop:IK-stab} in text]
For any 2-string $S \in \mathfrak{s}(2)$, we have
\begin{align}
\label{stability-intro}
H_S(x_1,\dots,x_{N-1},x_N=\infty;\bm{z})
=
\left\{
\begin{array}{ll}
H_S(x_1,\dots,x_{N-1};\bm{z}),
&
\quad
|S| \leq 2N-2,
\\ \\
0,
&
\quad
{\rm otherwise},
\end{array}
\right.
\end{align}
where the size of the primary alphabet on the right hand side of \eqref{stability-intro} is reduced to $N-1$.
\end{prop}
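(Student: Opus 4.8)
The plan is to work directly with the lattice representation of $H_S$ from Section~\ref{sec:IK_stable} and to send the spectral parameter $x_N$ of the top row (the row which carries $x_N$) to infinity, so that every ratio $z_j/x_N \to 0$ simultaneously. The heart of the argument is to compute the degenerate vertex weights $\lim_{z/x \to 0} W_{z/x}(i,j;k,\ell)$ for all nineteen vertex types and to show that, in this limit, the top row becomes \emph{frozen}: given its fixed boundary data on the left and right edges, the limiting weights vanish except for a single admissible assignment of its horizontal edge states. Once this is established, I would peel the frozen top row off and compare what remains with the $(N-1)$-row lattice defining $H_S(x_1,\dots,x_{N-1};\bm{z})$.

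I expect the frozen row to behave in one of two ways according to how many paths must still escape through the top boundary, which is governed by $|S|=2K$. There is slack in the conservation count precisely when $K \le N-1$, i.e. $|S| \le 2N-2$; in that regime the limiting weights allow the paths carried by the top row to be routed so that the row decouples cleanly, and the remaining lattice is exactly that of $H_S$ on the reduced alphabet $(x_1,\dots,x_{N-1})$. This gives the first branch. For the only remaining even value $|S|=2N$ there is no slack, and completing the frozen row consistently would force a vertex whose weight tends to $0$ as $z/x \to 0$; hence the entire partition function vanishes, giving the second branch.

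As an independent check, and to clarify the source of the vanishing, I would run the argument through the reduction formula of Theorem~\ref{speicalpart1}, writing $H_S = \lim_{u_1,\dots,u_{N-K}\to\infty} F_{2^{N-K}\cup S}$. Sending $x_N \to \infty$ then introduces one extra row ``at infinity'' alongside the $N-K$ frozen $2$-columns at infinity. One expects a row at infinity to annihilate against a single frozen $2$-column, collapsing $F_{2^{N-K}\cup S}$ on $N$ rows to $F_{2^{N-1-K}\cup S}$ on $N-1$ rows, which is precisely $H_S(x_1,\dots,x_{N-1};\bm{z})$. This pairing is available exactly when at least one frozen column is present, i.e. $N-K \ge 1$, equivalently $|S| \le 2N-2$; when $K=N$ there is no column to absorb the extra row at infinity, matching the vanishing branch.

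The main obstacle will be the explicit analysis of the degenerate top row in the three-state setting. Because each edge may carry $0$, $1$ or $2$ paths, there are many more vertex configurations than in the six-vertex stability argument of \cite{Borodin17}, and I must track how the doubly occupied edges propagate along the row in order to confirm that the surviving frozen configuration is unique. The most delicate point is the vanishing case $|S|=2N$, where the argument hinges on identifying the precise limiting weight that is forced to zero; I anticipate that the row--column annihilation picture above is the cleanest way to isolate this weight, while justifying the interchange of the $x_N\to\infty$ and $u_i\to\infty$ limits in that approach will require a separate, routine but careful, estimate.
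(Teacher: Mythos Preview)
Your strategy is sound and will work, but the paper's proof is shorter because it exploits one simplification you have not used: by the already-established symmetry of $H_S$ in $(x_1,\dots,x_N)$, one may send $x_1\to\infty$ rather than $x_N\to\infty$, so that the degenerate row is the \emph{bottom} row. The bottom row has all incoming vertical edges equal to $0$, hence every vertex there is of the form $W_{z/x}(0,j;k,\ell)$; among these, precisely the three ``path-depositing'' weights $W(0,1;1,0)$, $W(0,2;1,1)$, $W(0,2;2,0)$ tend to $0$ as $x\to\infty$, while the others tend to $1$. Since the right edge is $0$, a nonzero left edge $I_1>0$ forces at least one such vertex, and the contribution vanishes. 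With $I_1=0$ the row freezes to all $W_0(0,0;0,0)=1$ and peels off with unit weight, and crucially $\mathfrak{c}_1(0,I_2,\dots,I_N)=\mathfrak{c}_1(I_2,\dots,I_N)$ and $\inv(0,I_2,\dots,I_N)=\inv(I_2,\dots,I_N)$, so the $I$-sum defining $H_S$ collapses directly to the $(N-1)$-variable sum. The vanishing branch $|S|=2N$ is then immediate: conservation forces $I=(2^N)$, so $I_1=0$ is impossible.

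Your top-row version can be carried through, but it is not as clean as you suggest. The frozen top row does \emph{not} have unit weight: with $I_N=0$ the surviving vertices are $W_0(S_j,0;S_j,0)=q^{2S_j}$, giving a total factor $q^{4K}$. This is compensated by $\inv(I_1,\dots,I_{N-1},0)=\inv(I_1,\dots,I_{N-1})+4K$, so the $q^{-\inv(I)}$ weight in the definition of $H_S$ absorbs it. The point is that your plan does not currently mention the $I$-sum or the coefficients $(-1)^{\mathfrak{c}_1(I)/2}(1-q^{-1})^{\mathfrak{c}_1(I)}q^{-\inv(I)}$ at all, and without tracking how $\mathfrak{c}_1$ and $\inv$ behave under the removal of a coordinate of $I$, the argument is incomplete. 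Your alternative route via Theorem~\ref{speicalpart1} is a reasonable heuristic but, as you note, the limit interchange is the whole content; the paper does not go this way.
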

The property \eqref{stability-intro} places the functions $H_S$ on a similar footing to the stable spin Hall--Littlewood functions defined in \cite{BorodinWheeler21}. Like the latter objects, the functions $H_S$ pair together in their own Cauchy identity; this is their third key property: 
\begin{thm}[Theorem \ref{thm:IK-stable-cauchy} in text]
Fix alphabets $(x_1,\dots,x_N)$, $(y_1,\dots,y_M)$ and an infinite alphabet $\bm{z} = (z_1,z_2,z_3,\dots)$ which satisfy the constraints \eqref{eqres1-inhom}, \eqref{eqres2-inhom}. We then have the summation identity
\begin{align}
\label{cauchy-HH-intro}
\sum_{S \in \mathfrak{s}(2)}
(-1)^{|S|/2}
q^{4MN-|S|^2}
c_S(q)
H_{S}(x_1,\dots,x_N;\bm{z})
H_{S}(y_1,\dots,y_M;q^{-3}\bm{z}^{-1}) 
=
\prod_{i=1}^{N}
\prod_{j=1}^{M}
\frac{(1-q^2x_iy_j)(1-q^3x_iy_j)}{(1-x_iy_j)(1-qx_iy_j)}, 
\end{align}
where the sum is taken over all 2-strings and the coefficient $c_S(q)$ in the summand is given by \eqref{eqcsfact1}.
\end{thm}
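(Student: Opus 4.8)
The plan is to deduce the factorized identity \eqref{cauchy-HH-intro} from the skew Cauchy identity \eqref{cauchy-FG-intro} by sending a family of auxiliary column spectral parameters to infinity; the conceptual point is that this limit collapses the skew factor $F_{(2^N)}$ on the right-hand side to the constant $1$, thereby de-skewing the identity. Concretely, I would start from Theorem \ref{thm:IK-cauchy} applied with the same primary alphabets $(x_1,\dots,x_N)$ and $(y_1,\dots,y_M)$, but with the column alphabet enlarged to $\bm{u}\cup\bm{z}$, where $\bm{u}=(u_1,\dots,u_N)$ is prepended to $\bm{z}$. I would then take the iterated limit $u_1,\dots,u_N\to\infty$ on both sides and read off the result.

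On the right-hand side, the only $\bm{u}$-dependence sits in the factor $F_{(2^N)}(x_1,\dots,x_N;\bm{u}\cup\bm{z})$, since the Cauchy kernel depends only on $x_i,y_j$. Applying Theorem \ref{speicalpart1} in the extreme case $K=0$, $S=\emptyset$ gives $\lim_{\bm{u}\to\infty}F_{(2^N)}(x_1,\dots,x_N;\bm{u}\cup\bm{z})=H_{\emptyset}(x_1,\dots,x_N;\bm{z})$, and the empty-string function $H_{\emptyset}$ is the all-zero partition function, whose single configuration has weight $1$ because conservation plus \eqref{sum-to-unity-intro} force $W_{z/x}(0,0;0,0)=1$. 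Hence the whole right-hand side tends to $q^{N(2N+1)}\prod_{i,j}\frac{(1-q^2x_iy_j)(1-q^3x_iy_j)}{(1-x_iy_j)(1-qx_iy_j)}$, matching the target kernel up to the scalar $q^{N(2N+1)}$, which must be absorbed by the left-hand side.

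On the left-hand side I would evaluate the limit term by term. For the $F$-factor, Theorem \ref{speicalpart1} identifies $\lim_{\bm{u}\to\infty}F_T(x_1,\dots,x_N;\bm{u}\cup\bm{z})$ with $H_S(x_1,\dots,x_N;\bm{z})$, where the surviving $T$ are exactly those whose restriction to the auxiliary columns is a leading block of $2$'s with the remaining auxiliary columns empty, and $S$ is the restriction of $T$ to the $\bm{z}$-columns, so that $T=2^{N-K}\cup S$ with $|S|=2K\le 2N$; all other $T$ drop out. The $G$-factor requires the analogous reduction, but now its column parameters are $q^{-3}(\bm{u}\cup\bm{z})^{-1}$, so $u_i\to\infty$ drives the first $N$ of them to $0$; recasting $G$ as an $F$-type partition function through the flip symmetry used in the proof of Theorem \ref{thm:IK-cauchy} and then applying the same limit, I expect $\lim_{\bm{u}\to\infty}G_T(y_1,\dots,y_M;q^{-3}(\bm{u}\cup\bm{z})^{-1})$ to equal $H_S(y_1,\dots,y_M;q^{-3}\bm{z}^{-1})$ up to an explicit sign and monomial. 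After dividing both sides by $q^{N(2N+1)}$ and using the factorized form \eqref{eqcsfact1} of $c_T(q)=c_{2^{N-K}\cup S}(q)$, the surviving per-$S$ coefficient is the product of $c_{2^{N-K}\cup S}(q)$, the two limit prefactors, and $q^{-N(2N+1)}$, which must reduce to exactly $(-1)^{|S|/2}q^{4MN-|S|^2}c_S(q)$; at that point the identity reads precisely as \eqref{cauchy-HH-intro}.

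The routine parts are the $F$-reduction, supplied directly by Theorem \ref{speicalpart1}, and the collapse of the skew factor. The main obstacle will be the $G$-factor: one must establish its behaviour as the column parameters tend to $0$ -- which is not literally the regime covered by Theorem \ref{speicalpart1} -- and carefully account for the sign $(-1)^{|S|/2}$ and the power $q^{-|S|^2}$ that this degenerate limit produces, with the row-count $M$ entering only here. Equally delicate is the $q$-power bookkeeping: verifying that $c_{2^{N-K}\cup S}(q)/c_S(q)$, the two limit prefactors, and the leftover $q^{-N(2N+1)}$ assemble into the clean weight $(-1)^{|S|/2}q^{4MN-|S|^2}$. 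A final technical point is justifying the interchange of the sum over $S$ with the iterated limit, together with the vanishing of the non-saturated terms $T$; both are controlled once one knows the support is finite, namely $|S|\le 2\min(N,M)$, which follows from the stability property \eqref{stability-intro}.
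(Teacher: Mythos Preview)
Your overall strategy---enlarge the column alphabet to $\bm{u}\cup\bm{z}$ with $\bm{u}=(u_1,\dots,u_N)$ and then send $\bm{u}\to\infty$---is exactly the paper's, but your execution contains a concrete error and a structural gap. The error is in your reading of Theorem \ref{speicalpart1}: that theorem asserts
\[
H_S(x_1,\dots,x_N;\bm{z})=\lim_{\bm{u}\to\infty}\frac{F_{(2^{N-K},S)}(x_1,\dots,x_N;\bm{u}\cup\bm{z})}{F_{(2^{N-K})}(*;\bm{u})},
\]
so in the extreme case $K=0$ one has $\lim_{\bm{u}\to\infty}F_{(2^N)}(x;\bm{u}\cup\bm{z})=\lim_{\bm{u}\to\infty}F_{(2^N)}(*;\bm{u})$, a nontrivial $q$-dependent constant, \emph{not} $H_\emptyset=1$. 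The paper accordingly divides both sides by $F_{(2^N)}(*;\bm{u})$ \emph{before} taking the limit; without that normalization your right-hand side is off by this constant, and the same constant contaminates every term on the left.

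The structural gap is your term-by-term treatment of the left-hand side. You assert that only $T$ of the form $2^{N-K}\cup S$ survive the limit, but individual $\lim_{\bm{u}\to\infty}F_T$ do \emph{not} vanish for other $T$ (all vertex weights have finite $y\to\infty$ limits), and even for the ``good'' $T$ the normalization mismatch between $F_{(2^N)}(*;\bm{u})$ and $F_{(2^{N-K})}(*;\bm{u})$ obstructs a direct identification with $H_S$. The paper avoids this entirely: it starts from the equivalent form \eqref{eqcuachyidIK1} involving $\dotr{G}_S$, represents the whole sum $\sum_T F_T\,\dotr{G}_T$ as a single joint partition function $K_{N,M}$, and then expands over the intermediate edge states $(I_1,\dots,I_N)$ and $(\bar{J}_M,\dots,\bar{J}_1)$ at the interface between the $\bm{u}$-columns and the $\bm{z}$-columns. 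The leftmost $N$ columns are evaluated using the invariant $\chi$ of \eqref{IK-chi-inv}--\eqref{IK-chi-inv2}, yielding $q^{4MN}\cdot\lim F_{(2^N)}(*;\bm{u})$ times explicit $(I,J)$-dependent factors; the domain-wall constant then cancels against the normalization. The rightmost columns, after applying the tower flip relation \eqref{IK-tower-sym} to the dotted rows, become $F^I_S(x;\bm{z})\,F^J_S(y;q^{-3}\bm{z}^{-1})$, and summing over $I,J$ assembles precisely $H_S\times H_S$ with the coefficient $(-1)^{|S|/2}q^{4MN-|S|^2}c_S(q)$. Your anticipated obstacles---the behaviour of $G$ as column parameters degenerate, and the emergence of $q^{4MN-|S|^2}$---are both resolved by this joint-partition-function analysis rather than by separate limits of $F_T$ and $G_T$.
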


Equation \eqref{cauchy-HH-intro} achieves our aim of writing a Cauchy summation identity with a factorized right hand side; its proof follows by taking a relatively routine limit of \eqref{cauchy-FG-intro}.

\section{Symmetrization identity}
\label{ssec:intro-symmetrization}

Although the partition function \eqref{F-picture-intro} provides an explicit construction of $F_S(x_1,\dots,x_N;\bm{z})$, it is far from being a closed formula. Motivated by the goal of finding a {\it symmetrization identity} for the functions $F_S(x_1,\dots,x_N;\bm{z})$, we were led to the following definition and theorem.

\begin{defn}[Definition \ref{defn:M-set} in text]\label{defn:M-set-intro}
Fix two integers $n,N \geq 1$ and let $S \in \mathfrak{s}(n)$ be an $n$-string. An $n$-permutation matrix of size $N$ and profile $S$ is an $N \times \infty$ matrix with all row sums equal to $n$, and $i$-th column sum equal to $S_i$ for all $i \in \mathbb{N}$. We let $\mathfrak{M}_n(N,S)$ denote the set of all such matrices\footnote{It is a potentially interesting problem to compute the cardinality of $\mathfrak{M}_n(N,S)$, although we will not report on this in the current text.}.
\end{defn} 

\begin{rmk}
Note that $\mathfrak{M}_1(N,1^N)$ is just the standard $N$-dimensional representation of $\mathfrak{S}_N$.
\end{rmk}

\begin{ex}
Let $n=N=2$ and $S=(1,0,2,1)$, where we truncate $S$ by deleting its infinite tail of zeros. The set $\mathfrak{M}_2(2,S)$ consists of 4 matrices:
\begin{align*}
\mathfrak{M}_2(2,S)
=
\left\{
\begin{pmatrix}
0 & 0 & 2 & 0
\\
1 & 0 & 0 & 1
\end{pmatrix},
\begin{pmatrix}
1 & 0 & 0 & 1
\\
0 & 0 & 2 & 0
\end{pmatrix},
\begin{pmatrix}
1 & 0 & 1 & 0
\\
0 & 0 & 1 & 1
\end{pmatrix},
\begin{pmatrix}
0 & 0 & 1 & 1
\\
1 & 0 & 1 & 0
\end{pmatrix}
\right\}.
\end{align*}
\end{ex}

\begin{thm}[Theorem \ref{thm:symmetriz-IK} in text]\label{thm:sym-intro}
Fix an integer $N \geq 1$ and let $S \in \mathfrak{s}(2)$ be any 2-string such that $|S|=2N$. We then have
\begin{align}
\label{sym-IK-intro}
F_S(x_1,\dots,x_N;\bm{z})
=
\sum_{\sigma \in \mathfrak{M}_2(N,S)}
\prod_{1 \leq i<j \leq N} \Delta_{\sigma(i),\sigma(j)}(x_i,x_j;\bm{z})
\prod_{i=1}^{N}
F_{\sigma(i)}(x_i;\bm{z}),
\end{align}
where $\sigma(i)$ denotes the $i$-th row of the 2-permutation matrix $\sigma$, $F_{\sigma(i)}(x_i;\bm{z})$ is a single-row ($N=1$) function \eqref{F-picture-intro}, and
$\Delta_{U,V}(x,y;\bm{z}) : \mathfrak{s}(2) \times \mathfrak{s}(2) \rightarrow \mathbb{Q}(x,y;\bm{z})$ is an explicit rational function given by 
\begin{align*}
\Delta_{U,V}(x,y;\bm{z}) = 
\begin{cases} 
\smallskip
\dfrac{(y - q^2x)(y - q^3x)}{(y - x)(y - q x)}, 
&
\begin{tabular}{c||c|c|c}
    U & 2 & \\ \hline
    V & & 2 &
\end{tabular},\ \
\begin{tabular}{c||c|c|c}
    U & 2 &  &\\ \hline
    V & & 1& 1
\end{tabular},\ \
\begin{tabular}{c||c|c|c}
    U & 1 & 1 &\\ \hline
    V & & & 2
\end{tabular},\ \
\begin{tabular}{c||c|c|c|c}
    U & 1 & 1 & & \\ \hline
    V &  &  & 1 & 1  
\end{tabular}
\\
\smallskip
\dfrac{(y - q^2x)(q^2y - x)}{(y - x)^2}, 
&
\begin{tabular}{c||c|c|c}
    U & 1 & & 1 \\ \hline
    V & & 2 & 
\end{tabular},\ \
\begin{tabular}{c||c|c|c|c}
    U & 1 & & & 1 \\ \hline
    V & & 1 & 1 & 
\end{tabular}
\\
\smallskip
\dfrac{(y-q^2x)^2 (x-qy)}
{(y - x)^2 (y-q x)},
&
\begin{tabular}{c||c|c|c|c}
    U & 1 &  & 1 & \\ \hline
    V &  & 1 & & 1 
\end{tabular}
\end{cases}
\end{align*}
\begin{align*}
\Delta_{U,V}(x,y;\bm{z}) = \begin{cases}
\dfrac{(q^2x-y)(x y+q^2x y-q^3y z_k-q^3x z_k)}{q(1-q)(y-x)(y-qx)z_k}, \quad  &\begin{tabular}{c||c|c|c}
    U & 1 & 1 &  \\ \hline
    V & 1 &  & 1 \\ \hdashline[1pt/1pt]
    & k & &
\end{tabular}
\medskip
\\
\dfrac{(y-q^2x)(xy+q^2xy-q^2yz_k-q^3xz_k)}{q(1-q)(y-x)^2z_k}, 
&\begin{tabular}{c||c|c|c}
    U & 1 & 1 &  \\ \hline
    V &  & 1 & 1 \\ \hdashline[1pt/1pt]
    &  & k & 
\end{tabular}
\medskip
\\
\dfrac{(q^2x-y)(xy+q^2xy-q^2yz_k-q^2xz_k)}{(1-q)(y-x)(y-qx)z_k},
&\begin{tabular}{c||c|c|c}
    U & 1 &  & 1 \\ \hline
    V &  & 1 & 1 \\ \hdashline[1pt/1pt]
    &  & & k
\end{tabular}
\end{cases}
\end{align*}

\begin{align*}
\Delta_{U,V}(x,y;\bm{z}) = 
\frac{(xy+ q^2xy-q^3yz_k-q^3xz_k)(xy+q^2xy-q^2yz_{\ell}-q^2xz_{\ell})}{q(1-q)^2(y-qx)(x-qy)z_kz_{\ell}}, \quad\quad 
\begin{tabular}{c||c|c}
    U & 1 & 1  \\ \hline
    V & 1 & 1 \\ \hdashline[1pt/1pt]
    & k & $\ell$ 
\end{tabular}
\end{align*}
where any case of $\Delta_{U,V}(x,y;\bm{z})$ not listed above is determined by imposing the symmetry
\begin{align}
\label{eq:delta-sym-intro}
\Delta_{U,V}(x,y;\bm{z})
=
\Delta_{V,U}(y,x;\bm{z}).
\end{align}
Here we have made use of the following shorthand, for any two rows $U=(U_1,U_2,\dots)$, $V=(V_1,V_2,\dots)$ of $\sigma$ and $a,b \in \{1,2\}$:
\begin{align*}
&
\begin{tabular}{c||c|c|c}
    U & a & &\\ \hline
    V & & b &
\end{tabular}
\qquad
\text{ indicates that } a \text{ in } U \text{ occurs before } b \text{ in } V,
\\
&
\begin{tabular}{c||c|c|c}
    U & & a & \\ \hline
    V & b & &
\end{tabular}
\qquad
\text{ indicates that } a \text{ in } U \text{ occurs after } b \text{ in } V,
\\
&
\begin{tabular}{c||c|c|c}
    U & a &\phantom{b} &\\ \hline
    V & b & &
\end{tabular}
\qquad
\text{ indicates that } a \text{ in } U \text{ occurs at the same position as } b \text{ in } V,
\\
&
\begin{tabular}{c||c|c|c}
     U & a & & \\ \hline
     V &  & b &\\
     \hdashline[1pt/1pt]
    & k & $\ell$ &
\end{tabular}
\qquad
\text{ indicates that } U_k=a \text{ and } V_{\ell}=b, \text{ with } 1 \leq k < \ell.
\end{align*}
\end{thm}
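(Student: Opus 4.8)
The plan is to recast $F_S$ in the language of the row-to-row transfer (monodromy) matrix and to build up the lattice \eqref{F-picture-intro} one row at a time. Writing $C(x) := T_{2,0}(x)$ for the single-row operator obtained by fixing the horizontal boundary states of a row to $2$ on the left and $0$ on the right and summing over all internal horizontal edges, the partition function becomes the vacuum matrix element
$$
F_S(x_1,\dots,x_N;\bm{z}) = \langle S|\, C(x_N)\cdots C(x_1)\,|\Omega\rangle,
$$
where $|\Omega\rangle$ is the all-zero column state read along the bottom boundary and $\langle S|$ reads off the top boundary. Since each application of $C(x)$ lowers the horizontal charge by $2$, it deposits exactly two units of vertical path into the columns; the two extreme outcomes --- a single $2$ in one column, or two $1$'s in two columns --- are precisely the weight-two profiles indexing the rows of a $2$-permutation matrix, which is what makes $\mathfrak{M}_2(N,S)$ the correct summation set.

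First I would settle the two degenerate cases. For $N=1$ the operator $C(x)$ acts on the vacuum and $F_U(x;\bm{z})=\langle U|C(x)|\Omega\rangle$ is computed directly as the product of vertex weights along a single row, yielding the manifestly factorized single-row functions appearing on the right-hand side of \eqref{sym-IK-intro}. The substantive base case is $N=2$, where one must show
$$
F_S(x_1,x_2;\bm{z})=\sum_{\sigma\in\mathfrak{M}_2(2,S)}\Delta_{\sigma(1),\sigma(2)}(x_1,x_2;\bm{z})\,F_{\sigma(1)}(x_1;\bm{z})\,F_{\sigma(2)}(x_2;\bm{z}).
$$
Here the explicit rational functions $\Delta_{U,V}$ are extracted by matching the two-row partition function, configuration by configuration, against the product $F_U(x_1)F_V(x_2)$ for each admissible way two weight-two profiles $U,V$ can interleave across the columns. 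The case table in the statement is exactly the enumeration of these interleavings: which profile reaches a given column first, whether two $1$'s collide in a single column, and the column-indexed $z_k,z_\ell$ cases.

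With the $N=2$ relation in hand, the engine for general $N$ is an exchange relation for the row operators coming from the $RTT$ form of the Yang--Baxter equation, $\check{R}(x,y)\,\bigl(T(x)\otimes T(y)\bigr)=\bigl(T(y)\otimes T(x)\bigr)\,\check{R}(x,y)$. Extracting the components that act between the vacuum and weight-two profiles turns this into the operator identity underlying the $N=2$ decomposition, with coefficients $\Delta_{U,V}$. I would then induct on $N$: applying $C(x_N)$ to the already-symmetrized $(N-1)$-row expression and using the exchange relation to commute the two new creation events past the existing ones, with each crossing contributing a factor $\Delta$, reorganizes the result into a sum over $\mathfrak{M}_2(N,S)$. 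The already-established symmetry of $F_S$ in $(x_1,\dots,x_N)$, together with $\Delta_{U,V}(x,y;\bm{z})=\Delta_{V,U}(y,x;\bm{z})$, guarantees that the reordering is consistent. As a byproduct, expressing $C(x)$ and its action in the basis that makes these exchange relations transparent yields the totally spatially symmetric monodromy matrix elements advertised in item (4) of the introduction.

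The main obstacle is concentrated in the base case $N=2$ and in the consistency of the induction. The first difficulty is the case analysis defining $\Delta_{U,V}$: two weight-two profiles admit many relative arrangements, and the delicate ones are those in which a $1$ from one row and a $1$ from the other occupy a common column, so that some vertical edge carries two paths --- these are exactly the $z_k$- and $z_k,z_\ell$-dependent entries, where the residual dependence on the collision columns' parameters fails to factor into the single-row functions and must be tracked separately. The second difficulty is showing that the pairwise exchange factors compose associatively, so that commuting three or more row operators into a common order is independent of the sequence of transpositions performed; this braid-type consistency is what makes the induction close, and it is here that the full Yang--Baxter equation, rather than mere pairwise symmetry of $F_S$, is essential.
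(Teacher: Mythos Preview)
Your approach has a genuine gap in the inductive step. You propose to apply $C(x_N)$ to an already-symmetrized $(N-1)$-row expression and commute it past the existing operators, with each crossing contributing a factor $\Delta$. But this does not work in the Izergin--Korepin model: the $C$-operators already commute (Proposition~\ref{prop:Tcommute}), so there is no nontrivial exchange to perform, and no mechanism by which a single pass of $C(x_N)$ through the existing structure produces a product of pairwise $\Delta_{\sigma(i),\sigma(j)}$ factors of the required form. The $\Delta$ factors depend on the full relative pattern of the nonzero entries of two rows of $\sigma$ (and in several cases on the column parameters $z_k,z_\ell$ where collisions occur), and this combinatorial data is not captured by any row-operator exchange relation in the Yang--Baxter algebra. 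The paper explicitly flags that the techniques which make the six-vertex proof routine break down here: the polynomial weights have double the degree, and the row-by-row algebraic Bethe ansatz picture does not yield a factorized summand.

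The paper's proof takes a completely different route. Rather than build the lattice row by row, it builds it \emph{column by column} using explicitly constructed twisted column operators $\Gamma_k(z)\in{\rm End}(V_1\otimes\cdots\otimes V_N)$ (equations \eqref{gam0}--\eqref{gam2}), designed so that each summand in $\Gamma_k(z)$ corresponds to one column of a $2$-permutation matrix and the diagonal dressing factors multiply out to the $\Delta$'s. The equality $F_S=\llangle 2^N\|\Gamma_{S_1}(z_1)\Gamma_{S_2}(z_2)\cdots\|0^N\rrangle$ is then proved not by induction on $N$ but by checking a uniquely-determining list of properties (exchange relations in the $z$-variables plus Lagrange-interpolation recursions in $z_1$; Propositions~\ref{prop:IK-exch}--\ref{prop:IK-properties} and Theorem~\ref{thm:unique}). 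The symmetrization identity then drops out by expanding the $\Gamma$'s. Your remark about a basis rendering the monodromy matrix elements spatially symmetric is in the right spirit, but it is the column operators, not the row operators $C(x)$, that are twisted; and the paper constructs them directly rather than via a known Drinfeld twist, precisely because no factorizing $F$-matrix for $\mathcal{U}_q(A^{(2)}_2)$ was available.
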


\begin{rmk}
In view of the symmetry property \eqref{eq:delta-sym-intro} of the bivariate factors appearing in \eqref{sym-IK-intro}, it is easy to see that $F_S(x_1,\dots,x_N;\bm{z})$ is invariant under the pairwise swaps $x_i \leftrightarrow x_j$. Hence, \eqref{sym-IK-intro} manifestly exhibits the symmetry of $F_S(x_1,\dots,x_N;\bm{z})$ in its first alphabet.
\end{rmk}

Equation \eqref{sym-IK-intro} may appear somewhat unusual compared with traditional symmetrization identities, however it is quite easy to cast the corresponding result for the six-vertex model in a completely analogous way:
\begin{thm}[Theorem \ref{thm:F6sym} in text]
Fix an integer $N \geq 1$ and let $S \in \mathfrak{s}(1)$ be any 1-string such that $|S| = N$. There holds
\begin{align}
\label{6v-sym-intro}
F^{6{\rm V}}_S(x_1,\dots,x_N;\bm{z})
=
\sum_{\sigma \in \mathfrak{M}_1(N,S)}
\prod_{1 \leq i<j \leq N} \Delta^{6{\rm V}}_{\sigma(i),\sigma(j)}(x_i,x_j)
\prod_{i=1}^{N}
F^{6{\rm V}}_{\sigma(i)}(x_i;\bm{z}),
\end{align}
where $\sigma(i)$ denotes the $i$-th row of the 1-permutation matrix $\sigma$, and where we have defined
\begin{align*}
\Delta^{6{\rm V}}_{U,V}(x,y)
=
\left\{
\begin{array}{ll}
\dfrac{y-qx}{y-x},
&
\qquad
\begin{tabular}{c||c|c|c}
    U & 1 & \\ \hline
    V & & 1 &
\end{tabular}
\quad
\text{(1 in $U$ occurs before 1 in $V$)},
\\ \\
\dfrac{x-qy}{x-y},
&
\qquad
\begin{tabular}{c||c|c|c}
    U &  & 1 \\ \hline
    V & 1 &  &
\end{tabular}
\quad
\text{(1 in $U$ occurs after 1 in $V$)},
\end{array}
\right.
\end{align*}
for any $(U,V) \in \mathfrak{s}(1) \times \mathfrak{s}(1)$ such that $|U| = |V| = 1$.
\end{thm}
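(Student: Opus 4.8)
The plan is to recognize $F^{6{\rm V}}_S(x_1,\dots,x_N;\bm{z})$ as a monodromy matrix element and to extract \eqref{6v-sym-intro} from the two-particle exchange relations of the six-vertex $R$-matrix. First I would unwind the combinatorics of Definition~\ref{defn:M-set-intro} in the case $n=1$: since every row of a $1$-permutation matrix has row sum $1$ and the $i$-th column sum equals $S_i \in \{0,1\}$, a matrix $\sigma \in \mathfrak{M}_1(N,S)$ is nothing but a bijection between the rows $\{1,\dots,N\}$ and the $N$ columns at which $S$ has a $1$; thus $\mathfrak{M}_1(N,S) \cong \mathfrak{S}_N$. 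The single-row factor $F^{6{\rm V}}_{\sigma(i)}(x_i;\bm{z})$ is then the \emph{bare} weight of one lattice path that enters row $i$ from the left and turns upward at the column prescribed by $\sigma(i)$: a product of vertex weights over the columns it traverses, which I would write down in closed form.

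Next I would read $F^{6{\rm V}}_S$ as $\langle S |\, \mathbb{C}(x_N)\cdots\mathbb{C}(x_1)\,| \emptyset \rangle$, where $\mathbb{C}(x)$ is the row-to-row creation operator (the monodromy element with left state $1$ and right state $0$), $| \emptyset \rangle$ the all-$0$ bottom state and $\langle S |$ the projection onto the top configuration $S$. A single $\mathbb{C}(x)$ applied to $| \emptyset \rangle$ reproduces the single-row functions, so the content of \eqref{6v-sym-intro} is the rule for adding further paths. The decisive input is the $N=2$ case, which I would establish by the finite computation of the two-row partition function: for two paths exiting at distinct columns, the weighted sum over the admissible routings factors as the bare non-interacting product times a scalar, which I would identify with $\Delta^{6{\rm V}}_{U,V}(x,y) = \frac{y-qx}{y-x}$ or $\frac{x-qy}{x-y}$ according to the relative order of the exit columns. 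That this scalar depends only on the two spectral parameters and the relative order — not on where along the lattice the interaction occurs — is precisely the Yang–Baxter equation.

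For general $N$ I would iterate the two-body exchange, equivalently commute the operators $\mathbb{C}(x_i)$ through one another, so that each path resolves into its own column while accumulating one factor $\Delta^{6{\rm V}}_{\sigma(i),\sigma(j)}(x_i,x_j)$ per pair $i<j$. The structural fact I rely on is \emph{factorized scattering}: the total factor attached to a configuration depends only on the final permutation $\sigma$, not on the sequence of elementary exchanges producing it, and this consistency is guaranteed by the braid/Yang–Baxter relation (the same relation underlying the six-vertex analogue of Theorem~\ref{thm:IK-FG-sym}). Assembling these pieces, the sum over routings organizes into $\sum_{\sigma \in \mathfrak{M}_1(N,S)} \prod_{i<j}\Delta^{6{\rm V}}_{\sigma(i),\sigma(j)}(x_i,x_j)\prod_i F^{6{\rm V}}_{\sigma(i)}(x_i;\bm{z})$, which is \eqref{6v-sym-intro}; the relation $\Delta^{6{\rm V}}_{U,V}(x,y)=\Delta^{6{\rm V}}_{V,U}(y,x)$ then makes the right-hand side manifestly symmetric, matching the known symmetry of the left-hand side.

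The main obstacle is the $N \geq 3$ consistency: making rigorous that the pairwise factors multiply without cross-terms and that the summation over internal crossings genuinely factorizes over pairs. Rather than verifying factorized scattering by hand, the clean route is to compute the creation-operator product explicitly and show that the monodromy matrix elements are totally spatially symmetric in a suitable basis — the six-vertex shadow of the byproduct advertised for the IK model — from which \eqref{6v-sym-intro} follows by reading off coefficients. Secondary care is needed for the boundary cases of the relative-order convention (strictly-before versus strictly-after, the coincident-column case being excluded here since $S \in \mathfrak{s}(1)$ forbids two paths in one column) and for tracking normalizations, so that the bare single-row products carry no spurious prefactors.
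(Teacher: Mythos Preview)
Your approach is genuinely different from the paper's. The paper does \emph{not} compute the product $\mathcal{C}(x_1)\cdots\mathcal{C}(x_N)$ directly via commutation relations or an $F$-basis. Instead it proceeds by uniqueness: it first sets up (Propositions~\ref{prop:6-exch} and~\ref{prop:6-properties}) a list of properties that determine $F_S$ uniquely --- exchange relations in the $z$-variables that reduce arbitrary $S$ to the domain-wall string $(1^N)$, together with the Korepin--Izergin recursion in $z_1$ (degree bound, vanishing at $z_1=0$, recursion at $z_1=x_N$, and the $N=1$ initial condition). It then checks, on the explicit symmetrization sum \eqref{symformulasix1-SN}, that each of these properties holds. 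The exchange relation localizes to a single one-row factor $F_{e_{k_i}}(x_{\sigma(i)};\bm{z})$ in each summand; the recursion at $z_1=x_N$ kills all terms with $\sigma(1)\neq N$ and peels off one row and one column.

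Your route --- compute the matrix element directly, ultimately via a Drinfeld twist / $F$-basis argument --- is valid and is indeed the classical one (this is essentially \cite{SantosMaillet00}, which the paper cites). But as you yourself flag, the heuristic ``iterate the two-body exchange'' is not a proof: the $\mathcal{C}$ operators already commute, so commuting them through one another produces nothing, and the claim that the $N$-row partition function factorizes over pairs with the stated $\Delta$-factors is exactly the content to be proved. The $F$-basis construction fills this gap, but you have only gestured at it rather than carried it out. The paper's Lagrange-interpolation method has the advantage of being entirely self-contained and of generalizing to the Izergin--Korepin model (where factorizing twists are not known, cf.\ Section~\ref{sec:19-sym}); your approach, if fully executed, would be more structural and would explain \emph{why} the formula has the shape it does rather than merely verifying it after the fact.
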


Equation \eqref{sym-IK-intro} is the second key result of this paper; it was also by far the most difficult to formulate and prove. The main challenge in generalizing the six-vertex result \eqref{6v-sym-intro} to the IK case is that it is not {\it a priori}\/ clear over what set of objects one should symmetrize, such that summands remain factorized. We initially arrived at a similar formula to \eqref{sym-IK-intro} in the ZF model, by performing an appropriate fusion of the six-vertex formula \eqref{6v-sym-intro}. From there, aided by computer experiments, we were able to conjecture \eqref{sym-IK-intro} in the IK setting. Even with an explicit conjecture in clear view, the proof of \eqref{sym-IK-intro} remained a formidable challenge, since many of the techniques that expedite the proof of the six-vertex result \eqref{6v-sym-intro} break down at the level of the IK model. The main challenge that one faces is that standard Lagrange interpolation techniques are more involved, since partition functions in the IK model (when viewed as polynomials in their spectral parameters) have double the degree of their six-vertex counterparts; as such, one requires twice the number of interpolating points.

Eventually we were able to find a set of Lagrange interpolation based properties for the functions $F_S(x_1,\dots,x_N;\bm{z})$, that uniquely characterize them. In order to show that the formula \eqref{sym-IK-intro} satisfies these properties, it was necessary to find a subtle algebraic reformulation of it in terms of a family of operators called {\it twisted columns}, as we describe in the next subsection.
 
\section{Twisted columns}
\label{ssec:intro-twist}

Our key algebraic tool in the proof of Theorem \ref{thm:sym-intro} is the introduction of a family of three operators $\Gamma_0(z),\Gamma_1(z),\Gamma_2(z) \in {\rm End}(V_1 \otimes \cdots \otimes V_N)$ which live in an $N$-fold tensor product of the three-dimensional space $V \cong \mathbb{C}^3$. These operators depend implicitly on the full alphabet $(x_1,\dots,x_N)$ and the parameter $q$, as well as a single variable $z$; for notational compactness we shall only show dependence on the latter. 

The simplest of these operators is $\Gamma_0(z)$:
\begin{align*}
\Gamma_0(z)
=
\bigotimes_{i=1}^{N}
\begin{pmatrix}
1 & 0 & 0
\\
0 & \dfrac{1-z/x_i}{1-q^2 z/x_i} & 0
\\
0 & 0 & \dfrac{(1-z/x_i)(1-qz/x_i)}{(1-q^2 z/x_i)(1-q^3 z/x_i)}
\end{pmatrix}_i
\in {\rm End}(V_1 \otimes \cdots \otimes V_N).
\end{align*}
The remaining operators $\Gamma_1(z)$ and $\Gamma_2(z)$ are more complicated, but nevertheless fully explicit; see Section \ref{ssec:IK-twist} of the text. Our main result concerning these operators is the following:
\begin{thm}[Proposition \ref{prop:gam-sym} and Theorem \ref{thm:IK-YB-twist} in text]
\label{thm:yb-alg-intro}
For any $k \in \{0,1,2\}$ and $1 \leq i,j \leq N$, the operator $\Gamma_k(z)$ is invariant under simultaneous permutation of the spaces $V_i \otimes V_j$ in which it acts and the variables $(x_i,x_j)$ upon which it depends. Further, $\langle \Gamma_0(z),\Gamma_1(z),\Gamma_2(z) \rangle$ forms a subalgebra of the Yang--Baxter algebra for the Izergin--Korepin model.
\end{thm}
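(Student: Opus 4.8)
The plan is to derive both statements from the Yang--Baxter equation of the IK model, exploiting the fact that the $\Gamma_k(z)$ are twisted columns of the monodromy matrix. Write $T(z)$ for the monodromy matrix with auxiliary space $V_0 \cong \mathbb{C}^3$ and quantum space $V_1 \otimes \cdots \otimes V_N$, built as the ordered product of $R$-matrices $R_{0,i}(z/x_i)$; its auxiliary entries $T_{ab}(z)$, $a,b \in \{0,1,2\}$, generate the Yang--Baxter algebra and obey the $RTT$ relation $R_{00'}(z/w)\,T(z)\,T(w) = T(w)\,T(z)\,R_{00'}(z/w)$ in $V_0 \otimes V_{0'}$. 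Each $\Gamma_k(z)$ is, by construction (Section \ref{ssec:IK-twist}), a fixed auxiliary column of $T(z)$ dressed by diagonal gauge factors of the same shape as $\Gamma_0(z)$.

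For the symmetry statement (Proposition \ref{prop:gam-sym}), I would first reduce to adjacent transpositions $s_i = (i,i+1)$, since these generate $\mathfrak{S}_N$ and the claimed invariance is stable under composition. The standard train argument using the YBE shows that the check-$R$-matrix $\check{R}_{i,i+1}(x_i/x_{i+1})$ intertwines $T(z)$ before and after the swap $x_i \leftrightarrow x_{i+1}$, so each auxiliary column inherits this covariance. Writing $\check{R}_{i,i+1} = P_{i,i+1}\,R_{i,i+1}$, the point is that the diagonal dressing entering the definition of $\Gamma_k(z)$ is chosen precisely so that conjugation by the residual factor $R_{i,i+1}(x_i/x_{i+1})$ acts trivially on $\Gamma_k(z)$. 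This upgrades $\check{R}$-covariance to genuine invariance under the bare permutation $P_{i,i+1}$ together with $x_i \leftrightarrow x_{i+1}$. Everything localizes to the two sites $i,i+1$ (the remaining factors are untouched), so the claim reduces to a two-site ($N=2$) identity, verified directly from the explicit forms of $\Gamma_1(z)$, $\Gamma_2(z)$ and the IK weights; for $\Gamma_0(z)$ the invariance is manifest from its factorized tensor-product form.

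For the subalgebra statement (Theorem \ref{thm:IK-YB-twist}), I would rewrite the $RTT$ relation, restricted to the relevant auxiliary column, in terms of the twisted operators. Expanding $R_{00'}(z/w)$ over the $9$-dimensional auxiliary space $V_0 \otimes V_{0'}$ produces a finite system of quadratic exchange relations among the $\Gamma_a(z)$ and $\Gamma_b(w)$. The goal is to show that every product $\Gamma_a(z)\,\Gamma_b(w)$ can be rewritten as a combination, with coefficients in $\mathbb{Q}(z,w;\bm{z})$, of products $\Gamma_c(w)\,\Gamma_d(z)$ with $c,d \in \{0,1,2\}$; closure under this reordering then shows that the $\Gamma_k(z)$ generate a subalgebra of the full Yang--Baxter algebra. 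As in the symmetry proof, the diagonal dressing is what makes the structure constants rational and guarantees that no generator outside the chosen column reappears.

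The main obstacle is the explicit two-site verification underlying both parts, together with identifying the correct twist. The IK $R$-matrix has three-dimensional auxiliary and quantum spaces and nineteen nonzero weights, so the relevant $9 \times 9$ calculations are considerably heavier than in the six-vertex case; pinning down the single diagonal dressing that simultaneously symmetrizes all three columns $\Gamma_0,\Gamma_1,\Gamma_2$ and, in the second part, checking that the quadratic relations close without generating entries outside $\langle \Gamma_0(z),\Gamma_1(z),\Gamma_2(z)\rangle$, is the delicate step. Computer algebra would most likely be used to confirm these finite identities.
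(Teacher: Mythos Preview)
Your proposal rests on an assumption that the paper explicitly disclaims. You write that ``each $\Gamma_k(z)$ is, by construction (Section \ref{ssec:IK-twist}), a fixed auxiliary column of $T(z)$ dressed by diagonal gauge factors.'' But this is not how the operators are defined: equations \eqref{gam0}--\eqref{gam2} are explicit formulas found by computer experimentation, and the paper states directly (preamble to Section \ref{sec:19-sym}) that the $\Gamma_k(z)$ were \emph{not} obtained via a change of basis applied to monodromy columns, and that no factorizing Drinfeld twist is known for the IK model. The name ``twisted column operators'' records a structural resemblance, not a construction. Your entire argument --- the $\check R$-intertwining for symmetry, and the $RTT$ relation for closure --- applies to genuine monodromy columns conjugated by a known twist; without that link to the operators actually defined in \eqref{gam0}--\eqref{gam2}, neither step goes through. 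In particular your claim that ``the diagonal dressing is chosen precisely so that conjugation by $R_{i,i+1}(x_i/x_{i+1})$ acts trivially'' is exactly the missing ingredient, not a given.

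The paper's proofs avoid this gap entirely. For Proposition \ref{prop:gam-sym}, symmetry is read off directly from the explicit sums \eqref{gam0}--\eqref{gam2}: after relabelling the summation indices $\sigma(i)\mapsto i$ etc., one only needs $e^{uv}_{ab}(z)=e^{vu}_{ba}(z)$, which is immediate from the definitions. No Yang--Baxter argument is used. For Theorem \ref{thm:IK-YB-twist}, the paper forms the difference $E^{(k,\ell)}$ of the two sides of \eqref{gamma-com} and decomposes it over ``dot diagrams'' that record on which local spaces non-diagonal pieces act. Each component satisfies a factorization \eqref{eq:E-decomp} expressing it via the same quantity on at most four tensor factors, dressed by diagonal matrices on the rest; this reduces the general-$N$ identity to $1\le N\le 4$, which is then checked by computer. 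So the finite computer verification you anticipate does appear, but it is reached by a self-contained structural reduction on the explicit $\Gamma_k$, not by borrowing relations from the monodromy matrix.
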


The above result states the operators $\Gamma_0(z),\Gamma_1(z),\Gamma_2(z)$ satisfy the same exchange relations as the column-to-column operators used to build the partition function \eqref{F-picture-intro}, and are totally spatially symmetric (unlike the original operators). It is therefore reasonable to expect that $\Gamma_0(z),\Gamma_1(z),\Gamma_2(z)$ constitute {\it Drinfeld twists} of our starting column-to-column operators; this is a highly non-trivial result, since to the best of our knowledge very little is known about factorizing twists in the $U_q(A^{(2)}_2)$ setting.

Using Theorem \ref{thm:yb-alg-intro}, combined with the verification of appropriate (Lagrange interpolation) properties, we are finally able to show that
\begin{align}
\label{twist-F-intro}
F_S(x_1,\dots,x_N;\bm{z})
=
\llangle 2^N\| 
\Gamma_{S_1}(z_1) \Gamma_{S_2}(z_2) \Gamma_{S_3}(z_3) 
\cdots
\|0^N \rrangle,
\end{align}
where $\|0^N\rrangle \in V_1 \otimes \cdots \otimes V_N$ and $\llangle 2^N \| \in V_1^{*} \otimes \cdots \otimes V_N^{*}$ denote highest (and lowest) weight vectors in the space in which the operators act (and its dual). This allows us to complete the proof of Theorem \ref{thm:sym-intro}, since the right hand side of \eqref{twist-F-intro} may be readily expanded to yield the right hand side of \eqref{sym-IK-intro}.

\section{Future directions}

The work performed in this paper suggests a number of potential continuations; we elaborate briefly on some of these below.

\begin{itemize}[wide,labelindent=0pt]

\item {\it Orthogonality.} One of the most important properties of the rational symmetric functions constructed in \cite{Borodin17} is their orthogonality with respect to a certain integral inner product; this result may in fact be traced back to the even earlier work \cite{BorodinCPSa,BorodinCPSb}. The orthogonality relations of the functions, together with their Cauchy identity, allow for the construction of a generalized Fourier theory (complete with a forwards and backwards transform that compose as the identity). Notably, the proof of this orthogonality property seems to depart entirely from Yang--Baxter calculus, and rather proceeds by direct formulas that lay bare the pole structure of the participating functions. 

It would be rewarding to extend as much of this framework as possible to the symmetric functions introduced in our text. Such an endeavour is likely to be extremely challenging, and would require a detailed understanding of the pole structure in the symmetrization identity \eqref{sym-IK-intro} for $F_S(x_1,\dots,x_N;\bm{z})$.

\medskip

\item {\it Fusion.} Arguably the most important potential follow-up to this work would be to perform fusion of the symmetric functions in question. This is a subject that is very well-developed in context of the $\mathcal{U}_q(A^{(1)}_n)$ series; explicit formulas for the fused $R$-matrices acting in symmetric tensor representations were obtained in \cite{Mangazeev14,BosnjakMangazeev16}, and the procedure of {\it stochastic fusion} was developed in the works \cite{CorwinPetrov16,Borodin17,BorodinWheeler18}. This combinatorial understanding of fusion, together with explicit formulas for the weights of the fused models, allows one to readily extend results from the six-vertex model to its higher-dimensional representations. In fact, the rational symmetric functions introduced in \cite{Borodin17} were {\it a priori} fused in one direction (vertical lines corresponded to $\mathfrak{sl}(2)$ Verma modules).

Aside from leading to a richer class of multivariate rational functions, fusion may potentially allow one to access symmetric {\it polynomials} as certain special limits. For example, as is well-known from the work of \cite{Borodin17}, in the limit where the weight of the underlying symmetric tensor representation tends to infinity, one recovers the classical Hall--Littlewood polynomials of type $A_N$. It would be interesting to investigate the same limit in the context of the $\mathcal{U}_q(A^{(2)}_2)$ model.

\medskip

\item {\it Higher-rank analogues.} The IK model is just the first model in the $\mathcal{U}_q(A^{(2)}_{2n})$ series. The $R$-matrices of this family all have a common structure; see \cite{Jimbo86b}. 

It would be natural to use these higher-rank analogues of the IK model to define multivariate rational functions from ensembles of {\it coloured} lattice paths, similarly to what was done in \cite{BorodinWheeler18} in the context of $\mathcal{U}_q(A^{(1)}_n)$ models. It is expected that the resulting functions would be {\it nonsymmetric}, similarly to how the partition functions defined in \cite{BorodinWheeler18} gave a nonsymmetric decomposition of the original six-vertex functions introduced in \cite{Borodin17}. Investigations along these lines would also potentially shed light on the orthogonality direction mentioned above.    
\end{itemize}

\section{About the layout of this paper}

Our text is divided into two main chapters. Chapter \ref{chapter:6-vertex} reviews the construction of rational symmetric functions via partition functions in the six-vertex model, while Chapter \ref{chapter:19-vertex} contains our new results on the Izergin--Korepin nineteen-vertex model.

Most of the material in Chapter \ref{chapter:6-vertex} has appeared in previous works. The definition of the rational symmetric functions $F^{\rm 6V}_S$ and $G^{\rm 6V}_S$ in Section \ref{sec:6v-functions} is inherited from the {\it spin Hall--Littlewood functions} of \cite{Borodin17} and \cite{BorodinPetrov16}: in particular, one takes a simple limit of the higher-spin vertex model in the latter papers which maps it to the stochastic six-vertex model. The Cauchy identity listed in Section \ref{sec:6cauchy} is, in turn, a special case of the skew Cauchy identities of \cite{Borodin17} and \cite{BorodinPetrov16}. The stable symmetric functions $H^{\rm 6V}_S$ and their Cauchy identity, given in Sections \ref{sec:6_stable}--\ref{ssec:6v-stable-cauchy}, are closely inspired by the stable spin Hall--Littlewood functions of \cite[Section 5.3]{BorodinWheeler21}. Finally the symmetrization formula for $F^{\rm 6V}_S$ in Section \ref{sec:6-sym} is deduced as a special case of the corresponding symmetrization identity in \cite{Borodin17} and \cite{BorodinPetrov16}, although our proof (based on exchange relations and Lagrange interpolation) appears to be new.

Even though Chapter \ref{chapter:6-vertex} contains no substantive new material, we have chosen to present it as a warm-up to Chapter \ref{chapter:19-vertex} for several reasons. The first reason is that it sets many of our notations: almost every quantity introduced in Chapter \ref{chapter:6-vertex} has a direct analogue in Chapter \ref{chapter:19-vertex}, and we employ common notations for these. The second reason is historical: in our investigations of the Izergin--Korepin model, the passage to many of our results was not straightforward. In several cases, it was necessary to proceed by first understanding the corresponding calculation in the six-vertex model, and we have tried to reflect this lineage in the text itself. The final reason is for the convenience of the reader: more experienced readers may prefer to read Chapters \ref{chapter:6-vertex} and \ref{chapter:19-vertex} in parallel, as this will immediately illustrate what aspects of the constructions are different or more complicated within the Izergin--Korepin model. Less experienced readers may prefer to read the two parts independently: both Chapters \ref{chapter:6-vertex} and \ref{chapter:19-vertex} are entirely self-contained, and function as standalone texts in their own right.

\section{Acknowledgments}

We thank the Australian Research Council (ARC) for generous support. AG was partially supported by the ARC DECRA DE210101264. MW was partially supported by the ARC Future Fellowship  FT200100981. Both AG and MW were partially supported by the ARC Discovery Projects DP190102897 and DP240101787. WG was supported by an Australian Government Research Training Program Scholarship.

\chapter{Six-vertex model}
\label{chapter:6-vertex}

In this chapter we study the stochastic six-vertex model in the quadrant, and review how it gives rise to a pair of rational symmetric functions $F_S(x_1,\dots,x_N;\bm{z})$ and $G_S(y_1,\dots,y_M;\bm{z})$ with a host of important algebraic properties.

Section \ref{sec:6V-relations} gives the definition of the stochastic six-vertex model and its intrinsic properties; we shall follow the same conventions as in \cite{BorodinWheeler18}. In Section \ref{sec:6row} we use the model to define row-to-row operators, and derive some of their properties. Sections \ref{sec:6v-functions}--\ref{ssec:6v-stable-cauchy} contain the algebraic construction of the functions $F_S(x_1,\dots,x_N;\bm{z})$ and $G_S(y_1,\dots,y_M;\bm{z})$, their stable analogues $H_S(x_1,\dots,x_N;\bm{z})$, as well as the Cauchy identities that they satisfy. 

Finally, in Section \ref{sec:6-sym} we state and prove a symmetrization identity for $F_S(x_1,\dots,x_N;\bm{z})$. Although this result is not new, it is worthwhile for the reader to study this proof in some detail, as the method adopted there is ultimately employed to prove the corresponding symmetrization identity in the more complicated Izergin--Korepin model.

As we have mentioned, almost all of the results in this chapter are known (or may be deduced) from previous works, and we refer the reader to \cite{Borodin17,BorodinPetrov15,BorodinPetrov16,BorodinWheeler21} for much of the original theory.

\section{Definition of stochastic six-vertex model and basic relations}
\label{sec:6V-relations}

We begin by writing down the $R$-matrix and explain how it gives rise to a vertex model, in Section \ref{ssec:6R}. Section \ref{ssec:6YB} contains the unitarity and Yang--Baxter relations of the model, and Section \ref{ssec:6dot} introduces dotted analogues of the vertex weights; the latter differ from the usual vertex weights up to an overall normalization factor.

\subsection{$R$-matrix and vertices}
\label{ssec:6R}

\begin{defn}
The $R$-matrix for the stochastic six-vertex model is a $4 \times 4$ matrix given by
\begin{equation}
\label{eqrm1}
R_{ab}(y/x) =\sum_{i,j,k,\ell \in \{0,1\}} W_{y/x}(i,j;k,\ell) E_a^{j,\ell} \otimes E_b^{i,k}
\end{equation}
where $E_{a}^{j,\ell}$ and $E_{b}^{i,k}$ are $2 \times 2$ elementary matrices acting on the two-dimensional spaces $V_{a}\cong \mathbb{C}^2$ and $V_{b}\cong \mathbb{C}^2$, respectively. The matrix entries satisfy the conservation property
\begin{align}
\label{conserve-6}
W_{y/x}(i,j;k,\ell)
=
0,
\qquad
\text{if}
\qquad
i+j \not= k+\ell,
\end{align}
which leaves six possible non-vanishing choices of $(i,j;k,\ell)$. Writing the matrix \eqref{eqrm1} explicitly, one then finds that
\begin{equation}
\label{eqrm61}
R_{ab}(y/x) = 
\begin{bmatrix}
  \begin{array}{cccc}
    W_{y/x}(0,0;0,0) & 0 & 0 & 0 \\
    0 & W_{y/x}(1,0;1,0) & W_{y/x}(1,0;0,1) & 0 \\
   0 & W_{y/x}(0,1;1,0) & W_{y/x}(0,1;0,1) & 0 \\
    0 & 0 & 0 & W_{y/x}(1,1;1,1) \\
  \end{array}
\end{bmatrix}_{ab}
\end{equation}
where the subscript $[\cdot]_{ab}$ is used to indicate the action of $R_{ab}(y/x)$ in ${\rm End}(V_a \otimes V_b)$. 

\smallskip
The non-zero matrix entries of \eqref{eqrm61} are identified with vertices as follows:
\begin{equation}
\label{eqdefn6w}
\raisebox{-22mm}{
\begin{tikzpicture}[scale=0.8,baseline = {(0,-2.25)}]
\draw[lgray,line width=1.5pt,->] (14,0) -- (16,0);
\draw[lgray,line width=1.5pt,->] (15,-1) -- (15,1); 
\node at (13.8,0) {$j$};
\node at (16.2,0) {$\ell$};
\node at (15,-1.2) (A1) {$i$};
\node at (15,1.2) {$k$};
\node at (12.7,0) {$x$};
\node at (15,-2.2) (A) {$y$};
\draw[->,thick] (13,0) -- (13.5,0);
\draw[->,thick] (A) -- (A1);
\node at (10.75,0) {$W_{y/x}(i,j;k,\ell)=$};
\node at (19,0) {$\qquad \text{ for }\quad  i,j,k,\ell \in \{0,1\},$}; 
\end{tikzpicture}}
\end{equation}

and we tabulate the six possibilities in Figure \ref{fig:6v-weights}.
\end{defn}

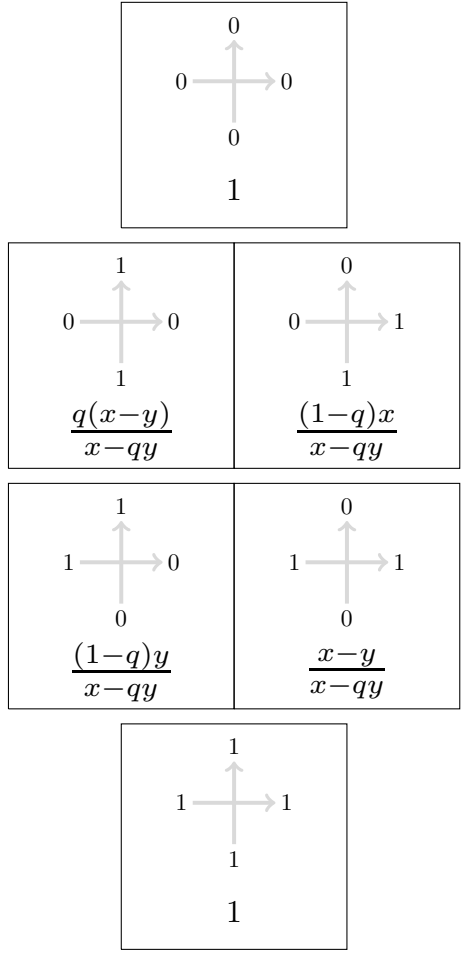
\begin{figure}
\begin{tikzpicture}
	\draw[scale = 1.5] (0,0) -- (2,0) -- (2,2) -- (0,2) -- (0,0);
	\begin{scope}[shift = {(0,0.2)}]
	\draw[lgray,line width=1.5pt,->] (0.95,1.75) -- (2.05,1.75);
	\draw[lgray,line width=1.5pt,->] (1.5,1.2) -- (1.5,2.3);
	\end{scope}
	\node[scale = 0.9] at (1.5,1.2) {$0$};
	\node[scale = 0.9] at (0.8,1.95) {$0$};
	\node[scale = 0.9] at (2.2,1.95) {$0$};
	\node[scale = 0.9] at (1.5,2.7) {$0$};
	\node[scale = 1.2] at (1.5,0.5) {$1$};
\begin{scope}[shift = {(-1.5,-3.2)}]
	\draw[scale = 1.5] (0,0) -- (2,0) -- (2,2) -- (0,2) -- (0,0);
	\begin{scope}[shift = {(0,0.2)}]
	\draw[lgray,line width=1.5pt,->] (0.95,1.75) -- (2.05,1.75);
	\draw[lgray,line width=1.5pt,->] (1.5,1.2) -- (1.5,2.3);
	\end{scope}
	\node[scale = 0.9] at (1.5,1.2) {$1$};
	\node[scale = 0.9] at (0.8,1.95) {$0$};
	\node[scale = 0.9] at (2.2,1.95) {$0$};
	\node[scale = 0.9] at (1.5,2.7) {$1$};
	\node[scale = 1.5] at (1.5,0.5) {$\frac{q(x-y)}{x-q y}$};
\end{scope}
\begin{scope}[shift = {(1.5,-3.2)}]
	\draw[scale = 1.5] (0,0) -- (2,0) -- (2,2) -- (0,2) -- (0,0);
	\begin{scope}[shift = {(0,0.2)}]
	\draw[lgray,line width=1.5pt,->] (0.95,1.75) -- (2.05,1.75);
	\draw[lgray,line width=1.5pt,->] (1.5,1.2) -- (1.5,2.3);
	\end{scope}
	\node[scale = 0.9] at (1.5,1.2) {$1$};
	\node[scale = 0.9] at (0.8,1.95) {$0$};
	\node[scale = 0.9] at (2.2,1.95) {$1$};
	\node[scale = 0.9] at (1.5,2.7) {$0$};
	\node[scale = 1.5] at (1.5,0.5) {$\frac{(1-q)x}{x-qy}$};
\end{scope}
\begin{scope}[shift = {(-1.5,-6.4)}]
	\draw[scale = 1.5] (0,0) -- (2,0) -- (2,2) -- (0,2) -- (0,0);
	\begin{scope}[shift = {(0,0.2)}]
	\draw[lgray,line width=1.5pt,->] (0.95,1.75) -- (2.05,1.75);
	\draw[lgray,line width=1.5pt,->] (1.5,1.2) -- (1.5,2.3);
	\end{scope}
	\node[scale = 0.9] at (1.5,1.2) {$0$};
	\node[scale = 0.9] at (0.8,1.95) {$1$};
	\node[scale = 0.9] at (2.2,1.95) {$0$};
	\node[scale = 0.9] at (1.5,2.7) {$1$};
	\node[scale = 1.5] at (1.5,0.5) {$\frac{(1-q)y}{x-qy}$};
\end{scope}
\begin{scope}[shift = {(1.5,-6.4)}]
	\draw[scale = 1.5] (0,0) -- (2,0) -- (2,2) -- (0,2) -- (0,0);
	\begin{scope}[shift = {(0,0.2)}]
	\draw[lgray,line width=1.5pt,->] (0.95,1.75) -- (2.05,1.75);
	\draw[lgray,line width=1.5pt,->] (1.5,1.2) -- (1.5,2.3);
	\end{scope}
	\node[scale = 0.9] at (1.5,1.2) {$0$};
	\node[scale = 0.9] at (0.8,1.95) {$1$};
	\node[scale = 0.9] at (2.2,1.95) {$1$};
	\node[scale = 0.9] at (1.5,2.7) {$0$};
	\node[scale = 1.5] at (1.5,0.5) {$\frac{x-y}{x-qy}$};
\end{scope}
\begin{scope}[shift = {(0,-9.6)}]
	\draw[scale = 1.5] (0,0) -- (2,0) -- (2,2) -- (0,2) -- (0,0);
	\begin{scope}[shift = {(0,0.2)}]
	\draw[lgray,line width=1.5pt,->] (0.95,1.75) -- (2.05,1.75);
	\draw[lgray,line width=1.5pt,->] (1.5,1.2) -- (1.5,2.3);
	\end{scope}
	\node[scale = 0.9] at (1.5,1.2) {$1$};
	\node[scale = 0.9] at (0.8,1.95) {$1$};
	\node[scale = 0.9] at (2.2,1.95) {$1$};
	\node[scale = 0.9] at (1.5,2.7) {$1$};
	\node[scale = 1.2] at (1.5,0.5) {$1$};
\end{scope}
\end{tikzpicture}
\caption{The weights of the stochastic six-vertex model.}
\label{fig:6v-weights}
\end{figure}

\begin{prop}[Sum-to-unity property]
For any fixed $i,j \in \{0,1\}$, we have 
\begin{equation}
\label{eqsumtounity1}
\sum_{k = 0}^1 \sum_{\ell = 0}^1 W_{y/x}(i,j;k,\ell) = 1.
\end{equation}
\end{prop}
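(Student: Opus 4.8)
The plan is to verify the identity directly from the explicit weights tabulated in Figure \ref{fig:6v-weights}, exploiting the conservation property \eqref{conserve-6} to reduce the double sum to at most two nonzero terms in each case. First I would observe that, for fixed $(i,j)$, a summand $W_{y/x}(i,j;k,\ell)$ vanishes unless $k+\ell=i+j$; hence the sum over $0 \leq k,\ell \leq 1$ collapses onto the fibre $\{(k,\ell):k+\ell=i+j\}$. This immediately organizes the verification into four cases indexed by $(i,j)$, two of which (the ``extreme'' cases) are single-term, and two of which (the ``mixed'' cases) are two-term sums.

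The extreme cases are immediate. When $(i,j)=(0,0)$ the only surviving term is $(k,\ell)=(0,0)$, giving $W_{y/x}(0,0;0,0)=1$; likewise when $(i,j)=(1,1)$ the only surviving term is $(k,\ell)=(1,1)$, giving $W_{y/x}(1,1;1,1)=1$.

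The two mixed cases each reduce to a sum of exactly two terms sharing the common denominator $x-qy$. For $(i,j)=(1,0)$,
\[
W_{y/x}(1,0;1,0)+W_{y/x}(1,0;0,1)=\frac{q(x-y)+(1-q)x}{x-qy}=\frac{x-qy}{x-qy}=1,
\]
and for $(i,j)=(0,1)$,
\[
W_{y/x}(0,1;1,0)+W_{y/x}(0,1;0,1)=\frac{(1-q)y+(x-y)}{x-qy}=\frac{x-qy}{x-qy}=1,
\]
where in each numerator the obvious cancellation (the terms $qx$ in the first line, and $y$ in the second) leaves exactly the denominator. This exhausts all four choices of $(i,j)$.

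There is no genuine obstacle here: once conservation trims the sum, each case is a one-line cancellation, and the identity holds as a rational-function identity in $x,y,q$, valid away from the pole $x=qy$. I expect the only point requiring care to be the bookkeeping of the correct weight for each admissible $(k,\ell)$ from Figure \ref{fig:6v-weights}, so that the numerators combine to reproduce $x-qy$ exactly; conceptually, the statement simply records that each row of the $R$-matrix \eqref{eqrm61} sums to unity, which is the defining feature of the stochastic normalization.
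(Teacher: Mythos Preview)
Your proposal is correct and follows exactly the same approach as the paper, which simply states that the identity follows by direct verification on the vertex weights of Figure~\ref{fig:6v-weights}, noting that the rows of that figure correspond to the rows of the $R$-matrix~\eqref{eqrm61}. You have merely spelled out the four cases in detail, which the paper leaves implicit.
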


\begin{proof}
By direct verification on the vertex weights of Figure \ref{fig:6v-weights}: the rows of that figure correspond with the rows of the matrix \eqref{eqrm61}.
\end{proof}

\begin{rmk}
The sum-to-unity property \eqref{eqsumtounity1} is necessary for the probabilistic interpretation of the $R$-matrix \eqref{eqrm61}. Indeed, by making suitable choices for the parameters $x,y,q$, such as $0 \leq y < x \leq 1$ and $0 \leq q \leq 1$, one finds that all entries of the matrix \eqref{eqrm61} are positive; it is thus a stochastic matrix. The stochastic property of the $R$-matrix \eqref{eqrm61} has been used to construct Markov processes in the quadrant in numerous prior works 
\cite{GwaSpohn92,BorodinCG,BorodinPetrov16,BorodinWheeler18}.
\end{rmk}

\subsection{Unitarity and Yang--Baxter relations}
\label{ssec:6YB}

\begin{prop}
Let $R_{ab}(x)$ denote the $R$-matrix defined in equation $\eqref{eqrm61}$, whose components are given by \eqref{eqdefn6w} and Figure \ref{fig:6v-weights}. The $R$-matrix satisfies the unitarity and Yang--Baxter relations:
\begin{alignat}{2}
R_{ab}(y/x)R_{ba}(x/y) &= {\rm id}, \label{eqpropybesix1}\\
R_{ab}(y/x)R_{ac}(z/x)R_{bc}(z/y)& = R_{bc}(z/y)R_{ac}(z/x)R_{ab}(y/x), \label{eqpropybesix2}
\end{alignat}
where the labels $a,b,c$ indicate the spaces upon which the $R$-matrix acts; here \eqref{eqpropybesix1} holds as an identity in ${\rm End}(V_a \otimes V_b)$, and \eqref{eqpropybesix2} as an identity in ${\rm End}(V_a \otimes V_b \otimes V_c)$.

Equations $\eqref{eqpropybesix1}$ and $\eqref{eqpropybesix2}$ have the following pictorial representations:
\begin{equation}
\label{equnitary1}
\sum_{k_1,k_2\in \{0,1\}}
\raisebox{-15mm}{\begin{tikzpicture}[scale = 0.4]
\draw[->,lgray,line width=1.5pt,rounded corners] (0,0) -- (3,0) -- (3,3);
\draw[->,lgray,line width=1.5pt,rounded corners] (1.5,-1.5) -- (1.5,1.5) -- (4.5,1.5);
\node at (-0.5,0) {\footnotesize $i_1$};
\node at (1.5,-2) {\footnotesize $i_2$};
\node at (3.4,-0.2) {\footnotesize $k_1$};
\node at (1.5,2.2) {\footnotesize $k_2$};
\node at (5,1.5) {\footnotesize $j_2$};
\node at (2.85,3.65) {\footnotesize $j_1$};
\node at (-2.5,0) {\footnotesize{$x$}};
\node at (1.5,-3.75) {\footnotesize{$y$}};
\draw[->,thick] (-2,0) -- (-1,0);
\draw[->,thick] (1.5,-3.35) -- (1.5,-2.5);
\end{tikzpicture}}
=
\bm{1}_{i_1=j_1}
\bm{1}_{i_2=j_2}
\end{equation}
\begin{equation}
\label{eqybesix1}
\sum_{k_1,k_2,k_3 \in \{0,1\}}
\begin{tikzpicture}[scale = 0.9,every node/.style={scale=0.8},baseline={([yshift=-0.25ex]current bounding box.center)}]
\draw[lgray,line width=1.5pt] (-2,1) -- (-1,0);
\draw[lgray,line width=1.5pt] (-2,0) -- (-1,1);
\draw[->,lgray,line width=1.5pt] (-1,1) -- (0,1);
\draw[->,lgray,line width=1.5pt] (-1,0) -- (0,0);
\draw[->,lgray,line width=1.5pt] (-0.5,-0.5) -- (-0.5,1.5);
\node at (-3,1) (A1) {$x$};
\node at (-3,0) (B1) {$y$};
\node[left] at (-1.9,1) (A) {$i_1$};
\node[left] at (-1.9,0) (B) {$i_2$};
\draw[->,thick] (A1) -- (A);
\draw[->,thick] (B1) -- (B);
\node at (-1,1.3) {$k_2$};
\node at (-1,-0.3) {$k_1$};
\node at (-0.5,1.75) {$j_3$};
\node at (-0.5,-0.75) {$i_3$};
\draw[->,thick] (-0.5,-1.3) -- (-0.5,-1);
\node at (-0.5,-1.5) {$z$};
\node at (0.35,1) {$j_2$};
\node at (0.35,0) {$j_1$};
\node at (-0.5, 0.5) {$k_3$};
\end{tikzpicture}
=\sum_{k_1,k_2,k_3 \in \{0,1\}}
\begin{tikzpicture}[scale = 0.8,every node/.style={scale=0.8},baseline={([yshift=-0.25ex]current bounding box.center)}]
\draw[lgray,line width=1.5pt] (4.5,1) -- (5.5,1);
\draw[lgray,line width=1.5pt] (4.5,0) -- (5.5,0);
\draw[->,lgray,line width=1.5pt] (5.5,1) -- (6.5,0);
\draw[->,lgray,line width=1.5pt] (5.5,0) -- (6.5,1);
\draw[->,lgray,line width=1.5pt] (5,-0.5) -- (5,1.5);
\node at (3.5,1) (C1) {$x$};
\node at (3.5,0) (D1) {$y$};
\node[left] at (4.6,1) (C) {$i_1$};
\node[left] at (4.6,0) (D) {$i_2$};
\draw[->,thick] (C1) -- (C);
\draw[->,thick] (D1) -- (D);
\node at (5,1.75) {$j_3$};
\node at (5,-0.75) {$i_3$};
\draw[->,thick] (5,-1.3) -- (5,-1);
\node at (5,-1.5) {$z$};
\node at (6.85,1) {$j_2$};
\node at (6.85,0) {$j_1$};
\node at (5.5,1.25) {$k_1$};
\node at (5.5,-0.25) {$k_2$};
\node at (5,0.5) {$k_3$};
\end{tikzpicture}
\end{equation}
where in both equations the indices $i_1,i_2,i_3,j_1,j_2,j_3$ are assigned fixed values in $\{0,1\}$.
\end{prop}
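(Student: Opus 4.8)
The plan is to establish both \eqref{eqpropybesix1} and \eqref{eqpropybesix2} by direct verification, using the conservation property \eqref{conserve-6} to reduce each operator identity to a small number of scalar checks on rational functions of $x,y,z,q$. The crucial structural observation is that every weight vanishes unless the number of incoming paths equals the number of outgoing paths, so both sides of each relation commute with the operator counting the total number of paths present on the external edges. Consequently each of the two identities block-diagonalizes, and I would treat each block separately. Throughout, the pictorial forms \eqref{equnitary1} and \eqref{eqybesix1} provide the convenient bookkeeping device: one fixes the external states, enumerates the internal edge labellings permitted by \eqref{conserve-6}, and compares the resulting weighted sums on the two sides.

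For the unitarity relation \eqref{eqpropybesix1}, an identity in ${\rm End}(V_a \otimes V_b)$, the total-path sectors have dimensions $1,2,1$ according to whether $0,1$ or $2$ paths occupy the two edges. The two extremal sectors are one-dimensional and contribute the scalar $1$, since $W_{y/x}(0,0;0,0) = W_{y/x}(1,1;1,1) = 1$ forces a unique internal configuration of weight $1$ on each side of \eqref{equnitary1}. The only genuine content therefore lies in the single-path sector: here I would write out the $2 \times 2$ block of $R_{ab}(y/x)$ read off from Figure~\ref{fig:6v-weights}, multiply it by the block of $R_{ba}(x/y)$ obtained from it by the substitution $x \leftrightarrow y$ together with the transposition of off-diagonal entries forced by the reversed ordering of the spaces, and confirm that the product is the $2\times 2$ identity. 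This is a one-line computation once everything is placed over the common denominator $(x-qy)(y-qx)$.

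For the Yang--Baxter equation \eqref{eqpropybesix2}, an identity in ${\rm End}(V_a \otimes V_b \otimes V_c)$, the same argument splits the $8 \times 8$ relation into sectors of dimensions $1,3,3,1$ indexed by the total path number $0,1,2,3$. The extremal one-dimensional sectors are again trivial. The substance is then the verification of the two three-dimensional sectors: for each choice of external states $i_1,i_2,i_3,j_1,j_2,j_3$ in a given sector, I would enumerate the internal labellings $k_1,k_2,k_3$ admissible on each side of \eqref{eqybesix1}, and check that the two weighted sums coincide. Conservation leaves only a handful of admissible internal configurations per boundary, so each of the nine resulting matrix entries reduces to an elementary identity of rational functions over the common denominator $(x-qy)(x-qz)(y-qz)$.

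The main obstacle is purely the bookkeeping in these two $3\times 3$ sectors of the Yang--Baxter equation: no conceptual difficulty arises, since every individual identity is an elementary algebraic manipulation, but one must organize the finitely many boundary cases systematically so as to confirm that all entries of both sectors agree. The whole argument is thus a finite verification of the kind that is standard for the six-vertex $R$-matrix, and the diagrams \eqref{equnitary1} and \eqref{eqybesix1} keep the enumeration of admissible internal states under control.
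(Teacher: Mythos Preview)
Your proposal is correct and matches the paper's approach, which simply states that both relations can be verified by direct calculation. Your use of the conservation property \eqref{conserve-6} to block-diagonalize into small sectors is a sensible way to organize that direct check, and indeed goes beyond the level of detail the paper itself provides.
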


\begin{proof}
Equations \eqref{eqpropybesix1} and \eqref{eqpropybesix2} can be verified by direct calculation.
\end{proof}

\subsection{Dotted vertices}
\label{ssec:6dot}

For technical reasons, in the sequel we will also require a normalized version of the weights  $W_{y/x}(i,j;k,\ell)$; namely,
\begin{equation}
\label{eqcrsym61}
\dotr{W}_{y/x}(i,j;k,\ell) 
= 
\frac{W_{y/x}(i,j;k,\ell)}{W_{y/x}(0,1;0,1)}
=
\left( \dfrac{x-qy}{x-y} \right)
W_{y/x}(i,j;k,\ell).
\end{equation}
To distinguish these weights from the regular ones, we shall place a dot in the center of each vertex; see Figure \ref{fig:6v-dot}.

\begin{figure}
\begin{tikzpicture}
	\draw[scale = 1.5] (0,0) -- (2,0) -- (2,2) -- (0,2) -- (0,0);
	\begin{scope}[shift = {(0,0.2)}]
	\draw[lgray,line width=1.5pt,->] (0.95,1.75) -- (2.05,1.75);
	\draw[lgray,line width=1.5pt,->] (1.5,1.2) -- (1.5,2.3);
	\end{scope}
	\node at (1.5,1.95) [circle,fill,inner sep=1.5pt] {};
	\node[scale = 0.9] at (1.5,1.2) {$0$};
	\node[scale = 0.9] at (0.8,1.95) {$0$};
	\node[scale = 0.9] at (2.2,1.95) {$0$};
	\node[scale = 0.9] at (1.5,2.7) {$0$};
	\node[scale = 1.5] at (1.5,0.5) {$\frac{x-qy}{x-y}$};
\begin{scope}[shift = {(-1.5,-3.2)}]
	\draw[scale = 1.5] (0,0) -- (2,0) -- (2,2) -- (0,2) -- (0,0);
	\begin{scope}[shift = {(0,0.2)}]
	\draw[lgray,line width=1.5pt,->] (0.95,1.75) -- (2.05,1.75);
	\draw[lgray,line width=1.5pt,->] (1.5,1.2) -- (1.5,2.3);
	\end{scope}
	\node[scale = 0.9] at (1.5,1.2) {$1$};
	\node[scale = 0.9] at (0.8,1.95) {$0$};
	\node[scale = 0.9] at (2.2,1.95) {$0$};
	\node[scale = 0.9] at (1.5,2.7) {$1$};
	\node at (1.5,1.95) [circle,fill,inner sep=1.5pt] {};
	\node[scale = 1.2] at (1.5,0.5) {$q$};
\end{scope}
\begin{scope}[shift = {(1.5,-3.2)}]
	\draw[scale = 1.5] (0,0) -- (2,0) -- (2,2) -- (0,2) -- (0,0);
	\begin{scope}[shift = {(0,0.2)}]
	\draw[lgray,line width=1.5pt,->] (0.95,1.75) -- (2.05,1.75);
	\draw[lgray,line width=1.5pt,->] (1.5,1.2) -- (1.5,2.3);
	\end{scope}
	\node[scale = 0.9] at (1.5,1.2) {$1$};
	\node[scale = 0.9] at (0.8,1.95) {$0$};
	\node[scale = 0.9] at (2.2,1.95) {$1$};
	\node[scale = 0.9] at (1.5,2.7) {$0$};
	\node at (1.5,1.95) [circle,fill,inner sep=1.5pt] {};
	\node[scale = 1.5] at (1.5,0.5) {$\frac{(1-q)x}{x-y}$};
\end{scope}
\begin{scope}[shift = {(-1.5,-6.4)}]
	\draw[scale = 1.5] (0,0) -- (2,0) -- (2,2) -- (0,2) -- (0,0);
	\begin{scope}[shift = {(0,0.2)}]
	\draw[lgray,line width=1.5pt,->] (0.95,1.75) -- (2.05,1.75);
	\draw[lgray,line width=1.5pt,->] (1.5,1.2) -- (1.5,2.3);
	\end{scope}
	\node[scale = 0.9] at (1.5,1.2) {$0$};
	\node[scale = 0.9] at (0.8,1.95) {$1$};
	\node[scale = 0.9] at (2.2,1.95) {$0$};
	\node[scale = 0.9] at (1.5,2.7) {$1$};
	\node at (1.5,1.95) [circle,fill,inner sep=1.5pt] {};
	\node[scale = 1.5] at (1.5,0.5) {$\frac{(1-q)y}{x-y}$};
\end{scope}
\begin{scope}[shift = {(1.5,-6.4)}]
	\draw[scale = 1.5] (0,0) -- (2,0) -- (2,2) -- (0,2) -- (0,0);
	\begin{scope}[shift = {(0,0.2)}]
	\draw[lgray,line width=1.5pt,->] (0.95,1.75) -- (2.05,1.75);
	\draw[lgray,line width=1.5pt,->] (1.5,1.2) -- (1.5,2.3);
	\end{scope}
	\node[scale = 0.9] at (1.5,1.2) {$0$};
	\node[scale = 0.9] at (0.8,1.95) {$1$};
	\node[scale = 0.9] at (2.2,1.95) {$1$};
	\node[scale = 0.9] at (1.5,2.7) {$0$};
	\node at (1.5,1.95) [circle,fill,inner sep=1.5pt] {};
	\node[scale = 1.2] at (1.5,0.5) {$1$};
\end{scope}
\begin{scope}[shift = {(0,-9.6)}]
	\draw[scale = 1.5] (0,0) -- (2,0) -- (2,2) -- (0,2) -- (0,0);
	\begin{scope}[shift = {(0,0.2)}]
	\draw[lgray,line width=1.5pt,->] (0.95,1.75) -- (2.05,1.75);
	\draw[lgray,line width=1.5pt,->] (1.5,1.2) -- (1.5,2.3);
	\end{scope}
	\node[scale = 0.9] at (1.5,1.2) {$1$};
	\node[scale = 0.9] at (0.8,1.95) {$1$};
	\node[scale = 0.9] at (2.2,1.95) {$1$};
	\node[scale = 0.9] at (1.5,2.7) {$1$};
	\node[scale = 1.5] at (1.5,0.5) {$\frac{x-qy}{x-y}$};
	\node at (1.5,1.95) [circle,fill,inner sep=1.5pt] {};
\end{scope}
\end{tikzpicture}
\caption{The weights of the stochastic six-vertex model after applying the normalization \eqref{eqcrsym61}.}
\label{fig:6v-dot}
\end{figure}
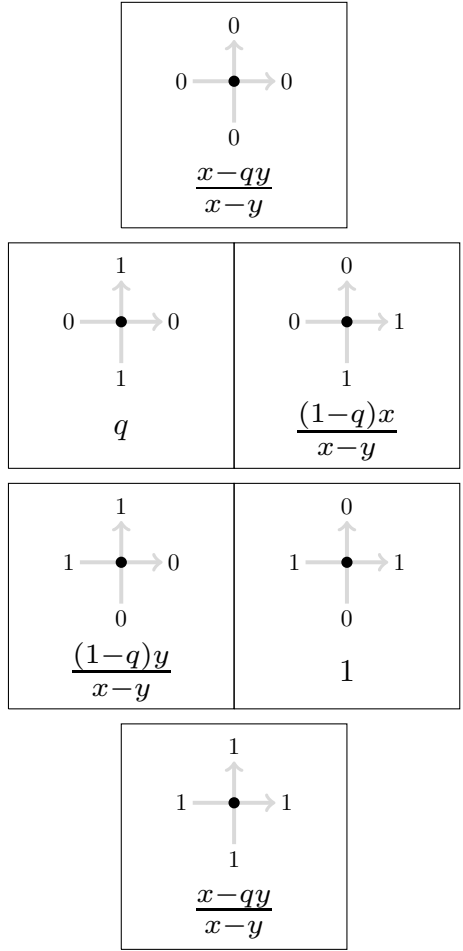
 
\begin{prop}
\label{flipsix1}
The vertices of Figure \ref{fig:6v-dot} can be related to those of Figure \ref{fig:6v-weights} under $x \mapsto x^{-1}$ and $y \mapsto q^{-1} y^{-1}$, via the following symmetry:
\begin{equation}
\label{eqnormandor61}
\dotr{W}_{y/x}(i,j;k,\ell) 
= 
(-1)^{i-k}q^{\bar{j}}
W_{x/y/q}(k,\bar{j};i,\bar{\ell}) \quad \text{ for all } \quad i,j,k,\ell \in \{0,1\},
\end{equation}
where $\bar{j}=1-j$ and $\bar{\ell}=1-\ell$. Pictorially, the vertices are related in the following way:
\[
\begin{tikzpicture}[scale = 0.8,every node/.style={scale=0.8}]
\draw[lgray,line width=1.5pt,->] (-1,0) -- (1,0);
\draw[lgray,line width=1.5pt,->] (0,-1) -- (0,1);
\node at (0,-1.25) {$i$};
\node at (0,1.25) {$k$};
\node at (-1.25,0) {$j$};
\node at (1.25,0) {$\ell$};
\node at (0,0) [circle,fill,inner sep=1.5pt] {};
\node at (-2.25,0) {$x$};
\draw[thick,->] (0,-2) -- (0,-1.45);
\draw[thick,->] (-2,0)  -- (-1.5,0);
\node[below] at (0,-2.05) {$y$};
\node[right] at (2,0) {$=$};
\draw[lgray,line width=1.5pt,->] (5,0) -- (7,0);
\draw[lgray,line width=1.5pt,->] (6,-1) -- (6,1);
\node at (6,-1.25) {$k$};
\node at (6,1.25) {$i$};
\node[left] at (5,0) {$\bar{j}$};
\node at (7.325,0) {$\bar{\ell}$};
\node at (3,0) {$x^{-1}$};
\draw[thick,->] (6,-2) -- (6,-1.45);
\draw[thick,->] (3.5,0) -- (4,0);
\node[below] at (6,-2.05) {$q^{-1} y^{-1}$};
\node[scale=1.25] at (9,0) {\qquad $\times\ \ (-1)^{i-k} q^{\bar{j}}.$};
\end{tikzpicture}
\]
\end{prop}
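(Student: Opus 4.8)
First I would note that \eqref{eqnormandor61} is nothing more than a finite collection of scalar identities, one for each $(i,j,k,\ell) \in \{0,1\}^4$, so the plan is to verify it on each admissible vertex. Before computing anything, I would check that the two sides are supported on the same configurations. The dotted weight $\dotr{W}_{y/x}(i,j;k,\ell)$ vanishes unless $i+j=k+\ell$, and the complementation $(i,j;k,\ell) \mapsto (k,\bar{j};i,\bar{\ell})$ appearing on the right satisfies $k+\bar{j}=i+\bar{\ell} \iff i+j=k+\ell$; hence this map preserves the conservation locus and in fact restricts to a fixed-point-free involution on the six admissible vertex types, pairing them as $(0,0;0,0)\leftrightarrow(0,1;0,1)$, $(1,0;1,0)\leftrightarrow(1,1;1,1)$ and $(1,0;0,1)\leftrightarrow(0,1;1,0)$. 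It therefore suffices to check \eqref{eqnormandor61} on the six weights of Figure \ref{fig:6v-weights}, and the involution organizes these into three conjugate pairs, so only three genuinely independent computations remain.

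The second step is a bookkeeping reduction that makes those computations painless. Writing every weight as a rational function of the single ratio $z=y/x$, the crossing normalization \eqref{eqcrsym61} becomes multiplication by $\tfrac{1-qz}{1-z}$, and the substitution $x \mapsto x^{-1}$, $y \mapsto q^{-1}y^{-1}$ sends $z = y/x \mapsto \tfrac{q^{-1}y^{-1}}{x^{-1}} = \tfrac{1}{qz}$ (equivalently the ratio $x/(yq)$ appearing in $W_{x/y/q}$). Thus the right hand side of \eqref{eqnormandor61} becomes the undotted weight $W_{1/(qz)}(k,\bar{j};i,\bar{\ell})$ multiplied by the scalar $(-1)^{i-k}q^{\bar{j}}$, and each case collapses to an elementary identity of rational functions in $z$ and $q$. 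For example, the empty vertex yields $\tfrac{1-qz}{1-z}=q\cdot W_{1/(qz)}(0,1;0,1)$, while the pair $(1,0;0,1)\leftrightarrow(0,1;1,0)$ matches the two off-diagonal weights with the sign $(-1)^{i-k}=-1$ supplying the required change of sign between $\tfrac{(1-q)x}{x-y}$ and the reflected weight. Every such identity is then dispatched by clearing denominators.

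The main obstacle is bookkeeping rather than any real difficulty: one must correctly assemble the crossing factor $\tfrac{x-qy}{x-y}$, faithfully track the sign $(-1)^{i-k}$ and the power $q^{\bar{j}}$, and — the most easily overlooked point — use that the spectral ratio transforms as $z \mapsto 1/(qz)$ rather than $z\mapsto 1/z$, the extra factor of $q$ being precisely what forces the two normalizations into agreement. I would remark in passing that \eqref{eqnormandor61} is an instance of the crossing symmetry of the six-vertex $R$-matrix and could be obtained more conceptually through a charge-conjugation intertwiner; but since the auxiliary space here is two-dimensional, the direct case-by-case verification above is both shorter and entirely self-contained, and it is the route I would take.
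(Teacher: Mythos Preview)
Your proposal is correct and matches the paper's approach: the paper's proof reads in its entirety ``By direct computation,'' and your write-up is simply a careful elaboration of that verification, organizing the six cases via the involution and tracking the spectral-ratio substitution $z\mapsto 1/(qz)$. There is nothing to add.
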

\begin{proof}
By direct computation. 
\end{proof}

\section{Row operators and the Yang--Baxter algebra}
\label{sec:6row}

This section contains the fundamental algebraic tools that underpin most of the calculations in this chapter. In Section \ref{ssec:6-finite-row} we introduce row-operators in finite size, and list some of their commutation relations. In Section \ref{ssec:6-infinite-row} we study the limit of these operators when the length of the row tends to infinity, and examine what happens to the associated commutation relations.

\subsection{Row operators and commutation relations}
\label{ssec:6-finite-row}

We begin by introducing the vector space $\mathbb{V}^{(L)}$, which is obtained by taking an $L$-fold tensor product of local spaces $V_i \cong \mathbb{C}^2$:
\begin{equation}
\label{6VL}
\mathbb{V}^{(L)} = V_1 \otimes V_2 \otimes \cdots \otimes V_L.
\end{equation}
In the partition functions that we subsequently consider, each vector space $V_i$ will correspond to a vertical lattice line; as such, the natural operators for acting on $\mathbb{V}^{(L)}$ are obtained by considering rows of vertices of length $L$ within the model \eqref{eqdefn6w}. In particular, for all $a,b \in \{0,1\}$, we define the {\it finite} row operator $\mathcal{T}_{a,b}(x;\bm{z}) \in {\rm End}(\mathbb{V}^{(L)})$ as follows:
\begin{align}
\mathcal{T}_{a,b}(x;\bm{z}): \bigotimes_{k = 1}^{L} \ket{i_k}_{k} 
&\mapsto \sum_{(j_1,\dots,j_L) 
\in \{0,1\}^L}  \left(\raisebox{-15mm}{\smash{
\begin{tikzpicture}[scale = 0.8,every node/.style={scale=0.8}]
\node at (-1.8,0) (D1) {$x$};
\node at (-1,0) (D) {};
\draw[lgray,line width=1.5pt] (-1,0) -- (3,0);
\draw[lgray,line width=1.5pt,->] (4.2,0) -- (7.2,0);
\draw[<-,lgray,line width=1.5pt] (1,1) -- (1,-1);
\draw[<-,lgray,line width=1.5pt] (0,1) -- (0,-1);
\draw[<-,lgray,line width=1.5pt] (2,1) -- (2,-1);
\draw[<-,lgray,line width=1.5pt] (5.2,1) -- (5.2,-1);
\draw[<-,lgray,line width=1.5pt] (6.2,1) -- (6.2,-1);
\node at (-1,-0.25) {$a$};
\node[right] at (3.3,0) {$\dots$};
\node at (0,1.25) {$i_1$};
\node at (1,1.25) {$i_2$};
\node at (2,1.25) {$\dots$};
\node at (5.2,1.25) {$\dots$};
\node at (6.2,1.25) {$i_L$};
\node at (0,-1.25) (A) {$j_1$};
\node at (1,-1.25) (B) {$j_2$};
\node at (2,-1.25) {$\dots$};
\node at (5.2,-1.25) {$\dots$};
\node at (6.2,-1.25) (C) {$j_L$};
\node at (7.2,-0.25) {$b$};
\node at (0,-2.25) (A1) {$z_1$};
\node at (1,-2.25) (B1) {$z_2$};
\node at (2,-2.25)  {$\dots$};
\node at (6.2,-2.25) (C1) {$z_L$};
\draw[->,thick] (A1) -- (A);
\draw[->,thick] (B1) -- (B);
\draw[->,thick] (C1) -- (C);
\draw[->,thick] (D1) -- (D);
\end{tikzpicture}}}\right)
\bigotimes_{k = 1}^{L} 
\ket{j_k}_{k}, 
\label{eqrowop6}
\end{align}
where $(i_1,\dots,i_L) \in \{0,1\}^L$ selects a fixed, arbitrary vector in $\mathbb{V}^{(L)}$. The row operators $\mathcal{T}_{a,b}(x;\bm{z})$ depend on both the horizontal spectral parameter $x$ and the vertical ones, $\bm{z} = (z_1,z_2,\dots,z_L)$. However, in practice, the dependence on $\bm{z}$ will have little interest for us, and we usually suppress it from our notations by writing $\mathcal{T}_{a,b}(x;\bm{z}) \equiv \mathcal{T}_{a,b}(x)$.

We shall also require a dotted version of the row operators \eqref{eqrowop6}, denoted $\dotr{\mathcal{T}}_{a,b}(x;\bm{z})$. These are obtained by simply replacing each vertex in \eqref{eqrowop6} by its dotted counterpart \eqref{eqcrsym61}:
\begin{align}
\dotr{\mathcal{T}}_{a,b}(x;\bm{z}): \bigotimes_{k = 1}^{L} \ket{i_k}_{k} 
&\mapsto \sum_{(j_1,\dots,j_L) 
\in \{0,1\}^L}  \left(\raisebox{-15mm}{\smash{
\begin{tikzpicture}[scale = 0.8,every node/.style={scale=0.8}]
\node at (-1.8,0) (D1) {$x$};
\node at (-1,0) (D) {};
\draw[lgray,line width=1.5pt] (-1,0) -- (3,0);
\draw[lgray,line width=1.5pt,->] (4.2,0) -- (7.2,0);
\draw[<-,lgray,line width=1.5pt] (1,1) -- (1,-1);
\draw[<-,lgray,line width=1.5pt] (0,1) -- (0,-1);
\draw[<-,lgray,line width=1.5pt] (2,1) -- (2,-1);
\draw[<-,lgray,line width=1.5pt] (5.2,1) -- (5.2,-1);
\draw[<-,lgray,line width=1.5pt] (6.2,1) -- (6.2,-1);
\node at (-1,-0.25) {$a$};
\node[right] at (3.3,0) {$\dots$};
\node at (0,1.25) {$i_1$};
\node at (1,1.25) {$i_2$};
\node at (2,1.25) {$\dots$};
\node at (5.2,1.25) {$\dots$};
\node at (6.2,1.25) {$i_L$};
\node at (0,-1.25) (A) {$j_1$};
\node at (1,-1.25) (B) {$j_2$};
\node at (2,-1.25) {$\dots$};
\node at (5.2,-1.25) {$\dots$};
\node at (6.2,-1.25) (C) {$j_L$};
\node at (7.2,-0.25) {$b$};
\node at (0,-2.25) (A1) {$z_1$};
\node at (1,-2.25) (B1) {$z_2$};
\node at (2,-2.25)  {$\dots$};
\node at (6.2,-2.25) (C1) {$z_L$};
\draw[->,thick] (A1) -- (A);
\draw[->,thick] (B1) -- (B);
\draw[->,thick] (C1) -- (C);
\draw[->,thick] (D1) -- (D);
\node at (0,0) [circle,fill,inner sep=1.5pt] {};
\node at (1,0) [circle,fill,inner sep=1.5pt] {};
\node at (2,0) [circle,fill,inner sep=1.5pt] {};
\node at (5.2,0) [circle,fill,inner sep=1.5pt] {};
\node at (6.2,0) [circle,fill,inner sep=1.5pt] {};
\end{tikzpicture}}}\right)
\bigotimes_{k = 1}^{L} 
\ket{j_k}_{k}. 
\label{eqrowop6dot}
\end{align}

\begin{rmk}
The operators \eqref{eqrowop6} correspond with the four entries of the {\it monodromy matrix} of the six-vertex model, and satisfy a collection of $2^4$ bilinear relations known as the {\it Yang--Baxter algebra}. Only a few of these relations will be important for our purposes: namely, the ones that allow us to prove the symmetry of our families of multivariate functions, and their Cauchy identities. These will be discussed below.
\end{rmk}

\begin{prop}
\label{prop:6-Tcommute}
For any two integers $a,b \in \{0,1\}$, the operators $\mathcal{T}_{a,b}(x)$ and $\mathcal{T}_{a,b}(y)$ commute:
\begin{align}
\label{6-Tcommute}
[\mathcal{T}_{a,b}(x),\mathcal{T}_{a,b}(y)]=0.
\end{align}
\end{prop}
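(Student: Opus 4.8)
The plan is to derive \eqref{6-Tcommute} from the Yang--Baxter algebra, which is itself a consequence of the Yang--Baxter equation \eqref{eqpropybesix2}. Package the four row operators into a single monodromy matrix $T(x) = \sum_{a,b \in \{0,1\}} E^{a,b} \otimes \mathcal{T}_{a,b}(x)$ acting on $V_0 \otimes \mathbb{V}^{(L)}$, where $V_0 \cong \mathbb{C}^2$ is an auxiliary space and the $E^{a,b}$ are elementary matrices on it. Introducing a second auxiliary copy, the first step is to establish the intertwining relation
\[
R_{12}(y/x)\, T_1(x)\, T_2(y)
=
T_2(y)\, T_1(x)\, R_{12}(y/x)
\]
in $\mathrm{End}(V_1 \otimes V_2 \otimes \mathbb{V}^{(L)})$, where $T_1(x)$ and $T_2(y)$ act in the first and second auxiliary spaces. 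I would prove this by the standard train argument: the product $T_1(x) T_2(y)$ is a pair of rows \eqref{eqrowop6} stacked along the common quantum space $\mathbb{V}^{(L)}$, and one transports $R_{12}(y/x)$ from the left boundary across to the right boundary by applying \eqref{eqpropybesix2} once per column, with $c$ identified with the $k$-th vertical space and $z \mapsto z_k$. After $L$ applications the factor reappears on the right with the two rows interchanged.

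The second step extracts the relevant scalars by taking matrix elements of the intertwining relation in $V_1 \otimes V_2$. In components it reads
\[
\sum_{r,s}
\langle \alpha\beta | R(y/x) | rs\rangle\,
\mathcal{T}_{r,\gamma}(x)\, \mathcal{T}_{s,\delta}(y)
=
\sum_{r,s}
\langle rs | R(y/x) | \gamma\delta\rangle\,
\mathcal{T}_{\beta,s}(y)\, \mathcal{T}_{\alpha,r}(x),
\]
where $\langle \mu\nu|R(y/x)|\rho\sigma\rangle = W_{y/x}(\nu,\mu;\sigma,\rho)$ and, by the conservation property \eqref{conserve-6}, is nonzero only when $\mu+\nu = \rho+\sigma$. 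For a fixed pair $(a,b)$ I choose the free external indices so that only the product $\mathcal{T}_{a,b}(x)\mathcal{T}_{a,b}(y)$ survives on the left and $\mathcal{T}_{a,b}(y)\mathcal{T}_{a,b}(x)$ on the right; conservation then collapses every internal sum to a single term. For the off-diagonal operators $(a,b) \in \{(0,1),(1,0)\}$ this leaves the corner weights, giving
\[
W_{y/x}(0,0;0,0)\,
\mathcal{T}_{a,b}(x)\mathcal{T}_{a,b}(y)
=
W_{y/x}(1,1;1,1)\,
\mathcal{T}_{a,b}(y)\mathcal{T}_{a,b}(x),
\]
whereas for the diagonal operators $(a,b) \in \{(0,0),(1,1)\}$ the very same coefficient appears on both sides.

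To conclude, both corner weights equal $1$ in the stochastic gauge (the top and bottom rows of Figure \ref{fig:6v-weights}), so $W_{y/x}(0,0;0,0) = W_{y/x}(1,1;1,1) = 1 \neq 0$; dividing through yields $\mathcal{T}_{a,b}(x)\mathcal{T}_{a,b}(y) = \mathcal{T}_{a,b}(y)\mathcal{T}_{a,b}(x)$ in all four cases, which is \eqref{6-Tcommute}. The genuinely substantive step is the train argument producing the global intertwining relation from the local equation \eqref{eqpropybesix2}; the remaining index bookkeeping is routine. I would flag that the essential input at the end is the coincidence of the two corner weights, a feature of this particular gauge: were they unequal, the same computation would only yield a commutation relation twisted by a nonzero scalar, rather than genuine commutativity.
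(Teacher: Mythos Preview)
Your proof is correct and follows essentially the same approach as the paper. Both arguments rest on the train argument (repeated application of \eqref{eqpropybesix2}) together with the fact that the corner weights $W_{y/x}(a,a;a,a)$ and $W_{y/x}(b,b;b,b)$ equal $1$; the paper presents this pictorially by attaching a frozen $R$-vertex at the left boundary and peeling it off on the right, whereas you first establish the full $RTT$ relation and then extract the $(\alpha,\beta,\gamma,\delta)=(a,a,b,b)$ component, but the content is identical.
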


\begin{proof}
This follows from a standard Yang--Baxter argument. One begins by considering arbitrary matrix elements of the product $\mathcal{T}_{a,b}(x)\mathcal{T}_{a,b}(y)$:
\begin{align*}
\bigotimes_{k = 1}^{L} \bra{j_k}_{k}
\mathcal{T}_{a,b}(x)\mathcal{T}_{a,b}(y) 
\bigotimes_{k = 1}^{L} \ket{i_k}_{k}
=
\raisebox{-12mm}{\begin{tikzpicture}[scale=0.6,every node/.style={scale=0.6}]
\node at (2,1) {$x$};
\node at (2,2) {$y$};
\foreach \i in {1,2} {
\draw[->,thick] (2.25,\i) -- (3,\i);
}
\foreach \i in {4} {
\draw[lgray,line width=1.5pt,->] (\i,0) -- (\i,3);
}
\foreach \i in {1,2} {
\draw[lgray,line width=1.5pt] (3.5,\i) -- (4.5,\i);
}
\foreach \i in {1,2} {
\node at (5,\i) {$\dots$};
}
\foreach \i in {6,7} {
\draw[lgray,line width=1.5pt,->] (\i,0) -- (\i,3);
}
\foreach \i in {1,2} {
\draw[lgray,line width=1.5pt,->] (5.5,\i) -- (7.5,\i);
}
\node at (3.3,1) {$a$};
\node at (3.3,2) {$a$};
\node[right] at (7.5,1) {$b$};
\node[right] at (7.5,2) {$b$};
\node[above] at (4,3) {$i_1$};
\node[above] at (6,3) {$\dots$};
\node[above] at (7,3) {$i_L$};
\node[below] at (4,0) {$j_1$};
\node[below] at (6,0) {$\dots$};
\node[below] at (7,0) {$j_L$};
\end{tikzpicture}}
\end{align*}
In view of the fact that both of the external left edges take the value $a \in \{0,1\}$, one may insert an $R$-matrix vertex at the left of this picture, without altering the value of the partition function:
\begin{align}
\label{eq:6-insertR-left}
\bigotimes_{k = 1}^{L} \bra{j_k}_{k}
\mathcal{T}_{a,b}(x)\mathcal{T}_{a,b}(y) 
\bigotimes_{k = 1}^{L} \ket{i_k}_{k}
=
\raisebox{-12mm}{\begin{tikzpicture}[scale=0.6,every node/.style={scale=0.6}]
\begin{scope}[shift = {(-1.2,0)}]
\node at (2,1) {$y$};
\node at (2,2) {$x$};
\foreach \i in {1,2} {
\draw[->,thick] (2.25,\i) -- (3,\i);
}
\end{scope}
\draw[lgray,line width=1.5pt] (2.5,2) -- (3.5,1);
\draw[lgray,line width=1.5pt] (2.5,1) -- (3.5,2);
\foreach \i in {4} {
\draw[lgray,line width=1.5pt,->] (\i,0) -- (\i,3);
}
\foreach \i in {1,2} {
\draw[lgray,line width=1.5pt] (3.5,\i) -- (4.5,\i);
}
\foreach \i in {1,2} {
\node at (5,\i) {$\dots$};
}
\foreach \i in {6,7} {
\draw[lgray,line width=1.5pt,->] (\i,0) -- (\i,3);
}
\foreach \i in {1,2} {
\draw[lgray,line width=1.5pt,->] (5.5,\i) -- (7.5,\i);
}
\node[left] at (2.3,1) {$a$};
\node[left] at (2.3,2) {$a$};
\node[right] at (7.5,1) {$b$};
\node[right] at (7.5,2) {$b$};
\node[above] at (4,3) {$i_1$};
\node[above] at (6,3) {$\dots$};
\node[above] at (7,3) {$i_L$};
\node[below] at (4,0) {$j_1$};
\node[below] at (6,0) {$\dots$};
\node[below] at (7,0) {$j_L$};
\node at (3.5,1) {$*$};
\node at (3.5,2) {$*$};
\end{tikzpicture}}
\end{align}
Indeed, this step is justified by the fact that both of the edges marked $*$ are forced to assume the value $a$ (by conservation of paths), and the fact that $W_{y/x}(a,a;a,a)=1$ for $a \in \{0,1\}$; {\it cf.} the top and bottom rows of Figure \ref{fig:6v-weights}. From here, we use the Yang--Baxter equation \eqref{eqybesix1} repeatedly to transfer the inserted $R$-matrix all the way to the right edges of the lattice:
\begin{align*}
\bigotimes_{k = 1}^{L} \bra{j_k}_{k}
\mathcal{T}_{a,b}(x)\mathcal{T}_{a,b}(y) 
\bigotimes_{k = 1}^{L} \ket{i_k}_{k}
=
\raisebox{-12mm}{\begin{tikzpicture}
[scale=0.6,every node/.style={scale=0.6}]
\begin{scope}[shift = {(0,0)}]
\node at (2,1) {$y$};
\node at (2,2) {$x$};
\foreach \i in {1,2} {
\draw[->,thick] (2.25,\i) -- (3,\i);
}
\end{scope}
\draw[lgray,line width=1.5pt,->] (7.5,2) -- (8.5,1);
\draw[lgray,line width=1.5pt,->] (7.5,1) -- (8.5,2);
\foreach \i in {4} {
\draw[lgray,line width=1.5pt,->] (\i,0) -- (\i,3);
}
\foreach \i in {1,2} {
\draw[lgray,line width=1.5pt] (3.5,\i) -- (4.5,\i);
}
\foreach \i in {1,2} {
\node at (5,\i) {$\dots$};
}
\foreach \i in {6,7} {
\draw[lgray,line width=1.5pt,->] (\i,0) -- (\i,3);
}
\foreach \i in {1,2} {
\draw[lgray,line width=1.5pt] (5.5,\i) -- (7.5,\i);
}
\node[left] at (3.5,1) {$a$};
\node[left] at (3.5,2) {$a$};
\node[right] at (8.5,1) {$b$};
\node[right] at (8.5,2) {$b$};
\node[above] at (4,3) {$i_1$};
\node[above] at (6,3) {$\dots$};
\node[above] at (7,3) {$i_L$};
\node[below] at (4,0) {$j_1$};
\node[below] at (6,0) {$\dots$};
\node[below] at (7,0) {$j_L$};
\node at (7.5,1) {$*$};
\node at (7.5,2) {$*$};
\end{tikzpicture}}
\end{align*}
One concludes by deleting the $R$-matrix from the right of the picture; the justification for doing so is again the fact that the edges marked $*$ are frozen to the values $b$, and the trivial weight $W_{y/x}(b,b;b,b)=1$ for $b \in \{0,1\}$. The net result of this calculation is the switching of horizontal lattice lines; we conclude that
\begin{align*}
\bigotimes_{k = 1}^{L} \bra{j_k}_{k}
\mathcal{T}_{a,b}(x)\mathcal{T}_{a,b}(y) 
\bigotimes_{k = 1}^{L} \ket{i_k}_{k}
=
\bigotimes_{k = 1}^{L} \bra{j_k}_{k}
\mathcal{T}_{a,b}(y)\mathcal{T}_{a,b}(x) 
\bigotimes_{k = 1}^{L} \ket{i_k}_{k}
\end{align*}
for all $(i_1,\dots,i_L), (j_1,\dots,j_L) \in \{0,1\}^L$, which is precisely \eqref{6-Tcommute}.
\end{proof}

\begin{prop}
\label{prop:6-commute-nontrivial}
The following commutation relation holds:
\begin{align}
\label{6-eqrowrel2}
\mathcal{T}_{1,1}(x) \mathcal{T}_{1,0}(y)
=
W_{y/x}(0,1;0,1)
\mathcal{T}_{1,0}(y) \mathcal{T}_{1,1}(x)
+ 
W_{y/x}(1,0;0,1)
\mathcal{T}_{1,1}(y) \mathcal{T}_{1,0}(x).
\end{align}
\end{prop}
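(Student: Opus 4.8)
The plan is to follow the same Yang--Baxter strategy as in the proof of Proposition~\ref{prop:6-Tcommute}, the only new ingredient being that the right boundary of the lattice can no longer be trimmed for free. First I would write an arbitrary matrix element of the left-hand side, $\bigotimes_k \bra{j_k}_k\, \mathcal{T}_{1,1}(x)\mathcal{T}_{1,0}(y)\, \bigotimes_k\ket{i_k}_k$, as a two-row partition function: the bottom row carries spectral parameter $x$ with left/right boundary states $1/1$, and the top row carries $y$ with left/right boundary states $1/0$, while the columns carry $z_1,\dots,z_L$ together with the vertical boundary states $(i_1,\dots,i_L)$ and $(j_1,\dots,j_L)$.

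Next I would insert an $R$-vertex at the far left, between the two horizontal lines. Since both left boundary states equal $1$, conservation forces the inserted vertex into the unique configuration of weight $W_{y/x}(1,1;1,1)=1$; hence the insertion does not change the value of the partition function and produces no sum, exactly as in \eqref{eq:6-insertR-left}. I would then transport this $R$-vertex rightward through all $L$ columns by $L$ successive applications of the Yang--Baxter equation \eqref{eqybesix1}, leaving the value unchanged and swapping the two spectral lines, so that to the right of the columns the bottom row now carries $y$ and the top row carries $x$.

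The crux, and the step that differs from Proposition~\ref{prop:6-Tcommute}, is the removal of the $R$-vertex now sitting at the right boundary. Its two outgoing edges are frozen to the external right boundary states, namely $1$ (bottom, the $x$-line's original right state) and $0$ (top, the $y$-line's original right state); because these differ, the vertex cannot be deleted trivially. Instead, summing over its admissible internal configurations, constrained by conservation $1+0=k+\ell$, leaves precisely two terms. The configuration in which the $y$-row exits with right state $0$ and the $x$-row with right state $1$ contributes $W_{y/x}(0,1;0,1)\,\mathcal{T}_{1,0}(y)\mathcal{T}_{1,1}(x)$, while the configuration in which the $y$-row exits with $1$ and the $x$-row with $0$ contributes $W_{y/x}(1,0;0,1)\,\mathcal{T}_{1,1}(y)\mathcal{T}_{1,0}(x)$; these are exactly the two $R$-matrix entries in the $\{(1,0),(0,1)\}$ block of \eqref{eqrm61}. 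Matching these two boundary configurations and their weights to the claimed coefficients is the only genuine bookkeeping, and it is straightforward once the row-swap induced by the dragging is tracked carefully. Since the resulting identity of matrix elements holds for all $(i_1,\dots,i_L),(j_1,\dots,j_L)\in\{0,1\}^L$, it upgrades to the operator identity \eqref{6-eqrowrel2}.
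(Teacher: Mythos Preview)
Your argument is correct and follows essentially the same route as the paper: insert a trivial $R$-vertex on the left using $W_{y/x}(1,1;1,1)=1$, drag it through via the Yang--Baxter equation, and then expand over the two admissible configurations of the emerging $R$-vertex at the right boundary to obtain the two terms of \eqref{6-eqrowrel2}. The paper presents the same steps, with the right-boundary expansion recorded pictorially in \eqref{6-eqybepic2}.
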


\begin{proof}
The proof of \eqref{6-eqrowrel2} follows by similar reasoning to that of Proposition \ref{prop:6-Tcommute}. One again begins by considering the matrix elements of the product $\mathcal{T}_{1,1}(x) \mathcal{T}_{1,0}(y)$, represented as a two-row partition function, and inserting an $R$-matrix vertex at the left of the lattice (using the fact that $W_{y/x}(1,1;1,1)=1$), similarly to \eqref{eq:6-insertR-left}. This time, after repeated application of the Yang--Baxter equation, the $R$-matrix vertex emerges from the right of the picture but it cannot simply be removed modulo freezing. Rather, one must sum over the possible values of the edges which connect this vertex to the lattice, resulting in the equation
\begin{align}
\label{6-eqybepic2}
\raisebox{-10mm}{\begin{tikzpicture}[scale=0.6,every node/.style={scale=0.6}]
\foreach \i in {24} {
\draw[lgray,line width=1.5pt,->] (\i,0) -- (\i,3);
}
\foreach \i in {1,2} {
\draw[lgray,line width=1.5pt] (23.5,\i) -- (24.5,\i);
}
\foreach \i in {1,2} {
\node at (25,\i) {$\dots$}; 
}
\foreach \i in {26,27} {
\draw[lgray,line width=1.5pt,->] (\i,0) -- (\i,3);
}
\foreach \i in {1,2} {
\draw[lgray,line width=1.5pt,->] (25.5,\i) -- (27.5,\i);
}
\node at (24,-0.2) {$j_1$};
\node at (26,-0.2) {$\dots$};
\node at (27,-0.2) {$j_L$};
\node at (24,3.2) {$i_1$};
\node at (26,3.2) {$\dots$};
\node at (27,3.2) {$i_L$};
\node at (21,2) {$x$};
\node at (21,1) {$y$};
\foreach \i in {1,2} {
\draw[thick,->] (21.3,\i) -- (22,\i);
}
\draw[lgray,line width=1.5pt] (22.5,2) -- (23.5,1);
\draw[lgray,line width=1.5pt] (22.5,1) -- (23.5,2);
\node at (22.3,1) {$1$};
\node at (22.3,2) {$1$};
\node at (27.7,2) {$0$};
\node at (27.7,1) {$1$};
\end{tikzpicture}}
\quad
=
\raisebox{-8mm}{\begin{tikzpicture}[scale=0.6,every node/.style={scale=0.6}]
\begin{scope}[shift = {(18,-4)}]
\foreach \i in {10.5} {
\draw[lgray,line width=1.5pt,->] (\i,0) -- (\i,3);
}
\node at (10.5,3.2) {$i_1$};
\node at (12.5,3.2) {$\dots$};
\node at (13.5,3.2) {$i_L$};
\node at (10.5,-0.2) {$j_1$};
\node at (12.5,-0.2) {$\dots$};
\node at (13.5,-0.2) {$j_L$};
\node at (9,1) {$y$};
\node at (9,2) {$x$};
\foreach \i in {1,2} {
\draw[->,thick] (9.2,\i) -- (9.6,\i);
}
\foreach \i in {1,2} {
\draw[lgray,line width=1.5pt] (10,\i) -- (11,\i);
}
\foreach \i in {1,2} {
\node at (11.5,\i) {$\dots$};
} 
\foreach \i in {12.5,13.5} {
\draw[lgray,line width=1.5pt,->] (\i,0) -- (\i,3);
} 
\foreach \i in {1,2} {
\draw[lgray,line width=1.5pt] (12,\i) -- (14,\i);
}
\draw[lgray,line width=1.5pt,->] (14,2) -- (15,1);
\draw[lgray,line width=1.5pt,->] (14,1) -- (15,2);
\node at (9.8,1) {$1$};
\node at (9.8,2) {$1$};
\node at (15.2,1) {$1$};
\node at (15.2,2) {$0$};
\node at (14,0.8) {$0$};
\node at (14,2.2) {$1$};
\end{scope}
\begin{scope}[shift = {(25.5,-4)}]
\node at (8.35,1.5) {$+$};
\foreach \i in {10.5} {
\draw[lgray,line width=1.5pt,->] (\i,0) -- (\i,3);
}
\node at (10.5,3.2) {$i_1$};
\node at (12.5,3.2) {$\dots$};
\node at (13.5,3.2) {$i_L$};
\node at (10.5,-0.2) {$j_1$};
\node at (12.5,-0.2) {$\dots$};
\node at (13.5,-0.2) {$j_L$};
\node at (9,1) {$y$};
\node at (9,2) {$x$};
\foreach \i in {1,2} {
\draw[->,thick] (9.2,\i) -- (9.6,\i);
}
\foreach \i in {1,2} {
\draw[lgray,line width=1.5pt] (10,\i) -- (11,\i);
}
\foreach \i in {1,2} {
\node at (11.5,\i) {$\dots$};
} 
\foreach \i in {12.5,13.5} {
\draw[lgray,line width=1.5pt,->] (\i,0) -- (\i,3);
} 
\foreach \i in {1,2} {
\draw[lgray,line width=1.5pt] (12,\i) -- (14,\i);
}
\draw[lgray,line width=1.5pt,->] (14,2) -- (15,1);
\draw[lgray,line width=1.5pt,->] (14,1) -- (15,2);
\node at (9.8,1) {$1$};
\node at (9.8,2) {$1$};
\node at (15.2,1) {$1$};
\node at (15.2,2) {$0$};
\node at (14,0.8) {$1$};
\node at (14,2.2) {$0$};
\end{scope}
\end{tikzpicture}}
\end{align}
which is valid for all $(i_1,\dots,i_L), (j_1,\dots,j_L) \in \{0,1\}^L$. Matching the vertices which appear in this identity with their symbolic form \eqref{eqdefn6w}, we read off \eqref{6-eqrowrel2}.
\end{proof}

\subsection{Row operators in the infinite-volume limit}
\label{ssec:6-infinite-row}

Before discussing the lift of our operators to rows of infinite length, we require a further definition.

\begin{defn}
\label{def:n-string}
An $n$-string is an infinite vector $S=(S_1,S_2,S_3,\dots)$ with $0 \leq S_i \leq n$ for all $i \in \mathbb{N}$, and obeying the finiteness property $\exists\ M \in \mathbb{N}$ such that $S_i = 0$ for all $i>M$. Whenever $S_i=0$ for all $i>M$, we make the identification $(S_1,S_2,S_3,\dots) \equiv (S_1,\dots,S_M)$. We write $|S| = \sum_{i \geq 1} S_i$ for the sum of components of any $n$-string and refer to this quantity as its {\it weight}. Let $\mathfrak{s}(n)$ denote the set of all $n$-strings. 
\end{defn}

The functions studied in this chapter will be indexed by $1$-strings (the $n=1$ case of Definition \ref{def:n-string}). We begin by defining a suitable completion of the vector space $\mathbb{V}^{(L)}$, obtained by taking $L \rightarrow \infty$.\footnote{Equivalently, we shall work with partition functions with infinitely many vertical lines in the rightward direction.} To that end, we define
\begin{align}
\label{6infV}
\mathbb{V}
=
\bigoplus_{S \in \mathfrak{s}(1)} \mathbb{C} \ket{S},
\end{align}
with the set $\mathfrak{s}(1)$ of $1$-strings given by Definition \ref{def:n-string}. 
We shall refer to $\mathbb{V}$ as the \textit{infinite-volume limit} of the space $\mathbb{V}^{(L)}$.

Some care is needed to extend the action of the finite row-operators \eqref{eqrowop6} and \eqref{eqrowop6dot} to the infinite volume; indeed, one may encounter convergence issues that are not present in finite size. We shall not attempt to address this issue for all of the possible choices of row-operators, but only for the subset of them that is needed in this work. This is done in the following proposition:
\begin{prop}[\cite{BorodinPetrov16} Sections 4.2 and 4.3]
\label{prop:6-ACD-define}
The objects
\begin{align*}
\mathcal{A}(x)
:=
\lim_{L \rightarrow \infty}
\mathcal{T}_{0,0}(x),
\qquad
\mathcal{C}(x)
:=
\lim_{L \rightarrow \infty}
\mathcal{T}_{1,0}(x),
\qquad
\dotr{\mathcal{D}}(x)
:=
\lim_{L \rightarrow \infty}
\dotr{\mathcal{T}}_{1,1}(x),
\end{align*}
are well-defined as operators in ${\rm End}(\mathbb{V})$.
\end{prop}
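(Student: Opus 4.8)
The plan is to prove well-definedness at the level of matrix coefficients: for any two $1$-strings $S, S' \in \mathfrak{s}(1)$ I will show that $\bra{S'} \mathcal{T}_{a,b}(x;\bm{z}) \ket{S}$ (and its dotted analogue $\bra{S'} \dotr{\mathcal{T}}_{a,b}(x;\bm{z}) \ket{S}$) becomes \emph{independent} of $L$ once $L$ is large enough, so that the $L \to \infty$ limit exists trivially. Fix $M \in \mathbb{N}$ so large that both $S$ and $S'$ are supported on $[1,M]$, i.e. $S_k = S'_k = 0$ for all $k > M$. The heart of the argument is a \emph{freezing} statement: in every configuration of nonzero weight, all edges in columns $k > M$ are forced, and these trailing vertices contribute a total weight of exactly $1$.

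To establish the freezing, write $h_k$ for the value of the horizontal edge separating columns $k$ and $k+1$, so that $h_0 = a$ is the left boundary. Summing the conservation law \eqref{conserve-6} over columns $1,\dots,M$ and telescoping the horizontal contributions gives $h_M = |S'| - |S| + a$, while applying the same computation across the entire row of length $L$ (where $h_L = b$) forces $|S'| = |S| + b - a$; together these yield $h_M = b$. Since every vertical edge in a column $k > M$ carries the state $0$ (both $S$ and $S'$ being supported on $[1,M]$), conservation then propagates $h_k = b$ by induction for all $k \geq M$. Hence each trailing vertex is forced to be of type $(0,b;0,b)$, and the configuration beyond column $M$ is completely rigid.

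For $\mathcal{A}(x) = \lim_L \mathcal{T}_{0,0}(x)$ and $\mathcal{C}(x) = \lim_L \mathcal{T}_{1,0}(x)$ we have $b = 0$, so each trailing vertex is the empty vertex $(0,0;0,0)$, of weight $1$ (cf. Figure \ref{fig:6v-weights}). Adjoining further empty columns therefore leaves the partition function unchanged, and $\bra{S'} \mathcal{T}_{a,b}(x) \ket{S}$ is literally constant for all $L \geq M$; the limit exists. I would add a remark on the codomain: applying these operators to a fixed $\ket{S}$ can produce nonzero coefficients for infinitely many $S'$ (for instance $\mathcal{A}(x)$ may push a top path arbitrarily far to the right), so the image is naturally read in the completion $\prod_{S \in \mathfrak{s}(1)} \mathbb{C}\ket{S}$ of the space \eqref{6infV}; the content of the proposition is precisely the coefficientwise convergence just established.

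The case $\dotr{\mathcal{D}}(x) = \lim_L \dotr{\mathcal{T}}_{1,1}(x)$ is the delicate one, and is the reason the dotted normalization is introduced; this is the main obstacle. Here $b = 1$, so a single path runs horizontally through every column $k > M$ and each trailing vertex is of type $(0,1;0,1)$. In the undotted weights this vertex carries $W_{z_k/x}(0,1;0,1) = \tfrac{x - z_k}{x - q z_k} \neq 1$, so each adjoined column multiplies the coefficient by an extra factor and the infinite product $\prod_{k > M} \tfrac{x - z_k}{x - q z_k}$ fails to stabilize, leaving $\lim_L \mathcal{T}_{1,1}(x)$ ill-defined. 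The normalization \eqref{eqcrsym61} is chosen exactly so that $\dotr{W}_{z_k/x}(0,1;0,1) = 1$; with it, every trailing vertex again contributes $1$, the coefficient is constant for $L \geq M$, and the limit exists. Thus the one genuine convergence difficulty is resolved entirely by passing to the dotted weights.
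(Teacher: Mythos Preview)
Your argument is correct and follows essentially the same freezing approach as the paper: once $L$ exceeds the common support $M$ of $S$ and $S'$, all trailing vertices are forced to be $W(0,0;0,0)=1$ (for $\mathcal{A},\mathcal{C}$) or $\dotr{W}(0,1;0,1)=1$ (for $\dotr{\mathcal{D}}$), so the matrix element stabilizes. Your telescoping derivation of $h_M=b$ via global conservation is a nice explicit justification of a step the paper leaves implicit.

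One small correction to your aside about the codomain: for these three operators the image genuinely lies in $\mathbb{V}$, not merely its completion. Paths in the six-vertex model propagate from bottom-left to top-right, and the operator sends $\ket{S}$ (top edges) to combinations of $\ket{S'}$ (bottom edges); hence every path contributing to $S'$ at column $m$ must exit at the top in some column $\geq m$, forcing ${\rm supp}(S')\subseteq[1,M]$ whenever ${\rm supp}(S)\subseteq[1,M]$. So only finitely many $S'$ give nonzero coefficients, and no passage to $\prod_{S}\mathbb{C}\ket{S}$ is required. The claim that ``$\mathcal{A}(x)$ may push a top path arbitrarily far to the right'' has the direction reversed.
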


\begin{proof}
The matrix elements of $\mathcal{A}(x)$ and $\mathcal{C}(x)$ are represented by one-row partition functions of the form
\[
\begin{tikzpicture}[scale = 0.8,every node/.style={scale=0.8}]
\node at (-2,0) (D1) {$x$};
\node at (-1.2,0) (D) {};
\draw[lgray,line width=1.5pt,->] (-1,0) -- (7.2,0);
\foreach \i in {0,1,2,3,4,5,6} {
\draw[<-,lgray,line width=1.5pt] (\i,1) -- (\i,-1);
}
\node at (-1.1,0) {$a$};
\node at (0,1.25) {$i_1$};
\node at (1,1.25) {$i_2$};
\node at (2,1.25) {$\dots$};
\node at (3,1.25) {$i_{N}$};
\node at (4,1.25) {$0$};
\node at (5,1.25) {$0$};
\node at (6,1.25) {$\dots$};
\node at (0,-1.25) (A) {$j_1$};
\node at (1,-1.25) (B) {$j_2$};
\foreach \i in {2,6} {
\node at (\i,-1.25) {$\dots$};
}
\node at (3,-1.25) {$j_{N}$};
\node at (4,-1.25) {$0$};
\node at (5,-1.25) {$0$};
\node at (6,-1.25) (C) {$\dots$};
\node[right] at (7.2,0) {$0$};
\node at (0,-2.25) (A1) {$z_1$};
\node at (1,-2.25) (B1) {$z_2$};
\node at (3,-2.25) {$z_N$};
\node at (4,-2.25) {$z_{N+1}$};
\node at (2,-2.25)  {$\dots$};
\draw[->,thick] (3,-2) -- (3,-1.5);
\draw[->,thick] (4,-2) -- (4,-1.5);
\draw[->,thick] (A1) -- (A);
\draw[->,thick] (B1) -- (B);
\draw[->,thick] (D1) -- (D);
\foreach \i in {3.5,4.5,5.5} {
\node at (\i,0) {$0$};
}
\end{tikzpicture}
\]
where $a \in \{0,1\}$. By virtue of the fact that $\exists\ N: i_k = j_k=0$ for all $k>N$, it is clear that all vertices in this diagram past the $N$-th column must take the form $W_{z_k/x}(0,0;0,0)=1$. As such, viewed as operators in ${\rm End}(\mathbb{V})$, the matrix entries of $\mathcal{A}(x)$ and $\mathcal{C}(x)$ consist only of finite products of rational functions.

A similar argument works for the matrix elements of $\dotr{\mathcal{D}}(x)$, which are represented by one-row partition functions using dotted vertices:
\[
\begin{tikzpicture}[scale = 0.8,every node/.style={scale=0.8}]
\node at (-2,0) (D1) {$x$};
\node at (-1.2,0) (D) {};
\draw[lgray,line width=1.5pt,->] (-1,0) -- (7.2,0);
\foreach \i in {0,1,2,3,4,5,6} {
\draw[<-,lgray,line width=1.5pt] (\i,1) -- (\i,-1);
}
\node at (-1.1,0) {$1$};
\node at (0,1.25) {$i_1$};
\node at (1,1.25) {$i_2$};
\node at (2,1.25) {$\dots$};
\node at (3,1.25) {$i_{N}$};
\node at (4,1.25) {$0$};
\node at (5,1.25) {$0$};
\node at (6,1.25) {$\dots$};
\node at (0,-1.25) (A) {$j_1$};
\node at (1,-1.25) (B) {$j_2$};
\foreach \i in {2,6} {
\node at (\i,-1.25) {$\dots$};
}
\node at (3,-1.25) {$j_{N}$};
\node at (4,-1.25) {$0$};
\node at (5,-1.25) {$0$};
\node at (6,-1.25) (C) {$\dots$};
\node[right] at (7.2,0) {$1$};
\node at (0,-2.25) (A1) {$z_1$};
\node at (1,-2.25) (B1) {$z_2$};
\node at (3,-2.25) {$z_N$};
\node at (4,-2.25) {$z_{N+1}$};
\node at (2,-2.25)  {$\dots$};
\draw[->,thick] (3,-2) -- (3,-1.5);
\draw[->,thick] (4,-2) -- (4,-1.5);
\draw[->,thick] (A1) -- (A);
\draw[->,thick] (B1) -- (B);
\draw[->,thick] (D1) -- (D);
\foreach \i in {3.5,4.5,5.5} {
\node at (\i,0) {$1$};
}
\foreach \i in {0,1,2,3,4,5,6} {
\node at (\i,0) [circle,fill,inner sep=1.5pt] {};
}
\end{tikzpicture}
\]
This time, $\exists\ N$ such that all vertices past the $N$-th column must take the form $\dotr{W}_{z_k/x}(0,1;0,1)=1$, and we once again conclude that entries of $\dotr{\mathcal{D}}(x)$ are meaningful.
\end{proof}

\begin{prop}
Operators $\mathcal{A}(x)$, $\mathcal{C}(x)$, $\dotr{\mathcal{D}}(x)$ satisfy the following commutation relations in ${\rm End}(\mathbb{V})$:
\begin{align}
\label{6-ACD}
[\mathcal{A}(x),\mathcal{A}(y)]
=
[\mathcal{C}(x),\mathcal{C}(y)]
=
[\dotr{\mathcal{D}}(x),\dotr{\mathcal{D}}(y)]
=
0.
\end{align}
\end{prop}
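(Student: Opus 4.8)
The plan is to deduce each relation in \eqref{6-ACD} from its finite-volume counterpart and then to pass carefully to the infinite-volume limit. In finite size, the commutativities $[\mathcal{T}_{0,0}(x),\mathcal{T}_{0,0}(y)]=0$ and $[\mathcal{T}_{1,0}(x),\mathcal{T}_{1,0}(y)]=0$ are exactly the $(a,b)=(0,0)$ and $(a,b)=(1,0)$ instances of Proposition \ref{prop:6-Tcommute}; these govern $\mathcal{A}(x)=\lim_L \mathcal{T}_{0,0}(x)$ and $\mathcal{C}(x)=\lim_L \mathcal{T}_{1,0}(x)$. For the dotted operator I would first observe that, in finite volume, every configuration contributing to $\dotr{\mathcal{T}}_{a,b}(x;\bm{z})$ carries the same configuration-independent scalar $\prod_{k=1}^{L}\tfrac{x-qz_k}{x-z_k}$ relative to $\mathcal{T}_{a,b}(x;\bm{z})$, by applying the definition \eqref{eqcrsym61} of the dotted weights column by column. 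Hence $\dotr{\mathcal{T}}_{1,1}(x)=\big(\prod_{k=1}^{L}\tfrac{x-qz_k}{x-z_k}\big)\mathcal{T}_{1,1}(x)$, and since this prefactor is a scalar it commutes through everything; combined with the $(a,b)=(1,1)$ instance of Proposition \ref{prop:6-Tcommute} this gives $[\dotr{\mathcal{T}}_{1,1}(x),\dotr{\mathcal{T}}_{1,1}(y)]=0$ in ${\rm End}(\mathbb{V}^{(L)})$.

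The second step is to transport each finite-volume identity to ${\rm End}(\mathbb{V})$. For fixed $1$-strings $S,T$, I would compute $\bra{T}\mathcal{A}(x)\mathcal{A}(y)\ket{S}$ by inserting a resolution of the identity over an intermediate $1$-string $U$; the point, already implicit in Proposition \ref{prop:6-ACD-define}, is that conservation of paths together with the frozen weight-$1$ tail forces $U$ to have bounded support, so the intermediate sum is finite and the operator product is well defined. Choosing $L$ larger than the supports of $S,T,U$, the individual matrix elements $\bra{T}\mathcal{T}_{0,0}(x)\ket{U}$ and $\bra{U}\mathcal{T}_{0,0}(y)\ket{S}$ stabilize to their infinite-volume values, since all columns past the support carry the trivial weight $W_{z_k/x}(0,0;0,0)=1$. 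Thus $\bra{T}\mathcal{A}(x)\mathcal{A}(y)\ket{S}=\lim_{L\to\infty}\bra{T}\mathcal{T}_{0,0}(x)\mathcal{T}_{0,0}(y)\ket{S}$, and likewise with the two factors reversed. The finite-volume commutativity forces the two limits to coincide, giving $[\mathcal{A}(x),\mathcal{A}(y)]=0$; the arguments for $\mathcal{C}$ and $\dotr{\mathcal{D}}$ are identical, using the trivial tail weights $W_{z_k/x}(0,0;0,0)=1$ and $\dotr{W}_{z_k/x}(0,1;0,1)=1$ respectively.

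I expect the only genuine obstacle to be this passage to the limit, rather than the algebra: one must be sure that the products $\mathcal{A}(x)\mathcal{A}(y)$, $\mathcal{C}(x)\mathcal{C}(y)$ and $\dotr{\mathcal{D}}(x)\dotr{\mathcal{D}}(y)$ make sense as operators on $\mathbb{V}$ (finiteness of the intermediate summation) and that matrix elements of these products genuinely equal the $L\to\infty$ limits of their finite-volume analogues. Both facts hinge on the same stabilization mechanism used to define the operators in the first place, so once the well-definedness bookkeeping is recorded the commutation relations follow with no further computation. A cosmetic alternative for $\mathcal{A}$ and $\mathcal{C}$ would be to rerun the Yang--Baxter insertion argument of Proposition \ref{prop:6-Tcommute} directly in infinite volume, propagating the auxiliary $R$-matrix into the frozen tail where it may be removed; I would nonetheless prefer the finite-volume-plus-limit route, since it handles the dotted operator $\dotr{\mathcal{D}}$ uniformly and avoids re-deriving the exchange relation.
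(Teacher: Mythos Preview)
Your proposal is correct and follows essentially the same approach as the paper: reduce to the finite-volume commutativity of Proposition~\ref{prop:6-Tcommute} (cases $(a,b)=(0,0),(1,0),(1,1)$, the last up to the scalar normalization \eqref{eqcrsym61}), then pass to the infinite-volume limit using the stabilization mechanism underpinning Proposition~\ref{prop:6-ACD-define}. The paper's proof is a two-sentence sketch citing exactly these ingredients, whereas you have spelled out the bookkeeping for the limit passage (finiteness of the intermediate sum, stabilization of matrix elements) more carefully than the paper does.
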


\begin{proof}
The relations \eqref{6-ACD} are a direct transcription of the $(a,b)=(0,0)$, $(a,b)=(1,0)$ cases of \eqref{6-Tcommute}, as well as a normalized version of the $(a,b)=(1,1)$ case. All of these relations survive the transition to infinite volume, in view of Proposition \ref{prop:6-ACD-define}.
\end{proof}

\begin{prop}[\cite{BorodinPetrov16} Equation (4.12)]
\label{propforCauchy61}
Fix $q,x,y,z_k \in \mathbb{C}$ such that for all $k \in \mathbb{Z}_{>0}$, the following condition holds:
\begin{align}
\label{6-eqres2}
\begin{tikzpicture}[baseline=(current bounding box.center)]
\node[scale = 0.8] at (-1.5,0) {$x$};
\node[scale = 0.8] at (-0.1,-1.2) {$z_k$};
\draw[thick,->] (-1.3,0) -- (-0.9,0);
\draw[thick,<-] (-0.1,-0.75) -- (-0.1,-1.05);
\draw[thick] (-1.7,-0.8) -- (-1.7,0.7);
\draw[->,lgray,line width=1.5pt] (-0.6,0) -- (0.4,0);
\draw[->,lgray,line width=1.5pt] (-0.1,-0.5) -- (-0.1,0.5);
\node[scale = 0.7] at (-0.7,0) {$0$};
\node[scale = 0.7] at (-0.1,-0.6) {$0$};
\node[scale = 0.7] at (-0.1,0.65) {$0$};
\node[scale = 0.7] at (0.5,0) {$0$};
\node at (-0.1,0) [circle,fill,inner sep=1.5pt] {};
\node at (0.75,0) {$\cdot$};
\begin{scope}[shift = {(2.5,0)}]
\node[scale = 0.8] at (-1.5,0) {$y$};
\node[scale = 0.8] at (-0.1,-1.2) {$z_k$};
\draw[thick,->] (-1.3,0) -- (-0.9,0);
\draw[thick,<-] (-0.1,-0.75) -- (-0.1,-1.05);
\draw[thick] (0.7,-0.8) -- (0.7,0.7);
\draw[->,lgray,line width=1.5pt] (-0.6,0) -- (0.4,0);
\draw[->,lgray,line width=1.5pt] (-0.1,-0.5) -- (-0.1,0.5);
\node[scale = 0.7] at (-0.7,0) {$1$};
\node[scale = 0.7] at (-0.1,-0.6) {$0$};
\node[scale = 0.7] at (-0.1,0.65) {$0$};
\node[scale = 0.7] at (0.5,0) {$1$};
\end{scope}
\end{tikzpicture}
=
\bigg| 
\frac{x-qz_k}{x-z_k} 
\cdot 
\frac{y-z_k}{y-qz_k}
\bigg|
<
\epsilon
<1,
\end{align}
for some constant $\epsilon \in \mathbb{R}$ which is independent of $k$. One then has the following commutation relation in ${\rm End}(\mathbb{V})$:
\begin{equation}
\label{6-eqspexchange1}
\dotr{\mathcal{D}}(x)\mathcal{C}(y)
=
\left( \frac{x-y}{x-qy} \right)
\mathcal{C}(y)\dotr{\mathcal{D}}(x).
\end{equation}
\end{prop}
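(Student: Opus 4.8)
The plan is to obtain \eqref{6-eqspexchange1} as the infinite-volume limit of the finite-size commutation relation \eqref{6-eqrowrel2} established in Proposition \ref{prop:6-commute-nontrivial}, after converting the plain operator $\mathcal{T}_{1,1}$ into its dotted counterpart $\dotr{\mathcal{T}}_{1,1}$. I work throughout at a finite volume $L$, and only pass to $L \rightarrow \infty$ at the very end, reading off the identity one matrix element at a time on $\mathbb{V}$.

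First I would normalize \eqref{6-eqrowrel2}. By the definition \eqref{eqcrsym61} of the dotted weights, in finite volume one has $\dotr{\mathcal{T}}_{1,1}(w) = \Phi_w\, \mathcal{T}_{1,1}(w)$, where $\Phi_w := \prod_{k=1}^{L} \frac{w - q z_k}{w - z_k}$ is a scalar. Multiplying \eqref{6-eqrowrel2} through by $\Phi_x$, using that $\Phi_x$ commutes with every row operator, and substituting $\mathcal{T}_{1,1}(y) = \Phi_y^{-1}\dotr{\mathcal{T}}_{1,1}(y)$ in the second term, yields
\begin{align*}
\dotr{\mathcal{T}}_{1,1}(x)\, \mathcal{T}_{1,0}(y)
=
W_{y/x}(0,1;0,1)\, \mathcal{T}_{1,0}(y)\, \dotr{\mathcal{T}}_{1,1}(x)
+
W_{y/x}(1,0;0,1)\, \frac{\Phi_x}{\Phi_y}\, \dotr{\mathcal{T}}_{1,1}(y)\, \mathcal{T}_{1,0}(x).
\end{align*}
Reading the two weights off Figure \ref{fig:6v-weights} gives $W_{y/x}(0,1;0,1) = \frac{x-y}{x-qy}$ and $W_{y/x}(1,0;0,1) = \frac{(1-q)x}{x-qy}$, while $\frac{\Phi_x}{\Phi_y} = \prod_{k=1}^{L} \frac{(x-qz_k)(y-z_k)}{(x-z_k)(y-qz_k)}$ is exactly the product of the quantities whose moduli are bounded by $\epsilon$ in the hypothesis \eqref{6-eqres2}.

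Next I would take $L \rightarrow \infty$ in each fixed matrix element $\bra{S'}(\cdots)\ket{S}$ with $S, S' \in \mathfrak{s}(1)$. On the left, $\dotr{\mathcal{T}}_{1,1}(x)\mathcal{T}_{1,0}(y) \rightarrow \dotr{\mathcal{D}}(x)\mathcal{C}(y)$, and in the first term of the right, $\mathcal{T}_{1,0}(y)\dotr{\mathcal{T}}_{1,1}(x) \rightarrow \mathcal{C}(y)\dotr{\mathcal{D}}(x)$; both limits exist and are finite, since by Proposition \ref{prop:6-ACD-define} the constituent operators are well-defined on $\mathbb{V}$ (recall $\mathcal{C}$ injects a single path while $\dotr{\mathcal{D}}$ conserves path number, so the relevant matrix elements converge under \eqref{6-eqres2}). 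The scalar coefficient $\Phi_x/\Phi_y$ of the second term satisfies $|\Phi_x/\Phi_y| < \epsilon^{L} \rightarrow 0$ by \eqref{6-eqres2}. Hence the only surviving term reproduces $\frac{x-y}{x-qy}\,\mathcal{C}(y)\dotr{\mathcal{D}}(x)$, which is precisely \eqref{6-eqspexchange1}.

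The main obstacle is making the vanishing of the second term rigorous: it is a product of a scalar tending to $0$ and an operator whose matrix elements could a priori grow with $L$. I would settle this by observing that the operator factor $\dotr{\mathcal{T}}_{1,1}(y)\mathcal{T}_{1,0}(x)$ itself converges, to $\dotr{\mathcal{D}}(y)\mathcal{C}(x)$, by the same finiteness-of-contributions argument underlying Proposition \ref{prop:6-ACD-define} together with \eqref{6-eqres2}; in particular its matrix elements are bounded uniformly in $L$, so multiplying by $\Phi_x/\Phi_y \rightarrow 0$ forces the whole term to zero. This makes transparent that the condition \eqref{6-eqres2} is exactly what is needed to suppress the second exchange channel in the limit, and completes the derivation of \eqref{6-eqspexchange1}.
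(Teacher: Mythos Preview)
Your argument is correct and follows essentially the same route as the paper: start from the finite-volume relation \eqref{6-eqrowrel2}, renormalize by $\prod_{k=1}^L W_{z_k/x}(0,1;0,1)$ to pass to dotted operators, and show that the unwanted term dies in the limit by the hypothesis \eqref{6-eqres2}. The only packaging difference is that the paper leaves the second term as $\mathcal{T}_{1,1}(y)\dotr{\mathcal{T}}_{1,0}(x)$ and argues directly that every column $k>N$ of its two-row partition function contributes precisely the factor in \eqref{6-eqres2}, whereas you rewrite it as $(\Phi_x/\Phi_y)\cdot\dotr{\mathcal{T}}_{1,1}(y)\mathcal{T}_{1,0}(x)$, pushing all the decay into the scalar $\Phi_x/\Phi_y$ and observing that the remaining operator has bounded (indeed stabilizing) matrix elements. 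These are two phrasings of the same estimate. Two minor remarks: the boundedness of $\dotr{\mathcal{T}}_{1,1}(y)\mathcal{T}_{1,0}(x)$ does not actually require \eqref{6-eqres2} (far-right columns freeze to weight $1$ by the same mechanism as in Proposition~\ref{prop:6-ACD-define}), so you can drop that caveat; and $\mathcal{C}$ \emph{removes} a path rather than injecting one, though this does not affect your argument.
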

\begin{proof}
Since it will be important in what follows, we note here the version of \eqref{6-eqrowrel2} obtained by dividing through by $\prod_{i=1}^{L} W_{z_i/x}(0,1;0,1)$. This converts all row operators carrying spectral parameter $x$ into their dotted counterparts:
\begin{align}
\label{6-pre-limit}
\dotr{\mathcal{T}}_{1,1}(x) \mathcal{T}_{1,0}(y)
=
W_{y/x}(0,1;0,1)
\mathcal{T}_{1,0}(y) \dotr{\mathcal{T}}_{1,1}(x)
+ 
W_{y/x}(1,0;0,1)
\mathcal{T}_{1,1}(y) \dotr{\mathcal{T}}_{1,0}(x).
\end{align}
We argue that in the limit $L \rightarrow \infty$, only the left hand side and the first term on the right hand side of \eqref{6-pre-limit} have non-vanishing matrix elements when evaluated on $\mathbb{V}$: in fact, it is clear that these terms do indeed produce \eqref{6-eqspexchange1} under the limit in question. Our attention then shifts to showing that matrix elements of the second term on the right hand side vanish as $L \rightarrow \infty$.

Examining the matrix elements of $\mathcal{T}_{1,1}(y) \dotr{\mathcal{T}}_{1,0}(x)$ as $L \rightarrow \infty$, one finds that they are given by partition functions of the form
\begin{equation*}
\begin{tikzpicture}[scale=0.64,every node/.style={scale=0.7},baseline={([yshift=-0.5ex]current bounding box.center)}]
\draw[lgray,line width=1.5pt,->] (0,1) -- (8,1);
\draw[lgray,line width=1.5pt,->] (0,0) -- (8,0);
\foreach \i in {1,2,3,4,5,6,7} {
\draw[lgray,line width=1.5pt,<-] (\i,2) -- (\i,-1);
}
\foreach \i in {1,2} {
\draw[->,thick] (\i,-2.2) -- (\i,-1.5);
}
\foreach \i in {4,5} {
\draw[->,thick] (\i,-2.2) -- (\i,-1.5);
}
\foreach \i in {1,2}
{
\node at (\i,-2.5) {$z_{\i}$};
}
\node at (3,-2.5) {$\dots$};
\node at (4,-2.5) {$z_{N-1}$};
\node at (5,-2.5) {$z_{N}$};
\node at (-0.25,0) {$1$};
\node at (-0.25,1) {$1$};
\node at (8.2,1) {$0$};
\node at (8.2,0) {$1$};
\node at (1,2.2) {$i_1$};
\node at (2,2.2) {$i_2$};
\node at (1,-1.2) {$j_1$};
\node at (2,-1.2) {$j_2$};
\node at (3,-1.2) {$\dots$};
\node at (4,-1.2) {$j_{N-1}$};
\node at (5,-1.2) {$j_{N}$};
\foreach \i in {7} {
\node at (\i,-1.2) {$\dots$};
}
\node at (6,-1.2) {$0$};
\node at (3,2.2) {$\dots$};
\node at (4,2.2) {$i_{N-1}$};
\node at (5,2.2) {$i_{N}$};
\foreach \i in {7} {
\node at (\i,2.2) {$\dots$};
}
\node at (6,2.2) {$0$};
\node at (-1.5,0) {$y$};
\node at (-1.5,1) {$x$};
\draw[thick,->] (-1.25,0) -- (-0.5,0);
\draw[thick,->] (-1.25,1) -- (-0.5,1);
\foreach \i in {1,2,3,4,5,6,7} {
\node at (\i,1) [circle,fill,inner sep=1.5pt] {};
}
\end{tikzpicture}
\end{equation*}
where as previously, $\exists\ N: i_k = j_k = 0$ for all $k>N$. It is then not difficult to show that all columns beyond the $N$-th one will produce pairs of vertices of the form \eqref{6-eqres2}. Since we assume that the absolute values of these quantities are uniformly bounded below $1$, and they occur infinitely often, one finds that the above partition function vanishes.
\end{proof}

\section{Rational symmetric functions from the six-vertex model}
\label{sec:6v-functions}

The goal of this section is to introduce families of multivariate rational functions, derived as partition functions in the stochastic six-vertex model. Section \ref{ssec:6F} contains the definition of the multivariate rational function $F_S$ and Section \ref{ssec:6G} introduces its partner, the multivariate rational function $G_S$. Finally, in Section \ref{ssec:symmetry-6}, we prove that both functions are symmetric in their primary alphabet.

\subsection{Multivariate rational function $F_S$}
\label{ssec:6F}

\begin{defn}
\label{defnraf1}
Fix $N \in \mathbb{Z}_{>0}$ and let $S \in \mathfrak{s}(1)$ be a $1$-string such that $|S|=N$. Fix also two alphabets $(x_1,\dots,x_N)$, $\bm{z} = (z_1,z_2,\dots)$. We define the multivariate rational function
\begin{equation}
\label{eqrff1}
F_{S}(x_1,\dots,x_N;\bm{z}) = \bra{\varnothing} \mathcal{C}(x_1;\bm{z})\cdots \mathcal{C}(x_N;\bm{z}) \ket{S},
\end{equation}
where the row operators $\mathcal{C}(x;\bm{z})$ are given by Proposition \ref{prop:6-ACD-define}, while
\begin{equation*}
\ket{S} = \bigotimes_{k = 1}^{\infty} \ket{S_{k}}_{k}
\quad \text{ and } \quad 
\bra{\varnothing} = \bigotimes_{k = 1}^{\infty} \bra{0}_k
\end{equation*}
denote states in $\mathbb{V}$ and its dual $\mathbb{V}^{*}$. We refer to $(x_1,\dots,x_N)$ and $(z_1,z_2,\dots)$ as the primary and secondary alphabets of $F_S$.
\end{defn}

Translating \eqref{eqrff1} into its partition function form, one has that
\begin{equation}
\label{picofsixf1}
F_S(x_1,\dots,x_N;\bm{z})
=
\tikz{0.7}{1.5cm}{
\foreach \y in {1,...,4}
\draw[lgray,line width=1.5pt,->] (0,\y) -- (10,\y);
\foreach \y in {1,...,4}
\node[left] at (0,\y) {1};
\foreach \y in {1,...,4}
\node[right] at (10,\y) {0};
\foreach \x in {1,...,9}
\draw[lgray,line width=1.5pt,->] (\x,0) -- (\x,5);
\node[left] at (-1.2,1) {$x_1\rightarrow$};
\node[left] at (-1.2,2.2) {$\vdots$};
\node[left] at (-1.2,3.2) {$\vdots$};
\node[left] at (-1.2,4) {$x_N\rightarrow$};
\node[above] at (1,5) {$S_1$};
\node[above] at (2,5) {$S_2$};
\node[above] at (3,5) {$S_3$};
\node[above] at (4,5) {$\cdots$};
\node[above] at (5,5) {$\cdots$};
\node[below] at (1,0) {0};
\node[below] at (2,0) {0};
\node[below] at (3,0) {0};
\node[below] at (4,-0.2) {$\cdots$};
\node[below] at (5,-0.2) {$\cdots$};
\node[below] at (1,-0.7) {$\uparrow$};
\node[below] at (2,-0.7) {$\uparrow$};
\node[below] at (3,-0.7) {$\uparrow$};
\node[below] at (1,-1.4) {$z_1$};
\node[below] at (2,-1.4) {$z_2$};
\node[below] at (3,-1.4) {$z_3$};
\node[below] at (4,-1.4) {$\cdots$};
\node[below] at (5,-1.4) {$\cdots$};
}
\end{equation}
where all bottom incoming and right outgoing edges are assigned the state $0$. Every left incoming edge is assigned the state $1$, while top outgoing edges correspond with the entries of $S$.

\begin{rmk}
The multivariate functions \eqref{picofsixf1} are a special case of functions previously studied in \cite{Borodin17,BorodinPetrov16}. The functions studied in those earlier works are more general in two ways: (a) They depend on an extra complex parameter $s$, in addition to the variables $(x_1,\dots,x_N)$ and $q$; (b) The underlying vertex model in \cite{Borodin17,BorodinPetrov16} is a {\it higher-spin} version of the six-vertex model, and this permits the indices on vertical lattice edges (including all $S_i$, $i \geq 1$ at the top of the lattice) to take arbitrary non-negative integer values. One recovers the functions \eqref{picofsixf1} from those of \cite{Borodin17,BorodinPetrov16} by restricting each $S_i$, $i \geq 1$ to lie in $\{0,1\}$, and setting $s=q^{-1/2}$.

From an alternative perspective, the functions \eqref{picofsixf1} may be considered as {\it pre-fused} analogues of their higher-spin counterparts in \cite{Borodin17,BorodinPetrov16}: namely, one recovers the latter objects by applying the {\it fusion procedure} to the former. For more information on this matter, we refer the reader to \cite[Section 3.6]{BorodinWheeler18}.
\end{rmk}

\subsection{Multivariate rational function $G_S$}
\label{ssec:6G}

\begin{defn}
Fix $N \in \mathbb{Z}_{>0}$ and let $S \in \mathfrak{s}(1)$ be a $1$-string such that $|S|=N$. Fix a further integer $M \in \mathbb{Z}_{>0}$ and two alphabets $(y_1,\dots,y_M)$, $\bm{z} = (z_1,z_2,\dots)$. We define the multivariate rational function
\begin{equation}
\label{eqrfg2}
G_{S}(y_1,\dots,y_M;\bm{z}) = \bra{1^N,\varnothing} \mathcal{A}(y_1;\bm{z}) \cdots \mathcal{A}(y_M;\bm{z}) \ket{S},
\end{equation}
where the row operators $\mathcal{A}(y_i;\bm{z})$ are given by Proposition \ref{prop:6-ACD-define}, while
\begin{equation*}
\ket{S} = \bigotimes_{k = 1}^{\infty} \ket{S_{k}}_{k}
\quad \text{ and } \quad 
\bra{1^N,\varnothing} = \bigg(\bigotimes_{k = 1}^{N} \bra{1}_k\bigg) \otimes \bigg( \bigotimes_{k = N+1}^{\infty} \bra{0}_{k}\bigg)
\end{equation*}
denote states in $\mathbb{V}$ and its dual $\mathbb{V}^{*}$. We refer to $(y_1,\dots,y_M)$ and $(z_1,z_2,\dots)$ as the primary and secondary alphabets of $G_S$.
\end{defn}

Translating \eqref{eqrfg2} into its partition function form, one has that 
\begin{align}
\label{6-eqdualrf1}
G_S(y_1,\dots,y_M;\bm{z})
=
\tikz{0.7}{2cm}{
\foreach \y in {1,...,5}
\draw[lgray,line width=1.5pt,->] (0,\y) -- (10,\y);
\foreach \x in {1,...,9}
\draw[lgray,line width=1.5pt,->] (\x,0) -- (\x,6);
\foreach \y in {1,...,5}
\node[left] at (0,\y) {0};
\foreach \y in {1,...,5}
\node[right] at (10,\y) {0};
\node[left] at (-1.2,1) {$y_1\rightarrow$};
\node[left] at (-1.2,2.2) {$\vdots$};
\node[left] at (-1.2,3.2) {$\vdots$};
\node[left] at (-1.2,4.2) {$\vdots$};
\node[left] at (-1.2,5) {$y_M\rightarrow$};
\node[above] at (1,6) {$S_1$};
\node[above] at (2,6) {$\cdots$};
\node[above] at (3,6) {$\cdots$};
\node[above] at (4,6) {$S_N$};
\node[above] at (5.1,5.95) {$S_{N+1}$};
\node[above] at (6.2,6) {$\cdots$};
\node[above] at (7.2,6) {$\cdots$};
\node[below] at (1,0) {1};
\node[below] at (2,-0.2) {$\cdots$};
\node[below] at (3,-0.2) {$\cdots$};
\node[below] at (4,0) {1};
\node[below] at (5,0) {0};
\node[below] at (6,-0.2) {$\cdots$};
\node[below] at (7,-0.2) {$\cdots$};
\node[below] at (1,-0.7) {$\uparrow$};
\node[below] at (4,-0.7) {$\uparrow$};
\node[below] at (5,-0.7) {$\uparrow$};
\node[below] at (1,-1.4) {$z_1$};
\node[below] at (2,-1.4) {$\cdots$};
\node[below] at (3,-1.4) {$\cdots$};
\node[below] at (4,-1.4) {$z_N$};
\node[below] at (5,-1.4) {$z_{N+1}$};
\node[below] at (6,-1.4) {$\cdots$};
\node[below] at (7,-1.4) {$\cdots$};
}
\end{align}
In this picture, all left incoming and right outgoing edges are assigned the state 0. The first $N$ incoming bottom edges are assigned the state 1; the remaining ones are all set to 0. As in the case of the functions \eqref{picofsixf1}, the top outgoing edges correspond with the entries of $S$. We emphasize that in this definition, the number of horizontal rows, $M$, is independent of $N$.

\begin{rmk}
By applying the fusion procedure to the multivariate functions \eqref{6-eqdualrf1}, one is able to construct higher-spin analogues in which a total of $N$ paths enter the lattice via its first column, with no other paths entering via the remaining columns. The functions that arise from such a procedure were also studied in the previous works \cite{Borodin17,BorodinPetrov16}.
\end{rmk}

\subsection{Symmetry in primary alphabet}
\label{ssec:symmetry-6}

\begin{thm}
\label{thm:6-sym-FG}
The rational functions $F_{S}(x_1,\dots,x_N;\bm{z})$ and $G_{S}(y_1,\dots,y_M;\bm{z})$, defined in $\eqref{eqrff1}$ and $\eqref{eqrfg2}$, are symmetric in their primary alphabets.
\end{thm}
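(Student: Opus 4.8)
The plan is to read off the symmetry directly from the commutation relations \eqref{6-ACD}, which have already been established as consequences of the Yang--Baxter equation, so that almost no new work is required. The definition \eqref{eqrff1} presents $F_S$ as a single matrix element of the ordered product $\mathcal{C}(x_1)\cdots\mathcal{C}(x_N)$, sandwiched between the boundary vectors $\bra{\varnothing}$ and $\ket{S}$. The decisive observation is that neither of these vectors depends on the primary alphabet $(x_1,\dots,x_N)$, so permuting the $x_i$ acts purely on the operator string.

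First I would recall from \eqref{6-ACD} that $[\mathcal{C}(x),\mathcal{C}(y)]=0$ in ${\rm End}(\mathbb{V})$. Since adjacent transpositions generate the full symmetric group $\mathfrak{S}_N$, repeated application of this relation shows that the product $\mathcal{C}(x_1)\cdots\mathcal{C}(x_N)$ is invariant under any permutation of its factors:
\begin{align*}
\mathcal{C}(x_1)\cdots\mathcal{C}(x_N)
=
\mathcal{C}(x_{\sigma(1)})\cdots\mathcal{C}(x_{\sigma(N)}),
\qquad
\forall\ \sigma \in \mathfrak{S}_N.
\end{align*}
Taking matrix elements against the ($x$-independent) boundary vectors then yields the identity $F_S(x_1,\dots,x_N;\bm{z}) = F_S(x_{\sigma(1)},\dots,x_{\sigma(N)};\bm{z})$ for every $\sigma \in \mathfrak{S}_N$, which is precisely the asserted symmetry in the primary alphabet.

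The argument for $G_S$ is identical in structure: by \eqref{eqrfg2} the function is the matrix element of $\mathcal{A}(y_1)\cdots\mathcal{A}(y_M)$ between the fixed vectors $\bra{1^N,\varnothing}$ and $\ket{S}$, and the relation $[\mathcal{A}(x),\mathcal{A}(y)]=0$ from \eqref{6-ACD} lets me freely reorder the $\mathcal{A}$ factors. Since $\bra{1^N,\varnothing}$ and $\ket{S}$ carry no dependence on $(y_1,\dots,y_M)$, the same reasoning shows that $G_S$ is symmetric in its primary alphabet.

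The theorem carries essentially no obstacle of its own; all of the genuine content has already been discharged in proving the commutation relations \eqref{6-ACD}, which rest on the insertion-and-propagation Yang--Baxter argument of Proposition \ref{prop:6-Tcommute} together with the infinite-volume well-definedness of the operators (Proposition \ref{prop:6-ACD-define}). The only point requiring a moment's care is to confirm that the boundary states $\bra{\varnothing}$, $\bra{1^N,\varnothing}$ and $\ket{S}$ are genuinely independent of the primary alphabet, so that permuting the spectral parameters acts solely on the operator product and not on the enclosing vectors.
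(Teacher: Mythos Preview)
Your proof is correct and follows essentially the same approach as the paper: both deduce the symmetry directly from the commutation relations $[\mathcal{C}(x),\mathcal{C}(y)]=0$ and $[\mathcal{A}(x),\mathcal{A}(y)]=0$ in \eqref{6-ACD}. Your version is slightly more explicit in spelling out that adjacent transpositions generate $\mathfrak{S}_N$ and that the boundary vectors are independent of the primary alphabet, but the argument is the same.
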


\begin{proof}
The symmetry of $F_{S}(x_1,\dots,x_N;\bm{z})$ in $(x_1,\dots,x_N)$ follows from the commutation relation $[\mathcal{C}(x_i;\bm{z}),\mathcal{C}(x_j;\bm{z})] = 0$ for $i \not= j$; see equation \eqref{6-ACD}. In a similar vein, the symmetry of $G_{S}(y_1,\dots,y_M;\bm{z})$ in $(y_1,\dots,y_M)$ follows from the commutation relation $[\mathcal{A}(y_i;\bm{z}),\mathcal{A}(y_j;\bm{z})] = 0$; see again \eqref{6-ACD}.
\end{proof}

\begin{rmk}
For generic $S \in \mathfrak{s}(1)$, neither 
$F_{S}(x_1,\dots,x_N;\bm{z})$, $G_{S}(y_1,\dots,y_M;\bm{z})$ have any symmetry properties in their secondary alphabet $\bm{z} = (z_1,z_2,\dots)$. Rather, the secondary alphabet in these functions plays an analogous role to the secondary alphabet of simpler symmetric functions, such as factorial Schur and double Grothendieck polynomials.
\end{rmk}

\section{Cauchy identities in the six-vertex model}
\label{sec:6cauchy}

In Section \ref{ssec:6v-cauchy}, we state a Cauchy summation identity for the rational functions $F_{S}$ and $G_{S}$ defined in $\eqref{eqrff1}$ and $\eqref{eqrfg2}$. The proof of this identity is split over the subsequent sections. Section \ref{ssec:6-flip} introduces a further rational function $\dotr{G}_{S}$, which is related to the original $G_S$ via the symmetry \eqref{6-eqdottondot}. Making use of this alternative formulation of $G_S$, one is able to cast the Cauchy identity in algebraic form, and compute it using commutation relations among row operators; this is done in Section \ref{ssec:6-cauchy-proof}. All of these ideas originally appeared in \cite{Borodin17,BorodinPetrov16}, in the setting of the higher-spin six-vertex model.

\subsection{Cauchy identity}
\label{ssec:6v-cauchy}

\begin{thm}
\label{thmci1}
Let $N,M \in \mathbb{Z}_{>0}$ be fixed positive integers. Fix parameters $q,x_i,y_j,z_k \in \mathbb{C}$ such that for all $i \in \{1,\dots,N\}$, $j \in \{1,\dots,M\}$ and $k \in \mathbb{Z}_{>0}$, the following conditions hold: 
\begin{equation}
\label{eqCauchyidentitywDW}
\begin{tikzpicture}[baseline={(0,0)}]
\node[scale = 0.8] at (-1.5,0) {$y_j^{-1}$};
\node[scale = 0.8] at (-0.1,-1.2) {$z_k$};
\draw[thick,->] (-1.3,0) -- (-0.9,0);
\draw[thick,<-] (-0.1,-0.75) -- (-0.1,-1.05);
\draw[thick] (-1.9,-0.8) -- (-1.9,0.7);
\draw[->,lgray,line width=1.5pt] (-0.6,0) -- (0.4,0);
\draw[->,lgray,line width=1.5pt] (-0.1,-0.5) -- (-0.1,0.5);
\node[scale = 0.7] at (-0.7,0) {$0$};
\node[scale = 0.7] at (-0.1,-0.6) {$0$};
\node[scale = 0.7] at (-0.1,0.65) {$0$};
\node[scale = 0.7] at (0.5,0) {$0$};
\node at (-0.1,0) [circle,fill,inner sep=1.5pt] {};
\node at (0.75,0) {$\cdot$};
\begin{scope}[shift = {(2.5,0)}]
\node[scale = 0.8] at (-1.5,0) {$x_i$};
\node[scale = 0.8] at (-0.1,-1.2) {$z_k$};
\draw[thick,->] (-1.3,0) -- (-0.9,0);
\draw[thick,<-] (-0.1,-0.75) -- (-0.1,-1.05);
\draw[thick] (0.8,-0.8) -- (0.8,0.7);
\draw[->,lgray,line width=1.5pt] (-0.6,0) -- (0.4,0);
\draw[->,lgray,line width=1.5pt] (-0.1,-0.5) -- (-0.1,0.5);
\node[scale = 0.7] at (-0.7,0) {$1$};
\node[scale = 0.7] at (-0.1,-0.6) {$0$};
\node[scale = 0.7] at (-0.1,0.65) {$0$};
\node[scale = 0.7] at (0.5,0) {$1$};
\end{scope}
\end{tikzpicture}
=
\bigg| \frac{(1-q y_jz_k)}{(1-y_j z_k)} \cdot 
\frac{(x_i-z_k)}{(x_i-qz_k)}\bigg|
<
\epsilon
<1,
\end{equation}
for some constant $\epsilon \in \mathbb{R}$ which is independent of $i,j,k$. For such a choice of parameters, the rational functions \eqref{eqrff1} and \eqref{eqrfg2} satisfy the summation identity
\begin{equation}
\label{eqccsix5}
\sum_{S \in \mathfrak{s}(1)} 
c_{S}(q)
F_{S}(x_1,\dots,x_N;\bm{z})
G_{S}(y_1,\dots,y_M;q^{-1}\bm{z}^{-1}) 
=
q^{N(N+1)/2}
F_{(1^{N})}(x_1,\dots,x_N;\bm{z}) 
\prod_{i=1}^N \prod_{j=1}^M \frac{1-qx_iy_j}{1-x_iy_j},
\end{equation}
where the sum is taken over all $1$-strings $S$ such that $|S|=N$. Here $\bm{z} = (z_1,z_2,\dots)$ as usual, while $q^{-1}\bm{z}^{-1} = (q^{-1}z_1^{-1},q^{-1}z_2^{-1},\dots)$. $F_{(1^{N})}$ appearing on the right hand side corresponds to the function \eqref{eqrff1} for $S=(1^N,0,0,\dots)$, and the constant $c_S(q)$ is given by
\begin{equation}
\label{eqcs1}
c_{S}(q) = \prod_{k=1}^{\infty} q^{k S_k}.
\end{equation}
\end{thm}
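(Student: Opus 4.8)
The plan is to recognize the left-hand side of \eqref{eqccsix5} as a single matrix element of a product of row operators, and then to evaluate that matrix element using the exchange relation \eqref{6-eqspexchange1} together with the fact that the covacuum $\bra{\varnothing}$ is a left eigenvector of $\dotr{\mathcal{D}}(w)$. The coefficient $c_S(q)$ from \eqref{eqcs1} enters precisely to cancel the $S$-dependent scalar produced when $G_S$ is flipped into its dotted form, so that all the genuine content is carried by the commutation of $\mathcal{C}$ past $\dotr{\mathcal{D}}$.

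First I would rewrite $G_S(y_1,\dots,y_M;q^{-1}\bm{z}^{-1})$ using the flip symmetry of Proposition \ref{flipsix1}, applied row by row to the lattice \eqref{6-eqdualrf1}. Reflecting the lattice about a horizontal axis and complementing the states on horizontal edges converts each weight $W$ into a dotted weight $\dotr{W}$ by \eqref{eqnormandor61}, at the cost of the scalar $(-1)^{i-k}q^{\bar{j}}$ per vertex and a change of spectral parameters. The horizontal parameters transform as $y_j \mapsto y_j^{-1}$, while the column parameters transform as $z \mapsto q^{-1}z^{-1}$; since we begin from columns carrying $q^{-1}z_k^{-1}$, these become exactly $z_k$, i.e.\ the same secondary alphabet $\bm{z}$ as in $F_S$. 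The result is a dotted partition function $\bra{S}\dotr{\mathcal{D}}(y_1^{-1})\cdots\dotr{\mathcal{D}}(y_M^{-1})\ket{1^N,\varnothing}$, built from the operators of Proposition \ref{prop:6-ACD-define}, multiplied by a scalar $\kappa_S$ collecting all the factors $(-1)^{i-k}q^{\bar{j}}$. A telescoping of the signs along each column shows $\prod(-1)^{i-k}=1$, while the $q$-powers should accumulate to $\kappa_S = q^{N(N+1)/2}\prod_k q^{-kS_k}$, so that $c_S(q)\,\kappa_S = q^{N(N+1)/2}$ is independent of $S$.

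Granting this, I would insert the resolution of the identity $\sum_S \ket{S}\bra{S}$ on $\mathbb{V}$ to collapse the (now scalar-balanced) sum into a single matrix element,
\begin{align*}
\sum_{S}c_S(q)\,F_S(x_1,\dots,x_N;\bm{z})\,G_S(y_1,\dots,y_M;q^{-1}\bm{z}^{-1})
=
q^{N(N+1)/2}\,\bra{\varnothing}\,\mathcal{C}(x_1)\cdots\mathcal{C}(x_N)\,\dotr{\mathcal{D}}(y_1^{-1})\cdots\dotr{\mathcal{D}}(y_M^{-1})\,\ket{1^N,\varnothing},
\end{align*}
whose well-definedness in infinite volume is guaranteed by \eqref{eqCauchyidentitywDW}, the $w=y_j^{-1}$ specialization of the convergence bound \eqref{6-eqres2} underpinning Proposition \ref{propforCauchy61}. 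Then I would move every $\dotr{\mathcal{D}}(y_j^{-1})$ to the far left past every $\mathcal{C}(x_i)$ using the inverse of \eqref{6-eqspexchange1}, namely $\mathcal{C}(y)\dotr{\mathcal{D}}(x)=\tfrac{x-qy}{x-y}\dotr{\mathcal{D}}(x)\mathcal{C}(y)$. Each of the $NM$ transpositions contributes $\tfrac{y_j^{-1}-qx_i}{y_j^{-1}-x_i}=\tfrac{1-qx_iy_j}{1-x_iy_j}$, producing exactly the kernel in \eqref{eqccsix5}. A short check on the dotted weights of Figure \ref{fig:6v-dot} then gives $\bra{\varnothing}\dotr{\mathcal{D}}(w)=\bra{\varnothing}$: with all bottom edges frozen to $0$ and the right boundary forced to $1$, conservation forbids any path from turning upward, so the row is frozen on the weight-one vertex $(0,1;0,1)$. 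Hence every $\dotr{\mathcal{D}}$ disappears against $\bra{\varnothing}$, leaving $\bra{\varnothing}\mathcal{C}(x_1)\cdots\mathcal{C}(x_N)\ket{1^N,\varnothing}=F_{(1^N)}(x_1,\dots,x_N;\bm{z})$ by \eqref{eqrff1}, which is the asserted identity.

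The step I expect to be the main obstacle is the flip bookkeeping in the second paragraph: one must verify that, over an arbitrary admissible configuration with prescribed top profile $S$, the per-vertex scalars $(-1)^{i-k}q^{\bar{j}}$ accumulate into a product depending on $S$ only through $q^{N(N+1)/2}/c_S(q)$, independently of the internal configuration and of the column alphabet $\bm{z}$. This configuration-independence is exactly what licenses factoring $q^{N(N+1)/2}$ out of the sum and applying the resolution of identity. The convergence justification for the infinite-volume matrix element, mirroring the vanishing-tail argument of Proposition \ref{propforCauchy61}, is a secondary technical point controlled by the uniform bound \eqref{eqCauchyidentitywDW}.
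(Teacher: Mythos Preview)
Your proposal is correct and follows essentially the same route as the paper: the flip-symmetry computation you outline is precisely Proposition~\ref{prop:6-flip-sym} (equation~\eqref{6-eqdottondot}), after which the paper defines the same expectation value $\mathcal{E}_{N,M}=\bra{\varnothing}\mathcal{C}(x_1)\cdots\mathcal{C}(x_N)\dotr{\mathcal{D}}(y_M^{-1})\cdots\dotr{\mathcal{D}}(y_1^{-1})\ket{1^N,\varnothing}$, inserts the resolution of identity, commutes via \eqref{6-eqspexchange1}, and uses $\bra{\varnothing}\dotr{\mathcal{D}}(w)=\bra{\varnothing}$. Your identification of the flip bookkeeping as the main obstacle is apt; the paper handles it via the column-tower identity \eqref{6-tower-sym}, which telescopes the per-vertex scalars exactly as you anticipate.
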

The proof of Theorem \ref{thmci1} is deferred to Section \ref{ssec:6-cauchy-proof}.

\begin{rmk}
\label{rmk:dwpf}
The quantity $F_{(1^N)}(x_1,\dots,x_N;\bm{z})$, which depends only on the variables $(x_1,\dots,x_N)$ and $(z_1,\dots,z_N)$, is known as the \textit{domain-wall partition function} \cite{Korepin82}. It admits an explicit $N \times N$ determinant formula \cite{Izergin87}:
\begin{align}
\label{eq:DWPF}
F_{(1^N)}(x_1,\dots,x_N;\bm{z})
=
\frac{\prod_{i,j=1}^{N} (x_i-z_j)}
{\prod_{1 \leq i<j \leq N} (x_i-x_j)(z_j-z_i)}
\det_{1 \leq i,j \leq N}
\left[ 
\frac{(1-q)z_j}{(x_i-z_j)(x_i-q z_j)} 
\right].
\end{align}
For the details of how the formula \eqref{eq:DWPF} is proved, we refer the reader to Section \ref{ssec:6-sym}.
\end{rmk}

\subsection{Flip symmetry of $G_S$}
\label{ssec:6-flip}

\begin{defn}
Fix $N \in \mathbb{Z}_{>0}$ and let $S \in \mathfrak{s}(1)$ be a $1$-string such that $|S|=N$. Fix a further integer $M \in \mathbb{Z}_{>0}$ and two alphabets $(y_1,\dots,y_M)$, $\bm{z} = (z_1,z_2,\dots)$. We define the multivariate rational function
\begin{equation}
\label{6-eqrff2}
\dotr{G}_{S}(y_1,\dots,y_M;\bm{z}) = \bra{S} \dotr{\mathcal{D}}(y_M;\bm{z}) \cdots \dotr{\mathcal{D}}(y_1;\bm{z}) \ket{1^N,\varnothing},
\end{equation}
where the row operators $\dotr{\mathcal{D}}(y_i;\bm{z})$ are given by Proposition \ref{prop:6-ACD-define}, while
\begin{equation*}
\ket{1^N,\varnothing} = \bigg(\bigotimes_{k = 1}^{N} \ket{1}_k\bigg) \otimes \bigg( \bigotimes_{k = N+1}^{\infty} \ket{0}_{k}\bigg)
\quad \text{ and } \quad 
\bra{S} = \bigotimes_{k = 1}^{\infty} \bra{S_{k}}_{k}
\end{equation*}
denote states in $\mathbb{V}$ and its dual $\mathbb{V}^{*}$.
\end{defn}

Translating \eqref{6-eqrff2} into its partition function form, it is given by
\begin{align}
\label{6-eqdualdot}
\dotr{G}_S(y_1,\dots,y_M;\bm{z})
=
\tikz{0.7}{2cm}{
\foreach \y in {1,...,5}
\draw[lgray,line width=1.5pt,->] (0,\y) -- (10,\y);
\foreach \x in {1,...,9}
\draw[lgray,line width=1.5pt,->] (\x,0) -- (\x,6);
\foreach \y in {1,...,5}
\node[left] at (0,\y) {1};
\foreach \y in {1,...,5}
\node[right] at (10,\y) {1};
\node[left] at (-1.2,1) {$y_M\rightarrow$};
\node[left] at (-1.2,2.2) {$\vdots$};
\node[left] at (-1.2,3.2) {$\vdots$};
\node[left] at (-1.2,4.2) {$\vdots$};
\node[left] at (-1.2,5) {$y_1\rightarrow$};
\node[above] at (1,6) {1};
\node[above] at (2,6) {$\cdots$};
\node[above] at (3,6) {$\cdots$};
\node[above] at (4,6) {1};
\node[above] at (5,6) {0};
\node[above] at (6.2,6) {$\cdots$};
\node[above] at (7.2,6) {$\cdots$};
\node[below] at (1,0) {$S_1$}; 
\node[below] at (2,-0.2) {$\cdots$};
\node[below] at (3,-0.2) {$\cdots$};
\node[below] at (4,0) {$S_N$}; 
\node[below] at (5.2,0) {$S_{N+1}$};
\node[below] at (6.4,-0.2) {$\cdots$};
\node[below] at (7.4,-0.2) {$\cdots$};
\node[below] at (1,-0.7) {$\uparrow$};
\node[below] at (4,-0.7) {$\uparrow$};
\node[below] at (5,-0.7) {$\uparrow$};
\node[below] at (1,-1.4) {$z_1$};
\node[below] at (2,-1.4) {$\cdots$};
\node[below] at (3,-1.4) {$\cdots$};
\node[below] at (4,-1.4) {$z_N$};
\node[below] at (5,-1.4) {$z_{N+1}$};
\node[below] at (6,-1.4) {$\cdots$};
\node[below] at (7,-1.4) {$\cdots$};
\foreach\x in {1,...,9}{
\foreach\y in {1,...,5}{
\node at (\x,\y) [circle,fill,inner sep=1.5pt] {};
}};
}
\end{align}
where the quantity \eqref{6-eqdualdot} is well-defined, despite the change in normalization of the underlying vertex weights. Indeed, one sees that the vertices in all columns sufficiently far to the right are frozen with weight $\dotr{W}_{z_j/y_i}(0,1;0,1)=1$. 

\begin{prop}
\label{prop:6-flip-sym}
Fix an integer $M \in \mathbb{Z}_{>0}$ and a $1$-string $S \in \mathfrak{s}(1)$ such that $|S|=N$. The rational functions \eqref{eqrfg2} and \eqref{6-eqrff2} are related under the following symmetry:
\begin{equation}
\label{6-eqdottondot}
q^{-N(N+1)/2}
c_{S}(q)
G_{S}(y_1,\dots,y_M;q^{-1}\bm{z}^{-1}) 
=  
\dotr{G}_{S}(y_1^{-1},\dots,y_M^{-1};\bm{z}),
\end{equation}
where the constant $c_{S}(q)$ is defined in \eqref{eqcs1}.
\end{prop}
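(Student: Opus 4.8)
The plan is to prove \eqref{6-eqdottondot} by applying the vertex-level flip symmetry of Proposition \ref{flipsix1} to every vertex of the partition function \eqref{6-eqdualdot}, thereby converting the dotted lattice into an undotted one and recognizing the result as a $G_S$-diagram. First I would write $\dotr{G}_S(y_1^{-1},\dots,y_M^{-1};\bm{z})$ as the lattice \eqref{6-eqdualdot}, whose vertices carry horizontal (row) parameters $y_i^{-1}$, vertical (column) parameters $z_m$, top boundary $\ket{1^N,\varnothing}$, bottom boundary $\bra{S}$, and all-$1$ left/right boundaries, every weight being dotted. Applying \eqref{eqnormandor61} to each vertex replaces its dotted weight by $(-1)^{i-k}q^{1-j}$ times an undotted weight, where $(i,j;k,\ell)$ denote the bottom, left, top, right states of that vertex; simultaneously this sends the row parameter $y_i^{-1}\mapsto y_i$ and each column parameter $z_m\mapsto q^{-1}z_m^{-1}$, and it swaps the bottom/top states while complementing the left/right states of the vertex.

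Because the local move reverses the vertical orientation of each vertex, the replaced weights assemble into a consistent lattice only after a global reflection about the central horizontal line. Under this reflection the boundaries $\ket{1^N,\varnothing}$ and $\bra{S}$ are interchanged (so the new top boundary is $S$ and the new bottom boundary is $1^N,\varnothing$), and the all-$1$ left/right boundaries of $\dotr{\mathcal D}$ are complemented to all-$0$ boundaries. Tracking all of this, the reflected diagram is precisely the partition function \eqref{6-eqdualrf1} for $G_S(y_1,\dots,y_M;q^{-1}\bm{z}^{-1})$; the induced reordering of the rows is immaterial, since $G_S$ is symmetric in its primary alphabet by Theorem \ref{thm:6-sym-FG}.

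It then remains to collect the scalar prefactors. The accumulated sign $\prod_{\text{vertices}}(-1)^{i-k}$ telescopes along each column, since the shared internal vertical edges cancel in pairs, leaving $(-1)^{(\text{bottom boundary})-(\text{top boundary})}$ per column and hence a total sign $(-1)^{|S|-N}=1$ because $|S|=N$. The accumulated power of $q$ is $P=\sum_{\text{vertices}}(1-j)$, summed over the left state $j$ of every vertex. Writing $u^{(r)}_c, v^{(r)}_c$ for the bottom/top states and $h^{(r)}_c$ for the horizontal edge states in row $r$, conservation together with the frozen left boundary $h^{(r)}_0=1$ gives $1-h^{(r)}_{c-1}=\sum_{c'<c}(v^{(r)}_{c'}-u^{(r)}_{c'})$; since the left and right boundaries of $\dotr{\mathcal D}$ are both $1$, the net vertical flux through each row vanishes, and a summation by parts collapses the per-row contribution to the column-weighted flux $P_r=\sum_c c\,(u^{(r)}_c-v^{(r)}_c)$. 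Summing over rows, the internal edges telescope via $u^{(r)}_c=v^{(r+1)}_c$, leaving only the boundary data $P=\sum_c c\big(S_c-(1^N,\varnothing)_c\big)=\sum_k kS_k-\tfrac{N(N+1)}{2}$. In view of \eqref{eqcs1} this reads $q^P=q^{-N(N+1)/2}c_S(q)$, which combined with the sign $+1$ yields \eqref{6-eqdottondot}.

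The main obstacle is the $q$-power: a priori $P$ depends on the internal configuration over which one sums, so a single global prefactor can appear only if $P$ is in fact configuration-independent. Establishing this is the crux, and it is precisely the telescoping of the column-weighted vertical flux $P_r=\sum_c c\,(u^{(r)}_c-v^{(r)}_c)$ that collapses $P$ onto the boundary statistic $\sum_k kS_k-N(N+1)/2$. By comparison, the reflection and boundary bookkeeping of the second step, and the sign computation, are routine, though one must be careful about the two inversions of the spectral parameters and about the complementation of the horizontal states when matching the reflected lattice against \eqref{6-eqdualrf1}.
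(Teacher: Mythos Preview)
Your proof is correct and follows essentially the same approach as the paper: apply the vertex-level flip symmetry \eqref{eqnormandor61} to every vertex of \eqref{6-eqdualdot} and telescope the accumulated prefactors down to boundary data. The paper packages the column-wise telescoping into the tower relation \eqref{6-tower-sym} (applied column by column with a running index $m$), whereas you compute the $q$-power directly via a row-wise summation by parts; these are two bookkeeping choices for the same computation.
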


\begin{proof}
The proof is by repeated application of the local symmetry relation \eqref{eqnormandor61}. To make the procedure more transparent, we begin by stating a version of \eqref{eqnormandor61} that may be applied to towers of vertices. In particular, one notes that
\begin{align}
\label{6-tower-sym}
\prod_{a=1}^{M} q^{m \bar{j}_a}
\begin{tikzpicture}[scale=0.8,baseline=0cm]
\draw[->,lgray,line width=1.5pt] (4,-2.75) -- (4,2.75);
\foreach \i in {-2,-1,0,1,2}
{\draw[->,lgray,line width=1.5pt] (3,\i) -- (5,\i);}
\node at (2.75,-2) {$j_M$};
\node at (2.75,-1) {$\vdots$};
\foreach \i in {0.1,1.1} {
\node at (2.75,\i) {$\vdots$};
}
\node at (2.75,2) {$j_1$};
\node at (5.35,-2) {$\ell_M$};
\node at (5.35,-1) {$\vdots$};
\foreach \i in {0.1,1.1} {
\node at (5.35,\i) {$\vdots$};
}
\node at (5.35,2) {$\ell_1$};
\node at (1.2,-2) {$y_M^{-1}$};
\node at (1.2,-1) {$\vdots$};
\node at (1.1,2) {$y_1^{-1}$};
\foreach \i in {0,1} {
\node at (1.2,\i) {$\vdots$};
}
\foreach \i in {-2,2} {
\draw[->,thick] (1.6,\i) -- (2.3,\i);
}
\node at (4,-3) {$i$};
\node at (4,3) {$k$};
\draw[->,thick] (4,-3.9) -- (4,-3.3);
\node at (4,-4.1) {$z$};
\foreach\y in {-2,...,2}{
\node at (4,\y) [circle,fill,inner sep=1.5pt] {};
}
\end{tikzpicture}
=
\prod_{a=1}^{M} q^{(m+1) \bar{\ell}_a}
\dfrac{q^{(m+1)i} (-1)^i}
{q^{(m+1)k} (-1)^k}
\begin{tikzpicture}[scale=0.8,baseline=0cm]
\begin{scope}[shift = {(8,0)}]
\draw[->,lgray,line width=1.5pt] (4,-2.75) -- (4,2.75);
\foreach \i in {-2,-1,0,1,2}
{\draw[->,lgray,line width=1.5pt] (3,\i) -- (5,\i);}
\node at (2.75,-2) {$\bar{j}_1$};
\node at (2.75,-1) {$\vdots$};
\foreach \i in {0.1,1.1} {
\node at (2.75,\i) {$\vdots$};
}
\node at (2.75,2) {$\bar{j}_M$};
\node at (5.35,-2) {$\bar{\ell}_1$};
\node at (5.35,-1) {$\vdots$};
\foreach \i in {0.1,1.1} {
\node at (5.35,\i) {$\vdots$};
}
\node at (5.35,2) {$\bar{\ell}_M$};
\node at (1.3,-2) {$y_1$};
\node at (1.3,-1) {$\vdots$};
\node at (1.3,2) {$y_M$};
\foreach \i in {0,1} {
\node at (1.3,\i) {$\vdots$};
}
\foreach \i in {-2,2} {
\draw[->,thick] (1.6,\i) -- (2.3,\i);
}
\node at (4,-3) {$k$};
\node at (4,3) {$i$};
\draw[->,thick] (4,-3.9) -- (4,-3.3);
\node at (4,-4.2) {$q^{-1}z^{-1}$};
\end{scope}
\end{tikzpicture}
\end{align}
where $m$ is an arbitrary integer. Equation \eqref{6-tower-sym} is a straightforward consequence of \eqref{eqnormandor61}, combined with conservation of paths through vertices, and appropriate telescopic cancellations of factors assigned to vertical edges.

The proof concludes by taking the lattice definition \eqref{6-eqdualdot} of $\dotr{G}_S$ and applying the relation \eqref{6-tower-sym} column by column (with $m$ initially equal to $0$, and rising in value by $1$ after each application of the relation). In particular, one finds that the procedure stabilizes: all indices $i,k,\bar{\ell}_a$ on the right hand side of \eqref{6-tower-sym} are ultimately $0$, as one traverses sufficiently far to the right through the lattice. Keeping track of all factors acquired through this process, we read off the relation
\begin{align*}
\dotr{G}_{S}(y_1^{-1},\dots,y_M^{-1};\bm{z})
=
\dfrac{1}{q^{N(N+1)/2}}
\prod_{k=1}^{\infty} q^{kS_k}
G_{S}(y_1,\dots,y_M;q^{-1}\bm{z}^{-1}),
\end{align*}
which is precisely \eqref{6-eqdottondot}.
\end{proof}

\subsection{Proof of the Cauchy identity}
\label{ssec:6-cauchy-proof}

We now return to the proof of Theorem \ref{thmci1}.

\begin{proof}
In view of the symmetry property \eqref{6-eqdottondot}, to demonstrate the Cauchy identity stated in \eqref{eqccsix5} we need to show that
\begin{equation}
\label{eqcuachyid6}
\sum_{S \in \mathfrak{s}(1)}F_{S}(x_1,\dots,x_N;\bm{z})\dotr{G}_{S}(y_1^{-1},\dots,y_M^{-1};\bm{z}) = 
F_{(1^N)}(x_1,\dots,x_N;\bm{z})
\prod_{i=1}^{N}
\prod_{j=1}^{M}
\frac{1-qx_iy_j}{1-x_iy_j}.
\end{equation}
Define an expectation value
\begin{equation}
\label{6-eqexpectation1}
\mathcal{E}_{N,M} 
= 
\bra{\varnothing} 
\mathcal{C}(x_1)
\dots 
\mathcal{C}(x_N) 
\dotr{\mathcal{D}}(y_M^{-1}) 
\dots 
\dotr{\mathcal{D}}(y_1^{-1}) 
\ket{1^N,\varnothing}.
\end{equation}
Inserting the identity operator $\sum_{S\in \mathfrak s(1)} \ket{S}\bra{S}$ between the operators $\mathcal{C}(x_N) \dotr{\mathcal{D}}(y_M^{-1})$ on the right hand side of equation $\eqref{6-eqexpectation1}$, we obtain
\begin{align*}
\mathcal{E}_{N,M}
&= 
\sum_{S \in \mathfrak{s}(1)}
\bra{\varnothing} 
\mathcal{C}(x_1)\dots \mathcal{C}(x_N) 
\ket{S}\bra{S} 
\dotr{\mathcal{D}}(y_M^{-1})
\dots \dotr{\mathcal{D}}(y_1^{-1}) 
\ket{1^N,\varnothing}
\\
&=
\sum_{S \in \mathfrak{s}(1)} 
F_{S}(x_1,\dots,x_N;\bm{z})
\dotr{G}_{S}(y_1^{-1},\dots,y_M^{-1};\bm{z}),
\end{align*}
where the final line follows from the algebraic definitions \eqref{eqrff1} and \eqref{6-eqrff2}. This matches the left hand side of \eqref{eqcuachyid6}.

On the other hand, commuting all $\mathcal{C}(x_i)$ and $\dotr{\mathcal{D}}(y_j^{-1})$ operators using the relation \eqref{6-eqspexchange1} (noting that both the relation and convergence constraint \eqref{6-eqres2} should be rewritten with $y \mapsto x_i$, $x \mapsto y_j^{-1}$), we find that
\begin{align*}
\mathcal{E}_{N,M} 
&= 
\prod_{i=1}^{N}
\prod_{j=1}^{M}
\frac{1-qx_iy_j}{1-x_iy_j}
\bra{\varnothing} 
\dotr{\mathcal{D}}(y_M^{-1})\cdots \dotr{\mathcal{D}}(y_1^{-1})
\mathcal{C}(x_1)\dots \mathcal{C}(x_N)\ket{1^N,\varnothing}.
\end{align*}
We conclude by noting that
$
\bra{\varnothing} \dotr{\mathcal{D}}(y_j^{-1})
=
\bra{\varnothing},
$
after which one has
\begin{align*}
\mathcal{E}_{N,M}
= 
\prod_{i=1}^{N}
\prod_{j=1}^{M}
\frac{1-qx_iy_j}{1-x_iy_j}
\bra{\varnothing} \mathcal{C}(x_1)\dots \mathcal{C}(x_N) \ket{1^N,\varnothing}
=
\prod_{i=1}^{N}
\prod_{j=1}^{M}
\frac{1-qx_iy_j}{1-x_iy_j}
F_{(1^N)}(x_1,\dots,x_N;\bm{z}), 
\end{align*}
which is precisely the right hand side of \eqref{eqcuachyid6}.
\end{proof}

\section{Stable symmetric functions}
\label{sec:6_stable}

It is desirable that a basis of the ring of symmetric functions exhibits {\it stability}. This property describes the invariance of the functions in question when a number of variables within their alphabet are specialized appropriately. For instance, given a Schur polynomial $s_{\lambda}(x_1,\dots,x_N)$ indexed by an arbitrary partition $\lambda$, setting $x_N=0$ yields the Schur polynomial $s_{\lambda}(x_1,\dots,x_{N-1})$ in an alphabet of one size smaller than originally. Such a stability property allows one to extend the definition of symmetric polynomials to alphabets with infinitely many variables \cite[Chapter I]{Macdonald}.

Our goal in this section is to generalize the definition \eqref{picofsixf1} of the rational symmetric functions $F_S$ in a way that renders them stable; this generalization results in yet another family of functions denoted $H_S$. These functions are defined in Section \ref{ssec:6-H}, and a direct limiting procedure that constructs them is outlined in Section \ref{ssec:6stab-construct}. Their basic properties, namely their symmetry and stability, are proved in Section \ref{ssec:6-stab}.

\subsection{Multivariate rational function $H_S$}
\label{ssec:6-H}

\begin{defn}
Fix an integer $N \in \mathbb{Z}_{>0}$ and a vector $I = (I_1,\dots,I_N) \in \{0,1\}^N$. We define the inversion number ${\rm inv}(I)$ as follows:
\begin{equation}
\label{eqinversionfor6}
\inv(I) = 
\sum_{1 \leq i<j \leq N} 
I_i (1-I_j).
\end{equation}
This coincides with the usual notion of inversions, since ${\rm inv}(I)$ is clearly equal to the cardinality of the set $\{a<b:I_a>I_b\}$.
\end{defn}

\begin{defn}
Fix two integers $K \in \mathbb{Z}_{\geq 0}$, $N \in \mathbb{Z}_{>0}$ such that $0 \leq K \leq N$ and let $S \in \mathfrak{s}(1)$ be a $1$-string such that $|S|=K$. We introduce the rational function $H_S$ as follows:
\begin{equation}
\label{eqstable6}
H_{S}(x_1,\dots,x_N;\bm{z}) = 
\sum_{I : |I| = K}
q^{-\inv(I)}
F_{S}^{I}(x_1,\dots,x_N;\bm{z}),
\end{equation}
with the sum taken over all vectors 
$I \in \{0,1\}^{N}$ such that $|I| = K$, and where
\begin{equation}
\label{eqstable6-pic}
\raisebox{-37.5mm}{
\begin{tikzpicture}[scale = 0.65]
\foreach \i in {0,1,2,3,4,6,7} {
    \draw[->,lgray,line width=1.5pt] (\i,0) -- (\i,7);}
    \foreach \i in {1,2,3,4,5,6} {
    \draw[lgray,line width=1.5pt] (-1,\i) -- (4.3,\i);}
    \foreach \i in {1,2,3,4,5,6} {
    \draw[->,lgray,line width=1.5pt] (5.5,\i) -- (8,\i);}
    \foreach \i in {1,2,3,4,5,6} {
    \node at (5,\i) {$\dots$};}
    \foreach \i in {1,2,6} {
    \node at (8.3,\i) {$0$};
    }
    \foreach \i in {3.2,4.2,5.2} {
    \node at (8.3,\i) {$\vdots$};
    }
    \node[left] at (-1.8,1) {$x_1 \to$};
    \node[left] at (-1.8,2) {$x_2 \to$};
    \node[left] at (-1.8,3) {$\vdots$};
    \node[left] at (-1.8,4) {$\vdots$};
    \node[left] at (-1.8,5) {$\vdots$};
    \node[left] at (-1.8,6) {$x_N \to$};
    \node at (-1.4,1) {$I_1$};
    \node at (-1.4,2) {$I_2$};
    \node at (-1.4,3) {$\vdots$};
    \node at (-1.4,4) {$\vdots$};
    \node at (-1.4,5) {$\vdots$};
    \node at (-1.4,6) {$I_{N}$};
    \node[left] at (-3.8,3.5) {$F_{S}^{I}(x_1,\dots,x_N;\bm{z})=$};
    \node at (0,7.5) {$S_1$};
    \node at (1,7.5) {$S_2$};
    \node at (2,7.5) {$S_3$};
    \node at (3,7.5) {$\dots$};
    \node at (4,7.5) {$\dots$};
    \node at (0,-2) {$z_1$};
    \node at (1,-2) {$z_2$};
    \node at (2,-2) {$z_3$};
    \node at (3,-2) {$\dots$};
    \node at (4,-2) {$\dots$};
    \draw[thick,->] (0,-1.5) -- (0,-0.85);
    \draw[thick,->] (1,-1.5) -- (1,-0.85);
    \draw[thick,->] (2,-1.5) -- (2,-0.85);
    \node at (0,-0.5) {$0$};
    \node at (1,-0.5) {$0$};
    \node at (2,-0.5) {$0$};
    \node at (3,-0.5) {$\dots$};
    \node at (4,-0.5) {$\dots$};
\end{tikzpicture}}
\end{equation}
which is interpreted as a partition function in the same vein as \eqref{picofsixf1}.
\end{defn}

\begin{rmk} 
The functions \eqref{eqstable6} generalize the family $F_S$. Indeed, taking $K=N$ trivializes the summation on the right hand side, and one is forced to have $(I_1,\dots,I_N) = (1^N)$. Since ${\rm inv}(I) = 0$ when $I=(1^N)$, and $F_S^{(1^N)}(x_1,\dots,x_N;\bm{z}) = F_S(x_1,\dots,x_N;\bm{z})$, one finds that $H_S = F_S$ when $K=N$.

At the other extreme, when $K=0$, no particles may enter the lattice \eqref{eqstable6-pic} and it freezes into a product of vertices $W_{z_j/x_i}(0,0;0,0)=1$. We therefore have $H_{(0,0,0,\dots)}(x_1,\dots,x_N;\bm{z}) = 1$ for any $N \geq 1$.
\end{rmk}

\subsection{Limiting procedure to obtain $H_S$}
\label{ssec:6stab-construct}

\begin{thm}
\label{6-speicalpart1}
Fix two integers $K \in \mathbb{Z}_{\geq 0}$, $N \in \mathbb{Z}_{>0}$ such that $0 \leq K \leq N$, and let $S \in \mathfrak{s}(1)$ be a $1$-string such that $|S|=K$. Fix three alphabets $(x_1,\dots,x_N)$, ${\bm u} = (u_1,\dots,u_{N-K})$ and ${\bm z} = (z_1,z_2,\dots)$ of arbitrary parameters. The following relation then holds:
\begin{equation}
\label{6-eqspeical1}
H_{S}(x_1,\dots,x_N;\bm{z}) 
= 
\lim_{\bm{u} \to \infty} 
\frac{F_{(1^{N-K},S)}(x_1,\dots,x_N;\bm{u} \cup \bm{z})}{F_{(1^{N-K})}(*;\bm{u})},
\end{equation}
where we write $\bm{u} \to \infty$ as shorthand for the limits $u_i \rightarrow \infty$, $1 \leq i \leq N-K$. The notation $(*;\bm{u})$ indicates that the primary alphabet of $F_{(1^{N-K})}$ may be chosen arbitrarily. 
\end{thm}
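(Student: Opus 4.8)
The plan is to cut the lattice defining $F_{(1^{N-K},S)}(x_1,\dots,x_N;\bm{u}\cup\bm{z})$ along the vertical line separating the first $N-K$ columns (those carrying the spectral parameters $\bm{u}=(u_1,\dots,u_{N-K})$ and top boundary $1^{N-K}$) from the remaining columns (carrying $\bm{z}$ and top boundary $S$). Writing $I=(I_1,\dots,I_N)\in\{0,1\}^N$ for the states on the $N$ horizontal edges crossing this cut, the partition function factorizes as
\begin{align*}
F_{(1^{N-K},S)}(x_1,\dots,x_N;\bm{u}\cup\bm{z})
=
\sum_{I\,:\,|I|=K}
Z(I;\bm{u})\,
F_S^{I}(x_1,\dots,x_N;\bm{z}),
\end{align*}
where $Z(I;\bm{u})$ denotes the partition function of the left block (left boundary $1^N$, bottom boundary $0^{N-K}$, top boundary $1^{N-K}$, right boundary $I$, columns carrying $\bm{u}$), and the right block is precisely $F_S^I$ as in \eqref{eqstable6-pic}. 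Conservation of paths in the left block forces $|I|=K$, consistently with the fact that $F_S^I=0$ unless $|I|=|S|=K$. Since $F_S^I$ is independent of $\bm{u}$ and the sum is finite, comparison with the definition \eqref{eqstable6} of $H_S$ reduces the theorem to the single claim
\begin{align*}
\lim_{\bm{u}\to\infty}
\frac{Z(I;\bm{u})}{F_{(1^{N-K})}(*;\bm{u})}
=
q^{-\inv(I)},
\qquad |I|=K,
\end{align*}
after which the limit passes through the finite sum over $I$.

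For this claim I would analyze the $\bm{u}\to\infty$ limit combinatorially. Letting the vertical spectral parameter tend to infinity, the six vertex weights of Figure \ref{fig:6v-weights} all have finite limits, with the sole exception of the configuration $(1,0;0,1)$ (a path entering from below and turning to the right), whose weight is $O(1/u)$. Hence every surviving configuration of the left block contains no such vertex, so each of the $N$ incoming paths is \emph{monotone}: the $K$ rows with $I_i=1$ run straight to the right boundary, while each of the $N-K$ rows with $I_i=0$ turns upward exactly once and then proceeds straight to the top. As exactly one path must exit the top of each of the $N-K$ columns, these configurations are parametrized by bijections $\sigma$ from the set of ``turn rows'' $\{i:I_i=0\}$ to the columns $\{1,\dots,N-K\}$.

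It then remains to count the power of $q$. In the limit the only nontrivial local weights are $\tfrac{q-1}{q}$ at each of the $N-K$ turning vertices $(0,1;1,0)$ and $\tfrac1q$ at each straight-horizontal vertex $(0,1;0,1)$, every crossing (double) vertex having weight $1$. Writing $h(\sigma)$ for the number of straight-horizontal vertices, the limiting weight of $\sigma$ is $\big(\tfrac{q-1}{q}\big)^{N-K}q^{-h(\sigma)}$. A direct bookkeeping of which vertices are straight-horizontal splits $h(\sigma)=\inv(I)+\inv(\sigma)$: the contribution of the through-rows is $\sum_{i:I_i=1}\#\{i'>i:I_{i'}=0\}=\inv(I)$ and is independent of $\sigma$, while the contribution of the turn-rows equals the number of inversions $\inv(\sigma)$ of the bijection. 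Summing over $\sigma$ and using $\sum_{\sigma}q^{-\inv(\sigma)}=[N-K]_{q^{-1}}!$ (the $q$-factorial) gives
\begin{align*}
\lim_{\bm{u}\to\infty}Z(I;\bm{u})
=
\Big(\tfrac{q-1}{q}\Big)^{N-K}
q^{-\inv(I)}\,[N-K]_{q^{-1}}!.
\end{align*}
The denominator $F_{(1^{N-K})}(*;\bm{u})$ is the $(N-K)$-row domain-wall partition function, i.e.\ the same analysis with no through-rows, giving the limit $\big(\tfrac{q-1}{q}\big)^{N-K}[N-K]_{q^{-1}}!$; this is independent of the auxiliary alphabet $*$ because all surviving weights are independent of the row parameters, which explains the notation $(*;\bm{u})$ in the statement. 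Taking the ratio, everything cancels except $q^{-\inv(I)}$, completing the reduction.

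The main obstacle is the bookkeeping of the previous paragraph: establishing the clean splitting $h(\sigma)=\inv(I)+\inv(\sigma)$ requires a careful reading of the lattice geometry, deciding for each vertex whether a vertical path is present beneath it, which depends on whether the turn in that column lies above or below the given row. Everything else is routine: the factorization along the cut is standard, the interchange of limit and sum is legitimate since there are finitely many $I$ and each limit exists and is finite for generic $q$, and the vertex-weight limits are elementary. An equivalent and perhaps cleaner route to the same count is to write $Z(I;\bm{u})$ as a matrix element of the finite row operators $\mathcal{T}_{1,I_i}(x_i;\bm{u})$ and compute their $\bm{u}\to\infty$ limits directly, which are respectively diagonal (for $I_i=1$) and single-path-injecting (for $I_i=0$); I would use whichever proves less error-prone in the writing.
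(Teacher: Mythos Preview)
Your argument is correct, and the overall architecture (cutting along the column boundary, reducing to the ratio of the left block to the domain-wall partition function) is exactly what the paper does. The difference lies in how the key claim
\[
\lim_{\bm{u}\to\infty}\frac{Z(I;\bm{u})}{F_{(1^{N-K})}(*;\bm{u})}=q^{-\inv(I)}
\]
is established. The paper does not compute either numerator or denominator directly; instead it proves a local exchange relation (Lemma~\ref{lem:qexchange6}, derived from the Yang--Baxter equation in the $\bm{u}\to\infty$ limit) showing that swapping two adjacent right-boundary states $(1,0)\leftrightarrow(0,1)$ multiplies the left block by $q$. This makes $q^{\inv(I)}Z(I;\infty)$ invariant under all rearrangements of $I$, so one may set $I=(0^{N-K},1^K)$, whereupon the top $K$ rows freeze trivially and the remaining block \emph{is} $F_{(1^{N-K})}(*;\infty)$; the ratio is then $q^{-\inv(I)}$ without ever evaluating either quantity. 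Your route is more elementary---you explicitly enumerate the surviving monotone configurations and sum the resulting $q$-weights, obtaining $\big(\tfrac{q-1}{q}\big)^{N-K}q^{-\inv(I)}[N-K]_{q^{-1}}!$ for the numerator and the same without the $q^{-\inv(I)}$ for the denominator. Amusingly, this explicit computation is precisely what the paper carries out \emph{afterwards}, in Proposition~\ref{domanwallpart1}, to evaluate the domain-wall limit; so you have effectively merged that later aside into the main proof. The paper's exchange-relation method is the one that ports cleanly to the nineteen-vertex setting (Lemma~\ref{qexchangeIKK1}), where the analogous direct count would be considerably messier.
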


\begin{proof}
Our first observation is that all of the vertex weights in Figure \ref{fig:6v-weights} have a well-defined $y \rightarrow \infty$ limit. Moreover, due to the fact that the weights \eqref{eqdefn6w} depend on the spectral parameters $x,y$ only via their ratio, it is clear that all weights become independent of $x$ after taking $y \rightarrow \infty$. We shall reflect this fact by suppressing the value of horizontal spectral parameters in most of the partition functions that follow.

Now consider the lattice model interpretation of $F_{(1^{N-K},S)}(x_1,\dots,x_N;\bm{u} \cup \bm{z})$; see equation \eqref{picofsixf1}. Sending $\bm{u} \to \infty$, we obtain the following:
\begin{equation}
\label{eqqexchange6rel1}
\begin{tikzpicture}[scale = 0.6,every node/.style={scale=0.5},baseline=(current bounding box.center)]
\node[scale = 2] at (-6.5,3) {$\displaystyle\lim_{\bm{u}\to \infty} F_{(1^{N-K},S)}(x_1,\dots,x_N;\bm{u} \cup \bm{z}) = \sum_{I:|I| = K}$};
\node[scale = 1.5] at (3.5,-1.5) {$\infty$};
\node[scale = 1.5] at (4.5,-1.5) {$\dots$};
\node[scale = 1.5] at (5.5,-1.5) {$\infty$};
\draw[thick,->] (3.5,-1.2) -- (3.5,-0.5);
\draw[thick,->] (5.5,-1.2) -- (5.5,-0.5);
\foreach \i in {3.5,4.5,5.5} {
\draw[lgray,line width=1.5pt,->] (\i,0) -- (\i,7);
}
\foreach \i in {1,2,3,4,5,6} {
\draw[lgray,line width=1.5pt] (2.5,\i) -- (6,\i);
}
\draw[red,dashed] (6.75,7) -- (6.75,0);
\foreach \i in {1,2,6} {
\node[scale = 1.25] at (2.25,\i) {$1$};
}
\foreach \i in {3,4,5} {
\node at (2.25,\i) {$\vdots$};
}
\foreach \i in {3.5,5.5} {
\node[scale = 1.25] at (\i,-0.25) {$0$};
}
\node[scale = 1.25] at (3.5,7.25) {$1$};
\node at (4.5,7.25) {$\dots$};
\node[scale = 1.25] at (5.5,7.25) {$1$};
\foreach \i in {3,4,5} {
\node at (6.2,\i) {$\vdots$};
}
\node[scale = 1.25] at (6.4,1) {$I_1$};
\node[scale = 1.25] at (6.4,2) {$I_2$};
\node[scale = 1.25] at (6.4,6) {$I_N$};
\foreach \i in {3,4,5} {
\node at (7.2,\i) {$\vdots$};
}
\node[scale = 1.25] at (7.2,1) {$I_1$};
\node[scale = 1.25] at (7.2,2) {$I_2$};
\node[scale = 1.25] at (7.2,6) {$I_N$};
\foreach \i in {8,9,10,11,12,13} {
\draw[lgray,line width=1.5pt,->] (\i,0) -- (\i,7);
}
\foreach \i in {1,2,3,4,5,6} {
\draw[lgray,line width=1.5pt,->] (7.5,\i) -- (13.5,\i);
}
\node at (4.5,-0.25) {$\dots$};
\foreach \i in {8,9} {
\node[scale = 1.25] at (\i,-0.25) {$0$};
}
\foreach \i in {10,11,12,13} {
\node at (\i,-0.25) {$\dots$};
}
\node[scale = 1.25] at (8,7.25) {$S_1$};
\node[scale = 1.25] at (9,7.25) {$S_2$};
\foreach \i in {10,11,12,13} {
\node at (\i,7.25) {$\dots$};
}
\foreach \i in {1,2,6} {
\node[scale = 1.25] at (13.75,\i) {$0$};
}
\node[scale = 1.5] at (8,-1.5) {$z_1$};
\node[scale = 1.5] at (9,-1.5) {$z_2$};
\foreach \i in {3,4,5} {
\node at (13.75,\i) {$\vdots$};}
\draw[thick,->] (8,-1.2) -- (8,-0.5);
\draw[thick,->] (9,-1.2) -- (9,-0.5);
\node[scale = 1.5] at (0.5,1) {$x_1$};
\node[scale = 1.5] at (0.5,2) {$x_2$};
\node[scale = 1.5] at (0.5,6) {$x_{N}$};
\foreach \i in {3,4,5} {
\node[scale = 1.5] at (0.5,\i) {$\vdots$};
}
\foreach \i in {1,2,6} {
\draw[thick,->] (1,\i) -- (2,\i);
}
\end{tikzpicture}
\end{equation}
where the red dashed line is used to separate out the first $N-K$ vertical lines of the lattice. In view of the comment above, we note that in these first $N-K$ columns, one may assign arbitrary values to the horizontal spectral parameters. To the right of the red dashed line we recognize the lattice definition of $F_{S}^{I}(x_1,\dots,x_N;\bm{z})$. Comparing with the definition \eqref{eqstable6} of $H_S$, equation \eqref{6-eqspeical1} holds provided that
\begin{equation}
\label{6-eqlatticeb1}
\begin{tikzpicture}[scale = 0.6,every node/.style={scale=0.5},baseline=1.9cm]
\foreach \i in {4,5,6} {
\draw[lgray,line width=1.5pt,->] (\i,0) -- (\i,7);
}
\foreach \i in {1,2,3,4,5,6} {
\draw[lgray,line width=1.5pt] (3,\i) -- (6.5,\i);
}
\foreach \i in {1,2,6} {
\node[scale = 1.25] at (2.8,\i) {$1$};
}
\foreach \i in {3,4,5} {
\node at (2.8,\i) {$\vdots$};
}
\foreach \i in {4,6} {
\node[scale = 1.25] at (\i,-0.2) {$0$};
\node[scale = 1.25] at (\i,7.2) {$1$};
}
\node at (5,7.2) {$\dots$};
\node at (5,-0.2) {$\dots$};
\foreach \i in {3,4,5} {
\node at (6.8,\i) {$\vdots$};
}
\node[scale = 1.25] at (6.8,1) {$I_1$};
\node[scale = 1.25] at (6.8,2) {$I_2$};
\node[scale = 1.25] at (6.8,6) {$I_{N}$};
\draw[->,thick] (4,-1.2) -- (4,-0.5);
\draw[->,thick] (6,-1.2) -- (6,-0.5);
\node[scale = 1.5] at (4,-1.5) {$\infty$};
\node[scale = 1.5] at (5,-1.5) {$\dots$};
\node[scale = 1.5] at (6,-1.5) {$\infty$};
\node[scale = 1.25] at (1,1) {$*$};
\node[scale = 1.25] at (1,2) {$*$};
\node[scale = 1.25] at (1,6) {$*$};
\draw[thick,->] (1.5,1) -- (2.5,1);
\draw[thick,->] (1.5,2) -- (2.5,2);
\draw[thick,->] (1.5,6) -- (2.5,6);
\end{tikzpicture}
=
q^{-\inv(I)}
\lim_{\bm{u}\to \infty}
F_{(1^{N-K})}(*;\bm{u})
\end{equation}
for all $(I_1,\dots,I_N) \in \{0,1\}^N$ such that $|I|=K$. To prove equation \eqref{6-eqlatticeb1} we require the following lemma, that allows us to incrementally modify the boundary conditions at the right edges of the partition function.

\begin{lem}
\label{lem:qexchange6}
Fix arbitrary vectors $S,T \in \{0,1\}^N$. The following equality of partition functions holds:
\begin{equation}
\label{6qexchange1}
q \times \left(
\begin{tikzpicture}[scale=0.64,every node/.style={scale=0.7},baseline={([yshift=-0.5ex]current bounding box.center)}]
\draw[lgray,line width=1.5pt] (0,1) -- (2.5,1);
\draw[lgray,line width=1.5pt] (0,0) -- (2.5,0);
\draw[lgray,line width=1.5pt,->] (3.5,1) -- (6,1);
\draw[lgray,line width=1.5pt,->] (3.5,0) -- (6,0);
\node[scale = 1.5] at (3,0) {$\dots$};
\node[scale = 1.5] at (3,1) {$\dots$};
\foreach \i in {1,2,4,5} {
\draw[lgray,line width=1.5pt,<-] (\i,2) -- (\i,-1);
}
\foreach \i in {1,2} {
\draw[->,thick] (\i,-2.2) -- (\i,-1.5);
}
\foreach \i in {4,5} {
\draw[->,thick] (\i,-2.2) -- (\i,-1.5);
}
\node at (1,-2.5) {$\infty$};
\node at (2,-2.5) {$\infty$};
\node at (4,-2.5) {$\infty$};
\node at (5,-2.5) {$\infty$};
\node at (-0.25,0) {$1$};
\node at (-0.25,1) {$1$};
\node at (6.2,1) {$0$};
\node at (6.2,0) {$1$};
\foreach \i in {1,2} {
\node at (\i,2.2) {$T_{\i}$};
}
\node at (4,2.2) {$T_{N-1}$};
\node at (5,2.2) {$T_{N}$};
\foreach \i in {1,2} {
\node at (\i,-1.2) {$S_{\i}$};
}
\node at (4,-1.2) {$S_{N-1}$};
\node at (5,-1.2) {$S_{N}$};
\node at (-1.5,0) {$*$};
\node at (-1.5,1) {$*$};
\draw[thick,->] (-1.25,0) -- (-0.5,0);
\draw[thick,->] (-1.25,1) -- (-0.5,1);
\end{tikzpicture}\right)
= 
\begin{tikzpicture}[scale=0.64,every node/.style={scale=0.7},baseline={([yshift=-0.5ex]current bounding box.center)}]
\node at (8.5,0) {$*$};
\node at (8.5,1) {$*$};
\draw[->,thick] (9,0) -- (9.75,0);
\draw[->,thick] (9,1) -- (9.75,1);
\draw[lgray,line width=1.5pt] (10.25,1) -- (12.75,1);
\draw[lgray,line width=1.5pt,->] (13.75,1) -- (16.25,1);
\draw[lgray,line width=1.5pt] (10.25,0) -- (12.75,0);
\draw[lgray,line width=1.5pt,->] (13.75,0) -- (16.25,0);
\node[scale = 1.5] at (13.25,0) {$\dots$};
\node[scale = 1.5] at (13.25,1) {$\dots$};
\foreach \i in {11.25,12.25,14.25,15.25} 
{
\draw[lgray,line width=1.5pt,<-] (\i,2) -- (\i,-1);
}
\node at (10,1) {$1$};
\node[below] at (10,0.25) {$1$};
\node at (16.5,1) {$1$};
\node at (16.5,0) {$0$};
\foreach \i in {1,2}{
\node at (\i+10.25,2.2) {$T_{\i}$};
}
\node at (14.25,2.2) {$T_{N-1}$};
\node at (15.25,2.2) {$T_{N}$};
\foreach \i in {1,2} {
\node at (\i+10.25,-1.2) {$S_{\i}$};
}
\node at (14.25,-1.2) {$S_{N-1}$};
\node at (15.25,-1.2) {$S_{N}$};
\foreach \i in {11.2,12.2} {
\draw[->,thick] (\i,-2.2) -- (\i,-1.5);
}
\foreach \i in {14.2,15.2} {
\draw[->,thick] (\i,-2.2) -- (\i,-1.5);
}
\node at (11.2,-2.5) {$\infty$};
\node at (12.2,-2.5) {$\infty$};
\node at (14.2,-2.5) {$\infty$};
\node at (15.2,-2.5) {$\infty$};
\end{tikzpicture}
\end{equation}

\end{lem}

\begin{proof}
Following very similar ideas to Proposition \ref{prop:6-commute-nontrivial}, one may derive the following exchange relation between row-operators (of arbitrary length):
\begin{align}
\label{6-general-commute}
\mathcal{T}_{1,1}(x;\bm{z}) \mathcal{T}_{1,0}(y;\bm{z})
=
\sum_{c,d:c+d=1}
W_{y/x}(d,c;0,1)
\mathcal{T}_{1,d}(y;\bm{z}) 
\mathcal{T}_{1,c}(x;\bm{z}).
\end{align}
After sending $\bm{z} \rightarrow \infty$, the row operators in \eqref{6-general-commute} become independent of their primary arguments:
\begin{align}
\label{6-general-commute2}
\mathcal{T}_{1,1}(*;\infty) 
\mathcal{T}_{1,0}(*;\infty)
=
\sum_{c,d:c+d=1}
W_{y/x}(d,c;0,1)
\mathcal{T}_{1,d}(*;\infty) 
\mathcal{T}_{1,c}(*;\infty),
\end{align}
which holds for arbitrary values of the parameter $y/x$. Consulting the weights in Figure \ref{fig:6v-weights} and sending $y/x \rightarrow \infty$, one finds that
\begin{align*}
W_{\infty}(d,c;0,1)
=
\bm{1}_{c=1}
\cdot
\bm{1}_{d=0}
\cdot
q^{-1},
\end{align*}
and \eqref{6qexchange1} then follows directly from \eqref{6-general-commute2}.

\end{proof}

An immediate corollary of the exchange relation \eqref{6qexchange1} is that the quantity
\begin{align}
\label{6-chi-inv}
\chi(I_1,\dots,I_N)
:=
q^{\inv(I)}
\times
\left(
\begin{tikzpicture}[scale = 0.6,every node/.style={scale=0.5},baseline=1.6cm]
\foreach \i in {4,5,6} {
\draw[lgray,line width=1.5pt,->] (\i,0) -- (\i,7);
}
\foreach \i in {1,2,3,4,5,6} {
\draw[lgray,line width=1.5pt] (3,\i) -- (6.5,\i);
}
\foreach \i in {1,2,6} {
\node[scale = 1.25] at (2.8,\i) {$1$};
}
\foreach \i in {3,4,5} {
\node at (2.8,\i) {$\vdots$};
}
\foreach \i in {4,6} {
\node[scale = 1.25] at (\i,-0.2) {$0$};
\node[scale = 1.25] at (\i,7.2) {$1$};
}
\node at (5,7.2) {$\dots$};
\node at (5,-0.2) {$\dots$};
\foreach \i in {3,4,5} {
\node at (6.8,\i) {$\vdots$};
}
\node[scale = 1.25] at (6.8,1) {$I_1$};
\node[scale = 1.25] at (6.8,2) {$I_2$};
\node[scale = 1.25] at (6.8,6) {$I_{N}$};
\draw[->,thick] (4,-1.2) -- (4,-0.5);
\draw[->,thick] (6,-1.2) -- (6,-0.5);
\node[scale = 1.5] at (4,-1.5) {$\infty$};
\node[scale = 1.5] at (5,-1.5) {$\dots$};
\node[scale = 1.5] at (6,-1.5) {$\infty$};
\node[scale = 1.25] at (1,1) {$*$};
\node[scale = 1.25] at (1,2) {$*$};
\node[scale = 1.25] at (1,6) {$*$};
\draw[thick,->] (1.5,1) -- (2.5,1);
\draw[thick,->] (1.5,2) -- (2.5,2);
\draw[thick,->] (1.5,6) -- (2.5,6);
\end{tikzpicture} \right)
\end{align}
is invariant for all $(I_1,\dots,I_N) \in \{0,1\}^N$ such that $|I|=K$. Indeed, any reordering of $(I_1,\dots,I_N)$ may be achieved by iteration of the relation \eqref{6qexchange1}. We are therefore at liberty to choose $(I_1,\dots,I_N)$ in any way that facilitates the computation of $\chi(I_1,\dots,I_N)$. Choosing $(I_1,\dots,I_N)=(0^{N-K},1^K)$, equation \eqref{6-chi-inv} becomes
\begin{align}
\label{6-chi-inv2}
\chi(0^{N-K},1^K)
=
\begin{tikzpicture}[scale = 0.6,every node/.style={scale=0.5},baseline=1.9cm]
\begin{scope}[shift = {(0,1)}]
\foreach \i in {4,5,6} {
\draw[lgray,line width=1.5pt,->] (\i,3.75) -- (\i,7);
}
\foreach \i in {4,5,6} {
\draw[lgray,line width=1.5pt] (3,\i) -- (6.5,\i);
}
\node[scale = 1.25] at (6.8,4) {$1$};
\node[scale = 1.25] at (6.8,5) {$\vdots$};
\node[scale = 1.25] at (6.8,6) {$1$};
\node[scale = 1.25] at (1,4) {$*$};
\node[scale = 1.25] at (1,6) {$*$};
\draw[thick,->] (1.5,4) -- (2.5,4);
\draw[thick,->] (1.5,6) -- (2.5,6);
\foreach \i in {4,6} {
\node[scale = 1.25] at (\i,7.2) {$1$};
}
\node at (5,7.2) {$\dots$};
\foreach \i in {4,6} {
\node[scale = 1.25] at (2.8,\i) {$1$};
}
\node at (2.8,5) {$\vdots$};
\node at (4,3.5) {$1$};
\node at (5,3.5) {$\dots$};
\node at (6,3.5) {$1$};
\end{scope}
\draw[red,dashed] (3,4.1) -- (6.5,4.1);
\node at (4,3.75) {$1$};
\node at (5,3.75) {$\dots$};
\node at (6,3.75) {$1$};
\foreach \i in {4,5,6} {
\draw[lgray,line width=1.5pt] (\i,0) -- (\i,3.5);
}
\foreach \i in {1,2,3} {
\draw[lgray,line width=1.5pt] (3,\i) -- (6.5,\i);
}
\foreach \i in {1,3} {
\node[scale = 1.25] at (2.8,\i) {$1$};
}
\foreach \i in {2} {
\node at (2.8,\i) {$\vdots$};
}
\foreach \i in {4,6} {
\node[scale = 1.25] at (\i,-0.2) {$0$};
}
\node at (5,-0.2) {$\dots$};
\node[scale = 1.25] at (6.8,1) {$0$};
\node[scale = 1.25] at (6.8,2) {$\vdots$};
\node[scale = 1.25] at (6.8,3) {$0$};
\draw[->,thick] (4,-1.2) -- (4,-0.5);
\draw[->,thick] (6,-1.2) -- (6,-0.5);
\node[scale = 1.5] at (4,-1.5) {$\infty$};
\node[scale = 1.5] at (5,-1.5) {$\dots$};
\node[scale = 1.5] at (6,-1.5) {$\infty$};
\node[scale = 1.25] at (1,1) {$*$};
\node[scale = 1.25] at (1,3) {$*$};
\draw[thick,->] (1.5,1) -- (2.5,1);
\draw[thick,->] (1.5,3) -- (2.5,3);
\draw [decorate,decoration={brace,amplitude=5pt,mirror,raise=4ex}]
  (4,-0.8) -- (6,-0.8) node[midway,yshift=-5.5em]{$N-K$};
\draw [decorate,decoration={brace,amplitude=5pt,mirror,raise=4ex}]
  (6.25,1) -- (6.25,3) node[midway,xshift=7em]{$N-K$};
  \draw [decorate,decoration={brace,amplitude=5pt,mirror,raise=4ex}]
  (6.25,5) -- (6.25,7) node[midway,xshift=5.75em]{$K$};
\end{tikzpicture}
=
\lim_{\bm{u}\to \infty}
F_{(1^{N-K})}(*;\bm{u}),
\end{align}
where the final equality follows by noting that all vertices above the red dashed line are frozen: all of these vertices have the weight $W_{\infty}(1,1;1,1)=1$. Comparison of \eqref{6-chi-inv} and \eqref{6-chi-inv2} proves \eqref{6-eqlatticeb1}, thereby completing the proof of Theorem \ref{6-speicalpart1}.
\end{proof}

\subsection{An aside about the domain-wall partition function}

We now compute the limit of the domain-wall partition function $\lim_{\bm{u}\to \infty} F_{(1^{N-K})}(*;\bm{u})$ that surfaced throughout the computations of the previous subsection. We do this mainly for completeness, as we will not require it in the sections which follow.  
 
\begin{prop}
\label{domanwallpart1}
Define the following quantity:
\begin{equation*}
Z_{N}(q)
:=
\lim_{\bm{u} \to \infty} F_{(1^N)}(*;\bm{u}).
\end{equation*}
We then have the explicit evaluation
\begin{align}
\label{eqspsixdomainwall1}
Z_N(q)
=
\prod_{i = 1}^{N} (1-q^{-i}).
\end{align}
\end{prop}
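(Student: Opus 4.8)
The plan is to compute $Z_N(q)$ directly from the lattice model, by first taking the $\bm{u}\to\infty$ limit of the individual vertex weights and then evaluating the resulting frozen partition function combinatorially. Since the domain-wall partition function $F_{(1^N)}(*;\bm u)$ is a \emph{finite} sum of finite products of weights $W_{u_j/x_i}$, and each such weight has a well-defined $u_j\to\infty$ limit with the $x_i$ held fixed, one may interchange the limit with the summation over configurations, so no analytic subtlety arises. The first step is therefore to record the limiting weights: reading off Figure \ref{fig:6v-weights} and sending $y/x\to\infty$ gives
\begin{align*}
&W_{\infty}(0,0;0,0)=W_{\infty}(1,0;1,0)=W_{\infty}(1,1;1,1)=1,\qquad W_{\infty}(1,0;0,1)=0,\\
&W_{\infty}(0,1;1,0)=1-q^{-1},\qquad W_{\infty}(0,1;0,1)=q^{-1}.
\end{align*}
In particular the limiting weights no longer depend on the $x_i$, which confirms that $Z_N(q)$ is independent of the (arbitrary) primary alphabet denoted $*$.

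The second step is to identify the surviving configurations. Because $W_{\infty}(1,0;0,1)=0$, a path may never turn from a vertical edge onto a horizontal one; combined with path conservation and the boundary data of $F_{(1^N)}$ (a path enters on every left edge and exits on the top edge of each of columns $1,\dots,N$), this forces every path to travel rightward along its row, turn upward exactly once, and then proceed straight to the top. I would then check that the up-turns occur in distinct columns, since two paths turning up in a common column would violate conservation at the shared vertex, and that these columns are exactly $\{1,\dots,N\}$ because $S=(1^N)$ forbids a top exit in any column $c>N$. Hence lattice configurations are in bijection with permutations $\sigma\in\mathfrak{S}_N$, where $\sigma(i)$ is the column in which the path entering row $i$ turns up.

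The third step is to compute the weight of the configuration indexed by $\sigma$. The only weights different from $1$ are the up-turn $W_{\infty}(0,1;1,0)=1-q^{-1}$, which contributes once per path for a total factor $(1-q^{-1})^N$, and the horizontal pass-through $W_{\infty}(0,1;0,1)=q^{-1}$, which occurs at a vertex $(i,c)$ precisely when the row-$i$ path is still travelling right ($c<\sigma(i)$) and no vertical segment occupies that column at that height (a crossing, of weight $W_{\infty}(1,1;1,1)=1$, would otherwise occur). A short bookkeeping argument identifies these pass-through vertices with the pairs $i<j$ satisfying $\sigma(i)>\sigma(j)$, so their number is $\inv(\sigma)$, and the configuration carries weight $(1-q^{-1})^N q^{-\inv(\sigma)}$. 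Summing over $\mathfrak{S}_N$ and invoking the classical generating function $\sum_{\sigma\in\mathfrak{S}_N}t^{\inv(\sigma)}=\prod_{k=1}^N\frac{1-t^k}{1-t}$ with $t=q^{-1}$ yields
\begin{align*}
Z_N(q)=(1-q^{-1})^N\prod_{k=1}^N\frac{1-q^{-k}}{1-q^{-1}}=\prod_{k=1}^N\bigl(1-q^{-k}\bigr),
\end{align*}
as claimed. I expect the main obstacle to lie in the careful bookkeeping of the third step, namely verifying that the non-crossing horizontal pass-throughs are enumerated exactly by $\inv(\sigma)$; everything else is either a direct limit evaluation or an appeal to the standard $q$-factorial identity.
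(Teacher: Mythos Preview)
Your proposal is correct and follows essentially the same approach as the paper: take the $\bm{u}\to\infty$ limit of the weights, use $W_\infty(1,0;0,1)=0$ to put configurations in bijection with $\mathfrak{S}_N$, and sum the resulting $q$-inversion weights via the Poincar\'e polynomial. The only cosmetic difference is that the paper labels the permutation from the top row (obtaining $\binom{N}{2}-\inv(\sigma)$ pass-throughs) while you label from the bottom (obtaining $\inv(\sigma)$); these give the same sum by the palindromicity of $\sum_\sigma t^{\inv(\sigma)}$.
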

\begin{proof}
When the ratio $y/x$ is set to $\infty$, the weights of the $R$-matrix \eqref{eqrm61} become
\begin{align*}
&
W_{\infty}(0,0;0,0) = W_{\infty}(1,1;1,1) = 
W_{\infty}(1,0;1,0) = 1,
\\
&
W_{\infty}(1,0;0,1) = 0, \quad
W_{\infty}(0,1;1,0) = -\frac{1-q}{q}, \quad
W_{\infty}(0,1;0,1) = \frac{1}{q}.
\end{align*}
Since $W_{\infty}(1,0;0,1) = 0$, the configurations in $F_{(1^N)}(*;\bm{u})$ which survive the limit $\bm{u} \rightarrow \infty$ are in bijection with the permutations of $\mathfrak{S}_N$. To see this, we note that the non-vanishing configurations of $F_{(1^N)}(*;\bm{u})$ have a unique vertex $W_{\infty}(0,1;1,0)$ in each row and column of the lattice. Replacing each such vertex by a $1$ and all other vertices by $0$, one recovers a permutation matrix $\sigma \in \mathfrak{S}_N$ (see Example \ref{ex:dwlimit}).

Now let us fix $\sigma \in \mathfrak{S}_N$ and compute the weight of the corresponding configuration. One finds that the frequencies of the vertices $W_{\infty}(0,1;1,0)$ and $W_{\infty}(0,1;0,1)$ are given by
\begin{equation*}
\#_{\sigma}\{ W_{\infty}(0,1;1,0) \} = N, 
\quad 
\#_{\sigma}\{ W_{\infty}(0,1;0,1) \} = 
\binom{N}{2} - \inv(\sigma),
\end{equation*}
where $\#_{\sigma}\{W_{\infty}(i,j;k,\ell)\}$ denotes the count of vertices of type $W_{\infty}(i,j;k,\ell)$ within the configuration $\sigma$. Summing the weights across each configuration $\sigma \in \mathfrak{S}_N$ we then obtain
\begin{equation}
Z_N(q) 
= 
(-1)^N 
(1-q)^N 
q^{-\binom{N+1}{2}} 
\sum_{\sigma \in \mathfrak{S}_N} 
q^{\inv(\sigma)}.
\end{equation}
The sum on the right hand side is the Poincar\'e polynomial of $\mathfrak{S}_N$, which yields the factorization
\begin{equation}
(-1)^N 
(1-q)^N 
q^{-\binom{N+1}{2}}
\sum_{\sigma \in \mathfrak{S}_N} 
q^{\inv(\sigma)} 
= 
\prod_{i = 1}^{N} (1-q^{-i}),
\end{equation}
completing the proof of \eqref{eqspsixdomainwall1}.
\end{proof}

\begin{ex}\label{ex:dwlimit}
For $N = 3$, the lattice configurations of $Z_3(q)$ are listed as follows (left panel):
\[
\begin{tikzpicture}[scale = 0.65,every node/.style={scale=0.6}]
\foreach \i in {1,2,3}
{
\draw[->,lgray,line width=1.5pt] (0,\i) -- (4,\i);
}
\foreach \i in {1,2,3} {
\draw[->,lgray,line width=1.5pt] (\i,0) -- (\i,4);
}
\foreach \i in {1,2,3} {
\node at (4.2,\i) {$0$};
\foreach \i in {1,2,3} {
\node at (\i,-0.2) {$0$};
}
}
\foreach \i in {1,2,3} {
\node at (-0.2,\i) {$1$};
}
\foreach \i in {1,2,3} {
\node at (\i,4.2) {$1$};
}
\draw[line width = 1.5,->,rounded corners] (0,1) -- (3,1) -- (3,4);
\draw[line width = 1.5,->,rounded corners] (0,2) -- (2,2) -- (2,4);
\draw[line width = 1.5,->,rounded corners] (0,3) -- (1,3) -- (1,4);
\node[scale = 2.5] at (1,3) {$\bullet$};
\node[scale = 2.5] at (2,2) {$\bullet$};
\node[scale = 2.5] at (3,1) {$\bullet$};
\begin{scope}[shift = {(-0.75,0)}]
\foreach \i in {1,2,3}
{
\draw[->,lgray,line width=1.5pt] (6,\i) -- (10,\i);
}
\foreach \i in {7,8,9} {
\draw[->,lgray,line width=1.5pt] (\i,0) -- (\i,4);
}
\foreach \i in {1,2,3} {
\node at (10.2,\i) {$0$};
\foreach \i in {7,8,9} {
\node at (\i,-0.2) {$0$};
}
}
\foreach \i in {1,2,3} {
\node at (5.8,\i) {$1$};
}
\foreach \i in {7,8,9} {
\node at (\i,4.2) {$1$};
}
\draw[line width = 1.5,->,rounded corners] (6,3) -- (7,3) -- (7,4);
\draw[line width = 1.5,->,rounded corners] (6,2) -- (7,2) -- (7,3) -- (8,3) -- (8,4);
\draw[line width = 1.5,->,rounded corners] (6,1) -- (9,1) -- (9,3) -- (9,4);
\node[scale = 2.5] at (8,3) {$\bullet$};
\node[scale = 2.5] at (7,2) {$\bullet$};
\node[scale = 2.5] at (9,1) {$\bullet$};
\end{scope}
\begin{scope}[shift = {(-1.5,0)}]
\foreach \i in {1,2,3}
{
\draw[->,lgray,line width=1.5pt] (12,\i) -- (16,\i);
}
\foreach \i in {13,14,15} {
\draw[->,lgray,line width=1.5pt] (\i,0) -- (\i,4);
}
\foreach \i in {1,2,3} {
\node at (16.2,\i) {$0$};
\foreach \i in {13,14,15} {
\node at (\i,-0.2) {$0$};
}
}
\foreach \i in {1,2,3} {
\node at (11.8,\i) {$1$};
}
\foreach \i in {13,14,15} {
\node at (\i,4.2) {$1$};
}
\draw[line width = 1.5,->,rounded corners] (12,3) -- (13,3) -- (13,4);
\draw[line width = 1.5,->,rounded corners] (12,2) -- (14,2) -- (14,4);
\draw[line width = 1.5,->,rounded corners] (12,1) -- (14,1) -- (14,2) -- (15,2) -- (15,4);
\node[scale = 2.5] at (13,3) {$\bullet$};
\node[scale = 2.5] at (15,2) {$\bullet$};
\node[scale = 2.5] at (14,1) {$\bullet$};
\end{scope}
\foreach \i in {-5,-4,-3}
{
\draw[->,lgray,line width=1.5pt] (0,\i) -- (4,\i);
}
\foreach \i in {1,2,3} {
\draw[->,lgray,line width=1.5pt] (\i,-6) -- (\i,-2);
}
\foreach \i in {-5,-4,-3} {
\node at (4.2,\i) {$0$};
\foreach \i in {1,2,3} {
\node at (\i,-6.2) {$0$};
}
}
\foreach \i in {-5,-4,-3} {
\node at (-0.2,\i) {$1$};
}
\foreach \i in {1,2,3} {
\node at (\i,-1.8) {$1$};
}
\draw[line width = 1.5,->,rounded corners] (0,-4) -- (1,-4) -- (1,-3) -- (2,-3) -- (2,-2);
\draw[line width = 1.5,->,rounded corners] (0,-3) -- (1,-3) -- (1,-2);
\draw[line width = 1.5,->,rounded corners] (0,-5) -- (1,-5) -- (1,-4) -- (3,-4) -- (3,-2);
\node[scale = 2.5] at (2,-3) {$\bullet$};
\node[scale = 2.5] at (3,-4) {$\bullet$};
\node[scale = 2.5] at (1,-5) {$\bullet$};
\begin{scope}[shift = {(-0.75,0)}]
\foreach \i in {-5,-4,-3}
{
\draw[->,lgray,line width=1.5pt] (6,\i) -- (10,\i);
}
\foreach \i in {7,8,9} {
\draw[->,lgray,line width=1.5pt] (\i,-6) -- (\i,-2);
}
\foreach \i in {-5,-4,-3} {
\node at (10.2,\i) {$0$};
\foreach \i in {7,8,9} {
\node at (\i,-6.2) {$0$};
}
}
\foreach \i in {-5,-4,-3} {
\node at (5.8,\i) {$1$};
}
\foreach \i in {7,8,9} {
\node at (\i,-1.8) {$1$};
}
\draw[line width = 1.5,->,rounded corners] (6,-3) -- (7,-3) -- (7,-2);
\draw[line width = 1.5,->,rounded corners] (6,-4) -- (7,-4) -- (7,-3) -- (8,-3) -- (8,-2);
\draw[line width = 1.5,->,rounded corners] (6,-5) -- (8,-5) -- (8,-3) -- (9,-3) -- (9,-2);
\node[scale = 2.5] at (9,-3) {$\bullet$};
\node[scale = 2.5] at (7,-4) {$\bullet$};
\node[scale = 2.5] at (8,-5) {$\bullet$};
\end{scope}
\begin{scope}[shift = {(-1.5,0)}]
\foreach \i in {-5,-4,-3}
{
\draw[->,lgray,line width=1.5pt] (12,\i) -- (16,\i);
}
\foreach \i in {13,14,15} {
\draw[->,lgray,line width=1.5pt] (\i,-6) -- (\i,-2);
}
\foreach \i in {-5,-4,-3} {
\node at (16.2,\i) {$0$};
\foreach \i in {13,14,15} {
\node at (\i,-6.2) {$0$};
}
}
\foreach \i in {-5,-4,-3} {
\node at (11.8,\i) {$1$};
}
\foreach \i in {13,14,15} {
\node at (\i,-1.8) {$1$};
}
\draw[line width = 1.5,->,rounded corners] (12,-3) -- (13,-3) -- (13,-2);
\draw[line width = 1.5,->,rounded corners] (12,-4) -- (13,-4) -- (13,-3) -- (14,-3) -- (14,-2);
\draw[line width = 1.5,->,rounded corners] (12,-5) -- (13,-5) -- (13,-4) -- (14,-4) -- (14,-3) -- (15,-3) -- (15,-2);
\node[scale = 2.5] at (15,-3) {$\bullet$};
\node[scale = 2.5] at (14,-4) {$\bullet$};
\node[scale = 2.5] at (13,-5) {$\bullet$};
\end{scope} 
\node[scale = 1.8] at (17.25,1.85) {$\begin{pmatrix}
	1 & 0 & 0\\
	0 & 1 & 0\\
	0 & 0 & 1
\end{pmatrix}$};
\node[scale = 1.8] at (20.65,1.85) {$\begin{pmatrix}
	0 & 1 & 0\\
	1 & 0 & 0\\
	0 & 0 & 1
\end{pmatrix}$};
\node[scale = 1.8] at (23.8,1.85) {$\begin{pmatrix}
	1 & 0 & 0\\
	0 & 0 & 1\\
	0 & 1 & 0
\end{pmatrix}$};
\begin{scope}[shift = {(0,-4.35)}]
\node[scale = 1.8] at (17.25,0.35) {$\begin{pmatrix}
	0 & 1 & 0\\
	0 & 0 & 1\\
	1 & 0 & 0
\end{pmatrix}$};
\node[scale = 1.8] at (20.65,0.35) {$\begin{pmatrix}
	0 & 0 & 1\\
	1 & 0 & 0\\
	0 & 1 & 0
\end{pmatrix}$};
\node[scale = 1.8] at (23.8,0.35) {$\begin{pmatrix}
	0 & 0 & 1\\
	0 & 1 & 0\\
	1 & 0 & 0
\end{pmatrix}$};
\end{scope}
\end{tikzpicture}
\]
Once can recognize the six permutation matrices of $\mathfrak{S}_{3}$ by writing $1$ in place of each vertex $W_\infty(0,1;1,0)$ and $0$ in place of all other vertices (right panel). The weights of these configurations sum up to yield 
\[
\lim_{\bm{u} \to \infty} F_{(1^3)}(x_1,x_2,x_3;\bm{u}) = \bigg(\frac{q-1}{q}\bigg)^{3} \bigg(1 + q^{-1} + q^{-1} +  q^{-2} +q^{-2}+ q^{-3} \bigg) = \prod_{i = 1}^{3}(1-q^{-i}).
\]
\end{ex}

\subsection{Symmetry and stability of $H_S$}
\label{ssec:6-stab}

The first property of the functions $\eqref{eqstable6}$ is their symmetry with respect to their primary alphabet:
\begin{prop}
Fix two integers $K \in \mathbb{Z}_{\geq 0}$, $N \in \mathbb{Z}_{>0}$ such that $0 \leq K \leq N$ and let $S \in \mathfrak{s}(1)$ be a $1$-string such that $|S|=K$. Then the rational function $H_{S}(x_1,\dots,x_N;\bm{z})$ is symmetric with respect to $(x_1,\dots,x_N)$.
\end{prop}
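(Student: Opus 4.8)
The cleanest route is to deduce the symmetry of $H_S$ directly from the limiting construction of Theorem~\ref{6-speicalpart1}, rather than from the defining sum \eqref{eqstable6}. Recall that that theorem expresses
\[
H_{S}(x_1,\dots,x_N;\bm{z}) = \lim_{\bm{u}\to\infty} \frac{F_{(1^{N-K},S)}(x_1,\dots,x_N;\bm{u}\cup\bm{z})}{F_{(1^{N-K})}(*;\bm{u})},
\]
with $\bm{u}=(u_1,\dots,u_{N-K})$. The plan is to observe that, for each fixed (finite) value of $\bm{u}$, the right-hand ratio is already symmetric in $(x_1,\dots,x_N)$, and that this symmetry is inherited by the limit $\bm{u}\to\infty$.

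First I would note that the numerator $F_{(1^{N-K},S)}(x_1,\dots,x_N;\bm{u}\cup\bm{z})$ is an honest member of the family $F_T$ of Definition~\ref{defnraf1}, with primary alphabet $(x_1,\dots,x_N)$ and secondary alphabet $\bm{u}\cup\bm{z}$; here $T=(1^{N-K},S)$ is a $1$-string of weight $(N-K)+K=N$. Hence Theorem~\ref{thm:6-sym-FG} applies and tells us that this numerator is symmetric in $(x_1,\dots,x_N)$. Next I would observe that the denominator $F_{(1^{N-K})}(*;\bm{u})$ carries no dependence whatsoever on the variables $(x_1,\dots,x_N)$: the asterisk records precisely the fact (established in the proof of Theorem~\ref{6-speicalpart1}) that once the secondary parameters are sent to infinity the weights no longer see the horizontal spectral parameters, so the denominator is constant with respect to the primary alphabet.

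Combining these two facts, for every fixed $\bm{u}$ the ratio on the right is a symmetric function of $(x_1,\dots,x_N)$. Since invariance under each transposition $x_i\leftrightarrow x_j$ is a functional identity valid for all values of $\bm{u}$, it persists after taking $\bm{u}\to\infty$; therefore $H_S$ is symmetric in its primary alphabet.

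I do not anticipate any genuine obstacle here, since the two key inputs (the limit formula and the symmetry of $F_T$) are already in hand. The only point requiring mild care is the interchange of symmetry with the limit, which is immediate because the symmetry is a pointwise identity in $\bm{u}$ rather than something requiring uniform control. For completeness I would remark that a more self-contained argument, bypassing Theorem~\ref{6-speicalpart1}, could instead establish symmetry directly from \eqref{eqstable6} by proving a row-swap exchange relation for the partition functions $F_S^I$ together with the $q^{-\inv(I)}$ weighting; but this merely reproves the content already packaged in Lemma~\ref{lem:qexchange6} and the invariance of $\chi(I_1,\dots,I_N)$, so the limiting argument is preferable.
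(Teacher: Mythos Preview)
Your proposal is correct and takes essentially the same approach as the paper: both deduce the symmetry of $H_S$ as an immediate consequence of Theorem~\ref{6-speicalpart1}, using that the numerator $F_{(1^{N-K},S)}$ is symmetric in its primary alphabet by Theorem~\ref{thm:6-sym-FG} while the denominator is independent of $(x_1,\dots,x_N)$. Your write-up simply spells out in more detail what the paper records in a single sentence.
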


\begin{proof}
This is an immediate consequence of Theorem \ref{6-speicalpart1}, in view of the fact that the right hand side of \eqref{6-eqspeical1} is symmetric in $(x_1,\dots,x_N)$.
\end{proof}

The second property is that the rational functions $\eqref{eqstable6}$ are {\it stable} with respect to their primary alphabet:

\begin{prop}
Fix two integers $K \in \mathbb{Z}_{\geq 0}$, $N \in \mathbb{Z}_{>0}$ such that $0 \leq K \leq N$ and let $S \in \mathfrak{s}(1)$ be a $1$-string such that $|S|=K$. We then have the reduction property
\begin{align}
\label{stability-6v}
H_S(x_1,\dots,x_{N-1},x_N=\infty;\bm{z})
=
\left\{
\begin{array}{ll}
H_S(x_1,\dots,x_{N-1};\bm{z}),
&
\quad
|S| \leq N-1,
\\ \\
0,
&
\quad
|S|=N.
\end{array}
\right.
\end{align}
\end{prop}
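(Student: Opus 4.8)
The plan is to work directly from the defining sum \eqref{eqstable6}, $H_S(x_1,\dots,x_N;\bm{z}) = \sum_{I:|I|=K} q^{-\inv(I)} F_S^I(x_1,\dots,x_N;\bm{z})$, and to analyze the topmost row of the lattice \eqref{eqstable6-pic}, which carries spectral parameter $x_N$ and whose top outgoing edges are exactly $S_1,S_2,\dots$. First I would record the $x_N\to\infty$ limits of the weights in Figure \ref{fig:6v-weights}: since they depend only on the ratio $z_k/x_N\to 0$, one finds $W_{\infty}(1,0;1,0)=q$, $W_{\infty}(1,0;0,1)=1-q$, $W_{\infty}(0,1;0,1)=1$, $W_{\infty}(1,1;1,1)=W_{\infty}(0,0;0,0)=1$, and crucially $W_{\infty}(0,1;1,0)=0$. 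Thus in the limit the ``left-to-top'' bend is forbidden in row $N$.

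The heart of the argument is a freezing analysis of row $N$. Reading the row left to right, track the horizontal occupation $h_k$ of the edge between columns $k$ and $k+1$, with $h_0=I_N$ the left boundary and $h_k=0$ for all large $k$ (the right boundary is frozen to $0$). The surviving vertices show that whenever the horizontal flow is present ($\text{left}=1$) it is forced to continue ($\text{right}=1$), because the only alternative would be the forbidden bend $W_{\infty}(0,1;1,0)$. Hence an active horizontal flow can never switch off, which is incompatible with $h_k\to 0$; I conclude that every surviving configuration has $h_k=0$ for all $k$. In particular all terms with $I_N=1$ vanish, and for $I_N=0$ the bend $W_{\infty}(1,0;0,1)$ is never used either, so each column of row $N$ is either $W_{\infty}(0,0;0,0)$ (empty) or $W_{\infty}(1,0;1,0)$ (vertical, weight $q$). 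This forces the state entering row $N$ from below in column $k$ to equal $S_k$, peels off row $N$ with total weight $q^{|S|}=q^{K}$, and leaves precisely the $(N-1)$-row partition function $F_S^{I'}(x_1,\dots,x_{N-1};\bm{z})$, where $I'=(I_1,\dots,I_{N-1})$.

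It remains to reconcile the statistics. Writing $I=(I',0)$ with $|I'|=K$ and using \eqref{eqinversionfor6} gives $\inv(I)=\inv(I')+K$, so $q^{-\inv(I)}\cdot q^{K}=q^{-\inv(I')}$; the row weight cancels the shift in the inversion number exactly. Summing over $I$ with $I_N=0$ therefore reproduces $\sum_{I':|I'|=K} q^{-\inv(I')} F_S^{I'}(x_1,\dots,x_{N-1};\bm{z})=H_S(x_1,\dots,x_{N-1};\bm{z})$, which requires $K\le N-1$ and settles the first case. In the remaining case $K=|S|=N$ the only admissible index is $I=(1^N)$, which has $I_N=1$ and hence contributes $0$ by the freezing analysis, so $H_S(x_1,\dots,x_{N-1},\infty;\bm{z})=0$. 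The one step that demands genuine care is the freezing argument: one must justify that, in an infinite row, the forbidden bend together with the frozen right boundary rules out any horizontal occupation whatsoever, rather than merely constraining it locally; everything else is bookkeeping of weights and of the inversion statistic.
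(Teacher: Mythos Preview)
Your proof is correct, and the underlying mechanism (the vanishing of $W_{\infty}(0,1;1,0)$ forces one boundary index to be $0$, freezing a row) is the same as in the paper. The route differs in one respect: the paper first invokes the symmetry of $H_S$ in its primary alphabet to send $x_1\to\infty$ rather than $x_N\to\infty$. Working at the \emph{bottom} row is cleaner because the incoming vertical states there are all $0$, so once $I_1=0$ the row freezes to a product of $W_{\infty}(0,0;0,0)=1$ vertices with no residual weight, and the inversion statistic satisfies $\inv(0,I_2,\dots,I_N)=\inv(I_2,\dots,I_N)$ on the nose. Your top-row analysis instead picks up the factor $q^{|S|}=q^K$ from the $W_{\infty}(1,0;1,0)$ vertices and an inversion shift $\inv(I',0)=\inv(I')+K$, which then cancel; this is extra bookkeeping but has the virtue of not appealing to the symmetry result. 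Your caveat about the infinite row is harmless: since $S\in\mathfrak{s}(1)$ has finite support, the partition function truncates to finitely many columns with right boundary $0$, so the monotonicity argument (horizontal occupation is non-decreasing and ends at $0$, hence is identically $0$) is rigorous.
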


\begin{proof}
Since $H_S(x_1,\dots,x_N;\bm{z})$ is symmetric with respect to its primary alphabet, we may equivalently study the limit as $x_1 \rightarrow \infty$. This limit affects the bottommost row of vertices in the partition function \eqref{eqstable6-pic}. Consulting the table in Figure \ref{fig:6v-weights}, one finds that all weights $W_{z/x}(i,j;k,\ell)$ have a well-defined limit as $x \rightarrow \infty$, and furthermore
\begin{align}
\label{6-vanishing-weights}
\lim_{x \rightarrow \infty}
W_{z/x}(0,1;1,0)
=
0.
\end{align}
It follows that $F^{I}_S(x_1=\infty,x_2,\dots,x_N;\bm{z})$ vanishes if the vertex \eqref{6-vanishing-weights} occurs within the first row of the partition function \eqref{eqstable6-pic}. 

Let us begin by assuming that $|S|<N$. The only way to prevent the appearance of the vertex \eqref{6-vanishing-weights} is to fix $I_1=0$, which causes the entire bottom row of the lattice to freeze away (since it is then comprised solely of $W_{0}(0,0;0,0)$ vertices). As such, for any $I=(I_1,\dots,I_N)$ we find that
\begin{align*}
F^{I}_S(x_1=\infty,x_2,\dots,x_N;\bm{z})
=
\bm{1}_{I_1=0}
\cdot
F^{\tilde{I}}_S(x_2,\dots,x_N;\bm{z}),
\qquad
\tilde{I}=(I_2,\dots,I_N),
\end{align*}
which translates into the property
\begin{align}
\label{6-H-reduce}
H_{S}(x_1=\infty,x_2,\dots,x_N;\bm{z}) = 
\sum_{\tilde{I} : |\tilde{I}| = K} 
q^{-\inv(\tilde{I})}
F_{S}^{\tilde{I}}(x_2,\dots,x_N;\bm{z})
=
H_{S}(x_2,\dots,x_N;\bm{z}),
\end{align}
with the sum taken over vectors 
$\tilde{I}=(I_2,\dots,I_N)$, and where the first equality in \eqref{6-H-reduce} makes use of the fact that 
\begin{align*}
\inv(0,I_2,\dots,I_N) = \inv(I_2,\dots,I_N) = \inv(\tilde{I}).
\end{align*}
This completes the proof of \eqref{stability-6v} in the case $|S| < N$. 

The case $|S|=N$ is handled very easily, since in this situation the vector that indexes the left incoming edges of the partition function \eqref{eqstable6-pic} is forced, by conservation, to be $(I_1,\dots,I_N)=(1^N)$. As such, it is impossible to choose $I_1=0$, and $F^{I}_S(x_1=\infty,x_2,\dots,x_N;\bm{z})$ vanishes identically.
\end{proof}

\section{Cauchy identity for stable symmetric functions}
\label{ssec:6v-stable-cauchy}

In Section \ref{sec:6cauchy} we stated a Cauchy-type identity between the rational symmetric functions $\eqref{eqrff1}$ and $\eqref{eqrfg2}$. Although the right hand side of the identity \eqref{eqccsix5} may be expressed in terms of Izergin's determinant \cite{Izergin87}, it does not fully factorize. In this section we show that the stable symmetric functions \eqref{eqstable6} are more natural from this point of view, and satisfy a Cauchy summation identity with a fully factorized right hand side.

\begin{thm}
\label{thm:6-stable-cauchy}
Let $N,M \in \mathbb{Z}_{>0}$ be fixed positive integers. Fix parameters $q,x_i,y_j,z_k \in \mathbb{C}$ such that for all $i \in \{1,\dots,N\}$, $j \in \{1,\dots,M\}$ and $k \in \mathbb{Z}_{>0}$, the condition \eqref{eqCauchyidentitywDW} holds. We then have the summation identity
\begin{align}
\label{eq:6-stable-cauchy}
\sum_{S \in \mathfrak{s}(1)}
(-1)^{|S|}
q^{MN-|S|^2}
c_S(q)
H_{S}(x_1,\dots,x_N;\bm{z})
H_{S}(y_1,\dots,y_M;q^{-1}\bm{z}^{-1}) 
=
\prod_{i=1}^{N}
\prod_{j=1}^{M}
\frac{1-q x_iy_j}{1-x_iy_j}, 
\end{align}
where the constant $c_S(q)$ is given by \eqref{eqcs1}.
\end{thm}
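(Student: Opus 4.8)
The plan is to deduce the stable Cauchy identity \eqref{eq:6-stable-cauchy} from the $F$--$G$ Cauchy identity \eqref{eqccsix5} of Theorem \ref{thmci1} by a limiting procedure that simultaneously converts both participating functions into their stable counterparts $H_S$ and relaxes the fixed-weight constraint $|S|=N$ of \eqref{eqccsix5} into the unrestricted sum over all of $\mathfrak{s}(1)$. The device is to prepend $N$ auxiliary columns carrying a block of secondary parameters $\bm{u}=(u_1,\dots,u_N)$, and then to send $\bm{u}\to\infty$. Concretely, I would apply \eqref{eqccsix5} with the secondary alphabet $\bm{u}\cup\bm{z}$ in place of $\bm{z}$; the left-hand sum is still over $1$-strings of weight $N$, but each such string now splits as $S=(I,S')$, where $I\in\{0,1\}^N$ records the top states of the auxiliary columns and $S'$ is the genuine tail. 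Since $|I|+|S'|=N$, as $I$ runs over all binary vectors the tail weight $|S'|=K$ ranges over the whole interval $0\leq K\leq N$; this is exactly the mechanism that frees the weight.

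On the $x$-side I would use that $F_{(I,S')}=H_{(I,S')}$ in the top-weight case, and that by the reordering/invariance argument underlying the proof of Theorem \ref{6-speicalpart1}, the $\bm{u}\to\infty$ limit of the auxiliary block factorizes: summing over $I$ with the weights supplied by $c_{(I,S')}(q)$ reconstructs $H_{S'}(x_1,\dots,x_N;\bm{z})$ in the sense of \eqref{eqstable6}, up to the overall domain-wall constant $Z_N(q)=\prod_{i=1}^N(1-q^{-i})$ of Proposition \ref{domanwallpart1}. The right-hand side is treated in the same breath: the string $(1^N)$ is supported entirely on the auxiliary columns, so $F_{(1^N)}(x;\bm{u}\cup\bm{z})\to Z_N(q)$, and the prefactor collapses to $q^{N(N+1)/2}Z_N(q)=(-1)^N\prod_{i=1}^N(1-q^{i})$ times the target product $\prod_{i,j}\frac{1-qx_iy_j}{1-x_iy_j}$.

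The $y$-side is the delicate point. Here the same auxiliary columns enter $G_{(I,S')}(y;q^{-1}\bm{z}^{-1})$ with secondary parameters $q^{-1}\bm{u}^{-1}\to 0$, so I would first invoke the flip symmetry of Proposition \ref{prop:6-flip-sym} to rewrite $G_S$ through $\dotr{G}_S$, and then establish a dual limiting procedure showing that this block likewise factorizes into a constant times $H_{S'}(y_1,\dots,y_M;q^{-1}\bm{z}^{-1})$. Assembling the three limits and cancelling the common domain-wall constants against the collapsed prefactor $(-1)^N\prod_{i=1}^N(1-q^{i})$, the fixed-weight sum over $S$ becomes the unrestricted sum over $S'\in\mathfrak{s}(1)$; the residual tracking of signs and $q$-powers then produces exactly the coefficient $(-1)^{|S'|}q^{MN-|S'|^2}c_{S'}(q)$, where the decomposition $c_{(I,S')}(q)=q^{NK}\,c_{S'}(q)\prod_{c=1}^{N}q^{c I_c}$ feeds the positional factors into the $q^{-\inv(I)}$ weights of \eqref{eqstable6}.

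The main obstacle is precisely this $y$-side conversion: unlike the $x$-side, where $F=H$ at top weight and Theorem \ref{6-speicalpart1} applies almost verbatim, the function $G$ is not \emph{a priori} stable, and one must prove that its auxiliary block has a finite, factorized limit as the secondary parameters tend to $0$ (rather than $\infty$). A second, genuine difficulty is checking that the convergence hypothesis \eqref{eqCauchyidentitywDW} is preserved uniformly as $\bm{u}\to\infty$, so that interchanging the infinite sum over $\mathfrak{s}(1)$ with the limit is legitimate. Once these two points are secured, the remaining computation is the purely mechanical bookkeeping of prefactors sketched above.
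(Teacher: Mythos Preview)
Your high-level plan—apply the Cauchy identity \eqref{eqcuachyid6} with secondary alphabet $\bm{u}\cup\bm{z}$, divide by $F_{(1^N)}(*;\bm{u})$, and send $\bm{u}\to\infty$—is exactly the paper's, and your treatment of the right-hand side is correct. The gap is in the decomposition you propose for the left-hand side. You split the summation index as $S=(I,S')$ where $I\in\{0,1\}^N$ records the \emph{top} states over the auxiliary columns, and then claim that summing over $I$ with the weights $c_{(I,S')}(q)$ ``reconstructs $H_{S'}$ in the sense of \eqref{eqstable6}''. But the vector appearing in \eqref{eqstable6} is the \emph{left} boundary of the main lattice (equivalently the states along a \emph{vertical} cut separating auxiliary columns from main columns), not the top states of the auxiliary block; these are two different objects and the positional factors $\prod_c q^{cI_c}$ coming from $c_{(I,S')}$ do not match the $q^{-\inv}$ weights of \eqref{eqstable6}. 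Moreover, since the same $I$ sits in both $F_{(I,S')}$ and $G_{(I,S')}$, the sum over $I$ does not factor across the $x$- and $y$-sides, which is precisely why your $y$-side looks intractable.

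The paper avoids this entirely by reversing the order of operations. One first sums over $S$, which glues $F$ and $\dotr{G}$ into a single $(N{+}M)$-row partition function $K_{N,M}$ (equation \eqref{eq:K-leftmost-columns-6}); your $I$ is then just an internal edge that disappears. Only \emph{after} gluing does one make a vertical cut past the auxiliary columns, introducing boundary states $(I_1,\dots,I_N)$ on the $x$-rows and $(\bar J_M,\dots,\bar J_1)$ on the $y$-rows. The full $(N{+}M)\times N$ auxiliary block is then a single $\chi$-quantity \eqref{6-chi-inv}, and its invariance produces the factor $q^{MN}\cdot q^{-\inv(I)-\inv(J)-|I||J|}\cdot\lim F_{(1^N)}(*;\bm u)$ for both sides at once—this is the step that dissolves the $x$/$y$ asymmetry you flagged. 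The right block splits as $\sum_S F^I_S(x;\bm z)\,F^{\bullet J}_S(y^{-1};\bm z)$, and a column-by-column application of the tower relation \eqref{6-tower-sym} converts $F^{\bullet J}_S$ into $(-1)^{|S|}c_S(q)\,F^J_S(y;q^{-1}\bm z^{-1})$; the $I$- and $J$-sums are now literally the defining sums \eqref{eqstable6}, and the prefactor bookkeeping is the one you sketched. No separate ``dual limiting procedure'' for $G$ is needed.
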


\begin{proof}
Our starting point is the identity \eqref{eqcuachyid6}, in which we choose the secondary alphabet to be $\bm{u} \cup \bm{z}$, where $\bm{u} = (u_1,\dots,u_N)$ and $\bm{z} = (z_1,z_2,\dots)$:
\begin{align*}
\sum_{S \in \mathfrak{s}(1)}
F_{S}(x_1,\dots,x_N;\bm{u}\cup\bm{z})
\dotr{G}_{S}(y_1^{-1},\dots,y_M^{-1};\bm{u}\cup\bm{z}) 
=
F_{(1^N)}(x_1,\dots,x_N;\bm{u}\cup\bm{z})
\prod_{i=1}^{N}
\prod_{j=1}^{M}
\frac{1-qx_iy_j}{1-x_iy_j}.
\end{align*}
Dividing this equation by $F_{(1^N)}(*;\bm{u})$ and taking the limit $\bm{u} \rightarrow \infty$, in exactly the same vein as \eqref{6-eqspeical1}, we read off the identity
\begin{align}
\label{eq:u-lim-H-6}
\lim_{\bm{u} \rightarrow \infty}
\frac{1}{F_{(1^N)}(*;\bm{u})}
\cdot
\sum_{S \in \mathfrak{s}(1)}
F_{S}(x_1,\dots,x_N;\bm{u}\cup\bm{z})
\dotr{G}_{S}(y_1^{-1},\dots,y_M^{-1};\bm{u}\cup\bm{z}) 
=
\prod_{i=1}^{N}
\prod_{j=1}^{M}
\frac{1-qx_iy_j}{1-x_iy_j},
\end{align}
where the right hand side of \eqref{eq:u-lim-H-6} follows from \eqref{6-eqspeical1} with $S = (0,0,0,\dots)$. It remains to show that the left hand side of \eqref{eq:u-lim-H-6} matches that of \eqref{eq:6-stable-cauchy}.

For that purpose, we define the quantity
\begin{align*}
K_{N,M} := 
\lim_{\bm{u}\to \infty} 
\sum_{S \in \mathfrak s(1)} 
F_{S}(x_1,\dots,&x_N;\bm{u} \cup\bm{z}) 
\dotr{G}_{S}(y_1^{-1},\dots,y_M^{-1};\bm{u}\cup \bm{z}),
\end{align*}
which has the graphical representation
\begin{equation}
\label{eq:K-leftmost-columns-6}
\raisebox{-35mm}{
\begin{tikzpicture}[scale = 0.7,every node/.style={scale=0.7}]
\node[scale = 1.5] at (-5.5,4) {$K_{N,M}=
\displaystyle\sum_{\substack{(I_1,\dots,I_N) \\ (J_1,\dots,J_M) \\ |I|=|J|}}$};
\node[scale = 1.25] at (-2,1) {$x_1$};
\foreach \i in {2,6,7} {
\node at (-2,\i) {$\vdots$};
}
\node[scale = 1.25] at (-2,3) {$x_N$};
\node[scale = 1.25] at (-2,5) {$y_M^{-1}$};
\node[scale = 1.25] at (-2,8) {$y_1^{-1}$};
\foreach \i in {1,2,3,5,6,7,8} {
\draw[thick,->] (-1.5,\i) -- (-0.5,\i);
}
\foreach \i in {1,2,3,5,6,7,8} {
\draw[lgray,line width=1.5pt,->] (0,\i) -- (3,\i);
}
\foreach \i in {1,2,3,5,6,7,8} {
\draw[lgray,line width=1.5pt,->] (4,\i) -- (9,\i);
}
\foreach \i in {0.5,1.5,2.5} {
\draw[lgray,line width=1.5pt,->] (\i,0.5) -- (\i,8.5);
} 
\foreach \i in {4.5,5.5,6.5,7.5,8.5} {
\draw[lgray,line width=1.5pt,->] (\i,0.5) -- (\i,8.5);
} 
\foreach \i in {1,2,3,5,6,7,8} {
\node at (-0.2,\i) {$1$};
}
\foreach \i in {0.5,1.5,2.5,4.5,5.5,6.5} {
\node at (\i,0.2) {$0$};
}
\node at (3.5,1) {$I_1$};
\node at (3.5,2) {$\vdots$};
\node at (3.5,3) {$I_{N}$};
\node at (3.5,5) {$\bar{J}_M$};
\node at (3.5,6) {$\vdots$};
\node at (3.5,7) {$\vdots$};
\node at (3.5,8) {$\bar{J}_1$};
\node at (7.5,0.25) {$\dots$};
\node at (8.5,0.25) {$\dots$};
\node at (9.2,1) {$0$};
\node at (9.2,2) {$\vdots$};
\node at (9.2,3) {$0$};
\node at (9.2,5) {$1$};
\node at (9.2,6) {$1$};
\node at (9.2,7.1) {$\vdots$};
\node at (9.2,8) {$1$};
\foreach \i in {0.5,1.5,2.5} {
\node at (\i,8.8) {$1$};
}
\foreach \i in {4.5,5.5,6.5} {
\node at (\i,8.8) {$0$};
}
\foreach \i in {7.5,8.5} {
\node at (\i,8.8) {$\dots$};
}
\foreach \i in {0.5,1.5,2.5,4.5,5.5,6.5} {
\draw[thick,->] (\i,-0.8) -- (\i,-0.2);
}
\foreach \i in {0.5,1.5,2.5} {
\node at (\i,-1) {$\infty$};
}
\node at (4.5,-1) {$z_1$};
\node at (5.5,-1) {$z_2$};
\node at (6.5,-1) {$z_3$};
\foreach \i in {0.5,1.5,2.5,4.5,5.5,6.5,7.5,8.5} { \foreach \j in {5,6,7,8} {
\node at (\i,\j)[circle,fill,inner sep=2pt]{};
}}
\end{tikzpicture}}
\end{equation}
The leftmost $N$ columns of this lattice correspond with the variables $(u_1,\dots,u_N)$ that have been sent to infinity, while the remaining columns correspond with the secondary alphabet $(z_1,z_2,\dots)$ that remains free. We perform summation over the vectors $(I_1,\dots,I_N)$ and $(J_1,\dots,J_M)$ such that $\sum_{i=1}^{N} I_i = \sum_{i=1}^{M} J_i$, noting that this implies $\sum_{i=1}^{N} I_i + \sum_{i=1}^{M} \bar{J}_i = M$, so that conservation of paths is respected in the rightmost region of the partition function.

We turn our attention firstly to the leftmost $N$ columns of \eqref{eq:K-leftmost-columns-6}. We see that this region takes a very similar form to the quantity \eqref{6-chi-inv} defined in Section \ref{ssec:6stab-construct}. Indeed, the only significant difference is that top $M \times N$ block of these columns uses dotted vertices, which differ from their undotted counterparts only up to normalization. We thus find that
\begin{align}
\label{eq:K-leftmost-compute-6}
\begin{tikzpicture}[scale = 0.6,every node/.style={scale=0.6},baseline=2cm]
\foreach \i in {1,2,3} {
\draw[lgray,line width=1.5pt,->] (\i,0) -- (\i,8);
}
\foreach \i in {1,2,3,4,5,6,7} {
\draw[lgray,line width=1.5pt] (0,\i) -- (3.5,\i);
}
\foreach \i in {1,2,3,4,5,6,7} {
\node[scale = 1.25] at (-0.2,\i) {$1$};
}
\foreach \i in {1,3} {
\node[scale = 1.25] at (\i,-0.2) {$0$};
\node[scale = 1.25] at (\i,8.2) {$1$};
}
\node at (2,8.2) {$\dots$};
\node at (2,-0.2) {$\dots$};
\foreach \i in {2} {
\node at (3.8,\i) {$\vdots$};
}
\node[scale = 1.25] at (3.8,3) {$I_N$};
\node[scale = 1.25] at (3.8,1) {$I_1$};
\node[scale = 1.25] at (3.8,7) {$\bar{J}_1$};
\node[scale = 1.25] at (3.8,4) {$\bar{J}_M$};
\node at (3.8,5) {$\vdots$};
\node at (3.8,6) {
$\vdots$
};
\draw[->,thick] (1,-1.2) -- (1,-0.5);
\draw[->,thick] (3,-1.2) -- (3,-0.5);
\node[scale = 1.5] at (1,-1.5) {$\infty$};
\node[scale = 1.5] at (2,-1.5) {$\dots$};
\node[scale = 1.5] at (3,-1.5) {$\infty$};
\node[scale = 1.25] at (-1.8,1) {$*$};
\node[scale = 1.25] at (-1.8,2) {$\vdots$};
\node[scale = 1.25] at (-1.8,3) {$*$};
\node[scale = 1.25] at (-1.8,4) {$*$};
\node[scale = 1.25] at (-1.8,7) {$*$};
\foreach \i in {5,6} {
\node at (-1.8,\i) {$\vdots$};
}
\foreach \i in {1,2,3,4,5,6,7} {
\draw[thick,->] (-1.5,\i) -- (-0.5,\i);
}
\foreach \i in {1,2,3} { \foreach \j in {4,5,6,7}
{\node at (\i,\j) [circle,fill,inner sep=2pt]{};};}
\end{tikzpicture}
=
q^{MN}
\times
\frac{\chi(I_1,\dots,I_N,\bar{J}_M,\dots,\bar{J}_1)}{q^{\inv(I)+\inv(J)+|I||J|}}
\end{align}
with $\chi$ defined in \eqref{6-chi-inv}, and where the factor of $q^{MN}$ arises from the above-mentioned normalization discrepancy between dotted and undotted vertices, while the $q$-inversions in the denominator of \eqref{eq:K-leftmost-compute-6} are computed using the fact that
\begin{align*}
\inv(I_1,\dots,I_N,\bar{J}_M,\dots,\bar{J}_1)
&=
\inv(I_1\dots,I_N)+\inv(\bar{J}_M,\dots,\bar{J}_1)+|I||J|
\\
&=
\inv(I_1\dots,I_N)+\inv(J_1,\dots,J_M)+|I||J|.
\end{align*}
By the invariance of $\chi$ under permutation of its arguments, we may choose $(I_1,\dots,I_N,\bar{J}_M,\dots,\bar{J}_1) = (0^N,1^M)$, which by \eqref{6-chi-inv2} yields 
\begin{align*}
\chi(I_1,\dots,I_N,\bar{J}_M,\dots,\bar{J}_1)
=
\chi(0^N,1^M) = \lim_{\bm{u}\to \infty} F_{(1^{N})}(*;\bm{u}).
\end{align*}
We have thus derived the equation
\begin{align}
\label{eq:K-right-flip-6}
\lim_{\bm{u}\to \infty} 
\frac{K_{N,M}}{F_{(1^{N})}(*;\bm{u})}
=
q^{MN}
\displaystyle\sum_{\substack{(I_1,\dots,I_N) \\ (J_1,\dots,J_M) \\ |I|=|J|}}
\frac{1}
{q^{\inv(I)+\inv(J)+|I||J|}}
\displaystyle\sum_{S\in \mathfrak s(1)}\raisebox{-35mm}{
\begin{tikzpicture}[scale = 0.7,every node/.style={scale=0.7}]
\foreach \i in {4.5,5.5,6.5,7.5,8.5} {
\draw[lgray,line width=1.5pt,->] (\i,4.5) -- (\i,8.5);
} 
\begin{scope}[shift = {(3.5,0)}]
\foreach \i in {1,2,3,5,6,7,8} {
\draw[thick,->] (-1.5,\i) -- (-0.5,\i);
}
\node[scale = 1.25] at (-2,1) {$x_1$};
\foreach \i in {2,6,7} {
\node at (-2,\i) {$\vdots$};
}
\node[scale = 1.25] at (-2,3) {$x_N$};
\node[scale = 1.25] at (-2,5) {$y_M^{-1}$};
\node[scale = 1.25] at (-2,8) {$y_1^{-1}$};
\end{scope}
\foreach \i in {1,2,3,5,6,7,8} {
\draw[lgray,line width=1.5pt,->] (4,\i) -- (9,\i);
}
\foreach \i in {4.5,5.5,6.5,7.5,8.5} {
\draw[lgray,line width=1.5pt,->] (\i,0.5) -- (\i,3.5);
} 
\foreach \i in {4.5,5.5,6.5} {
\node at (\i,0.2) {$0$};
}
\node at (3.5,1) {$I_1$};
\node at (3.5,2) {$\vdots$};
\node at (3.5,3) {$I_{N}$};
\node at (3.5,5) {$\bar{J}_M$};
\node at (3.5,6) {$\vdots$};
\node at (3.5,7) {$\vdots$};
\node at (3.5,8) {$\bar{J}_1$};
\node at (4.5,4) {$S_{1}$};
\node at (5.5,4) {$S_{2}$};
\node at (6.5,4) {$S_{3}$};
\node at (7.5,0.25) {$\dots$};
\node at (8.5,0.25) {$\dots$};
\node at (7.5,4) {$\dots$};
\node at (8.5,4) {$\dots$};
\node at (9.2,1) {$0$};
\node at (9.2,2) {$\vdots$};
\node at (9.2,3) {$0$};
\node at (9.2,5) {$1$};
\node at (9.2,6) {$1$};
\node at (9.2,7.1) {$\vdots$};
\node at (9.2,8) {$1$};
\foreach \i in {4.5,5.5,6.5} {
\node at (\i,8.8) {$0$};
}
\foreach \i in {7.5,8.5} {
\node at (\i,8.8) {$\dots$};
}
\foreach \i in {4.5,5.5,6.5} {
\draw[thick,->] (\i,-0.8) -- (\i,-0.2);
}
\node at (4.5,-1) {$z_1$};
\node at (5.5,-1) {$z_2$};
\node at (6.5,-1) {$z_3$};
\foreach \i in {4.5,5.5,6.5,7.5,8.5} { \foreach \j in {5,6,7,8} {
\node at (\i,\j)[circle,fill,inner sep=2pt]{};
}}
\end{tikzpicture}}
\end{align}
where summation over $S \in \mathfrak{s}(1)$ is restricted to $1$-strings such that $|S|=|I|=|J|$. 

The final step is to apply a symmetry relation to the top half of the lattice on the right hand side of \eqref{eq:K-right-flip-6}, similarly to that of Proposition \ref{prop:6-flip-sym}. Letting the top half of the lattice be denoted by
\begin{equation}
\label{eq:6-Fdot}
\raisebox{-37.5mm}{
\begin{tikzpicture}[scale = 0.6]
\foreach \i in {0,1,2,3,4,6,7} {
    \draw[->,lgray,line width=1.5pt] (\i,0) -- (\i,7);}
    \foreach \i in {1,2,3,4,5,6} {
    \draw[lgray,line width=1.5pt] (-1,\i) -- (4.3,\i);}
    \foreach \i in {1,2,3,4,5,6} {
    \draw[->,lgray,line width=1.5pt] (5.5,\i) -- (8,\i);}
    \foreach \i  in {0,1,2,3,4,6,7} { \foreach \j in {1,2,3,4,5,6} {
\node at (\i,\j)     [circle,fill, inner sep=1.5pt] {};}
}
    \foreach \i in {1,2,3,4,5,6} {
    \node at (5,\i) {$\dots$};}
    \foreach \i in {1,2,6} {
    \node at (8.3,\i) {\tiny $1$};
    }
    \foreach \i in {3,4,5} {
    \node at (8.3,\i+0.2) {$\vdots$};
    }
    \node[left] at (-1.8,1) {$y_M^{-1} \to$};
    \foreach \i in {3,4,5} {
    \node at (-2.2,\i) {$\vdots$};
    }
    \node[left] at (-1.8,6) {$y_1^{-1} \to$};
    \node at (-1.4,1) {\tiny $\bar{J}_M$};
    \node at (-1.4,6) {\tiny $\bar{J}_{1}$};
    \node[left] at (-3.8,3.5) {$F_{S}^{\bullet J}(y_1^{-1},\dots,y_M^{-1};\bm{z})=$};
    \node at (0,7.5) {\tiny $0$};
    \node at (1,7.5) {\tiny $0$};
    \node at (2,7.5) {\tiny $0$};
    \node at (3,7.5) {$\dots$};
    \node at (4,7.5) {$\dots$};
    \node at (0,-2) {$z_1$};
    \node at (1,-2) {$z_2$};
    \node at (2,-2) {$z_3$};
    \node at (3,-2) {$\dots$};
    \node at (4,-2) {$\dots$};
    \draw[thick,->] (0,-1.5) -- (0,-0.85);
    \draw[thick,->] (1,-1.5) -- (1,-0.85);
    \draw[thick,->] (2,-1.5) -- (2,-0.85);
    \node at (0,-0.5) {\tiny $S_1$};
    \node at (1,-0.5) {\tiny $S_2$};
    \node at (2,-0.5) {\tiny $S_3$};
    \node at (3,-0.5) {$\dots$};
    \node at (4,-0.5) {$\dots$};
\end{tikzpicture}}
\end{equation}
one may prove the relation
\begin{align*}
F^{\bullet J}_S(y_1^{-1},\dots,y_M^{-1};\bm{z})
=
(-1)^{|S|}
c_S(q)
F^{J}_S(y_1,\dots,y_M;q^{-1}\bm{z}^{-1})
\end{align*}
by repeated application of \eqref{6-tower-sym} to the columns of the partition function \eqref{eq:6-Fdot}, where $c_S(q)$ is given by \eqref{eqcs1} and $F^{J}_S(y_1,\dots,y_M;q^{-1}\bm{z}^{-1})$ by \eqref{eqstable6-pic}. Combining everything, we have shown that
\begin{multline*}
\lim_{\bm{u}\to \infty} \frac{K_{N,M}}{F_{(1^{N})}(*;\bm{u})}
=
\sum_{S \in \mathfrak{s}(1)}
(-1)^{|S|}
q^{MN-|S|^2}
c_S(q)
\times
\\
\displaystyle
\sum_{\substack{(I_1,\dots,I_N) \\ (J_1,\dots,J_M) \\ |I|=|J|=|S|}}
\frac{1}{q^{\inv(I)+\inv(J)}}
F^{I}_S(x_1,\dots,x_N;\bm{z})
F^{J}_S(y_1,\dots,y_M;q^{-1}\bm{z}^{-1})
\\
=
\sum_{S \in \mathfrak{s}(1)}
(-1)^{|S|}
q^{MN-|S|^2}
c_S(q)
H_S(x_1,\dots,x_N;\bm{z})
H_S(y_1,\dots,y_M;q^{-1}\bm{z}^{-1}),
\end{multline*}
completing the proof of \eqref{eq:6-stable-cauchy}.
\end{proof}

\section{Properties of rational symmetric functions and symmetrization formula}
\label{sec:6-sym}

We conclude the chapter by stating and proving a symmetrization formula for the rational symmetric functions \eqref{eqrff1}. Our proof is based on setting up, in Sections \ref{ssec:6-exchange} and \ref{ssec:6-rec}, a collection of properties which uniquely determine $F_{S}(x_1,\dots,x_N;\bm{z})$. One is then able to check that the proposed symmetrization identity, given in Section \ref{ssec:6-sym}, satisfies all of these properties.

\subsection{Exchange relations}
\label{ssec:6-exchange}

\begin{prop}
\label{prop:6-exch}

Fix an integer $k \geq 1$ and a $1$-string $S \in \mathfrak{s}(1)$ such that $S_k=0$ and $S_{k+1}=1$. The following relation then holds:
\begin{multline}
\label{eq:6-exchange}
W_{z_{k+1}/z_k}(1,0;1,0)
F_S(x_1,\dots,x_N;\bm{z})
=
\\
F_{\sigma_k \cdot S}(x_1,\dots,x_N;\sigma_k \cdot \bm{z})
-
W_{z_{k+1}/z_k}(0,1;1,0)
F_{\sigma_k \cdot S}(x_1,\dots,x_N;\bm{z}),
\end{multline}
where $\sigma_k\cdot S = (\dots,S_{k+1},S_k,\dots)$ and $\sigma_k \cdot \bm{z} = (\dots,z_{k+1},z_k,\dots)$ denote transposition of the $k$-th and $(k+1)$-th elements of $S,\bm{z}$.
\end{prop}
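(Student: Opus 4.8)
The plan is to mimic the $R$-matrix insertion arguments used in the proofs of Propositions \ref{prop:6-Tcommute} and \ref{prop:6-commute-nontrivial}, but with the auxiliary $R$-matrix now placed between two adjacent \emph{vertical} lattice lines rather than two horizontal ones. Concretely, I would work with the partition function \eqref{picofsixf1} for $F_{\sigma_k \cdot S}(x_1,\dots,x_N;\sigma_k\cdot\bm{z})$, in which columns $k$ and $k+1$ carry the swapped spectral parameters $z_{k+1},z_k$ and the top boundary states $(S_k,S_{k+1})=(1,0)$. Because the two bottom edges of these columns both carry the state $0$ and $W_{z_{k+1}/z_k}(0,0;0,0)=1$, one may glue an $R$-matrix vertex $R_{k,k+1}(z_{k+1}/z_k)$ across the base of columns $k,k+1$ without altering the value of the partition function; this is the exact analogue of the frozen insertion \eqref{eq:6-insertR-left}. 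The whole manipulation is local: it touches only two of the infinitely many columns and the $N$ rows between them, so no convergence hypothesis on $\bm{z}$ is required.

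The steps, in order, are as follows. First I insert the frozen $R$-matrix at the bottom. Second, I transport it upward through the lattice by $N$ successive applications of the Yang--Baxter equation \eqref{eqpropybesix2} (in its pictorial form \eqref{eqybesix1}, read with the two crossing lines vertical), one for each horizontal row; the net effect of passing the crossing through all $N$ rows is to interchange the spectral parameters attached to columns $k$ and $k+1$, so that below the now top-most $R$-matrix the lattice again carries the original parameters $z_k,z_{k+1}$. Third, at the top the crossing vertex sits between the lattice and the boundary states $(1,0)$; summing over the two internal edges $(a,b)$ joining it to the lattice, path conservation forces $a+b=S_k+S_{k+1}=1$, leaving exactly the terms $(a,b)=(0,1)$ and $(a,b)=(1,0)$. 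These reproduce the unswapped functions $F_S(x_1,\dots,x_N;\bm{z})$ and $F_{\sigma_k\cdot S}(x_1,\dots,x_N;\bm{z})$ respectively, each weighted by the appropriate entry of $R_{k,k+1}(z_{k+1}/z_k)$. Equating the two evaluations gives the two-term identity
\[
F_{\sigma_k\cdot S}(x_1,\dots,x_N;\sigma_k\cdot\bm{z})
=
W_{z_{k+1}/z_k}(1,0;1,0)\,F_S(x_1,\dots,x_N;\bm{z})
+
W_{z_{k+1}/z_k}(0,1;1,0)\,F_{\sigma_k\cdot S}(x_1,\dots,x_N;\bm{z}),
\]
and solving for the first summand on the right-hand side yields precisely \eqref{eq:6-exchange}, the minus sign arising from this rearrangement.

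The main obstacle is bookkeeping rather than conceptual. One must translate the crossing of the two vertical lines into the standard vertex weights $W_{z_{k+1}/z_k}(i,j;k,\ell)$ via the defining identification \eqref{eqrm1}, keeping careful track of which leg of $R_{k,k+1}(z_{k+1}/z_k)$ is identified with the bottom, left, top and right states; this is what pins the two coefficients to be exactly $W_{z_{k+1}/z_k}(1,0;1,0)$ and $W_{z_{k+1}/z_k}(0,1;1,0)$, read off from Figure \ref{fig:6v-weights}, rather than any of the other entries. The hypothesis $S_k=0$, $S_{k+1}=1$ is used precisely to fix the conserved total to $1$, so that the internal sum collapses to the two states appearing in the statement; for $S_k=S_{k+1}$ the analogous manipulation would be vacuous. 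Once these conventions are settled, the remaining algebra — equating the two evaluations and isolating the $W_{z_{k+1}/z_k}(1,0;1,0)\,F_S$ term — is immediate.
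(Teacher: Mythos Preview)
Your proposal is correct and follows essentially the same argument as the paper: start from the lattice representation of $F_{\sigma_k\cdot S}(x_1,\dots,x_N;\sigma_k\cdot\bm{z})$, attach a frozen $W_{z_{k+1}/z_k}(0,0;0,0)$ vertex at the base of columns $k$ and $k+1$, push it to the top via $N$ applications of the Yang--Baxter equation, expand over the two surviving internal states, and rearrange. The paper's own proof is exactly this, stated more tersely and with an accompanying picture in place of your verbal description of the bookkeeping.
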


\begin{proof}
The proof is by application of the Yang--Baxter equation \eqref{eqybesix1} to the $k$-th and $(k+1)$-th columns of the partition function in question. 

In particular, starting from the lattice representation of $F_{\sigma_k \cdot S}(x_1,\dots,x_N;\sigma_k \cdot \bm{z})$, one may attach a frozen $W_{z_{k+1}/z_k}(0,0;0,0)$ vertex at the base of the $k$-th and $(k+1)$-th columns. By repeated application of the Yang--Baxter equation, this vertex emerges from the top of the same columns, leading to the identity
\begin{align*}
\begin{tikzpicture}[scale=0.8,baseline=1.4cm]
    \foreach \i in {3.5,4.5} {
    \draw[->,lgray,line width=1.5pt] (\i,0) -- (\i,4);}
    \foreach \i in {1,2,3} {
    \draw[->,lgray,line width=1.5pt] (2.8,\i) -- (5.2,\i);}
    \draw[<-,lgray,line width=1.5pt] (3.5,0) -- (4.5,-1);
    \draw[<-,lgray,line width=1.5pt] (4.5,0) -- (3.5,-1);
    \node[left] at (3.5,0) {$0$};
    \node[right] at (4.5,0) {$0$};
    \node at (3.5,4.25) {$1$};
    \node at (4.5,4.25) {$0$};
    \node at (3.5,-1.25) {$0$};
    \node at (4.5,-1.25) {$0$};
    \node at (3.5,-2) {$z_k$};
    \node at (4.5,-2) {$z_{k+1}$};
\end{tikzpicture}
\quad
=
\quad
\begin{tikzpicture}[scale=0.8,baseline=1.4cm]
    \foreach \i in {6.5,7.5} {
    \draw[->,lgray,line width=1.5pt] (\i,-1) -- (\i,3);}
    \foreach \i in {1,2,3} {
    \draw[->,lgray,line width=1.5pt] (5.8,\i-1) -- (8.2,\i-1);}
    \draw[<-,lgray,line width=1.5pt] (6.5,4) -- (7.5,3);
    \draw[<-,lgray,line width=1.5pt] (7.5,4) -- (6.5,3);
    \node at (6.25,4.25) {$1$};
    \node at (7.75,4.25) {$0$};
    \node at (6.5,-1.25) {$0$};
    \node at (7.5,-1.25) {$0$};
    \node at (6.5,-2) {$z_k$};
    \node at (7.5,-2) {$z_{k+1}$};
    \node[left] at (6.5,3.25) {$0$};
    \node[right] at (7.5,3.25) {$1$};
\end{tikzpicture}
\quad
+
\quad
\begin{tikzpicture}[scale=0.8,baseline=1.4cm]
    \foreach \i in {6.5,7.5} {
    \draw[->,lgray,line width=1.5pt] (\i,-1) -- (\i,3);}
    \foreach \i in {1,2,3} {
    \draw[->,lgray,line width=1.5pt] (5.8,\i-1) -- (8.2,\i-1);}
    \draw[<-,lgray,line width=1.5pt] (6.5,4) -- (7.5,3);
    \draw[<-,lgray,line width=1.5pt] (7.5,4) -- (6.5,3);
    \node at (6.25,4.25) {$1$};
    \node at (7.75,4.25) {$0$};
    \node at (6.5,-1.25) {$0$};
    \node at (7.5,-1.25) {$0$};
    \node at (6.5,-2) {$z_k$};
    \node at (7.5,-2) {$z_{k+1}$};
    \node[left] at (6.5,3.25) {$1$};
    \node[right] at (7.5,3.25) {$0$};
\end{tikzpicture}
\end{align*}
where the states assigned to external horizontal edges are arbitrary, but held fixed on both sides of the equation. Equation \eqref{eq:6-exchange} then follows by rearrangement.
\end{proof}

Explicitly writing the vertex weights in \eqref{eq:6-exchange}, after dividing it through by $W_{z_{k+1}/z_k}(1,0;1,0)$, we have
\begin{align}
\label{eq:6-exchange2}
F_S(x_1,\dots,x_N;\bm{z})
=
\frac{z_k-q z_{k+1}}{q(z_k-z_{k+1})}
F_{\sigma_k \cdot S}(x_1,\dots,x_N;\sigma_k \cdot \bm{z})
-
\frac{(1-q)z_{k+1}}{q(z_k-z_{k+1})}
F_{\sigma_k \cdot S}(x_1,\dots,x_N;\bm{z}).
\end{align}
The exchange relation \eqref{eq:6-exchange2} may be used repeatedly to express $F_S$ as a linear combination of domain-wall partition functions $F_{(1^N)}$, for arbitrary $S \in \mathfrak{s}(1)$.

\subsection{Recursion relations}
\label{ssec:6-rec}

Recall the definition of the domain-wall partition function $F_{(1^N)}$; see Remark \ref{rmk:dwpf}. In this section we record a list of properties satisfied by this object, which determine it uniquely; these properties are originally due to Korepin \cite{Korepin82}.

\begin{prop}
\label{prop:6-properties}
Fix an integer $N \in \mathbb{Z}_{>0}$. The domain-wall partition function $F_{(1^N)}(x_1,\dots,x_N;\bm{z})$ satisfies the following properties:

\begin{enumerate}[wide,labelindent=0pt]

\item The function $F_{(1^N)}(x_1,\dots,x_N;\bm{z})$ is symmetric with respect to its primary alphabet $(x_1,\dots,x_N)$.
\medskip

\item Introduce the normalized function 
\begin{equation*}
\tilde{F}_{(1^N)}(x_1,\dots,x_N;\bm{z}) 
= 
\prod_{i=1}^{N}\prod_{j=1}^{N}
(x_i-q z_j)
F_{(1^N)}(x_1,\dots,x_N;\bm{z}).
\end{equation*}
Then $\tilde{F}_{(1^N)}$ is a polynomial in $z_1$ of degree $N$.
\medskip

\item The function $F_{(1^N)}$ vanishes at $z_1=0$:
\begin{align*}
F_{(1^N)}(x_1,\dots,x_N;\bm{z})
\Big|_{z_1 = 0} = 0.
\end{align*}
\medskip

\item The function $F_{(1^N)}$ has the following recursion relation with respect to $z_1$:
\begin{align*}
F_{(1^N)}(x_1,\dots,x_N;\bm{z})
\Big|_{z_1 = x_N} = 
F_{(1^{N-1})}(x_1,\dots,x_{N-1};\widehat{\bm{z}}_1), 
\end{align*}
where $\widehat{\bm{z}}_1 = (z_2,z_3,\dots)$ denotes the secondary alphabet with the omission of $z_1$.
\medskip

\item In the case $N=1$ we have the explicit evaluation
\begin{align*}
F_{(1)}(x_1;\bm{z}) 
= 
W_{z_1/x_1}(0,1;1,0) 
= 
\frac{(1-q)z_1}{x_1-q z_1}.
\end{align*}

\end{enumerate}
\end{prop}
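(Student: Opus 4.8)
The plan is to treat the five properties separately, in roughly increasing order of difficulty, working throughout from the lattice representation \eqref{picofsixf1} of $F_{(1^N)}$ together with the explicit weights of Figure \ref{fig:6v-weights}. Property (1) I would dispatch immediately: $F_{(1^N)}$ is the special case $S=(1^N)$ of $F_S$, so its symmetry in $(x_1,\dots,x_N)$ is exactly Theorem \ref{thm:6-sym-FG}, i.e.\ a consequence of $[\mathcal{C}(x_i),\mathcal{C}(x_j)]=0$. Property (5) is a single-row computation: with $|S|=1$ exactly one path enters from the left, and since the right boundary is $0$ it is forced to turn upward in the first column (the only column with an occupied top edge), leaving the unique configuration of weight $W_{z_1/x_1}(0,1;1,0)=\tfrac{(1-q)z_1}{x_1-qz_1}$, all other vertices frozen to $W_{z_k/x_1}(0,0;0,0)=1$.

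For property (2) I would isolate the $z_1$-dependence, which enters only through the $N$ vertices of the first column. Inspection of Figure \ref{fig:6v-weights} shows that every weight $W_{z_1/x_i}(\cdot)$ is a polynomial of degree at most $1$ in $z_1$ over $x_i-qz_1$; hence each configuration contributes, as a function of $z_1$, a ratio with denominator $\prod_{i=1}^N(x_i-qz_1)$ and numerator of degree at most $N$, so that $F_{(1^N)}=P(z_1)/\prod_{i=1}^N(x_i-qz_1)$ with $\deg_{z_1}P\le N$. Multiplying by $\prod_{i,j}(x_i-qz_j)$ cancels precisely the factor $\prod_{i=1}^N(x_i-qz_1)$ and leaves $P(z_1)$ times a $z_1$-free factor, so $\tilde{F}_{(1^N)}$ is a polynomial in $z_1$ of degree at most $N$; that the degree is exactly $N$ follows by exhibiting a first-column configuration of straight and turn-up vertices whose numerator attains degree $N$ and verifying the leading coefficient is nonzero.

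Property (3) is a freezing argument at $z_1=0$: setting $y=z_1=0$ in Figure \ref{fig:6v-weights} gives $W_{0/x}(0,1;1,0)=0$, so the turn-up vertex has zero weight. On the other hand, in the first column the bottom edge is $0$ and the top edge is $S_1=1$, so the vertical occupation must increase by one along the column; since the only vertex that raises the vertical state is $(0,1;1,0)$, every admissible configuration contains at least one such vertex in the first column and therefore has weight $0$, giving $F_{(1^N)}|_{z_1=0}=0$.

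The recursion (4) is where the real work lies, and I expect it to be the main obstacle. Because $F_{(1^N)}$ and $F_{(1^{N-1})}$ are both symmetric in their primary alphabets by (1), I would first reduce the claim to the coincidence $z_1=x_1$ (related to $z_1=x_N$ by the transposition $x_1\leftrightarrow x_N$), which places the special ratio at the \emph{bottom-left} corner rather than the top-left. This distinction matters: at the top-left corner the boundary data $(W{=}1,N{=}1)$ leaves two admissible vertices and nothing freezes, whereas at the bottom-left corner the data $(S{=}0,W{=}1)$ together with the vanishing of $W_{z_1/x_1}(0,1;0,1)$ at $z_1=x_1$ forces the turn-up vertex $(0,1;1,0)$ of weight $1$. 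I would then show this freezing cascades up the first column: the induced vertical occupation meets the left-boundary input at each higher vertex, and conservation forces $(1,1;1,1)$ of weight $1$ throughout, while the bottom row to the right of the corner freezes to empty vertices. The effect is to peel off the first column and bottom row at total weight $1$, displacing the remaining $N-1$ left-incoming paths into the second column, so that the surviving sublattice is exactly the partition function for $F_{(1^{N-1})}(x_2,\dots,x_N;\widehat{\bm z}_1)$; re-symmetrizing in the $x$-variables yields the stated form $F_{(1^{N-1})}(x_1,\dots,x_{N-1};\widehat{\bm z}_1)$. The delicate points are verifying that the cascade is forced rather than merely permitted, and the bookkeeping that every frozen vertex contributes weight exactly $1$.
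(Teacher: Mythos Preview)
Your proposal is correct and essentially follows the same approach as the paper. The paper's proof of (4) permutes $x_N$ to the bottom row (rather than reducing to $z_1=x_1$ as you do), but this is the same idea under a relabeling; the freezing cascade you describe---forcing $(0,1;1,0)$ at the bottom-left and then $(1,1;1,1)$ up the column and $(0,0;0,0)$ across the bottom row, each of weight $1$---is exactly the paper's argument, and your concern about the cascade being forced is resolved by the observation that with left input $1$ the only vertices available are $(0,1;0,1)$, $(0,1;1,0)$, $(1,1;1,1)$, so once the vertical state reaches $1$ it cannot drop. One small remark on (2): neither you nor the paper fully pins down that the degree is \emph{exactly} $N$ (your sketch of ``exhibiting a configuration'' would still need to rule out cancellation of leading terms across configurations), but only the bound $\deg_{z_1}\le N$ is actually used in the subsequent uniqueness argument.
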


\begin{proof}
All of the properties (1)--(5) follow directly from the partition function \eqref{picofsixf1} under the choice $S=(1^N,0,0,\ldots)$.

\begin{enumerate}[wide,labelindent=0pt]
\item The symmetry of $F_{(1^N)}(x_1,\dots,x_N;\bm{z})$ in $(x_1,\dots,x_N)$ has already been established in Theorem \ref{thm:6-sym-FG}.
\medskip

\item Multiplying the partition function by $\prod_{i=1}^{N}\prod_{j=1}^{N}
(x_i-q z_j)$ is equivalent to converting the underlying vertex weights to their polynomial form. In particular, we define
\begin{align}
\label{6-poly-weights}
\tilde{W}_{x,z}(i,j;k,\ell)
:=
(x-q z)
W_{z/x}(i,j;k,\ell),
\end{align}
which are polynomials in $x$ and $z$ for all $i,j,k,\ell \in \{0,1\}$. The function $\tilde{F}_{(1^N)}(x_1,\dots,x_N;\bm{z})$ is then given by the partition function setup \eqref{picofsixf1}, but using the weights \eqref{6-poly-weights}. Examining this partition function, it is clear that all dependence on the parameter $z_1$ is via the leftmost column of the lattice. Since the weights \eqref{6-poly-weights} of individual vertices in that column are degree-$1$ polynomials in $z_1$, it is then clear that $\tilde{F}_{(1^N)}(x_1,\dots,x_N;\bm{z})$ is a degree-$N$ polynomial in $z_1$.
\medskip

\item This again follows by studying the leftmost column of the partition function under consideration. It is easily seen that any configuration of this column must give rise to exactly one of the following vertex weights:
\begin{align*}
W_{z_1/x_i}(0,1;1,0)
&=
\frac{(1-q)z_1}{x_i-q z_1},
\end{align*}
where $i$ takes values in $\{1,\dots,N\}$. This weight vanishes at $z_1=0$, and the claim is then immediate.
\medskip

\item Take the lattice representation of $F_{(1^N)}(x_1,\dots,x_N;\bm{z})$ and use the symmetry in $(x_1,\dots,x_N)$ to permute the horizontal spectral parameters, so that $x_N$ is assigned to the lowest row of the lattice, followed by $(x_1,\dots,x_{N-1})$ assigned to the remaining rows. In any lattice configuration, the bottom-left vertex must then take one of the following forms:
\begin{align*}
W_{z_1/x_N}(0,1;0,1)
=
\frac{x_N-z_1}{x_N-q z_1},
\qquad
W_{z_1/x_N}(0,1;1,0)
=
\frac{(1-q)z_1}{x_N-q z_1}.
\end{align*}
The first of these weights vanishes at $z_1=x_N$; the second becomes $W_{1}(0,1;1,0)=1$. It is then easy to show that, after specializing $z_1=x_N$, the first row and column of $F_{(1^N)}(x_1,\dots,x_N;\bm{z})$ freeze out entirely:
\begin{equation}
\label{6-eqfreez1}
F_{(1^N)}(x_1,\dots,x_N;\bm{z})
\Big|_{z_1 = x_N}
=
\begin{tikzpicture}[scale = 0.7,every node/.style={scale=0.6},baseline=2.5cm]
\foreach \i in {-1,0,1,2,3,4} {
    \draw[->,lgray,line width=1.5pt] (\i,0) -- (\i,7);}
    \foreach \i in {1,2,3,4,5,6} {
    \draw[->,lgray,line width=1.5pt] (-2,\i) -- (5,\i);}
    \foreach \i in {1,2,6} {
    \node at (5.3,\i) {$0$};
    }
    \foreach \i in {3,4,5} {
    \node at (5.3,\i) {$\vdots$};
    }
    \node[left,scale = 1.25] at (-3.8,1) {$x_N$};
    \node[left,scale = 1.25] at (-3.8,2) {$x_1$};
    \node[left] at (-3.8,3) {$\vdots$};
    \node[left] at (-3.8,4) {$\vdots$};
    \node[left] at (-3.8,5) {$\vdots$};
    \node[left,scale = 1.25] at (-3.8,6) {$x_{N-1}$};
    \draw[thick,->] (-3.6,1) -- (-2.6,1);
    \draw[thick,->] (-3.6,2) -- (-2.6,2);
    \draw[thick,->] (-3.6,6) -- (-2.6,6);
    \foreach \i in {1,2,6} {
    \node at (-2.4,\i) {$1$};
    }
    \node at (-2.4,3) {$\vdots$};
    \node at (-2.4,4) {$\vdots$};
    \node at (-2.4,5) {$\vdots$};
    \node at (-1,1.5) {$1$};
    \foreach \i in {-1,0} {
    \node at (\i,7.3) {$1$};
    }
    \foreach \i in {4} {
    \node at (\i,7.3) {$1$};
    }
    \foreach \i in {1,2,3} {
    \node at (\i,7.3) {$\dots$};
    }
    \draw[->,thick] (0,-1) -- (0,-0.5);
    \draw[->,thick] (-1,-1) -- (-1,-0.5);
    \draw[->,thick] (4,-1) -- (4,-0.5);
    \node[scale = 1.25] at (-1,-1.2) {$x_N$};
    \node[scale = 1.25] at (0,-1.2) {$z_2$};
    \foreach \i in {1,2,3}{
    \node[scale = 1.25] at (\i,-1.2) {$\dots$};}
    \node[scale = 1.25] at (4,-1.2) {$z_{N}$};
    \foreach \i in {-1,0,4} {
    \node at (\i,-0.2) {$0$};
    }
    \node at (1,-0.2) {$\dots$};
    \foreach \i in {2,3} {
    \node at (\i,-0.2) {$\dots$};
    }
    \foreach \i in {2,3,4,5,6} {
    \node at (-0.5,\i) {$1$};
    }
    \node at (-0.5,1) {$0$};
    \foreach \i in {0,1,2,3,4} {
    \node at (\i,1.5) {$0$};
    }
    \node at (-1,1) [circle,fill,inner sep=1.5pt,red] {};
    \end{tikzpicture}
\end{equation}
The total weight of the frozen regions in \eqref{6-eqfreez1} is identically $1$, and the remaining (unfrozen) portion of the lattice is simply $F_{(1^{N-1})}(x_1,\dots,x_{N-1};\widehat{\bm{z}}_1)$, yielding the desired recursion.
\medskip

\item This property is evident.

\end{enumerate}
\end{proof}

\subsection{Proof of uniqueness}
\label{ssec:unique-6}

\begin{thm}
\label{thm:unique-6}
The functions $F_S(x_1,\dots,x_N;\bm{z})$ are uniquely characterized by the exchange relations of Proposition \ref{prop:6-exch} and by the properties listed in Proposition \ref{prop:6-properties}.
\end{thm}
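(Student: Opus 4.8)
The plan is to prove uniqueness in two stages: first reduce the characterisation of \emph{all} $F_S$ to that of the single domain-wall family $F_{(1^N)}$ by means of the exchange relations, and then invoke the classical Korepin--Izergin argument to pin down $F_{(1^N)}$ from the properties of Proposition \ref{prop:6-properties}.

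First I would use the exchange relation in the form \eqref{eq:6-exchange2}: whenever $S$ has a $0$ in position $k$ immediately followed by a $1$ in position $k+1$, this identity expresses $F_S(x_1,\dots,x_N;\bm{z})$ as an explicit $\mathbb{Q}(x_1,\dots,x_N;\bm{z},q)$-linear combination of $F_{\sigma_k \cdot S}(x_1,\dots,x_N;\sigma_k \cdot \bm{z})$ and $F_{\sigma_k \cdot S}(x_1,\dots,x_N;\bm{z})$, in which $\sigma_k\cdot S$ has the relevant particle moved one step to the left. I would introduce the statistic $w(S)=\sum_{i\geq 1} i\,S_i$; each application of \eqref{eq:6-exchange2} strictly decreases $w$ on both terms of the right-hand side, and among all $1$-strings of weight $|S|=N$ the unique minimiser of $w$ is $(1^N)$. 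Since $S$ has exactly $N$ ones and is not packed as long as $S\neq(1^N)$, an applicable adjacent $(0,1)$ pair always exists, and since $w$ takes values in a discrete set bounded below by $N(N+1)/2$, the rewriting terminates. Iterating it expresses $F_S$ as a finite linear combination $\sum_{\pi}c_{S,\pi}(x_1,\dots,x_N;\bm{z};q)\,F_{(1^N)}(x_1,\dots,x_N;\pi\cdot\bm{z})$ over permutations $\pi$ of the secondary alphabet, where the coefficients $c_{S,\pi}$ are dictated entirely by the exchange relations and the vertex weights, hence are independent of which solution of the system one started from.

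Next I would show that $F_{(1^N)}$ is uniquely determined by Proposition \ref{prop:6-properties}, arguing by induction on $N$. The base case $N=1$ is property (5). For the inductive step I would work with the normalised polynomial $\tilde{F}_{(1^N)}$ of property (2), regarded as a degree-$N$ polynomial in $z_1$ with the remaining variables held at generic values. Property (3) supplies the root $z_1=0$, while property (4) together with the symmetry of property (1) evaluates $\tilde{F}_{(1^N)}$ at the $N$ points $z_1=x_1,\dots,x_N$ in terms of $F_{(1^{N-1})}$, which is already determined by the inductive hypothesis. These $N+1$ conditions determine a degree-$N$ polynomial uniquely by Lagrange interpolation, so the $z_1$-dependence of $F_{(1^N)}$ is fixed for all values of the other variables, and hence $F_{(1^N)}$ itself is fixed. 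Combining the two stages, any two solutions of the system must agree on $F_{(1^N)}$, and therefore, by the solution-independent reduction of the first stage, must agree on every $F_S$.

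The main obstacle I anticipate is not the Korepin interpolation --- which is classical, and for which the properties have been tailored so that the degree count in property (2) matches the $N+1$ available evaluation points exactly --- but rather the bookkeeping in the first stage. One must check that the divisor $W_{z_{k+1}/z_k}(1,0;1,0)$ is generically nonzero, so that \eqref{eq:6-exchange2} may legitimately be solved for $F_S$; that a fixed reduction strategy terminates and yields coefficients $c_{S,\pi}$ which are genuine rational functions independent of the chosen solution; and that the permutations of $\bm{z}$ accumulated along the way, together with the specialisations used in the interpolation step, do not collide with the poles of these coefficients or with the genericity hypotheses (distinctness of the $x_j$, nonvanishing of the normalisation at $z_1=0$). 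Discharging these genericity conditions and then extending the resulting identities to all parameter values by rationality is where the real care lies.
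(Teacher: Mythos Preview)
Your proposal is correct and follows essentially the same approach as the paper: reduce via the exchange relations to the domain-wall case $F_{(1^N)}$ (the paper does this by writing the difference $Z_S-F_S$ as a combination of $Z_{(1^N)}-F_{(1^N)}$ with permuted $\bm z$), and then apply the Korepin--Izergin Lagrange interpolation argument in $z_1$ by induction on $N$. Your termination statistic $w(S)$ and your discussion of genericity are more explicit than what the paper provides, but the underlying strategy is the same.
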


\begin{proof}
Suppose that $Z_S(x_1,\dots,x_N;\bm{z})$, $S \in \mathfrak{s}(1)$, $|S|=N$ is a family of rational functions that satisfies all of the conditions of Propositions \ref{prop:6-exch} and \ref{prop:6-properties}. Our task is to show that, necessarily,
\begin{align}
\label{ZS=FS-6}
Z_S(x_1,\dots,x_N;\bm{z})
=
F_S(x_1,\dots,x_N;\bm{z}).
\end{align}
We begin with the observation that for any $1$-string $S$, one is able to write
\begin{align}
\label{Z-F-order-6}
Z_S(x_1,\dots,x_N;\bm{z})
-
F_S(x_1,\dots,x_N;\bm{z})
=
\sum_{\sigma}
d_{\sigma}^{S} 
\Big(
Z_{(1^N)}(x_1,\dots,x_N;\sigma \cdot \bm{z})
-
F_{(1^N)}(x_1,\dots,x_N;\sigma \cdot \bm{z})
\Big),
\end{align}
where the sum is over permutations of the secondary alphabet $\bm{z}$, while $d_{\sigma}^{S}$ are appropriate coefficients. Indeed, Proposition \ref{prop:6-exch} ensures that one may write
\begin{align}
\label{F-order-6}
F_S(x_1,\dots,x_N;\bm{z})
=
\sum_{\sigma}
d_{\sigma}^{S} 
F_{(1^N)}(x_1,\dots,x_N;\sigma \cdot \bm{z})
\end{align}
for appropriate coefficients $d_{\sigma}^{S}$. By virtue of the assumption that $Z_S(x_1,\dots,x_N;\bm{z})$ satisfies the same exchange relations, it must then be possible to write an analogue of equation \eqref{F-order-6} for $Z_S(x_1,\dots,x_N;\bm{z})$ with the {\it same}\/ expansion coefficients; the claim \eqref{Z-F-order-6} follows immediately.

Thanks to equation \eqref{Z-F-order-6}, the task of proving \eqref{ZS=FS-6} is reduced to showing that
\begin{align}
\label{ZT=FT-6}
Z_{(1^N)}(x_1,\dots,x_N;\bm{z})
=
F_{(1^N)}(x_1,\dots,x_N;\bm{z}).
\end{align}
The rest of the proof proceeds by induction on $N$; namely, the size of the primary alphabet. Let $\mathcal{P}_N$ denote the proposition that \eqref{ZT=FT-6} holds. 

We begin by noting that $\mathcal{P}_1$ is true, by virtue of property (5). Now we proceed with the inductive hypothesis that $\mathcal{P}_{N-1}$ is true for some $N \geq 2$, and consider the function
\begin{align}
\label{eq:delta-T-6}
\Delta_N=
\prod_{i=1}^{N}\prod_{j=1}^{N}(x_i-q z_j)
\Big(
Z_{(1^N)}(x_1,\dots,x_N;\bm{z})
-
F_{(1^N)}(x_1,\dots,x_N;\bm{z})
\Big).
\end{align}
By property (2) of Proposition \ref{prop:6-properties}, $\Delta_N$ is a polynomial in $z_1$ of degree $N$. The symmetry property (1), together with properties (3) and (4), specifies $\Delta_N$ at $N+1$ values of $z_1$. In particular, property (3) becomes $\Delta_N \Big|_{z_1=0}=0$. Property (4) becomes
\begin{align*}
\Delta_N
\Big|_{z_1 = x_k} = 
\prod_{j = 2}^{N}(x_k-q z_j)
\prod_{i = 1}^{N} (x_i - q x_k)
\Delta_{N-1}(x_1,\dots,\widehat{x}_k,\dots,x_N;
\widehat{\bm{z}}_1)
=
0
\end{align*}
for all $1 \leq k \leq N$, where the final equality with $0$ is due to the assumption that $\mathcal{P}_{N-1}$ holds. Collectively, these properties show that $\Delta_N$ vanishes at $N+1$ values of $z_1$; it is therefore identically zero for arbitrary $z_1$. Hence $\mathcal{P}_N$ is true, and the proof is complete by induction on $N$.

\end{proof}

\subsection{Symmetrization formula}
\label{ssec:6-sym}

\begin{defn}\label{defn:M-set}
Fix two integers $n,N \geq 1$ and let $S \in \mathfrak{s}(n)$ be an $n$-string. An $n$-permutation matrix of size $N$ and profile $S$ is an $N \times \infty$ matrix with all row sums equal to $n$, and $i$-th column sum equal to $S_i$ for all $i \in \mathbb{N}$. We let $\mathfrak{M}_n(N,S)$ denote the set of all such matrices.
\end{defn}

In what follows we let $S \in \mathfrak{s}(1)$ be a $1$-string, and denote by $\mathfrak{M}_1(N,S)$ the set of all $1$-permutation matrices of size $N$ and profile $S$. These are $N \times \infty$ matrices with row sums equal to $1$, and $i$-th column sum equal to $S_i$ for all $i \in \mathbb{Z}_{>0}$ (see {\normalfont Definition \ref{defn:M-set}}, above). 

The rows of the matrices $\sigma\in\mathfrak{M}_1(N,S)$ are given by unit vectors $e_k$, with $1$ at position $k$ and $0$ at all remaining positions. We note that when $S=e_k$, $k \in \mathbb{Z}_{>0}$, the function \eqref{eqrff1} may be computed explicitly:
\begin{align}
\label{eq:F6-one_row}
F_{e_k}(x;{\bm z}) 
= 
\prod_{j=1}^{k-1} 
\frac{x-z_j}{x-qz_j}
\cdot
\frac{(1-q)z_k}{x-qz_k}.
\end{align}
Indeed, this corresponds to the case where the partition function \eqref{picofsixf1} consists of just a single row. One finds that it permits a unique configuration whose weight is given by a product of $k-1$ vertices of the type $W_{z_j/x}(0,1;0,1)$ and one vertex $W_{z_k/x}(0,1;1,0)$, which results precisely in the equation \eqref{eq:F6-one_row}.

\begin{thm}\label{thm:F6sym}
Fix an integer $N \geq 1$ and let $S \in \mathfrak{s}(1)$ be a $1$-string such that $|S|=N$. The rational symmetric function \eqref{eqrff1} is given by the explicit formula
\begin{align}
\label{symformulasix1}
F_S(x_1,\dots,x_N;\bm{z})
=
\sum_{\sigma \in \mathfrak{M}_1(N,S)}
\prod_{1 \leq i<j \leq N} \Delta_{\sigma(i),\sigma(j)}(x_i,x_j)
\prod_{i=1}^{N}
F_{\sigma(i)}(x_i;\bm{z}),
\end{align}
where $\sigma(i)$ denotes the $i$-th row (read from top to bottom) of the $1$-permutation matrix $\sigma$, and $F_{\sigma(i)}(x_i;\bm{z})$ is given by \eqref{eq:F6-one_row}. For any $U,V \in \mathfrak{s}(1)$ such that $|U| = |V| =1$, we define
\begin{align*}
\Delta_{U,V}(x,y)
=
\left\{
\begin{array}{ll}
\dfrac{y-qx}{y-x},
&
\qquad
\text{1 in $U$ occurs before 1 in $V$},
\\ \\
\dfrac{x-qy}{x-y},
&
\qquad
\text{1 in $U$ occurs after 1 in $V$}.
\end{array}
\right.
\end{align*}
\end{thm}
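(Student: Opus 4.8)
The plan is to invoke the uniqueness result of Theorem \ref{thm:unique-6}. Writing $Z_S(x_1,\dots,x_N;\bm{z})$ for the right-hand side of \eqref{symformulasix1}, it suffices to check that $Z_S$ satisfies the exchange relations of Proposition \ref{prop:6-exch} for every $S$, and that $Z_{(1^N)}$ satisfies properties (1)--(5) of Proposition \ref{prop:6-properties}. The essential simplification is that all dependence of $Z_S$ on the secondary alphabet $\bm{z}$ is carried by the single-row functions $F_{\sigma(i)}(x_i;\bm{z})$ of \eqref{eq:F6-one_row}, whereas the factors $\Delta_{\sigma(i),\sigma(j)}(x_i,x_j)$ depend only on $(x_1,\dots,x_N)$. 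The symmetry of $Z_S$ in its primary alphabet is immediate from the relation $\Delta_{U,V}(x,y)=\Delta_{V,U}(y,x)$ together with the row-relabelling bijection on $\mathfrak{M}_1(N,S)$ that accompanies a transposition $x_i\leftrightarrow x_j$, so the genuine content lies in the $\bm{z}$-dependent statements.

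First I would verify the exchange relation \eqref{eq:6-exchange2}. Fix $k$ with $S_k=0$, $S_{k+1}=1$. Since the $k$-th column sum of any $\sigma\in\mathfrak{M}_1(N,S)$ is $0$ and the $(k+1)$-th is $1$, each such matrix has a unique row $r$ whose single $1$ sits in column $k+1$; moving that $1$ into column $k$ defines a bijection $\mathfrak{M}_1(N,S)\to\mathfrak{M}_1(N,\sigma_k\cdot S)$. Because columns $k$ and $k+1$ are otherwise empty, this move does not change the relative ordering of the $1$ in row $r$ against the $1$ in any other row, so the product $\prod_{i<j}\Delta_{\sigma(i),\sigma(j)}$ is preserved. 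The rows $i\neq r$ have their $1$ outside columns $k,k+1$, whence $F_{\sigma(i)}(x_i;\sigma_k\cdot\bm{z})=F_{\sigma(i)}(x_i;\bm{z})$. Thus \eqref{eq:6-exchange2} reduces, term by term across the bijection, to the single-variable identity $F_{e_{k+1}}(x;\bm{z})=\tfrac{z_k-qz_{k+1}}{q(z_k-z_{k+1})}F_{e_k}(x;\sigma_k\cdot\bm{z})-\tfrac{(1-q)z_{k+1}}{q(z_k-z_{k+1})}F_{e_k}(x;\bm{z})$, which follows from the explicit formula \eqref{eq:F6-one_row} by a direct computation.

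It remains to treat $Z_{(1^N)}$, for which $\mathfrak{M}_1(N,(1^N))$ is exactly the set of $N\times N$ permutation matrices, so $Z_{(1^N)}$ is a sum over $\mathfrak{S}_N$. Property (5) is the one-row evaluation \eqref{eq:F6-one_row}. Property (3) holds because the unique row whose $1$ lies in column $1$ contributes the factor $F_{e_1}(x;\bm{z})=\tfrac{(1-q)z_1}{x-qz_1}$, which vanishes at $z_1=0$ and kills every summand. Property (2) follows by clearing denominators: every single-row factor $F_{e_m}(x_i;\bm{z})$ has the same simple $z_1$-pole at $x_i=qz_1$, so multiplying by $\prod_{i,j}(x_i-qz_j)$ removes all $z_1$-poles, and each of the $N$ rows then contributes a single numerator factor of degree one in $z_1$ (namely $z_1$ for the column-$1$ row, or $x_i-z_1$ otherwise), giving total degree $N$.

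The main obstacle is property (4), the recursion at $z_1=x_N$. Here I would argue that a summand survives the specialization only if the permutation sends row $N$ to column $1$: otherwise $F_{\sigma(N)}(x_N;\bm{z})$ contains the factor $\tfrac{x_N-z_1}{x_N-qz_1}$, which vanishes at $z_1=x_N$. For the surviving permutations one has $F_{e_1}(x_N;\bm{z})\big|_{z_1=x_N}=1$, while each remaining row $i<N$ with $\sigma(i)=e_m$ obeys $F_{e_m}(x_i;\bm{z})\big|_{z_1=x_N}=\tfrac{x_i-x_N}{x_i-qx_N}\,F_{e_{m-1}}(x_i;\widehat{\bm{z}}_1)$, picking up the prefactor $\tfrac{x_i-x_N}{x_i-qx_N}$ and reindexing onto the reduced alphabet. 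The crucial point is that this prefactor is cancelled exactly by the factor $\Delta_{\sigma(i),\sigma(N)}(x_i,x_N)=\tfrac{x_i-qx_N}{x_i-x_N}$ coming from the pair $(i,N)$ (the $1$ of row $i$ lies after the $1$ of row $N$), while the remaining $\Delta$-factors among rows $1,\dots,N-1$ are insensitive to the uniform shift $m\mapsto m-1$. Hence the surviving sum collapses precisely to $Z_{(1^{N-1})}(x_1,\dots,x_{N-1};\widehat{\bm{z}}_1)$, establishing property (4). With the exchange relations and all five properties verified, Theorem \ref{thm:unique-6} gives $Z_S=F_S$, as claimed.
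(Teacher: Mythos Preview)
Your proposal is correct and follows essentially the same approach as the paper: both invoke Theorem \ref{thm:unique-6} by verifying that the right-hand side satisfies the exchange relation \eqref{eq:6-exchange2} and that its $(1^N)$ specialization obeys properties (1)--(5) of Proposition \ref{prop:6-properties}. The only presentational difference is that the paper carries out the checks in the equivalent $\mathfrak{S}_N$ form \eqref{symformulasix1-SN} (permuting the variables $x_{\sigma(i)}$), whereas you work directly in the $\mathfrak{M}_1$ form (permuting the rows); the arguments for properties (3), (4), and the exchange relation are otherwise step-for-step the same.
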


\begin{rmk}
It is easy to show that \eqref{symformulasix1} is equivalent to the more standard symmetrization identity
\begin{align}
\label{symformulasix1-SN}
F_S(x_1,\dots,x_N;\bm{z})
=
\sum_{\sigma \in \mathfrak{S}_N}
\prod_{1 \leq i<j \leq N}
\frac{x_{\sigma(j)}-q x_{\sigma(i)}}
{x_{\sigma(j)}- x_{\sigma(i)}}
\prod_{i=1}^{N}
F_{e_{k_i}}(x_{\sigma(i)};\bm{z}),
\end{align}
where $\{k_1 < \cdots < k_N\}$ are the coordinates of ones in the string $S$; namely, $S_{k_i} = 1$ for all $1 \leq i \leq N$. The reason why we present the somewhat less natural-looking formula \eqref{symformulasix1}, rather than simply \eqref{symformulasix1-SN}, is that the former has an immediate generalization to the setting of the Izergin--Korepin model (see Section \ref{ssec:IK-sym-formula}) while we do not know of any straightforward way to generalize the latter.
\end{rmk}

\begin{proof}
Throughout the proof, we shall let
\begin{align}
\label{eq:gS-define-6}
g_S(x_1,\dots,x_N;\bm{z})
=
\sum_{\sigma \in \mathfrak{S}_N}
\prod_{1 \leq i<j \leq N}
\frac{x_{\sigma(j)}-q x_{\sigma(i)}}
{x_{\sigma(j)}- x_{\sigma(i)}}
\prod_{i=1}^{N}
F_{e_{k_i}}(x_{\sigma(i)};\bm{z})
\end{align}
denote the right hand side of \eqref{symformulasix1-SN}. The proof is in two steps. In the first stage, we show that $g_S(x_1,\dots,x_N;\bm{z})$ satisfies the exchange relation \eqref{eq:6-exchange2}. In the second stage, we show that $g_{(1^N)}(x_1,\dots,x_N;\bm{z})$ satisfies the conditions of Proposition \ref{prop:6-properties}. In view of Theorem \ref{thm:unique-6}, these checks suffice to show that $g_S(x_1,\dots,x_N;\bm{z}) = F_S(x_1,\dots,x_N;\bm{z})$ for general $S \in \mathfrak{s}(1)$.

To validate the exchange relation \eqref{eq:6-exchange2}, we fix a $1$-string $S$ such that $S_{\ell}=0$, $S_{\ell+1}=1$. One finds that all factors in the sum \eqref{eq:gS-define-6} are symmetric in $(z_{\ell},z_{\ell+1})$ and invariant under the switch $S \leftrightarrow \sigma_{\ell} \cdot S$, with the exception of the unique one-row function $F_{e_{k_i}}(x_{\sigma(i)};\bm{z})$, where $i$ is chosen such that $k_i =\ell+1$. The exchange relation \eqref{eq:6-exchange2} is then immediately verified on this one-row function.

To check the properties of Proposition \ref{prop:6-properties}, we set $S=(1^N)$ in \eqref{eq:gS-define-6}:
\begin{align}
\label{eq:gS-DW}
g_{(1^N)}(x_1,\dots,x_N;\bm{z})
=
\sum_{\sigma \in \mathfrak{S}_N}
\prod_{1 \leq i<j \leq N}
\frac{x_{\sigma(j)}-q x_{\sigma(i)}}
{x_{\sigma(j)}- x_{\sigma(i)}}
\prod_{i=1}^{N}
F_{e_i}(x_{\sigma(i)};\bm{z}).
\end{align}

\begin{enumerate}[wide, labelindent=0pt]
\item The symmetry in $(x_1,\dots,x_N)$ is manifest from of \eqref{eq:gS-DW}.
\medskip

\item Multiplying \eqref{eq:gS-DW} by $\prod_{i=1}^{N} \prod_{j=1}^{N} (x_i-qz_j)$ converts each factor $F_{e_i}(x_{\sigma(i)};\bm{z})$ to a degree-$1$ polynomial in $z_1$. Hence the renormalized version of \eqref{eq:gS-DW} itself is a polynomial in $z_1$ of degree $N$.
\medskip

\item The function $F_{e_1}(x_{\sigma(1)};\bm{z})$ vanishes at $z_1=0$, and the vanishing of \eqref{eq:gS-DW} at the same point is then immediate.
\medskip

\item The functions $F_{e_i}(x_N;\bm{z})$, $2 \leq i \leq N$ all vanish at $z_1=x_N$. Setting $z_1=x_N$ in \eqref{eq:gS-DW}, we are then forced to have $\sigma(1)=N$. Writing $(\sigma(2),\dots,\sigma(N))=(\rho(1),\dots,\rho(N-1)) \in \mathfrak{S}_{N-1}$ for the remaining elements of the permutation $\sigma$, one finds that
\begin{multline}
\label{eq:gS-DW-rec}
g_{(1^N)}(x_1,\dots,x_N;\bm{z})
\Big|_{z_1=x_N}
=
\\
\prod_{j=1}^{N-1}
\frac{x_j-q x_N}{x_j-x_N}
F_{e_1}(x_N;\bm{z})
\Big|_{z_1=x_N}
\times
\sum_{\rho \in \mathfrak{S}_{N-1}}
\prod_{1 \leq i<j \leq N-1}
\frac{x_{\rho(j)}-q x_{\rho(i)}}{x_{\rho(j)}-x_{\rho(i)}}
\prod_{i=1}^{N-1}
F_{e_{i+1}}(x_{\rho(i)};\bm{z}) 
\Big|_{z_1=x_N}.
\end{multline}
The recursion $g_{(1^N)}(x_1,\dots,x_N;\bm{z})|_{z_1 = x_N} = 
g_{(1^{N-1})}(x_1,\dots,x_{N-1};\widehat{\bm{z}}_1)$ then follows after using the fact that
\begin{align*}
F_{e_{i+1}}(x_{\rho(i)};\bm{z})
=
\frac{x_{\rho(i)}-z_1}{x_{\rho(i)}-qz_1}
F_{e_i}(x_{\rho(i)};\widehat{\bm{z}}_1),
\qquad
\forall\ 1 \leq i \leq N-1,
\end{align*}
which is an immediate consequence of the definition \eqref{eq:F6-one_row}.

\item For $N=1$, the summation over $\mathfrak{S}_N$ is trivial, and one sees that $g_{(1)}(x_1;\bm{z}) = F_{e_1}(x_1;\bm{z}) = F_{(1)}(x_1;\bm{z})$.

\end{enumerate}

\end{proof}

\begin{rmk}
Just as we did throughout the previous proof, one may also show that the determinant formula \eqref{eq:DWPF} satisfies the properties of Proposition \ref{prop:6-properties}; indeed, this was its original proof due to Izergin \cite{Izergin87}. Outside of showing that they obey the same set of uniquely-characterizing properties, we do not know of a direct way to relate the formulas \eqref{eq:DWPF} and \eqref{symformulasix1-SN} at $S=(1^N)$.
\end{rmk}

\newpage

\chapter{Nineteen-vertex model}
\label{chapter:19-vertex}

This chapter contains the main results of our text. We introduce two families of rational symmetric functions arising from the 
$R$-matrix of the Izergin--Korepin nineteen-vertex model \cite{IzerginKorepin81,Jimbo86b,Lima-Santos99} and study a number of their properties. Where possible, our constructions proceed directly in parallel with those of the six-vertex model and employ common notations. While this does create a double-usage of many symbols and functions, we believe that there is no potential for confusion due to the partitioning of our text into chapters.

In Section \ref{sec:IKmodel-relations}, we write down the $R$-matrix of the model in a certain choice of gauge differing from the one in \cite{Jimbo86b}: our choice endows the $R$-matrix with a sum-to-unity property, namely, all of its rows sum to $1$. We state the Yang--Baxter equation and the unitarity relation for our $R$-matrix. In analogy with the six-vertex model, we also define dotted vertices (obtained as normalizations of undotted ones) and observe a symmetry between the dotted and undotted vertices. 

In Section \ref{sec:IKrow} we introduce row operators for the model, and in Section \ref{sec:IKrational} we use them to construct the rational functions $F_{S}$ and $G_{S}$. Key properties of these functions, including their symmetry in their (primary) alphabet and Cauchy identity, are proved in Sections \ref{sec:IKrational} and \ref{sec:IKcauchy}. In analogy with the six-vertex model, the Cauchy identity obtained at this point does not have a factorized right hand side: rather, one sees the appearance of the domain-wall partition function of the Izergin--Korepin model, previously studied in \cite{Garbali16}.

Subsequently, in Sections \ref{sec:IK_stable}--\ref{ssec:IK-stable-cauchy} we introduce a further family of rational symmetric functions $H_{S}$ (which have a stability property with respect to their primary alphabet) and offer a simplified version of the Cauchy identity with a fully factorized right hand side. The proof of these facts rests on demonstrating how to degenerate both the functions $F_S$ and $G_S$ to $H_{S}$.

By far the most challenging result of the chapter, and indeed of the whole text, is contained in Sections \ref{sec:19-prop}--\ref{sec:19-sym}: we present a symmetrization formula for the family of symmetric functions $F_S$. This formula is conceptually very similar to its six-vertex analogue in Section \ref{sec:6-sym}, in the sense that it expresses $F_S$ as a sum over products of one-variable wavefunctions, dressed by appropriate two-variable scattering factors. However, the proof of this formula ends up being far more involved than in the six-vertex case. The crux of the difficulty may be seen by analysing the weights of the Izergin--Korepin vertex model: once normalized to be polynomials, one finds that they are typically of degree-two in their spectral parameters, rather than degree-one as in the six-vertex case. This means that any type of Lagrange interpolation proof necessarily requires twice the number of interpolating points as previously; this creates technical problems both for setting up the required recursion relations and for validating them on the formula itself.

\section{Definition of Izergin--Korepin nineteen-vertex model and basic relations}
\label{sec:IKmodel-relations}

We begin by writing down the $R$-matrix and explain how it gives rise to a vertex model, in Section \ref{ssec:19R}. Section \ref{ssec:19YB} contains the unitarity and Yang--Baxter relations of the model, and Section \ref{ssec:19dot} introduces dotted analogues of the vertex weights.

\subsection{$R$-matrix and vertices}
\label{ssec:19R}

\begin{defn}
The $R$-matrix for the Izergin--Korepin model is a $9 \times 9$ matrix given by
\begin{equation}
\label{eqrmnineteen1}
R_{ab}(y/x) =\sum_{i,j,k,\ell \in \{0,1,2\}} W_{y/x}(i,j;k,\ell) E_a^{j,\ell} \otimes E_b^{i,k}
\end{equation}
where $E_{a}^{j,\ell}$ and $E_{b}^{i,k}$ are $3 \times 3$ elementary matrices acting on the three--dimensional spaces $V_{a}\cong \mathbb{C}^3$ and $V_{b}\cong \mathbb{C}^3$, respectively. The matrix entries $W_{y/x}(i,j;k,\ell)$ are identified with vertices as follows:
\begin{equation}
\label{eqdefn19w}
\raisebox{-22mm}{
\begin{tikzpicture}[scale=0.8,baseline = {(0,-2.25)}]
\draw[lgray,line width=1.5pt,->] (14,0) -- (16,0);
\draw[lgray,line width=1.5pt,->] (15,-1) -- (15,1); 
\node at (13.8,0) {$j$};
\node at (16.2,0) {$\ell$};
\node at (15,-1.2) (A1) {$i$};
\node at (15,1.2) {$k$};
\node at (12.75,0) {$x$};
\node at (15,-2.2) (A) {$y$};
\draw[->,thick] (13,0) -- (13.5,0);
\draw[->,thick] (A) -- (A1);
\node at (10.8,0) {$W_{y/x}(i,j;k,\ell)=$};
\node at (19,0) {$\qquad \text{ for }\quad  i,j,k,\ell \in \{0,1,2\},$}; 
\end{tikzpicture}}
\end{equation}
where the arguments $(i,j;k,\ell)$ are read in clockwise manner around the vertex. The vertex weights satisfy the conservation property
\begin{align}
\label{conserve}
W_{y/x}(i,j;k,\ell)
=
0,
\qquad
\text{if}
\qquad
i+j \not= k+\ell,
\end{align}
which leaves nineteen possible non-vanishing choices of $(i,j;k,\ell)$. We tabulate all of these possibilities, together with their explicit weights, in Figure \ref{fig:19weights}.
\end{defn}

\begin{rmk}
While the $R$-matrix for the Izergin--Korepin model has nineteen non-vanishing weights, it is crucial to differentiate it from the nineteen-vertex Zamolodchikov--Fateev model \cite{Zamolodchikov80,Lima-Santos99}, which is derived from fusion of the six-vertex model. Indeed, if one were to work with the latter model, most results could be obtained from suitable fusion of results in Chapter \ref{chapter:6-vertex}. The nineteen-vertex Izergin--Korepin model does not afford us such a luxury. 

It should also be emphasized that the vertices \eqref{eqdefn19w} are {\it not} coloured\footnote{In other words, arising from the quantized affine Lie algebra $\mathcal{U}_{q}(\widehat{\mathfrak{sl}_3})$.} variants of the six-vertex model, as were studied in \cite{BorodinWheeler18}. Instead, the states attached to each edge denote the {\it total number} of paths present, and the conservation property \eqref{conserve} means that the total flux of paths through a vertex is preserved. This approach contrasts with coloured vertex models, where edge labels indicate the {\it colour} of the path present, rather than the quantity.
\end{rmk}

\begin{rmk}
The model presented in Figure \ref{fig:19weights} may seem intimidating at first glance, but it should be noted that in practical calculations one usually only needs explicit knowledge of a subset of the vertex weights. For example, for the purpose of deriving the symmetry of our functions and their Cauchy identities (see Sections \ref{sec:IKrational} and \ref{sec:IKcauchy}), it turns out that only explicit knowledge of the weights
\begin{align*}
W_{y/x}(0,0;0,0) = 1,
\qquad
W_{y/x}(2,2;2,2) = 1,
\qquad
W_{y/x}(0,2;0,2) = 
\dfrac{(x-y)(x-qy)}{(x-q^2y)(x-q^3y)}
\end{align*}
is needed; the remaining weights are implicitly treated as providing a solution to the Yang--Baxter equation.

A further nice aspect of the model in Figure \ref{fig:19weights} is that two copies of the six-vertex model are embedded within it. Indeed, if one restricts to $(i,j;k,\ell)$ such that $0 \leq i+j \leq 1$ or $3 \leq i+j \leq 4$, one finds that the resulting weights $W_{y/x}(i,j;k,\ell)$ match with six-vertex weights, up to replacing $q$ in the latter model by $q^2$.
\end{rmk}

\begin{prop}[Sum-to-unity property]
\label{nineteensumtounity}
For any fixed $i,j \in \{0,1,2\}$, we have 
\begin{equation}
\sum_{k = 0}^2 \sum_{\ell = 0}^2 W_{y/x}(i,j;k,\ell) = 1.
\end{equation}
\end{prop}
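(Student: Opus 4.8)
The plan is to exploit the block structure that the conservation law \eqref{conserve} imposes on the $R$-matrix, which reduces the claim to a small number of elementary computations. First I would invoke \eqref{conserve} to observe that, for fixed $i,j$, the only terms contributing to $\sum_{k,\ell} W_{y/x}(i,j;k,\ell)$ are those with $k+\ell=i+j$. Writing $m=i+j\in\{0,1,2,3,4\}$, the double sum therefore collapses to a sum over the finitely many pairs $(k,\ell)$ with $0\leq k,\ell\leq 2$ and $k+\ell=m$. Equivalently, the $9\times 9$ matrix \eqref{eqrmnineteen1} is block diagonal, its diagonal blocks indexed by $m$ having sizes $1,2,3,2,1$, and the proposition is exactly the statement that every row of every block sums to $1$.

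I would then dispatch the five sectors in turn. The extremal sectors $m=0$ and $m=4$ are immediate, since each consists of a single weight, namely $W_{y/x}(0,0;0,0)=1$ and $W_{y/x}(2,2;2,2)=1$ respectively. For the sectors $m=1$ and $m=3$ I would appeal to the embedded six-vertex structure recorded in the remark following Figure \ref{fig:19weights}: restricting to $0\leq i+j\leq 1$ (respectively $3\leq i+j\leq 4$) reproduces the six-vertex weights with $q$ replaced by $q^2$. Under these identifications the two relevant $2\times 2$ blocks coincide with the unique size-$2$ block of the six-vertex $R$-matrix, so their rows sum to $1$ by the six-vertex sum-to-unity property \eqref{eqsumtounity1} (applied with $q\mapsto q^2$). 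This leaves only the central sector $m=2$.

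The sector $m=2$ carries the genuinely new content and is the main obstacle, as no six-vertex shortcut is available. Here one must verify that each of the three rows of the central $3\times 3$ block, indexed by $(i,j)\in\{(0,2),(1,1),(2,0)\}$ and summed over $(k,\ell)\in\{(0,2),(1,1),(2,0)\}$, totals $1$. I would carry this out by direct substitution of the nine explicit weights tabulated in Figure \ref{fig:19weights}; of these, the only one stated in closed form in the text is $W_{y/x}(0,2;0,2)=\frac{(x-y)(x-qy)}{(x-q^2y)(x-q^3y)}$. Placing the three entries of each row over the common denominator $(x-q^2y)(x-q^3y)$ and simplifying, the numerators collapse to that denominator, so each row sum equals $1$. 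Since this final verification is a purely mechanical rational-function simplification, assembling the five sectors then completes the proof.
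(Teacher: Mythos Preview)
Your argument is correct. The paper's own proof is the minimal version of what you do: it simply notes that Figure~\ref{fig:19weights} is organized so that each row has fixed incoming indices $(i,j)$, and verifies directly that the entries along each of the nine rows sum to $1$. Your proposal follows the same direct-verification philosophy but organizes it more economically: you first invoke the conservation law \eqref{conserve} to pass to the block decomposition by $m=i+j$, dispose of the $m\in\{0,4\}$ sectors trivially, and then use the embedded six-vertex structure (the remark after Figure~\ref{fig:19weights}) together with Proposition~\eqref{eqsumtounity1} at $q\mapsto q^2$ to handle $m\in\{1,3\}$ without fresh computation. This is a genuine saving---four of the nine row sums come for free---leaving only the central $3\times 3$ block $m=2$ for explicit rational-function simplification. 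The paper does not exploit either shortcut, so your route is tidier, though both proofs are ultimately mechanical checks against the tabulated weights.
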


\begin{proof}
This is by direct verification, using the weights in Figure \ref{fig:19weights}. Indeed, each row of that figure has fixed values for the incoming indices $i,j$, meaning that one simply sums the weights along each row to deduce the result.
\end{proof}

\begin{rmk}
Similarly to the scenario in the six-vertex model, the $R$-matrix introduced in equation $\eqref{eqrmnineteen1}$ obeys the sum-to-unity criterion, as outlined in Proposition $\ref{nineteensumtounity}$. This property of the $R$-matrix is necessary for the purposes of constructing Markov processes in the quadrant, but not sufficient: one also requires that all entries of the $R$-matrix be non-negative. We were unable to find any parameter range of $x,y,q$ for which the entries obey such a requirement, other than very restrictive ones such as $0 \leq y<x \leq 1$, $q=0$ (when non-negativity becomes manifest).
\end{rmk}

\newgeometry{bottom=0.1cm,top=0.1cm,left=2cm,right=2cm}
\begin{figure}
\centering
\begin{center}
\begin{tabular}{|c|}
\hline
\begin{tikzpicture}[scale=0.5]
\draw[lgray,line width=1.5pt,->] (-3,2.5) -- (-1,2.5);
\draw[lgray,line width=1.5pt,->] (-2,1.5) -- (-2,3.5);
\node[below] at (-2,1.5) {\footnotesize{$0$}};
\node[above] at (-2,3.5) {\footnotesize{$0$}};
\node[left] at (-3,2.5) {\footnotesize{$0$}};
\node[right] at (-1,2.5) {\footnotesize{$0$}};
\node at (-2,0.3) {$1$};
\end{tikzpicture}
\\
\hline
\end{tabular}
\end{center}
\begin{center}
\begin{tabular}{|c|c|}
\hline
\begin{tikzpicture}[scale=0.5]
\draw[lgray,line width=1.5pt,->] (1,2.5) -- (3,2.5);
\draw[lgray,line width=1.5pt,->] (2,1.5) -- (2,3.5);
\node[below] at (2,1.5) {\footnotesize{$1$}};
\node[above] at (2,3.5) {\footnotesize{$1$}};
\node[left] at (1,2.5) {\footnotesize{$0$}};
\node[right] at (3,2.5) {\footnotesize{$0$}};
\node[scale = 1.2] at (2,-0.2) {\footnotesize{$\dfrac{q^2(x-y)}{x-q^2y}$}};
\end{tikzpicture}
&
\begin{tikzpicture}[scale=0.5]
\draw[lgray,line width=1.5pt,->] (1,-1.5) -- (3,-1.5);
\draw[lgray,line width=1.5pt,->] (2,-2.5) -- (2,-0.5);
\draw[lgray,line width=1.5pt,->] (2,-2.5) -- (2,-1.5) -- (3,-1.5);
\node[below] at (2,-2.5) {\footnotesize{$1$}};
\node[above] at (2,-0.5) {\footnotesize{$0$}};
\node[left] at (1,-1.5) {\footnotesize{$0$}};
\node[right] at (3,-1.5) {\footnotesize{$1$}};
\node[scale = 1.2] at (2,-4.2) {\footnotesize{$\dfrac{(1-q^2)x}{x-q^2y}$}};
\end{tikzpicture}
\\
\hline
\end{tabular}
\end{center}
\begin{center}
\begin{tabular}{|c|c|c|}
\hline
\begin{tikzpicture}[scale=0.5]
\draw[lgray,line width=1.5pt,->] (9,2.5) -- (11,2.5);
\draw[lgray,line width=1.5pt,->] (10,1.5) -- (10,3.5);
\node[below] at (10,1.5) {\footnotesize{$2$}};
\node[above] at (10,3.5) {\footnotesize{$2$}};
\node[left] at (9,2.5) {\footnotesize{$0$}};
\node[right] at (11,2.5) {\footnotesize{$0$}};
\node[scale = 1.2] at (10,-0.2) {\footnotesize{$\dfrac{q^4(x-y)(x-qy)}{(x-q^2y)(x-q^3y)}$}};
\end{tikzpicture}
&
\begin{tikzpicture}[scale=0.5]
\draw[lgray,line width=1.5pt] (9,-1.5) -- (11,-1.5);
\draw (10,-2.5) -- (10,-0.5);
\draw[lgray,line width=1.5pt] (10,-2.5) -- (10,-1.5);
\draw[lgray,line width=1.5pt,->] (10,-1.5) -- (10,-0.5);
\draw[lgray,line width=1.5pt,->] (10,-1.5) -- (11,-1.5);
\node[below] at (10,-2.5) {\footnotesize{$2$}};
\node[above] at (10,-0.5) {\footnotesize{$1$}};
\node[left] at (9,-1.5) {\footnotesize{$0$}};
\node[right] at (11,-1.5) {\footnotesize{$1$}};
\node[scale = 1.2] at (10,-4.2) {\footnotesize{$-\dfrac{q(1-q)^2(1+q)x(x-y)}{(x-q^2y)(x-q^3y)}
$}};
\end{tikzpicture}
&
\begin{tikzpicture}[scale=0.5]
\draw[lgray,line width=1.5pt] (9,-5.5) -- (11,-5.5);
\draw[lgray,line width=1.5pt,->] (10,-6.5) -- (10,-4.5);
\draw[lgray,line width=1.5pt,->] (10,-6.5) -- (10,-5.5) -- (11,-5.5);
\node[below] at (10,-6.5) {\footnotesize{$2$}};
\node[above] at (10,-4.5) {\footnotesize{$0$}};
\node[left] at (9,-5.5) {\footnotesize{$0$}};
\node[right] at (11,-5.5) {\footnotesize{$2$}};
\node[scale = 1.2] at (10,-8.2) {\footnotesize{$\dfrac{(1-q^2)x(x+qx-qy-q^3y)}{(x-q^2y)(x-q^3y)}
$}};
\end{tikzpicture}
\\
\hline
\end{tabular}
\end{center}
\begin{center}
\begin{tabular}{|c|c|}
\hline
\begin{tikzpicture}[scale=0.5]
\draw[lgray,line width=1.5pt,->] (1,2.5) -- (3,2.5);
\draw[lgray,line width=1.5pt,->] (2,1.5) -- (2,3.5);
\node[below] at (2,1.5) {\footnotesize{$0$}};
\node[above] at (2,3.5) {\footnotesize{$1$}};
\node[left] at (1,2.5) {\footnotesize{$1$}};
\node[right] at (3,2.5) {\footnotesize{$0$}};
\node[scale = 1.2] at (2,-0.2) 
{\footnotesize{$\dfrac{(1-q^2)y}{x-q^2y}$}};
\end{tikzpicture}
&
\begin{tikzpicture}[scale=0.5]
\draw[lgray,line width=1.5pt,->] (1,-1.5) -- (3,-1.5);
\draw[lgray,line width=1.5pt,->] (2,-2.5) -- (2,-0.5);
\draw[lgray,line width=1.5pt,->] (2,-2.5) -- (2,-1.5) -- (3,-1.5);
\node[below] at (2,-2.5) {\footnotesize{$0$}};
\node[above] at (2,-0.5) {\footnotesize{$0$}};
\node[left] at (1,-1.5) {\footnotesize{$1$}};
\node[right] at (3,-1.5) {\footnotesize{$1$}};
\node[scale = 1.2] at (2,-4.2) {\footnotesize{$\dfrac{x-y}{x-q^2y}$}};
\end{tikzpicture}
\\
\hline
\end{tabular}
\end{center}
\begin{center}
\begin{tabular}{|c|c|c|}
\hline
\begin{tikzpicture}[scale=0.5]
\draw[lgray,line width=1.5pt,->] (9,2.5) -- (11,2.5);
\draw[lgray,line width=1.5pt,->] (10,1.5) -- (10,3.5);
\node[below] at (10,1.5) {\footnotesize{$1$}};
\node[above] at (10,3.5) {\footnotesize{$2$}};
\node[left] at (9,2.5) {\footnotesize{$1$}};
\node[right] at (11,2.5) {\footnotesize{$0$}};
\node[scale = 1.2] at (10,-0.2) {\footnotesize{$-\dfrac{q^4(1+q)(x-y)y}{(x-q^2y)(x-q^3y)}$}};
\end{tikzpicture}
&
\begin{tikzpicture}[scale=0.5]
\draw[lgray,line width=1.5pt] (9,-1.5) -- (11,-1.5);
\draw (10,-2.5) -- (10,-0.5);
\draw[lgray,line width=1.5pt] (10,-2.5) -- (10,-1.5);
\draw[lgray,line width=1.5pt,->] (10,-1.5) -- (10,-0.5);
\draw[lgray,line width=1.5pt,->] (10,-1.5) -- (11,-1.5);
\node[below] at (10,-2.5) {\footnotesize{$1$}};
\node[above] at (10,-0.5) {\footnotesize{$1$}};
\node[left] at (9,-1.5) {\footnotesize{$1$}};
\node[right] at (11,-1.5) {\footnotesize{$1$}};
\node[scale = 1.2] at (10,-4.2) {\footnotesize{$\dfrac{-qx^2+(1+q)(1-q^2+q^4)xy-q^4y^2}{(x-q^2y)(x-q^3y)}$}};
\end{tikzpicture}
&
\begin{tikzpicture}[scale=0.5]
\draw[lgray,line width=1.5pt] (9,-5.5) -- (11,-5.5);
\draw[lgray,line width=1.5pt,->] (10,-6.5) -- (10,-4.5);
\draw[lgray,line width=1.5pt,->] (10,-6.5) -- (10,-5.5) -- (11,-5.5);
\node[below] at (10,-6.5) {\footnotesize{$1$}};
\node[above] at (10,-4.5) {\footnotesize{$0$}};
\node[left] at (9,-5.5) {\footnotesize{$1$}};
\node[right] at (11,-5.5) {\footnotesize{$2$}};
\node[scale = 1.2] at (10,-8.2) {\footnotesize{$\dfrac{(1+q)x(x-y)}{(x-q^2y)(x-q^3y)}$}};
\end{tikzpicture}
\\
\hline
\end{tabular}
\end{center}
\begin{center}
\begin{tabular}{|c|c|}
\hline
\begin{tikzpicture}[scale=0.5]
\draw[lgray,line width=1.5pt,->] (1,2.5) -- (3,2.5);
\draw[lgray,line width=1.5pt,->] (2,1.5) -- (2,3.5);
\node[below] at (2,1.5) {\footnotesize{$2$}};
\node[above] at (2,3.5) {\footnotesize{$2$}};
\node[left] at (1,2.5) {\footnotesize{$1$}};
\node[right] at (3,2.5) {\footnotesize{$1$}};
\node[scale = 1.2] at (2,-0.2) {\footnotesize{$\dfrac{q^2(x-y)}{x-q^2y}$}};
\end{tikzpicture}
&
\begin{tikzpicture}[scale=0.5]
\draw[lgray,line width=1.5pt,->] (1,-1.5) -- (3,-1.5);
\draw[lgray,line width=1.5pt,->] (2,-2.5) -- (2,-0.5);
\draw[lgray,line width=1.5pt,->] (2,-2.5) -- (2,-1.5) -- (3,-1.5);
\node[below] at (2,-2.5) {\footnotesize{$2$}};
\node[above] at (2,-0.5) {\footnotesize{$1$}};
\node[left] at (1,-1.5) {\footnotesize{$1$}};
\node[right] at (3,-1.5) {\footnotesize{$2$}};
\node[scale = 1.2] at (2,-4.2) {\footnotesize{$\dfrac{(1-q^2)x}{x-q^2y}$}};
\end{tikzpicture}
\\
\hline
\end{tabular}
\end{center}
\begin{center}
\begin{tabular}{|c|c|c|}
\hline
\begin{tikzpicture}[scale=0.5]
\draw[lgray,line width=1.5pt,->] (9,2.5) -- (11,2.5);
\draw[lgray,line width=1.5pt,->] (10,1.5) -- (10,3.5);
\node[below] at (10,1.5) {\footnotesize{$0$}};
\node[above] at (10,3.5) {\footnotesize{$2$}};
\node[left] at (9,2.5) {\footnotesize{$2$}};
\node[right] at (11,2.5) {\footnotesize{$0$}};
\node[scale = 1.2] at (10,-0.2) {\footnotesize{$\dfrac{(1-q^2)(x+q^2x-q^2y-q^3y)y}{(x-q^2y)(x-q^3y)}$}};
\end{tikzpicture}
&
\begin{tikzpicture}[scale=0.5]
\draw[lgray,line width=1.5pt] (9,-1.5) -- (11,-1.5);
\draw (10,-2.5) -- (10,-0.5);
\draw[lgray,line width=1.5pt] (10,-2.5) -- (10,-1.5);
\draw[lgray,line width=1.5pt,->] (10,-1.5) -- (10,-0.5);
\draw[lgray,line width=1.5pt,->] (10,-1.5) -- (11,-1.5);
\node[below] at (10,-2.5) {\footnotesize{$0$}};
\node[above] at (10,-0.5) {\footnotesize{$1$}};
\node[left] at (9,-1.5) {\footnotesize{$2$}};
\node[right] at (11,-1.5) {\footnotesize{$1$}};
\node[scale = 1.2] at (10,-4.2) {\footnotesize{$\dfrac{q(1-q)^2(1+q)(x-y)y}{(x-q^2y)(x-q^3y)}$}};
\end{tikzpicture}
&
\begin{tikzpicture}[scale=0.5]
\draw[lgray,line width=1.5pt] (9,-5.5) -- (11,-5.5);
\draw[lgray,line width=1.5pt,->] (10,-6.5) -- (10,-4.5);
\draw[lgray,line width=1.5pt,->] (10,-6.5) -- (10,-5.5) -- (11,-5.5);
\node[below] at (10,-6.5) {\footnotesize{$0$}};
\node[above] at (10,-4.5) {\footnotesize{$0$}};
\node[left] at (9,-5.5) {\footnotesize{$2$}};
\node[right] at (11,-5.5) {\footnotesize{$2$}};
\node[scale = 1.2] at (10,-8.2) {\footnotesize{$\dfrac{(x-y)(x-qy)}{(x-q^2y)(x-q^3y)}$}};
\end{tikzpicture}
\\
\hline
\end{tabular}
\end{center}
\begin{center}
\begin{tabular}{|c|c|}
\hline
\begin{tikzpicture}[scale=0.5]
\draw[lgray,line width=1.5pt,->] (1,2.5) -- (3,2.5);
\draw[lgray,line width=1.5pt,->] (2,1.5) -- (2,3.5);
\node[below] at (2,1.5) {\footnotesize{$1$}};
\node[above] at (2,3.5){\footnotesize{$2$}};
\node[left] at (1,2.5) {\footnotesize{$2$}};
\node[right] at (3,2.5) {\footnotesize{$1$}};
\node[scale = 1.2] at (2,-0.2) {\footnotesize{$\dfrac{(1-q^2)y}{x-q^2y}$}};
\end{tikzpicture}
&
\begin{tikzpicture}[scale=0.5]
\draw[lgray,line width=1.5pt,->] (1,-1.5) -- (3,-1.5);
\draw[lgray,line width=1.5pt,->] (2,-2.5) -- (2,-0.5);
\draw[lgray,line width=1.5pt,->] (2,-2.5) -- (2,-1.5) -- (3,-1.5);
\node[below] at (2,-2.5) {\footnotesize{$1$}};
\node[above] at (2,-0.5) {\footnotesize{$1$}};
\node[left] at (1,-1.5) {\footnotesize{$2$}};
\node[right] at (3,-1.5) {\footnotesize{$2$}};
\node[scale = 1.2] at (2,-4.2) {\footnotesize{$\dfrac{x-y}{x-q^2y}$}};
\end{tikzpicture}
\\
\hline
\end{tabular}
\end{center}
\begin{center}
\begin{tabular}{|c|}
\hline
\begin{tikzpicture}[scale=0.5]
\draw[lgray,line width=1.5pt,->] (-3,2.5) -- (-1,2.5);
\draw[lgray,line width=1.5pt,->] (-2,1.5) -- (-2,3.5);
\node[below] at (-2,1.5) {\footnotesize{$2$}};
\node[above] at (-2,3.5) {\footnotesize{$2$}};
\node[left] at (-3,2.5) {\footnotesize{$2$}};
\node[right] at (-1,2.5) {\footnotesize{$2$}};
\node at (-2,0.3) {$1$};
\end{tikzpicture}
\\
\hline
\end{tabular}
\end{center}
\caption{The weights of the nineteen-vertex Izergin--Korepin model.}
\label{fig:19weights}
\end{figure}
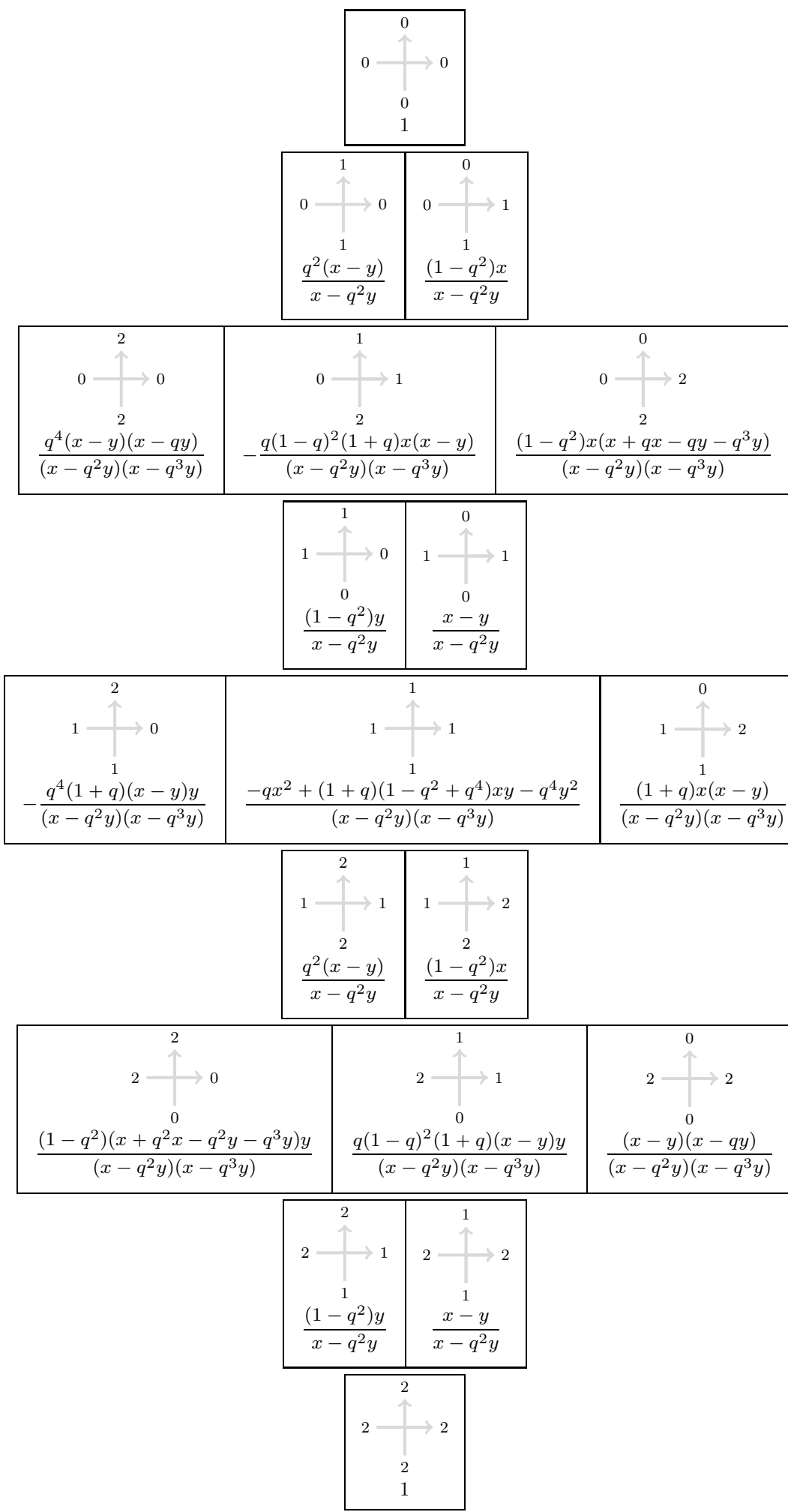

\restoregeometry
\newpage

\subsection{Unitarity and Yang--Baxter relations}
\label{ssec:19YB}

\begin{prop}
Let $R_{ab}(x)$ denote the $R$-matrix defined in equation $\eqref{eqrmnineteen1}$, whose components are given by \eqref{eqdefn19w} and Figure \ref{fig:19weights}. The $R$-matrix satisfies the unitarity and Yang--Baxter relations:
\begin{alignat}{2}
R_{ab}(y/x)R_{ba}(x/y) &= {\rm id}, \label{eqpropybe1}\\
R_{ab}(y/x)R_{ac}(z/x)R_{bc}(z/y)& = R_{bc}(z/y)R_{ac}(z/x)R_{ab}(y/x), \label{eqpropybe2}
\end{alignat}
where the labels $a,b,c$ indicate the spaces upon which the $R$-matrix acts; here \eqref{eqpropybe1} holds as an identity in ${\rm End}(V_a \otimes V_b)$, and \eqref{eqpropybe2} as an identity in ${\rm End}(V_a \otimes V_b \otimes V_c)$.

Equations $\eqref{eqpropybe1}$ and $\eqref{eqpropybe2}$ have the following pictorial representations:

\begin{equation}
\label{eq19unitary1}
\sum_{k_1,k_2\in \{0,1,2\}}
\raisebox{-15mm}{\begin{tikzpicture}[scale = 0.4]
\draw[->,lgray,line width=1.5pt,rounded corners] (0,0) -- (3,0) -- (3,3);
\draw[->,lgray,line width=1.5pt,rounded corners] (1.5,-1.5) -- (1.5,1.5) -- (4.5,1.5);
\node at (-0.5,0) {\footnotesize $i_1$};
\node at (1.5,-2) {\footnotesize $i_2$};
\node at (3.4,-0.2) {\footnotesize $k_1$};
\node at (1.5,2.2) {\footnotesize $k_2$};
\node at (5,1.5) {\footnotesize $j_2$};
\node at (2.85,3.65) {\footnotesize $j_1$};
\node at (-2.5,0) {\footnotesize{$x$}};
\node at (1.5,-3.75) {\footnotesize{$y$}};
\draw[->,thick] (-2,0) -- (-1,0);
\draw[->,thick] (1.5,-3.35) -- (1.5,-2.5);
\end{tikzpicture}}
=
\bm{1}_{i_1=j_1}
\bm{1}_{i_2=j_2}
\end{equation}
\begin{equation}
\label{eq19ybe1}
\sum_{k_1,k_2,k_3 \in \{0,1,2\}}
\begin{tikzpicture}[scale = 0.9,every node/.style={scale=0.8},baseline={([yshift=-0.25ex]current bounding box.center)}]
\draw[lgray,line width=1.5pt] (-2,1) -- (-1,0);
\draw[lgray,line width=1.5pt] (-2,0) -- (-1,1);
\draw[->,lgray,line width=1.5pt] (-1,1) -- (0,1);
\draw[->,lgray,line width=1.5pt] (-1,0) -- (0,0);
\draw[->,lgray,line width=1.5pt] (-0.5,-0.5) -- (-0.5,1.5);
\node at (-3,1) (A1) {$x$};
\node at (-3,0) (B1) {$y$};
\node[left] at (-1.9,1) (A) {$i_1$};
\node[left] at (-1.9,0) (B) {$i_2$};
\draw[->,thick] (A1) -- (A);
\draw[->,thick] (B1) -- (B);
\node at (-1,1.3) {$k_2$};
\node at (-1,-0.3) {$k_1$};
\node at (-0.5,1.75) {$j_3$};
\node at (-0.5,-0.75) {$i_3$};
\draw[->,thick] (-0.5,-1.3) -- (-0.5,-1);
\node at (-0.5,-1.5) {$z$};
\node at (0.35,1) {$j_2$};
\node at (0.35,0) {$j_1$};
\node at (-0.5, 0.5) {$k_3$};
\end{tikzpicture}
=\sum_{k_1,k_2,k_3 \in \{0,1,2\}}
\begin{tikzpicture}[scale = 0.8,every node/.style={scale=0.8},baseline={([yshift=-0.25ex]current bounding box.center)}]
\draw[lgray,line width=1.5pt] (4.5,1) -- (5.5,1);
\draw[lgray,line width=1.5pt] (4.5,0) -- (5.5,0);
\draw[->,lgray,line width=1.5pt] (5.5,1) -- (6.5,0);
\draw[->,lgray,line width=1.5pt] (5.5,0) -- (6.5,1);
\draw[->,lgray,line width=1.5pt] (5,-0.5) -- (5,1.5);
\node at (3.5,1) (C1) {$x$};
\node at (3.5,0) (D1) {$y$};
\node[left] at (4.6,1) (C) {$i_1$};
\node[left] at (4.6,0) (D) {$i_2$};
\draw[->,thick] (C1) -- (C);
\draw[->,thick] (D1) -- (D);
\node at (5,1.75) {$j_3$};
\node at (5,-0.75) {$i_3$};
\draw[->,thick] (5,-1.3) -- (5,-1);
\node at (5,-1.5) {$z$};
\node at (6.85,1) {$j_2$};
\node at (6.85,0) {$j_1$};
\node at (5.5,1.25) {$k_1$};
\node at (5.5,-0.25) {$k_2$};
\node at (5,0.5) {$k_3$};
\end{tikzpicture}
\end{equation}
where in both equations the indices $i_1,i_2,i_3,j_1,j_2,j_3$ are assigned fixed values in $\{0,1,2\}$.
\end{prop}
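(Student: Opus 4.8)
The plan is to prove both relations by direct verification, exploiting the conservation property \eqref{conserve} to reduce each identity to a finite collection of scalar checks organized by total charge, exactly in the spirit of the six-vertex computation in Section \ref{ssec:6YB}. Because every nonzero weight $W_{y/x}(i,j;k,\ell)$ forces $i+j=k+\ell$, both the unitarity product $R_{ab}(y/x)R_{ba}(x/y)$ and each side of \eqref{eqpropybe2} are block-diagonal with respect to the relevant conserved quantity. This turns an a priori daunting $9\times 9$ (resp.\ $27\times 27$) matrix identity into a handful of small blocks.

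For unitarity, I would decompose ${\rm End}(V_a\otimes V_b)$ according to the total charge $i+j\in\{0,1,2,3,4\}$. The sectors of charge $0$ and $4$ are one-dimensional and the identity is immediate from $W_{y/x}(0,0;0,0)=W_{y/x}(2,2;2,2)=1$. The charge-$1$ and charge-$3$ sectors contribute $2\times 2$ blocks, and the charge-$2$ sector a single $3\times 3$ block spanned by $(0,2),(1,1),(2,0)$. In each case one reads the relevant entries off Figure \ref{fig:19weights} (with the roles of $x,y$ exchanged in the second factor) and checks that the product collapses to the identity; the only nontrivial rational cancellations occur in the charge-$2$ block.

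For the Yang--Baxter equation I would organize ${\rm End}(V_a\otimes V_b\otimes V_c)$ by the total charge $i_1+i_2+i_3\in\{0,\dots,6\}$ and verify the matrix identity separately in each sector. The particle--hole involution $i\mapsto 2-i$ on every edge is a symmetry of the weight table, so the charge-$n$ and charge-$(6-n)$ sectors are equivalent and it suffices to treat $n=0,1,2,3$. The extreme sectors are trivial or low-dimensional, while the crux is the charge-$3$ sector, which is seven-dimensional (spanned by the permutations of $(2,1,0)$ together with $(1,1,1)$) and whose entries are sums of triple products of the more complicated weights.

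The main obstacle is precisely the algebraic bulk of the middle sectors: each entry of the charge-$2$ and charge-$3$ blocks is a sum of several triple products of rational weights sharing the denominators $(x-q^2y)(x-q^3y)$ and their cyclic analogues, so establishing equality requires clearing denominators and matching polynomial numerators of fairly high degree. In practice these identities are most efficiently confirmed by symbolic computation, exactly as in the six-vertex case. Alternatively, one may observe that our $R$-matrix is obtained from the standard $\mathcal{U}_q(A^{(2)}_2)$ solution of \cite{Jimbo86b} by an overall spectral-parameter-dependent rescaling (chosen so that \eqref{sum-to-unity-intro} holds) together with a constant diagonal gauge transformation $R(y/x)\mapsto (D\otimes D)\,R(y/x)\,(D\otimes D)^{-1}$; a uniform scalar factor drops out of both sides of \eqref{eqpropybe2} and a constant diagonal conjugation manifestly preserves it, so the validity of \eqref{eqpropybe1} and \eqref{eqpropybe2} descends from the known solution. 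Either route reduces the claim to a finite, mechanical verification.
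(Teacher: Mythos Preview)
Your proposal is correct and matches the paper's approach: the paper's proof is simply ``Equations \eqref{eqpropybe1} and \eqref{eqpropybe2} can be verified by direct calculation,'' and your charge-sector decomposition (together with the optional gauge-equivalence argument) is a perfectly good way to organize that calculation. There is nothing more to add.
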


\begin{proof}
Equations $\eqref{eqpropybe1}$ and $\eqref{eqpropybe2}$ can be verified by direct calculation.
\end{proof}

\subsection{Dotted vertices}
\label{ssec:19dot}
For technical reasons, in the sequel we will also require a normalized version of the weights  $W_{y/x}(i,j;k,\ell)$; namely,
\begin{equation}
\label{eqcr19sym1}
\dotr{W}_{y/x}(i,j;k,\ell) 
= 
\dfrac{W_{y/x}(i,j;k,\ell)}
{W_{y/x}(0,2;0,2)}
=
\dfrac{(x-q^2y)(x-q^3y)}{(x-y)(x-qy)}
W_{y/x}(i,j;k,\ell).
\end{equation}
By analogy with the six-vertex model, we place dots in the center of each vertex $\eqref{eqdefn19w}$ to denote the normalized weight $\dotr{W}_{y/x}(i,j;k,\ell)$. 

It is clear that the normalized weights \eqref{eqcr19sym1} continue to satisfy the Yang--Baxter equation \eqref{eq19ybe1} as written. However, \eqref{eq19unitary1} is not scale-invariant and its right hand side needs to be appropriately normalized in order for the equation to be valid for dotted vertices. 
\begin{prop}
\label{nineteenflip1}
The vertices presented in equation $\eqref{eqcr19sym1}$ can be related to those of equation $\eqref{eqdefn19w}$ under $x \mapsto x^{-1}$ and $y \mapsto q^{-3}y^{-1}$, via the following symmetry:
\begin{equation}
\label{eqnineteencym1}
\dotr{W}_{y/x}(i,j;k,\ell) 
=
q^{2\bar{j}}
(q-1)^{\bm{1}_{k=1}-\bm{1}_{i=1}}
(-q)^{\bm{1}_{k=2}-\bm{1}_{i=2}}
W_{x/y/q^{3}}(k,\bar{j};i,\bar{\ell}) 
\quad \text{ for all } \quad i,j,k,\ell \in \{0,1,2\},
\end{equation}
where $\bar{j} = 2 - j$ and $\bar{\ell} = 2-\ell$. Pictorially, the vertices are related in the following way:
\begin{equation}
\label{eqnormandor1}
\raisebox{-20mm}{
\begin{tikzpicture}[scale = 0.8,every node/.style={scale=0.8}]
\draw[lgray,line width=1.5pt,->] (-1,0) -- (1,0);
\draw[lgray,line width=1.5pt,->] (0,-1) -- (0,1);
\node at (0,-1.25) {$i$};
\node at (0,1.25) {$k$};
\node at (-1.25,0) {$j$};
\node at (1.25,0) {$\ell$};
\node at (0,0) [circle,fill,inner sep=1.5pt] {};
\node at (-2.25,0) {$x$};
\draw[thick,->] (0,-2) -- (0,-1.45);
\draw[thick,->] (-2,0)  -- (-1.5,0);
\node[below] at (0,-2.05) {$y$};
\node[right] at (2,0) {$=$};
\draw[lgray,line width=1.5pt,->] (5,0) -- (7,0);
\draw[lgray,line width=1.5pt,->] (6,-1) -- (6,1);
\node at (6,-1.25) {$k$};
\node at (6,1.25) {$i$};
\node at (4.75,0) {$\bar{j}$};
\node at (7.25,0) {$\bar{\ell}$};
\node at (3.2,0) {$x^{-1}$};
\draw[thick,->] (6,-2) -- (6,-1.45);
\draw[thick,->] (3.7,0) -- (4.2,0);
\node[below] at (6,-2.05) {$q^{-3}y^{-1}$};
\node[scale = 1.25] at (11,0) {\qquad $\times\ \ q^{2\bar{j}}
(q-1)^{\bm{1}_{k=1}-\bm{1}_{i=1}}
(-q)^{\bm{1}_{k=2}-\bm{1}_{i=2}}.$};
\end{tikzpicture}}
\end{equation}
\end{prop}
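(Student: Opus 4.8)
The plan is to prove \eqref{eqnineteencym1} by direct verification over the finite set of nineteen admissible vertices, organized so as to keep the algebra manageable. First I would observe that the identity is trivially true whenever both sides vanish, and that this happens on the same set: by the conservation property \eqref{conserve}, the left side $\dotr{W}_{y/x}(i,j;k,\ell)$ is nonzero only if $i+j=k+\ell$, while the right side $W_{x/y/q^3}(k,\bar{j};i,\bar{\ell})$ is nonzero only if $k+\bar{j}=i+\bar{\ell}$, i.e.\ $k+(2-j)=i+(2-\ell)$, which is the same condition $i+j=k+\ell$. Hence it suffices to check \eqref{eqnineteencym1} on the nineteen configurations tabulated in Figure \ref{fig:19weights}.

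Next I would exploit homogeneity. Every weight in Figure \ref{fig:19weights} is a rational function of the single ratio $t:=y/x$, so I would rewrite both the normalized weight \eqref{eqcr19sym1} and each entry of Figure \ref{fig:19weights} as an explicit rational function of $t$. Under the substitution $x\mapsto x^{-1}$, $y\mapsto q^{-3}y^{-1}$ the ratio transforms as $t=y/x\mapsto x/(q^3 y)=q^{-3}t^{-1}$, which is precisely the value recorded by the subscript $x/y/q^3$. In these variables, \eqref{eqnineteencym1} becomes, for each admissible $(i,j;k,\ell)$, a one-variable identity of the schematic form
\begin{align*}
\frac{(1-q^2 t)(1-q^3 t)}{(1-t)(1-q t)}\,W(t)_{(i,j;k,\ell)}
=
q^{2\bar{j}}(q-1)^{\bm{1}_{k=1}-\bm{1}_{i=1}}(-q)^{\bm{1}_{k=2}-\bm{1}_{i=2}}\,W(q^{-3}t^{-1})_{(k,\bar{j};i,\bar{\ell})},
\end{align*}
which is then checked by clearing denominators and comparing polynomials in $t$.

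To cut down the bookkeeping I would record that the index map $(i,j;k,\ell)\mapsto(k,\bar{j};i,\bar{\ell})$ is an involution (since $2-(2-j)=j$ and $2-(2-\ell)=\ell$), as is the ratio map $t\mapsto q^{-3}t^{-1}$; hence the nineteen checks come in pairs, the self-paired cases being exactly $(i,1;i,1)$ for $i\in\{0,1,2\}$. This reduces the work to three fixed-point identities and eight paired ones, and simultaneously supplies a consistency test: applying the relation to a vertex and then to its image must compose to the identity, which pins down the prefactors. I would verify the flux-extreme sectors $i+j\in\{0,4\}$ first (these collapse onto the diagonal weights $W_{y/x}(0,2;0,2)$ and $W_{y/x}(2,0;2,0)$, exactly as in the six-vertex warm-up of Proposition \ref{flipsix1} and \eqref{eqnormandor61}), then the sector-$1$ and sector-$3$ vertices, which mirror the six-vertex computation with $q\mapsto q^2$.

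The main obstacle is the middle flux sector $i+j=2$, which contains the genuinely quadratic weights such as $W_{y/x}(1,1;1,1)$, $W_{y/x}(2,0;0,2)$ and $W_{y/x}(0,2;2,0)$: unlike the six-vertex case, where all polynomial weights are linear in the spectral parameters, here one must verify nontrivial rational identities involving the two- and three-term numerators of Figure \ref{fig:19weights}. A secondary source of care is the prefactor $(q-1)^{\bm{1}_{k=1}-\bm{1}_{i=1}}(-q)^{\bm{1}_{k=2}-\bm{1}_{i=2}}$, whose exponents are nonzero precisely on the off-diagonal vertices with $i\neq k$ (for instance $(2,0;1,1)$ and $(1,1;2,0)$); correctly tracking these signs and powers of $q$ across each paired identity is where errors are most likely to arise, and is the step I would cross-check using the involution consistency noted above.
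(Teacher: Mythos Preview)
Your proposal is correct and takes essentially the same approach as the paper: the paper's proof is a single sentence stating that \eqref{eqnineteencym1} ``can be checked directly on each individual vertex.'' Your organization (reducing to the ratio $t=y/x$, using the involution to pair vertices, and proceeding by flux sector) is more detailed than what the paper provides, but the underlying method is identical.
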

\begin{proof}
Equation $\eqref{eqnineteencym1}$ can be checked directly on each individual vertex. 
\end{proof}

\section{Row operators and the Yang--Baxter algebra}
\label{sec:IKrow}

This section contains the fundamental algebraic tools that underpin most of the calculations in this chapter. In Section \ref{ssec:IK-finite-row} we introduce row-operators in finite size, and list some of their commutation relations. In Section \ref{ssec:IK-infinite-row} we study the limit of these operators when the length of the row tends to infinity, and examine what happens to the associated commutation relations.

\subsection{Row operators and commutation relations}
\label{ssec:IK-finite-row}

We begin by introducing the vector space $\mathbb{V}^{(L)}$, which is obtained by taking an $L$-fold tensor product of local spaces $V_i \cong \mathbb{C}^3$:
\begin{equation}
\label{IKVL}
\mathbb{V}^{(L)} = V_1 \otimes V_2 \otimes \cdots \otimes V_L.
\end{equation}
In the partition functions that we subsequently consider, each vector space $V_i$ will correspond to a vertical lattice line; as such, the natural operators for acting on $\mathbb{V}^{(L)}$ are obtained by considering rows of vertices of length $L$ within the model \eqref{eqdefn19w}. In particular, for all $a,b \in \{0,1,2\}$, we define the {\it finite} row operator $\mathcal{T}_{a,b}(x;\bm{z}) \in {\rm End}(\mathbb{V}^{(L)})$ as follows:
\begin{align}
\mathcal{T}_{a,b}(x;\bm{z}): \bigotimes_{k = 1}^{L} \ket{i_k}_{k} 
&\mapsto \sum_{(j_1,\dots,j_L) 
\in \{0,1,2\}^L}  \left(\raisebox{-15mm}{\smash{
\begin{tikzpicture}[scale = 0.8,every node/.style={scale=0.8}]
\node at (-1.8,0) (D1) {$x$};
\node at (-1,0) (D) {};
\draw[lgray,line width=1.5pt] (-1,0) -- (3,0);
\draw[lgray,line width=1.5pt,->] (4.2,0) -- (7.2,0);
\draw[<-,lgray,line width=1.5pt] (1,1) -- (1,-1);
\draw[<-,lgray,line width=1.5pt] (0,1) -- (0,-1);
\draw[<-,lgray,line width=1.5pt] (2,1) -- (2,-1);
\draw[<-,lgray,line width=1.5pt] (5.2,1) -- (5.2,-1);
\draw[<-,lgray,line width=1.5pt] (6.2,1) -- (6.2,-1);
\node at (-1,-0.25) {$a$};
\node[right] at (3.3,0) {$\dots$};
\node at (0,1.25) {$i_1$};
\node at (1,1.25) {$i_2$};
\node at (2,1.25) {$\dots$};
\node at (5.2,1.25) {$\dots$};
\node at (6.2,1.25) {$i_L$};
\node at (0,-1.25) (A) {$j_1$};
\node at (1,-1.25) (B) {$j_2$};
\node at (2,-1.25) {$\dots$};
\node at (5.2,-1.25) {$\dots$};
\node at (6.2,-1.25) (C) {$j_L$};
\node at (7.2,-0.25) {$b$};
\node at (0,-2.25) (A1) {$z_1$};
\node at (1,-2.25) (B1) {$z_2$};
\node at (2,-2.25)  {$\dots$};
\node at (6.2,-2.25) (C1) {$z_L$};
\draw[->,thick] (A1) -- (A);
\draw[->,thick] (B1) -- (B);
\draw[->,thick] (C1) -- (C);
\draw[->,thick] (D1) -- (D);
\end{tikzpicture}}}\right)
\bigotimes_{k = 1}^{L} 
\ket{j_k}_{k}, 
\label{eqrowopIK}
\end{align}
where $(i_1,\dots,i_L) \in \{0,1,2\}^L$ selects a fixed, arbitrary vector in $\mathbb{V}^{(L)}$. The row operators $\mathcal{T}_{a,b}(x;\bm{z})$ depend on both the horizontal spectral parameter $x$ and the vertical ones, $\bm{z} = (z_1,z_2,\dots,z_L)$. However, in practice, the dependence on $\bm{z}$ will have little interest for us, and we usually suppress it from our notations by writing $\mathcal{T}_{a,b}(x;\bm{z}) \equiv \mathcal{T}_{a,b}(x)$.

We shall also require a dotted version of the row operators \eqref{eqrowopIK}, denoted $\dotr{\mathcal{T}}_{a,b}(x;\bm{z})$. These are obtained by simply replacing each vertex in \eqref{eqrowopIK} by its dotted counterpart \eqref{eqcr19sym1}:
\begin{align}
\dotr{\mathcal{T}}_{a,b}(x;\bm{z}): \bigotimes_{k = 1}^{L} \ket{i_k}_{k} 
&\mapsto \sum_{(j_1,\dots,j_L) 
\in \{0,1,2\}^L}  \left(\raisebox{-15mm}{\smash{
\begin{tikzpicture}[scale = 0.8,every node/.style={scale=0.8}]
\node at (-1.8,0) (D1) {$x$};
\node at (-1,0) (D) {};
\draw[lgray,line width=1.5pt] (-1,0) -- (3,0);
\draw[lgray,line width=1.5pt,->] (4.2,0) -- (7.2,0);
\draw[<-,lgray,line width=1.5pt] (1,1) -- (1,-1);
\draw[<-,lgray,line width=1.5pt] (0,1) -- (0,-1);
\draw[<-,lgray,line width=1.5pt] (2,1) -- (2,-1);
\draw[<-,lgray,line width=1.5pt] (5.2,1) -- (5.2,-1);
\draw[<-,lgray,line width=1.5pt] (6.2,1) -- (6.2,-1);
\node at (-1,-0.25) {$a$};
\node[right] at (3.3,0) {$\dots$};
\node at (0,1.25) {$i_1$};
\node at (1,1.25) {$i_2$};
\node at (2,1.25) {$\dots$};
\node at (5.2,1.25) {$\dots$};
\node at (6.2,1.25) {$i_L$};
\node at (0,-1.25) (A) {$j_1$};
\node at (1,-1.25) (B) {$j_2$};
\node at (2,-1.25) {$\dots$};
\node at (5.2,-1.25) {$\dots$};
\node at (6.2,-1.25) (C) {$j_L$};
\node at (7.2,-0.25) {$b$};
\node at (0,-2.25) (A1) {$z_1$};
\node at (1,-2.25) (B1) {$z_2$};
\node at (2,-2.25)  {$\dots$};
\node at (6.2,-2.25) (C1) {$z_L$};
\draw[->,thick] (A1) -- (A);
\draw[->,thick] (B1) -- (B);
\draw[->,thick] (C1) -- (C);
\draw[->,thick] (D1) -- (D);
\node at (0,0) [circle,fill,inner sep=1.5pt] {};
\node at (1,0) [circle,fill,inner sep=1.5pt] {};
\node at (2,0) [circle,fill,inner sep=1.5pt] {};
\node at (5.2,0) [circle,fill,inner sep=1.5pt] {};
\node at (6.2,0) [circle,fill,inner sep=1.5pt] {};
\end{tikzpicture}}}\right)
\bigotimes_{k = 1}^{L} 
\ket{j_k}_{k}. 
\label{eqrowopIKdot}
\end{align}

\begin{rmk}
The operators \eqref{eqrowopIK} correspond with the nine entries of the {\it monodromy matrix} of the Izergin--Korepin model, and satisfy a collection of $3^4$ bilinear relations known as the {\it Yang--Baxter algebra}. Only a few of these relations will be important for our purposes: namely, the ones that allow us to prove the symmetry of our families of multivariate functions, and their Cauchy identities. These will be discussed below.
\end{rmk}

\begin{prop}
\label{prop:Tcommute}
For any two integers $a,b \in \{0,2\}$, the operators $\mathcal{T}_{a,b}(x)$ and $\mathcal{T}_{a,b}(y)$ commute:
\begin{align}
\label{IK-Tcommute}
[\mathcal{T}_{a,b}(x),\mathcal{T}_{a,b}(y)]=0.
\end{align}
\end{prop}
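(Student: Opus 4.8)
The plan is to follow the Yang--Baxter argument of Proposition \ref{prop:6-Tcommute} essentially verbatim, since its structure carries over to the nineteen-vertex setting with no change apart from one crucial observation about which boundary values are admissible. First I would fix arbitrary indices $(i_1,\dots,i_L),(j_1,\dots,j_L)\in\{0,1,2\}^L$ and represent the matrix element $\bigotimes_{k} \bra{j_k}_{k}\, \mathcal{T}_{a,b}(x)\mathcal{T}_{a,b}(y)\,\bigotimes_{k} \ket{i_k}_{k}$ as a two-row partition function, in which both external left edges carry the value $a$ and both external right edges carry the value $b$. Because these operators are of finite length $L$, no convergence issues arise at this stage.

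The key first move is to glue an $R$-matrix vertex onto the left of this two-row lattice. As in the six-vertex case, conservation of paths forces the two internal edges joining the inserted vertex to the lattice to take the value $a$, so that this vertex contributes precisely the corner weight $W_{y/x}(a,a;a,a)$. The whole argument hinges on this weight being equal to $1$, and this is exactly where the restriction $a\in\{0,2\}$ enters: from the explicit evaluations recorded with Figure \ref{fig:19weights} we have $W_{y/x}(0,0;0,0)=1$ and $W_{y/x}(2,2;2,2)=1$, so the insertion does not alter the value of the partition function.

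Next I would push this $R$-matrix through the entire two-row lattice by repeated application of the Yang--Baxter equation \eqref{eq19ybe1}, one column at a time; the only difference from the six-vertex computation is that the internal summation now ranges over $\{0,1,2\}$ rather than $\{0,1\}$, which is immaterial to the argument. Once the vertex has emerged on the right, conservation again freezes its two connecting edges to the common value $b$, so it contributes the weight $W_{y/x}(b,b;b,b)$, which equals $1$ for $b\in\{0,2\}$ by the same explicit evaluations; deleting it is therefore free. The net effect is the interchange of the two horizontal lines, yielding the matrix element of $\mathcal{T}_{a,b}(y)\mathcal{T}_{a,b}(x)$. Since the external indices were arbitrary, \eqref{IK-Tcommute} follows.

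The only genuine subtlety --- and the reason the statement is restricted to $a,b\in\{0,2\}$ rather than all of $\{0,1,2\}$ --- is that the insertion/deletion trick requires the relevant corner weight to be trivial. For the middle value $a=1$ the weight $W_{y/x}(1,1;1,1)$ is a nontrivial rational function (see Figure \ref{fig:19weights}), so neither the initial gluing nor the final removal of the $R$-matrix is weight-preserving, and the argument genuinely breaks down. Thus the main obstacle here is conceptual rather than computational: one must recognize that only the two extreme spin values produce frozen corners, which is precisely what singles out the index set $\{0,2\}$.
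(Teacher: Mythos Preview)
Your proof is correct and follows essentially the same Yang--Baxter railroad argument as the paper: insert an $R$-matrix on the left (free because $W_{y/x}(a,a;a,a)=1$ for $a\in\{0,2\}$), push it through via \eqref{eq19ybe1}, and remove it on the right (free because $W_{y/x}(b,b;b,b)=1$ for $b\in\{0,2\}$). One minor nuance in your closing remark: the obstruction at $a=1$ is not merely that $W_{y/x}(1,1;1,1)\neq 1$, but more fundamentally that conservation alone does not freeze the connecting edges to $(1,1)$ --- the pairs $(0,2)$ and $(2,0)$ are also consistent with $i+j=2$ --- so the inserted vertex would produce a genuine sum rather than a single frozen term.
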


\begin{proof}
This follows from a standard Yang--Baxter argument. One begins by considering arbitrary matrix elements of the product $\mathcal{T}_{a,b}(x)\mathcal{T}_{a,b}(y)$:
\begin{align*}
\bigotimes_{k = 1}^{L} \bra{j_k}_{k}
\mathcal{T}_{a,b}(x)\mathcal{T}_{a,b}(y) 
\bigotimes_{k = 1}^{L} \ket{i_k}_{k}
=
\raisebox{-12mm}{\begin{tikzpicture}[scale=0.6,every node/.style={scale=0.6}]
\node at (2,1) {$x$};
\node at (2,2) {$y$};
\foreach \i in {1,2} {
\draw[->,thick] (2.25,\i) -- (3,\i);
}
\foreach \i in {4} {
\draw[lgray,line width=1.5pt,->] (\i,0) -- (\i,3);
}
\foreach \i in {1,2} {
\draw[lgray,line width=1.5pt] (3.5,\i) -- (4.5,\i);
}
\foreach \i in {1,2} {
\node at (5,\i) {$\dots$};
}
\foreach \i in {6,7} {
\draw[lgray,line width=1.5pt,->] (\i,0) -- (\i,3);
}
\foreach \i in {1,2} {
\draw[lgray,line width=1.5pt,->] (5.5,\i) -- (7.5,\i);
}
\node at (3.3,1) {$a$};
\node at (3.3,2) {$a$};
\node[right] at (7.5,1) {$b$};
\node[right] at (7.5,2) {$b$};
\node[above] at (4,3) {$i_1$};
\node[above] at (6,3) {$\dots$};
\node[above] at (7,3) {$i_L$};
\node[below] at (4,0) {$j_1$};
\node[below] at (6,0) {$\dots$};
\node[below] at (7,0) {$j_L$};
\end{tikzpicture}}
\end{align*}
In view of the fact that both of the external left edges take the value $a \in \{0,2\}$, one may insert an $R$-matrix vertex at the left of this picture, without altering the value of the partition function:
\begin{align}
\label{eq:insertR-left}
\bigotimes_{k = 1}^{L} \bra{j_k}_{k}
\mathcal{T}_{a,b}(x)\mathcal{T}_{a,b}(y) 
\bigotimes_{k = 1}^{L} \ket{i_k}_{k}
=
\raisebox{-12mm}{\begin{tikzpicture}[scale=0.6,every node/.style={scale=0.6}]
\begin{scope}[shift = {(-1.2,0)}]
\node at (2,1) {$y$};
\node at (2,2) {$x$};
\foreach \i in {1,2} {
\draw[->,thick] (2.25,\i) -- (3,\i);
}
\end{scope}
\draw[lgray,line width=1.5pt] (2.5,2) -- (3.5,1);
\draw[lgray,line width=1.5pt] (2.5,1) -- (3.5,2);
\foreach \i in {4} {
\draw[lgray,line width=1.5pt,->] (\i,0) -- (\i,3);
}
\foreach \i in {1,2} {
\draw[lgray,line width=1.5pt] (3.5,\i) -- (4.5,\i);
}
\foreach \i in {1,2} {
\node at (5,\i) {$\dots$};
}
\foreach \i in {6,7} {
\draw[lgray,line width=1.5pt,->] (\i,0) -- (\i,3);
}
\foreach \i in {1,2} {
\draw[lgray,line width=1.5pt,->] (5.5,\i) -- (7.5,\i);
}
\node[left] at (2.3,1) {$a$};
\node[left] at (2.3,2) {$a$};
\node[right] at (7.5,1) {$b$};
\node[right] at (7.5,2) {$b$};
\node[above] at (4,3) {$i_1$};
\node[above] at (6,3) {$\dots$};
\node[above] at (7,3) {$i_L$};
\node[below] at (4,0) {$j_1$};
\node[below] at (6,0) {$\dots$};
\node[below] at (7,0) {$j_L$};
\node at (3.5,1) {$*$};
\node at (3.5,2) {$*$};
\end{tikzpicture}}
\end{align}
Indeed, this step is justified by the fact that both of the edges marked $*$ are forced to assume the value $a$ (by conservation of paths), and the fact that $W_{y/x}(a,a;a,a)=1$ for $a \in \{0,2\}$; {\it cf.} the top and bottom rows of Figure \ref{fig:19weights}. From here, we use the Yang--Baxter equation \eqref{eq19ybe1} repeatedly to transfer the inserted $R$-matrix all the way to the right edges of the lattice:
\begin{align*}
\bigotimes_{k = 1}^{L} \bra{j_k}_{k}
\mathcal{T}_{a,b}(x)\mathcal{T}_{a,b}(y) 
\bigotimes_{k = 1}^{L} \ket{i_k}_{k}
=
\raisebox{-12mm}{\begin{tikzpicture}
[scale=0.6,every node/.style={scale=0.6}]
\begin{scope}[shift = {(0,0)}]
\node at (2,1) {$y$};
\node at (2,2) {$x$};
\foreach \i in {1,2} {
\draw[->,thick] (2.25,\i) -- (3,\i);
}
\end{scope}
\draw[lgray,line width=1.5pt,->] (7.5,2) -- (8.5,1);
\draw[lgray,line width=1.5pt,->] (7.5,1) -- (8.5,2);
\foreach \i in {4} {
\draw[lgray,line width=1.5pt,->] (\i,0) -- (\i,3);
}
\foreach \i in {1,2} {
\draw[lgray,line width=1.5pt] (3.5,\i) -- (4.5,\i);
}
\foreach \i in {1,2} {
\node at (5,\i) {$\dots$};
}
\foreach \i in {6,7} {
\draw[lgray,line width=1.5pt,->] (\i,0) -- (\i,3);
}
\foreach \i in {1,2} {
\draw[lgray,line width=1.5pt] (5.5,\i) -- (7.5,\i);
}
\node[left] at (3.5,1) {$a$};
\node[left] at (3.5,2) {$a$};
\node[right] at (8.5,1) {$b$};
\node[right] at (8.5,2) {$b$};
\node[above] at (4,3) {$i_1$};
\node[above] at (6,3) {$\dots$};
\node[above] at (7,3) {$i_L$};
\node[below] at (4,0) {$j_1$};
\node[below] at (6,0) {$\dots$};
\node[below] at (7,0) {$j_L$};
\node at (7.5,1) {$*$};
\node at (7.5,2) {$*$};
\end{tikzpicture}}
\end{align*}
One concludes by deleting the $R$-matrix from the right of the picture; the justification for doing so is again the fact that the edges marked $*$ are frozen to the values $b$, and the trivial weight $W_{y/x}(b,b;b,b)=1$ for $b \in \{0,2\}$. The net result of this calculation is the switching of horizontal lattice lines; we conclude that
\begin{align*}
\bigotimes_{k = 1}^{L} \bra{j_k}_{k}
\mathcal{T}_{a,b}(x)\mathcal{T}_{a,b}(y) 
\bigotimes_{k = 1}^{L} \ket{i_k}_{k}
=
\bigotimes_{k = 1}^{L} \bra{j_k}_{k}
\mathcal{T}_{a,b}(y)\mathcal{T}_{a,b}(x) 
\bigotimes_{k = 1}^{L} \ket{i_k}_{k}
\end{align*}
for all $(i_1,\dots,i_L), (j_1,\dots,j_L) \in \{0,1,2\}^L$, which is precisely \eqref{IK-Tcommute}.
\end{proof}

\begin{rmk}
It is worth mentioning that Proposition \ref{prop:Tcommute} cannot be extended to the situation where $a,b$ take the value $1$. The point where the reasoning breaks is that in the Izergin--Korepin model, a vertex whose incoming states both assume the value $1$ is {\it not} forced to have outgoing states both equal to $1$; see, in particular, the middle row of Figure \ref{fig:19weights}. This lack of freezing prevents one from freely attaching/detaching an $R$-matrix vertex as in the previous argument.
\end{rmk}

\begin{prop}
\label{prop:IK-commute-nontrivial}
The following commutation relation holds:
\begin{multline}
\label{eqrowrel2}
\mathcal{T}_{2,2}(x) \mathcal{T}_{2,0}(y)
=
\\
W_{y/x}(0,2;0,2)
\mathcal{T}_{2,0}(y) \mathcal{T}_{2,2}(x)
+ 
W_{y/x}(1,1;0,2)
\mathcal{T}_{2,1}(y) \mathcal{T}_{2,1}(x)
+ 
W_{y/x}(2,0;0,2)
\mathcal{T}_{2,2}(y) \mathcal{T}_{2,0}(x).
\end{multline}
\end{prop}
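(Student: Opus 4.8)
The plan is to mirror exactly the Yang--Baxter argument of Proposition \ref{prop:IK-commute-nontrivial}'s six-vertex counterpart (Proposition \ref{prop:6-commute-nontrivial}), adapting it to the three-state setting. First I would write the matrix element $\bigotimes_k \bra{j_k}_k \mathcal{T}_{2,2}(x)\mathcal{T}_{2,0}(y)\bigotimes_k\ket{i_k}_k$ as a two-row partition function, with left boundary states $(2,2)$ reading top-to-bottom and right boundary states $(2,0)$. Since $W_{y/x}(2,2;2,2)=1$ (the bottom row of Figure \ref{fig:19weights}), I may freely insert an $R$-matrix vertex $R(y/x)$ at the left edge: the two edges connecting it to the lattice are frozen to the value $2$ by path conservation, exactly as in \eqref{eq:insertR-left}, and this insertion leaves the partition function unchanged.

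Next I would propagate this inserted vertex rightwards through the two-row lattice by repeated application of the Yang--Baxter equation \eqref{eq19ybe1}. The crucial difference from the $\mathcal{T}_{a,b}$-commutation case is at the right edge: the outgoing horizontal states are $2$ on the top row and $0$ on the bottom, so when the $R$-vertex emerges on the right it is \emph{not} frozen. Rather, one must sum over the internal states on the two edges joining the emergent $R$-vertex to the lattice. By conservation, these two edges $(c,d)$ must satisfy $c+d=2$, giving exactly three channels: $(c,d)\in\{(0,2),(1,1),(2,0)\}$. These three channels produce, respectively, the three terms on the right hand side of \eqref{eqrowrel2}, with coefficients $W_{y/x}(0,2;0,2)$, $W_{y/x}(1,1;0,2)$, $W_{y/x}(2,0;0,2)$ read directly off the emergent vertex according to the symbolic convention \eqref{eqdefn19w}. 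Reading off which row operator carries which spectral parameter in each channel yields the stated products $\mathcal{T}_{2,0}(y)\mathcal{T}_{2,2}(x)$, $\mathcal{T}_{2,1}(y)\mathcal{T}_{2,1}(x)$, and $\mathcal{T}_{2,2}(y)\mathcal{T}_{2,0}(x)$.

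The main obstacle, and the genuine point of novelty relative to the six-vertex derivation, is the appearance of the \emph{three-term} sum: the middle channel $(c,d)=(1,1)$ has no six-vertex analogue, since in that model $c+d=1$ admits only two channels. One must verify carefully that the emergent $R$-vertex in the $(1,1)$ channel correctly reproduces the row operators $\mathcal{T}_{2,1}(y)\mathcal{T}_{2,1}(x)$ (with both carrying value $1$ on the relevant external edge), and that the weight attached is precisely $W_{y/x}(1,1;0,2)$ rather than any other middle-row weight of Figure \ref{fig:19weights}. This is a bookkeeping issue about orienting the emergent vertex and matching the clockwise-read arguments $(i,j;k,\ell)$ correctly; the relevant weight is the vertex with $i=1,j=1,k=0,\ell=2$, which sits in the central block of the weight table. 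Once the three channels are correctly identified, the identity follows for all $(i_1,\dots,i_L),(j_1,\dots,j_L)\in\{0,1,2\}^L$, establishing \eqref{eqrowrel2} at the operator level.
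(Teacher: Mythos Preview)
Your proposal is correct and follows essentially the same approach as the paper's proof: insert an $R$-matrix at the left using $W_{y/x}(2,2;2,2)=1$, transfer it to the right via repeated application of \eqref{eq19ybe1}, and then sum over the three channels $(c,d)\in\{(0,2),(1,1),(2,0)\}$ at the emergent vertex. The paper presents precisely this argument (with the pictorial identity \eqref{eqybepic2}), and your identification of the three-term structure as the main point of departure from the six-vertex case is exactly right.
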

\begin{proof}
The proof of $\eqref{eqrowrel2}$ follows by similar reasoning to that of Proposition \ref{prop:Tcommute}. One again begins by considering the matrix elements of the product $\mathcal{T}_{2,2}(x) \mathcal{T}_{2,0}(y)$, represented as a two-row partition function, and inserting an $R$-matrix vertex at the left of the lattice (using the fact that $W_{y/x}(2,2;2,2)=1$), similarly to \eqref{eq:insertR-left}. This time, after repeated application of the Yang--Baxter equation, the $R$-matrix vertex emerges from the right of the picture but it cannot simply be removed modulo freezing. Rather, one must sum over all possible values of the edges which connect this vertex to the lattice, resulting in the equation
\begin{multline}
\label{eqybepic2}
\raisebox{-10mm}{\begin{tikzpicture}[scale=0.6,every node/.style={scale=0.6}]
\foreach \i in {24} {
\draw[lgray,line width=1.5pt,->] (\i,0) -- (\i,3);
}
\foreach \i in {1,2} {
\draw[lgray,line width=1.5pt] (23.5,\i) -- (24.5,\i);
}
\foreach \i in {1,2} {
\node at (25,\i) {$\dots$}; 
}
\foreach \i in {26,27} {
\draw[lgray,line width=1.5pt,->] (\i,0) -- (\i,3);
}
\foreach \i in {1,2} {
\draw[lgray,line width=1.5pt,->] (25.5,\i) -- (27.5,\i);
}
\node at (24,-0.2) {$j_1$};
\node at (26,-0.2) {$\dots$};
\node at (27,-0.2) {$j_L$};
\node at (24,3.2) {$i_1$};
\node at (26,3.2) {$\dots$};
\node at (27,3.2) {$i_L$};
\node at (21,2) {$x$};
\node at (21,1) {$y$};
\foreach \i in {1,2} {
\draw[thick,->] (21.3,\i) -- (22,\i);
}
\draw[lgray,line width=1.5pt] (22.5,2) -- (23.5,1);
\draw[lgray,line width=1.5pt] (22.5,1) -- (23.5,2);
\node at (22.3,1) {$2$};
\node at (22.3,2) {$2$};
\node at (27.7,2) {$0$};
\node at (27.7,1) {$2$};
\end{tikzpicture}}
\quad
=
\\
\raisebox{-8mm}{\begin{tikzpicture}[scale=0.6,every node/.style={scale=0.6}]
\begin{scope}[shift = {(18,-4)}]
\foreach \i in {10.5} {
\draw[lgray,line width=1.5pt,->] (\i,0) -- (\i,3);
}
\node at (10.5,3.2) {$i_1$};
\node at (12.5,3.2) {$\dots$};
\node at (13.5,3.2) {$i_L$};
\node at (10.5,-0.2) {$j_1$};
\node at (12.5,-0.2) {$\dots$};
\node at (13.5,-0.2) {$j_L$};
\node at (9,1) {$y$};
\node at (9,2) {$x$};
\foreach \i in {1,2} {
\draw[->,thick] (9.2,\i) -- (9.6,\i);
}
\foreach \i in {1,2} {
\draw[lgray,line width=1.5pt] (10,\i) -- (11,\i);
}
\foreach \i in {1,2} {
\node at (11.5,\i) {$\dots$};
} 
\foreach \i in {12.5,13.5} {
\draw[lgray,line width=1.5pt,->] (\i,0) -- (\i,3);
} 
\foreach \i in {1,2} {
\draw[lgray,line width=1.5pt] (12,\i) -- (14,\i);
}
\draw[lgray,line width=1.5pt,->] (14,2) -- (15,1);
\draw[lgray,line width=1.5pt,->] (14,1) -- (15,2);
\node at (9.8,1) {$2$};
\node at (9.8,2) {$2$};
\node at (15.2,1) {$2$};
\node at (15.2,2) {$0$};
\node at (14,0.8) {$0$};
\node at (14,2.2) {$2$};
\end{scope}
\begin{scope}[shift = {(25.5,-4)}]
\node at (8.35,1.5) {$+$};
\foreach \i in {10.5} {
\draw[lgray,line width=1.5pt,->] (\i,0) -- (\i,3);
}
\node at (10.5,3.2) {$i_1$};
\node at (12.5,3.2) {$\dots$};
\node at (13.5,3.2) {$i_L$};
\node at (10.5,-0.2) {$j_1$};
\node at (12.5,-0.2) {$\dots$};
\node at (13.5,-0.2) {$j_L$};
\node at (9,1) {$y$};
\node at (9,2) {$x$};
\foreach \i in {1,2} {
\draw[->,thick] (9.2,\i) -- (9.6,\i);
}
\foreach \i in {1,2} {
\draw[lgray,line width=1.5pt] (10,\i) -- (11,\i);
}
\foreach \i in {1,2} {
\node at (11.5,\i) {$\dots$};
} 
\foreach \i in {12.5,13.5} {
\draw[lgray,line width=1.5pt,->] (\i,0) -- (\i,3);
} 
\foreach \i in {1,2} {
\draw[lgray,line width=1.5pt] (12,\i) -- (14,\i);
}
\draw[lgray,line width=1.5pt,->] (14,2) -- (15,1);
\draw[lgray,line width=1.5pt,->] (14,1) -- (15,2);
\node at (9.8,1) {$2$};
\node at (9.8,2) {$2$};
\node at (15.2,1) {$2$};
\node at (15.2,2) {$0$};
\node at (14,0.8) {$1$};
\node at (14,2.2) {$1$};
\end{scope}
\begin{scope}[shift = {(33,-4)}]
\node at (8.35,1.5) {$+$};
\foreach \i in {10.5} {
\draw[lgray,line width=1.5pt,->] (\i,0) -- (\i,3);
}
\node at (10.5,3.2) {$i_1$};
\node at (12.5,3.2) {$\dots$};
\node at (13.5,3.2) {$i_L$};
\node at (10.5,-0.2) {$j_1$};
\node at (12.5,-0.2) {$\dots$};
\node at (13.5,-0.2) {$j_L$};
\node at (9,1) {$y$};
\node at (9,2) {$x$};
\foreach \i in {1,2} {
\draw[->,thick] (9.2,\i) -- (9.6,\i);
}
\foreach \i in {1,2} {
\draw[lgray,line width=1.5pt] (10,\i) -- (11,\i);
}
\foreach \i in {1,2} {
\node at (11.5,\i) {$\dots$};
} 
\foreach \i in {12.5,13.5} {
\draw[lgray,line width=1.5pt,->] (\i,0) -- (\i,3);
} 
\foreach \i in {1,2} {
\draw[lgray,line width=1.5pt] (12,\i) -- (14,\i);
}
\draw[lgray,line width=1.5pt,->] (14,2) -- (15,1);
\draw[lgray,line width=1.5pt,->] (14,1) -- (15,2);
\node at (9.8,1) {$2$};
\node at (9.8,2) {$2$};
\node at (15.2,1) {$2$};
\node at (15.2,2) {$0$};
\node at (14,0.8) {$2$};
\node at (14,2.2) {$0$};
\end{scope}
\end{tikzpicture}}
\end{multline}
which is valid for all $(i_1,\dots,i_L), (j_1,\dots,j_L) \in \{0,1,2\}^L$. Matching the vertices which appear in this identity with their symbolic form \eqref{eqdefn19w}, we read off \eqref{eqrowrel2}.
\end{proof}

\subsection{Row operators in the infinite-volume limit}
\label{ssec:IK-infinite-row}

Our main interest in this work is not the vector space $\mathbb{V}^{(L)}$, but rather a suitable completion of it obtained by taking $L \rightarrow \infty$.\footnote{Equivalently, we shall work with partition functions with infinitely many vertical lines in the rightward direction.} To that end, we define
\begin{align}
\label{IKinfV}
\mathbb{V}
=
\bigoplus_{S \in \mathfrak{s}(2)} \mathbb{C} \ket{S},
\end{align}
where we recall the set $\mathfrak{s}(2)$ of $2$-strings given in Definition \ref{def:n-string}. 
We refer to $\mathbb{V}$ as the \textit{infinite-volume limit} of the space $\mathbb{V}^{(L)}$.

Some care is needed to extend the action of the finite row-operators \eqref{eqrowopIK} and \eqref{eqrowopIKdot} to the infinite volume; indeed, one may encounter convergence issues that are not present in finite size. We shall not attempt to address this issue for all of the possible choices of row-operators, but only for the subset of them that is needed in this work. This is done in the following proposition:
\begin{prop}
\label{prop:IK-ACD-define}
The objects
\begin{align*}
\mathcal{A}(x)
:=
\lim_{L \rightarrow \infty}
\mathcal{T}_{0,0}(x),
\qquad
\mathcal{C}(x)
:=
\lim_{L \rightarrow \infty}
\mathcal{T}_{2,0}(x),
\qquad
\dotr{\mathcal{D}}(x)
:=
\lim_{L \rightarrow \infty}
\dotr{\mathcal{T}}_{2,2}(x),
\end{align*}
are well-defined as operators in ${\rm End}(\mathbb{V})$.
\end{prop}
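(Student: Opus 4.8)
The plan is to mimic the six-vertex argument of Proposition \ref{prop:6-ACD-define} verbatim: for each of the three operators I would fix finitely supported $S,T \in \mathfrak{s}(2)$ and show that the matrix element $\bra{T}\,\cdot\,\ket{S}$ is computed by a one-row partition function whose weight is a \emph{finite} product of nontrivial vertices, with every sufficiently far column frozen to weight $1$. Once this is established the matrix element is manifestly independent of $L$ for $L$ large, so the $L \to \infty$ limit exists (indeed it stabilizes), and this defines a genuine operator in ${\rm End}(\mathbb{V})$.

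First I would write the matrix elements of $\mathcal{A}(x) = \lim_{L} \mathcal{T}_{0,0}(x)$ and $\mathcal{C}(x) = \lim_{L}\mathcal{T}_{2,0}(x)$ using \eqref{eqrowopIK}: a single horizontal line carrying left boundary state $a \in \{0,2\}$ and right boundary state $b = 0$, with top edges $i_k = S_k$ and bottom edges $j_k = T_k$. Since $S$ and $T$ are $2$-strings, I pick $N$ exceeding both of their supports, so that $i_k = j_k = 0$ for all $k > N$. Applying the conservation rule \eqref{conserve} column by column then forces the horizontal state to be constant across all columns $k > N$, and the right boundary condition $b=0$ pins this constant value to $0$. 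Hence every tail vertex is of the form $W_{z_k/x}(0,0;0,0) = 1$ (top and bottom rows of Figure \ref{fig:19weights}), and the partition function reduces to a finite product over the first $N$ columns, identical for every $L > N$.

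The case $\dotr{\mathcal{D}}(x) = \lim_{L} \dotr{\mathcal{T}}_{2,2}(x)$ is the same argument run with dotted vertices and boundary values $a = b = 2$. Here the frozen tail consists of vertices $\dotr{W}_{z_k/x}(0,2;0,2)$, and the entire purpose of the normalization \eqref{eqcr19sym1} is that $\dotr{W}_{y/x}(0,2;0,2) = W_{y/x}(0,2;0,2)/W_{y/x}(0,2;0,2) = 1$. Thus the dotted tail again freezes to weight $1$, the matrix element is a finite product, and the limit exists. I expect this to be the point worth stating explicitly, as it is the place where the otherwise cosmetic dotted normalization does real work.

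The step requiring the most care — and the reason the statement is restricted to $a,b \in \{0,2\}$ rather than allowing the value $1$ — is precisely the freezing of the tail. Were one to allow a horizontal boundary value of $1$, the tail would become an infinite product of vertices $W_{z_k/x}(0,1;0,1) = (x - z_k)/(x - q^2 z_k) \neq 1$ (the middle block of Figure \ref{fig:19weights}), which neither terminates nor telescopes, so no naive stabilization occurs; this is the same obstruction already flagged in the remark following Proposition \ref{prop:Tcommute}. For the three operators at hand this never happens: the right boundary value is $0$ for $\mathcal{A},\mathcal{C}$ and $2$ for $\dotr{\mathcal{D}}$, and in each case the corresponding frozen vertex ($W(0,0;0,0)$ or $\dotr{W}(0,2;0,2)$) carries weight exactly $1$. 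I would conclude by remarking that the single-path ($b=1$) sectors are exactly the ones never needed in this work, so no further convergence analysis is required.
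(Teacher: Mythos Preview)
Your proposal is correct and follows essentially the same approach as the paper's own proof: freeze the tail beyond the common support of $S$ and $T$ using conservation, and observe that the resulting tail vertices are $W_{z_k/x}(0,0;0,0)=1$ for $\mathcal{A},\mathcal{C}$ and $\dotr{W}_{z_k/x}(0,2;0,2)=1$ for $\dotr{\mathcal{D}}$. Your additional remarks on why the horizontal boundary value $1$ is excluded are accurate and a nice complement, though the paper does not spell this out in its proof.
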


\begin{proof}
The matrix elements of $\mathcal{A}(x)$ and $\mathcal{C}(x)$ are represented by one-row partition functions of the form
\[
\begin{tikzpicture}[scale = 0.8,every node/.style={scale=0.8}]
\node at (-2,0) (D1) {$x$};
\node at (-1.2,0) (D) {};
\draw[lgray,line width=1.5pt,->] (-1,0) -- (7.2,0);
\foreach \i in {0,1,2,3,4,5,6} {
\draw[<-,lgray,line width=1.5pt] (\i,1) -- (\i,-1);
}
\node at (-1.1,0) {$a$};
\node at (0,1.25) {$i_1$};
\node at (1,1.25) {$i_2$};
\node at (2,1.25) {$\dots$};
\node at (3,1.25) {$i_{N}$};
\node at (4,1.25) {$0$};
\node at (5,1.25) {$0$};
\node at (6,1.25) {$\dots$};
\node at (0,-1.25) (A) {$j_1$};
\node at (1,-1.25) (B) {$j_2$};
\foreach \i in {2,6} {
\node at (\i,-1.25) {$\dots$};
}
\node at (3,-1.25) {$j_{N}$};
\node at (4,-1.25) {$0$};
\node at (5,-1.25) {$0$};
\node at (6,-1.25) (C) {$\dots$};
\node[right] at (7.2,0) {$0$};
\node at (0,-2.25) (A1) {$z_1$};
\node at (1,-2.25) (B1) {$z_2$};
\node at (3,-2.25) {$z_N$};
\node at (4,-2.25) {$z_{N+1}$};
\node at (2,-2.25)  {$\dots$};
\draw[->,thick] (3,-2) -- (3,-1.5);
\draw[->,thick] (4,-2) -- (4,-1.5);
\draw[->,thick] (A1) -- (A);
\draw[->,thick] (B1) -- (B);
\draw[->,thick] (D1) -- (D);
\foreach \i in {3.5,4.5,5.5} {
\node at (\i,0) {$0$};
}
\end{tikzpicture}
\]
where $a \in \{0,2\}$. By virtue of the fact that $\exists\ N: i_k = j_k=0$ for all $k>N$, it is clear that all vertices in this diagram past the $N$-th column must take the form $W_{z_k/x}(0,0;0,0)=1$. As such, viewed as operators in ${\rm End}(\mathbb{V})$, the matrix entries of $\mathcal{A}(x)$ and $\mathcal{C}(x)$ consist only of finite products of rational functions.

A similar argument works for the matrix elements of $\dotr{\mathcal{D}}(x)$, which are represented by one-row partition functions using dotted vertices:
\[
\begin{tikzpicture}[scale = 0.8,every node/.style={scale=0.8}]
\node at (-2,0) (D1) {$x$};
\node at (-1.2,0) (D) {};
\draw[lgray,line width=1.5pt,->] (-1,0) -- (7.2,0);
\foreach \i in {0,1,2,3,4,5,6} {
\draw[<-,lgray,line width=1.5pt] (\i,1) -- (\i,-1);
}
\node at (-1.1,0) {$2$};
\node at (0,1.25) {$i_1$};
\node at (1,1.25) {$i_2$};
\node at (2,1.25) {$\dots$};
\node at (3,1.25) {$i_{N}$};
\node at (4,1.25) {$0$};
\node at (5,1.25) {$0$};
\node at (6,1.25) {$\dots$};
\node at (0,-1.25) (A) {$j_1$};
\node at (1,-1.25) (B) {$j_2$};
\foreach \i in {2,6} {
\node at (\i,-1.25) {$\dots$};
}
\node at (3,-1.25) {$j_{N}$};
\node at (4,-1.25) {$0$};
\node at (5,-1.25) {$0$};
\node at (6,-1.25) (C) {$\dots$};
\node[right] at (7.2,0) {$2$};
\node at (0,-2.25) (A1) {$z_1$};
\node at (1,-2.25) (B1) {$z_2$};
\node at (3,-2.25) {$z_N$};
\node at (4,-2.25) {$z_{N+1}$};
\node at (2,-2.25)  {$\dots$};
\draw[->,thick] (3,-2) -- (3,-1.5);
\draw[->,thick] (4,-2) -- (4,-1.5);
\draw[->,thick] (A1) -- (A);
\draw[->,thick] (B1) -- (B);
\draw[->,thick] (D1) -- (D);
\foreach \i in {3.5,4.5,5.5} {
\node at (\i,0) {$2$};
}
\foreach \i in {0,1,2,3,4,5,6} {
\node at (\i,0) [circle,fill,inner sep=1.5pt] {};
}
\end{tikzpicture}
\]
This time, $\exists\ N$ such that all vertices past the $N$-th column must take the form $\dotr{W}_{z_k/x}(0,2;0,2)=1$, and we once again conclude that entries of $\dotr{\mathcal{D}}(x)$ are meaningful.
\end{proof}

\begin{prop}
Operators $\mathcal{A}(x)$, $\mathcal{C}(x)$, $\dotr{\mathcal{D}}(x)$ satisfy the following commutation relations in ${\rm End}(\mathbb{V})$:
\begin{align}
\label{IK-ACD}
[\mathcal{A}(x),\mathcal{A}(y)]
=
[\mathcal{C}(x),\mathcal{C}(y)]
=
[\dotr{\mathcal{D}}(x),\dotr{\mathcal{D}}(y)]
=
0.
\end{align}
\end{prop}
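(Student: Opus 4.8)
The plan is to obtain all three relations in \eqref{IK-ACD} as the infinite-volume images of the finite-volume commutation relations already recorded in Proposition \ref{prop:Tcommute}, which gives $[\mathcal{T}_{a,b}(x),\mathcal{T}_{a,b}(y)]=0$ for every $a,b\in\{0,2\}$. Specializing to $(a,b)=(0,0)$ and $(a,b)=(2,0)$ yields $[\mathcal{T}_{0,0}(x),\mathcal{T}_{0,0}(y)]=0$ and $[\mathcal{T}_{2,0}(x),\mathcal{T}_{2,0}(y)]=0$ in $\mathrm{End}(\mathbb{V}^{(L)})$; these are exactly the finite-volume statements underlying $[\mathcal{A}(x),\mathcal{A}(y)]=0$ and $[\mathcal{C}(x),\mathcal{C}(y)]=0$.

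For the third relation I would first reduce the dotted commutator to an undotted one. By the definition \eqref{eqcr19sym1}, the dotted row operator differs from its undotted counterpart only by an overall scalar, $\dotr{\mathcal{T}}_{2,2}(x;\bm{z})=\Big(\prod_{k=1}^{L} W_{z_k/x}(0,2;0,2)\Big)^{-1}\mathcal{T}_{2,2}(x;\bm{z})$, where the prefactor is a $c$-number depending only on $x$ and $\bm{z}$. Since scalars commute with all operators, the $(a,b)=(2,2)$ case of Proposition \ref{prop:Tcommute} immediately gives $[\dotr{\mathcal{T}}_{2,2}(x),\dotr{\mathcal{T}}_{2,2}(y)]=0$ in finite volume.

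It then remains to argue that these finite-volume identities survive the limit $L\to\infty$, which is where the only real content lies. Here I would check that each of the two-row products $\mathcal{A}(x)\mathcal{A}(y)$, $\mathcal{C}(x)\mathcal{C}(y)$ and $\dotr{\mathcal{D}}(x)\dotr{\mathcal{D}}(y)$ has well-defined matrix elements on $\mathbb{V}$, by the same freezing mechanism as in Proposition \ref{prop:IK-ACD-define}. Fixing states $S,T\in\mathfrak{s}(2)$ with support in the first $N$ columns and applying conservation of paths \eqref{conserve} to the sublattice of columns $1,\dots,N$, one finds that the horizontal flux crossing into column $N+1$ vanishes for $\mathcal{A}$ and $\mathcal{C}$, so the entire tail freezes into vertices $W_{z_k/x}(0,0;0,0)=1$; whereas for $\dotr{\mathcal{D}}$ the tail carries two horizontal paths per row and freezes into $\dotr{W}_{z_k/x}(0,2;0,2)=1$, which is precisely the reason the dotted normalization is adopted. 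In every case the two-row partition function stabilizes to a finite product for all $L\geq N$, so the matrix elements of the products are independent of $L$ once $L\geq N$ and coincide with their finite-volume values; the finite-volume commutation relations established above therefore pass verbatim to $\mathrm{End}(\mathbb{V})$, giving \eqref{IK-ACD}.

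I anticipate that the main obstacle is not the algebra but exactly this stabilization step: one must confirm that the intermediate (summed-over) configurations cannot leak paths arbitrarily far to the right, which is guaranteed by the conservation property \eqref{conserve} together with the weight-$1$ freezing of the tails, and which for $\dotr{\mathcal{D}}$ relies essentially on the normalization \eqref{eqcr19sym1}.
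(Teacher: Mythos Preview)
Your proposal is correct and follows essentially the same approach as the paper: the paper's proof also invokes the $(a,b)=(0,0)$, $(2,0)$, and (normalized) $(2,2)$ cases of Proposition~\ref{prop:Tcommute} and then appeals to Proposition~\ref{prop:IK-ACD-define} for the passage to infinite volume. Your treatment is simply more explicit about the scalar relation between $\dotr{\mathcal{T}}_{2,2}$ and $\mathcal{T}_{2,2}$ and about the freezing of two-row tails, whereas the paper dispatches both points in a single sentence.
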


\begin{proof}
The relations \eqref{IK-ACD} are a direct transcription of the $(a,b)=(0,0)$, $(a,b)=(2,0)$ cases of \eqref{IK-Tcommute}, as well as a normalized version of the $(a,b)=(2,2)$ case. All of these relations survive the transition to infinite volume, in view of Proposition \ref{prop:IK-ACD-define}.
\end{proof}

\begin{prop}
\label{propforCauchyIK1}
Fix $q,x,y,z_k \in \mathbb{C}$ such that for all $k \in \mathbb{Z}_{>0}$, the following conditions hold:
\begin{align}
\label{eqres1}
&
\begin{tikzpicture}[baseline={(0,0)}]
\node[scale = 0.8] at (-1.5,0) {$x$};
\node[scale = 0.8] at (-0.1,-1.2) {$z_k$};
\draw[thick,->] (-1.3,0) -- (-0.9,0);
\draw[thick,<-] (-0.1,-0.75) -- (-0.1,-1.05);
\draw[thick] (-1.7,-0.8) -- (-1.7,0.7);
\draw[->,lgray,line width=1.5pt] (-0.6,0) -- (0.4,0);
\draw[->,lgray,line width=1.5pt] (-0.1,-0.5) -- (-0.1,0.5);
\node[scale = 0.7] at (-0.7,0) {$1$};
\node[scale = 0.7] at (-0.1,-0.6) {$0$};
\node[scale = 0.7] at (-0.1,0.65) {$0$};
\node[scale = 0.7] at (0.5,0) {$1$};
\node at (-0.1,0) [circle,fill,inner sep=1.5pt] {};
\node at (0.75,0) {$\cdot$};
\begin{scope}[shift = {(2.5,0)}]
\node[scale = 0.8] at (-1.5,0) {$y$};
\node[scale = 0.8] at (-0.1,-1.2) {$z_k$};
\draw[thick,->] (-1.3,0) -- (-0.9,0);
\draw[thick,<-] (-0.1,-0.75) -- (-0.1,-1.05);
\draw[thick] (0.8,-0.8) -- (0.8,0.7);
\draw[->,lgray,line width=1.5pt] (-0.6,0) -- (0.4,0);
\draw[->,lgray,line width=1.5pt] (-0.1,-0.5) -- (-0.1,0.5);
\node[scale = 0.7] at (-0.7,0) {$1$};
\node[scale = 0.7] at (-0.1,-0.6) {$0$};
\node[scale = 0.7] at (-0.1,0.65) {$0$};
\node[scale = 0.7] at (0.5,0) {$1$};
\end{scope}
\end{tikzpicture}
=
\bigg| \frac{(x-q^3z_k)}{(x-qz_k)} \cdot \frac{(y-z_k)}{(y-q^{2}z_k)}\bigg|
<
\epsilon
<1,
\\
\label{eqres2}
&
\begin{tikzpicture}[baseline=(current bounding box.center)]
\node[scale = 0.8] at (-1.5,0) {$x$};
\node[scale = 0.8] at (-0.1,-1.2) {$z_k$};
\draw[thick,->] (-1.3,0) -- (-0.9,0);
\draw[thick,<-] (-0.1,-0.75) -- (-0.1,-1.05);
\draw[thick] (-1.7,-0.8) -- (-1.7,0.7);
\draw[->,lgray,line width=1.5pt] (-0.6,0) -- (0.4,0);
\draw[->,lgray,line width=1.5pt] (-0.1,-0.5) -- (-0.1,0.5);
\node[scale = 0.7] at (-0.7,0) {$0$};
\node[scale = 0.7] at (-0.1,-0.6) {$0$};
\node[scale = 0.7] at (-0.1,0.65) {$0$};
\node[scale = 0.7] at (0.5,0) {$0$};
\node at (-0.1,0) [circle,fill,inner sep=1.5pt] {};
\node at (0.75,0) {$\cdot$};
\begin{scope}[shift = {(2.5,0)}]
\node[scale = 0.8] at (-1.5,0) {$y$};
\node[scale = 0.8] at (-0.1,-1.2) {$z_k$};
\draw[thick,->] (-1.3,0) -- (-0.9,0);
\draw[thick,<-] (-0.1,-0.75) -- (-0.1,-1.05);
\draw[thick] (0.7,-0.8) -- (0.7,0.7);
\draw[->,lgray,line width=1.5pt] (-0.6,0) -- (0.4,0);
\draw[->,lgray,line width=1.5pt] (-0.1,-0.5) -- (-0.1,0.5);
\node[scale = 0.7] at (-0.7,0) {$2$};
\node[scale = 0.7] at (-0.1,-0.6) {$0$};
\node[scale = 0.7] at (-0.1,0.65) {$0$};
\node[scale = 0.7] at (0.5,0) {$2$};
\end{scope}
\end{tikzpicture}
=
\bigg| \frac{(x-q^2 z_k)(x-q^3z_k)}{(x-z_k)(x-q z_k)} \cdot \frac{(y-z_k)(y-qz_k)}{(y-q^2z_k)(y-q^3z_k)}\bigg|
<
\epsilon
<1,
\end{align}
for some constant $\epsilon \in \mathbb{R}$ which is independent of $k$. One then has the following commutation relation in ${\rm End}(\mathbb{V})$:
\begin{equation}
\label{eqspexchange1}
\dotr{\mathcal{D}}(x)\mathcal{C}(y)
=
\frac{(x-y)(x-qy)}{(x-q^2y)(x-q^3y)}
\mathcal{C}(y)\dotr{\mathcal{D}}(x).
\end{equation}
\end{prop}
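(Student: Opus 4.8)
The plan is to mirror the proof of the six-vertex relation \eqref{6-eqspexchange1} in Proposition \ref{propforCauchyIK1}'s six-vertex predecessor, but starting from the finite-volume commutation relation \eqref{eqrowrel2} of Proposition \ref{prop:IK-commute-nontrivial} in place of \eqref{6-eqrowrel2}. First I would divide \eqref{eqrowrel2} through by $\prod_{i=1}^{L} W_{z_i/x}(0,2;0,2)$, which by the definition \eqref{eqcr19sym1} converts every row operator carrying the spectral parameter $x$ into its dotted counterpart, while leaving the $y$-operators and the scalar coefficients untouched:
\[
\dotr{\mathcal{T}}_{2,2}(x)\mathcal{T}_{2,0}(y)
=
W_{y/x}(0,2;0,2)\,\mathcal{T}_{2,0}(y)\dotr{\mathcal{T}}_{2,2}(x)
+
W_{y/x}(1,1;0,2)\,\mathcal{T}_{2,1}(y)\dotr{\mathcal{T}}_{2,1}(x)
+
W_{y/x}(2,0;0,2)\,\mathcal{T}_{2,2}(y)\dotr{\mathcal{T}}_{2,0}(x).
\]
As $L\to\infty$, Proposition \ref{prop:IK-ACD-define} guarantees that the left-hand side converges to $\dotr{\mathcal{D}}(x)\mathcal{C}(y)$ and that the first term on the right converges to $W_{y/x}(0,2;0,2)\,\mathcal{C}(y)\dotr{\mathcal{D}}(x)$; since $W_{y/x}(0,2;0,2)=\frac{(x-y)(x-qy)}{(x-q^2y)(x-q^3y)}$, this already produces the coefficient claimed in \eqref{eqspexchange1}. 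It therefore remains only to show that the matrix elements of the second and third terms vanish in the limit.

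To handle these two terms I would examine their matrix elements $\bra{T}\cdots\ket{S}$ between fixed states $S,T\in\mathfrak{s}(2)$ of finite support, drawn as two-row partition functions, and read off the frozen vertices occurring in all sufficiently-far-right columns. For $\mathcal{T}_{2,2}(y)\dotr{\mathcal{T}}_{2,0}(x)$ the right boundaries $2$ (top, $y$) and $0$ (bottom, $x$) force every far-right column to contribute the product $\dotr{W}_{z_k/x}(0,0;0,0)\,W_{z_k/y}(0,2;0,2)$, which is exactly the quantity bounded by $\epsilon<1$ in \eqref{eqres2}; for $\mathcal{T}_{2,1}(y)\dotr{\mathcal{T}}_{2,1}(x)$ the common right boundary $1$ forces the column product $\dotr{W}_{z_k/x}(0,1;0,1)\,W_{z_k/y}(0,1;0,1)$, which is precisely the quantity bounded in \eqref{eqres1}. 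The crucial structural observation is that, because the lattice paths are monotone and move only up or right, the single path that must transfer from the bottom row to the top row in either configuration can do so only in a column no further right than the support of $T$; consequently the active portion of every surviving configuration is confined to a bounded region independent of $L$, and each configuration acquires a tail of infinitely many frozen columns whose weights are uniformly $<\epsilon<1$. Since only finitely many configurations populate the bounded region, each weight carries an infinite product of factors bounded by $\epsilon$ and therefore tends to zero, forcing the whole matrix element to zero.

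The main obstacle is precisely this vanishing argument, and it is more delicate than in the six-vertex setting: there one discards a single unwanted term using the lone condition \eqref{6-eqres2}, whereas here two terms must be eliminated, governed by the two distinct conditions \eqref{eqres1} and \eqref{eqres2}. I would take particular care to verify that each condition is matched to the correct term as indicated above, and that the bounded-active-region reasoning genuinely applies term by term --- in particular that conservation of paths, together with the fixed right boundaries, produces exactly the claimed frozen column weights and does not permit a path to leak arbitrarily far to the right before settling. Once both terms are shown to vanish, rearranging the limiting identity yields \eqref{eqspexchange1} directly.
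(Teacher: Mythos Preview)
Your overall strategy matches the paper's exactly: divide \eqref{eqrowrel2} by $\prod_i W_{z_i/x}(0,2;0,2)$ to obtain the dotted relation \eqref{IK-pre-limit}, then show that the two unwanted terms have vanishing matrix elements as $L\to\infty$. The surviving term already gives \eqref{eqspexchange1} with the stated coefficient $W_{y/x}(0,2;0,2)$.

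The gap is in your vanishing argument for the term $\mathcal{T}_{2,1}(y)\dotr{\mathcal{T}}_{2,1}(x)$. Your claim that the active region is confined to the support of $S,T$ --- because ``the single path that must transfer from the bottom row to the top row \ldots\ can do so only in a column no further right than the support of $T$'' --- does not hold. Write $a_k$ for the horizontal state of the dotted $x$-row and $b_k$ for that of the $y$-row. In columns $k>N$ (where $i_k=j_k=0$) conservation gives $a_k=a_{k-1}+m_k$ and $b_k=b_{k-1}-m_k$, so $(a_k,b_k)$ can sit at $(0,2)$ for an arbitrary stretch and then jump to $(1,1)$ via $m_k=1$ at any column; nothing pins this transition to the finite support of $S$ or $T$. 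Consequently it is not true that every far-right column contributes precisely the pair $\dotr{W}_{z_k/x}(0,1;0,1)\,W_{z_k/y}(0,1;0,1)$, and your one-to-one matching of the two unwanted terms with the two conditions \eqref{eqres1},\eqref{eqres2} is too clean.

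The paper's argument avoids this by not insisting on a bounded active region. It observes instead that in both pictures the columns beyond $N$ produce vertex pairs of the form \eqref{eqres1} \emph{or} \eqref{eqres2} (both types can occur within the $\mathcal{T}_{2,1}(y)\dotr{\mathcal{T}}_{2,1}(x)$ picture, with at most one transitional column of a different type). Since each such pair is uniformly bounded by $\epsilon<1$ and infinitely many of them occur in every configuration, the matrix elements tend to zero. If you repair your argument along these lines --- allowing both conditions to play a role in the $(1,1)$ term and dropping the bounded-region claim --- your proof becomes the paper's.
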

\begin{proof}
Since it will be important in what follows, we note here the version of \eqref{eqrowrel2} obtained by dividing through by $\prod_{i=1}^{L} W_{z_i/x}(0,2;0,2)$. This converts all row operators carrying spectral parameter $x$ into their dotted counterparts:
\begin{multline}
\label{IK-pre-limit}
\dotr{\mathcal{T}}_{2,2}(x) \mathcal{T}_{2,0}(y)
=
\\
W_{y/x}(0,2;0,2)
\mathcal{T}_{2,0}(y) \dotr{\mathcal{T}}_{2,2}(x)
+ 
W_{y/x}(1,1;0,2)
\mathcal{T}_{2,1}(y) \dotr{\mathcal{T}}_{2,1}(x)
+ 
W_{y/x}(2,0;0,2)
\mathcal{T}_{2,2}(y) \dotr{\mathcal{T}}_{2,0}(x).
\end{multline}
We argue that in the limit $L \rightarrow \infty$, only the left hand side and the first term on the right hand side of \eqref{IK-pre-limit} have non-vanishing matrix elements when evaluated on $\mathbb{V}$: in fact, it is clear that these terms do indeed produce \eqref{eqspexchange1} under the limit in question. Our attention then shifts to showing that matrix elements of the second and third terms on the right hand side vanish as $L \rightarrow \infty$.

Examining the matrix elements of $\mathcal{T}_{2,1}(y) \dotr{\mathcal{T}}_{2,1}(x)$ and $\mathcal{T}_{2,2}(y) \dotr{\mathcal{T}}_{2,0}(x)$ as $L \rightarrow \infty$, one finds that they are given by partition functions of the form
\begin{equation*}
\begin{tikzpicture}[scale=0.64,every node/.style={scale=0.7},baseline={([yshift=-0.5ex]current bounding box.center)}]
\draw[lgray,line width=1.5pt,->] (0,1) -- (8,1);
\draw[lgray,line width=1.5pt,->] (0,0) -- (8,0);
\foreach \i in {1,2,3,4,5,6,7} {
\draw[lgray,line width=1.5pt,<-] (\i,2) -- (\i,-1);
}
\foreach \i in {1,2} {
\draw[->,thick] (\i,-2.2) -- (\i,-1.5);
}
\foreach \i in {4,5} {
\draw[->,thick] (\i,-2.2) -- (\i,-1.5);
}
\foreach \i in {1,2}
{
\node at (\i,-2.5) {$z_{\i}$};
}
\node at (3,-2.5) {$\dots$};
\node at (4,-2.5) {$z_{N-1}$};
\node at (5,-2.5) {$z_{N}$};
\node at (-0.25,0) {$2$};
\node at (-0.25,1) {$2$};
\node at (8.2,1) {$1$};
\node at (8.2,0) {$1$};
\node at (1,2.2) {$i_1$};
\node at (2,2.2) {$i_2$};
\node at (1,-1.2) {$j_1$};
\node at (2,-1.2) {$j_2$};
\node at (3,-1.2) {$\dots$};
\node at (4,-1.2) {$j_{N-1}$};
\node at (5,-1.2) {$j_{N}$};
\foreach \i in {7} {
\node at (\i,-1.2) {$\dots$};
}
\node at (6,-1.2) {$0$};
\node at (3,2.2) {$\dots$};
\node at (4,2.2) {$i_{N-1}$};
\node at (5,2.2) {$i_{N}$};
\foreach \i in {7} {
\node at (\i,2.2) {$\dots$};
}
\node at (6,2.2) {$0$};
\node at (-1.5,0) {$y$};
\node at (-1.5,1) {$x$};
\draw[thick,->] (-1.25,0) -- (-0.5,0);
\draw[thick,->] (-1.25,1) -- (-0.5,1);
\foreach \i in {1,2,3,4,5,6,7} {
\node at (\i,1) [circle,fill,inner sep=1.5pt] {};
}
\begin{scope}[shift = {(11,0)}]
\draw[lgray,line width=1.5pt,->] (0,1) -- (8,1);
\draw[lgray,line width=1.5pt,->] (0,0) -- (8,0);
\foreach \i in {1,2,3,4,5,6,7} {
\draw[lgray,line width=1.5pt,<-] (\i,2) -- (\i,-1);
}
\foreach \i in {1,2} {
\draw[->,thick] (\i,-2.2) -- (\i,-1.5);
}
\foreach \i in {4,5} {
\draw[->,thick] (\i,-2.2) -- (\i,-1.5);
}
\foreach \i in {1,2}
{
\node at (\i,-2.5) {$z_{\i}$};
}
\node at (3,-2.5) {$\dots$};
\node at (4,-2.5) {$z_{N-1}$};
\node at (5,-2.5) {$z_{N}$};
\node at (-0.25,0) {$2$};
\node at (-0.25,1) {$2$};
\node at (8.2,1) {$0$};
\node at (8.2,0) {$2$};
\node at (1,2.2) {$i_1$};
\node at (2,2.2) {$i_2$};
\node at (1,-1.2) {$j_1$};
\node at (2,-1.2) {$j_2$};
\node at (3,-1.2) {$\dots$};
\node at (4,-1.2) {$j_{N-1}$};
\node at (5,-1.2) {$j_{N}$};
\foreach \i in {7} {
\node at (\i,-1.2) {$\dots$};
}
\node at (6,-1.2) {$0$};
\node at (3,2.2) {$\dots$};
\node at (4,2.2) {$i_{N-1}$};
\node at (5,2.2) {$i_{N}$};
\foreach \i in {7} {
\node at (\i,2.2) {$\dots$};
}
\node at (6,2.2) {$0$};
\node at (-1.5,0) {$y$};
\node at (-1.5,1) {$x$};
\draw[thick,->] (-1.25,0) -- (-0.5,0);
\draw[thick,->] (-1.25,1) -- (-0.5,1);
\foreach \i in {1,2,3,4,5,6,7} {
\node at (\i,1) [circle,fill,inner sep=1.5pt] {};
}
\end{scope}
\end{tikzpicture}
\end{equation*}
where as previously, $\exists\ N: i_k = j_k = 0$ for all $k>N$. It is then not difficult to show that, in both pictures, all columns beyond the $N$-th one will produce pairs of vertices either of the form \eqref{eqres1} or \eqref{eqres2}. Since we assume that the absolute values of these quantities are uniformly bounded below $1$, and they occur infinitely often, one finds that both of the above partition functions vanish.
\end{proof}

\section{Rational symmetric functions from the nineteen-vertex model}
\label{sec:IKrational}

The goal of this section is to introduce families of multivariate rational functions, derived as partition functions in the Izergin--Korepin model. Our approach is closely inspired by that of Section \ref{sec:6v-functions} for the six-vertex model.
Section \ref{ssec:IKF} contains the definition of the multivariate rational function $F_S$ and Section \ref{ssec:IKG} introduces its partner, the multivariate rational function $G_S$. Finally, in Section \ref{ssec:IK-symmetry}, we prove that both functions are symmetric in their primary alphabet.

\subsection{Multivariate rational function $F_{S}$}
\label{ssec:IKF}

\begin{defn}
Fix $N \in \mathbb{Z}_{>0}$ and let $S \in \mathfrak{s}(2)$ be a $2$-string such that $|S|=2N$. Fix also two alphabets $(x_1,\dots,x_N)$, $\bm{z} = (z_1,z_2,\dots)$. We define the multivariate rational function
\begin{equation}
\label{eqspsymIK2}
F_{S}(x_1,\dots,x_N;\bm{z}) = \bra{\varnothing} \mathcal{C}(x_1;\bm{z})\cdots\mathcal{C}(x_N;\bm{z}) \ket{S},
\end{equation}
where the row operators 
$\mathcal{C}(x_i;\bm{z})$ are given by Proposition \ref{prop:IK-ACD-define}, while
\begin{equation*}
\ket{S} = \bigotimes_{k = 1}^{\infty} \ket{S_{k}}_{k}
\quad \text{ and } \quad 
\bra{\varnothing} = \bigotimes_{k = 1}^{\infty} \bra{0}_k
\end{equation*}
denote states in $\mathbb{V}$ and its dual $\mathbb{V}^{*}$. We refer to $(x_1,\dots,x_N)$ and $(z_1,z_2,\dots)$ as the primary and secondary alphabets of $F_S$.
\end{defn}

Translating \eqref{eqspsymIK2} into its partition function form, one has that
\begin{equation}
\label{pictureofrational1}
F_S(x_1,\dots,x_N;\bm{z})
=
\tikz{0.7}{1.5cm}{
\foreach \y in {1,...,4}
\draw[lgray,line width=1.5pt,->] (0,\y) -- (10,\y);
\foreach \y in {1,...,4}
\node[left] at (0,\y) {2};
\foreach \y in {1,...,4}
\node[right] at (10,\y) {0};
\foreach \x in {1,...,9}
\draw[lgray,line width=1.5pt,->] (\x,0) -- (\x,5);
\node[left] at (-1.2,1) {$x_1\rightarrow$};
\node[left] at (-1.2,2.2) {$\vdots$};
\node[left] at (-1.2,3.2) {$\vdots$};
\node[left] at (-1.2,4) {$x_N\rightarrow$};
\node[above] at (1,5) {$S_1$};
\node[above] at (2,5) {$S_2$};
\node[above] at (3,5) {$S_3$};
\node[above] at (4,5) {$\cdots$};
\node[above] at (5,5) {$\cdots$};
\node[below] at (1,0) {0};
\node[below] at (2,0) {0};
\node[below] at (3,0) {0};
\node[below] at (4,-0.2) {$\cdots$};
\node[below] at (5,-0.2) {$\cdots$};
\node[below] at (1,-0.7) {$\uparrow$};
\node[below] at (2,-0.7) {$\uparrow$};
\node[below] at (3,-0.7) {$\uparrow$};
\node[below] at (1,-1.4) {$z_1$};
\node[below] at (2,-1.4) {$z_2$};
\node[below] at (3,-1.4) {$z_3$};
\node[below] at (4,-1.4) {$\cdots$};
\node[below] at (5,-1.4) {$\cdots$};
}
\end{equation}
where all bottom incoming and right outgoing edges are assigned the state $0$. Every left incoming edge is assigned the state $2$, while top outgoing edges correspond with the entries of $S$. The object \eqref{pictureofrational1} is to be interpreted as a partition function in the statistical mechanical sense: one works out all possible labelling of internal lattice edges and assigns a weight to each such configuration, which is the product of all individual vertices \eqref{eqdefn19w} that comprise it. Summing these weights across all possible configurations yields the rational function $F_S(x_1,\dots,x_N;\bm{z})$.

\begin{rmk}
A key difference between the rational functions $\eqref{pictureofrational1}$ and their six-vertex counterparts $\eqref{picofsixf1}$ lies in the alteration of the states on the left of the lattice: these all assume the value $2$, rather than $1$ as in the six-vertex case. Paths continue to enter from the left and exit towards the top of the lattice; however, in the Izergin--Korepin model there is increased freedom, as pairs of paths can now split and recombine multiple times under the weights $W_{z/x}(0,2;1,1)$, $W_{z/x}(2,0;1,1)$, $W_{z/x}(1,1;0,2)$, $W_{z/x}(1,1;2,0)$. As such, the functions $F_S(x_1,\dots,x_N;\bm{z})$ contain many more non-trivial configurations than their six-vertex cousins.
\end{rmk}

\begin{rmk}
\label{rmk:IK-truncate}
Since the index of $F_S(x_1,\dots,x_N;\bm{z})$ is a $2$-string, there exists an integer $M$ such that $S_k=0$ for all $k>M$. It is then straightforward to see that all columns to the right of column $M$ freeze in the partition function \eqref{pictureofrational1}: they are comprised solely of vertices $W_{z_k/x_i}(0,0;0,0)$ which have weight $1$. In view of this fact, it is often convenient to identify the function \eqref{pictureofrational1} with the following finite-lattice version:
\begin{align}
\label{eq:IK-F-truncate}
F_S(x_1,\dots,x_N;\bm{z})
=
\tikz{0.6}{1.5cm}{
\foreach \y in {1,...,4}
\draw[lgray,line width=1.5pt,->] (0,\y) -- (8,\y);
\foreach \y in {1,...,4}
\node[left] at (0,\y) {$2$};
\foreach \y in {1,...,4}
\node[right] at (8,\y) {$0$};
\foreach \x in {1,...,7}
\draw[lgray,line width=1.5pt,->] (\x,0) -- (\x,5);
\node[left] at (-0.5,1) {$x_1\rightarrow$};
\node[left] at (-0.5,2) {$\vdots$};
\node[left] at (-0.5,3) {$\vdots$};
\node[left] at (-0.5,4) {$x_N \rightarrow$};
\node[above] at (1,5) {$S_1$};
\node[above] at (2,5) {$S_2$};
\node[above] at (3,5) {$\cdots$};
\node[above] at (4,5) {$\cdots$};
\node[above] at (5,5) {$\cdots$};
\node[above] at (7,5) {$S_M$};
\node[below] at (1,0) {$0$};
\node[below] at (2,0) {$0$};
\node[below] at (3,0) {$\cdots$};
\node[below] at (4,0) {$\cdots$};
\node[below] at (5,0) {$\cdots$};
\node[below] at (7,0) {$0$};
\node[below] at (1,-0.7) {$\uparrow$};
\node[below] at (2,-0.7) {$\uparrow$};
\node[below] at (7,-0.7) {$\uparrow$};
\node[below] at (1,-1.4) {$z_1$};
\node[below] at (2,-1.4) {$z_2$};
\node[below] at (3,-1.4) {$\cdots$};
\node[below] at (4,-1.4) {$\cdots$};
\node[below] at (5,-1.4) {$\cdots$};
\node[below] at (7,-1.4) {$z_M$};
}
\end{align}
and we write $S \equiv (S_1,\dots,S_M)$. In what follows, we will pass freely between the two different formulations \eqref{pictureofrational1} and \eqref{eq:IK-F-truncate}. 
\end{rmk}

\begin{ex}
\label{ex:IKF}
Sample lattice configuration of \eqref{pictureofrational1} when $N=4$, $S=(1,2,0,1,1,2,1,0,0,\dots)$: 
\begin{align*}
\tikz{0.6}{1.5cm}{
\foreach \y in {1,...,4}
\draw[lgray,line width=1.5pt] (0,\y) -- (8,\y);
\foreach \y in {1,...,4}
\node[left] at (0,\y) {$2$};
\foreach \y in {1,...,4}
\node[right] at (8,\y) {$0$};
\foreach \x in {1,...,7}
\draw[lgray,line width=1.5pt] (\x,0) -- (\x,5);
\node[left] at (-0.5,1) {$x_1\rightarrow$};
\node[left] at (-0.5,2) {$x_2 \rightarrow$};
\node[left] at (-0.5,3) {$x_3 \rightarrow$};
\node[left] at (-0.5,4) {$x_4 \rightarrow$};
\node[above] at (1,5) {$1$};
\node[above] at (2,5) {$2$};
\node[above] at (3,5) {$0$};
\node[above] at (4,5) {$1$};
\node[above] at (5,5) {$1$};
\node[above] at (6,5) {$2$};
\node[above] at (7,5) {$1$};
\node[below] at (1,0) {$0$};
\node[below] at (2,0) {$0$};
\node[below] at (3,0) {$0$};
\node[below] at (4,0) {$0$};
\node[below] at (5,0) {$0$};
\node[below] at (6,0) {$0$};
\node[below] at (7,0) {$0$};
\node[below] at (1,-0.7) {$\uparrow$};
\node[below] at (2,-0.7) {$\uparrow$};
\node[below] at (3,-0.7) {$\uparrow$};
\node[below] at (4,-0.7) {$\uparrow$};
\node[below] at (5,-0.7) {$\uparrow$};
\node[below] at (6,-0.7) {$\uparrow$};
\node[below] at (7,-0.7) {$\uparrow$};
\node[below] at (1,-1.4) {$z_1$};
\node[below] at (2,-1.4) {$z_2$};
\node[below] at (3,-1.4) {$z_3$};
\node[below] at (4,-1.4) {$z_4$};
\node[below] at (5,-1.4) {$z_5$};
\node[below] at (6,-1.4) {$z_6$};
\node[below] at (7,-1.4) {$z_7$};
\draw[line width = 2,->,rounded corners] (0,4.2) -- (1,4.2) -- (1,5);
\draw[line width = 2,->,rounded corners] (0,4) -- (1.9,4) -- (1.9,5);
\draw[line width = 2,->,rounded corners] (0,3.1) -- (1,3.1) -- (1,3.8) -- (2.1,3.8) -- (2.1,5);
\draw[line width = 2,->,rounded corners] (0,2.9) -- (2.9,2.9) -- (2.9,4) -- (4,4) -- (4,5);
\draw[line width = 2,->,rounded corners] (0,2.2) -- (3.1,2.2) -- (3.1,3) -- (4.8,3) -- (4.8,3.8) -- (5,4.2) -- (5,5);
\draw[line width = 2,->,rounded corners] (0,2) -- (5,2) -- (5,4) -- (5.9,4) -- (5.9,5);
\draw[line width = 2,->,rounded corners] (0,1.1) -- (4,1.1) -- (4,1.8) -- (5.2,1.8) -- (5.2,3) -- (6.1,3) -- (6.1,5);
\draw[line width = 2,->,rounded corners] (0,0.9) -- (6,0.9) -- (6,2) -- (7,2) -- (7,5);
}
\end{align*}
As stated in Remark \ref{rmk:IK-truncate}, is only necessary to draw the first seven columns of the lattice, as all subsequent columns are necessarily devoid of paths and have trivial weight.
\end{ex}

\subsection{Multivariate rational function $G_{S}$}
\label{ssec:IKG}

\begin{defn}
Fix $N \in \mathbb{Z}_{>0}$ and let $S \in \mathfrak{s}(2)$ be a $2$-string such that $|S|=2N$. Fix a further integer $M \in \mathbb{Z}_{>0}$ and two alphabets $(y_1,\dots,y_M)$, $\bm{z} = (z_1,z_2,\dots)$. We define the multivariate rational function
\begin{equation}
\label{eqdualsym1}
G_{S}(y_1,\dots,y_M;\bm{z}) = \bra{2^N,\varnothing} \mathcal{A}(y_1;\bm{z}) \cdots \mathcal{A}(y_M;\bm{z}) \ket{S},
\end{equation}
where the row operators $\mathcal{A}(y_i;\bm{z})$ are given by Proposition \ref{prop:IK-ACD-define}, while
\begin{equation*}
\ket{S} = \bigotimes_{k = 1}^{\infty} \ket{S_{k}}_{k}
\quad \text{ and } \quad 
\bra{2^N,\varnothing} = \bigg(\bigotimes_{k = 1}^{N} \bra{2}_k\bigg) \otimes \bigg( \bigotimes_{k = N+1}^{\infty} \bra{0}_{k}\bigg)
\end{equation*}
denote states in $\mathbb{V}$ and its dual $\mathbb{V}^{*}$. We refer to $(y_1,\dots,y_M)$ and $(z_1,z_2,\dots)$ as the primary and secondary alphabets of $G_S$.
\end{defn}

Translating \eqref{eqdualsym1} into its partition function form, one has that 
\begin{align}
\label{eqdualrf1}
G_S(y_1,\dots,y_M;\bm{z})
=
\tikz{0.7}{2cm}{
\foreach \y in {1,...,5}
\draw[lgray,line width=1.5pt,->] (0,\y) -- (10,\y);
\foreach \x in {1,...,9}
\draw[lgray,line width=1.5pt,->] (\x,0) -- (\x,6);
\foreach \y in {1,...,5}
\node[left] at (0,\y) {0};
\foreach \y in {1,...,5}
\node[right] at (10,\y) {0};
\node[left] at (-1.2,1) {$y_1\rightarrow$};
\node[left] at (-1.2,2.2) {$\vdots$};
\node[left] at (-1.2,3.2) {$\vdots$};
\node[left] at (-1.2,4.2) {$\vdots$};
\node[left] at (-1.2,5) {$y_M\rightarrow$};
\node[above] at (1,6) {$S_1$};
\node[above] at (2,6) {$\cdots$};
\node[above] at (3,6) {$\cdots$};
\node[above] at (4,6) {$S_N$};
\node[above] at (5.1,5.95) {$S_{N+1}$};
\node[above] at (6.2,6) {$\cdots$};
\node[above] at (7.2,6) {$\cdots$};
\node[below] at (1,0) {2};
\node[below] at (2,-0.2) {$\cdots$};
\node[below] at (3,-0.2) {$\cdots$};
\node[below] at (4,0) {2};
\node[below] at (5,0) {0};
\node[below] at (6,-0.2) {$\cdots$};
\node[below] at (7,-0.2) {$\cdots$};
\node[below] at (1,-0.7) {$\uparrow$};
\node[below] at (4,-0.7) {$\uparrow$};
\node[below] at (5,-0.7) {$\uparrow$};
\node[below] at (1,-1.4) {$z_1$};
\node[below] at (2,-1.4) {$\cdots$};
\node[below] at (3,-1.4) {$\cdots$};
\node[below] at (4,-1.4) {$z_N$};
\node[below] at (5,-1.4) {$z_{N+1}$};
\node[below] at (6,-1.4) {$\cdots$};
\node[below] at (7,-1.4) {$\cdots$};
}
\end{align}
In this picture, all left incoming and right outgoing edges are assigned the state 0. The first $N$ incoming bottom edges are assigned the state 2; the remaining ones are all set to 0. As in the case of the functions \eqref{pictureofrational1}, the top outgoing edges correspond with the entries of $S$. We emphasize that in this definition, the number of horizontal rows, $M$, is independent of $N$.

\begin{ex}
\label{ex:IKG}
Sample configuration of \eqref{eqdualrf1} when $N=4$, $M=5$, $S=(1,2,0,1,1,2,1,0,0,\dots)$:
\begin{align*}
\tikz{0.6}{1.5cm}{
\foreach \y in {1,...,5}
\draw[lgray,line width=1.5pt] (0,\y) -- (8,\y);
\foreach \y in {1,...,5}
\node[left] at (0,\y) {$0$};
\foreach \y in {1,...,5}
\node[right] at (8,\y) {$0$};
\foreach \x in {1,...,7}
\draw[lgray,line width=1.5pt] (\x,0) -- (\x,6);
\node[left] at (-0.5,1) {$y_1\rightarrow$};
\node[left] at (-0.5,2) {$y_2 \rightarrow$};
\node[left] at (-0.5,3) {$y_3 \rightarrow$};
\node[left] at (-0.5,4) {$y_4 \rightarrow$};
\node[left] at (-0.5,5) {$y_5 \rightarrow$};
\node[above] at (1,6) {$1$};
\node[above] at (2,6) {$2$};
\node[above] at (3,6) {$0$};
\node[above] at (4,6) {$1$};
\node[above] at (5,6) {$1$};
\node[above] at (6,6) {$2$};
\node[above] at (7,6) {$1$};
\node[below] at (1,0) {$2$};
\node[below] at (2,0) {$2$};
\node[below] at (3,0) {$2$};
\node[below] at (4,0) {$2$};
\node[below] at (5,0) {$0$};
\node[below] at (6,0) {$0$};
\node[below] at (7,0) {$0$};
\node[below] at (1,-0.7) {$\uparrow$};
\node[below] at (2,-0.7) {$\uparrow$};
\node[below] at (3,-0.7) {$\uparrow$};
\node[below] at (4,-0.7) {$\uparrow$};
\node[below] at (5,-0.7) {$\uparrow$};
\node[below] at (6,-0.7) {$\uparrow$};
\node[below] at (7,-0.7) {$\uparrow$};
\node[below] at (1,-1.4) {$z_1$};
\node[below] at (2,-1.4) {$z_2$};
\node[below] at (3,-1.4) {$z_3$};
\node[below] at (4,-1.4) {$z_4$};
\node[below] at (5,-1.4) {$z_5$};
\node[below] at (6,-1.4) {$z_6$};
\node[below] at (7,-1.4) {$z_7$};
\draw[line width = 2,->,rounded corners] (0.9,0) -- (0.9,6);
\draw[line width = 2,->,rounded corners] (1.1,0) -- (1.1,4) -- (1.9,4) -- (1.9,6);
\draw[line width = 2,->,rounded corners] (1.9,0) -- (1.9,2.8) -- (2.1,3.2) -- (2.1,6);
\draw[line width = 2,->,rounded corners] (2.1,0) -- (2.1,3) -- (3,3) -- (3,5) -- (4,5) -- (4,6);
\draw[line width = 2,->,rounded corners] (2.9,0) -- (2.9,2.1) -- (4,2.1) -- (4,4) -- (5,4) -- (5,6);
\draw[line width = 2,->,rounded corners] (3.1,0) -- (3.1,1.9) -- (4.9,1.9) -- (4.9,3.2) -- (5.9,3.2) -- (5.9,6);
\draw[line width = 2,->,rounded corners] (3.9,0) -- (3.9,1.1) -- (5.1,1.1) -- (5.1,3) -- (6.1,3) -- (6.1,6);
\draw[line width = 2,->,rounded corners] (4.1,0) -- (4.1,0.9) -- (6,0.9) -- (6,2.8) -- (7,2.8) -- (7,6);
}
\end{align*}
Again, it is only necessary to draw the first seven columns of the lattice, as all subsequent columns are necessarily devoid of paths and have trivial weight.
\end{ex}

\subsection{Symmetry in primary alphabet}
\label{ssec:IK-symmetry}

Below we present our first main result of the chapter.

\begin{thm}
\label{thm:IK-FG-sym}
The rational functions $F_{S}(x_1,\dots,x_N;\bm{z})$ and $G_{S}(y_1,\dots,y_M;\bm{z})$, defined in $\eqref{eqspsymIK2}$ and $\eqref{eqdualsym1}$, are symmetric in their primary alphabets.
\end{thm}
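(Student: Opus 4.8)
The plan is to show symmetry of $F_S$ and $G_S$ as a direct consequence of the commutation relations already established, in exactly the same manner as the six-vertex proof of Theorem \ref{thm:6-sym-FG}. The key observation is that both functions are built as products of row operators applied to fixed boundary vectors: $F_S$ uses $\mathcal{C}(x_1)\cdots\mathcal{C}(x_N)$ while $G_S$ uses $\mathcal{A}(y_1)\cdots\mathcal{A}(y_M)$. Thus symmetry in the primary alphabet will follow immediately once we know that these operators commute among themselves.

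First I would invoke Proposition stating the commutation relations \eqref{IK-ACD}, which give $[\mathcal{C}(x),\mathcal{C}(y)]=0$ and $[\mathcal{A}(x),\mathcal{A}(y)]=0$ in ${\rm End}(\mathbb{V})$. For $F_S$, observe that interchanging $x_i \leftrightarrow x_{i+1}$ in \eqref{eqspsymIK2} amounts to swapping the adjacent operators $\mathcal{C}(x_i)$ and $\mathcal{C}(x_{i+1})$; since these commute, the matrix element $\bra{\varnothing}\mathcal{C}(x_1)\cdots\mathcal{C}(x_N)\ket{S}$ is unchanged. As the adjacent transpositions generate the full symmetric group $\mathfrak{S}_N$, this establishes that $F_S(x_1,\dots,x_N;\bm{z})$ is symmetric in $(x_1,\dots,x_N)$. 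The identical argument using $[\mathcal{A}(y_i),\mathcal{A}(y_j)]=0$ shows that $G_S(y_1,\dots,y_M;\bm{z})$ is symmetric in $(y_1,\dots,y_M)$.

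The only genuine content here lives upstream, in the commutation relations themselves, which trace back to the Yang--Baxter argument of Proposition \ref{prop:Tcommute}: the crucial point is that the external left and right boundary states take values in $\{0,2\}$, so that one may freely attach and detach an $R$-matrix vertex using the freezing weights $W_{y/x}(0,0;0,0)=1$ and $W_{y/x}(2,2;2,2)=1$. It is worth flagging (as the Remark following Proposition \ref{prop:Tcommute} already does) that this is precisely why the boundary value $2$, rather than $1$, is essential: a vertex with both incoming states equal to $1$ does not freeze, so the analogous operators $\mathcal{T}_{1,1}$ would fail to commute. Thus the choice of boundary conditions in the definitions \eqref{pictureofrational1} and \eqref{eqdualrf1} is exactly what makes the proof go through. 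I anticipate no obstacle in the final step itself — it is a one-line deduction from the commutativity — with all the real work having been front-loaded into the Yang--Baxter calculus of the preceding section.
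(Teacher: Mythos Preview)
Your proposal is correct and matches the paper's own proof essentially verbatim: the paper simply invokes the commutation relations $[\mathcal{C}(x_i),\mathcal{C}(x_j)]=0$ and $[\mathcal{A}(y_i),\mathcal{A}(y_j)]=0$ from \eqref{IK-ACD} and concludes symmetry in one sentence for each function. Your additional remarks about adjacent transpositions generating $\mathfrak{S}_N$ and the necessity of the boundary value $2$ are accurate elaborations but not part of the paper's proof proper.
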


\begin{proof}
The symmetry of $F_{S}(x_1,\dots,x_N;\bm{z})$ in $(x_1,\dots,x_N)$ follows from the commutation relation $[\mathcal{C}(x_i;\bm{z}),\mathcal{C}(x_j;\bm{z})] = 0$ for $i \not= j$; see equation \eqref{IK-ACD}. In a similar vein, the symmetry of $G_{S}(y_1,\dots,y_M;\bm{z})$ in $(y_1,\dots,y_M)$ follows from the commutation relation $[\mathcal{A}(y_i;\bm{z}),\mathcal{A}(y_j;\bm{z})] = 0$; see again \eqref{IK-ACD}.
\end{proof}

\section{Cauchy identities in the nineteen-vertex model}
\label{sec:IKcauchy}

In Section \ref{ssec:IK-cauchy}, we state a Cauchy summation identity for the rational functions $F_{S}$ and $G_{S}$ defined in $\eqref{eqspsymIK2}$ and $\eqref{eqdualsym1}$. The proof of this identity is split over the subsequent sections, and closely follows ideas from the proof of the Cauchy identity in the six-vertex model. In particular, Section \ref{ssec:IK-flip} introduces a further rational function $\dotr{G}_{S}$, which is related to the original $G_S$ via the symmetry \eqref{eqdottondot}. Making use of this alternative formulation of $G_S$, one is able to cast the Cauchy identity in algebraic form, and compute it using commutation relations among row operators; this is done in Section \ref{ssec:IK-cauchy-proof}.

\subsection{Cauchy identity}
\label{ssec:IK-cauchy}

\begin{defn}[Counting function]
\label{eqcounting1}
For each $2$-string $S=(S_1,S_2,\dots)$ and integer $1 \leq i \leq 2$, define the \textit{counting function} $\mathfrak{c}_{i}(S)$ to be multiplicity of $i$ in $S$; namely, $\mathfrak{c}_i(S) = {\rm Card}\{j: S_j=i\}$.
\end{defn}
\begin{thm}
\label{thm:IK-cauchy}
Let $N,M \in \mathbb{Z}_{>0}$ be fixed positive integers. Fix parameters $q,x_i,y_j,z_k \in \mathbb{C}$ such that for all $i \in \{1,\dots,N\}$, $j \in \{1,\dots,M\}$ and $k \in \mathbb{Z}_{>0}$, the following conditions hold:
\begin{align}
\label{eqres1-inhom}
&
\begin{tikzpicture}[baseline={(0,0)}]
\node[scale = 0.8] at (-1.5,0) {$y_j^{-1}$};
\node[scale = 0.8] at (-0.1,-1.2) {$z_k$};
\draw[thick,->] (-1.3,0) -- (-0.9,0);
\draw[thick,<-] (-0.1,-0.75) -- (-0.1,-1.05);
\draw[thick] (-1.9,-0.8) -- (-1.9,0.7);
\draw[->,lgray,line width=1.5pt] (-0.6,0) -- (0.4,0);
\draw[->,lgray,line width=1.5pt] (-0.1,-0.5) -- (-0.1,0.5);
\node[scale = 0.7] at (-0.7,0) {$1$};
\node[scale = 0.7] at (-0.1,-0.6) {$0$};
\node[scale = 0.7] at (-0.1,0.65) {$0$};
\node[scale = 0.7] at (0.5,0) {$1$};
\node at (-0.1,0) [circle,fill,inner sep=1.5pt] {};
\node at (0.75,0) {$\cdot$};
\begin{scope}[shift = {(2.5,0)}]
\node[scale = 0.8] at (-1.5,0) {$x_i$};
\node[scale = 0.8] at (-0.1,-1.2) {$z_k$};
\draw[thick,->] (-1.3,0) -- (-0.9,0);
\draw[thick,<-] (-0.1,-0.75) -- (-0.1,-1.05);
\draw[thick] (0.8,-0.8) -- (0.8,0.7);
\draw[->,lgray,line width=1.5pt] (-0.6,0) -- (0.4,0);
\draw[->,lgray,line width=1.5pt] (-0.1,-0.5) -- (-0.1,0.5);
\node[scale = 0.7] at (-0.7,0) {$1$};
\node[scale = 0.7] at (-0.1,-0.6) {$0$};
\node[scale = 0.7] at (-0.1,0.65) {$0$};
\node[scale = 0.7] at (0.5,0) {$1$};
\end{scope}
\end{tikzpicture}
=
\bigg| \frac{(1-q^3 y_j z_k)}{(1-q y_j z_k)} \cdot 
\frac{(x_i-z_k)}{(x_i-q^{2}z_k)}\bigg|
<
\epsilon
<1,
\\
\label{eqres2-inhom}
&
\begin{tikzpicture}[baseline=(current bounding box.center)]
\node[scale = 0.8] at (-1.5,0) {$y_j^{-1}$};
\node[scale = 0.8] at (-0.1,-1.2) {$z_k$};
\draw[thick,->] (-1.3,0) -- (-0.9,0);
\draw[thick,<-] (-0.1,-0.75) -- (-0.1,-1.05);
\draw[thick] (-1.9,-0.8) -- (-1.9,0.7);
\draw[->,lgray,line width=1.5pt] (-0.6,0) -- (0.4,0);
\draw[->,lgray,line width=1.5pt] (-0.1,-0.5) -- (-0.1,0.5);
\node[scale = 0.7] at (-0.7,0) {$0$};
\node[scale = 0.7] at (-0.1,-0.6) {$0$};
\node[scale = 0.7] at (-0.1,0.65) {$0$};
\node[scale = 0.7] at (0.5,0) {$0$};
\node at (-0.1,0) [circle,fill,inner sep=1.5pt] {};
\node at (0.75,0) {$\cdot$};
\begin{scope}[shift = {(2.5,0)}]
\node[scale = 0.8] at (-1.5,0) {$x_i$};
\node[scale = 0.8] at (-0.1,-1.2) {$z_k$};
\draw[thick,->] (-1.3,0) -- (-0.9,0);
\draw[thick,<-] (-0.1,-0.75) -- (-0.1,-1.05);
\draw[thick] (0.7,-0.8) -- (0.7,0.7);
\draw[->,lgray,line width=1.5pt] (-0.6,0) -- (0.4,0);
\draw[->,lgray,line width=1.5pt] (-0.1,-0.5) -- (-0.1,0.5);
\node[scale = 0.7] at (-0.7,0) {$2$};
\node[scale = 0.7] at (-0.1,-0.6) {$0$};
\node[scale = 0.7] at (-0.1,0.65) {$0$};
\node[scale = 0.7] at (0.5,0) {$2$};
\end{scope}
\end{tikzpicture}
=
\bigg| \frac{(1-q^2 y_j z_k)(1-q^3 y_j z_k)}{(1- y_j z_k)(1-q y_j z_k)} \cdot 
\frac{(x_i-z_k)(x_i-qz_k)}{(x_i-q^2z_k)(x_i-q^3z_k)}\bigg|
<
\epsilon
<1,
\end{align}
for some constant $\epsilon \in \mathbb{R}$ which is independent of $i,j,k$. For such a choice of parameters, the rational functions \eqref{eqspsymIK2} and \eqref{eqdualsym1} satisfy the summation identity
\begin{multline}
\label{eqcc5}
\sum_{S \in \mathfrak{s}(2)} 
c_{S}(q)
F_{S}(x_1,\dots,x_N;\bm{z})
G_{S}(y_1,\dots,y_M;q^{-3}\bm{z}^{-1})
\\
=
q^{N(2N+1)}
F_{(2^{N})}(x_1,\dots,x_N;\bm{z}) 
\prod_{i=1}^{N}
\prod_{j=1}^{M}
\frac{(1-q^2x_iy_j)(1-q^3x_iy_j)}{(1-x_iy_j)(1-qx_iy_j)},
\end{multline}
where the sum is taken over all $2$-strings $S$ such that $|S|=2N$. Here $\bm{z} = (z_1,z_2,\dots)$ as usual, while $q^{-3}\bm{z}^{-1} = (q^{-3}z_1^{-1},q^{-3}z_2^{-1},\dots)$. $F_{(2^{N})}$ appearing on the right hand side corresponds to the function \eqref{eqspsymIK2} for $S=(2^N,0,0,\dots)$, and the constant $c_S(q)$ is given by
\begin{equation}
\label{eqcsfact1}
c_{S}(q) 
=
(-1)^{\mathfrak{c}_1(S)/2}
(1-q)^{-\mathfrak{c}_1(S)}
q^{-\mathfrak{c}_2(S)}
\prod_{k=1}^{\infty} q^{2kS_k}.
\end{equation}
\end{thm}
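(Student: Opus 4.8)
The plan is to mirror, almost verbatim, the strategy used to prove the six-vertex Cauchy identity (Theorem \ref{thmci1}), exploiting the fact that the nineteen-vertex model has been set up with all the same structural tools: a flip symmetry for $G_S$, the exchange relation \eqref{eqspexchange1} between $\dotr{\mathcal D}$ and $\mathcal C$, and the freezing of $\dotr{\mathcal D}$ on the vacuum covector. First I would establish the flip symmetry of $G_S$, the analogue of Proposition \ref{prop:6-flip-sym}, by introducing a dotted partner $\dotr G_S(y_1,\dots,y_M;\bm z)=\bra{S}\dotr{\mathcal D}(y_M)\cdots\dotr{\mathcal D}(y_1)\ket{2^N,\varnothing}$ and proving an identity of the shape
\begin{equation*}
q^{-N(2N+1)}c_S(q)\,G_S(y_1,\dots,y_M;q^{-3}\bm z^{-1})=\dotr G_S(y_1^{-1},\dots,y_M^{-1};\bm z).
\end{equation*}
This is where the precise form of the constant $c_S(q)$ in \eqref{eqcsfact1} must be pinned down: applying the local reflection \eqref{eqnineteencym1} column-by-column to the lattice for $\dotr G_S$, one accumulates factors $q^{2\bar\jmath}(q-1)^{\bm 1_{k=1}-\bm 1_{i=1}}(-q)^{\bm 1_{k=2}-\bm 1_{i=2}}$ at each vertex. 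Tracking the telescoping of the $q^{2\bar\jmath}$ weights along each column against the running exponent (as in \eqref{6-tower-sym}) should produce the $\prod_k q^{2kS_k}$ factor, while the vertical tallies of $1$'s and $2$'s that reach the top boundary produce exactly the $(-1)^{\mathfrak c_1(S)/2}(1-q)^{-\mathfrak c_1(S)}q^{-\mathfrak c_2(S)}$ prefactor via Definition \ref{eqcounting1}. Getting these bookkeeping exponents to assemble into precisely \eqref{eqcsfact1} is the main obstacle, since the $A_2^{(2)}$ reflection has three distinct local phase contributions rather than the single $(-1)^{i-k}q^{\bar\jmath}$ of the six-vertex case.

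With the flip symmetry in hand, the identity \eqref{eqcc5} reduces to showing
\begin{equation*}
\sum_{S\in\mathfrak s(2)}F_S(x_1,\dots,x_N;\bm z)\,\dotr G_S(y_1^{-1},\dots,y_M^{-1};\bm z)=F_{(2^N)}(x_1,\dots,x_N;\bm z)\prod_{i=1}^N\prod_{j=1}^M\frac{(1-q^2x_iy_j)(1-q^3x_iy_j)}{(1-x_iy_j)(1-qx_iy_j)},
\end{equation*}
and here I would repeat the operator computation of Section \ref{ssec:6-cauchy-proof} verbatim in the nineteen-vertex setting. Defining the expectation value
\begin{equation*}
\mathcal E_{N,M}=\bra{\varnothing}\mathcal C(x_1)\cdots\mathcal C(x_N)\,\dotr{\mathcal D}(y_M^{-1})\cdots\dotr{\mathcal D}(y_1^{-1})\ket{2^N,\varnothing},
\end{equation*}
inserting the resolution of identity $\sum_{S\in\mathfrak s(2)}\ket S\bra S$ between the $\mathcal C$'s and $\dotr{\mathcal D}$'s reproduces the left-hand side by the definitions \eqref{eqspsymIK2} and \eqref{6-eqrff2}-type formula. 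Commuting every $\dotr{\mathcal D}(y_j^{-1})$ leftward past every $\mathcal C(x_i)$ using \eqref{eqspexchange1} (rewritten with $x\mapsto y_j^{-1}$, $y\mapsto x_i$, so the scalar prefactor becomes $\tfrac{(1-q^2x_iy_j)(1-q^3x_iy_j)}{(1-x_iy_j)(1-qx_iy_j)}$) extracts the full product kernel; then $\bra\varnothing\dotr{\mathcal D}(y_j^{-1})=\bra\varnothing$ collapses the dotted operators, leaving $\bra\varnothing\mathcal C(x_1)\cdots\mathcal C(x_N)\ket{2^N,\varnothing}=F_{(2^N)}(x_1,\dots,x_N;\bm z)$. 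The convergence constraints \eqref{eqres1-inhom}, \eqref{eqres2-inhom} are precisely the $x\mapsto y_j^{-1}$, $y\mapsto x_i$ translations of \eqref{eqres1}, \eqref{eqres2}, guaranteeing that the infinite-volume commutations and the insertion of identity are justified by Propositions \ref{prop:IK-ACD-define} and \ref{propforCauchyIK1}. Combining the two evaluations of $\mathcal E_{N,M}$ and restoring the $q^{N(2N+1)}c_S(q)$ factors from the flip symmetry yields \eqref{eqcc5}.

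I expect the operator-algebra half of the argument to go through with essentially no new difficulty, as it is a word-for-word transcription of the six-vertex proof with $\dotr{\mathcal D}$, $\mathcal C$ now acting on the three-state space and with the single scalar in \eqref{eqspexchange1} replaced by its nineteen-vertex counterpart. The genuinely new work, and the crux of the proof, lies entirely in the flip symmetry and the identification of $c_S(q)$: one must verify that the telescoping cancellations in the analogue of \eqref{6-tower-sym} still close up despite the vertex states now ranging over $\{0,1,2\}$ and the reflection \eqref{eqnineteencym1} carrying sign and $q$-power data that depends nonlinearly (through the indicator functions $\bm 1_{k=1},\bm 1_{k=2}$) on the local configuration. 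I would therefore devote the bulk of the detailed write-up to a careful column-by-column application of \eqref{eqnineteencym1}, checking that the accumulated phases depend on the configuration only through the boundary data $S$ and assemble into \eqref{eqcsfact1}, after which the remainder follows mechanically.
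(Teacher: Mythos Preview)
Your proposal is correct and follows essentially the same approach as the paper. The paper establishes the flip symmetry $q^{-N(2N+1)}c_S(q)\,G_S(y_1,\dots,y_M;q^{-3}\bm z^{-1})=\dotr G_S(y_1^{-1},\dots,y_M^{-1};\bm z)$ as a separate proposition (via a tower version of \eqref{eqnineteencym1} exactly as you describe), and then the proof of the theorem proper is precisely your expectation-value argument with $\mathcal E_{N,M}$: insert the resolution of identity, commute all $\mathcal C$'s past all $\dotr{\mathcal D}$'s using \eqref{eqspexchange1}, and collapse $\bra\varnothing\dotr{\mathcal D}=\bra\varnothing$.
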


The proof of Theorem \ref{thm:IK-cauchy} is deferred to Section \ref{ssec:IK-cauchy-proof}.

\begin{rmk}
As with the corresponding Cauchy identity \eqref{eqccsix5} in the six-vertex case, one again sees the appearance of a domain-wall-type partition function on the right hand side of \eqref{eqcc5}. In contrast to the six-vertex case, however, there is no known determinant formula for $F_{(2^{N})}$ other than when the parameter $q$ is set to a root of unity \cite{Garbali16}.
\end{rmk}

\subsection{Flip symmetry of $G_S$}
\label{ssec:IK-flip}

In a similar vein to the six-vertex model, we introduce here the rational function $\dotr{G}_{S}$, which will play a key role in the proof of the Cauchy identity \eqref{eqcc5}.

\begin{defn}
Fix $N \in \mathbb{Z}_{>0}$ and let $S \in \mathfrak{s}(2)$ be a $2$-string such that $|S|=2N$. Fix a further integer $M \in \mathbb{Z}_{>0}$ and two alphabets $(y_1,\dots,y_M)$, $\bm{z} = (z_1,z_2,\dots)$. We define the multivariate rational function
\begin{equation}
\label{eqrff2}
\dotr{G}_{S}(y_1,\dots,y_M;\bm{z}) = \bra{S} \dotr{\mathcal{D}}(y_M;\bm{z}) \cdots \dotr{\mathcal{D}}(y_1;\bm{z}) \ket{2^N,\varnothing},
\end{equation}
where the row operators $\dotr{\mathcal{D}}(y_i;\bm{z})$ are given by Proposition \ref{prop:IK-ACD-define}, while
\begin{equation*}
\ket{2^N,\varnothing} = \bigg(\bigotimes_{k = 1}^{N} \ket{2}_k\bigg) \otimes \bigg( \bigotimes_{k = N+1}^{\infty} \ket{0}_{k}\bigg)
\quad \text{ and } \quad 
\bra{S} = \bigotimes_{k = 1}^{\infty} \bra{S_{k}}_{k}
\end{equation*}
denote states in $\mathbb{V}$ and its dual $\mathbb{V}^{*}$.
\end{defn}

Translating \eqref{eqrff2} into its partition function form, it is given by
\begin{align}
\label{eqdualdot}
\dotr{G}_S(y_1,\dots,y_M;\bm{z})
=
\tikz{0.7}{2cm}{
\foreach \y in {1,...,5}
\draw[lgray,line width=1.5pt,->] (0,\y) -- (10,\y);
\foreach \x in {1,...,9}
\draw[lgray,line width=1.5pt,->] (\x,0) -- (\x,6);
\foreach \y in {1,...,5}
\node[left] at (0,\y) {2};
\foreach \y in {1,...,5}
\node[right] at (10,\y) {2};
\node[left] at (-1.2,1) {$y_M\rightarrow$};
\node[left] at (-1.2,2.2) {$\vdots$};
\node[left] at (-1.2,3.2) {$\vdots$};
\node[left] at (-1.2,4.2) {$\vdots$};
\node[left] at (-1.2,5) {$y_1\rightarrow$};
\node[above] at (1,6) {2};
\node[above] at (2,6) {$\cdots$};
\node[above] at (3,6) {$\cdots$};
\node[above] at (4,6) {2};
\node[above] at (5,6) {0};
\node[above] at (6.2,6) {$\cdots$};
\node[above] at (7.2,6) {$\cdots$};
\node[below] at (1,0) {$S_1$}; 
\node[below] at (2,-0.2) {$\cdots$};
\node[below] at (3,-0.2) {$\cdots$};
\node[below] at (4,0) {$S_N$}; 
\node[below] at (5.2,0) {$S_{N+1}$};
\node[below] at (6.4,-0.2) {$\cdots$};
\node[below] at (7.4,-0.2) {$\cdots$};
\node[below] at (1,-0.7) {$\uparrow$};
\node[below] at (4,-0.7) {$\uparrow$};
\node[below] at (5,-0.7) {$\uparrow$};
\node[below] at (1,-1.4) {$z_1$};
\node[below] at (2,-1.4) {$\cdots$};
\node[below] at (3,-1.4) {$\cdots$};
\node[below] at (4,-1.4) {$z_N$};
\node[below] at (5,-1.4) {$z_{N+1}$};
\node[below] at (6,-1.4) {$\cdots$};
\node[below] at (7,-1.4) {$\cdots$};
\foreach\x in {1,...,9}{
\foreach\y in {1,...,5}{
\node at (\x,\y) [circle,fill,inner sep=1.5pt] {};
}};
}
\end{align}
where the quantity \eqref{eqdualdot} is well-defined, despite the change in normalization of the underlying vertex weights. Indeed, one sees that the vertices in all columns sufficiently far to the right are frozen with weight $\dotr{W}_{z_j/y_i}(0,2;0,2)=1$.

\begin{prop}
\label{prop:IK-flip-sym}
Fix an integer $M \in \mathbb{Z}_{>0}$ and a $2$-string $S \in \mathfrak{s}(2)$ such that $|S|=2N$. The rational functions \eqref{eqdualsym1} and \eqref{eqrff2} are related under the following symmetry:
\begin{equation}
\label{eqdottondot}
q^{-N(2N+1)}
c_{S}(q)
G_{S}(y_1,\dots,y_M;q^{-3}\bm{z}^{-1}) 
=  
\dotr{G}_{S}(y_1^{-1},\dots,y_M^{-1};\bm{z}),
\end{equation}
where the constant $c_{S}(q)$ is defined in $\eqref{eqcsfact1}$.
\end{prop}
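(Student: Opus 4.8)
The plan is to mirror the proof of Proposition \ref{prop:6-flip-sym}, with the six-vertex local symmetry \eqref{eqnormandor61} replaced by its Izergin--Korepin analogue \eqref{eqnineteencym1}. The first step is to upgrade \eqref{eqnineteencym1} to a single-column (\emph{tower}) relation, exactly as \eqref{6-tower-sym} was obtained from \eqref{eqnormandor61}. Applying \eqref{eqnineteencym1} to every vertex of one vertical line reflects that column about its horizontal axis, maps the spectral data $(y_a^{-1},z_k) \mapsto (y_a, q^{-3} z_k^{-1})$, complements each horizontal edge state via $j \mapsto \bar{j} = 2-j$, and emits the scalar $q^{2\bar{j}}(q-1)^{\bm{1}_{k=1}-\bm{1}_{i=1}}(-q)^{\bm{1}_{k=2}-\bm{1}_{i=2}}$ at each vertex. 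Since consecutive vertices in a column share a vertical edge, the two families $(q-1)^{\bm{1}_{k=1}-\bm{1}_{i=1}}$ and $(-q)^{\bm{1}_{k=2}-\bm{1}_{i=2}}$ telescope up the column and contribute only through the bottom state $S_k$ and the top state $T_k$ of column $k$ (where $T_k=2$ for $k \leq N$ and $T_k=0$ otherwise). As in the six-vertex case, the $q^{2\bar{j}}$ factors are carried along with an integer $m$ that increases by one per column, so that (using the conservation identity $\bar{j} - \bar{\ell} = i-k$ from \eqref{conserve}) they telescope \emph{across} columns.

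Next I would apply this tower relation to the lattice \eqref{eqdualdot} for $\dotr{G}_S(y_1^{-1},\dots,y_M^{-1};\bm{z})$, processing columns left to right with $m$ starting at $0$. Because $S$ is a $2$-string with finite support, every column sufficiently far to the right is frozen into $\dotr{W}(0,2;0,2)=1$ vertices whose complemented boundary states all equal $0$; hence the procedure stabilizes after finitely many steps. The reflection and complementation convert \eqref{eqdualdot} into the partition function \eqref{eqdualrf1} of $G_S(y_1,\dots,y_M;q^{-3}\bm{z}^{-1})$: the top boundary $(2^N,0,\dots)$ becomes the bottom boundary, the all-$2$ left and right boundaries become all-$0$, and the order of the rows reverses.

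It then remains to assemble the accumulated scalar and match it with $q^{-N(2N+1)} c_S(q)$. Collecting the three families separately, the vertical $(q-1)$-telescoping yields $(q-1)^{-\mathfrak{c}_1(S)}$ (as $T_k \in \{0,2\}$ forces $\bm{1}_{T_k=1}=0$); the vertical $(-q)$-telescoping yields $(-q)^{N-\mathfrak{c}_2(S)}$ (as $\sum_k \bm{1}_{T_k=2}=N$); and the $q^{2\bar{j}}$ factors reorganize, through the incrementing $m$ and conservation, into $\prod_{k} q^{2k(S_k-T_k)} = q^{2\sum_k k S_k - 2N(N+1)}$ (using $\sum_k k T_k = N(N+1)$). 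Multiplying these three contributions and invoking the weight constraint $\mathfrak{c}_1(S)+2\mathfrak{c}_2(S)=|S|=2N$ — which makes $\mathfrak{c}_1(S)$ even and gives $\mathfrak{c}_1(S)/2 = N-\mathfrak{c}_2(S)$ — collapses the accumulated sign $(-1)^{\mathfrak{c}_1(S)}(-1)^{N-\mathfrak{c}_2(S)}$ to $(-1)^{\mathfrak{c}_1(S)/2}$ and reproduces precisely $q^{-N(2N+1)}$ times the constant \eqref{eqcsfact1}, which is the claim \eqref{eqdottondot}.

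I expect the principal obstacle to be the bookkeeping of these prefactors rather than any single hard step. Whereas the six-vertex flip carries only the single scalar $(-1)^{i-k}q^{\bar{j}}$, the Izergin--Korepin flip \eqref{eqnineteencym1} splits into a horizontal power of $q$ together with two independent vertical pieces, one for each nonzero edge value, each demanding its own telescoping; keeping the incrementing parameter $m$ synchronized with the horizontal power across all columns adds a further layer. The delicate point is the sign: the factor $(q-1)^{-\mathfrak{c}_1(S)}$ naively supplies $(-1)^{\mathfrak{c}_1(S)}=1$, and it is only after combining with the $(-q)$-contribution and using $|S|=2N$ that the required $(-1)^{\mathfrak{c}_1(S)/2}$ in \eqref{eqcsfact1} emerges.
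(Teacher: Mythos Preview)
Your approach is essentially identical to the paper's: both upgrade the local flip \eqref{eqnineteencym1} to a column (tower) identity analogous to \eqref{6-tower-sym}, apply it column by column to \eqref{eqdualdot}, and then collect the three telescoping families of prefactors to recover $q^{-N(2N+1)}c_S(q)$. Your bookkeeping of the $(q-1)$, $(-q)$, and $q$-power contributions and the final sign manipulation using $\mathfrak{c}_1(S)/2 = N-\mathfrak{c}_2(S)$ are exactly the computations the paper performs (compare with \eqref{eq:sign-fix}). One small slip: in the Izergin--Korepin tower relation the horizontal-edge factor is $q^{2\bar{j}}$ with $\bar{j}=2-j$, so the auxiliary integer $m$ increments by \emph{two} per column (as in \eqref{IK-tower-sym}), not by one; your final exponent $2k(S_k-T_k)$ is nonetheless correct, so this is only a wording inconsistency rather than an error in the argument.
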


\begin{proof}
The proof is by repeated application of the local symmetry relation \eqref{eqnineteencym1}. To make the procedure more transparent, we begin by stating a version of \eqref{eqnineteencym1} that may be applied to towers of vertices. In particular, one notes that
\begin{align}
\label{IK-tower-sym}
\prod_{a=1}^{M} q^{m \bar{j}_a}
\begin{tikzpicture}[scale=0.8,baseline=0cm]
\draw[->,lgray,line width=1.5pt] (4,-2.75) -- (4,2.75);
\foreach \i in {-2,-1,0,1,2}
{\draw[->,lgray,line width=1.5pt] (3,\i) -- (5,\i);}
\node at (2.75,-2) {$j_M$};
\node at (2.75,-1) {$\vdots$};
\foreach \i in {0.1,1.1} {
\node at (2.75,\i) {$\vdots$};
}
\node at (2.75,2) {$j_1$};
\node at (5.35,-2) {$\ell_M$};
\node at (5.35,-1) {$\vdots$};
\foreach \i in {0.1,1.1} {
\node at (5.35,\i) {$\vdots$};
}
\node at (5.35,2) {$\ell_1$};
\node at (1.2,-2) {$y_M^{-1}$};
\node at (1.2,-1) {$\vdots$};
\node at (1.1,2) {$y_1^{-1}$};
\foreach \i in {0,1} {
\node at (1.2,\i) {$\vdots$};
}
\foreach \i in {-2,2} {
\draw[->,thick] (1.6,\i) -- (2.3,\i);
}
\node at (4,-3) {$i$};
\node at (4,3) {$k$};
\draw[->,thick] (4,-3.9) -- (4,-3.3);
\node at (4,-4.1) {$z$};
\foreach\y in {-2,...,2}{
\node at (4,\y) [circle,fill,inner sep=1.5pt] {};
}
\end{tikzpicture}
=
\prod_{a=1}^{M} q^{(m+2) \bar{\ell}_a}
\dfrac{q^{(m+2)i} 
(q-1)^{\bm{1}_{k=1}}
(-q)^{\bm{1}_{k=2}}}
{
q^{(m+2)k} 
(q-1)^{\bm{1}_{i=1}}
(-q)^{\bm{1}_{i=2}}
}
\begin{tikzpicture}[scale=0.8,baseline=0cm]
\begin{scope}[shift = {(8,0)}]
\draw[->,lgray,line width=1.5pt] (4,-2.75) -- (4,2.75);
\foreach \i in {-2,-1,0,1,2}
{\draw[->,lgray,line width=1.5pt] (3,\i) -- (5,\i);}
\node at (2.75,-2) {$\bar{j}_1$};
\node at (2.75,-1) {$\vdots$};
\foreach \i in {0.1,1.1} {
\node at (2.75,\i) {$\vdots$};
}
\node at (2.75,2) {$\bar{j}_M$};
\node at (5.35,-2) {$\bar{\ell}_1$};
\node at (5.35,-1) {$\vdots$};
\foreach \i in {0.1,1.1} {
\node at (5.35,\i) {$\vdots$};
}
\node at (5.35,2) {$\bar{\ell}_M$};
\node at (1.3,-2) {$y_1$};
\node at (1.3,-1) {$\vdots$};
\node at (1.3,2) {$y_M$};
\foreach \i in {0,1} {
\node at (1.3,\i) {$\vdots$};
}
\foreach \i in {-2,2} {
\draw[->,thick] (1.6,\i) -- (2.3,\i);
}
\node at (4,-3) {$k$};
\node at (4,3) {$i$};
\draw[->,thick] (4,-3.9) -- (4,-3.3);
\node at (4,-4.2) {$q^{-3}z^{-1}$};
\end{scope}
\end{tikzpicture}
\end{align}
where $m$ is an arbitrary integer. Equation \eqref{IK-tower-sym} is a straightforward consequence of \eqref{eqnineteencym1}, combined with conservation of paths through vertices, and appropriate telescopic cancellations of factors assigned to vertical edges.

The proof concludes by taking the lattice definition \eqref{eqdualdot} of $\dotr{G}_S$ and applying the relation \eqref{IK-tower-sym} column by column (with $m$ initially equal to $0$, and rising in value by $2$ after each application of the relation). In particular, one finds that the procedure stabilizes: all indices $i,k,\bar{\ell}_a$ on the right hand side of \eqref{IK-tower-sym} are ultimately $0$, as one traverses sufficiently far to the right through the lattice. Keeping track of all factors acquired through this process, we read off the relation
\begin{align}
\label{eq:sign-fix}
\dotr{G}_{S}(y_1^{-1},\dots,y_M^{-1};\bm{z})
=
\dfrac{(-1)^N}
{q^{N(2N+1)}
(q-1)^{\mathfrak{c}_1(S)}
(-q)^{\mathfrak{c}_2(S)}}
\prod_{k=1}^{\infty} q^{2kS_k}
G_{S}(y_1,\dots,y_M;q^{-3}\bm{z}^{-1}).
\end{align}
Noting that for any $2$-string $S$ such that $|S|=2N$ one has
\begin{align*}
\mathfrak{c}_1(S)+\mathfrak{c}_2(S)
=
\frac{\mathfrak{c}_1(S)}{2}
+
\frac{|S|}{2}
=
\frac{\mathfrak{c}_1(S)}{2}
+
N,
\end{align*}
we may rearrange signs in \eqref{eq:sign-fix} to recover \eqref{eqdottondot}.
\end{proof}

\subsection{Proof of the Cauchy identity}
\label{ssec:IK-cauchy-proof}

We now return to the proof of Theorem \ref{thm:IK-cauchy}.

\begin{proof}
In view of the symmetry property \eqref{eqdottondot},
to demonstrate the Cauchy identity stated in $\eqref{eqcc5}$ we need to show that
\begin{equation}
\label{eqcuachyidIK1}
\sum_{S \in \mathfrak{s}(2)}F_{S}(x_1,\dots,x_N;\bm{z})\dotr{G}_{S}(y_1^{-1},\dots,y_M^{-1};\bm{z}) = 
F_{(2^N)}(x_1,\dots,x_N;\bm{z})
\prod_{i=1}^{N}
\prod_{j=1}^{M}
\frac{(1-q^2x_iy_j)(1-q^3x_iy_j)}{(1-x_iy_j)(1-qx_iy_j)}.
\end{equation}
Define an expectation value
\begin{equation}
\label{eqexpectation1}
\mathcal{E}_{N,M} 
= 
\bra{\varnothing} 
\mathcal{C}(x_1)
\dots 
\mathcal{C}(x_N) 
\dotr{\mathcal{D}}(y_M^{-1}) 
\dots 
\dotr{\mathcal{D}}(y_1^{-1}) 
\ket{2^N,\varnothing}.
\end{equation}
Inserting the identity operator $\sum_{S\in \mathfrak s(2)} \ket{S}\bra{S}$ between the operators $\mathcal{C}(x_N) \dotr{\mathcal{D}}(y_M^{-1})$ on the right hand side of equation $\eqref{eqexpectation1}$, we obtain
\begin{align*}
\mathcal{E}_{N,M}
&= 
\sum_{S \in \mathfrak{s}(2)}
\bra{\varnothing} 
\mathcal{C}(x_1)\dots \mathcal{C}(x_N) 
\ket{S}\bra{S} 
\dotr{\mathcal{D}}(y_M^{-1})
\dots \dotr{\mathcal{D}}(y_1^{-1}) 
\ket{2^N,\varnothing}
\\
&=
\sum_{S \in \mathfrak{s}(2)} 
F_{S}(x_1,\dots,x_N;\bm{z})
\dotr{G}_{S}(y_1^{-1},\dots,y_M^{-1};\bm{z}),
\end{align*}
where the final line follows from the algebraic definitions \eqref{eqspsymIK2} and \eqref{eqrff2}. This matches the left hand side of \eqref{eqcuachyidIK1}.

On the other hand, commuting all $\mathcal{C}(x_i)$ and $\dotr{\mathcal{D}}(y_j^{-1})$ operators using the relation \eqref{eqspexchange1} (noting that convergence constraints \eqref{eqres1} and \eqref{eqres2} should be rewritten with $y \mapsto x_i$, $x \mapsto y_j^{-1}$), we find that
\begin{align*}
\mathcal{E}_{N,M} 
&= \prod_{i = 1}^N \prod_{j = 1}^M \frac{(1-q^2x_iy_j)(1-q^3x_iy_j)}{(1-x_iy_j)(1-qx_iy_j)}\bra{\varnothing} \dotr{\mathcal{D}}(y_M^{-1})\cdots \dotr{\mathcal{D}}(y_1^{-1})\mathcal{C}(x_1)\dots \mathcal{C}(x_N)\ket{2^N,\varnothing}.
\end{align*}
We conclude by noting that
$
\bra{\varnothing} \dotr{\mathcal{D}}(y_j^{-1})
=
\bra{\varnothing},
$
after which one has
\begin{align*}
\mathcal{E}_{N,M}
& = \prod_{i = 1}^N \prod_{j = 1}^M \frac{(1-q^2x_iy_j)(1-q^3x_iy_j)}{(1-x_iy_j)(1-qx_iy_j)}\bra{\varnothing} \mathcal{C}(x_1)\dots \mathcal{C}(x_N) \ket{2^N,\varnothing}
\\
& = \prod_{i = 1}^N \prod_{j = 1}^M
\frac{(1-q^2x_iy_j)(1-q^3x_iy_j)}{(1-x_iy_j)(1-qx_iy_j)}
F_{(2^N)}(x_1,\dots,x_N;\bm{z}), 
\end{align*}
which is precisely the right hand side of \eqref{eqcuachyidIK1}.
\end{proof}

\section{Stable symmetric functions}
\label{sec:IK_stable}

Our goal in this section is to generalize the definition \eqref{pictureofrational1} of the rational symmetric functions $F_S$ in a way that renders them stable; this generalization results in yet another family of functions denoted $H_S$. These functions are defined in Section \ref{ssec:IK-H}, and a direct limiting procedure that constructs them is outlined in Section \ref{ssec:IKstab-construct}. Their basic properties, namely their symmetry and stability, are proved in Section \ref{ssec:IK-stab}.

\subsection{Multivariate rational function $H_S$}
\label{ssec:IK-H}

Before defining the rational function $H_S$, we shall require an extension of the previous inversion statistic \eqref{eqinversionfor6} to $2$-strings:

\begin{defn}
Fix an integer $N \in \mathbb{Z}_{>0}$ and a vector $I = (I_1,\dots,I_N) \in \{0,1,2\}^N$. We define the inversion number ${\rm inv}(I)$ as follows:
\begin{equation}
\label{eqinversionforIK1}
\inv(I) = 
\sum_{1 \leq i<j \leq N} 
I_i (2-I_j).
\end{equation}
\end{defn}

\begin{defn}
Fix two integers $K \in \mathbb{Z}_{\geq 0}$, $N \in \mathbb{Z}_{>0}$ such that $0 \leq K \leq N$ and let $S \in \mathfrak{s}(2)$ be a $2$-string such that $|S|=2K$. We introduce the rational function $H_S$ as follows:
\begin{equation}
\label{eqstableIK1}
H_{S}(x_1,\dots,x_N;\bm{z}) = 
\sum_{I : |I| = 2K} 
(-1)^{\mathfrak{c}_1(I)/2}
(1-q^{-1})^{\mathfrak{c}_1(I)}
q^{-\inv(I)}
F_{S}^{I}(x_1,\dots,x_N;\bm{z}),
\end{equation}
with the sum taken over all vectors 
$I \in \{0,1,2\}^{N}$ such that $|I| = 2K$, and where
\begin{equation}
\label{eqstableIK191}
\raisebox{-37.5mm}{
\begin{tikzpicture}[scale = 0.65]
\foreach \i in {0,1,2,3,4,6,7} {
    \draw[->,lgray,line width=1.5pt] (\i,0) -- (\i,7);}
    \foreach \i in {1,2,3,4,5,6} {
    \draw[lgray,line width=1.5pt] (-1,\i) -- (4.3,\i);}
    \foreach \i in {1,2,3,4,5,6} {
    \draw[->,lgray,line width=1.5pt] (5.5,\i) -- (8,\i);}
    \foreach \i in {1,2,3,4,5,6} {
    \node at (5,\i) {$\dots$};}
    \foreach \i in {1,2,6} {
    \node at (8.3,\i) {$0$};
    }
    \foreach \i in {3.2,4.2,5.2} {
    \node at (8.3,\i) {$\vdots$};
    }
    \node[left] at (-1.8,1) {$x_1 \to$};
    \node[left] at (-1.8,2) {$x_2 \to$};
    \node[left] at (-1.8,3) {$\vdots$};
    \node[left] at (-1.8,4) {$\vdots$};
    \node[left] at (-1.8,5) {$\vdots$};
    \node[left] at (-1.8,6) {$x_N \to$};
    \node at (-1.4,1) {$I_1$};
    \node at (-1.4,2) {$I_2$};
    \node at (-1.4,3) {$\vdots$};
    \node at (-1.4,4) {$\vdots$};
    \node at (-1.4,5) {$\vdots$};
    \node at (-1.4,6) {$I_{N}$};
    \node[left] at (-3.8,3.5) {$F_{S}^{I}(x_1,\dots,x_N;\bm{z})=$};
    \node at (0,7.5) {$S_1$};
    \node at (1,7.5) {$S_2$};
    \node at (2,7.5) {$S_3$};
    \node at (3,7.5) {$\dots$};
    \node at (4,7.5) {$\dots$};
    \node at (0,-2) {$z_1$};
    \node at (1,-2) {$z_2$};
    \node at (2,-2) {$z_3$};
    \node at (3,-2) {$\dots$};
    \node at (4,-2) {$\dots$};
    \draw[thick,->] (0,-1.5) -- (0,-0.85);
    \draw[thick,->] (1,-1.5) -- (1,-0.85);
    \draw[thick,->] (2,-1.5) -- (2,-0.85);
    \node at (0,-0.5) {$0$};
    \node at (1,-0.5) {$0$};
    \node at (2,-0.5) {$0$};
    \node at (3,-0.5) {$\dots$};
    \node at (4,-0.5) {$\dots$};
\end{tikzpicture}}
\end{equation}
which is interpreted as a partition function in the same vein as \eqref{pictureofrational1}.
\end{defn}

\begin{rmk}\label{rmk:stable-cases}
The functions \eqref{eqstableIK1} generalize the family $F_S$. Indeed, taking $K=N$ trivializes the summation on the right hand side, and one is forced to have $(I_1,\dots,I_N) = (2^N)$. Since all of the statistics in the exponents of the summand of \eqref{eqstableIK1} reduce to $0$ when $I=(2^N)$, and $F_S^{(2^N)}(x_1,\dots,x_N;\bm{z}) = F_S(x_1,\dots,x_N;\bm{z})$, one finds that $H_S = F_S$ when $K=N$.

At the other extreme, when $K=0$, no particles may enter the lattice \eqref{eqstableIK191} and it freezes into a product of vertices $W_{z_j/x_i}(0,0;0,0)=1$. We therefore have $H_{(0,0,0,\dots)}(x_1,\dots,x_N;\bm{z}) = 1$ for any $N \geq 1$.
\end{rmk}

\subsection{Limiting procedure to obtain $H_S$}
\label{ssec:IKstab-construct}

The following theorem outlines how the functions $H_{S}$ may be recovered via a suitable series of limits applied to the original family \eqref{eqspsymIK2}. The proof follows very similar lines to the corresponding result (Theorem \eqref{6-speicalpart1}) in the six-vertex model: namely, we shall demonstrate certain simplified exchange relations for the row operators \eqref{eqrowopIK} that hold under the limits of the vertical spectral parameters that we impose.

\begin{thm}
\label{speicalpart1}
Fix two integers $K \in \mathbb{Z}_{\geq 0}$, $N \in \mathbb{Z}_{>0}$ such that $0 \leq K \leq N$, and let $S \in \mathfrak{s}(2)$ be a $2$-string such that $|S|=2K$. Fix three alphabets $(x_1,\dots,x_N)$, ${\bm u} = (u_1,\dots,u_{N-K})$ and ${\bm z} = (z_1,z_2,\dots)$ of arbitrary parameters. The following relation then holds:
\begin{equation}
\label{eqspeical1}
H_{S}(x_1,\dots,x_N;\bm{z}) 
= 
\lim_{\bm{u} \to \infty} 
\frac{F_{(2^{N-K},S)}(x_1,\dots,x_N;\bm{u} \cup \bm{z})}{F_{(2^{N-K})}(*;\bm{u})},
\end{equation}
where we write $\bm{u} \to \infty$ as shorthand for the limits $u_i \rightarrow \infty$, $1 \leq i \leq N-K$. The notation $(*;\bm{u})$ indicates that the primary alphabet of $F_{(2^{N-K})}$ may be chosen arbitrarily. 
\end{thm}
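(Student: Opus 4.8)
The plan is to mimic the structure of the six-vertex proof of Theorem \ref{6-speicalpart1} as closely as possible, adapting each step to the increased complexity of the Izergin--Korepin weights. First I would take the lattice definition \eqref{pictureofrational1} of $F_{(2^{N-K},S)}(x_1,\dots,x_N;\bm{u}\cup\bm{z})$ and send $\bm{u}\to\infty$, so that the first $N-K$ columns (carrying the spectral parameters $u_1,\dots,u_{N-K}$) become frozen or simplified. As in the six-vertex case, one first checks that every weight in Figure \ref{fig:19weights} has a well-defined limit as the vertical spectral parameter tends to infinity, and that in this limit the weights become independent of the horizontal parameter (since they depend only on the ratio). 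This lets me factor the partition function along a vertical dividing line: to the right of the first $N-K$ columns one recognizes the lattice defining $F_S^I(x_1,\dots,x_N;\bm{z})$ summed over the intermediate states $I=(I_1,\dots,I_N)\in\{0,1,2\}^N$ crossing the dividing line, while the left block becomes an $N\times(N-K)$ frozen-type region.

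The crux is then to evaluate the left block and show it produces exactly the coefficient $(-1)^{\mathfrak{c}_1(I)/2}(1-q^{-1})^{\mathfrak{c}_1(I)}q^{-\inv(I)}$ appearing in the definition \eqref{eqstableIK1} of $H_S$, divided by $\lim_{\bm u\to\infty}F_{(2^{N-K})}(*;\bm u)$. Concretely, I would establish an analogue of Lemma \ref{lem:qexchange6}: an exchange relation for the infinite-limit row operators allowing the intermediate labels $I_1,\dots,I_N$ to be reordered at a controlled cost. This amounts to degenerating the commutation relation \eqref{eqrowrel2} (or rather \eqref{IK-pre-limit}) as the spectral ratio $y/x\to\infty$, reading off the limiting weights $W_\infty(\cdots)$ from Figure \ref{fig:19weights}, and verifying that the resulting structure constants combine so that the quantity
\begin{equation*}
\chi(I_1,\dots,I_N)
:=
(-1)^{\mathfrak{c}_1(I)/2}(1-q^{-1})^{\mathfrak{c}_1(I)}q^{\inv(I)}
\times
(\text{left block with profile } I)
\end{equation*}
is invariant under permutation of its arguments. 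Granting this invariance, I would choose the most convenient ordering $(I_1,\dots,I_N)=(0^{N-K},2^K)$; for this ordering all the $1$'s are absent, the inversion statistic and the $\mathfrak{c}_1$ factor vanish, the block above the ``diagonal'' freezes out entirely with weight $1$, and what remains is precisely $\lim_{\bm u\to\infty}F_{(2^{N-K})}(*;\bm u)$. Comparing the general profile against this distinguished one then yields the per-$I$ coefficient and completes the identification with \eqref{eqstableIK1}.

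The main obstacle I anticipate is precisely the verification that the degenerate exchange relation produces the correct invariant statistic $\chi$. In the six-vertex case each intermediate label lies in $\{0,1\}$ and only the single vertex $W_\infty(0,1;1,0)$ and normalization $W_\infty(0,1;0,1)$ enter, giving the clean factor $q^{-1}$ per swap and the elementary inversion count $\inv(I)=\sum_{i<j}I_i(1-I_j)$. In the IK setting the labels lie in $\{0,1,2\}$, the splitting/recombining vertices $W_{z/x}(0,2;1,1)$, $W_{z/x}(1,1;0,2)$, etc., are active even in the limit, and the modified inversion statistic $\inv(I)=\sum_{i<j}I_i(2-I_j)$ together with the extra sign $(-1)^{\mathfrak{c}_1(I)/2}$ and factor $(1-q^{-1})^{\mathfrak{c}_1(I)}$ must emerge from a more intricate bookkeeping of how $1$-labelled lines are created in pairs. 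I would therefore devote the most care to computing the full set of $y/x\to\infty$ limits of the relevant weights, checking the pairwise exchange of adjacent labels $(a,b)$ for all $a,b\in\{0,1,2\}$, and confirming that these local moves are consistent (i.e. generate a well-defined action so that $\chi$ is genuinely a function of $I$ up to the claimed statistic). Once that combinatorial heart is in place, the remaining steps are formally parallel to the six-vertex argument and should go through with only notational changes.
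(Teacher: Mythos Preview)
Your proposal is correct and follows essentially the same route as the paper: split the lattice along the $(N-K)$-th column, recognize the right block as $F_S^I$, and show via degenerate exchange relations (the paper's Lemma \ref{qexchangeIKK1}) that the left block, normalized by the statistic you wrote, is independent of $I$ and hence equals $\lim_{\bm u\to\infty}F_{(2^{N-K})}(*;\bm u)$. The only nuance worth flagging is that the invariance of $\chi$ is not under mere permutation but under \emph{re-partitioning} of $(I_1,\dots,I_N)$: the paper's relation \eqref{IKqexchange2} converts a pair of adjacent $1$'s into $(2,0)$ (this is exactly where the factors $(-1)^{\mathfrak{c}_1(I)/2}(1-q^{-1})^{\mathfrak{c}_1(I)}$ arise), so your ``pairwise exchange'' check must include this non-swap move in addition to the ordinary swaps of \eqref{IKqexchange1}.
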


\begin{proof}
Our first observation is that all of the vertex weights in Figure \ref{fig:19weights} have a well-defined $y \rightarrow \infty$ limit. Moreover, due to the fact that the weights \eqref{eqdefn19w} depend on the spectral parameters $x,y$ only via their ratio, it is clear that all weights become independent of $x$ after taking $y \rightarrow \infty$. We shall reflect this fact by suppressing the value of horizontal spectral parameters in most of the partition functions that follow.

Now consider the lattice model interpretation of $F_{(2^{N-K},S)}(x_1,\dots,x_N;\bm{u} \cup \bm{z})$; see equation $\eqref{pictureofrational1}$. Sending $\bm{u} \to \infty$, we obtain the following:
\begin{equation}
\label{eqqexchangeIKrel1}
\begin{tikzpicture}[scale = 0.6,every node/.style={scale=0.5},baseline=(current bounding box.center)]
\node[scale = 2] at (-6.5,3) {$\displaystyle\lim_{\bm{u}\to \infty} F_{(2^{N-K},S)}(x_1,\dots,x_N;\bm{u} \cup \bm{z}) = \sum_{I:|I| = 2K}$};
\node[scale = 1.5] at (3.5,-1.5) {$\infty$};
\node[scale = 1.5] at (4.5,-1.5) {$\dots$};
\node[scale = 1.5] at (5.5,-1.5) {$\infty$};
\draw[thick,->] (3.5,-1.2) -- (3.5,-0.5);
\draw[thick,->] (5.5,-1.2) -- (5.5,-0.5);
\foreach \i in {3.5,4.5,5.5} {
\draw[lgray,line width=1.5pt,->] (\i,0) -- (\i,7);
}
\foreach \i in {1,2,3,4,5,6} {
\draw[lgray,line width=1.5pt] (2.5,\i) -- (6,\i);
}
\draw[red,dashed] (6.75,7) -- (6.75,0);
\foreach \i in {1,2,6} {
\node[scale = 1.25] at (2.25,\i) {$2$};
}
\foreach \i in {3,4,5} {
\node at (2.25,\i) {$\vdots$};
}
\foreach \i in {3.5,5.5} {
\node[scale = 1.25] at (\i,-0.25) {$0$};
}
\node[scale = 1.25] at (3.5,7.25) {$2$};
\node at (4.5,7.25) {$\dots$};
\node[scale = 1.25] at (5.5,7.25) {$2$};
\foreach \i in {3,4,5} {
\node at (6.2,\i) {$\vdots$};
}
\node[scale = 1.25] at (6.4,1) {$I_1$};
\node[scale = 1.25] at (6.4,2) {$I_2$};
\node[scale = 1.25] at (6.4,6) {$I_N$};
\foreach \i in {3,4,5} {
\node at (7.2,\i) {$\vdots$};
}
\node[scale = 1.25] at (7.2,1) {$I_1$};
\node[scale = 1.25] at (7.2,2) {$I_2$};
\node[scale = 1.25] at (7.2,6) {$I_N$};
\foreach \i in {8,9,10,11,12,13} {
\draw[lgray,line width=1.5pt,->] (\i,0) -- (\i,7);
}
\foreach \i in {1,2,3,4,5,6} {
\draw[lgray,line width=1.5pt,->] (7.5,\i) -- (13.5,\i);
}
\node at (4.5,-0.25) {$\dots$};
\foreach \i in {8,9} {
\node[scale = 1.25] at (\i,-0.25) {$0$};
}
\foreach \i in {10,11,12,13} {
\node at (\i,-0.25) {$\dots$};
}
\node[scale = 1.25] at (8,7.25) {$S_1$};
\node[scale = 1.25] at (9,7.25) {$S_2$};
\foreach \i in {10,11,12,13} {
\node at (\i,7.25) {$\dots$};
}
\foreach \i in {1,2,6} {
\node[scale = 1.25] at (13.75,\i) {$0$};
}
\node[scale = 1.5] at (8,-1.5) {$z_1$};
\node[scale = 1.5] at (9,-1.5) {$z_2$};
\foreach \i in {3,4,5} {
\node at (13.75,\i) {$\vdots$};}
\draw[thick,->] (8,-1.2) -- (8,-0.5);
\draw[thick,->] (9,-1.2) -- (9,-0.5);
\node[scale = 1.5] at (0.5,1) {$x_1$};
\node[scale = 1.5] at (0.5,2) {$x_2$};
\node[scale = 1.5] at (0.5,6) {$x_{N}$};
\foreach \i in {3,4,5} {
\node[scale = 1.5] at (0.5,\i) {$\vdots$};
}
\foreach \i in {1,2,6} {
\draw[thick,->] (1,\i) -- (2,\i);
}
\end{tikzpicture}
\end{equation}
where the red dashed line is used to separate out the first $N-K$ vertical lines of the lattice. In view of the comment above, we note that in these first $N-K$ columns, one may assign arbitrary values to the horizontal spectral parameters. To the right of the red dashed line we recognize the lattice definition of $F_{S}^{I}(x_1,\dots,x_N;\bm{z})$. Comparing with the definition \eqref{eqstableIK1} of $H_S$, equation $\eqref{eqspeical1}$ holds provided that 
\begin{equation}
\label{eqlatticeb1}
\begin{tikzpicture}[scale = 0.6,every node/.style={scale=0.5},baseline=1.9cm]
\foreach \i in {4,5,6} {
\draw[lgray,line width=1.5pt,->] (\i,0) -- (\i,7);
}
\foreach \i in {1,2,3,4,5,6} {
\draw[lgray,line width=1.5pt] (3,\i) -- (6.5,\i);
}
\foreach \i in {1,2,6} {
\node[scale = 1.25] at (2.8,\i) {$2$};
}
\foreach \i in {3,4,5} {
\node at (2.8,\i) {$\vdots$};
}
\foreach \i in {4,6} {
\node[scale = 1.25] at (\i,-0.2) {$0$};
\node[scale = 1.25] at (\i,7.2) {$2$};
}
\node at (5,7.2) {$\dots$};
\node at (5,-0.2) {$\dots$};
\foreach \i in {3,4,5} {
\node at (6.8,\i) {$\vdots$};
}
\node[scale = 1.25] at (6.8,1) {$I_1$};
\node[scale = 1.25] at (6.8,2) {$I_2$};
\node[scale = 1.25] at (6.8,6) {$I_{N}$};
\draw[->,thick] (4,-1.2) -- (4,-0.5);
\draw[->,thick] (6,-1.2) -- (6,-0.5);
\node[scale = 1.5] at (4,-1.5) {$\infty$};
\node[scale = 1.5] at (5,-1.5) {$\dots$};
\node[scale = 1.5] at (6,-1.5) {$\infty$};
\node[scale = 1.25] at (1,1) {$*$};
\node[scale = 1.25] at (1,2) {$*$};
\node[scale = 1.25] at (1,6) {$*$};
\draw[thick,->] (1.5,1) -- (2.5,1);
\draw[thick,->] (1.5,2) -- (2.5,2);
\draw[thick,->] (1.5,6) -- (2.5,6);
\end{tikzpicture}
=
(-1)^{\mathfrak{c}_1(I)/2}
(1-q^{-1})^{\mathfrak{c}_1(I)}
q^{-\inv(I)}
\lim_{\bm{u}\to \infty}
F_{(2^{N-K})}(*;\bm{u})
\end{equation}
for all $(I_1,\dots,I_N) \in \{0,1,2\}^N$ such that $|I|=2K$. To prove equation \eqref{eqlatticeb1} we require the following lemma, that allows us to incrementally modify the boundary conditions at the right edges of the partition function.

\begin{lem}
\label{qexchangeIKK1}
Fix integers $a,b \in \{0,1,2\}$ such that $a>b$, and arbitrary vectors $S,T \in \{0,1,2\}^N$. The following equality of partition functions holds:
\begin{equation}
\label{IKqexchange1}
q^{\inv(a,b)} \times \left(
\begin{tikzpicture}[scale=0.64,every node/.style={scale=0.7},baseline={([yshift=-0.5ex]current bounding box.center)}]
\draw[lgray,line width=1.5pt] (0,1) -- (2.5,1);
\draw[lgray,line width=1.5pt] (0,0) -- (2.5,0);
\draw[lgray,line width=1.5pt,->] (3.5,1) -- (6,1);
\draw[lgray,line width=1.5pt,->] (3.5,0) -- (6,0);
\node[scale = 1.5] at (3,0) {$\dots$};
\node[scale = 1.5] at (3,1) {$\dots$};
\foreach \i in {1,2,4,5} {
\draw[lgray,line width=1.5pt,<-] (\i,2) -- (\i,-1);
}
\foreach \i in {1,2} {
\draw[->,thick] (\i,-2.2) -- (\i,-1.5);
}
\foreach \i in {4,5} {
\draw[->,thick] (\i,-2.2) -- (\i,-1.5);
}
\node at (1,-2.5) {$\infty$};
\node at (2,-2.5) {$\infty$};
\node at (4,-2.5) {$\infty$};
\node at (5,-2.5) {$\infty$};
\node at (-0.25,0) {$2$};
\node at (-0.25,1) {$2$};
\node at (6.2,1) {$b$};
\node at (6.2,0) {$a$};
\foreach \i in {1,2} {
\node at (\i,2.2) {$T_{\i}$};
}
\node at (4,2.2) {$T_{N-1}$};
\node at (5,2.2) {$T_{N}$};
\foreach \i in {1,2} {
\node at (\i,-1.2) {$S_{\i}$};
}
\node at (4,-1.2) {$S_{N-1}$};
\node at (5,-1.2) {$S_{N}$};
\node at (-1.5,0) {$*$};
\node at (-1.5,1) {$*$};
\draw[thick,->] (-1.25,0) -- (-0.5,0);
\draw[thick,->] (-1.25,1) -- (-0.5,1);
\end{tikzpicture}\right)
= 
\begin{tikzpicture}[scale=0.64,every node/.style={scale=0.7},baseline={([yshift=-0.5ex]current bounding box.center)}]
\node at (8.5,0) {$*$};
\node at (8.5,1) {$*$};
\draw[->,thick] (9,0) -- (9.75,0);
\draw[->,thick] (9,1) -- (9.75,1);
\draw[lgray,line width=1.5pt] (10.25,1) -- (12.75,1);
\draw[lgray,line width=1.5pt,->] (13.75,1) -- (16.25,1);
\draw[lgray,line width=1.5pt] (10.25,0) -- (12.75,0);
\draw[lgray,line width=1.5pt,->] (13.75,0) -- (16.25,0);
\node[scale = 1.5] at (13.25,0) {$\dots$};
\node[scale = 1.5] at (13.25,1) {$\dots$};
\foreach \i in {11.25,12.25,14.25,15.25} 
{
\draw[lgray,line width=1.5pt,<-] (\i,2) -- (\i,-1);
}
\node at (10,1) {$2$};
\node[below] at (10,0.25) {$2$};
\node at (16.5,1) {$a$};
\node at (16.5,0) {$b$};
\foreach \i in {1,2}{
\node at (\i+10.25,2.2) {$T_{\i}$};
}
\node at (14.25,2.2) {$T_{N-1}$};
\node at (15.25,2.2) {$T_{N}$};
\foreach \i in {1,2} {
\node at (\i+10.25,-1.2) {$S_{\i}$};
}
\node at (14.25,-1.2) {$S_{N-1}$};
\node at (15.25,-1.2) {$S_{N}$};
\foreach \i in {11.2,12.2} {
\draw[->,thick] (\i,-2.2) -- (\i,-1.5);
}
\foreach \i in {14.2,15.2} {
\draw[->,thick] (\i,-2.2) -- (\i,-1.5);
}
\node at (11.2,-2.5) {$\infty$};
\node at (12.2,-2.5) {$\infty$};
\node at (14.2,-2.5) {$\infty$};
\node at (15.2,-2.5) {$\infty$};
\end{tikzpicture}
\end{equation}
where in this case ${\rm inv}(a,b)=a(2-b)$. 

Furthermore, the following equality holds:
\begin{equation}
\label{IKqexchange2}
-q (1-q^{-1})^{-2} \times
\left(
\begin{tikzpicture}[scale=0.64,every node/.style={scale=0.7},baseline={([yshift=-0.5ex]current bounding box.center)}]
\draw[lgray,line width=1.5pt] (0,1) -- (2.5,1);
\draw[lgray,line width=1.5pt] (0,0) -- (2.5,0);
\draw[lgray,line width=1.5pt,->] (3.5,1) -- (6,1);
\draw[lgray,line width=1.5pt,->] (3.5,0) -- (6,0);
\node[scale = 1.5] at (3,0) {$\dots$};
\node[scale = 1.5] at (3,1) {$\dots$};
\foreach \i in {1,2,4,5} {
\draw[lgray,line width=1.5pt,<-] (\i,2) -- (\i,-1);
}
\foreach \i in {1,2} {
\draw[->,thick] (\i,-2.2) -- (\i,-1.5);
}
\foreach \i in {4,5} {
\draw[->,thick] (\i,-2.2) -- (\i,-1.5);
}
\node at (1,-2.5) {$\infty$};
\node at (2,-2.5) {$\infty$};
\node at (4,-2.5) {$\infty$};
\node at (5,-2.5) {$\infty$};
\node at (-0.25,0) {$2$};
\node at (-0.25,1) {$2$};
\node at (6.2,1) {$1$};
\node at (6.2,0) {$1$};
\foreach \i in {1,2} {
\node at (\i,2.2) {$T_{\i}$};
}
\node at (4,2.2) {$T_{N-1}$};
\node at (5,2.2) {$T_{N}$};
\foreach \i in {1,2} {
\node at (\i,-1.2) {$S_{\i}$};
}
\node at (4,-1.2) {$S_{N-1}$};
\node at (5,-1.2) {$S_{N}$};
\node at (-1.5,0) {$*$};
\node at (-1.5,1) {$*$};
\draw[thick,->] (-1.25,0) -- (-0.5,0);
\draw[thick,->] (-1.25,1) -- (-0.5,1);
\end{tikzpicture} \right)
= 
\begin{tikzpicture}[scale=0.64,every node/.style={scale=0.7},baseline={([yshift=-0.5ex]current bounding box.center)}]
\node at (8.5,0) {$*$};
\node at (8.5,1) {$*$};
\draw[->,thick] (9,0) -- (9.75,0);
\draw[->,thick] (9,1) -- (9.75,1);
\draw[lgray,line width=1.5pt] (10.25,1) -- (12.75,1);
\draw[lgray,line width=1.5pt,->] (13.75,1) -- (16.25,1);
\draw[lgray,line width=1.5pt] (10.25,0) -- (12.75,0);
\draw[lgray,line width=1.5pt,->] (13.75,0) -- (16.25,0);
\node[scale = 1.5] at (13.25,0) {$\dots$};
\node[scale = 1.5] at (13.25,1) {$\dots$};
\foreach \i in {11.25,12.25,14.25,15.25} 
{
\draw[lgray,line width=1.5pt,<-] (\i,2) -- (\i,-1);
}
\node at (10,1) {$2$};
\node[below] at (10,0.25) {$2$};
\node at (16.5,1) {$2$};
\node at (16.5,0) {$0$};
\foreach \i in {1,2}{
\node at (\i+10.25,2.2) {$T_{\i}$};
}
\node at (14.25,2.2) {$T_{N-1}$};
\node at (15.25,2.2) {$T_{N}$};
\foreach \i in {1,2} {
\node at (\i+10.25,-1.2) {$S_{\i}$};
}
\node at (14.25,-1.2) {$S_{N-1}$};
\node at (15.25,-1.2) {$S_{N}$};
\foreach \i in {11.2,12.2} {
\draw[->,thick] (\i,-2.2) -- (\i,-1.5);
}
\foreach \i in {14.2,15.2} {
\draw[->,thick] (\i,-2.2) -- (\i,-1.5);
}
\node at (11.2,-2.5) {$\infty$};
\node at (12.2,-2.5) {$\infty$};
\node at (14.2,-2.5) {$\infty$};
\node at (15.2,-2.5) {$\infty$};
\end{tikzpicture}
\end{equation}
\end{lem}

\begin{proof}
We begin with the proof of \eqref{IKqexchange1}. Following very similar ideas to Proposition \ref{prop:IK-commute-nontrivial}, one may derive the following exchange relation between row-operators (of arbitrary length):
\begin{align}
\label{IK-general-commute}
\mathcal{T}_{2,a}(x;\bm{z}) \mathcal{T}_{2,b}(y;\bm{z})
=
\sum_{c,d:c+d=a+b}
W_{y/x}(d,c;b,a)
\mathcal{T}_{2,d}(y;\bm{z}) \mathcal{T}_{2,c}(x;\bm{z}),
\end{align}
where $a,b \in \{0,1,2\}$ are arbitrary. After sending $\bm{z} \rightarrow \infty$, the row operators in \eqref{IK-general-commute} become independent of their primary arguments:
\begin{align}
\label{IK-general-commute2}
\mathcal{T}_{2,a}(*;\infty) \mathcal{T}_{2,b}(*;\infty)
=
\sum_{c,d:c+d=a+b}
W_{y/x}(d,c;b,a)
\mathcal{T}_{2,d}(*;\infty) \mathcal{T}_{2,c}(*;\infty),
\end{align}
which holds for arbitrary values of the parameter $y/x$. Consulting the weights in Figure \ref{fig:19weights} and sending $y/x \rightarrow \infty$, one finds that
\begin{align*}
W_{\infty}(d,c;b,a)
=
\bm{1}_{c=a}
\cdot
\bm{1}_{d=b}
\cdot
q^{-{\rm inv}(a,b)},
\qquad
\text{for}
\ \ 
a>b,
\end{align*}
and \eqref{IKqexchange1} then follows directly from \eqref{IK-general-commute2}.

The proof of \eqref{IKqexchange2} follows by similar reasoning. This time, one starts from \eqref{IK-general-commute2} with $a=b=1$. Sending $y/x \rightarrow \infty$, there holds
\begin{align*}
W_{\infty}(d,c;1,1)
=
\left\{
\begin{array}{ll}
0, & \quad c=0 \ \text{and}\ d=2,
\\
\\
-q^{-1}, & \quad c=d=1,
\\
\\
-q^{-4}(1-q)(1-q^2), & \quad c=2 \ \text{and}\ d=0,
\end{array}
\right.
\end{align*}
and \eqref{IK-general-commute2} then reads
\begin{align*}
\mathcal{T}_{2,1}(*;\infty)
\mathcal{T}_{2,1}(*;\infty)
=
-q^{-1}
\mathcal{T}_{2,1}(*;\infty)
\mathcal{T}_{2,1}(*;\infty)
-q^{-4}(1-q)(1-q^2)
\mathcal{T}_{2,0}(*;\infty)
\mathcal{T}_{2,2}(*;\infty).
\end{align*}
This rearranges to yield \eqref{IKqexchange2}.
\end{proof}

An immediate corollary of the exchange relations \eqref{IKqexchange1} and \eqref{IKqexchange2} is that the quantity
\begin{align}
\label{IK-chi-inv}
\chi(I_1,\dots,I_N)
:=
(-1)^{\mathfrak{c}_1(I)/2}
(1-q^{-1})^{-\mathfrak{c}_1(I)}
q^{\inv(I)}
\times
\left(
\begin{tikzpicture}[scale = 0.6,every node/.style={scale=0.5},baseline=1.6cm]
\foreach \i in {4,5,6} {
\draw[lgray,line width=1.5pt,->] (\i,0) -- (\i,7);
}
\foreach \i in {1,2,3,4,5,6} {
\draw[lgray,line width=1.5pt] (3,\i) -- (6.5,\i);
}
\foreach \i in {1,2,6} {
\node[scale = 1.25] at (2.8,\i) {$2$};
}
\foreach \i in {3,4,5} {
\node at (2.8,\i) {$\vdots$};
}
\foreach \i in {4,6} {
\node[scale = 1.25] at (\i,-0.2) {$0$};
\node[scale = 1.25] at (\i,7.2) {$2$};
}
\node at (5,7.2) {$\dots$};
\node at (5,-0.2) {$\dots$};
\foreach \i in {3,4,5} {
\node at (6.8,\i) {$\vdots$};
}
\node[scale = 1.25] at (6.8,1) {$I_1$};
\node[scale = 1.25] at (6.8,2) {$I_2$};
\node[scale = 1.25] at (6.8,6) {$I_{N}$};
\draw[->,thick] (4,-1.2) -- (4,-0.5);
\draw[->,thick] (6,-1.2) -- (6,-0.5);
\node[scale = 1.5] at (4,-1.5) {$\infty$};
\node[scale = 1.5] at (5,-1.5) {$\dots$};
\node[scale = 1.5] at (6,-1.5) {$\infty$};
\node[scale = 1.25] at (1,1) {$*$};
\node[scale = 1.25] at (1,2) {$*$};
\node[scale = 1.25] at (1,6) {$*$};
\draw[thick,->] (1.5,1) -- (2.5,1);
\draw[thick,->] (1.5,2) -- (2.5,2);
\draw[thick,->] (1.5,6) -- (2.5,6);
\end{tikzpicture} \right)
\end{align}
is invariant for all $(I_1,\dots,I_N) \in \{0,1,2\}^N$ such that $|I|=2K$. Indeed, any re-partitioning of $(I_1,\dots,I_N)$ may be achieved by iteration of the relations \eqref{IKqexchange1} and \eqref{IKqexchange2}. We are therefore at liberty to choose $(I_1,\dots,I_N)$ in any way that facilitates the computation of $\chi(I_1,\dots,I_N)$. Choosing $(I_1,\dots,I_N)=(0^{N-K},2^K)$, equation \eqref{IK-chi-inv} becomes
\begin{align}
\label{IK-chi-inv2}
\chi(0^{N-K},2^K)
=
\begin{tikzpicture}[scale = 0.6,every node/.style={scale=0.5},baseline=1.9cm]
\begin{scope}[shift = {(0,1)}]
\foreach \i in {4,5,6} {
\draw[lgray,line width=1.5pt,->] (\i,3.75) -- (\i,7);
}
\foreach \i in {4,5,6} {
\draw[lgray,line width=1.5pt] (3,\i) -- (6.5,\i);
}
\node[scale = 1.25] at (6.8,4) {$2$};
\node[scale = 1.25] at (6.8,5) {$\vdots$};
\node[scale = 1.25] at (6.8,6) {$2$};
\node[scale = 1.25] at (1,4) {$*$};
\node[scale = 1.25] at (1,6) {$*$};
\draw[thick,->] (1.5,4) -- (2.5,4);
\draw[thick,->] (1.5,6) -- (2.5,6);
\foreach \i in {4,6} {
\node[scale = 1.25] at (\i,7.2) {$2$};
}
\node at (5,7.2) {$\dots$};
\foreach \i in {4,6} {
\node[scale = 1.25] at (2.8,\i) {$2$};
}
\node at (2.8,5) {$\vdots$};
\node at (4,3.5) {$2$};
\node at (5,3.5) {$\dots$};
\node at (6,3.5) {$2$};
\end{scope}
\draw[red,dashed] (3,4.1) -- (6.5,4.1);
\node at (4,3.75) {$2$};
\node at (5,3.75) {$\dots$};
\node at (6,3.75) {$2$};
\foreach \i in {4,5,6} {
\draw[lgray,line width=1.5pt] (\i,0) -- (\i,3.5);
}
\foreach \i in {1,2,3} {
\draw[lgray,line width=1.5pt] (3,\i) -- (6.5,\i);
}
\foreach \i in {1,3} {
\node[scale = 1.25] at (2.8,\i) {$2$};
}
\foreach \i in {2} {
\node at (2.8,\i) {$\vdots$};
}
\foreach \i in {4,6} {
\node[scale = 1.25] at (\i,-0.2) {$0$};
}
\node at (5,-0.2) {$\dots$};
\node[scale = 1.25] at (6.8,1) {$0$};
\node[scale = 1.25] at (6.8,2) {$\vdots$};
\node[scale = 1.25] at (6.8,3) {$0$};
\draw[->,thick] (4,-1.2) -- (4,-0.5);
\draw[->,thick] (6,-1.2) -- (6,-0.5);
\node[scale = 1.5] at (4,-1.5) {$\infty$};
\node[scale = 1.5] at (5,-1.5) {$\dots$};
\node[scale = 1.5] at (6,-1.5) {$\infty$};
\node[scale = 1.25] at (1,1) {$*$};
\node[scale = 1.25] at (1,3) {$*$};
\draw[thick,->] (1.5,1) -- (2.5,1);
\draw[thick,->] (1.5,3) -- (2.5,3);
\draw [decorate,decoration={brace,amplitude=5pt,mirror,raise=4ex}]
  (4,-0.8) -- (6,-0.8) node[midway,yshift=-5.5em]{$N-K$};
\draw [decorate,decoration={brace,amplitude=5pt,mirror,raise=4ex}]
  (6.25,1) -- (6.25,3) node[midway,xshift=7em]{$N-K$};
  \draw [decorate,decoration={brace,amplitude=5pt,mirror,raise=4ex}]
  (6.25,5) -- (6.25,7) node[midway,xshift=5.75em]{$K$};
\end{tikzpicture}
=
\lim_{\bm{u}\to \infty}
F_{(2^{N-K})}(*;\bm{u}),
\end{align}
where the final equality follows by noting that all vertices above the red dashed line are frozen: all of these vertices have the weight $W_{\infty}(2,2;2,2)=1$. Comparison of \eqref{IK-chi-inv} and \eqref{IK-chi-inv2} proves \eqref{eqlatticeb1}, thereby completing the proof of Theorem \ref{speicalpart1}.

\end{proof}

\subsection{Symmetry and stability of $H_S$}
\label{ssec:IK-stab}
In this section, we discuss some properties of the rational functions defined by $\eqref{eqstableIK1}$. Similarly to their six-vertex analogues, the first property is that the rational functions $H_{S}$ are symmetric with respect to their primary alphabet:

\begin{prop}
Fix two integers $K \in \mathbb{Z}_{\geq 0}$, $N \in \mathbb{Z}_{>0}$ such that $0 \leq K \leq N$ and let $S \in \mathfrak{s}(2)$ be a $2$-string such that $|S|=2K$. Then the rational function $H_{S}(x_1,\dots,x_N;\bm{z})$ is symmetric with respect to $(x_1,\dots,x_N)$.
\end{prop}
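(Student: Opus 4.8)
The plan is to deduce symmetry directly from the limiting characterization of $H_S$ established in Theorem \ref{speicalpart1}, exactly as was done in the six-vertex setting. The key point is that, although the defining sum \eqref{eqstableIK1} is not manifestly invariant under permutations of $(x_1,\dots,x_N)$ --- its summands are weighted by the inversion statistic \eqref{eqinversionforIK1} and by counting functions, which break the symmetry term by term --- the closed-form limit \eqref{eqspeical1} expresses $H_S$ as a ratio in which symmetry is transparent.

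First I would invoke Theorem \ref{speicalpart1} to write
\begin{equation*}
H_{S}(x_1,\dots,x_N;\bm{z})
=
\lim_{\bm{u} \to \infty}
\frac{F_{(2^{N-K},S)}(x_1,\dots,x_N;\bm{u} \cup \bm{z})}{F_{(2^{N-K})}(*;\bm{u})}.
\end{equation*}
The numerator is a function of the form $F_T$ with $T = (2^{N-K},S) \in \mathfrak{s}(2)$ and primary alphabet $(x_1,\dots,x_N)$; by Theorem \ref{thm:IK-FG-sym} it is symmetric in $(x_1,\dots,x_N)$. The denominator $F_{(2^{N-K})}(*;\bm{u})$ depends only on the auxiliary alphabet $\bm{u}$ that is sent to infinity, and carries no dependence on $(x_1,\dots,x_N)$ whatsoever --- indeed the notation $(*;\bm{u})$ signals that its primary alphabet is immaterial. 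Hence for every finite specialization of $\bm{u}$ the quotient is a symmetric rational function of $(x_1,\dots,x_N)$, and since a pointwise limit of functions invariant under a fixed finite group action remains invariant, the limit $H_S$ is symmetric as well.

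I do not anticipate any genuine obstacle here: the statement is an immediate corollary of Theorem \ref{speicalpart1}, and the only mild subtlety --- that invariance under $\mathfrak{S}_N$ survives the $\bm{u} \to \infty$ limit --- is automatic, because the symmetry constraint holds uniformly in $\bm{u}$ before the limit is taken. The whole difficulty has already been absorbed into the proof of the limiting procedure, so the present proposition requires essentially no further work. (One could instead attempt a direct proof from \eqref{eqstableIK1} by producing exchange relations for the $\Gamma$-type operators, but this would reintroduce precisely the combinatorial complications that the limit formula is designed to bypass, so I would not pursue that route.)
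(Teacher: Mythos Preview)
Your proposal is correct and matches the paper's proof essentially verbatim: the paper also deduces the symmetry as an immediate consequence of Theorem \ref{speicalpart1}, noting that the right hand side of \eqref{eqspeical1} is symmetric in $(x_1,\dots,x_N)$. Your additional remarks spelling out why the numerator is symmetric (via Theorem \ref{thm:IK-FG-sym}) and why the denominator is independent of the $x_i$ are accurate elaborations of what the paper leaves implicit.
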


\begin{proof}
This is an immediate consequence of Theorem \ref{speicalpart1}, in view of the fact that the right hand side of \eqref{eqspeical1} is symmetric in $(x_1,\dots,x_N)$.
\end{proof}

The second property is that the rational functions $\eqref{eqstableIK1}$ are {\it stable} with respect to their alphabet
$(x_1,\dots,x_N)$:

\begin{prop}
\label{prop:IK-stab}
Fix two integers $K \in \mathbb{Z}_{\geq 0}$, $N \in \mathbb{Z}_{>0}$ such that $0 \leq K \leq N$ and let $S \in \mathfrak{s}(2)$ be a $2$-string such that $|S|=2K$. We then have the reduction property
\begin{align}
\label{stability-19v}
H_S(x_1,\dots,x_{N-1},x_N=\infty;\bm{z})
=
\left\{
\begin{array}{ll}
H_S(x_1,\dots,x_{N-1};\bm{z}),
&
\quad
|S| \leq 2N-2,
\\ \\
0,
&
\quad
|S|=2N.
\end{array}
\right.
\end{align}
\end{prop}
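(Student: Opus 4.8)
The plan is to mirror the proof of the six-vertex stability statement (Proposition governing \eqref{stability-6v}) as closely as possible, exploiting the fact that $H_S$ is symmetric in its primary alphabet so that sending $x_N \to \infty$ is equivalent to sending $x_1 \to \infty$. First I would work directly from the partition-function definition \eqref{eqstableIK191} of $F^I_S$ together with the summation formula \eqref{eqstableIK1} for $H_S$. The key preliminary observation is that every vertex weight in Figure \ref{fig:19weights} has a well-defined limit as the horizontal spectral parameter tends to infinity (this was already noted in the proof of Theorem \ref{speicalpart1}, where sending $y/x \to \infty$ gives the explicit degenerate weights $W_{\infty}(i,j;k,\ell)$). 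I would then identify precisely which of these limiting weights vanish: examining the $y/x \to \infty$ specializations, one finds that the weights responsible for paths \emph{entering} from the left of the bottom row (namely those with $j > \ell$ in the horizontal direction, i.e.\ a net inflow of paths into the row) degenerate to zero. Concretely, the analogues of \eqref{6-vanishing-weights} are the weights $W_{\infty}(0,2;2,0)$, $W_{\infty}(0,1;1,0)$ and $W_{\infty}(0,2;1,1)$, which all vanish in the limit.

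Next I would split into the two cases of \eqref{stability-19v} exactly as in the six-vertex argument. For the case $|S| = 2N$, conservation of paths forces the left-boundary vector to be $I = (2^N)$ (since $\sum_i I_i = |S| = 2N$ with each $I_i \le 2$), so $I_1 = 2$ is forced; the bottom row must then carry two incoming paths that have nowhere to go without invoking a vanishing limiting weight, whence $F^I_S(x_1 = \infty, x_2, \dots, x_N; \bm z) = 0$ identically and $H_S$ vanishes. For the case $|S| \le 2N - 2$, I would argue that avoiding the vanishing weights in the bottom row forces $I_1 = 0$: any $I_1 \in \{1, 2\}$ would require at least one path to enter the bottom row from the left, producing a vanishing limiting weight somewhere along that row. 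With $I_1 = 0$ the entire bottom row freezes into $W_{\infty}(0,0;0,0) = 1$ vertices, decoupling it from the rest of the lattice. This yields
\begin{align*}
F^{I}_S(x_1=\infty,x_2,\dots,x_N;\bm{z})
=
\bm{1}_{I_1=0}
\cdot
F^{\tilde{I}}_S(x_2,\dots,x_N;\bm{z}),
\qquad
\tilde{I}=(I_2,\dots,I_N).
\end{align*}

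The remaining step is purely combinatorial: I must check that the weighting prefactors in \eqref{eqstableIK1} are compatible with removing the first entry. Since $I_1 = 0$ contributes nothing to the multiplicity counts, $\mathfrak{c}_1(I) = \mathfrak{c}_1(\tilde I)$, and from the definition \eqref{eqinversionforIK1} one has $\inv(0, I_2, \dots, I_N) = \inv(I_2, \dots, I_N) = \inv(\tilde I)$ because a leading $0$ contributes $I_1(2 - I_j) = 0$ to every term. Hence the full prefactor $(-1)^{\mathfrak{c}_1(I)/2}(1-q^{-1})^{\mathfrak{c}_1(I)} q^{-\inv(I)}$ is unchanged under deletion of $I_1$, and summing over all $\tilde I$ with $|\tilde I| = 2K$ reproduces exactly $H_S(x_2, \dots, x_N; \bm z)$; by symmetry this equals the claimed $H_S(x_1, \dots, x_{N-1}; \bm z)$. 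The main obstacle I anticipate is the careful bookkeeping in the first case and in verifying that $I_1 \in \{1,2\}$ genuinely forces a vanishing weight: unlike the six-vertex model, the IK model permits path splitting and recombination via the weights $W(0,2;1,1)$, $W(1,1;0,2)$, so I must confirm that none of these splitting mechanisms provides an alternative route for an incoming path to survive in the $x_1 \to \infty$ limit. This requires explicitly tabulating which of the degenerate bottom-row weights vanish and checking that every configuration with $I_1 \neq 0$ contains at least one such vanishing vertex.
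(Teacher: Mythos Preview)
Your proposal is correct and follows the paper's proof essentially line for line: the same three vanishing limiting weights $W_{z/x}(0,1;1,0)$, $W_{z/x}(0,2;1,1)$, $W_{z/x}(0,2;2,0)$ are identified, the same $I_1=0$ freezing argument is used, and the same check of $\mathfrak{c}_1$ and $\inv$ under deletion of a leading zero concludes. One small slip to fix when you write it up: sending the horizontal parameter $x_1\to\infty$ makes the ratio $z_j/x_1\to 0$, not $\to\infty$, so the relevant degenerate weights are $\lim_{x\to\infty}W_{z/x}(\cdot)$ rather than the $W_\infty(\cdot)$ of Theorem~\ref{speicalpart1} (where it was the vertical parameter going to infinity); the three weights you list do vanish in the correct limit, so this is purely notational.
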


\begin{proof}
Since $H_S(x_1,\dots,x_N;\bm{z})$ is symmetric with respect to its primary alphabet, we may equivalently study the limit as $x_1 \rightarrow \infty$. This limit affects the bottommost row of vertices in the partition function \eqref{eqstableIK191}. Consulting the table in Figure \ref{fig:19weights}, one finds that all weights $W_{z/x}(i,j;k,\ell)$ have a well-defined limit as $x \rightarrow \infty$, and furthermore
\begin{align}
\label{vanishing-weights}
\lim_{x \rightarrow \infty}
W_{z/x}(0,1;1,0)
=
0,
\qquad
\lim_{x \rightarrow \infty}
W_{z/x}(0,2;1,1)
=
0,
\qquad
\lim_{x \rightarrow \infty}
W_{z/x}(0,2;2,0)
=
0.
\end{align}
It follows that $F^{I}_S(x_1=\infty,x_2,\dots,x_N;\bm{z})$ vanishes if any of the vertices \eqref{vanishing-weights} occur within the first row of the partition function \eqref{eqstableIK191}. 

Let us begin by assuming that $|S|<2N$. The only way to prevent the appearance of one of the vertices \eqref{vanishing-weights} is to fix $I_1=0$, which causes the entire bottom row of the lattice to freeze away (since it is then comprised solely of $W_{0}(0,0;0,0)$ vertices). As such, for any $I=(I_1,\dots,I_N)$ we find that
\begin{align*}
F^{I}_S(x_1=\infty,x_2,\dots,x_N;\bm{z})
=
\bm{1}_{I_1=0}
\cdot
F^{\tilde{I}}_S(x_2,\dots,x_N;\bm{z}),
\qquad
\tilde{I}=(I_2,\dots,I_N),
\end{align*}
which translates into the property
\begin{multline}
\label{H-reduce}
H_{S}(x_1=\infty,x_2,\dots,x_N;\bm{z}) = 
\sum_{\tilde{I} : |\tilde{I}| = 2K} 
(-1)^{\mathfrak{c}_1(\tilde{I})/2}
(1-q^{-1})^{\mathfrak{c}_1(\tilde{I})}
q^{-\inv(\tilde{I})}
F_{S}^{\tilde{I}}(x_2,\dots,x_N;\bm{z})
\\
=
H_{S}(x_2,\dots,x_N;\bm{z}),
\end{multline}
with the sum taken over vectors 
$\tilde{I}=(I_2,\dots,I_N)$, and where the first equality in \eqref{H-reduce} makes use of the fact that 
\begin{align*}
\mathfrak{c}_1(0,I_2,\dots,I_N) &= \mathfrak{c}_1(I_2,\dots,I_N) = \mathfrak{c}_1(\tilde{I}),
\\
\inv(0,I_2,\dots,I_N) &= \inv(I_2,\dots,I_N) = \inv(\tilde{I}).
\end{align*}
This completes the proof of \eqref{stability-19v} in the case $|S| < 2N$. 

The case $|S|=2N$ is handled very easily, since in this situation the vector that indexes the left incoming edges of the partition function \eqref{eqstableIK191} is forced, by conservation, to be $(I_1,\dots,I_N)=(2^N)$. As such, it is impossible to choose $I_1=0$, and $F^{I}_S(x_1=\infty,x_2,\dots,x_N;\bm{z})$ vanishes identically.
\end{proof}

\section{Cauchy identity for stable symmetric functions}
\label{ssec:IK-stable-cauchy}

In Section \ref{sec:IKcauchy} we stated a Cauchy-type identity between the rational symmetric functions \eqref{eqspsymIK2} and \eqref{eqdualsym1}. One aspect of the identity \eqref{eqcc5} which is somewhat undesirable is that its right hand side does not fully factorize. In this section we show that the stable symmetric functions \eqref{eqstableIK1} are more natural from this point of view, and satisfy a Cauchy summation identity with a fully factorized right hand side.

\begin{thm}
\label{thm:IK-stable-cauchy}
Let $N,M \in \mathbb{Z}_{>0}$ be fixed positive integers. Fix parameters $q,x_i,y_j,z_k \in \mathbb{C}$ such that for all $i \in \{1,\dots,N\}$, $j \in \{1,\dots,M\}$ and $k \in \mathbb{Z}_{>0}$, the conditions \eqref{eqres1-inhom} and \eqref{eqres2-inhom} hold. We then have the summation identity
\begin{align}
\label{eq:IK-stable-cauchy}
\sum_{S \in \mathfrak{s}(2)}
(-1)^{|S|/2}
q^{4MN-|S|^2}
c_S(q)
H_{S}(x_1,\dots,x_N;\bm{z})
H_{S}(y_1,\dots,y_M;q^{-3}\bm{z}^{-1}) 
=
\prod_{i=1}^{N}
\prod_{j=1}^{M}
\frac{(1-q^2x_iy_j)(1-q^3x_iy_j)}{(1-x_iy_j)(1-qx_iy_j)}, 
\end{align}
where the constant $c_S(q)$ is given by \eqref{eqcsfact1}.
\end{thm}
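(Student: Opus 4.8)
The plan is to derive the stable Cauchy identity \eqref{eq:IK-stable-cauchy} as a limiting case of the inhomogeneous Cauchy identity \eqref{eqcc5}, exactly mirroring the six-vertex proof of Theorem \ref{thm:6-stable-cauchy}. First I would take the identity \eqref{eqcc5} but with the secondary alphabet enlarged to $\bm{u} \cup \bm{z}$, where $\bm{u} = (u_1,\dots,u_N)$ and $\bm{z} = (z_1,z_2,\dots)$. Dividing through by $F_{(2^N)}(*;\bm{u})$ and sending $\bm{u} \to \infty$, the left-hand factor $q^{N(2N+1)} F_{(2^N)}(x_1,\dots,x_N;\bm{u}\cup\bm{z})$ appearing on the right of \eqref{eqcc5} should be disposed of: indeed, Theorem \ref{speicalpart1} with $S=(0,0,\dots)$ gives $\lim_{\bm{u}\to\infty} F_{(2^N)}(x_1,\dots,x_N;\bm{u}\cup\bm{z})/F_{(2^N)}(*;\bm{u}) = H_{(0,0,\dots)}(x_1,\dots,x_N;\bm{z}) = 1$, so after accounting for the prefactor $q^{N(2N+1)}$ the right-hand side reduces to the desired product kernel. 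The heart of the argument is then to show that the limit of the left-hand side of \eqref{eqcc5}, divided by $F_{(2^N)}(*;\bm{u})$, produces the stable sum appearing in \eqref{eq:IK-stable-cauchy}.

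To accomplish this, I would first pass to the flipped formulation via Proposition \ref{prop:IK-flip-sym}, rewriting the summand $c_S(q) F_S G_S$ as $F_S \dotr{G}_S$ just as in \eqref{eqcuachyidIK1}. I would then represent $\lim_{\bm{u}\to\infty} \sum_{S} F_S(x_1,\dots,x_N;\bm{u}\cup\bm{z}) \dotr{G}_S(y_1^{-1},\dots,y_M^{-1};\bm{u}\cup\bm{z})$ as a single partition function $K_{N,M}$ in the spirit of \eqref{eq:K-leftmost-columns-6}, where the leftmost $N$ columns carry the variables $\bm{u}$ sent to infinity and the remaining columns carry $\bm{z}$. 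The leftmost $N$ columns take a form very close to the object $\chi(I_1,\dots,I_N)$ defined in \eqref{IK-chi-inv}, differing only by the use of dotted vertices in the top $M\times N$ block; this normalization discrepancy contributes an explicit power of $q$ (namely $q^{4MN}$, the IK analogue of the $q^{MN}$ in \eqref{eq:K-leftmost-compute-6}, since the dotting factor $W_{y/x}(0,2;0,2)^{-1}$ carries degree-two weights). Invoking the invariance of $\chi$ under re-partitioning of its arguments (Lemma \ref{qexchangeIKK1}) and evaluating at $(0^N,2^M)$ via \eqref{IK-chi-inv2}, the leftmost block collapses to $\lim_{\bm{u}\to\infty} F_{(2^N)}(*;\bm{u})$, cancelling the denominator.

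The remaining step is to apply the tower symmetry \eqref{IK-tower-sym} to the top half of the resulting lattice, converting the dotted partition function $F^{\bullet J}_S(y_1^{-1},\dots,y_M^{-1};\bm{z})$ into $(-1)^{|S|/2} c_S(q) F^J_S(y_1,\dots,y_M;q^{-3}\bm{z}^{-1})$, which is the IK counterpart of the six-vertex relation used near \eqref{eq:6-Fdot}. After carefully collecting all the accumulated prefactors --- the normalization power of $q$, the inversion statistics $q^{-\inv(I)-\inv(J)}$ and the sign/$(1-q^{-1})$ factors from the two counting functions $\mathfrak{c}_1$ --- the two inner sums over $I$ and $J$ reassemble precisely into $H_S(x_1,\dots,x_N;\bm{z})$ and $H_S(y_1,\dots,y_M;q^{-3}\bm{z}^{-1})$ by the definition \eqref{eqstableIK1}, yielding \eqref{eq:IK-stable-cauchy}. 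The main obstacle I anticipate is purely bookkeeping: tracking the interplay between the $(-1)^{\mathfrak{c}_1/2}(1-q^{-1})^{\mathfrak{c}_1}$ weights built into the definition of $H_S$ and the sign/power-of-$q$ factors emerging from the dotting normalization and the tower symmetry \eqref{IK-tower-sym}, and verifying that the total exponent of $q$ organizes into exactly $q^{4MN-|S|^2}$. This requires the IK analogue of the inversion identity $\inv(I_1,\dots,I_N,\bar{J}_M,\dots,\bar{J}_1) = \inv(I)+\inv(J)+|I||J|$ used in the six-vertex case, now with the complementation $\bar{J}_a = 2 - J_a$ and the inversion statistic \eqref{eqinversionforIK1}, together with the identity $\mathfrak{c}_1(S) + \mathfrak{c}_2(S) = \mathfrak{c}_1(S)/2 + N$ from the proof of Proposition \ref{prop:IK-flip-sym}; assembling these correctly is where the delicacy lies, even though each individual ingredient is already available.
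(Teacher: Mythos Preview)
Your proposal is correct and follows essentially the same approach as the paper's proof: start from the Cauchy identity \eqref{eqcuachyidIK1} with enlarged secondary alphabet $\bm{u}\cup\bm{z}$, divide by $F_{(2^N)}(*;\bm{u})$ and send $\bm{u}\to\infty$, interpret the left-hand side as a partition function whose leftmost $N$ columns are handled via the invariance of $\chi$ (yielding the $q^{4MN}$ factor), then apply the tower symmetry \eqref{IK-tower-sym} to the top block and reassemble the sums over $I,J$ into the stable functions $H_S$. All of the bookkeeping ingredients you flag---the inversion identity for $(I_1,\dots,I_N,\bar{J}_M,\dots,\bar{J}_1)$ with $\bar{J}_a=2-J_a$, the $\mathfrak{c}_1$ weights, and the resulting exponent $q^{4MN-|S|^2}$---are exactly those the paper uses.
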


\begin{proof}
Our starting point is the identity \eqref{eqcuachyidIK1}, in which we choose the secondary alphabet to be $\bm{u} \cup \bm{z}$, where $\bm{u} = (u_1,\dots,u_N)$ and $\bm{z} = (z_1,z_2,\dots)$:
\begin{multline*}
\sum_{S \in \mathfrak{s}(2)}
F_{S}(x_1,\dots,x_N;\bm{u}\cup\bm{z})
\dotr{G}_{S}(y_1^{-1},\dots,y_M^{-1};\bm{u}\cup\bm{z}) 
\\
=
F_{(2^N)}(x_1,\dots,x_N;\bm{u}\cup\bm{z})
\prod_{i=1}^{N}
\prod_{j=1}^{M}
\frac{(1-q^2x_iy_j)(1-q^3x_iy_j)}{(1-x_iy_j)(1-qx_iy_j)}.
\end{multline*}
Dividing this equation by $F_{(2^N)}(*;\bm{u})$ and taking the limit $\bm{u} \rightarrow \infty$, in exactly the same vein as \eqref{eqspeical1}, we read off the identity
\begin{multline}
\label{eq:u-lim-H}
\lim_{\bm{u} \rightarrow \infty}
\frac{1}{F_{(2^N)}(*;\bm{u})}
\cdot
\sum_{S \in \mathfrak{s}(2)}
F_{S}(x_1,\dots,x_N;\bm{u}\cup\bm{z})
\dotr{G}_{S}(y_1^{-1},\dots,y_M^{-1};\bm{u}\cup\bm{z}) 
=
\prod_{i=1}^{N}
\prod_{j=1}^{M}
\frac{(1-q^2x_iy_j)(1-q^3x_iy_j)}{(1-x_iy_j)(1-qx_iy_j)},
\end{multline}
where the right hand side of \eqref{eq:u-lim-H} follows from \eqref{eqspeical1} with $S = (0,0,0,\dots)$. It remains to show that the left hand side of \eqref{eq:u-lim-H} matches that of \eqref{eq:IK-stable-cauchy}.

To that end, define the quantity
\begin{align*}
K_{N,M} := \lim_{\bm{u}\to \infty} \sum_{S \in \mathfrak s(2)} F_{S}(x_1,\dots,&x_N;\bm{u} \cup\bm{z}) \dotr{G}_{S}(y_1^{-1},\dots,y_M^{-1};\bm{u}\cup \bm{z}),
\end{align*}
which has the graphical representation
\begin{equation}
\label{eq:K-leftmost-columns}
\raisebox{-35mm}{
\begin{tikzpicture}[scale = 0.7,every node/.style={scale=0.7}]
\node[scale = 1.5] at (-5.5,4) {$K_{N,M}=
\displaystyle\sum_{\substack{(I_1,\dots,I_N) \\ (J_1,\dots,J_M) \\ |I|=|J|}}$};
\node[scale = 1.25] at (-2,1) {$x_1$};
\foreach \i in {2,6,7} {
\node at (-2,\i) {$\vdots$};
}
\node[scale = 1.25] at (-2,3) {$x_N$};
\node[scale = 1.25] at (-2,5) {$y_M^{-1}$};
\node[scale = 1.25] at (-2,8) {$y_1^{-1}$};
\foreach \i in {1,2,3,5,6,7,8} {
\draw[thick,->] (-1.5,\i) -- (-0.5,\i);
}
\foreach \i in {1,2,3,5,6,7,8} {
\draw[lgray,line width=1.5pt,->] (0,\i) -- (3,\i);
}
\foreach \i in {1,2,3,5,6,7,8} {
\draw[lgray,line width=1.5pt,->] (4,\i) -- (9,\i);
}
\foreach \i in {0.5,1.5,2.5} {
\draw[lgray,line width=1.5pt,->] (\i,0.5) -- (\i,8.5);
} 
\foreach \i in {4.5,5.5,6.5,7.5,8.5} {
\draw[lgray,line width=1.5pt,->] (\i,0.5) -- (\i,8.5);
} 
\foreach \i in {1,2,3,5,6,7,8} {
\node at (-0.2,\i) {$2$};
}
\foreach \i in {0.5,1.5,2.5,4.5,5.5,6.5} {
\node at (\i,0.2) {$0$};
}
\node at (3.5,1) {$I_1$};
\node at (3.5,2) {$\vdots$};
\node at (3.5,3) {$I_{N}$};
\node at (3.5,5) {$\bar{J}_M$};
\node at (3.5,6) {$\vdots$};
\node at (3.5,7) {$\vdots$};
\node at (3.5,8) {$\bar{J}_1$};
\node at (7.5,0.25) {$\dots$};
\node at (8.5,0.25) {$\dots$};
\node at (9.2,1) {$0$};
\node at (9.2,2) {$\vdots$};
\node at (9.2,3) {$0$};
\node at (9.2,5) {$2$};
\node at (9.2,6) {$2$};
\node at (9.2,7.1) {$\vdots$};
\node at (9.2,8) {$2$};
\foreach \i in {0.5,1.5,2.5} {
\node at (\i,8.8) {$2$};
}
\foreach \i in {4.5,5.5,6.5} {
\node at (\i,8.8) {$0$};
}
\foreach \i in {7.5,8.5} {
\node at (\i,8.8) {$\dots$};
}
\foreach \i in {0.5,1.5,2.5,4.5,5.5,6.5} {
\draw[thick,->] (\i,-0.8) -- (\i,-0.2);
}
\foreach \i in {0.5,1.5,2.5} {
\node at (\i,-1) {$\infty$};
}
\node at (4.5,-1) {$z_1$};
\node at (5.5,-1) {$z_2$};
\node at (6.5,-1) {$z_3$};
\foreach \i in {0.5,1.5,2.5,4.5,5.5,6.5,7.5,8.5} { \foreach \j in {5,6,7,8} {
\node at (\i,\j)[circle,fill,inner sep=2pt]{};
}}
\end{tikzpicture}}
\end{equation}
The leftmost $N$ columns of this lattice correspond with the variables $(u_1,\dots,u_N)$ that have been sent to infinity, while the remaining columns correspond with the secondary alphabet $(z_1,z_2,\dots)$ that remains free. We perform summation over the vectors $(I_1,\dots,I_N)$ and $(J_1,\dots,J_M)$ such that $\sum_{i=1}^{N} I_i = \sum_{i=1}^{M} J_i$, noting that this implies $\sum_{i=1}^{N} I_i + \sum_{i=1}^{M} \bar{J}_i = 2M$, so that conservation of paths is respected in the rightmost region of the partition function.

We turn our attention firstly to the leftmost $N$ columns of \eqref{eq:K-leftmost-columns}. We see that this region takes a very similar form to the quantity \eqref{IK-chi-inv} defined in Section \ref{ssec:IKstab-construct}. Indeed, the only significant difference is that top $M \times N$ block of these columns uses dotted vertices, which differ from their undotted counterparts only up to normalization. We thus find that
\begin{align}
\label{eq:K-leftmost-compute}
\begin{tikzpicture}[scale = 0.6,every node/.style={scale=0.6},baseline=2cm]
\foreach \i in {1,2,3} {
\draw[lgray,line width=1.5pt,->] (\i,0) -- (\i,8);
}
\foreach \i in {1,2,3,4,5,6,7} {
\draw[lgray,line width=1.5pt] (0,\i) -- (3.5,\i);
}
\foreach \i in {1,2,3,4,5,6,7} {
\node[scale = 1.25] at (-0.2,\i) {$2$};
}
\foreach \i in {1,3} {
\node[scale = 1.25] at (\i,-0.2) {$0$};
\node[scale = 1.25] at (\i,8.2) {$2$};
}
\node at (2,8.2) {$\dots$};
\node at (2,-0.2) {$\dots$};
\foreach \i in {2} {
\node at (3.8,\i) {$\vdots$};
}
\node[scale = 1.25] at (3.8,3) {$I_N$};
\node[scale = 1.25] at (3.8,1) {$I_1$};
\node[scale = 1.25] at (3.8,7) {$\bar{J}_1$};
\node[scale = 1.25] at (3.8,4) {$\bar{J}_M$};
\node at (3.8,5) {$\vdots$};
\node at (3.8,6) {
$\vdots$
};
\draw[->,thick] (1,-1.2) -- (1,-0.5);
\draw[->,thick] (3,-1.2) -- (3,-0.5);
\node[scale = 1.5] at (1,-1.5) {$\infty$};
\node[scale = 1.5] at (2,-1.5) {$\dots$};
\node[scale = 1.5] at (3,-1.5) {$\infty$};
\node[scale = 1.25] at (-1.8,1) {$*$};
\node[scale = 1.25] at (-1.8,2) {$\vdots$};
\node[scale = 1.25] at (-1.8,3) {$*$};
\node[scale = 1.25] at (-1.8,4) {$*$};
\node[scale = 1.25] at (-1.8,7) {$*$};
\foreach \i in {5,6} {
\node at (-1.8,\i) {$\vdots$};
}
\foreach \i in {1,2,3,4,5,6,7} {
\draw[thick,->] (-1.5,\i) -- (-0.5,\i);
}
\foreach \i in {1,2,3} { \foreach \j in {4,5,6,7}
{\node at (\i,\j) [circle,fill,inner sep=2pt]{};};}
\end{tikzpicture}
=
q^{4MN}
\times
\chi(I_1,\dots,I_N,\bar{J}_M,\dots,\bar{J}_1)
\frac{
(-1)^{\mathfrak{c}_1(I)/2+\mathfrak{c}_1(J)/2}
(1-q^{-1})^{\mathfrak{c}_1(I)+\mathfrak{c}_1(J)}
}
{q^{\inv(I)+\inv(J)+|I||J|}}
\end{align}
with $\chi$ defined in \eqref{IK-chi-inv}, and where the factor of $q^{4MN}$ arises from the above-mentioned normalization discrepancy between dotted and undotted vertices, while the $q$-inversions in the denominator of \eqref{eq:K-leftmost-compute} are computed using the fact that
\begin{align*}
\inv(I_1,\dots,I_N,\bar{J}_M,\dots,\bar{J}_1)
&=
\inv(I_1\dots,I_N)+\inv(\bar{J}_M,\dots,\bar{J}_1)+|I||J|
\\
&=
\inv(I_1\dots,I_N)+\inv(J_1,\dots,J_M)+|I||J|.
\end{align*}
By the invariance of $\chi$ under re-partitioning of its arguments, we may choose $(I_1,\dots,I_N,\bar{J}_M,\dots,\bar{J}_1) = (0^N,2^M)$, which by \eqref{IK-chi-inv2} yields 
\begin{align*}
\chi(I_1,\dots,I_N,\bar{J}_M,\dots,\bar{J}_1)
=
\chi(0^N,2^M) = \lim_{\bm{u}\to \infty} F_{(2^{N})}(*;\bm{u}).
\end{align*}
We have thus derived the equation
\begin{multline}
\label{eq:K-right-flip}
\lim_{\bm{u}\to \infty} \frac{K_{N,M}}{F_{(2^{N})}(*;\bm{u})}
=
\\
q^{4MN}
\displaystyle\sum_{\substack{(I_1,\dots,I_N) \\ (J_1,\dots,J_M) \\ |I|=|J|}}
\frac{
(-1)^{\mathfrak{c}_1(I)/2+\mathfrak{c}_1(J)/2}
(1-q^{-1})^{\mathfrak{c}_1(I)+\mathfrak{c}_1(J)}
}
{q^{\inv(I)+\inv(J)+|I||J|}}
\displaystyle\sum_{S\in \mathfrak s(2)}\raisebox{-35mm}{
\begin{tikzpicture}[scale = 0.7,every node/.style={scale=0.7}]
\foreach \i in {4.5,5.5,6.5,7.5,8.5} {
\draw[lgray,line width=1.5pt,->] (\i,4.5) -- (\i,8.5);
} 
\begin{scope}[shift = {(3.5,0)}]
\foreach \i in {1,2,3,5,6,7,8} {
\draw[thick,->] (-1.5,\i) -- (-0.5,\i);
}
\node[scale = 1.25] at (-2,1) {$x_1$};
\foreach \i in {2,6,7} {
\node at (-2,\i) {$\vdots$};
}
\node[scale = 1.25] at (-2,3) {$x_N$};
\node[scale = 1.25] at (-2,5) {$y_M^{-1}$};
\node[scale = 1.25] at (-2,8) {$y_1^{-1}$};
\end{scope}
\foreach \i in {1,2,3,5,6,7,8} {
\draw[lgray,line width=1.5pt,->] (4,\i) -- (9,\i);
}
\foreach \i in {4.5,5.5,6.5,7.5,8.5} {
\draw[lgray,line width=1.5pt,->] (\i,0.5) -- (\i,3.5);
} 
\foreach \i in {4.5,5.5,6.5} {
\node at (\i,0.2) {$0$};
}
\node at (3.5,1) {$I_1$};
\node at (3.5,2) {$\vdots$};
\node at (3.5,3) {$I_{N}$};
\node at (3.5,5) {$\bar{J}_M$};
\node at (3.5,6) {$\vdots$};
\node at (3.5,7) {$\vdots$};
\node at (3.5,8) {$\bar{J}_1$};
\node at (4.5,4) {$S_{1}$};
\node at (5.5,4) {$S_{2}$};
\node at (6.5,4) {$S_{3}$};
\node at (7.5,0.25) {$\dots$};
\node at (8.5,0.25) {$\dots$};
\node at (7.5,4) {$\dots$};
\node at (8.5,4) {$\dots$};
\node at (9.2,1) {$0$};
\node at (9.2,2) {$\vdots$};
\node at (9.2,3) {$0$};
\node at (9.2,5) {$2$};
\node at (9.2,6) {$2$};
\node at (9.2,7.1) {$\vdots$};
\node at (9.2,8) {$2$};
\foreach \i in {4.5,5.5,6.5} {
\node at (\i,8.8) {$0$};
}
\foreach \i in {7.5,8.5} {
\node at (\i,8.8) {$\dots$};
}
\foreach \i in {4.5,5.5,6.5} {
\draw[thick,->] (\i,-0.8) -- (\i,-0.2);
}
\node at (4.5,-1) {$z_1$};
\node at (5.5,-1) {$z_2$};
\node at (6.5,-1) {$z_3$};
\foreach \i in {4.5,5.5,6.5,7.5,8.5} { \foreach \j in {5,6,7,8} {
\node at (\i,\j)[circle,fill,inner sep=2pt]{};
}}
\end{tikzpicture}}
\end{multline}
where summation over $S \in \mathfrak{s}(2)$ is restricted to $2$-strings such that $|S|=|I|=|J|$. 

The final step is to apply a symmetry relation to the top half of the lattice on the right hand side of \eqref{eq:K-right-flip}, similarly to that of Proposition \ref{prop:IK-flip-sym}. Letting the top half of the lattice be denoted by
\begin{equation}
\label{eq:IK-Fdot}
\raisebox{-37.5mm}{
\begin{tikzpicture}[scale = 0.6]
\foreach \i in {0,1,2,3,4,6,7} {
    \draw[->,lgray,line width=1.5pt] (\i,0) -- (\i,7);}
    \foreach \i in {1,2,3,4,5,6} {
    \draw[lgray,line width=1.5pt] (-1,\i) -- (4.3,\i);}
    \foreach \i in {1,2,3,4,5,6} {
    \draw[->,lgray,line width=1.5pt] (5.5,\i) -- (8,\i);}
    \foreach \i  in {0,1,2,3,4,6,7} { \foreach \j in {1,2,3,4,5,6} {
\node at (\i,\j)     [circle,fill, inner sep=1.5pt] {};}
}
    \foreach \i in {1,2,3,4,5,6} {
    \node at (5,\i) {$\dots$};}
    \foreach \i in {1,2,6} {
    \node at (8.3,\i) {\tiny $2$};
    }
    \foreach \i in {3,4,5} {
    \node at (8.3,\i+0.2) {$\vdots$};
    }
    \node[left] at (-1.8,1) {$y_M^{-1} \to$};
    \foreach \i in {3,4,5} {
    \node at (-2.2,\i) {$\vdots$};
    }
    \node[left] at (-1.8,6) {$y_1^{-1} \to$};
    \node at (-1.4,1) {\tiny $\bar{J}_M$};
    \node at (-1.4,6) {\tiny $\bar{J}_{1}$};
    \node[left] at (-3.8,3.5) {$F_{S}^{\bullet J}(y_1^{-1},\dots,y_M^{-1};\bm{z})=$};
    \node at (0,7.5) {\tiny $0$};
    \node at (1,7.5) {\tiny $0$};
    \node at (2,7.5) {\tiny $0$};
    \node at (3,7.5) {$\dots$};
    \node at (4,7.5) {$\dots$};
    \node at (0,-2) {$z_1$};
    \node at (1,-2) {$z_2$};
    \node at (2,-2) {$z_3$};
    \node at (3,-2) {$\dots$};
    \node at (4,-2) {$\dots$};
    \draw[thick,->] (0,-1.5) -- (0,-0.85);
    \draw[thick,->] (1,-1.5) -- (1,-0.85);
    \draw[thick,->] (2,-1.5) -- (2,-0.85);
    \node at (0,-0.5) {\tiny $S_1$};
    \node at (1,-0.5) {\tiny $S_2$};
    \node at (2,-0.5) {\tiny $S_3$};
    \node at (3,-0.5) {$\dots$};
    \node at (4,-0.5) {$\dots$};
\end{tikzpicture}}
\end{equation}
one may prove the relation
\begin{align*}
F^{\bullet J}_S(y_1^{-1},\dots,y_M^{-1};\bm{z})
=
(-1)^{|S|/2}
c_S(q)
F^{J}_S(y_1,\dots,y_M;q^{-3}\bm{z}^{-1})
\end{align*}
by repeated application of \eqref{IK-tower-sym} to the columns of the partition function \eqref{eq:IK-Fdot}, where $c_S(q)$ is given by \eqref{eqcsfact1} and $F^{J}_S(y_1,\dots,y_M;q^{-3}\bm{z}^{-1})$ by \eqref{eqstableIK191}. Combining everything, we have shown that
\begin{multline*}
\lim_{\bm{u}\to \infty} \frac{K_{N,M}}{F_{(2^{N})}(*;\bm{u})}
=
\sum_{S \in \mathfrak{s}(2)}
(-1)^{|S|/2}
q^{4MN-|S|^2}
c_S(q)
\times
\\
\displaystyle
\sum_{\substack{(I_1,\dots,I_N) \\ (J_1,\dots,J_M) \\ |I|=|J|=|S|}}
\frac{
(-1)^{\mathfrak{c}_1(I)/2+\mathfrak{c}_1(J)/2}
(1-q^{-1})^{\mathfrak{c}_1(I)+\mathfrak{c}_1(J)}
}
{q^{\inv(I)+\inv(J)}}
F^{I}_S(x_1,\dots,x_N;\bm{z})
F^{J}_S(y_1,\dots,y_M;q^{-3}\bm{z}^{-1})
\\
=
\sum_{S \in \mathfrak{s}(2)}
(-1)^{|S|/2}
q^{4MN-|S|^2}
c_S(q)
H_S(x_1,\dots,x_N;\bm{z})
H_S(y_1,\dots,y_M;q^{-3}\bm{z}^{-1}),
\end{multline*}
completing the proof of \eqref{eq:IK-stable-cauchy}.
\end{proof}

\section{Properties of rational symmetric functions}
\label{sec:19-prop}

This section serves as a prelude to the next one. Our aim is to provide a list of properties satisfied by the rational symmetric functions $F_S(x_1,\dots,x_N;\bm{z})$, before writing down a solution to these properties in Section \ref{sec:19-sym}.

In Section \ref{ssec:exchange}, we state a family of exchange relations obeyed by the function $F_S(x_1,\dots,x_N;\bm{z})$. These relations allow one to start from an arbitrary $2$-string $S$ and progressively order it. More precisely, these relations provide a way to convert an arbitrary function $F_S$ into a linear combination of functions $F_T$, such that each $T \in \mathfrak{s}(2)$ in this sum has the form $T_1 \geq T_2 \geq T_3 \geq  \cdots$. We shall refer to such $2$-strings as {\it ordered}. 

In Section \ref{ssec:recursion}, we use the lattice formulation of $F_T(x_1,\dots,x_N;\bm{z})$ to derive recursion relations that it satisfies when the indexing $2$-string $T$ is ordered. These recursion relations provide the foundation for standard Lagrange interpolation techniques. 

Finally, in Section \ref{ssec:unique}, we prove that all of these properties are uniquely-determining, meaning that any family of functions that obey the properties must be equal to $F_S(x_1,\dots,x_N;\bm{z})$ itself.

\subsection{Exchange relations}
\label{ssec:exchange}

\begin{defn}
Fix an integer $k \geq 1$ and let $S,T \in \mathfrak{s}(2)$ be $2$-strings such that $(S_k,S_{k+1}) \not= (T_k,T_{k+1})$, but $S_k+S_{k+1} = T_k+T_{k+1}$, and $S_i=T_i$ for all $i \not= k,k+1$. We say that $S,T$ are $k$-exchangeable and write $S \sim_k T$. 
\end{defn}

\begin{prop}
\label{prop:IK-exch}
Fix an integer $k \geq 1$ and a $2$-string $S \in \mathfrak{s}(2)$ such that $S_k < S_{k+1}$. The following relation then holds:
\begin{multline}
\label{eq:IK-exchange}
W_{z_{k+1}/z_k}(S_{k+1},S_k;S_{k+1},S_k)
F_S(x_1,\dots,x_N;\bm{z})
=
\\
F_{\sigma_k \cdot S}(x_1,\dots,x_N;\sigma_k \cdot \bm{z})
-
\sum_{T:S \sim_k T}
W_{z_{k+1}/z_k}(T_{k+1},T_k;S_{k+1},S_k)
F_T(x_1,\dots,x_N;\bm{z}),
\end{multline}
with the sum taken over $2$-strings $T$ such that $S,T$ are $k$-exchangeable, and where $\sigma_k\cdot S = (\dots,S_{k+1},S_k,\dots)$, $\sigma_k \cdot \bm{z} = (\dots,z_{k+1},z_k,\dots)$ denote transposition of the $k$-th and $(k+1)$-th elements of $S,\bm{z}$.
\end{prop}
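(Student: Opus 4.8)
The plan is to mimic the proof of the six-vertex exchange relation (Proposition \ref{prop:6-exch}), which follows from a single application of the Yang--Baxter equation to two adjacent columns. Concretely, I would start from the lattice representation \eqref{eq:IK-F-truncate} of $F_{\sigma_k \cdot S}(x_1,\dots,x_N;\sigma_k \cdot \bm{z})$ — note that the spectral parameters on columns $k$ and $k+1$ have been swapped to $z_{k+1},z_k$ respectively, and the boundary data on those top edges is $S_k,S_{k+1}$ (after the $\sigma_k$ action the string reads $(\dots,S_{k+1},S_k,\dots)$, so column $k$ carries $S_{k+1}$ and column $k+1$ carries $S_k$). I would attach at the base of these two columns a frozen $R$-matrix vertex $W_{z_{k+1}/z_k}(0,0;0,0)=1$, which does not change the partition function.

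Next I would push this inserted vertex upward through columns $k$ and $k+1$ using the Yang--Baxter equation \eqref{eq19ybe1}, exactly as in the six-vertex argument. Because all incoming bottom edges are $0$, the vertex travels upward freely until it emerges at the top of the lattice between the two columns. Unlike the six-vertex case, where the emerging $R$-vertex is typically frozen, here the outgoing top state can split in several ways consistent with conservation: the total path count across the two top edges is fixed at $S_k+S_{k+1}$, so the emerging configurations are indexed precisely by the possible values $(T_k,T_{k+1})$ with $T_k+T_{k+1}=S_k+S_{k+1}$ — that is, by the $2$-strings $T$ with $S \sim_k T$, together with the ``diagonal'' term $T=S$. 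Reading off the weight of the emergent $R$-vertex in each case gives a vertex weight $W_{z_{k+1}/z_k}(T_{k+1},T_k;S_{k+1},S_k)$ multiplying $F_T(x_1,\dots,x_N;\bm{z})$ (now with the \emph{unswapped} alphabet $\bm{z}$, since after the YBE passage the columns carry $z_k,z_{k+1}$ in their original order). Isolating the $T=S$ term — whose coefficient is $W_{z_{k+1}/z_k}(S_{k+1},S_k;S_{k+1},S_k)$ — and moving the remaining $T$ terms to the other side yields exactly \eqref{eq:IK-exchange}.

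The main obstacle, relative to the six-vertex proof, is purely bookkeeping: one must verify that the top outgoing state of the emergent $R$-vertex genuinely ranges over \emph{all} $k$-exchangeable profiles and that the associated vertex weights appear with the stated arguments $(T_{k+1},T_k;S_{k+1},S_k)$. This requires a careful reading of the clockwise argument convention in \eqref{eqdefn19w} and attention to which of the two columns is the ``incoming'' versus ``outgoing'' horizontal edge of the $R$-vertex after it has been transported to the top. In the six-vertex case there is only one nontrivial off-diagonal term; here, when $(S_k,S_{k+1})=(0,2)$ for instance, the intermediate state $T=(1,1)$ contributes via the splitting weights $W_{z/x}(1,1;0,2)$ and its partners, so one must confirm that the single YBE application reproduces all such channels with the correct signs. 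I would organise this by noting that the hypothesis $S_k<S_{k+1}$ guarantees a well-defined ``sorted target'' and by checking the claim case-by-case on the admissible pairs $(S_k,S_{k+1})\in\{(0,1),(0,2),(1,2)\}$, appealing to Figure \ref{fig:19weights} for the explicit weights only where needed. Since the Yang--Baxter equation holds as an exact operator identity, the structure of the argument is identical to Proposition \ref{prop:6-exch}; only the number of emergent terms has increased.
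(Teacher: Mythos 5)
Your proposal is correct and follows essentially the same route as the paper: start from $F_{\sigma_k\cdot S}(x_1,\dots,x_N;\sigma_k\cdot\bm{z})$, attach a frozen $W_{z_{k+1}/z_k}(0,0;0,0)$ vertex at the base of columns $k$ and $k+1$, transport it to the top by the Yang--Baxter equation, expand over the emergent states $(T_k,T_{k+1})$ with $T_k+T_{k+1}=S_k+S_{k+1}$ to get $F_{\sigma_k\cdot S}(x_1,\dots,x_N;\sigma_k\cdot\bm{z})=\sum_T W_{z_{k+1}/z_k}(T_{k+1},T_k;S_{k+1},S_k)\,F_T(x_1,\dots,x_N;\bm{z})$, and isolate the diagonal term $T=S$. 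The extra case-by-case bookkeeping you flag is not needed in the paper's argument, since the expansion is written uniformly over all conserving pairs with external horizontal edges held fixed, but including it does no harm.
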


\begin{proof}
The proof is by application of the Yang--Baxter equation \eqref{eq19ybe1} to the $k$-th and $(k+1)$-th columns of the partition function in question. 

In particular, starting from the lattice representation of $F_{\sigma_k \cdot S}(x_1,\dots,x_N;\sigma_k \cdot \bm{z})$, one may attach a frozen $W_{z_{k+1}/z_k}(0,0;0,0)$ vertex at the base of the $k$-th and $(k+1)$-th columns. By repeated application of the Yang--Baxter equation, this vertex emerges from the top of the same columns, leading to the identity
\begin{align*}
\begin{tikzpicture}[scale=0.8,baseline=1.4cm]
    \foreach \i in {3.5,4.5} {
    \draw[->,lgray,line width=1.5pt] (\i,0) -- (\i,4);}
    \foreach \i in {1,2,3} {
    \draw[->,lgray,line width=1.5pt] (2.8,\i) -- (5.2,\i);}
    \draw[<-,lgray,line width=1.5pt] (3.5,0) -- (4.5,-1);
    \draw[<-,lgray,line width=1.5pt] (4.5,0) -- (3.5,-1);
    \node[left] at (3.5,0) {$0$};
    \node[right] at (4.5,0) {$0$};
    \node at (3.5,4.25) {$S_{k+1}$};
    \node at (4.5,4.25) {$S_k$};
    \node at (3.5,-1.25) {$0$};
    \node at (4.5,-1.25) {$0$};
    \node at (3.5,-2) {$z_k$};
    \node at (4.5,-2) {$z_{k+1}$};
\end{tikzpicture}
=
\sum_{(T_k,T_{k+1})}
\begin{tikzpicture}[scale=0.8,baseline=1.4cm]
    \foreach \i in {6.5,7.5} {
    \draw[->,lgray,line width=1.5pt] (\i,-1) -- (\i,3);}
    \foreach \i in {1,2,3} {
    \draw[->,lgray,line width=1.5pt] (5.8,\i-1) -- (8.2,\i-1);}
    \draw[<-,lgray,line width=1.5pt] (6.5,4) -- (7.5,3);
    \draw[<-,lgray,line width=1.5pt] (7.5,4) -- (6.5,3);
    \node at (6.25,4.25) {$S_{k+1}$};
    \node at (7.75,4.25) {$S_k$};
    \node at (6.5,-1.25) {$0$};
    \node at (7.5,-1.25) {$0$};
    \node at (6.5,-2) {$z_k$};
    \node at (7.5,-2) {$z_{k+1}$};
    \node[left] at (6.5,3.25) {$T_k$};
    \node[right] at (7.5,3.25) {$T_{k+1}$};
\end{tikzpicture}
\end{align*}
where the states assigned to external horizontal edges are arbitrary, but held fixed on both sides of the equation. We then read off the equation
\begin{align*}
F_{\sigma_k \cdot S}(x_1,\dots,x_N;\sigma_k \cdot \bm{z})
=
\sum_{T}
W_{z_{k+1}/z_k}(T_{k+1},T_k;S_{k+1},S_k)
F_{T}(x_1,\dots,x_N;\bm{z})
\end{align*}
where the sum is over $2$-strings such that $T_i=S_i$ for all $i \not= k,k+1$. Equation \eqref{eq:IK-exchange} then follows by rearrangement.
\end{proof}

\begin{rmk}
While the $k$-th and $(k+1)$-th elements of $S$ in Proposition \ref{prop:IK-exch} are increasing, it is easily seen that all $2$-strings present on the right hand side of \eqref{eq:IK-exchange} are non-increasing with respect to the same two elements. It is therefore possible to iterate \eqref{eq:IK-exchange}, converting an arbitrary initial $2$-string into a linear combination of non-increasing ones.
\end{rmk}

\begin{prop}
\label{prop:11}
Fix an integer $N \geq 1$. The following relation holds:
\begin{multline}
\label{11-02}
W_{z_2/z_1}(1,1;2,0)
F_{(1^{2N})}(x_1,\dots,x_N;\bm{z})
=
\\
F_{(2,0,1^{2N-2})}(x_1,\dots,x_N;\sigma_1 \cdot \bm{z})
-
W_{z_2/z_1}(0,2;2,0)
F_{(2,0,1^{2N-2})}(x_1,\dots,x_N;\bm{z})
\\
-
W_{z_2/z_1}(2,0;2,0)
\prod_{i=1}^{N}
W_{z_1/x_i}(0,2;0,2)
F_{(2,1^{2N-2})}(x_1,\dots,x_N;\widehat{\bm{z}}_1),
\end{multline}
where $\widehat{\bm{z}}_1 = (z_2,z_3,\dots)$ denotes the secondary alphabet with the omission of $z_1$.
\end{prop}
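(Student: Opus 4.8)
The plan is to obtain \eqref{11-02} in two moves: first an application of the already-established exchange relation of Proposition \ref{prop:IK-exch}, and then a column-freezing argument that rewrites the single remaining term. The key conceptual step is to recognize that \eqref{11-02} is \emph{not} an exchange relation applied to $(1^{2N})$ directly (whose first two entries are equal), but rather the exchange relation applied to a different, strictly increasing string, followed by a recursion that lowers the number of columns.

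For the first move, I would invoke Proposition \ref{prop:IK-exch} at $k=1$ with the $2$-string $S = (0,2,1^{2N-2})$, which satisfies $S_1 = 0 < 2 = S_2$ and has weight $|S| = 2N$. Here $\sigma_1 \cdot S = (2,0,1^{2N-2})$, and the $2$-strings $T$ that are $1$-exchangeable with $S$ are exactly $(2,0,1^{2N-2})$ and $(1,1,1^{2N-2}) = (1^{2N})$. Reading off the weights $W_{z_2/z_1}(T_2,T_1;S_2,S_1) = W_{z_2/z_1}(T_2,T_1;2,0)$ for these two strings, Proposition \ref{prop:IK-exch} reads
\begin{multline*}
W_{z_2/z_1}(2,0;2,0)\, F_{(0,2,1^{2N-2})}(x_1,\dots,x_N;\bm{z})
=
F_{(2,0,1^{2N-2})}(x_1,\dots,x_N;\sigma_1 \cdot \bm{z})
\\
-
W_{z_2/z_1}(0,2;2,0)\, F_{(2,0,1^{2N-2})}(x_1,\dots,x_N;\bm{z})
-
W_{z_2/z_1}(1,1;2,0)\, F_{(1^{2N})}(x_1,\dots,x_N;\bm{z}).
\end{multline*}
Solving this for the term $W_{z_2/z_1}(1,1;2,0)\, F_{(1^{2N})}$ already reproduces the first two terms on the right-hand side of \eqref{11-02}, and leaves only the task of identifying $W_{z_2/z_1}(2,0;2,0)\, F_{(0,2,1^{2N-2})}(x_1,\dots,x_N;\bm{z})$ with the third term.

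For the second move, I would establish the freezing identity
\[
F_{(0,2,1^{2N-2})}(x_1,\dots,x_N;\bm{z})
=
\prod_{i=1}^{N} W_{z_1/x_i}(0,2;0,2)\;
F_{(2,1^{2N-2})}(x_1,\dots,x_N;\widehat{\bm{z}}_1).
\]
In the lattice \eqref{pictureofrational1} for $F_{(0,2,1^{2N-2})}$, the first column carries top edge $S_1 = 0$ and bottom edge $0$, while every left external edge is $2$. Working down the column from the topmost vertex using the conservation property \eqref{conserve}: the top vertical output $0$ together with the left input $2$ forces the vertical input to equal $0$ and the right output to equal $2$; inducting downward shows every vertical edge in the first column is $0$ and every horizontal edge exiting it is $2$. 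Thus the whole first column is frozen into the vertices $W_{z_1/x_i}(0,2;0,2)$, contributing the stated product, and deleting it leaves precisely the partition function for $F_{(2,1^{2N-2})}$ with secondary alphabet $\widehat{\bm{z}}_1$.

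Substituting this freezing identity into the rearranged exchange relation yields \eqref{11-02} at once. I do not expect a genuine obstacle: both ingredients are routine given the machinery in place. The one point requiring care is the bookkeeping — correctly matching each argument $W_{z_2/z_1}(T_2,T_1;2,0)$ to its exchanged string, and verifying that the first column is \emph{forced} (not merely permitted) to freeze, which is exactly where conservation does the work. The only other thing worth checking is the small-$N$ edge case $N=1$, where the strings collapse to $(0,2)$, $(2,0)$, $(1,1)$, and $(2)$, and the identity remains valid.
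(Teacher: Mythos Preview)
Your proof is correct and takes essentially the same route as the paper: the paper derives \eqref{eq:ab-sum} by re-running the Yang--Baxter zipper argument from the proof of Proposition \ref{prop:IK-exch} in this specific case, whereas you simply invoke Proposition \ref{prop:IK-exch} with $S=(0,2,1^{2N-2})$ directly, which is a mild streamlining. Your column-freezing step coincides with the paper's \eqref{eq:column-freeze}.
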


\begin{proof}
The proof follows along similar lines to that of Proposition \ref{prop:IK-exch}. This time, one starts from the lattice representation of $F_{(2,0,1^{2N-2})}(x_1,\dots,x_N;\sigma_1 \cdot \bm{z})$ and attaches a frozen $W_{z_2/z_1}(0,0;0,0)$ vertex at the base of the first and second columns. By repeated application of the Yang--Baxter equation, this vertex emerges from the top of the same columns, leading to the identity
\begin{align*}
\begin{tikzpicture}[scale=0.8,baseline=1.4cm]
    \foreach \i in {3.5,4.5} {
    \draw[->,lgray,line width=1.5pt] (\i,0) -- (\i,4);}
    \foreach \i in {1,2,3} {
    \draw[->,lgray,line width=1.5pt] (2.8,\i) -- (5.2,\i);}
    \draw[<-,lgray,line width=1.5pt] (3.5,0) -- (4.5,-1);
    \draw[<-,lgray,line width=1.5pt] (4.5,0) -- (3.5,-1);
    \node[left] at (3.5,0) {$0$};
    \node[right] at (4.5,0) {$0$};
    \node at (3.5,4.25) {$2$};
    \node at (4.5,4.25) {$0$};
    \node at (3.5,-1.25) {$0$};
    \node at (4.5,-1.25) {$0$};
    \node at (3.5,-2) {$z_1$};
    \node at (4.5,-2) {$z_2$};
    \foreach\i in {1,2,3}{
    \node[left] at (2.8,\i) {$2$};
    } 
\end{tikzpicture}
=
\sum_{(a,b):a+b=2}
\begin{tikzpicture}[scale=0.8,baseline=1.4cm]
    \foreach \i in {6.5,7.5} {
    \draw[->,lgray,line width=1.5pt] (\i,-1) -- (\i,3);}
    \foreach \i in {1,2,3} {
    \draw[->,lgray,line width=1.5pt] (5.8,\i-1) -- (8.2,\i-1);}
    \draw[<-,lgray,line width=1.5pt] (6.5,4) -- (7.5,3);
    \draw[<-,lgray,line width=1.5pt] (7.5,4) -- (6.5,3);
    \node at (6.25,4.25) {$2$};
    \node at (7.75,4.25) {$0$};
    \node at (6.5,-1.25) {$0$};
    \node at (7.5,-1.25) {$0$};
    \node at (6.5,-2) {$z_1$};
    \node at (7.5,-2) {$z_2$};
    \node[left] at (6.5,3.25) {$a$};
    \node[right] at (7.5,3.25) {$b$};
    \foreach\i in {1,2,3}{
    \node[left] at (5.8,\i-1) {$2$};
    }
\end{tikzpicture}
\end{align*}
where the states assigned to left horizontal edges are all $2$, as that coincides with the left boundary of the partition function \eqref{pictureofrational1}. We then obtain the equation
\begin{align}
\label{eq:ab-sum}
F_{(2,0,1^{2N-2})}(x_1,\dots,x_N;\sigma_1 \cdot \bm{z})
=
\sum_{(a,b):a+b=2}
W_{z_2/z_1}(b,a;2,0)
F_{(a,b,1^{2N-2})}(x_1,\dots,x_N; \bm{z}).
\end{align}
The sum over $(a,b)$ such that $a+b=2$ consists of three possible pairs: namely, $(a,b)=(2,0)$, $(a,b)=(1,1)$ and $(a,b)=(0,2)$. The final of these leads to a simplification of the function $F_{(a,b,1^{2N-2})}(x_1,\dots,x_N; \bm{z})$:
\begin{align}
\label{eq:column-freeze}
F_{(0,2,1^{2N-2})}(x_1,\dots,x_N; \bm{z})
=
\prod_{i=1}^{N}
W_{z_1/x_i}(0,2;0,2)
F_{(2,1^{2N-2})}(x_1,\dots,x_N; \widehat{\bm{z}}_1),
\end{align}
which is easily deduced by freezing of the first column of the partition function \eqref{pictureofrational1} for $F_S(x_1,\dots,x_N;\bm{z})$ when $S_1=0$. Combining \eqref{eq:ab-sum} and \eqref{eq:column-freeze} then yields the result \eqref{11-02}.
\end{proof}

\begin{rmk}\label{rem:2string-order}
Collectively, Propositions \ref{prop:IK-exch} and \ref{prop:11} have the following implication: not only is it possible to completely order the $2$-strings that index our symmetric functions, it is also possible to ensure that every ordered $2$-string begins with a $2$. 

To see this, recall that Proposition \ref{prop:IK-exch} allows $F_S$ to be written as a linear combination of functions $F_T$, where $T$ is ordered. If $T=(1^{2N})$, the only ordered $2$-string that does not begin with $2$, application of Proposition \ref{prop:11} results in a linear combination of $2$-strings that begin with $2$. While not all of the $2$-strings appearing on the right hand side of \eqref{11-02} are non-increasing, renewed application of \eqref{eq:IK-exchange} allows the strings $(2,0,1^{2N-2})$ to be ordered (without disturbing the $2$ at the head of the string).

\end{rmk}

\subsection{Recursion relations}
\label{ssec:recursion}

\begin{prop}
\label{prop:IK-properties}
Fix integers $N \in \mathbb{Z}_{>0}$ and $M \in \mathbb{Z}_{\geq 0}$ such that $N>M$, and let $T=(2^{N-M},1^{2M})$ denote a non-increasing $2$-string. The function $F_T(x_1,\dots,x_N;\bm{z})$ satisfies the following properties:

\begin{enumerate}[wide,labelindent=0pt]

\item The function $F_T(x_1,\dots,x_N;\bm{z})$ is symmetric with respect to its primary alphabet $(x_1,\dots,x_N)$.
\medskip

\item Introduce the normalized function 
\begin{equation*}
\tilde{F}_{T}(x_1,\dots,x_N;\bm{z}) 
= 
\prod_{i=1}^{N}\prod_{j=1}^{N+M}
(x_i-q^2 z_j)(x_i-q^3 z_j)
F_{T}(x_1,\dots,x_N;\bm{z}).
\end{equation*}
Then $\tilde{F}_{T}$ is a polynomial in $z_1$ of degree $2N$.
\medskip

\item The function $F_{T}$ vanishes at $z_1=0$:
\begin{align*}
F_{T}(x_1,\dots,x_N;\bm{z})
\Big|_{z_1 = 0} = 0.
\end{align*}
\medskip

\item The function $F_{T}$ has the following recursion relation with respect to $z_1$:
\begin{align*}
F_{T}(x_1,\dots,x_N;\bm{z})
\Big|_{z_1 = x_N} = 
F_{T\backslash 2}(x_1,\dots,x_{N-1};\widehat{\bm{z}}_1), 
\end{align*}
where $\widehat{\bm{z}}_1 = (z_2,z_3,\dots)$ denotes the secondary alphabet with the omission of $z_1$ and $T\backslash2 = (2^{N-M-1},1^{2M})$.
\medskip

\item The function $F_{T}$ has a further recursion relation with respect to $z_1$:
\begin{multline}
\label{eqdomw3}
{\rm Res}
\Big\{
F_{T}(x_1,\dots,x_N;\bm{z})
\Big\}_{z_1 = q^{-3}x_N}
=
-\frac{(1-q^2)(1-q^3)}{q^6(1-q)}
x_N
\\[5pt]
\times
\prod_{i=1}^{N-1}
\frac{(x_N-q^2 x_i)(x_N-q^3 x_i)}{(x_N-x_i)(x_N-q x_i)}
\prod_{j=2}^{N+M}
\frac{(x_N-z_j)(x_N-q z_j)}{(x_N-q^2 z_j)(x_N-q^3 z_j)}
\sum_{U} \Phi_{U} F_U(x_1,\dots,x_{N-1};\widehat{\bm{z}}_1),
\end{multline}
with the sum taken over $2$-strings $U=(U_2,\dots,U_{N+M})$ such that $|U| = 2N-2$, and where the coefficients $\Phi_U$ are defined as the one-row partition functions
\begin{equation}
\label{cu-def}
\Phi_U
\equiv
\Phi_{(U_2,\dots,U_{N+M})}
=
\raisebox{-24mm}{
\begin{tikzpicture}[scale=0.8,baseline=-2.45cm]
\node[left] at (0.8,0) {$2$};
\draw[->,thick] (9.2,0) -- (8.7,0);
\node at (10,0) {$q^{-3}x_N$}; 
\draw[->,lgray,line width=1.5pt] (8,0) -- (1,0);
\foreach \i in {2,3,4,5,6,7} {
\draw[->,lgray,line width=1.5pt] (\i,-1) -- (\i,1);
}
\foreach \i in {4,5,6} {
\node at (\i,-2) {$\dots$};
}
\node at (2,-2.3) {$z_2$};
\node at (3,-2.3) {$z_3$};
\node at (7,-2.3) {$z_{N+M}$};
\node at (2,-1.2) {$U_2$};
\node at (3,-1.2) {$U_3$};
\foreach \i in {4,5,6} {
\node at (\i,-1.2) {$\dots$};
}
\node at (7,-1.2) {$U_{N+M}$};
\foreach \i in {2,3,7} {
\draw[->,thick
] (\i,-2) -- (\i,-1.5);
}
\node at (2,1.2) {$2$};
\node at (3,1.2) {$\dots$};
\node at (4,1.2) {$2$};
\node at (5,1.2) {$1$};
\node at (6,1.2) {$\dots$};
\node at (7,1.2) {$1$};
\node at (8.4,0) {$2$};
\end{tikzpicture}}
\end{equation}
with top boundary conditions corresponding with the string $(2^{N-M-1})\cup (1^{2M})$.
\medskip

\item In the case $N=1$, $M=0$ we have the explicit evaluation
\begin{align*}
F_{(2)}(x_1;\bm{z}) 
= 
W_{z_1/x_1}(0,2;2,0) 
= 
\frac{(1-q^2)(x_1+q^2 x_1-q^2 z_1-q^3 z_1)z_1}
{(x_1-q^2 z_1)(x_1-q^3 z_1)}.
\end{align*}

\end{enumerate}
\end{prop}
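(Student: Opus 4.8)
The plan is to establish all six properties directly from the lattice definition \eqref{pictureofrational1} of $F_T$, using throughout that $T=(2^{N-M},1^{2M})$ begins with a $2$ (as $N>M$), so that the leftmost column carries the state $0$ at its bottom boundary and $T_1=2$ at its top. Property (1) is free: it is the $S=T$ case of Theorem \ref{thm:IK-FG-sym}. For property (2) I would pass to the polynomial-normalized weights $\tilde{W}_{z/x}(i,j;k,\ell)=(x-q^2z)(x-q^3z)W_{z/x}(i,j;k,\ell)$, which one reads off Figure \ref{fig:19weights} to be polynomials of degree at most $2$ in $z$. Since the dependence of $F_T$ on $z_1$ is confined to the $N$ vertices of the leftmost column, multiplying by the $j=1$ factors $\prod_{i=1}^{N}(x_i-q^2z_1)(x_i-q^3z_1)$ of the normalization turns that column into a product of $N$ polynomials of degree at most $2$ in $z_1$, giving the claimed degree bound $2N$ (all remaining normalization factors are independent of $z_1$).

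Property (3) is a flux argument in the leftmost column. Every vertex there has left-incoming state $2$, so the vertical state is nondecreasing from $0$ at the bottom to $T_1=2$ at the top and must strictly increase somewhere; the only admissible weights that increase it are $W_{z_1/x_i}(0,2;2,0)$, $W_{z_1/x_i}(0,2;1,1)$ and $W_{z_1/x_i}(1,2;2,1)$, each of which carries an explicit factor of $z_1$ in its numerator and hence vanishes at $z_1=0$, while the non-increasing column weights remain finite; thus every configuration vanishes. Property (6) is the single-vertex reading of Figure \ref{fig:19weights}. For property (4) I would use (1) to place $x_N$ in the bottom row and note that at $z_1=x_N$ the bottom-left weight $W_{z_1/x_N}(0,2;k,\ell)$ is forced to be $(0,2;2,0)$: the two other options $(0,2;0,2)$ and $(0,2;1,1)$ carry a factor $(x_N-z_1)$ and vanish, whereas $W_{z_1/x_N}(0,2;2,0)=1$ at $z_1=x_N$. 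This turns both incoming paths straight up the first column, freezing the rest of the bottom row to $(0,0;0,0)$ vertices and the rest of the first column to $(2,2;2,2)$ vertices, and leaves precisely $F_{T\backslash 2}(x_1,\dots,x_{N-1};\widehat{\bm{z}}_1)$.

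Property (5), the residue recursion at $z_1=q^{-3}x_N$, is the heart of the proposition and the main obstacle. Placing $x_N$ in the bottom row again, I would first observe that $F_T$ has a simple pole at $z_1=q^{-3}x_N$ coming \emph{solely} from the bottom-left vertex $W_{z_1/x_N}(0,2;k,\ell)$: it is the unique vertex whose weight carries the factor $(x_N-q^3z_1)$ in its denominator and depends on $z_1$, since all other rows of the first column use distinct parameters $x_i$ with $i<N$, and all other columns of the bottom row use distinct $z_j$ with $j>1$. The delicate point, in contrast to (4), is that no single local configuration is selected: all three of $(0,2;2,0)$, $(0,2;1,1)$, $(0,2;0,2)$ have nonvanishing residue at $z_1=q^{-3}x_N$, so the residue must be reorganised rather than frozen.

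To carry out that reorganisation I would exploit the crossing structure of the model at the distinguished ratio $z_1/x_N=q^{-3}$, which is exactly the crossing point of Proposition \ref{nineteenflip1} and equation \eqref{eqnineteencym1}. The expectation is that at this point the bottom row (at parameter $x_N$) can be reflected through the remaining $N-1$ horizontal rows by repeated use of the Yang--Baxter equation \eqref{eq19ybe1} and unitarity \eqref{eqpropybe1}, producing the factor $\prod_{i=1}^{N-1}\frac{(x_N-q^2x_i)(x_N-q^3x_i)}{(x_N-x_i)(x_N-qx_i)}=\prod_{i=1}^{N-1}W_{x_i/x_N}(0,2;0,2)^{-1}$, while the frozen pass-through of the bottom row across columns $2,\dots,N+M$ contributes $\prod_{j=2}^{N+M}W_{z_j/x_N}(0,2;0,2)$; inserting a complete set $\sum_U\ket{U}\bra{U}$ of interface states between the reflected bottom row and the upper $N-1$ rows then splits the lattice into the one-row object $\Phi_U$ at the reflected parameter $q^{-3}x_N$ and the smaller function $F_U(x_1,\dots,x_{N-1};\widehat{\bm{z}}_1)$. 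I expect the real work to lie in two places: justifying that the reflection to parameter $q^{-3}x_N$ is exact precisely at the residue, and pinning down the scalar prefactor $-\frac{(1-q^2)(1-q^3)}{q^6(1-q)}x_N$ together with its sign and powers of $q$ (a direct residue of $W_{z_1/x_N}(0,2;0,2)$ alone gives $+\frac{(1-q^2)(1-q^3)}{q^7(1-q)}x_N$, so the reflection must contribute the discrepancy). I would fix the scalar by a direct computation in the base case and propagate it through the reflection, cross-checking the whole identity against the exchange relations of Propositions \ref{prop:IK-exch} and \ref{prop:11}.
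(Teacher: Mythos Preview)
Your arguments for properties (1)--(4) and (6) are correct and match the paper's proofs almost verbatim: symmetry from Theorem \ref{thm:IK-FG-sym}, polynomial normalization via $\tilde{W}_{x,z}$, the first-column flux argument for the vanishing at $z_1=0$, the freezing at $z_1=x_N$ via $W_{z_1/x_N}(0,2;0,2)=W_{z_1/x_N}(0,2;1,1)=0$, and the single-vertex evaluation of $F_{(2)}$.

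For property (5), however, your approach diverges from the paper's and contains a genuine gap. You correctly diagnose the obstruction at the bottom-left corner: all three configurations $(0,2;0,2)$, $(0,2;1,1)$, $(0,2;2,0)$ carry the pole at $z_1=q^{-3}x_N$, so nothing freezes locally. But your proposed remedy---reflecting the $x_N$ row through the other $N-1$ rows via crossing and unitarity---is not a mechanism that the model supplies. The crossing relation of Proposition \ref{nineteenflip1} relates a vertex at ratio $y/x$ to one at ratio $x/(q^3y)$ with complemented horizontal states; at $z_1/x_N=q^{-3}$ this lands at ratio $1$, which does not by itself produce a row reflection or explain how the one-row object $\Phi_U$ at spectral parameter $q^{-3}x_N$ emerges. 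Row-to-row Yang--Baxter moves commute $\mathcal{C}(x_N)$ with the other $\mathcal{C}(x_i)$, which is just the symmetry you already used, and there is no obvious way to manufacture the interface sum $\sum_U$ and the two product prefactors from that.

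The paper's trick is to move the pole to a corner where it \emph{does} freeze. One first rewrites $F_T$ by attaching a trivial right-to-left $z_1$-row at the bottom of the lattice (frozen to weight $1$) and then uses the column Yang--Baxter equation to push the $z_1$-line from the first column to the last, yielding
\[
F_T(x_1,\dots,x_N;\bm{z})=\sum_{U}\Psi_U(z_1)\,F_{\omega\cdot U}(x_1,\dots,x_N;\omega\cdot\bm{z}),
\]
where $\omega$ cycles $z_1$ to position $N+M$ and $\Psi_U(z_1)$ is a one-row partition function at horizontal parameter $z_1$ with right boundary $U_1$. In $F_{\omega\cdot U}$ the vertex carrying the $z_1$-pole is now the \emph{bottom-right} one, with right boundary $0$; there the only option with nonvanishing residue is $W_{z_1/x_N}(0,2;2,0)$ (since $W(0,0;0,0)$ and $W(0,1;1,0)$ have no $q^{-3}$-pole), forcing $U_1=2$ and freezing the bottom row and rightmost column exactly as in property (4). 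The frozen rightmost column gives $\prod_{i=1}^{N-1}W_{q^{-3}x_N/x_i}(2,0;2,0)$, which is your $\prod_i \frac{(x_N-q^2x_i)(x_N-q^3x_i)}{(x_N-x_i)(x_N-qx_i)}$; the frozen bottom row gives $\prod_{j=2}^{N+M}W_{z_j/x_N}(0,2;0,2)$; the residue of the corner vertex $W_{z_1/x_N}(0,2;2,0)$ gives exactly $-\tfrac{(1-q^2)(1-q^3)}{q^6(1-q)}x_N$ with no leftover discrepancy; and $\Psi_{(2,U_2,\dots,U_{N+M})}(q^{-3}x_N)=\Phi_{(U_2,\dots,U_{N+M})}$. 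The missing idea in your sketch is this column-cycling reformulation, which converts the ``bad'' bottom-left pole into a ``good'' bottom-right one.
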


\begin{proof}
All of the properties (1)--(6) follow directly from the partition function formula \eqref{pictureofrational1}.

\begin{enumerate}[wide,labelindent=0pt]
\item The symmetry of $F_T(x_1,\dots,x_N;\bm{z})$ in $(x_1,\dots,x_N)$ has already been established in Theorem \ref{thm:IK-FG-sym}.
\medskip

\item Multiplying the partition function by $\prod_{i=1}^{N}\prod_{j=1}^{N+M}
(x_i-q^2 z_j)(x_i-q^3 z_j)$ is equivalent to converting the underlying vertex weights to their polynomial form. In particular, we define
\begin{align}
\label{poly-weights}
\tilde{W}_{x,z}(i,j;k,\ell)
:=
(x-q^2 z)(x-q^3 z)
W_{z/x}(i,j;k,\ell),
\end{align}
which are polynomials in $x$ and $z$ for all $i,j,k,\ell \in \{0,1,2\}$. The function $\tilde{F}_T(x_1,\dots,x_N;\bm{z})$ is then given by the partition function setup \eqref{pictureofrational1}, but using the weights \eqref{poly-weights}. Examining this partition function, it is clear that all dependence on the parameter $z_1$ is via the leftmost column of the lattice. Since the weights \eqref{poly-weights} of individual vertices in that column are at most degree-$2$ polynomials in $z_1$, it is then immediate that $\tilde{F}_T(x_1,\dots,x_N;\bm{z})$ is at most a degree-$2N$ polynomial in $z_1$.
\medskip

\item This again follows by studying the leftmost column of the partition function under consideration. It is clear that any configuration of this column must give rise to at least one of the following vertex weights:
\begin{align*}
W_{z_1/x_i}(1,2;2,1)
&=
\frac{(1-q^2)z_1}{(x_i-q^2 z_1)},
\\[5pt]
W_{z_1/x_i}(0,2;1,1)
&=
\frac{q(1-q)^2(1+q)(x_i-z_1)z_1}
{(x_i-q^2 z_1)(x_i-q^3 z_1)},
\\[5pt]
W_{z_1/x_i}(0,2;2,0)
&=
\frac{(1-q^2)(x_i+q^2x_i-q^2z_1-q^3z_1)z_1}
{(x_i-q^2 z_1)(x_i-q^3 z_1)},
\end{align*}
where $i$ takes values in $\{1,\dots,N\}$. All of these weights vanish at $z_1=0$, and the claim is then immediate.
\medskip

\item Take the lattice representation of $F_T(x_1,\dots,x_N;\bm{z})$ and use the symmetry in $(x_1,\dots,x_N)$ to permute the horizontal spectral parameters, so that $x_N$ is assigned to the lowest row of the lattice, followed by $(x_1,\dots,x_{N-1})$ assigned to the remaining rows. In any lattice configuration, the bottom-left vertex must then take one of the following forms:
\begin{align*}
W_{z_1/x_N}(0,2;0,2)
&=
\frac{(x_N-z_1)(x_N-qz_1)}{(x_N-q^2 z_1)(x_N-q^3 z_1)},
\\[5pt]
W_{z_1/x_N}(0,2;1,1)
&=
\frac{q(1-q)^2(1+q)(x_N-z_1)z_1}
{(x_N-q^2 z_1)(x_N-q^3 z_1)},
\\[5pt]
W_{z_1/x_N}(0,2;2,0)
&=
\frac{(1-q^2)(x_N+q^2x_N-q^2z_1-q^3z_1)z_1}
{(x_N-q^2 z_1)(x_N-q^3 z_1)}.
\end{align*}
The first two of these weights vanish at $z_1=x_N$; the third becomes $W_{1}(0,2;2,0)=1$. It is then easy to show that, after specializing $z_1=x_N$, the first row and column of $F_{T}(x_1,\dots,x_N;\bm{z})$ freeze out entirely:
\begin{equation}
\label{eqfreez1}
F_T(x_1,\dots,x_N;\bm{z})
\Big|_{z_1 = x_N}
=
\begin{tikzpicture}[scale = 0.7,every node/.style={scale=0.6},baseline=2.5cm]
\foreach \i in {-1,0,1,2,3,4,5,6} {
    \draw[->,lgray,line width=1.5pt] (\i,0) -- (\i,7);}
    \foreach \i in {1,2,3,4,5,6} {
    \draw[->,lgray,line width=1.5pt] (-2,\i) -- (7,\i);}
    \foreach \i in {1,2,6} {
    \node at (7.3,\i) {$0$};
    }
    \foreach \i in {3,4,5} {
    \node at (7.3,\i) {$\vdots$};
    }
    \node[left,scale = 1.25] at (-3.8,1) {$x_N$};
    \node[left,scale = 1.25] at (-3.8,2) {$x_1$};
    \node[left] at (-3.8,3) {$\vdots$};
    \node[left] at (-3.8,4) {$\vdots$};
    \node[left] at (-3.8,5) {$\vdots$};
    \node[left,scale = 1.25] at (-3.8,6) {$x_{N-1}$};
    \draw[thick,->] (-3.6,1) -- (-2.6,1);
    \draw[thick,->] (-3.6,2) -- (-2.6,2);
    \draw[thick,->] (-3.6,6) -- (-2.6,6);
    \foreach \i in {1,2,6} {
    \node at (-2.4,\i) {$2$};
    }
    \node at (-2.4,3) {$\vdots$};
    \node at (-2.4,4) {$\vdots$};
    \node at (-2.4,5) {$\vdots$};
    \node at (-1,1.5) {$2$};
    \foreach \i in {-1,0,2} {
    \node at (\i,7.3) {$2$};
    }
    \foreach \i in {3,6} {
    \node at (\i,7.3) {$1$};
    }
    \foreach \i in {1,4,5} {
    \node at (\i,7.3) {$\dots$};
    }
    \draw[->,thick] (0,-1) -- (0,-0.5);
    \draw[->,thick] (-1,-1) -- (-1,-0.5);
    \draw[->,thick] (6,-1) -- (6,-0.5);
    \node[scale = 1.25] at (-1,-1.2) {$x_N$};
    \node[scale = 1.25] at (0,-1.2) {$z_2$};
    \node[scale = 1.25] at (1,-1.2) {$\dots$};
    \node[scale = 1.25] at (6.1,-1.2) {$z_{N+M}$};
    \foreach \i in {-1,0,6} {
    \node at (\i,-0.2) {$0$};
    }
    \node at (1,-0.2) {$\dots$};
    \foreach \i in {4,5} {
    \node at (\i,-0.2) {$\dots$};
    }
    \foreach \i in {2,3,4,5,6} {
    \node at (-0.5,\i) {$2$};
    }
    \node at (-0.5,1) {$0$};
    \foreach \i in {0,1,2,3,4,5,6} {
    \node at (\i,1.5) {$0$};
    }
    \node at (-1,1) [circle,fill,inner sep=1.5pt,red] {};
    \end{tikzpicture}
\end{equation}
The total weight of the frozen regions in \eqref{eqfreez1} is identically $1$, and the remaining (unfrozen) portion of the lattice is simply $F_{T\backslash 2}(x_1,\dots,x_{N-1};\widehat{\bm{z}}_1)$, yielding the desired recursion.
\medskip

\item One begins by observing that the partition function $F_T(x_1,\dots,x_N;\bm{z})$ admits the following, alternative lattice formulation:
\begin{equation}
\label{eqpulllines} 
F_T(x_1,\dots,x_N;\bm{z})
=
\begin{tikzpicture}[scale = 0.5,every node/.style={scale=0.6},baseline=0.7cm]
\foreach \i in {-1,0,1,2,3,4} {
\draw[lgray,line width=1.5pt,->] (0,\i) -- (8,\i);
}
\draw[lgray,line width=1.5pt,->] (1,-1.5) -- (1,5);
\foreach \i in {2,3,4,5,6,7} {
\draw[lgray,line width=1.5pt,->] (\i,-3) -- (\i,5);
}
\foreach \i in {-1,0,4} {
\node at (-0.3,\i) {$2$};
}
\foreach \i in {1,2,3} {
\node at (-0.3,\i) {$\vdots$};
}
\foreach \i in {-1,4} {
\node at (8.3,\i) {$0$};
}
\node at (8.3,0) {$\vdots$};
\node at (8.3,1) {$\vdots$};
\foreach \i in {2,3} {
\node at (8.3,\i) {$\vdots$};
}
\foreach \i in {1,3} {
\node at (\i,5.3) {$2$};
}
\foreach \i in {4,7} {
\node at (\i,5.3) {$1$};
}
\foreach \i in {2,5,6} {
\node at (\i,5.3) {$\dots$};
}
\node at (-2,-1) {$x_N$};
\node at (-2,0) {$x_1$};
\node at (-2,1) {$\vdots$};
\node at (-2,2) {$\vdots$};
\node at (-2,3) {$\vdots$};
\node at (-2.2,4) {$x_{N-1}$};
\foreach \i in {-1,0,4} {
\draw[->,thick] (-1.7,\i) -- (-0.5,\i);
}
\draw[lgray,line width=1.5pt,rounded corners] (8,-2) -- (1,-2) -- (1,-1);
\node at (8.25,-2) {$0$};
\draw[->,thick] (9.5,-2) -- (8.5,-2);
\node at (9.75,-2) {$z_1$};
\begin{scope}[shift = {(0,-1)}]
\foreach \i in {2,7} {
\node at (\i,-2.2) {$0$};
}
\foreach \i in {3,5,6} {
\node at (\i,-2.2) {$\dots$};
}
\node at (2,-3.5) {$z_2$};
\node at (3,-3.5) {$\dots$};
\foreach \i in {5,6} {
\node at (\i,-3.5) {$\dots$};
}
\node at (7,-3.5) {$z_{N+M}$};
\foreach \i in {2,7} {
\draw[->,thick] (\i,-3.25) -- (\i,-2.5);
}
\foreach \i in {1,2,3,4,5,6,7} {
\node[scale = 1.5] at (\i,-0.5) {$\ast$};
}
\end{scope}
\end{tikzpicture}
=
\begin{tikzpicture}[scale = 0.5,every node/.style={scale=0.6},baseline=0.7cm]
\begin{scope}[shift = {(1.5,0.35)}]
\foreach \i in {-2,-1,0,1,2,3} {
\draw[lgray,line width=1.5pt,->] (12,\i) -- (20,\i);
}
\foreach \i in {13,14,15,16,17,18} {
\draw[lgray,line width=1.5pt] (\i,-3) -- (\i,4);
}
\draw[lgray,line width=1.5pt,->] (19,-3) -- (19,4);
\foreach \i in {13,14,18,19} {
\draw[->,thick] (\i,-4) -- (\i,-3.5);
}
\draw[red,dashed] (11.5,4.75) -- (20,4.75);
\begin{scope}[shift = {(0,2.75)}]
\draw[lgray,line width=1.5pt,->,rounded corners] (19,3) -- (19,4) -- (12,4);
\node at (13,5.25) {$2$};
\node at (13.55,5.25) {$\dots$};
\node at (14,5.25) {$2$};
\foreach \i in {17} {
\node at (\i,5.25) {$\dots$};
}
\node at (15,5.25) {$1$};
\node at (16,5.25) {$\dots$};
\node at (18,5.25) {$1$};
\node at (11.5,4) {$2$};
\foreach \i in {13,14,15,16,17,18} {
\draw[lgray,line width=1.5pt,->] (\i,3) -- (\i,5);
}
\node at (13,2.5) {$U_2$};
\node at (14,2.5) {$U_3$};
\foreach \i in {15,16,17} {
\node at (\i,2.5) {$\dots$};
}
\node at (18,2.5) {$U_{N+M}$};
\node at (19,2.5) {$U_1$};
\end{scope}
\begin{scope}[shift = {(0,1.75)}]
\node at (13,2.5) {$U_2$};
\node at (14,2.5) {$U_3$};
\foreach \i in {15,16,17} {
\node at (\i,2.5) {$\dots$};
}
\node at (18,2.5) {$U_{N+M}$};
\node at (19,2.5) {$U_1$};
\end{scope}
\node at (13,-4.3) {$z_{2}$};
\node at (14,-4.3) {$z_{3}$};
\foreach \i in {15,16,17} {
\node at (\i,-4.3) {$\dots$};
}
\node at (18,-4.3) {$z_{N+M}$};
\node at (19,-4.3) {$z_1$};
\foreach \i in {-2,-1,3} {
\node at (20.5,\i) {$0$};
}
\foreach \i in {0,1,2} {
\node at (20.5,\i) {$\vdots$};
}
\foreach \i in {-2,-1} {
\node at (11.5,\i) {$2$};
}
\node at (10,-2) {$x_N$};
\node at (10,-1) {$x_1$};
\node at (9.8,3) {$x_{N-1}$};
\foreach \i in {-2,-1,3}
{\draw[->,thick] (10.4,\i) -- (11.3,\i);}
\foreach \i in {0,1,2} {
\node at (10,\i) {$\vdots$};
}
\foreach \i in {0,1,2} {
\node at (11.5,\i) {$\vdots$};
}
\node at (11.5,3) {$2$};
\foreach \i in {13,14,18,19} {
\node at (\i,-3.2) {$0$};
}
\end{scope}
\end{tikzpicture} 
\end{equation}
The first equality in \eqref{eqpulllines} is due to the fact that all edges marked $*$ are forced to assume the state $0$, which then completely freezes the bottom row of the lattice, and one recovers $F_T(x_1,\dots,x_N;\bm{z})$ (where we have again exploited the symmetry in $(x_1,\dots,x_N)$ to transfer $x_N$ to the base of the lattice). The second equality in \eqref{eqpulllines} follows by repeated application of the Yang--Baxter equation \eqref{eq19ybe1}.

Expanding the rightmost partition function in \eqref{eqpulllines} along the edges marked with the red dashed line, we read off the algebraic equality
\begin{align}
\label{eq:FT-U}
F_T(x_1,\dots,x_N;\bm{z})
=
\sum_{(U_1,\dots,U_{N+M})}
\Psi_U(z_1)
F_{\omega\cdot U}(x_1,\dots,x_N;\omega\cdot\bm{z})
\end{align}
with the sum taken over $2$-strings $(U_1,\dots,U_{N+M})$ such that $|U|=2N$, and where $\omega \cdot U = (U_2,\dots,U_{N+M},U_1)$ and $\omega\cdot\bm{z} = (z_2,\dots,z_{N+M},z_1)$ denote cyclic permutations of the function index and alphabet, respectively. The function $F_{\omega\cdot U}(x_1,\dots,x_N;\omega\cdot\bm{z})$ appearing in the summand of \eqref{eq:FT-U} takes the form
\begin{equation}
\label{eqfreez2}
\raisebox{-25mm}{\begin{tikzpicture}[scale = 0.5,every node/.style={scale=0.6}]
\foreach \i in {-1,0,1,2,3,4,5,6} {
    \draw[->,lgray,line width=1.5pt] (\i,0) -- (\i,7);}
    \foreach \i in {1,2,3,4,5,6} {
    \draw[->,lgray,line width=1.5pt] (-2,\i) -- (7,\i);}
    \foreach \i in {1,2,6} {
    \node at (7.3,\i) {$0$};
    }
    \foreach \i in {3,4,5} {
    \node at (7.3,\i) {$\vdots$};
    }
    \node[left,scale = 1.25] at (-3.8,1) {$x_N$};
    \node[left,scale = 1.25] at (-3.8,2) {$x_1$};
    \node[left] at (-3.8,3) {$\vdots$};
    \node[left] at (-3.8,4) {$\vdots$};
    \node[left] at (-3.8,5) {$\vdots$};
    \node[left,scale = 1.25] at (-3.8,6) {$x_{N-1}$};
    \draw[thick,->] (-3.6,1) -- (-2.6,1);
    \draw[thick,->] (-3.6,2) -- (-2.6,2);
    \draw[thick,->] (-3.6,6) -- (-2.6,6);
    \foreach \i in {1,2,6} {
    \node at (-2.4,\i) {$2$};
    }
    \node at (-2.4,3) {$\vdots$};
    \node at (-2.4,4) {$\vdots$};
    \node at (-2.4,5) {$\vdots$};
    \node at (-1,7.3) {$U_2$};
    \node at (0,7.3) {$U_3$};
    \foreach \i in {1,2,3,4} {
    \node at (\i,7.3) {$\dots$};
    }
    \node at (5,7.3) {$U_{N+M}$};
    \node at (6,7.3) {$U_1$};
    \node at (-1,-1.7) {$z_2$};
    \node at (0,-1.7) {$z_3$};
    \foreach \i in {1,2,3,4} {
    \node at (\i,-1.7) {\dots};
    }
    \node at (5,-1.7) {$z_{N+M}$};
    \node at (6,-1.7) {$z_{1}$};
    \foreach \i in {-1,0,6} {
    \node at (\i,-0.2) {$0$};
    }
    \foreach \i in {1,2,3,4,5} {
    \node at (\i,-0.2) {$\dots$};
    }
    \foreach \i in {-1,0,1,2,3,4,5,6} {
    \draw[thick,->] (\i,-1.25) -- (\i,-0.5);
    }
    \node[scale = 1.5] at (-10,3.5) {$F_{\omega \cdot U}(x_1,\dots,x_N;\omega\cdot\bm{z})=$};
\end{tikzpicture}}
\end{equation}
while the coefficients $\Psi_U(z_1)$ are defined as the following one-row partition functions:
\begin{equation}
\label{eq:Psi-one-row}
\Psi_{U}(z_1)
\equiv
\Psi_{(U_1,U_2,\dots,U_{N+M})}(z_1)
=
\raisebox{-24mm}{
\begin{tikzpicture}[scale=0.8,baseline=-2.45cm]
\node[left] at (0.8,0) {$2$};
\draw[->,thick] (9.5,0) -- (9,0);
\node at (10,0) {$z_1$}; 
\draw[->,lgray,line width=1.5pt] (8,0) -- (1,0);
\foreach \i in {2,3,4,5,6,7} {
\draw[->,lgray,line width=1.5pt] (\i,-1) -- (\i,1);
}
\foreach \i in {4,5,6} {
\node at (\i,-2) {$\dots$};
}
\node at (2,-2.3) {$z_2$};
\node at (3,-2.3) {$z_3$};
\node at (7,-2.3) {$z_{N+M}$};
\node at (2,-1.2) {$U_2$};
\node at (3,-1.2) {$U_3$};
\foreach \i in {4,5,6} {
\node at (\i,-1.2) {$\dots$};
}
\node at (7,-1.2) {$U_{N+M}$};
\foreach \i in {2,3,7} {
\draw[->,thick
] (\i,-2) -- (\i,-1.5);
}
\node at (2,1.2) {$2$};
\node at (3,1.2) {$\dots$};
\node at (4,1.2) {$2$};
\node at (5,1.2) {$1$};
\node at (6,1.2) {$\dots$};
\node at (7,1.2) {$1$};
\node at (8.5,0) {$U_1$};
\end{tikzpicture}}
\end{equation}
Now consider the effect of computing the residue of $F_{\omega \cdot U}(x_1,\dots,x_N;\omega\cdot\bm{z})$ at the point $z_1=q^{-3}x_N$. The only contribution to this residue comes from the bottom-right vertex in \eqref{eqfreez2}. In particular, we require this vertex to assume the form $W_{z_1/x_N}(0,2;2,0)$, since both $W_{z_1/x_N}(0,1;1,0)$ and $W_{z_1/x_N}(0,0;0,0)$ have vanishing residue at $z_1=q^{-3}x_N$. We then observe a freezing of the first row and final column of \eqref{eqfreez2}:
\begin{align}
\label{eq:res-Fomega}
{\rm Res}\Big\{
F_{\omega \cdot U}(x_1,\dots,x_N;\omega\cdot\bm{z})
\Big\}_{z_1=q^{-3}x_N}
=
\bm{1}_{U_1=2}
\times
\begin{tikzpicture}[scale = 0.7,every node/.style={scale=0.7},baseline=2.5cm]
\foreach \i in {-1,0,1,2,3,4,5,6} {
    \draw[->,lgray,line width=1.5pt] (\i,0) -- (\i,7);}
    \foreach \i in {1,2,3,4,5,6} {
    \draw[->,lgray,line width=1.5pt] (-2,\i) -- (7,\i);}
    \foreach \i in {1,2,6} {
    \node at (7.3,\i) {$0$};
    }
    \foreach \i in {3,4,5} {
    \node at (7.3,\i) {$\vdots$};
    }
    \node[left,scale = 1.25] at (-3.8,1) {$x_N$};
    \node[left,scale = 1.25] at (-3.8,2) {$x_1$};
    \node[left] at (-3.8,3) {$\vdots$};
    \node[left] at (-3.8,4) {$\vdots$};
    \node[left] at (-3.8,5) {$\vdots$};
    \node[left,scale = 1.25] at (-3.8,6) {$x_{N-1}$};
    \draw[thick,->] (-3.6,1) -- (-2.6,1);
    \draw[thick,->] (-3.6,2) -- (-2.6,2);
    \draw[thick,->] (-3.6,6) -- (-2.6,6);
    \foreach \i in {1,2,6} {
    \node at (-2.4,\i) {$2$};
    }
    \node at (-2.4,3) {$\vdots$};
    \node at (-2.4,4) {$\vdots$};
    \node at (-2.4,5) {$\vdots$};
    \node at (-1,7.3) {$U_2$};
    \node at (0,7.3) {$U_3$};
    \foreach \i in {1,2,3,4} {
    \node at (\i,7.3) {$\dots$};
    }
    \node at (5,7.3) {$U_{N+M}$};
    \node at (6,7.3) {$2$};
    \draw[->,thick] (0,-1) -- (0,-0.5);
    \draw[->,thick] (-1,-1) -- (-1,-0.5);
    \draw[->,thick] (5,-1) -- (5,-0.5);
    \draw[->,thick] (6,-1) -- (6,-0.5);
    \node at (-1,-1.2) {$z_2$};
    \node at (0,-1.2) {$z_3$};
    \node at (5,-1.2) {$z_{N+M}$};
    \node at (6.3,-1.2) {$q^{-3}x_N$};
    \foreach \i in {-1,0,5,6} {
    \node at (\i,-0.2) {$0$};
    }
    \foreach \i in {1,2,3,4} {
    \node at (\i,-0.2) {$\dots$};
    }
    \foreach \i in {2,3,4,5,6} {
    \node at (5.5,\i) {$0$};
    }
    \node at (5.5,1) {$2$};
    \foreach \i in {-1,0,1,2,3,4,5} {
    \node at (\i,1.5) {$0$};
    }
    \node at (6,1.5) {$2$};
    \node at (6,1) [circle,fill,inner sep=1.5pt,red] {};
\end{tikzpicture}
\end{align}
where it is important to note that this residue vanishes if $U_1 \not= 2$. Next, we explicitly compute the weights of the frozen regions in \eqref{eq:res-Fomega}; namely,
\begin{align*}
{\rm Res}\Big\{ & W_{z_1/x_N}(0,2;2,0) \Big\}_{z_1=q^{-3}x_N}
=
-\frac{(1-q^2)(1-q^3)}{q^6(1-q)}x_N,
\\
\\
\prod_{i=1}^{N-1}
&
W_{z_1/x_i}(2,0;2,0)
\Big|_{z_1=q^{-3}x_N}
=
\prod_{i=1}^{N-1}
\frac{(x_N-q^2 x_i)(x_N-q^3 x_i)}{(x_N-x_i)(x_N-q x_i)},
\\
\\
\prod_{j=2}^{N+M}
& W_{z_j/x_N}(0,2;0,2)
=
\prod_{j=2}^{N+M}
\frac{(x_N-z_j)(x_N-q z_j)}{(x_N-q^2 z_j)(x_N-q^3 z_j)}.
\end{align*}
Making use of these evaluations, \eqref{eq:res-Fomega} becomes
\begin{multline}
\label{eq:res-Fomega2}
{\rm Res}\Big\{
F_{\omega \cdot U}(x_1,\dots,x_N;\omega\cdot\bm{z})
\Big\}_{z_1=q^{-3}x_N}
=
-\frac{(1-q^2)(1-q^3)}{q^6(1-q)}x_N
\\
\times
\prod_{i=1}^{N-1}
\frac{(x_N-q^2 x_i)(x_N-q^3 x_i)}{(x_N-x_i)(x_N-q x_i)}
\prod_{j=2}^{N+M}
\frac{(x_N-z_j)(x_N-q z_j)}{(x_N-q^2 z_j)(x_N-q^3 z_j)}
\cdot
\bm{1}_{U_1=2}
\cdot
F_{(U_2,\dots,U_{N+M})}(x_1,\dots,x_{N-1};\widehat{\bm{z}}_1).
\end{multline}
One may then directly compute the residue of $F_{T}(x_1,\dots,x_N;\bm{z})$ at the point $z_1=q^{-3} x_N$, using \eqref{eq:FT-U}, \eqref{eq:res-Fomega2}, and the fact that the one-row partition functions \eqref{cu-def} and \eqref{eq:Psi-one-row} are related via the identity $\Psi_{(2,U_2,\dots,U_{N+M})}(q^{-3}x_N) = \Phi_{(U_2,\dots,U_{N+M})}$. This completes the proof of \eqref{eqdomw3}.

\medskip

\item This property requires no further explanation.

\end{enumerate}
\end{proof}

\subsection{Proof of uniqueness}
\label{ssec:unique}

\begin{thm}
\label{thm:unique}
The functions $F_S(x_1,\dots,x_N;\bm{z})$ are uniquely characterized by the exchange relations of Propositions \ref{prop:IK-exch} and \ref{prop:11}, and by the properties listed in Proposition \ref{prop:IK-properties}.
\end{thm}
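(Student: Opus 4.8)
The plan is to mirror the proof of Theorem \ref{thm:unique-6} in the six-vertex case, with two modifications dictated by the richer structure of the Izergin--Korepin model: the reduction to a distinguished family of $2$-strings now requires \emph{both} exchange relations (Propositions \ref{prop:IK-exch} and \ref{prop:11}), and the concluding interpolation step must account for the doubled polynomial degree of the participating functions. Throughout, let $Z_S$ (for $S \in \mathfrak{s}(2)$ with $|S|=2N$) be any family satisfying all of the assumed properties; the goal is to show $Z_S = F_S$.

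First I would reduce to a canonical family. Since $Z_S$ obeys the same exchange relations as $F_S$ with the same vertex-weight coefficients, iterating Proposition \ref{prop:IK-exch} and invoking Remark \ref{rem:2string-order} expresses $F_S$ as a fixed linear combination of functions $F_T$ with $T$ \emph{ordered} and beginning with a $2$, i.e. $T=(2^{N-M},1^{2M})$ with $0 \leq M < N$ (the string $(1^{2N})$, which cannot be ordered to begin with a $2$ by Proposition \ref{prop:IK-exch} alone, is dispatched by Proposition \ref{prop:11}). As the combination is the same for $Z$, one obtains
\begin{align*}
Z_S(x_1,\dots,x_N;\bm{z}) - F_S(x_1,\dots,x_N;\bm{z})
=
\sum_{T}
d_{T}
\Big(
Z_{T}(x_1,\dots,x_N;\bm{z}_T)
-
F_{T}(x_1,\dots,x_N;\bm{z}_T)
\Big),
\end{align*}
where each $T$ is a special string as above and $\bm{z}_T$ is a secondary alphabet obtained from $\bm{z}$ by a permutation and, via Proposition \ref{prop:11}, possibly the deletion of a variable. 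Hence it suffices to prove $Z_T = F_T$ as rational functions for every special $T=(2^{N-M},1^{2M})$, $0 \leq M < N$.

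Second I would run an induction on $N$, the size of the primary alphabet. Let $\mathcal{P}_N$ denote the assertion that $Z_U = F_U$ for \emph{all} $U \in \mathfrak{s}(2)$ with $|U|=2N$; by the reduction above, $\mathcal{P}_N$ is equivalent to $Z_T = F_T$ for the special strings. The base case $\mathcal{P}_1$ follows from the explicit evaluation in property (6) of Proposition \ref{prop:IK-properties} together with Proposition \ref{prop:11}. For the inductive step I assume $\mathcal{P}_{N-1}$, fix a special $T$, and form
\begin{align*}
\Delta_T
=
\prod_{i=1}^{N}\prod_{j=1}^{N+M}(x_i - q^2 z_j)(x_i - q^3 z_j)
\Big(
Z_T(x_1,\dots,x_N;\bm{z}) - F_T(x_1,\dots,x_N;\bm{z})
\Big),
\end{align*}
which by property (2) is a polynomial in $z_1$ of degree at most $2N$. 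The shared properties (3), (4), (5) then force $\Delta_T$ to vanish at $2N+1$ values of $z_1$: property (3) gives $\Delta_T\big|_{z_1=0}=0$; property (4), combined with the symmetry (1) and $\mathcal{P}_{N-1}$ applied to $F_{T\backslash 2}$ in $N-1$ variables, gives $\Delta_T\big|_{z_1=x_k}=0$ for each $k$; and property (5), combined with symmetry and with $\mathcal{P}_{N-1}$ applied to the functions $F_U(x_1,\dots,x_{N-1};\widehat{\bm{z}}_1)$ appearing in \eqref{eqdomw3}, gives $\Delta_T\big|_{z_1=q^{-3}x_k}=0$ for each $k$ (one uses that the value of $\Delta_T$ at $z_1=q^{-3}x_k$ is a known nonzero multiple of the residue ${\rm Res}\{Z_T-F_T\}_{z_1=q^{-3}x_k}$, since the coefficients $\Phi_U$ in \eqref{eqdomw3} are model-intrinsic and common to both families). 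For generic parameters these $2N+1$ points are distinct, so $\Delta_T \equiv 0$, whence $Z_T=F_T$ and $\mathcal{P}_N$ holds.

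The main obstacle, relative to the six-vertex argument, is the doubled degree: a degree-$2N$ polynomial requires $2N+1$ interpolation points rather than the $N+1$ available in the six-vertex setting. The extra $N$ points are supplied precisely by the residue recursion (property (5)), which has no six-vertex analogue, and the delicate part of the write-up will be confirming that property (5) genuinely controls $\Delta_T$ at the points $z_1=q^{-3}x_k$ --- namely that, after stripping the explicit prefactor in \eqref{eqdomw3}, the surviving sum $\sum_U \Phi_U F_U(x_1,\dots,x_{N-1};\widehat{\bm{z}}_1)$ involves only functions in $N-1$ variables (so that $\mathcal{P}_{N-1}$ applies, even in the boundary case $M=N-1$ where $T\backslash 2=(1^{2N-2})$ is itself non-special and must be handled through the reduction at the lower level) --- together with checking that the $2N+1$ specialization points $0,\,x_1,\dots,x_N,\,q^{-3}x_1,\dots,q^{-3}x_N$ are distinct for generic $x_i$ and $q$.
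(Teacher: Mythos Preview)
Your proposal is correct and follows essentially the same approach as the paper's proof: reduce via the exchange relations to the special strings $T=(2^{N-M},1^{2M})$, then run a Lagrange interpolation induction on $N$ using properties (1)--(6) to pin down $\Delta_T$ at the $2N+1$ points $0,\,x_k,\,q^{-3}x_k$. Your packaging differs only cosmetically --- you take $\mathcal{P}_N$ to be the statement for \emph{all} $U$ of weight $2N$ rather than just the special $T$ (which spares you the reordering step the paper performs inside the residue recursion), and you are more explicit than the paper about the secondary-alphabet modifications (the paper writes ``permutations of $\bm z$'' where Proposition~\ref{prop:11} in fact also shifts/deletes a variable), the boundary case $T\backslash 2=(1^{2N-2})$, and the genericity of the interpolation points.
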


\begin{proof}
Suppose that $Z_S(x_1,\dots,x_N;\bm{z})$, $S \in \mathfrak{s}(2)$, $|S|=2N$ is a family of rational functions that satisfies all of the conditions of Propositions \ref{prop:IK-exch}, \ref{prop:11} and \ref{prop:IK-properties}. Our task is to show that, necessarily,
\begin{align}
\label{ZS=FS}
Z_S(x_1,\dots,x_N;\bm{z})
=
F_S(x_1,\dots,x_N;\bm{z}).
\end{align}
We begin with the observation that for any $2$-string $S$, one is able to write
\begin{align}
\label{Z-F-order}
Z_S(x_1,\dots,x_N;\bm{z})
-
F_S(x_1,\dots,x_N;\bm{z})
=
\sum_{T}
\sum_{\sigma}
d_{T,\sigma}^S 
\Big(
Z_T(x_1,\dots,x_N;\sigma \cdot \bm{z})
-
F_T(x_1,\dots,x_N;\sigma \cdot \bm{z})
\Big),
\end{align}
where the first sum is over $2$-strings of the form 
$T=(2^{N-M},1^{2M})$ with $N>M$ and the second sum is over permutations of the secondary alphabet $\bm{z}$, while $d_{T,\sigma}^S$ are appropriate coefficients. Indeed, as explained in Remark \ref{rem:2string-order}, Propositions \ref{prop:IK-exch} and \ref{prop:11} collectively ensure that one may write
\begin{align}
\label{F-order}
F_S(x_1,\dots,x_N;\bm{z})
=
\sum_{T}
\sum_{\sigma}
d_{T,\sigma}^{S} 
F_T(x_1,\dots,x_N;\sigma \cdot \bm{z})
\end{align}
for appropriate coefficients $d_{T,\sigma}^{S}$. By virtue of the assumption that $Z_S(x_1,\dots,x_N;\bm{z})$ satisfies the same exchange relations, it must then be possible to write an analogue of equation \eqref{F-order} for $Z_S(x_1,\dots,x_N;\bm{z})$ with the {\it same}\/ expansion coefficients; the claim \eqref{Z-F-order} follows immediately.

Thanks to equation \eqref{Z-F-order}, the task of proving \eqref{ZS=FS} is reduced to showing that
\begin{align}
\label{ZT=FT}
Z_T(x_1,\dots,x_N;\bm{z})
=
F_T(x_1,\dots,x_N;\bm{z}),
\qquad
\forall\ T=(2^{N-M},1^{2M})\ 
\text{with}\ N>M.
\end{align}
The rest of the proof proceeds by induction on $N$; namely, the size of the primary alphabet. Let $\mathcal{P}_N$ denote the proposition that \eqref{ZT=FT} holds. 

We begin with a verification of $\mathcal{P}_1$. The only ordered $2$-string that begins with a $2$ and has total weight $2$ is the string $T=(2)$. Property (6) of Proposition \ref{prop:IK-properties} provides an explicit evaluation of $F_T(x_1;\bm{z})$ and $Z_T(x_1;\bm{z})$ when $T=(2)$, so they are necessarily equal. Hence $\mathcal{P}_1$ is true.

Now we proceed with the inductive hypothesis that $\mathcal{P}_{N-1}$ is true for some $N \geq 2$, and consider the function
\begin{align}
\label{eq:delta-T}
\Delta_T=
\prod_{i=1}^{N}\prod_{j=1}^{N+M}
(x_i-q^2 z_j)(x_i-q^3 z_j)
\Big(
Z_T(x_1,\dots,x_N;\bm{z})-F_T(x_1,\dots,x_N;\bm{z})
\Big).
\end{align}
By property (2) of Proposition \ref{prop:IK-properties}, $\Delta_T$ is a polynomial in $z_1$ of degree $2N$. The symmetry property (1), together with properties (3), (4), (5) specifies $\Delta_T$ at $2N+1$ values of $z_1$. In particular, property (3) becomes $\Delta_T \Big|_{z_1=0}=0$. Property (4) becomes
\begin{align*}
\Delta_T
\Big|_{z_1 = x_k} = 
\prod_{j = 2}^{N+M}(x_k-q^2 z_j)(x_k -q^3z_{j})
\prod_{i = 1}^{N} (x_i - q^2 x_k)(x_i - q^3 x_k)
\Delta_{T\backslash 2}(x_1,\dots,\widehat{x}_k,\dots,x_N;
\widehat{\bm{z}}_1)
=
0
\end{align*}
for all $1 \leq k \leq N$, where the final equality with $0$ is due to the assumption that $\mathcal{P}_{N-1}$ holds. Finally, property (5) becomes
\begin{multline}
\label{eq:delta-eval}
\Delta_{T}
\Big|_{z_1 = q^{-3}x_k} 
=
-\frac{1}{q^{N+3}}
\prod_{j =2}^{N+M} (x_k - z_j)(x_k - qz_j)
\prod_{i=1}^{N} (x_k-q^2 x_i)(x_k-q^3 x_i)
\\
\times
\sum_{U} \Phi_{U}
\Delta_U(x_1,\dots,\widehat{x}_k,\dots,x_N;\widehat{\bm{z}}_1)
=0
\end{multline}
for all $1 \leq k \leq N$, with summation over $U$ and coefficients $\Phi_U$ as described in Proposition \ref{prop:IK-properties}. After reordering the $2$-strings appearing on the right hand side of \eqref{eq:delta-eval}, similarly to \eqref{Z-F-order}, we recover a sum over functions of the form \eqref{eq:delta-T} with non-increasing index and a primary alphabet of size $N-1$. This is again equal to $0$ due to the inductive hypothesis.

Collectively, these properties show that $\Delta_T$ vanishes at $2N+1$ values of $z_1$; it is therefore identically zero for arbitrary $z_1$. Hence $\mathcal{P}_N$ is true, and the proof is complete by induction on $N$.

\end{proof}

\section{Twisted columns presentation and symmetrization formula}
\label{sec:19-sym}

In this section we turn to the task of writing an explicit summation formula for the functions \eqref{eqspsymIK2}, by obtaining a solution to the set of properties that uniquely determine them (see Section \ref{ssec:unique}). To achieve this goal, we shall make use of a family of {\it twisted column operators} $\Gamma_k(z)$ (defined in Sections \ref{ssec:aux} and \ref{ssec:IK-twist}) which satisfy exchange relations from the Yang--Baxter algebra of the Izergin--Korepin model; this is the content of Theorem \ref{thm:IK-YB-twist}. Aided by these operators, we obtain an algebraic reformulation of the functions $F_S(x_1,\dots,x_N;\bm{z})$ in Section \ref{ssec:IK-twist-rep} which ultimately leads to an explicit symmetrization identity for them, in Section \ref{ssec:IK-sym-formula}.

Our construction of the operators $\Gamma_k(z)$ was strongly inspired by a useful procedure in the six-vertex model, where one applies {\it Drinfeld twists} to columns of six-vertex partition functions \cite{SantosMaillet00,WheelerZinn-Justin16,BorodinWheeler18}. Under such a transformation, it is well-known that column operators become manifestly symmetric in their spectral parameters $(x_1,\dots,x_N)$, providing a direct passage to symmetrization identities. In contrast with these works on the six-vertex model, however, little seems to be known about the construction of factorizing twists in the setting of the Izergin--Korepin model. This means that the operators \eqref{gam0}--\eqref{gam2} were in fact {\it not}\/ obtained via a change of basis applied to partition function columns in the Izergin--Korepin model, but found by computer experimentation. As such, we have to work a good deal harder to prove any results concerning them, in particular Theorems \ref{thm:IK-YB-twist} and \ref{thm:twisted-column-F}; these would be immediate if one knew how to recover $\Gamma_k(z)$ via Drinfeld twists.

\subsection{Auxiliary definitions}
\label{ssec:aux}

Before stating our expressions for the twisted column operators, we require a number of preliminary definitions. We shall make use of four diagonal matrices, defined as follows:
\begin{align}
\label{d-mat}
d_i(z)
= 
\begin{pmatrix}
1 & 0 & 0 \\
0 & \dfrac{x_i - z}{x_i - q^2z}& 0\\
0 & 0 & \dfrac{(x_i - z)(x_i - qz)}{(x_i - q^2z)(x_i - q^3z)}
\end{pmatrix}_i
\end{align}

\begin{align}
\label{dcirc-mat}
d_i^{\circ}(z) 
= 
\begin{pmatrix}
1 & 0 & 0\\
0 & \dfrac{(qx_i-z)(x_i - q^2 z)}{(1+q)(x_i-z)}& 0\\
0 & 0 & \dfrac{x_i - q^3 z}{x_i -q z}
\end{pmatrix}_i
\end{align}

\begin{align}
\label{dbull-mat}
d_i^{\bullet}(z)
=
\begin{pmatrix}
1 & 0 & 0\\
0 & \dfrac{1+q}{qx_i - z} & 0\\
0 & 0 & \dfrac{x_i - q^2 z}{x_i - z}
\end{pmatrix}_i
\end{align}

\begin{align}
\label{dboth-mat}
d_{i}^{\bullet\circ}(z) 
= 
\begin{pmatrix}
1 & 0 & 0\\
0 & \dfrac{x_i - q^2z}{x_i - z} & 0\\
0 & 0 & \dfrac{(x_i - q^2z)(x_i - q^3z)}{(x_i - z)(x_i - qz)}
\end{pmatrix}_i
\end{align}
where each matrix acts in a local space $V_i \cong \mathbb{C}^3$, as indicated by the use of the subscript $i \in \{1,\dots,N\}$. Each matrix also depends on two spectral parameters $z,x_i$, but we suppress the dependence on the second of these for compactness of notation. The matrices \eqref{d-mat}--\eqref{dboth-mat} satisfy the following relations:
\begin{align}
\label{eq:d-factor}
d_i^{\bullet}(z)
d_i^{\circ}(z)
=
d_i^{\bullet\circ}(z)
=
d_i(z)^{-1}.
\end{align}
We shall also require three types of non-diagonal matrices:
\begin{align}
\label{ecirc-mat}
e_i^{\circ}(z) = 
\dfrac{(1-q^2)z}{x_i - q^2z}
\begin{pmatrix}
0 & 0 & 0\\
1 & 0 & 0\\
0 & 0 & 0 
\end{pmatrix}_i
=
\begin{tikzpicture}[scale=0.5,baseline=-0.1cm]
\draw[lgray,line width=1.5pt,->] (14,0) -- (16,0);
\draw[lgray,line width=1.5pt,->] (15,-1) -- (15,1); 
\node at (13.8,0) {\tiny $1$};
\node at (16.2,0) {\tiny $0$};
\node at (15,-1.2) (A1) {\tiny $0$};
\node at (15,1.2) {\tiny $1$};
\node at (13.2,0) {$x_i$};
\node at (15,-2.35) (A) {$z$};
\draw[->,thick] (12.3,0) -- (12.8,0);
\draw[->,thick] (15,-2.1) -- (A1);
\end{tikzpicture}
\begin{pmatrix}
0 & 0 & 0\\
1 & 0 & 0\\
0 & 0 & 0 
\end{pmatrix}_i
\end{align}

\begin{align}
\label{ebull-mat}
e_i^{\bullet}(z) = 
\dfrac{q(1-q)^2(1+q)(x_i-z)z}{(x_i-q^2z)(x_i - q^3z)}
\begin{pmatrix}
0 & 0 & 0\\
0 & 0 & 0\\
0 & 1 & 0 
\end{pmatrix}_i
=
\begin{tikzpicture}[scale=0.5,baseline=-0.1cm]
\draw[lgray,line width=1.5pt,->] (14,0) -- (16,0);
\draw[lgray,line width=1.5pt,->] (15,-1) -- (15,1); 
\node at (13.8,0) {\tiny $2$};
\node at (16.2,0) {\tiny $1$};
\node at (15,-1.2) (A1) {\tiny $0$};
\node at (15,1.2) {\tiny $1$};
\node at (13.2,0) {$x_i$};
\node at (15,-2.35) (A) {$z$};
\draw[->,thick] (12.3,0) -- (12.8,0);
\draw[->,thick] (15,-2.1) -- (A1);
\end{tikzpicture}
\begin{pmatrix}
0 & 0 & 0\\
0 & 0 & 0\\
0 & 1 & 0 
\end{pmatrix}_i
\end{align}

\begin{align}
\label{eboth-mat}
e_i^{\bullet\circ}(z) = 
\dfrac{(1-q^2)(x_i+q^2x_i-q^2z-q^3z)z}
{(x_i-q^2z)(x_i-q^3z)}
\begin{pmatrix}
0 & 0 & 0\\
0 & 0 & 0\\
1 & 0 & 0
\end{pmatrix}_i
=
\begin{tikzpicture}[scale=0.5,baseline=-0.1cm]
\draw[lgray,line width=1.5pt,->] (14,0) -- (16,0);
\draw[lgray,line width=1.5pt,->] (15,-1) -- (15,1); 
\node at (13.8,0) {\tiny $2$};
\node at (16.2,0) {\tiny $0$};
\node at (15,-1.2) (A1) {\tiny $0$};
\node at (15,1.2) {\tiny $2$};
\node at (13.2,0) {$x_i$};
\node at (15,-2.35) (A) {$z$};
\draw[->,thick] (12.3,0) -- (12.8,0);
\draw[->,thick] (15,-2.1) -- (A1);
\end{tikzpicture}
\begin{pmatrix}
0 & 0 & 0\\
0 & 0 & 0\\
1 & 0 & 0
\end{pmatrix}_i
\end{align}
whose action is again in the local space $V_i$, as indicated by the subscript $i$. As before, each matrix depends on two spectral parameters $z,x_i$, but only the explicit dependence on $z$ is recorded in our notation. Note that the rational function that appears in each of \eqref{ecirc-mat}--\eqref{eboth-mat} corresponds precisely with the vertex weights $W_{z/x_i}(0,1;1,0)$, $W_{z/x_i}(0,2;1,1)$, $W_{z/x_i}(0,2;2,0)$ from Figure \ref{fig:19weights}.

Finally, we shall also make use of four types of $9 \times 9$ matrices, obtained by taking tensor products of the operators \eqref{ecirc-mat}--\eqref{eboth-mat}: 
\begin{align}
\label{ecc}
e_{ij}^{\circ\circ}(z)
=
\dfrac{x_ix_j+q^2x_ix_j-q^2x_iz-q^2x_jz}
{(1-q^2)z}
e_i^{\circ}(z) e_j^{\circ}(z)
\end{align}

\begin{align}
\label{ecb}
e_{ij}^{\bullet\circ}(z)
=
e_{ji}^{\circ\bullet}(z)
=
\dfrac{x_ix_j+q^2x_ix_j-q^2x_i z-q^3x_j z}
{q(1-q)(x_i-x_j)z}
e_i^{\bullet}(z)
e_j^{\circ}(z)
\end{align}

\begin{align}
\label{ebb}
e_{ij}^{\bullet\bullet}(z)
=
\dfrac{(1+q)(x_ix_j+q^2x_ix_j-q^3x_iz-q^3x_jz)}{q(1-q)(x_i-qx_j)(x_j-qx_i)z}
e_i^{\bullet}(z) e_j^{\bullet}(z)
\end{align}
for any pair of indices $i,j \in \{1,\dots,N\}$ such that $i\not=j$. Each of these matrices acts in $V_i \otimes V_j$, and depends on three spectral parameters $z,x_i,x_j$.

\subsection{Twisted column operators and Yang--Baxter algebra}
\label{ssec:IK-twist}

Using the elementary matrices from the previous subsection, we now define the operators $\Gamma_k(z)$, $k \in \{0,1,2\}$ that will be the main focus of this section. The operators $\Gamma_k(z)$ are $3^N \times 3^N$ matrices that act in a tensor product $V_1\otimes \cdots \otimes V_N$ of local spaces. They depend explicitly on a spectral parameter $z$, as well as implicitly on $N$ spectral parameters $(x_1,\dots,x_N)$. We shall refer to $\Gamma_k(z)$ as {\it twisted column operators}, as their structure closely resembles that of monodromy matrix operators under conjugation by a Drinfeld twist; see \cite{SantosMaillet00} for more on this subject, in the setting of the six-vertex model.\footnote{It is important to note, however, that we do not make explicit use of Drinfeld twists or the so-called $F$-basis in the present work.}

\begin{defn}[Twisted column operators]
Fix an integer $N$, a parameter $z$ and an alphabet of parameters $(x_1,\dots,x_N)$. The twisted column operators are defined as follows:
\begin{align}
\label{gam0}
\Gamma_0(z)
=
\prod_{i=1}^{N} d_i(z),
\end{align}
which is an $N$-fold tensor product of the diagonal matrices \eqref{d-mat};
\begin{align}
\label{gam1}
\Gamma_1(z)
=
\sum_{u \in \{\circ,\bullet\}}
\sum_{i=1}^{N}
e^{u}_i(z)
\prod_{j\not=i}^{N} d_j(z) d^{u}_j(x_i),
\end{align}
where the sum is over matrices \eqref{ecirc-mat}, \eqref{ebull-mat} acting on a single space, dressed by an $(N-1)$-fold tensor product of diagonal matrices \eqref{d-mat}--\eqref{dbull-mat}; and
\begin{align}
\label{gam2}
\Gamma_2(z)
=
\sum_{i=1}^{N}
e^{\bullet\circ}_i(z)
\prod_{j\not=i}^{N} 
d_j(z)
d^{\bullet\circ}_j(x_i) 
+
\sum_{u,v \in \{\circ,\bullet\}}
\sum_{1 \leq i<j \leq N}
e^{uv}_{ij}(z)
\prod_{k\not=i,j}^{N} 
d_k(z)
d^{u}_k(x_i) d^{v}_k(x_j) 
\end{align}
where the first sum is over matrices \eqref{eboth-mat} acting on a single space, dressed by an $(N-1)$-fold tensor product of diagonal matrices \eqref{d-mat}, \eqref{dboth-mat}, while the second sum is over matrices \eqref{ecc}--\eqref{ebb} acting on a pair of spaces, dressed appropriately by diagonal matrices on the remaining $(N-2)$ spaces.
\end{defn}

The twisted column operators \eqref{gam0}--\eqref{gam2} have two key properties that will be essential throughout the remainder of this section:

\begin{prop}
\label{prop:gam-sym}
For each $k \in \{0,1,2\}$, we let $\llangle i_1,\dots,i_N \| \Gamma_k(z;x_1,\dots,x_N) \| j_1,\dots,j_N \rrangle$ denote the entries of the matrix $\Gamma_k(z) \equiv \Gamma_k(z;x_1,\dots,x_N) \in {\rm End}(V_1 \otimes \cdots \otimes V_N)$. Then for any permutation $\sigma \in \mathfrak{S}_N$, there holds
\begin{align*}
\llangle i_{\sigma(1)},\dots,i_{\sigma(N)} \| \Gamma_k\left(
z;x_{\sigma(1)},\dots,x_{\sigma(N)} \right) \| j_{\sigma(1)},\dots,j_{\sigma(N)} \rrangle
=
\llangle i_1,\dots,i_N \| \Gamma_k(z;x_1,\dots,x_N) \| j_1,\dots,j_N \rrangle.
\end{align*}
In other words, $\Gamma_k(z)$ is invariant under simultaneous permutation of the alphabet $(x_1,\dots,x_N)$ and the spaces $V_1 \otimes \cdots \otimes V_N$ on which it acts.
\end{prop}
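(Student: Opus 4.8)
The plan is to exploit the fact that each $\Gamma_k(z)$ is presented in \eqref{gam0}--\eqref{gam2} as a finite sum of \emph{monomials}, where by a monomial I mean a product of single-site operators (one factor in each space $V_1,\dots,V_N$) carrying a scalar prefactor. Since $\mathfrak{S}_N$ is generated by the adjacent transpositions $s_k=(k,k+1)$, it suffices to establish invariance under the simultaneous relabeling $V_k \leftrightarrow V_{k+1}$, $x_k \leftrightarrow x_{k+1}$ for each $k$; the general claim then follows because invariance under a generating set implies invariance under the whole group. Because every factor in each monomial acts in a single space and carries a well-defined spectral argument, the effect of $s_k$ is simply to relabel which space each factor sits in and to swap the occurrences of $x_k$ and $x_{k+1}$ among the arguments. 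The whole argument thus reduces to showing that, under $s_k$, the set of monomials constituting $\Gamma_k(z)$ is permuted among itself.

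For $\Gamma_0(z)$ the claim is immediate: it is a tensor product of the local matrices $d_i(z)$, each depending only on its own $x_i$, so $s_k$ merely interchanges the two commuting factors in $V_k$ and $V_{k+1}$. For $\Gamma_1(z)$ I would classify the summands of \eqref{gam1} by the location $i$ of the active factor $e_i^u(z)$. If $i\notin\{k,k+1\}$, then $s_k$ interchanges the two commuting spectator factors $d_k(z)d_k^u(x_i)$ and $d_{k+1}(z)d_{k+1}^u(x_i)$ and fixes everything else, sending the summand to itself. If $i=k$, then $s_k$ carries the $(k,u)$ summand precisely onto the $(k+1,u)$ summand: the active factor $e_k^u(z)$ becomes $e_{k+1}^u(z)$, the factor $d_{k+1}(z)d_{k+1}^u(x_k)$ becomes $d_k(z)d_k^u(x_{k+1})$, and each spectator $d_j(z)d_j^u(x_k)$ becomes $d_j(z)d_j^u(x_{k+1})$. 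Summing over all $i$ and $u$ then yields invariance.

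The substantive case is $\Gamma_2(z)$, whose summands \eqref{gam2} split into single-site terms $e_i^{\bullet\circ}(z)$ and two-site terms $e_{ij}^{uv}(z)$. The single-site terms are handled exactly as for $\Gamma_1$. For the two-site terms I would split according to how $\{i,j\}$ meets $\{k,k+1\}$: if $\{i,j\}\cap\{k,k+1\}=\varnothing$ the spectator factors at $k,k+1$ are interchanged and the summand is fixed; if exactly one of $i,j$ lies in $\{k,k+1\}$ the summand is carried to the corresponding two-site term with that index relabeled. The crux is $\{i,j\}=\{k,k+1\}$, where $s_k$ sends the summand indexed by $(k,k+1;u,v)$ onto the one indexed by $(k+1,k;u,v)$, so I must check the latter re-expresses as a summand already present for the ordered pair $k<k+1$. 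This is exactly the content of the reflection identities $e_{ij}^{\circ\circ}(z)=e_{ji}^{\circ\circ}(z)$, $e_{ij}^{\bullet\bullet}(z)=e_{ji}^{\bullet\bullet}(z)$ and $e_{ij}^{\bullet\circ}(z)=e_{ji}^{\circ\bullet}(z)$: the first two follow by inspecting \eqref{ecc} and \eqref{ebb}, whose scalar prefactors are manifestly symmetric in $x_i\leftrightarrow x_j$ while the operators $e_i^u(z)e_j^v(z)$ commute, and the third is built into the definition \eqref{ecb}. Thus the pure types $\circ\circ$ and $\bullet\bullet$ are individually fixed, while $s_k$ interchanges the mixed types $\bullet\circ\leftrightarrow\circ\bullet$; a matching computation shows the accompanying dressing $\prod_{l\ne k,k+1}d_l(z)d_l^u(x_k)d_l^v(x_{k+1})$ transforms consistently, since the diagonal factors $d_l^{\bullet}$ and $d_l^{\circ}$ commute. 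Summing over $u,v$ and over $i<j$ therefore restores invariance.

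The main obstacle I anticipate is bookkeeping rather than conceptual: one must verify that the scalar prefactors in \eqref{ecc}--\eqref{ebb} carry exactly the right (a)symmetry so that the mixed two-site terms of $\Gamma_2$ pair up correctly under $s_k$, and that every dressing product $d_l^u(x_i)d_l^v(x_j)$ follows its active sites without producing spurious factors. Once the three reflection identities for $e_{ij}^{uv}(z)$ are confirmed and the relabeling action of $s_k$ on the arguments is tracked carefully, the proof assembles routinely from the reduction to adjacent transpositions.
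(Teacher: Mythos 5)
Your proof is correct and rests on exactly the same ingredients as the paper's own argument: re-indexing the sums defining $\Gamma_0,\Gamma_1,\Gamma_2$ together with the reflection identities $e_{ij}^{\circ\circ}(z)=e_{ji}^{\circ\circ}(z)$, $e_{ij}^{\bullet\bullet}(z)=e_{ji}^{\bullet\bullet}(z)$ (symmetric scalar prefactors times commuting single-site factors) and $e_{ij}^{\bullet\circ}(z)=e_{ji}^{\circ\bullet}(z)$ (built into the definition). The only difference is organizational: the paper performs the relabeling for an arbitrary $\sigma\in\mathfrak{S}_N$ in a single step, whereas you reduce to adjacent transpositions and do the case analysis, which is equivalent.
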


\begin{proof}
The simultaneous permutation of spaces and variables is achieved by replacing all subscripts in \eqref{gam0}--\eqref{gam2} by their image under $\sigma$; namely,
\begin{align*}
\Gamma_0^{\sigma}(z)
=
\prod_{i=1}^{N} d_{\sigma(i)}(z),
\end{align*}

\begin{align*}
\Gamma_1^{\sigma}(z)
=
\sum_{u \in \{\circ,\bullet\}}
\sum_{i=1}^{N}
e^{u}_{\sigma(i)}(z)
\prod_{j\not=i}^{N} d_{\sigma(j)}(z) d^{u}_{\sigma(j)}(x_{\sigma(i)}),
\end{align*}

\begin{multline*}
\Gamma_2^{\sigma}(z)
=
\sum_{i=1}^{N}
e^{\bullet\circ}_{\sigma(i)}(z)
\prod_{j\not=i}^{N} 
d_{\sigma(j)}(z)
d^{\bullet\circ}_{\sigma(j)}(x_{\sigma(i)}) 
+
\sum_{u,v \in \{\circ,\bullet\}}
\sum_{1 \leq i<j \leq N}
e^{uv}_{\sigma(i)\sigma(j)}(z)
\prod_{k\not=i,j}^{N} 
d_{\sigma(k)}(z)
d^{u}_{\sigma(k)}(x_{\sigma(i)}) 
d^{v}_{\sigma(k)}(x_{\sigma(j)}).
\qquad\qquad
\end{multline*}
Performing the change of indices $\sigma(i) \mapsto i$, $\sigma(j) \mapsto j$, $\sigma(k) \mapsto k$ and using the fact that $e^{uv}_{ab}(z) = e^{vu}_{ba}(z)$ for all $a\not=b$ and $u,v \in \{\circ,\bullet\}$, one immediately sees that $\Gamma_k^{\sigma}(z) = \Gamma_k(z)$ for $k \in \{0,1,2\}$.
\end{proof}

\begin{thm}
\label{thm:IK-YB-twist}
Fix two integers $k,\ell \in \{0,1,2\}$ and two parameters $z,y \in \mathbb{C}$. The product $\Gamma_k(z) \Gamma_{\ell}(y)$ of twisted column operators obeys the following exchange relation:
\begin{equation}
\label{gamma-com}
\Gamma_k(z)\Gamma_{\ell}(y) 
= 
\sum_{i=0}^2 \sum_{j=0}^2 
W_{z/y}(i,j;k,\ell) \Gamma_j(y)\Gamma_i(z),
\end{equation}
where $W_{z/y}(i,j;k,\ell)$ denotes the vertex weights \eqref{eqdefn19w} of the Izergin--Korepin model.
\end{thm}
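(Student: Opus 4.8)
The plan is to verify the operator identity \eqref{gamma-com} by direct computation, since --- as the authors emphasize --- the Drinfeld twist producing $\Gamma_0,\Gamma_1,\Gamma_2$ is not available, so one cannot simply pull back the $RTT$ relation of the monodromy matrix and must instead prove the relation intrinsically. I would exploit two structural features of the twisted column operators: the weight grading (the off-diagonal blocks \eqref{ecirc-mat}--\eqref{eboth-mat} raise the local state, so $\Gamma_k(z)$ raises the total weight $|S|=\sum_a S_a$ by exactly $k$), and the permutation symmetry established in Proposition \ref{prop:gam-sym}.

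First I would pass to matrix elements $\llangle \underline{i}\,\|\,\Gamma_k(z)\Gamma_\ell(y)\,\|\,\underline{j}\rrangle$ with $\underline{i}=(i_1,\dots,i_N)$ and $\underline{j}=(j_1,\dots,j_N)$. By the weight grading, both sides of \eqref{gamma-com} vanish unless $|\underline{i}|-|\underline{j}|=k+\ell$, and on the right the conservation property \eqref{conserve} forces $W_{z/y}(i,j;k,\ell)=0$ unless $i+j=k+\ell$, so the two sides live in the same graded component. Proposition \ref{prop:gam-sym} then permits a simultaneous permutation of sites and spectral parameters, so it suffices to treat configurations $\underline{i},\underline{j}$ in a fixed canonical order.

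The central reduction is to separate \emph{active} sites ($i_a\neq j_a$) from \emph{spectator} sites ($i_a=j_a$). On a spectator site only diagonal factors $d(z),d(y)$ and the dressings $d^{u}(x_i),d^{\bullet\circ}(x_i)$ act, and the goal is to show that these diagonal contributions match up on both sides of \eqref{gamma-com} and hence factor out of the identity. Since each site absorbs a net raising of at most $2$ while $k+\ell\le 4$, the number of active sites is at most four, so the identity collapses to a finite family of operator relations in ${\rm End}(V_1\otimes\cdots\otimes V_n)$ with $n\le 4$. The base case $n=1$ is the verification of \eqref{gamma-com} on a single local space, a $3\times 3$ computation matching the amplitudes in \eqref{ecirc-mat}--\eqref{eboth-mat} against the weights of Figure \ref{fig:19weights}; the cases $n=2,3,4$ I would organize by induction on $N$, peeling off the last site and closing the recursion using the Izergin--Korepin Yang--Baxter equation \eqref{eq19ybe1} and unitarity \eqref{eq19unitary1}.

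The hard part will be the bookkeeping attached to the two-site blocks $e^{uv}_{ij}(z)$ in $\Gamma_2$, equations \eqref{ecc}--\eqref{ebb}. When I peel off a site for the induction, or attempt to factor out a spectator, the two-site terms that straddle the peeled site and a retained one do not decouple cleanly, and the dressings $d^{u}_j(x_i)$ couple spectator sites to the spectral parameters $x_i$ of the active sites. The precise rational prefactors appearing in \eqref{ecc}--\eqref{ebb} are exactly what is needed to make these couplings telescope and the spectator factors cancel; confirming these cancellations --- in effect, that $\Gamma_2(z)$ behaves like a genuine (twisted) weight-two monodromy entry despite being assembled by hand --- is the main obstacle, and is where I expect the bulk of the computational effort (and possibly computer assistance) to lie.
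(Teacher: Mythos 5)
Your proposal is essentially the paper's own proof: your separation into active and spectator sites, with the diagonal dressings $d_j(z)d_j(y)\prod_i d_j^{u}(x_{i})$ factoring out of both sides, is exactly the paper's dot-diagram decomposition \eqref{eq:E-decomp} of the error matrix $E^{(k,\ell)}$, which reduces the identity \eqref{gamma-com} to the cases of at most four tensor factors (since $k+\ell\leq 4$), verified by computer. The only divergence is your suggestion to handle $n=2,3,4$ by an induction closed via the Yang--Baxter equation \eqref{eq19ybe1} and unitarity \eqref{eq19unitary1} — this is neither needed nor clearly available (the $\Gamma_k$ are not known to arise from a twist of the monodromy matrix), and the paper instead checks these finitely many small cases directly by computer, which your hedge about computer assistance already accommodates.
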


\begin{proof}
Our proof will make reference to the number of factors in the tensor product $V_1 \otimes \cdots \otimes V_N$ on which the twisted column operators act. To that effect, throughout the proof we shall write $\Gamma_k^{(N)}(z) \equiv \Gamma_k(z)$, for all $k \in \{0,1,2\}$. We begin by defining the following matrix:
\begin{align}
\label{Emat=0}
E^{(k,\ell)}(x_1,\dots,x_N;z;y) 
:= 
\Gamma^{(N)}_{k}(z)
\Gamma^{(N)}_{\ell}(y) 
- 
\sum_{i = 0}^{2} 
\sum_{j = 0}^{2} 
W_{z/y}(i,j;k,\ell)
\Gamma_j^{(N)}(y)\Gamma_i^{(N)}(z)
\in 
{\rm End}(V_1\otimes\cdots\otimes V_N).
\end{align}
To show that equation \eqref{gamma-com} holds, we must demonstrate that $E^{(k,\ell)}(x_1,\dots,x_N;z;y) = 0$. To facilitate this, we make use of a certain decomposition of the matrix $E^{(k,\ell)}(x_1,\dots,x_N;z;y)$, that we now describe. 

For all $1 \leq k \leq N$, define
\begin{align*}
L_{k}^{\circ} = 
\begin{pmatrix}
 0 & 0 & 0\\
 1 & 0 & 0\\
 0 & 0 & 0	
 \end{pmatrix}_{k}, 
 \quad L_{k}^{\bullet} = 
 \begin{pmatrix}
 0 & 0 & 0\\
 0 & 0 & 0\\
 0 & 1 & 0	
 \end{pmatrix}_{k}, 
 \quad 
 L_{k}^{\bullet\circ} = \begin{pmatrix}
 0 & 0 & 0\\
 0 & 0 & 0\\
 1 & 0 & 0	
 \end{pmatrix}_{k},
\end{align*}
where the index $k$ specifies that the matrices act non-trivially on space $V_k$, and as the identity on all other spaces in the product $V_1 \otimes \cdots \otimes V_N$. We introduce the further notations
\begin{align*}
\bm{1}_{[u]} = \sum_{1 \leq i \leq N} L_i^{u},
\quad\quad
&
\bm{1}_{[u,v]} = \sum_{1 \leq i<j \leq N} L_i^{u} L_j^{v},
\\
\bm{1}_{[u,v,w]} = \sum_{1 \leq i<j<k \leq N} L_i^{u} L_j^{v} L_k^{w},
\quad\quad
&
\bm{1}_{[u,v,w,s]} = \sum_{1 \leq i<j<k<\ell \leq N} L_i^{u} L_j^{v} L_k^{w} L_{\ell}^{s},
\end{align*}
for any $u,v,w,s \in \{\circ,\bullet,\bullet\circ\}$. We refer to the subscripts $[u],[u,v],[u,v,w],[u,v,w,s]$, $u,v,w,s \in \{\circ,\bullet,\bullet\circ\}$ as {\it dot diagrams}. For any dot diagram $\mathcal{D}$ we let its {\it weight} be the total number of dots present in the diagram (where $\circ$ and $\bullet$ count as $1$, while $\bullet\circ$ counts as $2$), denoted $|\mathcal{D}|$. For any integer $m \in [1,4]$ we write $\mathcal{S}_m=\{\mathcal{D}:|\mathcal{D}|=m\}$. We also write $l(\mathcal{D})$ for the {\it length} of $\mathcal{D}$, defined as the number of components in the diagram, and $\mathcal{D}_i$ for the $i$-th component of $\mathcal{D}$. In general, one has that $l(\mathcal{D}) \leq |\mathcal{D}|$. For example, the dot diagram $\mathcal{D}=[\circ,\bullet\circ]$ satisfies $|\mathcal{D}|=3$, $l(\mathcal{D})=2$, $\mathcal{D}_1=\circ$, $\mathcal{D}_2=\bullet\circ$. 

Then for any matrix $M \in {\rm End}(V_1\otimes\cdots\otimes V_N)$ and dot diagram $\mathcal{D}$, we define
\begin{align*}
M_{\mathcal{D}} = M \odot \bm{1}_{\mathcal{D}},
\end{align*}
where $\odot$ denotes component-wise multiplication of matrices\footnote{More precisely, for any two matrices $P$ and $Q$ of equal dimensions, with entries $P_{ij}$ and $Q_{ij}$ respectively, we define the matrix $P\odot Q$ with entries $$[P\odot Q]_{ij} = P_{ij} Q_{ij}.$$}. $M \odot \bm{1}_{\mathcal{D}}$ is simply the matrix formed by selecting all entries of $M$ corresponding to the positions of ones in $\bm{1}_{\mathcal{D}}$, with zeros at all other entries. Having set up these notations, we see that
\begin{align*}
E^{(k,\ell)}(x_1,\dots,x_N;z;y)
=
\sum_{\mathcal{D} \in \mathcal{S}_{k+\ell}}
E^{(k,\ell)}_{\mathcal{D}}(x_1,\dots,x_N;z;y),
\end{align*}
which is our desired decomposition of the matrix \eqref{Emat=0}. The point of this decomposition is that for each dot diagram $\mathcal{D} \in \mathcal{S}_{k+\ell}$, the matrix $E^{(k,\ell)}_{\mathcal{D}}(x_1,\dots,x_N;z;y)$ has a nice recursive structure not present on $E^{(k,\ell)}(x_1,\dots,x_N;z;y)$ itself. In particular, we claim that for any $k,\ell \in \{0,1,2\}$ and dot diagram $\mathcal{D}$ such that $|\mathcal{D}|=k+\ell$, there holds
\begin{align}
\label{eq:E-decomp}
E^{(k,\ell)}_{\mathcal{D}}(x_1,\dots,x_N;z;y)
=
\sum_{S \subset [1,N]: {\rm Card}(S)= l(\mathcal{D})}
E^{(k,\ell)}_{\mathcal{D}}(x_S;z;y)
\prod_{j \not\in S}
d_j(z) d_j(y) 
\prod_{1 \leq i \leq l(\mathcal{D})}
d_j^{\mathcal{D}_i}(x_{S_i}),
\end{align}
where $E^{(k,\ell)}_{\mathcal{D}}(x_S;z;y) = E^{(k,\ell)}_{\mathcal{D}}(x_{S_1},\dots,x_{S_{\ell(\mathcal{D})}};z;y)$ has precisely the same definition as \eqref{Emat=0}, but acting in  the tensor product $V_{S_1} \otimes \cdots \otimes V_{S_{\ell(\mathcal{D})}}$ that contains at most four factors. It is not difficult to verify that \eqref{eq:E-decomp} holds, making use of the explicit formulas \eqref{gam0}--\eqref{gam2} for $\Gamma^{(N)}_k(z)$, $\Gamma^{(N)}_{\ell}(y)$ and the property \eqref{eq:d-factor} of the diagonal matrices \eqref{dcirc-mat}--\eqref{dboth-mat}.

The advantage of equation \eqref{eq:E-decomp} is that it expresses our original matrix $E^{(k,\ell)}_{\mathcal{D}}(x_1,\dots,x_N;z;y)$ in terms of matrices $E^{(k,\ell)}_{\mathcal{D}}(x_S;z;y)$ with a non-trivial action on at most four local spaces $V_i$. To prove that $E^{(k,\ell)}_{\mathcal{D}}(x_1,\dots,x_N;z;y)=0$, it is therefore sufficient to check it explicitly for the cases $1 \leq N \leq 4$, which may be done by computer. 
\end{proof}

\subsection{Twisted columns representation of $F_S(x_1,\dots,x_N;\bm{z})$}
\label{ssec:IK-twist-rep}

The purpose of this section is to show that the functions \eqref{eqspsymIK2} may be recovered as matrix elements of products of the twisted column operators \eqref{gam0}--\eqref{gam2}. In particular, we shall prove the following theorem:
\begin{thm}
\label{thm:twisted-column-F}
Fix $N \in \mathbb{Z}_{>0}$ and let $S \in \mathfrak{s}(2)$ be a $2$-string such that $|S|=2N$. Then one has the equality
\begin{align}
\label{F-gam=}
F_S(x_1,\dots,x_N;\bm{z})
=
\llangle 2^N\| 
\Gamma_{S_1}(z_1) \Gamma_{S_2}(z_2) \Gamma_{S_3}(z_3) 
\cdots
\|0^N \rrangle
=
\llangle 2^N\| 
\prod_{i \geq 1} \Gamma_{S_i}(z_i)
\|0^N \rrangle,
\end{align}
where the vectors $\llangle 2^N\| \in V_1^{*} \otimes \cdots \otimes V_N^{*}$ and $\|0^N \rrangle \in V_1 \otimes \cdots \otimes V_N$ are given by
\begin{align*}
\llangle 2^N\|
=
\bigotimes_{i=1}^{N}
\llangle 2 \|_i
=
\bigotimes_{i=1}^{N}
\left( \begin{array}{ccc} 0 & 0 & 1 \end{array} \right)_i
\qquad
\text{and}
\qquad
\| 0^N \rrangle
=
\bigotimes_{i=1}^{N}
\| 0 \rrangle_i
=
\bigotimes_{i=1}^{N}
\left( \begin{array}{c} 1 \\ 0 \\ 0 \end{array} \right)_i.
\end{align*}
\end{thm}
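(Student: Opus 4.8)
The plan is to establish \eqref{F-gam=} by means of the uniqueness criterion of Theorem \ref{thm:unique}. I would introduce the candidate family
\begin{align*}
Z_S(x_1,\dots,x_N;\bm{z}) = \llangle 2^N \| \prod_{i \geq 1} \Gamma_{S_i}(z_i) \| 0^N \rrangle, \qquad S \in \mathfrak{s}(2),\ |S| = 2N,
\end{align*}
and show that it satisfies every hypothesis of that theorem, which then forces $Z_S = F_S$. The exchange relations of Proposition \ref{prop:IK-exch} transfer to the $Z$-family directly from the Yang--Baxter algebra of the twisted columns: substituting $\Gamma_{S_{k+1}}(z_{k+1})\Gamma_{S_k}(z_k) = \sum_{i,j} W_{z_{k+1}/z_k}(i,j;S_{k+1},S_k)\,\Gamma_j(z_k)\Gamma_i(z_{k+1})$ from Theorem \ref{thm:IK-YB-twist} into the relevant adjacent pair of factors of $Z_{\sigma_k \cdot S}(x_1,\dots,x_N;\sigma_k\cdot\bm{z})$ reproduces verbatim the identity derived in the proof of Proposition \ref{prop:IK-exch}. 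The column-freezing step \eqref{eq:column-freeze} underlying Proposition \ref{prop:11} also carries over, via the diagonal identity $\llangle 2^N \| \Gamma_0(z_1) = \prod_{i=1}^N W_{z_1/x_i}(0,2;0,2)\,\llangle 2^N \|$ (each $d_i(z_1)$ from \eqref{d-mat} being diagonal with $(2,2)$-entry $W_{z_1/x_i}(0,2;0,2)$); combined with Theorem \ref{thm:IK-YB-twist}, this yields \eqref{11-02} for $Z$.

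It then remains to verify the properties of Proposition \ref{prop:IK-properties} for $Z_T$ with $T = (2^{N-M},1^{2M})$, $N > M$. Symmetry in $(x_1,\dots,x_N)$ is immediate from Proposition \ref{prop:gam-sym} together with the invariance of $\llangle 2^N \|$ and $\| 0^N \rrangle$ under permutation of tensor factors. Vanishing at $z_1 = 0$ holds because each off-diagonal building block \eqref{ecirc-mat}--\eqref{ebb} carries an overall factor of $z$, so $\Gamma_2(0) = 0$ and hence $Z_T|_{z_1=0} = 0$. For the degree count, I would multiply the first operator $\Gamma_{T_1}(z_1) = \Gamma_2(z_1)$ by $\prod_{i=1}^N (x_i - q^2 z_1)(x_i - q^3 z_1)$ and observe that each normalized diagonal $d_j(z_1)$ and single-site block $e_i^{\bullet\circ}(z_1)$ becomes degree two, while the normalized pair blocks \eqref{ecc}--\eqref{ebb} become degree four but carry one fewer diagonal factor, so every summand totals degree $2N$ in $z_1$. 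The base case $N=1$, $M=0$ is the direct evaluation $\llangle 2 \| \Gamma_2(z_1) \| 0 \rrangle = \llangle 2 \| e_1^{\bullet\circ}(z_1) \| 0 \rrangle = W_{z_1/x_1}(0,2;2,0)$, matching property (6).

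The recursion at $z_1 = x_N$ (property (4)) is the cleanest of the analytic checks, and I would derive it from two structural facts about the matrices \eqref{d-mat}--\eqref{dboth-mat}. First, since the $(2,2)$-entry of $d_N(x_N)$ vanishes, the covector $\llangle 2 \|_N$ annihilates every term of $\Gamma_2(x_N)$ except the single-site term $e_N^{\bullet\circ}(x_N)\prod_{j \neq N} d_j(x_N) d_j^{\bullet\circ}(x_N)$; on that surviving term the factorization $d_j(x_N) d_j^{\bullet\circ}(x_N) = \mathrm{Id}$ from \eqref{eq:d-factor} trivializes the spectator spaces and $\llangle 2 \|_N e_N^{\bullet\circ}(x_N) = \llangle 0 \|_N$. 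Second, for each $k \in \{0,1,2\}$ one checks the sandwich identity $\llangle 0 \|_N \Gamma_k(z) \| 0 \rrangle_N = \Gamma_k(z)$, where the right-hand side is the twisted column operator on $V_1 \otimes \cdots \otimes V_{N-1}$ built from $(x_1,\dots,x_{N-1})$. Together these let space $N$ decouple from the remaining columns, giving exactly $Z_T|_{z_1 = x_N} = Z_{T\backslash 2}(x_1,\dots,x_{N-1};\widehat{\bm{z}}_1)$.

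The hard part will be property (5), the residue recursion \eqref{eqdomw3} at $z_1 = q^{-3}x_N$. Unlike the point $z_1 = x_N$, space $N$ does not decouple cleanly: the factor $(x_N - q^3 z_1)$ produces a simple pole in every term of $\Gamma_2(z_1)$ acting nontrivially on $V_N$ (through $d_N(z_1)$, $e_N^{\bullet\circ}(z_1)$, $e_N^{\bullet}(z_1)$, and each pair block touching $V_N$), and the residue retains a genuine sum over intermediate $2$-strings. I would extract the residue, factor out the scalar prefactor of \eqref{eqdomw3}, and then identify the remaining space-$N$ data with the one-row partition functions $\Phi_U$ of \eqref{cu-def}. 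The crux is to show that contracting the space-$N$ residue of $\Gamma_2(z_1)$ against the tail $\prod_{i \geq 2} \Gamma_{T_i}(z_i)$ assembles precisely into $\sum_U \Phi_U\, Z_U(x_1,\dots,x_{N-1};\widehat{\bm{z}}_1)$, with the coefficients $\Phi_U$ emerging as matrix elements of the residue in space $N$. I expect this to require the same bounded-support reduction used to prove Theorem \ref{thm:IK-YB-twist}, namely rewriting the residue as an operator identity whose dependence on spectator spaces factors through diagonal matrices, thereby reducing the claim to a finite verification on a handful of spaces. Once property (5) is in hand, Theorem \ref{thm:unique} closes the argument.
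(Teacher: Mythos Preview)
Your overall strategy is exactly the paper's: define $g_S = \llangle 2^N\|\prod_i\Gamma_{S_i}(z_i)\|0^N\rrangle$ and verify the hypotheses of Theorem~\ref{thm:unique}. Your treatment of the exchange relations, Proposition~\ref{prop:11}, and properties (1)--(4), (6) of Proposition~\ref{prop:IK-properties} is correct and matches the paper essentially line for line.

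For property (5) you are missing the key simplification. You propose to compute the residue of $\llangle 2^N\|\Gamma_2(z_1)$ at $z_1=q^{-3}x_N$ directly from the left and then track its interaction with the tail; you correctly sense this is hard and fall back on speculating about a bounded-support reduction. The paper avoids this entirely by observing that the cyclic identity \eqref{eq:FT-U}, derived for $F_T$ in the proof of Proposition~\ref{prop:IK-properties}(5) using \emph{nothing but column exchange relations}, transfers verbatim to $g_T$ because the $\Gamma_k$ satisfy the same Yang--Baxter algebra (Theorem~\ref{thm:IK-YB-twist}). This gives $g_T = \sum_U \Psi_U(z_1)\,g_{\omega\cdot U}(\,\cdot\,;\omega\cdot\bm z)$ with the \emph{same} one-row coefficients $\Psi_U$ as for $F_T$; the residue is now taken in the rightmost operator $\Gamma_{U_1}(z_1)$ acting on $\|0^N\rrangle$. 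There the computation is clean and dual to your property-(4) argument: the residue vanishes unless $U_1=2$, and in that case only the single-site term $e_N^{\bullet\circ}$ survives, yielding a scalar times $\|0^{N-1}\rrangle\otimes\|2\rrangle_N$. Sandwiching the remaining operators between $\llangle 2\|_N$ and $\|2\rrangle_N$ forces only diagonal factors in space $N$, which produce exactly the scalar prefactor of \eqref{eqdomw3} and leave $g_{(U_2,\dots,U_{N+M})}(x_1,\dots,x_{N-1};\widehat{\bm z}_1)$. The coefficients $\Phi_U$ are then inherited for free from $\Psi_U(q^{-3}x_N)$ with $U_1=2$; they do not have to be rebuilt as residue matrix elements in space $N$. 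Once you use this cyclic move, property (5) is no harder than property (4).
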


\begin{proof}
For convenience, in what follows we adopt the notation
\begin{align}
\label{gS-alg}
g_S(x_1,\dots,x_N;\bm{z})
:=
\llangle 2^N\| 
\Gamma_{S_1}(z_1) \Gamma_{S_2}(z_2) \Gamma_{S_3}(z_3) 
\cdots
\|0^N \rrangle
\end{align}
for the right hand side of \eqref{F-gam=}. Our proof rests on demonstrating that $g_S(x_1,\dots,x_N;\bm{z})$ satisfies the properties laid out in Propositions \ref{prop:IK-exch}, \ref{prop:11} and \ref{prop:IK-properties}, since these uniquely determine the family $F_S(x_1,\dots,x_N;\bm{z})$. 

\begin{prop}[Check of Proposition \ref{prop:IK-exch}]
Fix an integer $k \geq 1$ and a $2$-string $S \in \mathfrak{s}(2)$ such that $S_k < S_{k+1}$. The following relation then holds:
\begin{multline}
\label{eq:IK-exchange-g}
W_{z_{k+1}/z_k}(S_{k+1},S_k;S_{k+1},S_k)
g_S(x_1,\dots,x_N;\bm{z})
=
\\
g_{\sigma_k \cdot S}(x_1,\dots,x_N;\sigma_k \cdot \bm{z})
-
\sum_{T:S \sim_k T}
W_{z_{k+1}/z_k}(T_{k+1},T_k;S_{k+1},S_k)
g_T(x_1,\dots,x_N;\bm{z}),
\end{multline}
with the sum taken over $2$-strings $T$ such that $S,T$ are $k$-exchangeable, and where $\sigma_k\cdot S = (\dots,S_{k+1},S_k,\dots)$, $\sigma_k \cdot \bm{z} = (\dots,z_{k+1},z_k,\dots)$ denote transposition of the $k$-th and $(k+1)$-th elements of $S,\bm{z}$.
\end{prop}

\begin{proof}
Employing exchange relation \eqref{gamma-com} with $k \mapsto S_{k+1}$, $\ell \mapsto S_k$ and $z \mapsto z_{k+1}$, $y \mapsto z_k$, we have
\begin{multline*}
g_{\sigma_k \cdot S}(x_1,\dots,x_N;\sigma_k \cdot \bm{z})
=
\llangle 2^N\|
\prod_{i<k} \Gamma_{S_i}(z_i)
\cdot
\Gamma_{S_{k+1}}(z_{k+1}) \Gamma_{S_k}(z_k)
\cdot
\prod_{j > k+1} \Gamma_{S_j}(z_j)
\|0^N \rrangle
\\
=
\sum_{(T_k,T_{k+1})}
W_{z_{k+1}/z_k}(T_{k+1},T_k;S_{k+1},S_k)
\llangle 2^N\| 
\prod_{i<k} \Gamma_{S_i}(z_i)
\cdot
\Gamma_{T_k}(z_k) \Gamma_{T_{k+1}}(z_{k+1})
\cdot
\prod_{j > k+1} \Gamma_{S_j}(z_j)
\|0^N \rrangle
\\
=
W_{z_{k+1}/z_k}(S_{k+1},S_k;S_{k+1},S_k)
g_S(x_1,\dots,x_N;\bm{z})
+
\sum_{T: S\sim_k T}
W_{z_{k+1}/z_k}(T_{k+1},T_k;S_{k+1},S_k)
g_T(x_1,\dots,x_N;\bm{z}),
\end{multline*}
which is precisely \eqref{eq:IK-exchange-g}.
\end{proof}

\begin{prop}[Check of Proposition \ref{prop:11}]
Fix an integer $N \geq 1$. The following relation holds:
\begin{multline}
\label{11-02-g}
W_{z_2/z_1}(1,1;2,0)
g_{(1^{2N})}(x_1,\dots,x_N;\bm{z})
=
\\
g_{(2,0,1^{2N-2})}(x_1,\dots,x_N;\sigma_1 \cdot \bm{z})
-
W_{z_2/z_1}(0,2;2,0)
g_{(2,0,1^{2N-2})}(x_1,\dots,x_N;\bm{z})
\\
-
W_{z_2/z_1}(2,0;2,0)
\prod_{i=1}^{N}
W_{z_1/x_i}(0,2;0,2)
g_{(2,1^{2N-2})}(x_1,\dots,x_N;\widehat{\bm{z}}_1),
\end{multline}
where $\widehat{\bm{z}}_1 = (z_2,z_3,\dots)$ denotes the secondary alphabet with the omission of $z_1$.
\end{prop}

\begin{proof}
Employing exchange relation \eqref{gamma-com} with $k=2$, $\ell=0$ and $z \mapsto z_2$, $y \mapsto z_1$, we find that
\begin{multline}
\label{11-02-proof}
g_{(2,0,1^{2N-2})}(x_1,\dots,x_N;\sigma_1 \cdot \bm{z})
=
\llangle 2^N\|
\Gamma_{2}(z_2) \Gamma_{0}(z_1)
\cdot
\prod_{j=3}^{2N} \Gamma_1(z_j)
\|0^N \rrangle
\\
=
W_{z_2/z_1}(2,0;2,0)
\llangle 2^N\|
\Gamma_0(z_1) \Gamma_2(z_2)
\cdot
\prod_{j=3}^{2N} \Gamma_1(z_j)
\|0^N \rrangle
+
W_{z_2/z_1}(1,1;2,0)
\llangle 2^N\|
\Gamma_1(z_1) \Gamma_1(z_2)
\cdot
\prod_{j=3}^{2N} \Gamma_1(z_j)
\|0^N \rrangle
\\
+
W_{z_2/z_1}(0,2;2,0)
\llangle 2^N\|
\Gamma_2(z_1) \Gamma_0(z_2)
\cdot
\prod_{j=3}^{2N} \Gamma_1(z_j)
\|0^N \rrangle.
\end{multline}
The operator $\Gamma_0(z_1)$ is diagonal, and its action on $\llangle 2^N\|$ may be trivially computed:
\begin{align}
\label{gam0-leftact}
\llangle 2^N\|
\Gamma_0(z_1)
=
\prod_{i=1}^{N}
\frac{(x_i-q^2 z_1)(x_i-q^3 z_1)}
{(x_i-z_1)(x_i-q z_1)}
\llangle 2^N\|
=
\prod_{i=1}^{N}
W_{z_1/x_i}(0,2;0,2)
\llangle 2^N\|.
\end{align}
Using \eqref{gam0-leftact} in the first term on the right hand side of \eqref{11-02-proof}, and re-expressing each operator product in the form \eqref{gS-alg}, the result \eqref{11-02-g} follows.
\end{proof}

\begin{prop}[Check of Proposition \ref{prop:IK-properties}]

Fix integers $N \in \mathbb{Z}_{>0}$ and $M \in \mathbb{Z}_{\geq 0}$ such that $N>M$, and let $T=(2^{N-M},1^{2M})$ denote a non-increasing $2$-string. The function $g_T(x_1,\dots,x_N;\bm{z})$ satisfies the following properties:

\begin{enumerate}[wide,labelindent=0pt]

\item The function $g_T(x_1,\dots,x_N;\bm{z})$ is symmetric with respect to its primary alphabet $(x_1,\dots,x_N)$.
\medskip

\item Introduce the normalized function 
\begin{equation*}
\tilde{g}_{T}(x_1,\dots,x_N;\bm{z}) 
= 
\prod_{i=1}^{N}\prod_{j=1}^{N+M}
(x_i-q^2 z_j)(x_i-q^3 z_j)
g_{T}(x_1,\dots,x_N;\bm{z}).
\end{equation*}
Then $\tilde{g}_{T}$ is a polynomial in $z_1$ of degree $2N$.
\medskip

\item The function $g_{T}$ vanishes at $z_1=0$:
\begin{align*}
g_{T}(x_1,\dots,x_N;\bm{z})
\Big|_{z_1 = 0} = 0.
\end{align*}
\medskip

\item The function $g_{T}$ has the following recursion relation with respect to $z_1$:
\begin{align}
\label{eq:g-reduce-0}
g_{T}(x_1,\dots,x_N;\bm{z})
\Big|_{z_1 = x_N} = 
g_{T\backslash 2}
(x_1,\dots,x_{N-1};\widehat{\bm{z}}_1), 
\end{align}
where $\widehat{\bm{z}}_1 = (z_2,z_3,\dots)$ denotes the secondary alphabet with the omission of $z_1$ and $T\backslash2 = (2^{N-M-1},1^{2M})$.
\medskip

\item The function $g_{T}$ has a further recursion relation with respect to $z_1$:
\begin{multline}
\label{eq:res-gT}
{\rm Res}
\Big\{
g_{T}(x_1,\dots,x_N;\bm{z})
\Big\}_{z_1 = q^{-3}x_N}
=
-\frac{(1-q^2)(1-q^3)}{q^6(1-q)}
x_N
\\[5pt]
\times
\prod_{i=1}^{N-1}
\frac{(x_N-q^2 x_i)(x_N-q^3 x_i)}{(x_N-x_i)(x_N-q x_i)}
\prod_{j=2}^{N+M}
\frac{(x_N-z_j)(x_N-q z_j)}{(x_N-q^2 z_j)(x_N-q^3 z_j)}
\sum_{U} \Phi_{U} g_U(x_1,\dots,x_{N-1};\widehat{\bm{z}}_1),
\end{multline}
with the sum taken over $2$-strings $U=(U_2,\dots,U_{N+M})$ such that $|U| = 2N-2$, and where the coefficients $\Phi_U$ are defined as the one-row partition functions \eqref{cu-def}.
\medskip

\item In the case $N=1$, $M=0$ we have the explicit evaluation
\begin{align}
\label{eq:g-init}
g_{(2)}(x_1;\bm{z}) 
= 
W_{z_1/x_1}(0,2;2,0) 
= 
\frac{(1-q^2)(x_1+q^2 x_1-q^2 z_1-q^3 z_1)z_1}
{(x_1-q^2 z_1)(x_1-q^3 z_1)}.
\end{align}

\end{enumerate}
\end{prop}

\begin{proof}
We check each of these properties directly on $g_T(x_1,\dots,x_N)$:

\begin{enumerate}[wide, labelindent=0pt]
\item The symmetry with respect to $(x_1,\dots,x_N)$ is a direct consequence of Proposition \ref{prop:gam-sym}, as well as the fact that the vectors $\llangle 2^N \|$ and $\| 0^N \rrangle$ are invariant under permutation of the underlying spaces $V_1 \otimes \cdots \otimes V_N$.

\medskip
\item One may write $\tilde{g}_T(x_1,\dots,x_N;\bm{z})$ in the algebraic form
\begin{align*}
\tilde{g}_T(x_1,\dots,x_N;\bm{z})
=
\llangle 2^N\| 
\tilde{\Gamma}_{T_1}(z_1) 
\tilde{\Gamma}_{T_2}(z_2) 
\tilde{\Gamma}_{T_3}(z_3) 
\cdots
\|0^N \rrangle,
\end{align*}
where $\tilde{\Gamma}_k(z) = 
\prod_{i=1}^{N} (x_i-q^2 z)(x_i-q^3 z) \Gamma_k(z)$ for all $k \in \{0,1,2\}$. The normalized operators $\tilde{\Gamma}_k(z)$ are given explicitly by
\begin{align*}
\tilde{\Gamma}_0(z)
=
\prod_{i=1}^{N} \tilde{d}_i(z),
\end{align*}
\begin{align*}
\tilde{\Gamma}_1(z)
=
\sum_{u \in \{\circ,\bullet\}}
\sum_{i=1}^{N}
\tilde{e}^{u}_i(z)
\prod_{j\not=i}^{N} \tilde{d}_j(z) d^{u}_j(x_i),
\end{align*}
\begin{align*}
\tilde{\Gamma}_2(z)
=
\sum_{i=1}^{N}
\tilde{e}^{\bullet\circ}_i(z)
\prod_{j\not=i}^{N} 
\tilde{d}_j(z)
d^{\bullet\circ}_j(x_i) 
+
\sum_{u,v \in \{\circ,\bullet\}}
\sum_{1 \leq i<j \leq N}
\tilde{e}^{uv}_{ij}(z)
\prod_{k\not=i,j}^{N} 
\tilde{d}_k(z)
d^{u}_k(x_i) d^{v}_k(x_j),
\end{align*}
where we have defined the following normalized versions of the matrices \eqref{d-mat}, \eqref{ecirc-mat}--\eqref{ebb}:
\begin{align*}
\tilde{d}_j(z) &=(x_j-q^2 z)(x_j-q^3 z) d_j(z),
\\
\tilde{e}^{w}_i(z) &= (x_i-q^2 z)(x_i-q^3 z) e^{w}_i(z),
\quad
w \in \{\circ,\bullet,\bullet\circ\},
\\
\tilde{e}^{uv}_{ij}(z)
&= 
(x_i-q^2 z)(x_j-q^2 z)(x_i-q^3 z)(x_j-q^3 z) e^{uv}_{ij}(z),
\quad
u,v \in \{\circ,\bullet\}.
\end{align*}
From these representations it is clear that $\tilde{\Gamma}_k(z)$ has entries that are degree-$2N$ polynomials in $z$, and so in particular, $\tilde{g}_T(x_1,\dots,x_N;\bm{z})$ is a degree-$2N$ polynomial in $z_1$.
\medskip

\item The function $g_T(x_1,\dots,x_N;\bm{z})$ depends on $z_1$ only via the operator $\Gamma_{T_1}(z_1)$, and furthermore, one has that $T_1=2$. Thus, to demonstrate that $g_T(x_1,\dots,x_N;\bm{z})|_{z_1=0} = 0$ it is only necessary to show that $\Gamma_2(z)$ vanishes at $z=0$. Consulting \eqref{d-mat} and \eqref{ecirc-mat}--\eqref{ebb}, one sees that $d_i(0) = {\rm id}_i$ and $e^{\bullet\circ}_i(0) = e^{uv}_{ij}(0) = 0$ for all $u,v \in \{\circ,\bullet\}$, so that indeed $\Gamma_2(0) = 0$.
\medskip

\item
Using the fact that $T_1=2$, one has that
\begin{align*}
g_T(x_1,\dots,x_N;\bm{z})\Big|_{z_1=x_N}
=
\llangle 2^N\| 
\Gamma_{2}(x_N) 
\Gamma_{T_2}(z_2) 
\Gamma_{T_3}(z_3) 
\cdots
\|0^N \rrangle.
\end{align*}
To progress further, we examine the covector $\llangle 2^N\| \Gamma_2(x_N)$. One notices that
\begin{align*}
\llangle 2\|_N d_N(x_N) = 0,
\end{align*}
which is easily verified from \eqref{d-mat}, and 
\begin{align*}
\llangle 2\|_i \otimes \llangle 2\|_N
e^{uv}_{iN}(x_N)
=
0,
\quad
\text{for all}
\quad u,v \in \{\circ,\bullet\},
\qquad 1 \leq i \leq N-1,
\end{align*}
which follows from the fact that $\llangle 2\|_N e^{\circ}_N(z) = 0$ for arbitrary $z$, while $e^{\bullet}_N(x_N) = 0$ ({\it cf.}\/ equations \eqref{ecirc-mat} and \eqref{ebull-mat}). Collectively, these vanishing properties ensure that only the $i=N$ term from the first sum of \eqref{gam2} contributes non-trivially to the quantity $\llangle 2^N\| \Gamma_2(x_N)$:
\begin{align*}
\llangle 2^N\| \Gamma_2(x_N)
&=
\llangle 2^N\| 
e_N^{\bullet\circ}(x_N)
\prod_{j=1}^{N-1} d_j(x_N) d_j^{\bullet\circ}(x_N)
\\
&=
\llangle 2^N\| 
e_N^{\bullet\circ}(x_N)
\prod_{j=1}^{N-1} {\rm id}_j
=
\llangle 2 \|_1
\otimes\cdots \otimes 
\llangle 2 \|_{N-1}
\otimes
\llangle 0 \|_N.
\end{align*}
We have thus shown that 
\begin{align}
\label{eq:g-reduce-1}
g_T(x_1,\dots,x_N;\bm{z})\Big|_{z_1=x_N}
=
\llangle 2 \|_1
\otimes\cdots \otimes 
\llangle 2 \|_{N-1}
\otimes
\llangle 0 \|_N
\Gamma_{T_2}(z_2) 
\Gamma_{T_3}(z_3) 
\cdots
\|0^N \rrangle.
\end{align}
The proof concludes by noting the following projective property of the covector $\llangle 0 \|_N$ on the operators $\Gamma_k(z)$:
\begin{align}
\label{eq:0-project}
\llangle 0 \|_N
\Gamma_k(z;x_1,\dots,x_N)
=
\Gamma_k(z;x_1,\dots,x_{N-1})
\otimes
\llangle 0 \|_N,
\end{align}
where $\Gamma_k(z;x_1,\dots,x_N)$ denotes a twisted column operator acting on the full space $V_1\otimes \cdots \otimes V_N$, while $\Gamma_k(z;x_1,\dots,x_{N-1})$ denotes the same operator acting on $V_1\otimes \cdots \otimes V_{N-1}$. The property \eqref{eq:0-project} is easily verified using the fact that
\begin{align*}
\llangle 0 \|_N e_N^{u}(z)=0,\quad
\text{for all}\quad 
u \in \{\circ,\bullet,\bullet\circ\},
\end{align*}
and 
\begin{align*}
\llangle 0\|_N d_N(z) = \llangle 0\|_N,\
\qquad
\llangle 0\|_N d_N^{u}(z) = \llangle 0\|_N,\quad 
\text{for all}\quad
u \in \{\circ,\bullet,\bullet\circ\}.
\end{align*}
Iterating the relation \eqref{eq:0-project} within \eqref{eq:g-reduce-1}, each of the operators $\Gamma_{T_i}(z_i)$ is replaced by its $V_1 \otimes \cdots \otimes V_{N-1}$ counterpart, and all dependence on $V_N$ is ultimately contracted away due to the fact that $\llangle 0 \|_N \cdot \| 0 \rrangle_N =1$. The recursion relation \eqref{eq:g-reduce-0} is then established.
\medskip

\item
In the first step, one shows that the function $g_T(x_1,\dots,x_N;\bm{z})$ admits an algebraic equality of the same form as \eqref{eq:FT-U}. More precisely, we claim that
\begin{align}
\label{eq:gT-U}
g_T(x_1,\dots,x_N;\bm{z})
=
\sum_{(U_1,\dots,U_{N+M})}
\Psi_U(z_1)
g_{\omega\cdot U}(x_1,\dots,x_N;\omega\cdot\bm{z})
\end{align}
with the sum taken over $2$-strings $(U_1,\dots,U_{N+M})$ such that $|U|=2N$, where $\omega \cdot U = (U_2,\dots,U_{N+M},U_1)$ and $\omega\cdot\bm{z} = (z_2,\dots,z_{N+M},z_1)$, and the coefficients $\Psi_U(z_1)$ are given by the same one-row partition functions \eqref{eq:Psi-one-row} as previously. To see this, recall that the twisted column operators satisfy exactly the same family of exchange relations \eqref{gamma-com} as their untwisted counterparts. Since \eqref{eq:FT-U} is derived using nothing other than exchange relations of partition function columns, it follows that the same relation must apply to $g_T(x_1,\dots,x_N;\bm{z})$ itself.

The central part of the proof is then the computation of
\begin{align}
\label{eq:residue-key}
{\rm Res}\Big\{
g_{\omega\cdot U}(x_1,\dots,x_N;\omega\cdot\bm{z})
\Big\}_{z_1=q^{-3}x_N}
=
\llangle 2^N \|
\Gamma_{U_2}(z_2)
\cdots
\Gamma_{U_{N+M}}(z_{N+M})
{\rm Res}
\Big\{
\Gamma_{U_1}(z_1)
\Big\}_{z_1=q^{-3}x_N}
\| 0^N \rrangle,
\end{align}
considering the possible values of $U_1 \in \{0,1,2\}$. In the case $U_1=0$, one sees that
\begin{align*}
{\rm Res}
\Big\{
\Gamma_{0}(z)
\Big\}_{z=q^{-3}x_N}
\| 0^N \rrangle
=
0,
\end{align*}
since
\begin{align}
\label{dmat-res}
{\rm Res}\Big\{ d_N(z) \Big\}_{z=q^{-3} x_N} \|0\rrangle_N = 0,
\end{align}
the latter being easily verified from \eqref{d-mat}. One also finds that
\begin{align*}
{\rm Res}
\Big\{
\Gamma_{1}(z)
\Big\}_{z=q^{-3}x_N}
\| 0^N \rrangle
=
0,
\end{align*}
by combination of \eqref{dmat-res} together with the facts
\begin{align}
\label{emat-res}
{\rm Res}\Big\{ e^{\circ}_N(z) \Big\}_{z=q^{-3} x_N} = 0,
\qquad
e^{\bullet}_N(z) \|0\rrangle_N=0,
\end{align}
which follow immediately from \eqref{ecirc-mat} and \eqref{ebull-mat}.

The only non-trivial case of the residue \eqref{eq:residue-key} is thus when $U_1=2$. We therefore turn to examining the residue of the vector $\Gamma_2(z)\|0^N\rrangle$ at the point $z=q^{-3}x_N$. Using \eqref{dmat-res} and the vanishing property
\begin{align*}
{\rm Res}\Big\{ e_{iN}^{uv}(z) \Big\}_{z=q^{-3} x_N} 
\|0\rrangle_i \otimes \|0\rrangle_N
= 0,
\quad
\text{for all}
\quad u,v \in \{\circ,\bullet\},
\qquad 1 \leq i \leq N-1,
\end{align*}
which follows immediately from \eqref{emat-res}, one finds that only the $i=N$ term from the first sum of \eqref{gam2} contributes non-trivially to the residue of $\Gamma_2(z)\|0^N\rrangle$ at the point $z=q^{-3}x_N$:
\begin{align}
\label{eq:gam2-res}
{\rm Res}\Big\{ \Gamma_2(z) \Big\}_{z=q^{-3}x_N} \|0^N\rrangle
&= 
{\rm Res}\Big\{ e_N^{\bullet\circ}(z) \Big\}_{z=q^{-3}x_N}
\prod_{j=1}^{N-1} d_j(q^{-3}x_N) d_j^{\bullet\circ}(x_N)
\|0^N\rrangle
\\
&=
-\frac{(1-q^2)(1-q^3)}{q^6(1-q)}x_N
\cdot
\|0\rrangle_1 
\otimes
\cdots
\otimes
\|0\rrangle_{N-1}
\otimes
\|2\rrangle_N.
\nonumber
\end{align}
In preparation for applying \eqref{eq:gam2-res} to \eqref{eq:residue-key}, we claim that
\begin{multline}
\label{eq:N-spacefreeze}
\llangle 2^{N-1}\|
\otimes
\llangle 2 \|_N
\prod_{j=2}^{N+M}
\Gamma_{U_j}(z_j;x_1,\dots,x_N)
\| 0^{N-1} \rrangle \otimes \| 2 \rrangle_N
\\
=
\prod_{i=1}^{N-1}
\frac{(x_N-q^2 x_i)(x_N-q^3 x_i)}{(x_N-x_i)(x_N-q x_i)}
\prod_{j=2}^{N+M}
\frac{(x_N-z_j)(x_N-q z_j)}{(x_N-q^2 z_j)(x_N-q^3 z_j)}
\llangle 2^{N-1}\|
\prod_{j=2}^{N+M}
\Gamma_{U_j}(z_j;x_1,\dots,x_{N-1})
\| 0^{N-1} \rrangle.
\end{multline}
The derivation of equation \eqref{eq:N-spacefreeze} proceeds by analyzing the local operators that act in space $V_N$ within the product $\prod_{j=2}^{N+M}
\Gamma_{U_j}(z_j;x_1,\dots,x_N)$. Since the operators on this space are sandwiched between the covector $\llangle 2\|_N$ and vector $\| 2 \rrangle_N$, any non-diagonal operator acting on $V_N$ will give a vanishing contribution to the quantity on the left hand side of \eqref{eq:N-spacefreeze}. The only operators that may act on this space are thus $d_N(z_j)$ with $2 \leq j \leq N+M$, and $d_N^{u}(x_i)$ with $u \in \{\circ,\bullet,\bullet\circ\}$, $1 \leq i \leq N-1$. 

One finds that the operator $d_N(z_j)$ is represented exactly once for each $2 \leq j \leq N+M$, yielding the factor $\dfrac{(x_N-z_j)(x_N-q z_j)}{(x_N-q^2 z_j)(x_N-q^3 z_j)}$ when one sandwiches between $\llangle 2\|_N$ and $\| 2 \rrangle_N$. In a similar vein, the operator $d_N^{\bullet\circ}(x_i) = d_N^{\bullet}(x_i) d_N^{\circ}(x_i)$ is represented exactly once for each $1\leq i \leq N-1$, yielding $\dfrac{(x_N-q^2 x_i)(x_N-q^3 x_i)}{(x_N-x_i)(x_N-q x_i)}$ when one sandwiches between $\llangle 2\|_N$ and $\| 2 \rrangle_N$. The identity \eqref{eq:N-spacefreeze} then follows.

The proof concludes by combining \eqref{eq:gam2-res} and \eqref{eq:N-spacefreeze} in \eqref{eq:residue-key}, which gives
\begin{multline}
\label{eq:residue-key2}
{\rm Res}\Big\{
g_{\omega\cdot U}(x_1,\dots,x_N;\omega\cdot\bm{z})
\Big\}_{z_1=q^{-3}x_N}
=
-\frac{(1-q^2)(1-q^3)}{q^6(1-q)}x_N
\times
\\
\prod_{i=1}^{N-1}
\frac{(x_N-q^2 x_i)(x_N-q^3 x_i)}{(x_N-x_i)(x_N-q x_i)}
\prod_{j=2}^{N+M}
\frac{(x_N-z_j)(x_N-q z_j)}{(x_N-q^2 z_j)(x_N-q^3 z_j)}
\cdot\bm{1}_{U_1=2}\cdot
g_{(U_2,\dots,U_{N+M})}
(x_1,\dots,x_{N-1};\widehat{\bm{z}}_1).
\end{multline}
One may then use \eqref{eq:residue-key2} in \eqref{eq:gT-U} to compute 
${\rm Res}\Big\{g_{T}(x_1,\dots,x_N;\bm{z})\Big\}_{z_1=q^{-3}x_N}$ and the required recursion \eqref{eq:res-gT} follows immediately.

\medskip

\item One has that $g_{(2)}(x_1;\bm{z})
=
\llangle 2^1 \| \Gamma_2(z_1) \| 0^1 \rrangle$, where in this case 
\begin{align*}
\llangle 2^1 \| = \begin{pmatrix} 0 & 0 & 1 \end{pmatrix}_1,
\qquad
\| 0^1 \rrangle = \begin{pmatrix} 1 \\ 0 \\ 0 \end{pmatrix}_1,
\qquad
\Gamma_2(z_1)
=
\dfrac{(1-q^2)(x_1+q^2x_1-q^2z_1-q^3z_1)z_1}
{(x_1-q^2z_1)(x_1-q^3z_1)}
\begin{pmatrix}
0 & 0 & 0\\
0 & 0 & 0\\
1 & 0 & 0
\end{pmatrix}_1,
\end{align*}
and the claim \eqref{eq:g-init} is immediate.

\end{enumerate}
\end{proof}

This completes the check of Propositions \ref{prop:IK-exch}, \ref{prop:11} and \ref{prop:IK-properties} for the functions $g_S(x_1,\dots,x_N;\bm{z})$; in view of Theorem \ref{thm:unique}, it follows that $g_S(x_1,\dots,x_N;\bm{z}) = F_S(x_1,\dots,x_N;\bm{z})$ for all $S \in \mathfrak{s}(2)$ such that $|S|=2N$, thereby completing the proof of Theorem \ref{thm:twisted-column-F}.
\end{proof}

\subsection{Symmetrization formula}
\label{ssec:IK-sym-formula}

We are now ready to state and prove our symmetrization identity for the rational functions $F_S(x_1,\dots,x_N;\bm{z})$. Before doing so, we shall require some further notation.

\begin{defn}
Let $\sigma \in \mathfrak{M}_2(N,S)$ be a $2$-permutation matrix (see Definition \ref{defn:M-set}) with profile $S=(S_1,\dots,S_M)$. For any two rows $U=(U_1,\dots,U_M)$, $V=(V_1,\dots,V_M)$ of $\sigma$ and $a,b \in \{1,2\}$, we shall use the following pictorial shorthand:
\begin{align*}
&
\begin{tabular}{c||c|c|c}
    U & a & &\\ \hline
    V & & b &
\end{tabular}
\qquad
\text{ indicates that } a \text{ in } U \text{ occurs before } b \text{ in } V,
\\
&
\begin{tabular}{c||c|c|c}
    U & & a & \\ \hline
    V & b & &
\end{tabular}
\qquad
\text{ indicates that } a \text{ in } U \text{ occurs after } b \text{ in } V,
\\
&
\begin{tabular}{c||c|c|c}
    U & a &\phantom{b} &\\ \hline
    V & b & &
\end{tabular}
\qquad
\text{ indicates that } a \text{ in } U \text{ occurs at the same position as } b \text{ in } V.
\end{align*}
The purpose of this notation is to succinctly express the relative positions of the non-zero entries of $U,V$; it does not carry information about the absolute positions of the non-zero entries. In situations where the absolute positioning is relevant we shall place coordinate labels in the columns of these diagrams; for example,
\begin{align*}
\begin{tabular}{c||c|c|c}
     U & a & & \\ \hline
     V &  & b &\\
     \hdashline[1pt/1pt]
    & k & $\ell$ &
\end{tabular}
\qquad
\text{ indicates that } U_k=a \text{ and } V_{\ell}=b, \text{ with } 1 \leq k < \ell \leq M.
\end{align*}
\end{defn}

\begin{thm}\label{thm:symmetriz-IK}
Fix an integer $N \geq 1$ and let $S \in \mathfrak{s}(2)$ be a $2$-string such that $|S|=2N$. The rational symmetric function \eqref{eqspsymIK2} is given by the explicit formula
\begin{equation}
\label{symformula1}
F_S(x_1,\dots,x_N;\bm{z})
=
\sum_{\sigma \in \mathfrak{M}_2(N,S)}
\prod_{1 \leq i<j \leq N} \Delta_{\sigma(i),\sigma(j)}(x_i,x_j;\bm{z})
\prod_{i=1}^{N}
F_{\sigma(i)}(x_i;\bm{z}),
\end{equation}
where $\sigma(i)$ denotes the $i$-th row (read from top to bottom) of the $2$-permutation matrix $\sigma$. For any $U,V \in \mathfrak{s}(2)$ such that $|U|=|V|=2$, we define
\begin{align}
\label{eq:IK-delta1}
\Delta_{U,V}(x,y;\bm{z}) = 
\begin{cases} 
\smallskip
\dfrac{(y - q^2x)(y - q^3x)}{(y - x)(y - q x)}, 
&
\begin{tabular}{c||c|c|c}
    U & 2 & \\ \hline
    V & & 2 &
\end{tabular},\ \
\begin{tabular}{c||c|c|c}
    U & 2 &  &\\ \hline
    V & & 1& 1
\end{tabular},\ \
\begin{tabular}{c||c|c|c}
    U & 1 & 1 &\\ \hline
    V & & & 2
\end{tabular},\ \
\begin{tabular}{c||c|c|c|c}
    U & 1 & 1 & & \\ \hline
    V &  &  & 1 & 1  
\end{tabular}
\\
\smallskip
\dfrac{(y - q^2x)(q^2y - x)}{(y - x)^2}, 
&
\begin{tabular}{c||c|c|c}
    U & 1 & & 1 \\ \hline
    V & & 2 & 
\end{tabular},\ \
\begin{tabular}{c||c|c|c|c}
    U & 1 & & & 1 \\ \hline
    V & & 1 & 1 & 
\end{tabular}
\\
\smallskip
\dfrac{(y-q^2x)^2 (x-q y)}
{(y - x)^2 (y-q x)},
&
\begin{tabular}{c||c|c|c|c}
    U & 1 &  & 1 & \\ \hline
    V &  & 1 & & 1 
\end{tabular}
\end{cases}
\end{align}
and
\begin{align}
\label{eq:IK-delta2}
\Delta_{U,V}(x,y;\bm{z}) = \begin{cases}
\dfrac{(q^2x-y)(x y+q^2x y-q^3y z_k-q^3x z_k)}{q(1-q)(y-x)(y-qx)z_k}, \quad  &\begin{tabular}{c||c|c|c}
    U & 1 & 1 &  \\ \hline
    V & 1 &  & 1 \\ \hdashline[1pt/1pt]
    & k & &
\end{tabular}
\medskip
\\
\dfrac{(y-q^2x)(xy+q^2xy-q^2yz_k-q^3xz_k)}{q(1-q)(y-x)^2z_k}, 
&\begin{tabular}{c||c|c|c}
    U & 1 & 1 &  \\ \hline
    V &  & 1 & 1 \\ \hdashline[1pt/1pt]
    &  & k & 
\end{tabular}
\medskip
\\
\dfrac{(q^2x-y)(xy+q^2xy-q^2yz_k-q^2xz_k)}{(1-q)(y-x)(y-qx)z_k},
&\begin{tabular}{c||c|c|c}
    U & 1 &  & 1 \\ \hline
    V &  & 1 & 1 \\ \hdashline[1pt/1pt]
    &  & & k
\end{tabular}
\end{cases}
\end{align}
We also specify that
\begin{align}
\label{eq:IK-delta3}
\Delta_{U,V}(x,y;\bm{z}) = 
\frac{(xy+ q^2xy-q^3yz_k-q^3xz_k)(xy+q^2xy-q^2yz_{\ell}-q^2xz_{\ell})}{q(1-q)^2(y-qx)(x-qy)z_kz_{\ell}}, \quad\quad 
\begin{tabular}{c||c|c}
    U & 1 & 1  \\ \hline
    V & 1 & 1 \\ \hdashline[1pt/1pt]
    & k & $\ell$ 
\end{tabular}
\end{align}
Any case of $\Delta_{U,V}(x,y;\bm{z})$ not listed above is determined by imposing the symmetry
\begin{align}
\label{eq:delta-sym}
\Delta_{U,V}(x,y;\bm{z}) = \Delta_{V,U}(y,x;\bm{z}).
\end{align}
The function $F_{\sigma(i)}(x_i;\bm{z})$ appearing in the summand of \eqref{symformula1} denotes the $N=1$ version of the partition function \eqref{pictureofrational1}, and is given explicitly by
\[
F_U(x;\bm{z}) = 
\begin{cases}
\displaystyle\prod_{j = 1}^{k-1} 
\dfrac{(x-z_j)(x-qz_j)}{(x-q^2z_j)(x-q^3z_j)}
\dfrac{(1-q^2)(x+q^2x-q^2z_k-q^3z_k)z_k}{(x-q^2z_k)(x-q^3z_k)},
\qquad\
U_k=2
\\
\\
\displaystyle \prod_{j = 1}^{k-1} 
\dfrac{(x - z_j)(x - qz_j)}{(x - q^2z_j)(x-q^3z_j)}
\dfrac{q(1-q)^2(1+q)(x - z_k)z_k}{(x - q^2z_k)(x - q^3z_k)}
\cdot
\prod_{j = k + 1}^{\ell-1}
\dfrac{x - z_j}{x - q^2z_j}
\dfrac{(1-q^2)z_{\ell}}{x - q^2z_{\ell}},
\\
\\
\qquad
\qquad\qquad\qquad\qquad\qquad\qquad\qquad\qquad\qquad\qquad\qquad\qquad\qquad
U_k=U_{\ell}=1,\quad k < \ell.
\end{cases}
\]
\end{thm}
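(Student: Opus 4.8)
The plan is to prove \eqref{symformula1} by expanding the twisted columns representation of Theorem \ref{thm:twisted-column-F}, rather than re-checking the characterizing properties of Section \ref{ssec:unique} from scratch. Starting from $F_S(x_1,\dots,x_N;\bm{z}) = \llangle 2^N\| \prod_{i\geq 1}\Gamma_{S_i}(z_i)\|0^N\rrangle$, I would substitute the explicit formulas \eqref{gam0}--\eqref{gam2} for each operator and expand the matrix element. Reading the operator product as a sequence of moves that must carry every local space $V_i$ from the state $0$ (in $\|0^N\rrangle$) to the state $2$ (in $\llangle 2^N\|$), each term of the expansion is specified by recording, for every column $j$ and every space $V_i$, the amount by which $\Gamma_{S_j}(z_j)$ raises the state of $V_i$. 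Since $\Gamma_{S_j}$ raises the total state by exactly $S_j$ (via a single $e^{\bullet\circ}_i$, a single $e^u_i$, or a combined $e^{uv}_{ij}$), these data form precisely an $N\times\infty$ matrix with all row sums equal to $2$ and $j$-th column sum equal to $S_j$; that is, a $2$-permutation matrix $\sigma\in\mathfrak{M}_2(N,S)$ in the sense of Definition \ref{defn:M-set}. The $i$-th row $\sigma(i)$ is the raising history of $V_i$, and the type ($\circ$ versus $\bullet$) of each elementary raising is forced by whether it carries $V_i$ from $0$ to $1$ or from $1$ to $2$; hence the terms of the expansion biject with $\mathfrak{M}_2(N,S)$.

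Next I would show that the summand attached to a fixed $\sigma$ factorizes into the product appearing in \eqref{symformula1}. The key observation is the structure of \eqref{gam1}--\eqref{gam2}: whenever $V_i$ is raised in column $j$, the remaining spaces $V_{i'}$ are dressed by the diagonal matrices $d_{i'}(z_j)$ (their own stay-put weight in that column) together with cross-factors $d^u_{i'}(x_i)$ or $d^{\bullet\circ}_{i'}(x_i)$ depending only on $x_i$ and the current state of $V_{i'}$. Collecting, for each space $V_i$, its raising weights $e^u_i(z_j)$ on raising columns and its diagonal weights $d_i(z_j)$ on the remaining columns reproduces exactly the single-variable function $F_{\sigma(i)}(x_i;\bm{z})$, which one checks directly against the closed forms for $F_U$ stated in the theorem. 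All the remaining factors are cross-terms between pairs $(V_i,V_j)$: the diagonal dressings $d^u_{j}(x_i)$, $d^{\bullet\circ}_j(x_i)$ (and their $i\leftrightarrow j$ counterparts), supplemented by the rational prefactors of the combined operators \eqref{ecc}--\eqref{ebb} on any column in which both $V_i$ and $V_j$ are raised. These are exactly the factors that must be shown to assemble into $\prod_{1\leq i<j\leq N}\Delta_{\sigma(i),\sigma(j)}(x_i,x_j;\bm{z})$.

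The verification of the explicit formulas \eqref{eq:IK-delta1}--\eqref{eq:IK-delta3} then reduces to a finite case analysis over the relative arrangement of the two nonzero entries of $U=\sigma(i)$ and $V=\sigma(j)$. For each arrangement one determines the states of $V_i$ and $V_j$ at the relevant columns and multiplies the corresponding diagonal entries of \eqref{dbull-mat}, \eqref{dboth-mat} together with any combined-operator prefactor. For instance, when $U$ carries a single $2$ occurring before the single $2$ of $V$, the only nonvanishing cross-term is the $(3,3)$ entry of $d^{\bullet\circ}_j(x_i)$ (because $V_j$ has already reached state $2$ by the time $V_i$ is raised), namely $\tfrac{(y-q^2x)(y-q^3x)}{(y-x)(y-qx)}$, matching the first line of \eqref{eq:IK-delta1}; and when $U,V$ both carry their two $1$'s in the same pair of columns $k<\ell$, the product of the $e^{\circ\circ}_{ij}(z_\ell)$ and $e^{\bullet\bullet}_{ij}(z_k)$ prefactors collapses, after the cancellation $1-q^2=(1-q)(1+q)$, to exactly \eqref{eq:IK-delta3}. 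The cases not listed in the statement are then obtained for free from \eqref{eq:delta-sym}, which is itself a consequence of the invariance of $\Gamma_k(z)$ under simultaneous exchange of $(V_i,x_i)$ and $(V_j,x_j)$ established in Proposition \ref{prop:gam-sym}.

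The conceptual content is entirely contained in the factorization above, so the hard part will be the bookkeeping of this case analysis. The most delicate cases are those in which the raising events of $V_i$ and $V_j$ are interleaved, since then one must correctly track which diagonal entry ($1$, the middle, or the $(3,3)$ entry) of each dressing matrix is selected, and those in which both spaces are raised in a common column, since the combined-operator prefactors \eqref{ecc}--\eqref{ebb} depend on both $x_i$ and $x_j$ and must be split correctly between the two elementary raisings already recorded inside $F_{\sigma(i)}$ and $F_{\sigma(j)}$. Once all arrangements are tabulated and matched against \eqref{eq:IK-delta1}--\eqref{eq:IK-delta3}, the product over pairs assembles to \eqref{symformula1}, completing the proof.
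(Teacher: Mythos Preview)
Your proposal is correct and follows essentially the same route as the paper's proof: both start from the twisted-columns representation of Theorem \ref{thm:twisted-column-F}, expand $\llangle 2^N\|\prod_i \Gamma_{S_i}(z_i)\|0^N\rrangle$ using the explicit formulas \eqref{gam0}--\eqref{gam2}, set up a bijection between surviving operator products and elements of $\mathfrak{M}_2(N,S)$, and then verify that each term factorizes into the one-row functions $F_{\sigma(i)}(x_i;\bm{z})$ together with the pairwise scattering factors $\Delta_{\sigma(i),\sigma(j)}$. The paper organizes the bijection slightly more explicitly via the assignments $\mathfrak{P}^{\sigma}(i,j)$ and the scalars $\eta^{\sigma}(j)$, but the content, including the observation that the combined-operator prefactors \eqref{ecc}--\eqref{ebb} supply exactly the $\bm{z}$-dependent pieces of \eqref{eq:IK-delta2}--\eqref{eq:IK-delta3}, is the same as what you outline.
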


\begin{rmk}
The symmetrization formula \eqref{symformula1} has similar structure to \eqref{symformulasix1} in the six-vertex case, but is more involved due to the large number of possibilities that arise when selecting two rows of a $2$-permutation matrix: this is reflected in the large number of different {\it scattering factors} \eqref{eq:IK-delta1}--\eqref{eq:IK-delta3}. We note that some of these factors, namely \eqref{eq:IK-delta2}--\eqref{eq:IK-delta3}, depend on the vertical spectral parameters of the lattice model, which is a feature not observed in the six-vertex case.
\end{rmk}

\begin{proof}
In view of Theorem \ref{thm:twisted-column-F}, it suffices to show that
\begin{equation}
\label{col-to-sym}
\llangle 2^N\| \Gamma_{S_1}(z_1)\Gamma_{S_2}(z_2)\Gamma_{S_3}(z_3)
\cdots \| 0^N \rrangle
= 
\sum_{\sigma \in \mathfrak{M}_2(N,S)}
\prod_{1 \leq i<j \leq N} \Delta_{\sigma(i),\sigma(j)}(x_i,x_j;\bm{z})
\prod_{i=1}^{N}
F_{\sigma(i)}(x_i;\bm{z}),
\end{equation}
where the right-hand side of equation $\eqref{col-to-sym}$ is the same as the right-hand side of $\eqref{symformula1}$.

The proof of \eqref{col-to-sym} proceeds in two steps. In the first step, we establish a one-to-one correspondence between the elements of the set $\mathfrak{M}_2(N,S)$ and the products of operators $\mathfrak{P}$ that arise by explicitly expanding the left-hand side of \eqref{col-to-sym}. In the second step, we demonstrate that the functions associated to $\sigma$ in the summand of \eqref{col-to-sym} match $\llangle 2^N \| \mathfrak{P} \| 0^N \rrangle$ in the previously established correspondence. Our exposition will be fairly informal, as there are relatively simple ideas behind the matching \eqref{col-to-sym} that would be obscured by heavy notation.

For the first step, we note that the summation on both sides of \eqref{col-to-sym} is finite. This is due to the fact that $S \in \mathfrak{s}(2)$, and therefore there exists $M \geq 1$ such that $S_k = 0$ for all $k>M$ (without loss of generality, we may also assume that $M$ is the minimal such integer, meaning that $S_M \geq 1$). As such, the left-hand side of \eqref{col-to-sym} may be written in truncated form $\llangle 2^N \| \Gamma_{S_1}(z_1)\cdots \Gamma_{S_M}(z_M) \| 0^N \rrangle$, while the summation on the right-hand side of \eqref{col-to-sym} ranges over all $2$-permutation matrices in $\mathfrak{M}_2(N,S)$ with exactly $M$ columns.

Fix $\sigma \in \mathfrak{M}_2(N,S)$. We shall associate to it a product of operators $\mathfrak{P} =\prod_{i=1}^{N} \prod_{j=1}^{M} \mathfrak{P}^{\sigma}(i,j) \prod_{j=1}^{M} \eta^{\sigma}(j)$, where $\mathfrak{P}^{\sigma}(i,j)$ and $\eta^{\sigma}(j)$ are defined in the following way. Letting $\sigma(i,j)$ denote the row $i$ and column $j$ entry of $\sigma$, we define
\begin{align*}
\mathfrak{P}^{\sigma}(i,j)
=
\left\{
\begin{array}{ll}
e^{\bullet\circ}_i(z_j), 
&
\qquad
\sigma(i,j)=2,
\\ \\
e^{\bullet}_i(z_j),
&
\qquad
\sigma(i,j)=1,\ \exists\ k>j\ \text{such that}\ \sigma(i,k)=1,
\\ \\
e^{\circ}_i(z_j),
&
\qquad
\sigma(i,j)=1,\ \exists\ k<j\ \text{such that}\ \sigma(i,k)=1.
\end{array}
\right.
\end{align*}
This associates a non-diagonal operator to every non-zero entry of $\sigma$. The zero entries of $\sigma$, on the other hand, get associated with diagonal operators via the following choices:
\begin{align*}
\mathfrak{P}^{\sigma}(i,j)
=
\left\{
\begin{array}{ll}
d_i(z_j), 
&
\qquad
\sigma(i,j)=0,\ S_j=0,
\\ \\
d_i(z_j) d_i^u(x_k),
&
\qquad
\sigma(i,j)=0,\ S_j=1,\ \mathfrak{P}^{\sigma}(k,j) = e^{u}_k(z_j),
\\ \\
d_i(z_j)
d_i^{\bullet\circ}(x_k),
&
\qquad
\sigma(i,j)=0,\ S_j=2,\ \mathfrak{P}^{\sigma}(k,j) = e^{\bullet\circ}_k(z_j),
\\ \\
d_i(z_j)
d_i^{u}(x_k)
d_i^{v}(x_{\ell}),
&
\qquad
\sigma(i,j)=0,\ S_j=2,\ 
\mathfrak{P}^{\sigma}(k,j) = e^{u}_k(z_j),\ 
\mathfrak{P}^{\sigma}(\ell,j) = e^{v}_{\ell}(z_j),
\end{array}
\right.
\end{align*}
with $u,v$ taking values in $\{\circ,\bullet\}$. We also introduce the scalar

\begin{align*}
\eta^{\sigma}(j)
=
\left\{
\begin{array}{ll}
\dfrac{x_k x_\ell+q^2 x_k x_\ell-q^2 x_k z_j-q^2 x_\ell z_j}
{(1-q^2)z_j},
\quad
& S_j=2,\ 
\mathfrak{P}^{\sigma}(k,j) = e^{\circ}_k(z_j),\ 
\mathfrak{P}^{\sigma}(\ell,j) = e^{\circ}_{\ell}(z_j)
\\ \\
\dfrac{x_k x_\ell+q^2x_k x_\ell-q^2x_k z_j-q^3x_\ell z_j}
{q(1-q)(x_k-x_\ell)z_j},
\quad
& S_j=2,\ 
\mathfrak{P}^{\sigma}(k,j) = e^{\bullet}_{k}(z_j),\ 
\mathfrak{P}^{\sigma}(\ell,j) = e^{\circ}_\ell(z_j)
\\ \\
\dfrac{(1+q)(x_k x_\ell+q^2x_k x_\ell-q^3x_k z_j-q^3x_\ell z_j)}{q(1-q)(x_k-qx_\ell)(x_\ell-qx_k)z_j},
\quad
& S_j=2,\ 
\mathfrak{P}^{\sigma}(k,j) = e^{\bullet}_k(z_j),\ 
\mathfrak{P}^{\sigma}(\ell,j) = e^{\bullet}_{\ell}(z_j)
\\ \\
1,
\quad
&
{\rm otherwise.}
\end{array}
\right.
\end{align*}
It is not difficult to check that this defines a bijection between the elements of $\mathfrak{M}_2(N,S)$ and all operator products in the expansion of $\Gamma_{S_1}(z_1)\cdots \Gamma_{S_M}(z_M)$ which survive when sandwiched between $\llangle 2^N\|$ and $\|0^N \rrangle$. For an illustration of this correspondence, see Figure \ref{fig:sigma-correspond}.

\begin{figure}
$$
\begin{pmatrix}
0 & 1 & 1 & 0 & 0 & 0\\
0 & 0 & 0 & 0 & 1 & 1\\
2 & 0 & 0 & 0 & 0 & 0\\
0 & 1 & 0 & 0 & 1 & 0
\end{pmatrix}
$$
\bigskip
\medskip
$$\begin{pmatrix}
\medskip
d_1(z_1)
d^{\bullet\circ}_1(x_3)  
& 
e^{\bullet}_1(z_2)
&
e^{\circ}_1(z_3)
&
d_1(z_4)
&
d_1(z_5)
d^{\bullet}_1(x_2)
d^{\circ}_1(x_4)
&
d_1(z_6)
d^{\circ}_1(x_2)
\\ \medskip
d_2(z_1)
d^{\bullet\circ}_2(x_3)  
&
d_2(z_2)
d^{\bullet}_2(x_1)
d^{\bullet}_2(x_4)
&
d_2(z_3)
d^{\circ}_2(x_1)
&
d_2(z_4)
&
e^{\bullet}_2(z_5)
&
e^{\circ}_2(z_6)
\\ \medskip
e^{\bullet\circ}_3(z_1) 
& 
d_3(z_2)
d^{\bullet}_3(x_1)
d^{\bullet}_3(x_4)
& 
d_3(z_3)
d^{\circ}_3(x_1)
&  
d_3(z_4)
&
d_3(z_5)d^{\bullet}_3(x_2) d^{\circ}_3(x_4)
&
d_3(z_6)
d^{\circ}_3(x_2)
\\ \medskip
d_4(z_1)
d^{\bullet\circ}_4(x_3) 
& 
e^{\bullet}_4(z_2)
& 
d_4(z_3)
d^{\circ}_4(x_1)
& 
d_4(z_4)
&
e^{\circ}_4(z_5)
&
d_4(z_6)
d^{\circ}_4(x_2)
\end{pmatrix}
$$
\caption{An illustration of the correspondence between $\sigma$ and $\mathfrak{P}$, for 
$S=(2,2,1,0,2,1)$ and $N=4$. In the top half, we give a specific choice of $\sigma \in \mathfrak{M}_2(N,S)$. The corresponding operators $\mathfrak{P}^{\sigma}(i,j)$ are obtained from entry $(i,j)$ of the matrix in the bottom half. For this example, the factors $\eta^{\sigma}(2)$ and $\eta^{\sigma}(5)$ also contribute non-trivially to $\mathfrak{P}$, although we have suppressed them. Reading off the operators in the $j$-th column, one recovers a single term in the summation formula for $\Gamma_{S_j}(z_j)$.}
\label{fig:sigma-correspond}
\end{figure}

Now we turn our attention to the second step. Our claim is that under the above correspondence, one has
\begin{align*}
\llangle 2^N\|
\mathfrak{P}
\|0^N \rrangle
=
\prod_{j=1}^{M}
\eta^{\sigma}(j)
\llangle 2^N\| 
\prod_{i=1}^{N}
\prod_{j=1}^{M}
\mathfrak{P}^{\sigma}(i,j)
\|0^N \rrangle
=
\prod_{1 \leq i<j \leq N} \Delta_{\sigma(i),\sigma(j)}(x_i,x_j;\bm{z})
\prod_{i=1}^{N}
F_{\sigma(i)}(x_i;\bm{z}).
\end{align*}
It is easily verified that multiplying out all operators in $\mathfrak{P}$ with argument $z_j$, $1 \leq j \leq M$,  produces exactly the product $\prod_{i=1}^{N}
F_{\sigma(i)}(x_i;\bm{z})$ of single-variable functions. The remaining operators in $\mathfrak{P}$ (with arguments $x_i$, $1 \leq i \leq N$) as well as the factor $\prod_{j=1}^{M} \eta^{\sigma}(j)$ give rise to $\prod_{1 \leq i<j \leq N} \Delta_{\sigma(i),\sigma(j)}(x_i,x_j;\bm{z})$.
\end{proof}

We refer the reader to Appendix \ref{chapter:examples} for some explicit examples of the formula \eqref{symformula1}, in the case $N=2$.

\appendix

\chapter{Examples of Izergin--Korepin symmetrization formula}
\label{chapter:examples}

To help illustrate how the symmetrization formula \eqref{symformula1} works, we record three cases of it that may arise in the case $N=2$.

\section{The string \texorpdfstring{$S = (2,2)$}{Lg}}
    
    The set of $2$-permutations with profile $(2,2)$ is given by
	\begin{equation}
    \label{eq:=twoperm--(2,2)}
	\mathfrak{M}_{2}(2,(2,2)) = \left\{ \begin{pmatrix}
	2 & 0 \\
	0 & 2
	\end{pmatrix},\begin{pmatrix}
	1 & 1 \\
	1 & 1
	\end{pmatrix},\begin{pmatrix}
	0 & 2\\
	2 & 0	
	\end{pmatrix} \right\}.
	\end{equation}
	The formula \eqref{symformula1} produces the following sum over the elements of the set \eqref{eq:=twoperm--(2,2)}:
	\begin{align}
	\label{eq:=example--(2,2)}
	F_{(2,2)}(x_1,x_2;\bm{z}) &= \Delta_{(2,0),(0,2)}(x_1,x_2;\bm{z}) F_{(2,0)}(x_1;\bm{z})F_{(0,2)}(x_2;\bm{z}) \\
	&+ \Delta_{(1,1),(1,1)}(x_1,x_2;\bm{z})F_{(1,1)}(x_1;\bm{z})F_{(1,1)}(x_2;\bm{z}) \nonumber\\
	&+ \Delta_{(0,2),(2,0)}(x_1,x_2;\bm{z})F_{(0,2)}(x_1;\bm{z})F_{(2,0)}(x_2;\bm{z}), \nonumber
	\end{align}
where the indices of $\Delta$ and $F$ are determined by reading the rows of each $2$-permutation matrix from top to bottom. The one-variable partition functions that appear in \eqref{eq:=example--(2,2)} are given by
	\begin{equation}
	\label{pic:=onerowpart--(2,2)}
	F_{(a_1,a_2)}(x_1;\bm{z}) \times F_{(b_1,b_2)}(x_2;\bm{z}) =
\begin{tikzpicture}[scale = 0.7,every node/.style={scale=0.7},baseline = {(0,0)}]
	\foreach \i in {1,2} {
	\draw[lgray,line width=1.5pt,->] (\i,-1) -- (\i,1);
	}
	\draw[lgray,line width=1.5pt,->] (0,0) -- (3,0);
	\foreach \i in {1,2} {
	\node at (\i,-1.2) {$0$};
	}
	\foreach \i in {1} {
	\node at (\i,1.2) {$a_1$};
	}
	\node at (2,1.2) {$a_2$};
	\node at (3.2,0) {$0$};
	\node at (-0.2,0) {$2$};
	\node at (-1.5,0) {$x_1$};
	\draw[thick,->] (-1.25,0) -- (-0.5,0);
	\foreach \i in {1,2} {
	\draw[thick,->] (\i,-2.25) -- (\i,-1.5);
	}
	\foreach \i in {1,2} {
	\node at (\i,-2.5) {$z_{\i}$};}
	\node[scale = 1.5] at (3.75,0) {$\times$};
	\begin{scope}[shift = {(6,0)}]
	\foreach \i in {1,2} {
	\draw[lgray,line width=1.5pt,->] (\i,-1) -- (\i,1);
	}
	\draw[lgray,line width=1.5pt,->] (0,0) -- (3,0);
	\foreach \i in {1,2} {
	\node at (\i,-1.2) {$0$};
	}
	\foreach \i in {1} {
	\node at (\i,1.2) {$b_1$};
	}
	\node at (2,1.2) {$b_2$};
	\node at (3.2,0) {$0$};
	\node at (-0.2,0) {$2$};
	\node at (-1.5,0) {$x_2$};
	\draw[thick,->] (-1.25,0) -- (-0.5,0);
	\foreach \i in {1,2} {
	\draw[thick,->] (\i,-2.25) -- (\i,-1.5);
	}
	\foreach \i in {1,2} {
	\node at (\i,-2.5) {$z_{\i}$};}
	\end{scope}
	\end{tikzpicture}
	\end{equation}
	for any $a_1,a_2,b_1,b_2$ such that $a_1+a_2 = 2$, $b_1+b_2 = 2$, and the vertex weights are given in Figure \ref{fig:19weights} of Chapter \ref{chapter:19-vertex}. The scattering factors on the right hand side of \eqref{eq:=example--(2,2)} have the following pictorial presentations: 
	\[
	\Delta_{(2,0),(0,2)} \mapsto \begin{tabular}{c||c|c|c}
    \textit{U} & 2 & \\ \hline
    \textit{V} & & 2 &
	\end{tabular}, \qquad \Delta_{(1,1),(1,1)} \mapsto \begin{tabular}{c||c|c|c}
    \textit{U} & 1 & 1\\ \hline
    \textit{V} & 1 & 1 &
	\end{tabular}.
	\]
	Using the symmetry $\Delta_{U,V}(x,y;\bm{z}) = \Delta_{V,U}(y,x;\bm{z})$ and the explicit expressions \eqref{eq:IK-delta1} and \eqref{eq:IK-delta3}, we obtain the following expansion:
	\begin{align*}
	F_{(2,2)}(x_1,x_2;\bm{z}) &=\frac{(x_2-q^2x_1)(x_2-q^3x_1)}{(x_2-x_1)(x_2-qx_1)}F_{(2,0)}(x_1;\bm{z})F_{(0,2)}(x_2;\bm{z})\\
	& + \frac{(x_1x_2+q^2x_1x_2-q^3x_2z_1-q^3x_1z_1)(x_1x_2+q^2x_1x_2-q^2x_2z_2-q^2x_1z_2)}{q(1-q)^2(x_2-qx_1)(x_1-qx_2)z_1z_2}F_{(1,1)}(x_1;\bm{z})F_{(1,1)}(x_2;\bm{z})\\
	& + \frac{(x_1-q^2x_2)(x_1-q^3x_2)}{(x_1-x_2)(x_1-qx_2)} F_{(0,2)}(x_1;\bm{z})F_{(2,0)}(x_2;\bm{z}),
	\end{align*}
	where the one-row partition functions are depicted in equation \eqref{pic:=onerowpart--(2,2)}.

\section{The string \texorpdfstring{$S=(1,2,1)$}{Lg}}
    
	The set of $2$-permutations with profile $(1,2,1)$ is given by 
	\begin{equation}
    \label{eq:=twoperm--(1,2,1)}
	\mathfrak{M}_{2}(2,(1,2,1)) = \left\{\begin{pmatrix}
	1 & 0 & 1\\
	0 & 2 & 0
	\end{pmatrix},\begin{pmatrix}
	1 & 1 & 0\\
	0 & 1 & 1
	\end{pmatrix}, \begin{pmatrix}
	0 & 1 & 1 \\
	1 & 1 & 0
	\end{pmatrix},\begin{pmatrix}
	0 & 2 & 0 \\
	1 & 0 & 1
	\end{pmatrix} \right\}.
	\end{equation}
	The formula \eqref{symformula1} produces the following sum over the elements of the set \eqref{eq:=twoperm--(1,2,1)}:
	\begin{align}
    \label{eq:=example--(1,2,1)}
	F_{(1,2,1)}(x_1,x_2;\bm{z}) &= \Delta_{(1,0,1),(0,2,0)}(x_1,x_2;\bm{z})F_{(1,0,1)}(x_1;\bm{z})F_{(0,2,0)}(x_2;\bm{z})\\
    \nonumber
	&+ \Delta_{(1,1,0),(0,1,1)}(x_1,x_2;\bm{z})F_{(1,1,0)}(x_1;\bm{z})F_{(0,1,1)}(x_2;\bm{z})\\
    \nonumber
	&+ \Delta_{(0,1,1),(1,1,0)}(x_1,x_2;\bm{z}) F_{(0,1,1)}(x_1;\bm{z})F_{(1,1,0)}(x_2;\bm{z})\\
    \nonumber
	&+ \Delta_{(0,2,0),(1,0,1)}(x_1,x_2;\bm{z})F_{(0,2,0)}(x_1;\bm{z})F_{(1,0,1)}(x_2;\bm{z}).
	\end{align}
	 The one-variable partition functions that appear in \eqref{eq:=example--(1,2,1)} are given by
	\begin{equation}
    \label{eq:=onerowpart--(1,2,1)}
	F_{(a_1,a_2,a_3)}(x_1;\bm{z}) \times F_{(b_1,b_2,b_3)}(x_2;\bm{z}) =
\begin{tikzpicture}[scale = 0.7,every node/.style={scale=0.7},baseline = {(0,0)}]
	\foreach \i in {1,2,3} {
	\draw[lgray,line width=1.5pt,->] (\i,-1) -- (\i,1);
	}
	\draw[lgray,line width=1.5pt,->] (0,0) -- (4,0);
	\foreach \i in {1,2} {
	\node at (\i,-1.2) {$0$};
	}
	\foreach \i in {1,2,3} {
	\node at (\i,1.2) {$a_{\i}$};
	}
	\foreach \i in {1,2,3} {
	\node at (\i,-1.2) {$0$};
	}
	\node at (4.2,0) {$0$};
	\node at (-0.2,0) {$2$};
	\node at (-1.5,0) {$x_1$};
	\draw[thick,->] (-1.25,0) -- (-0.5,0);
	\foreach \i in {1,2,3} {
	\draw[thick,->] (\i,-2.25) -- (\i,-1.5);
	}
	\foreach \i in {1,2,3} {
	\node at (\i,-2.5) {$z_{\i}$};}
	\begin{scope}[shift = {(6,0)}]
	\end{scope}
	\end{tikzpicture} \hspace{2mm} \times \hspace{2mm}
	\begin{tikzpicture}[scale = 0.7,every node/.style={scale=0.7},baseline = {(0,0)}]
	\foreach \i in {1,2,3} {
	\draw[lgray,line width=1.5pt,->] (\i,-1) -- (\i,1);
	}
	\draw[lgray,line width=1.5pt,->] (0,0) -- (4,0);
	\foreach \i in {1,2} {
	\node at (\i,-1.2) {$0$};
	}
	\foreach \i in {1,2,3} {
	\node at (\i,1.2) {$b_{\i}$};
	}
	\foreach \i in {1,2,3} {
	\node at (\i,-1.2) {$0$};
	}
	\node at (4.2,0) {$0$};
	\node at (-0.2,0) {$2$};
	\node at (-1.5,0) {$x_2$};
	\draw[thick,->] (-1.25,0) -- (-0.5,0);
	\foreach \i in {1,2,3} {
	\draw[thick,->] (\i,-2.25) -- (\i,-1.5);
	}
	\foreach \i in {1,2,3} {
	\node at (\i,-2.5) {$z_{\i}$};}
	\begin{scope}[shift = {(6,0)}]
	\end{scope}
	\end{tikzpicture}
		\end{equation}
	for any $a_1,a_2,a_3,b_1,b_2,b_3$ such that $a_1+a_2+a_3 = 2$ and $b_1+b_2+b_3 = 2$. The scattering factors on the right hand side of \eqref{eq:=example--(1,2,1)} have the following pictorial presentations: 
	\[
	\Delta_{(1,0,1),(0,2,0)} \mapsto \begin{tabular}{c||c|c|c}
    \textit{U} & 1 & & 1 \\ \hline
    \textit{V} & & 2 & 
\end{tabular}, \quad \text{ and } \quad 
\Delta_{(1,1,0),(0,1,1)} \mapsto \begin{tabular}{c||c|c|c}
    \textit{U} & 1 & 1 &  \\ \hline
    \textit{V} &  & 1 & 1 \\ \hdashline[1pt/1pt]
    &  & 2 & 
\end{tabular}.
	\]
	Using the property that $\Delta_{U,V}(x,y;\bm{z}) = \Delta_{V,U}(y,x;\bm{z})$ and the explicit expressions \eqref{eq:IK-delta1} and \eqref{eq:IK-delta2}, we obtain the following expansion: 
	\begin{align*}
	F_{(1,2,1)}(x_1,x_2;\bm{z}) &= \frac{(q^2x_2-x_1)(x_2-q^2x_1)}{(x_2-x_1)^2} F_{(1,0,1)}(x_1;\bm{z})F_{(0,2,0)}(x_2;\bm{z})\\
	&+ \frac{(x_2-q^2x_1)(x_1x_2+q^2x_1x_2-q^2x_2z_2-q^3x_1z_2)}{q(1-q)(x_2-x_1)^2z_2} F_{(1,1,0)}(x_1;\bm{z})F_{(0,1,1)}(x_2;\bm{z})\\
	&+ \frac{(x_1-q^2x_2)(x_1x_2+q^2x_1x_2-q^2x_1z_2-q^3x_2z_2)}{q(1-q)(x_1-x_2)^2z_2} F_{(0,1,1)}(x_1;\bm{z})F_{(1,1,0)}(x_2;\bm{z})\\
	&+\frac{(q^2x_1-x_2)(x_1-q^2x_2)}{(x_1-x_2)^2} F_{(0,2,0)}(x_1;\bm{z})F_{(1,0,1)}(x_2;\bm{z}),
	\end{align*}
	where the one-row partition functions are depicted in equation \eqref{eq:=onerowpart--(1,2,1)}.

\section{The string \texorpdfstring{$S=(1,1,1,1)$}{Lg}}
    
	The set of $2$-permutations with profile $(1,1,1,1)$ is given by
	\begin{multline}
    \label{eq:=twoperm--(1,1,1,1)}
	\mathfrak{M}_{2}(2,(1,1,1,1)) = \\
	 \bigg\{\begin{pmatrix}
	1 & 1 & 0 & 0\\
	0 & 0 & 1 & 1
	\end{pmatrix},\begin{pmatrix}
	1 & 0 & 1 & 0\\
	0 & 1 & 0 & 1
	\end{pmatrix},\begin{pmatrix}
	1 & 0 & 0 & 1\\
	0 & 1 & 1 & 0
	\end{pmatrix},\begin{pmatrix}
	0 & 1 & 1 & 0\\
	1 & 0 & 0 & 1
	\end{pmatrix},\begin{pmatrix}
	0 & 1 & 0 & 1\\
	1 & 0 & 1 & 0
	\end{pmatrix},\begin{pmatrix}
	0 & 0 & 1 & 1\\
	1 & 1 & 0 & 0
	\end{pmatrix} \bigg\}.
	\end{multline}
	The formula \eqref{symformula1} produces the following sum over the elements of the set \eqref{eq:=twoperm--(1,1,1,1)}:
	\begin{align}
    \label{eq:=example--(1,1,1,1)}
	F_{(1,1,1,1)}(x_1,x_2;\bm{z}) &= \Delta_{(1,1,0,0),(0,0,1,1)}(x_1,x_2;\bm{z})F_{(1,1,0,0)}(x_1;\bm{z})F_{(0,0,1,1)}(x_2;\bm{z})\\
    \nonumber
	&+\Delta_{(1,0,1,0),(0,1,0,1)}(x_1,x_2;\bm{z})F_{(1,0,1,0)}(x_1;\bm{z})F_{(0,1,0,1)}(x_2;\bm{z})\\
    \nonumber
	&+\Delta_{(1,0,0,1),(0,1,1,0)}(x_1,x_2;\bm{z})F_{(1,0,0,1)}(x_1;\bm{z})F_{(0,1,1,0)}(x_2;\bm{z})\\
    \nonumber
	&+\Delta_{(0,1,1,0),(1,0,0,1)}(x_1,x_2;\bm{z})F_{(0,1,1,0)}(x_1;\bm{z})F_{(1,0,0,1)}(x_2;\bm{z})\\
    \nonumber
	&+\Delta_{(0,1,0,1),(1,0,1,0)}(x_1,x_2;\bm{z})F_{(0,1,0,1)}(x_1;\bm{z})F_{(1,0,1,0)}(x_2;\bm{z})\\
    \nonumber
	&+\Delta_{(0,0,1,1),(1,1,0,0)}(x_1,x_2;\bm{z})F_{(0,0,1,1)}(x_1;\bm{z})F_{(1,1,0,0)}(x_2;\bm{z}).
	\end{align}
	The one-variable partition functions that appear in \eqref{eq:=example--(1,1,1,1)} are given by
\begin{align}
\label{eq:=onerowpart--(1,1,1,1)}
	F_{(a_1,a_2,a_3,a_4)}(x_1;\bm{z}) \times F_{(b_1,b_2,b_3,b_4)}(x_2;\bm{z}) =
\begin{tikzpicture}[scale = 0.7,every node/.style={scale=0.7},baseline = {(0,0)}]
	\foreach \i in {1,2,3,4} {
	\draw[lgray,line width=1.5pt,->] (\i,-1) -- (\i,1);
	}
	\draw[lgray,line width=1.5pt,->] (0,0) -- (5,0);
	\foreach \i in {1,2,3,4} {
	\node at (\i,-1.2) {$0$};
	}
	\foreach \i in {1,2,3,4} {
	\node at (\i,1.2) {$a_{\i}$};
	}
	\node at (5.2,0) {$0$};
	\node at (-0.2,0) {$2$};
	\node at (-1.5,0) {$x_1$};
	\draw[thick,->] (-1.25,0) -- (-0.5,0);
	\foreach \i in {1,2,3,4} {
	\draw[thick,->] (\i,-2.25) -- (\i,-1.5);
	}
	\foreach \i in {1,2,3,4} {
	\node at (\i,-2.5) {$z_{\i}$};}
	\end{tikzpicture} \hspace{2mm} \times \hspace{2mm}
	\begin{tikzpicture}[scale = 0.7,every node/.style={scale=0.7},baseline = {(0,0)}]
	\foreach \i in {1,2,3,4} {
	\draw[lgray,line width=1.5pt,->] (\i,-1) -- (\i,1);
	}
	\draw[lgray,line width=1.5pt,->] (0,0) -- (5,0);
	\foreach \i in {1,2,3,4} {
	\node at (\i,-1.2) {$0$};
	}
	\foreach \i in {1,2,3,4} {
	\node at (\i,1.2) {$b_{\i}$};
	}
	\node at (5.2,0) {$0$};
	\node at (-0.2,0) {$2$};
	\node at (-1.5,0) {$x_2$};
	\draw[thick,->] (-1.25,0) -- (-0.5,0);
	\foreach \i in {1,2,3,4} {
	\draw[thick,->] (\i,-2.25) -- (\i,-1.5);
	}
	\foreach \i in {1,2,3,4} {
	\node at (\i,-2.5) {$z_{\i}$};}
	\end{tikzpicture}
\end{align}
	for any $a_1,a_2,a_3,a_4,b_1,b_2,b_3,b_4$ such that $a_1+a_2+a_3+a_4 = 2$ and $b_1+b_2+b_3+b_4 = 2$. The scattering factors on the right hand side of \eqref{eq:=example--(1,1,1,1)} are associated with the following diagrams:
	\[
\Delta_{(1,1,0,0),(0,0,1,1)} \mapsto \begin{tabular}{c||c|c|c|c}
    \textit{U} & 1 & 1 & & \\ \hline
    \textit{V} &  &  & 1 & 1  
\end{tabular}, 
\quad 
\Delta_{(1,0,1,0),(0,1,0,1)} \mapsto \begin{tabular}{c||c|c|c|c}
    \textit{U} & 1 &  & 1 & \\ \hline
    \textit{V} &  & 1 &  & 1  
\end{tabular},
	\]
	\\
	\[ 
\Delta_{(1,0,0,1),(0,1,1,0)} \mapsto \begin{tabular}{c||c|c|c|c}
    \textit{U} & 1 &  & & 1\\ \hline
    \textit{V} &  & 1 & 1 &   
\end{tabular}.
\]
In the same vein as the previous examples, the remaining scattering factors can be determined using the symmetry property \eqref{eq:delta-sym}. Using the explicit expressions in \eqref{eq:IK-delta1}, we obtain the following expansion:
\begin{align*}
\label{eq:=sym--(1,1,1,1)}
F_{(1,1,1,1)} &= \frac{(x_2-q^2x_1)(x_2-q^3x_1)}{(x_2-x_1)(x_2-qx_1)} F_{(1,1,0,0)}(x_1;\bm{z})F_{(0,0,1,1)}(x_2;\bm{z})\\
  &+\frac{(x_2-q^2x_1)^2(x_1-qx_2)}{(x_2-x_1)^2(x_2-qx_1)}F_{(1,0,1,0)}(x_1;\bm{z})F_{(0,1,0,1)}(x_2;\bm{z})\\
  &+\frac{(x_2-q^2x_1)(q^2x_2-x_1)}{(x_2-x_1)^2} F_{(1,0,0,1)}(x_1;\bm{z})F_{(0,1,1,0)}(x_2;\bm{z})\\
  &+ \frac{(x_1-q^2x_2)(q^2x_1-x_2)}{(x_1-x_2)^2}F_{(0,1,1,0)}(x_1;\bm{z})F_{(1,0,0,1)}(x_2;\bm{z})\\
  &+\frac{(x_1-q^2x_2)^2(x_2-qx_1)}{(x_1-x_2)^2(x_1-qx_2)} F_{(0,1,0,1)}(x_1;\bm{z})F_{(1,0,1,0)}(x_2;\bm{z})\\
 &+\frac{(x_1-q^2x_2)(x_1-q^3x_2)}{(x_1-x_2)(x_1-qx_2)}F_{(0,0,1,1)}(x_1;\bm{z})F_{(1,1,0,0)}(x_2;\bm{z}),
\end{align*}
where the one-row partition functions are depicted in equation \eqref{eq:=onerowpart--(1,1,1,1)}. Note that in this example (and in all examples in which the elements of $S$ are at most $1$), one observes neatly factorized scattering coefficients.

\bibliographystyle{alpha}
\bibliography{references}

\newpage

\end{document}